\documentclass[11pt]{amsart}

\usepackage[T1]{fontenc}
\usepackage[utf8]{inputenc}
\usepackage[english]{babel}
\usepackage{amscd,amsmath,amsthm,amssymb}
\usepackage{mathtools}
\usepackage{mathrsfs}
\usepackage{comment}
\usepackage{xcolor}
\usepackage{array}
\usepackage{caption}
\usepackage{mathabx}
\usepackage{standalone}
\usepackage{scalerel}
\usepackage{csquotes}

\usepackage{bm}
\usepackage{accents} %%% for the circle above Z

\usepackage{enumitem}
\setlist[enumerate]{label=\textnormal{(\arabic*)}}

\usepackage{xr-hyper} 
\definecolor{cadmiumgreen}{rgb}{0.0, 0.42, 0.24}
\definecolor{darkred}{rgb}{.65,0,0}
\definecolor{aqua}{rgb}{0.0, 1.0, 1.0}

\usepackage[
  colorlinks,
  linkcolor=darkred,
  urlcolor=blue,
  citecolor=cadmiumgreen,
  pdfstartview ={FitV},
]{hyperref}
\hypersetup{
  pdfauthor={Omid Amini, Matthieu Piquerez, Lyuhui Wu},
  pdftitle={Solutions to tropical Monge--Ampere}}

\usepackage[
  alphabetic,
  msc-links,
  nobysame,
  lite,
]{amsrefs}

\usepackage{tikz, tikz-cd}
\usetikzlibrary{calc, backgrounds, quotes, arrows.meta}

\usepackage[left=3cm,top=3.5cm,right=3cm]{geometry}
\def\xx{1}
\def\xy{0}
\def\yx{0}
\def\yy{1}
\def\zx{.3}
\def\zy{.6}

\def\ratio{1.3}
\def\ratioo{.5}
\def\ratioe{.6}

\newcommand{\BFcoordinates}{
  \coordinate (I) at (0,0);
  \coordinate (O) at (0,0);

  \coordinate (e1) at (\ratioe*\xx, \ratioe*\xy);
  \coordinate (e2) at (\ratioe*\yx, \ratioe*\yy);
  \coordinate (e3) at (\ratioe*\zx, \ratioe*\zy);
  \coordinate (e0) at ($(0,0)-(e1)-(e2)-(e3)$);

  \foreach \i/\j in {1/2, 1/3, 1/0, 2/3, 2/0, 3/0} {
    \coordinate (e\i\j) at ($(e\i)+(e\j)$);
  }

  \foreach \i/\j/\ax/\ay/\bx/\by in {1/2/\xx/\xy/\yx/\yy, 1/3/\xx/\xy/\zx/\zy, 2/3/\yx/\yy/\zx/\zy} {
    \pgftransformcm{\ax}{\ay}{\bx}{\by}{\pgfpoint{0}{0}}
    \coordinate (I\i\i\j) at (2,1);
    \coordinate (I\j\i\j) at (1,2);
    \coordinate (I\i\j) at ($(0,0)!(I\i\i\j)!(1,1)$);
    \coordinate (I\i) at ($(0,0)!(I\i\i\j)!(1,0)$);
    \coordinate (I\j) at ($(0,0)!(I\j\i\j)!(0,1)$);
  }

  \foreach \i/\ax/\ay in {1/\xx/\xy, 2/\yx/\yy, 3/\zx/\zy} {
    \pgftransformcm{\ax}{\ay}{-\xx-\yx-\zx+\ax}{-\xy-\yy-\zy+\ay}{\pgfpoint{0}{0}}
    \coordinate (I\i0) at (0,\ratio);
    \coordinate (I\i\i0) at (2,\ratio);
    \coordinate (I0\i0) at (-\ratioo,\ratio);
  }

  \coordinate (I0) at ($.333*(I010)+.333*(I020)+.333*(I030)$);
}

\def\roundface#1#2[#3]{\fill[#3] (I) -- (I#1) .. controls ($.3*(I#1)+.7*(I#1#1#2)$) and ($.3*(I#1#2)+.7*(I#1#1#2)$) .. (I#1#2) .. controls ($.3*(I#1#2)+.7*(I#2#1#2)$) and ($.3*(I#2)+.7*(I#2#1#2)$) .. (I#2) -- cycle}

\newtheorem{thm}{Theorem}[section]

\newtheorem{lemma}[thm]{Lemma}

\newtheorem{prop}[thm]{Proposition}

\newtheorem{claim}[thm]{Claim}
\newtheorem{cor}[thm]{Corollary}

\newtheorem{conj}[thm]{Conjecture}

\newtheorem*{theoremA}{Theorem A}
\newtheorem*{theoremB}{Theorem B}
\newtheorem*{theoremC}{Theorem C}
\newtheorem*{theoremD}{Theorem D}
\newtheorem*{theoremD'}{Theorem D'}
\newtheorem*{theoremE}{Theorem E}

\theoremstyle{definition}
\newtheorem{defii}[thm]{Definition}
\newtheorem{definition}[thm]{Definition}
\newtheorem{construction}[thm]{Construction}
\newenvironment{defi}
  {\pushQED{\qed}\defii}
  {\popQED\enddefii}
\newtheorem{remm}[thm]{Remark}
\newenvironment{remark}
  {\pushQED{\qed}\remm}
  {\popQED\endremm}
\newtheorem{exx}[thm]{Example}
\newenvironment{example}
  {\pushQED{\qed}\exx}
  {\popQED\endexx}

\numberwithin{equation}{section}

\newcommand{\resp}{\textit{resp.} }

\newcommand{\Q}{\mathbb{Q}}
\newcommand{\N}{\mathbb N}
\newcommand{\R}{\ssub{\mathbb{R}}}
\newcommand{\Z}{\ssub{\mathbb{Z}}}

\newcommand{\bul}{\bullet} % to use as index for complexes

\renewcommand{\emptyset}{\varnothing}

\newcommand{\rquot}[2]{#1\big/#2}
\newcommand{\rest}[1]{\raisebox{-1pt}{$\vert$}_{#1}}

\let\oldchi\chi
  \newcommand{\raisechi}[2]{\raisebox{.4ex}{$#1#2$}}
  \renewcommand{\chi}{{\mathpalette\raisechi\oldchi}}

\makeatletter
\let\oldsum\sum
\renewcommand{\sum}{\@ifnextchar_\@mysum\oldsum}
\def\@mysum_#1{\oldsum_{\substack{#1}}}
\let\oldbigoplus\bigoplus
\renewcommand{\bigoplus}{\@ifnextchar_\@mybigoplus\oldbigoplus}
\def\@mybigoplus_#1{\oldbigoplus_{\substack{#1}}}
\let\oldprod\prod
\renewcommand{\prod}{\@ifnextchar_\@myprod\oldprod}
\def\@myprod_#1{\oldprod_{\substack{#1}}}
\makeatother

\let\oldbigwedge\bigwedge
\renewcommand{\bigwedge}{{\textstyle\oldbigwedge\!}}

\DeclareMathOperator{\sed}{sed} % sedentarity
\DeclareMathOperator{\SedT}{Sed} % sedentarity
\newcommand{\reg}{\mathrm{reg}}
\DeclareMathOperator{\gyss}{Gys} % gysin map
\newcommand{\gys}{\ssub{\gyss}!}

\DeclareMathOperator{\ress}{Res} % gysin map
\newcommand{\res}{\ssub{\ress}!}

\DeclareMathOperator{\id}{id} % identity map
\renewcommand{\v}{\mathbf v} % a multivector
\newcommand{\eR}{\mathbb T} % tropical semiring
\newcommand{\e}{\ssub{\mathfrak e}!} % primitive vector of a ray
\newcommand{\f}{{\textnormal{\textsl{\textsf f}}}} % finite for subscript
\renewcommand{\P}{\mathbb P} % projective space
\newcommand{\TP}{\mathbb{TP}} % tropical projective space
\newcommand{\bbU}{\ssub[-1pt]{\mathbb U}}
\newcommand{\bbO}{\ssub[-1pt]{\mathbb O}}
\renewcommand{\div}{\mathrm{div}} % a divisor
\newcommand{\ssT}{\ssub{T}!} % espace tangent

\newcommand{\dual}{\star} % dual
\newcommand{\CC}{\mathcal C} % C^0, C^1, etc. functions
\let\i\relax
\newcommand{\i}{\ssub{\mathrm i}!} % inclusion
\renewcommand{\d}{\ssub{{\partial}}!} %  differential Dolbeaul
\newcommand{\parr}{{\scaleto{/\!/}{6pt}}} % symbol parallel
\newcommand{\nvect}{\ssub{\mathfrak n}!} % normal unit vector to gamma in delta

\newcommand{\x}{\textsc{x}} % polynomial variable
\newcommand{\y}{\textsc{y}} % polynomial variable
\newcommand{\z}{\textsc{z}} % polynomial variable
\newcommand{\card}[1]{\lvert#1\rvert} % cardinality
\newcommand{\st}{\bigm|} % such that in sets
\newcommand{\Hom}{\ssub{\mathrm{Hom}}} % Hom
\newcommand{\RpMod}{\mathcal M} % category of \R_+ modules

\newcommand{\abs}[1]{\lvert #1\rvert} % absolute value
\newcommand{\shiftcomp}[2][0]{{}\mkern#1mu\overline{\mkern-#1mu#2}}
\newcommand{\comp}[1]{\ifx#1N \shiftcomp[3]{#1}\else\ifx#1X \shiftcomp[3]{#1}\else\ifx#1Z \shiftcomp[3]{#1}\else\ifx#1F \shiftcomp[3]{#1} \else \shiftcomp{#1}\fi\fi\fi\fi} % canonical compactification

\newcommand{\X}{\mathfrak X}

\renewcommand{\bar}{\comp}
\renewcommand{\dh}{\d}
\newcommand{\dhb}{\ssub{\bar\d}!}
\newcommand{\dm}{\ssub{\bm \partial}!}
\newcommand{\dmb}{\ssub{\bar\dm}!}

  \newcommand{\dims}[1]{\ss{d}_{#1}}
\newcommand{\suppaux}[2]{\scalebox{1}[1.4]{$#1\lvert$}#2\scalebox{1}[1.4]{$#1\rvert$}}
  \newcommand{\supp}[1]{\mathpalette\suppaux{#1}}
\newcommand{\conezero}{{\underline0}} % cone 0

\newcommand{\subface}{\prec}
\newcommand{\subfaceq}{\preceq}
\newcommand{\ssubface}{\mathbin{\mathchoice
  {\subface\!\!\!\cdot}%
  {\subface\!\!\!\cdot}%
  {\subface\!\cdot}%
  {\subface\!\cdot}%
}} % subface of codimension one codimension 1
\newcommand{\supface}{\succ}
\newcommand{\supfaceq}{\succeq}
\newcommand{\ssupface}{\mathbin{\mathchoice
  {\cdot\!\!\!\supface}%
  {\cdot\!\!\!\supface}%
  {\cdot\!\supface}%
  {\cdot\!\supface}%
}}

\DeclareMathOperator{\conv}{conv} % convex hull

\newcommand{\Trop}{\mathrm{Trop}}

\newcommand{\uniform}{\mathsf{t}}

\newcommand{\weight}{\ssub{\varpi}!}

\newcommand{\Spec}{\operatorname{Spec}}

\newcommand{\wpsh}{\mathrm{PSH}}

\newcommand{\NN}{{\mathpalette\doscaleNN\relax}}
\newcommand{\doscaleNN}[2]{\ssub{\scalebox{.9}{$#1\mathcal N$}}!}

\renewcommand{\a}{\ssub{\mathfrak a}!}

\newcommand{\rec}{\mathrm{rec}}

\newcommand{\Radonp}{\ssub{\mathrm{Radon}}!_{\ge 0}}
\newcommand{\MA}{\ssub{\mathrm{MA}}!}

\newcommand{\an}{{\mathrm{an}}}
\newcommand{\val}{{\mathrm{val}}}

\newcommand{\NA}{{\mathrm{NA}}}

\newcommand{\env}{\ssub[-2pt]{\mathrm{P}}!} %%%enveloppe

\NewDocumentCommand{\ssub}{O{0pt} O{.9} m t! e{_^}}{
  #3%
  \IfValueT{#5}{
    \IfBooleanTF{#4}{\sb{\hspace{#1}\scaleobj{#2}{#5}}}{\sb{#5}}
  }
  \IfValueT{#6}{\sp{\scaleobj{#2}{#6}}}
}

\ExplSyntaxOn
\NewDocumentCommand{\tossub}{o o m}{
  \expandafter\let\csname old\cs_to_str:N #3\endcsname#3
  \renewcommand#3%
  {\ssub[#1][#2]{\csname old\cs_to_str:N #3\endcsname}}
}
\ExplSyntaxOff

\tossub\varpi
\tossub\deg
\tossub\omega
\tossub\nu
\tossub\rho
\newcommand{\ssM}{\ssub{M}!}
\newcommand{\ssN}{\ssub[-1pt]{N}!}
\newcommand{\ssA}{\ssub{A}!}
\newcommand{\ssbarN}{\ssub[-1pt]{\bar N}!}
\newcommand{\ssbarT}{\ssub[-1pt]{\bar T}!}

\newcommand{\ssW}{\ssub[-1pt]{W}!}

\newcommand{\ssSigma}{\ssub{\Sigma}!}
\newcommand{\sssigma}{\ssub[-2pt]{\sigma}}
\newcommand{\sstau}{\ssub[-2pt]{\tau}}

\newcommand{\ssOmega}{\ssub{\Omega}!}

\newcommand{\varX}{\mathbf X}
\newcommand{\varH}{\mathbf H}

\newcommand{\varY}{\mathbf Y}
\newcommand{\varT}{\mathbf T}
\newcommand{\varL}{\mathbf L}
\newcommand{\varG}{\mathbf G}

\newcommand{\Sk}{\mathrm{Sk}}

\newcommand{\E}{\mathrm{E}_{\omega}}
\newcommand{\FE}{\ssub[-2pt]{\mathrm{F}}!}
\newcommand{\GE}{\ssub[-2pt]{\mathrm{G}}!}

\renewcommand{\setminus}{\smallsetminus}

\newcommand{\std}{\mathrm{std}}

\newcommand{\be}{\begin{equation}}
\newcommand{\ee}{\end{equation}}
\newcommand{\bes}{\begin{equation*}}
\newcommand{\ees}{\end{equation*}}
\newcommand{\ben}{\begin{eqnarray}}
\newcommand{\een}{\end{eqnarray}}
\newcommand{\bens}{\begin{eqnarray*}}
\newcommand{\eens}{\end{eqnarray*}}

\newcommand{\lms}{\longmapsto}
\newcommand{\ol}{\overline}
\newcommand{\vol}{\mathrm{vol}}
\newcommand{\Supp}{\mathrm{Supp}}

\newcommand{\MongeA}{Monge--Ampère }

\newcommand{\PSH}{\mathrm{PSH}}
\newcommand{\CPSH}{\mathrm{CPSH}}
\newcommand{\dCPSH}{\mathrm{dCPSH}}
\newcommand{\appCPSH}{\CPSH^{\mathrm{app}}}

\newcommand{\pCPSH}{\mathrm{CPSH}^{+}}

\newcommand{\Conv}{\mathrm{Conv}}

\newcommand{\LConv}{\mathrm{LConv}}
\newcommand{\appLConv}{\LConv^{\mathrm{app}}}
\newcommand{\extLConv}{\mathrm{PSH}}

\newcommand{\PL}{\mathrm{PL}}
\newcommand{\Aff}{\mathrm{Aff}}
\newcommand{\PSm}{\mathrm{PC}^{\infty}}

\newcommand{\dom}{\mathrm{dom}}

\newcommand{\aff}{\mathrm{aff}}
\newcommand{\interior}{\mathrm{int}}
\newcommand{\relint}{\mathrm{relint}}

\newcommand{\ext}{\mathrm{ext}}

\newcommand{\xo}{X_o}

\newcommand{\MWp}{\mathrm{MW}^{\ge 0}}
\newcommand{\uinfty}{\underline{\infty}}

\newcommand{\kahler}{K\"ahler }
\newcommand{\prekahler}{prek\"ahler }

\newcommand{\MAo}{\MA_{\omega}}
\newcommand{\envomega}{\env_{\omega}}

\newcommand{\BC}{{\mathbb {C}}} \newcommand{\BD}{{\mathbb {D}}}
 
\newcommand{\BG}{{\mathbb {G}}}

 \newcommand{\BN}{{\mathbb {N}}}
\newcommand{\BO}{{\mathbb {O}}} \newcommand{\BP}{{\mathbb {P}}}
\newcommand{\BQ}{{\mathbb {Q}}} \newcommand{\BR}{{\mathbb {R}}}
\newcommand{\BS}{{\mathbb {S}}} \newcommand{\BT}{{\mathbb {T}}}

 \newcommand{\BZ}{{\mathbb {Z}}}

\newcommand{\CE}{{\mathcal {E}}} \newcommand{\CF}{{\mathcal {F}}}
 \newcommand{\CH}{{\mathcal {H}}}
 
\newcommand{\CK}{{\ssub{\mathcal {K}}!}} \newcommand{\CL}{{\mathcal {L}}}
 
\newcommand{\CO}{{\mathcal {O}}} 
 \newcommand{\CR}{\ssub{\mathcal {R}}!}
\newcommand{\CS}{{\mathcal {S}}}

\newcommand{\nC}{\ssub{C}!}

\newcommand{\fe}{{\mathfrak{e}}}

\newcommand{\fm}{{\mathfrak{m}}}

\newcommand{\fs}{{\mathfrak{s}}} \newcommand{\ft}{{\mathfrak{t}}}

\newcommand{\fC}{{\mathfrak{C}}}

 \newcommand{\fL}{{\mathfrak{L}}}

\newcommand{\fS}{{\mathfrak{S}}} 
 
 \newcommand{\fX}{{\mathfrak{X}}}

\newcommand{\su}{\mathsf{u}}
\newcommand{\sv}{\mathsf{v}}
\newcommand{\sw}{\mathsf{w}}
\newcommand{\sC}{\mathsf{C}}

\newcommand{\Lip}{\mathrm{Lip}}

\newcommand{\valfield}{\mathbb{K}}
\newcommand{\valring}{\mathbb{K}^{\circ}}
\newcommand{\resfield}{\kappa}

\DeclareMathOperator{\DL}{DL}

\newcommand{\olPi}{\ol{\Pi}}

\makeatletter
\let\@oldinfty\infty
\newcommand{\@sminfty}{{\hspace{-.02cm}\scaleto{\@oldinfty}{2.8pt}}} % infinity used as index
\newcommand{\@smsminfty}{_{\hspace{-.04cm}\scaleto{\@oldinfty}{2.8pt}}} % infinity used as second index
\renewcommand{\infty}{{\mathchoice%
  {\displaystyle{\@oldinfty}}%
  {\textstyle{\@oldinfty}}%
  {\scriptstyle{\@sminfty}}%
  {\scriptscriptstyle{\@smsminfty}}}
}
\makeatother

\begin{document}
\title{On solutions to tropical Monge--Ampère equations}

\allowdisplaybreaks

\author{Omid Amini}
\address{CNRS - LMO, Université Paris-Saclay, Orsay, France}
\email{\href{omid.amini@universite-paris-saclay.fr}{omid.amini@universite-paris-saclay.fr}}

\author{Matthieu Piquerez}
\address{Nantes, France}
\email{\href{matthieupi2@yahoo.fr}{matthieupi2@yahoo.fr}}

\author{Lyuhui Wu}
\address{\shortstack[l]{IMJ--PRG, Sorbonne Universit\'e, Paris, France\\
LMO, Université Paris-Saclay, Orsay, France}}
\email{\href{lyuhui.wu@imj-prg.fr}{lyuhui.wu@imj-prg.fr}}

\date{October 15, 2026}

\begin{abstract}
This paper develops convex analysis on tropical varieties and investigates the properties of the tropical \MongeA equations. 

We start by introducing notions of plurisubharmonic functions and \kahler forms on arbitrary tropical varieties. We show that many classical results of pluripotential theory extend to this setting, including an analogue of Demailly's regularization theorem. Next, we study the solutions to the tropical \MongeA equations. While we establish their uniqueness, we show that the existence is not guaranteed in general. More precisely, we prove that the tropical analogue of the Calabi--Yau theorem fails for any pair $(X, \theta)$ consisting of a compact tropical variety $X$ of dimension at least two satisfying a mild genericity assumption, and a tropical \kahler form $\theta$ on $X$. In particular, there exist measures compactly supported in the open part of $X$ that admit no solution. In contrast, for tropical varieties that are regular in codimension one, we show the existence of solutions. Finally, via strongly faithful tropicalizations whose existence is established in this paper, we relate the tropical solutions, when they exist, to the solutions of non-Archimedean \MongeA equations on Berkovich spaces. 
\end{abstract}

\maketitle

\setcounter{tocdepth}{1}

\tableofcontents

\newpage

\section{Introduction} 

The study of convex analysis and \MongeA equations on $\mathbb{R}^n$ has a long and fruitful history, with applications across many branches of mathematics. On the other hand, the theory of plurisubharmonic (psh) functions, viewed as the complex counterparts of convex functions, and of complex \MongeA equations on complex manifolds has proved fundamental in complex and Kähler geometry. 
The purpose of this paper is to develop and study a theory of convex analysis and \MongeA equations on tropical varieties.

\vspace{.2cm}

We provide a broader background to our work before going into more details. Let $M$ be a smooth projective complex variety of dimension $d$.  Let $\omega$ be a \kahler form on $M$ of volume $\deg(\omega^d)=\int_M \omega^d$, and let $\mu$ be a strictly positive smooth probability measure on $M$. The celebrated Calabi--Yau theorem asserts that the complex \MongeA equation $(\omega+dd^c \varphi)^d=\deg(\omega^d) \mu$ has a unique solution $\varphi \in C^{\infty}(M)$, up to an additive constant, such that $\omega+dd^c \varphi \ge 0$. A function satisfying this positivity condition  is called an $\omega$-psh function on $M$. Geometrically, $\omega+dd^c\varphi$ constitutes a new \kahler form on $M$ within the \kahler class $[\omega]$. Consequently, solving this \MongeA equation is equivalent to finding a \kahler metric with a prescribed volume form, which in turn prescribes its Ricci curvature. A prominent consequence of the Calabi--Yau theorem is the existence of Ricci-flat \kahler metrics on Calabi--Yau manifolds 
 (\textit{i.e.} varieties with a trivial canonical bundle).

Understanding the degeneration of \kahler metrics, particularly Ricci-flat metrics, is a central problem in \kahler geometry. To make this precise, let $\fX \to \BC \setminus \{0\}$ be a smooth projective morphism, so that each fiber $\fX_t$ is a $d$-dimensional projective manifold for $t \in \BC \setminus \{0\}$. Let $\fL$ be an ample line bundle on $\fX$. For each $t$, we equip the fiber $\fX_t$ with a (Ricci-flat) \kahler metric $\omega_t$ in the class $c_1(\fL_t)$. A fundamental question is to determine the asymptotic behavior of $\omega_t$ as $t \to 0$.

A powerful approach to studying the limits of degenerating \kahler metrics relies on non-Archimedean geometry. This perspective was originally introduced by Kontsevich and Soibelman~\cite{KS06} in the context of the SYZ conjecture, and was subsequently developed by Boucksom, Favre, and Jonsson~\cites{BFJ15, BJ17}. 
To characterize these limits, Kontsevich and Soibelman proposed studying the non-Archimedean polarized Calabi--Yau variety $(\varX^{\an}, \varL^{\an})$ over the complete non-Archimedean field of Laurent series $\valfield = \mathbb{C}((\mathsf{t}))$. This space is obtained as the Berkovich analytification of the polarized $\valfield$-variety $(\varX, \varL)$ induced by the generic fiber of the family $(\fX, \fL)$. By a theorem of Payne \cite{Pay09}, the Berkovich space $\varX^{\an}$ can be realized as the inverse limit of all its tropicalizations, where each tropicalization is a finite integral polyhedral complex endowed with a weight structure. In this sense, $\varX^{\an}$ can be naturally viewed as an ``infinite'' polyhedral space.

Following an idea of Shou-Wu Zhang~\cite{Zha95}, non-Archimedean psh functions are defined using the algebraic geometric notion of nefness, which, via the toric dictionary, serves as an analogue of convexity. A non-Archimedean \MongeA operator was introduced by Chambert-Loir \cite{CL06}, and a non-Archimedean analogue of the Calabi--Yau theorem was subsequently established by Boucksom, Favre, and Jonsson \cite{BFJ15}. Specifically, they proved that certain non-Archimedean \MongeA equations admit unique solutions (up to additive constants) within the class of non-Archimedean psh functions. In particular, for a degenerating family $\fX$ of Calabi--Yau manifolds, there exists a non-Archimedean Calabi--Yau potential $\varphi_{\mathrm{CY}, \mathrm{NA}}$ on $\varX^{\an}$. 

It was later shown by Yang Li \cite{Li25C0convergence} that the Archimedean Calabi--Yau potentials $\varphi_{\mathrm{CY},t}$ converge in a suitable sense to $\varphi_{\mathrm{CY}, \mathrm{NA}}$. Furthermore, when $\fX$ is a maximally degenerating family, this non-Archimedean potential satisfies a ``comparison property'' \cite{Li26Valuative}, which ultimately leads to a version of the SYZ conjecture \cite{Li23WeakComparison}.

However, because they are defined on an ``infinite polyhedral space'', non-Archimedean psh functions are extremely difficult to describe explicitly. Therefore, despite the relatively complete non-Archimedean picture, it remains beneficial from a complex geometric perspective to study convex functions and \MongeA equations on a single tropicalizaliation, which is a finite polyhedral shadow of the Berkovich space, as a way to seek a more explicit description of the limit metrics. The tropical \MongeA equations have previously been studied for specific examples of tropical hypersurfaces \cites{HJMM22,Li23Fano, AH23} (although under disguised languages). The goal of this paper is to extend this study to general tropical varieties.

\subsection{Convex analysis on tropical varieties}
In this paper, a tropical variety is defined as an extended polyhedral space endowed with a tropical weight function that satisfies the balancing condition (see Section \ref{subsec:tropical-varieties}). 
More precisely, a polyhedral chart is an open subset of the canonical compactification\footnote{For a simplicial cone $\sigma \cong [0, +\infty)^k$, for the coordinates given by the generators of the rays of $\sigma$, 
its canonical compactification is isomorphic to $[0, +\infty]^k$, obtained by allowing the coordinates to take the value infinity. 
The canonical compactification of a simplicial fan is obtained by gluing the canonical compactifications of its cones along the common faces.} of a simplicial rational fan, and an extended polyhedral space is a Hausdorff space obtained by gluing polyhedral charts via $\Z$-linear maps. 
A tropical weight function on an extended polyhedral space assigns a positive integer to each top-dimensional face.  It satisfies the balancing condition if the weighted sum of the normal vectors in the star fan of each codimension-one face vanishes.
 For a tropical variety $X$, we use $\xo$ to denote its open part which is, by definition, the union of the open parts of the polyhedral charts of $X$\footnote{For an open subset $U$ in the canonical compactification $\comp{\Sigma}$ of a simplicial fan $\Sigma$, its open part is the intersection of $U$ with the support of $\Sigma$.}.

In order to be able to work on tropical \MongeA equations, the first task is to specify a proper class of convex functions on tropical varieties. 
When the tropical variety $X$ is embedded in $\R^n$, a natural choice to consider is the restriction of convex functions in the ambient space, the so-called class of ``globally convex functions''. However, this class of convex functions turns out to be too restrictive for our purpose. Indeed, in the setting of the metric SYZ conjectures for Calabi--Yau hypersurfaces in toric Fano varieties, 
the optimal transport problem introduced and studied in \cites{HJMM22, Li23Fano, AH23} turns out to be equivalent to solving a certain tropical \MongeA equation inside the class of globally convex functions mentioned above. 
 However, in \cite{AH23}, the authors showed that for many examples (actually, most examples they examined), this optimal transport problem does not admit any solution. This, along with recently developed Hodge theoretic results in the combinatorial and tropical geometric~\cite{AHK, AP-hodge-fan,AP-tht} settings, motivated us to introduce a larger class of convex functions. 
\begin{definition} [Definition \ref{def:locally_convex_paper_2}] \label{introdef:locally_convex_functions}
  Let $X$ be a tropical variety, and let $U$ be an open subset of $X$. We say that a function $u \colon U \to \R$ is \emph{locally convex} on $U$ if
    \begin{enumerate}
      \item $u \in C^0(U)$.
      \item For any $x \in U$, there exist an open neighborhood $U'$ of $x$ and an affine function $l \colon U' \to \R$ such that $u(x) = l(x)$ and $u(y) \ge l(y)$ for all  $y \in U'$.
    \end{enumerate}
  We use $\LConv(U)$ to denote the space of locally convex functions on $U$. 
\end{definition}
A locally convex function is called \emph{strictly locally convex} if in addition we have that $u(y)>l(y)$ for all $y \in U' \setminus \{x\}$ in Definition \ref{introdef:locally_convex_functions}\,(2). 

\smallskip

When $X=\R^n$, the above class of convex functions coincides with the class of usual convex functions (with finite values). In general, we show that over the open part $\xo$ of the tropical variety, this class of convex functions is equivalent to a class of convex functions introduced in \cite{BBS22} based on a notion of polyhedral convex combination.

\vspace{.2cm}

In \cite{APW1}, we introduce a formalism of mixed differential forms. In this formalism, mixed $(0,0)$-forms are just piecewise smooth functions\footnote{A function $f$ is called piecewise smooth if it is continuous, and there exists a polyhedral structure such that the restriction of $f$ on each face is smooth.}. In general, this differential calculus incorporates both discrete and continuous features. The discrete contribution is encoded in the Chow rings of local fans (isomorphic to the Chow rings of the associated toric varieties), while the continuous part arises from differential calculus of superforms on real vector spaces developed by Largerberg~\cite{Lag12}. The mixed differential operators $\dm, \dmb$, as well as the wedge products for mixed differential forms are defined, and enjoy the nice properties of their complex-geometric analogues. The framework of mixed differential calculus provides a generalization of smooth superforms of~\cites{Lag12,CLD} and $\delta$-forms of~\cites{GK17,Mih24,BGK25}. In particular, the differential operators $\dm, \dmb$  are the generalization of the operators $d', d''$ in the setting of smooth superforms and $\delta$-forms.

\vspace{.2cm}

Based on the class of locally convex functions, and using the mixed differential calculus,  we give the definition of  \kahler forms and psh functions on tropical varieties. 

A real closed mixed $(1,1)$-form $\omega$ on $X$ is a mixed $(1,1)$-form satisfying the following condition: for every $x \in X$, there exist an open neighborhood $U$ of $x$ and a piecewise smooth function $h$ on $U$ such that $\omega=\dm\dmb h$\footnote{Readers who prefer not to delve into our formalism of mixed differential forms may instead view $\omega=d'd''h$ as a real closed $(1,1)$-supercurrent (resp. $(1,1)$-$\delta$-current) in the sense of \cite{CLD} (resp. \cite{GK17}). This does not affect the understanding of any of the main results presented in this paper.}. In this case, $h$ is called a \emph{local potential} of $\omega$.

\begin{definition} [Definition \ref{def:kahler_forms_and_strictly_convex_functions}]
  A \emph{\kahler form}  $\theta$ on a compact  tropical variety $X$ is a real closed mixed $(1,1)$-form $\theta$ such that any local potential $h$ of $\theta$ is strictly locally convex (Definition \ref{def:kahler_forms_and_strictly_convex_functions}).
\end{definition}

If we weaken the condition on the local potentials by requiring them to be locally convex (instead of strictly locally convex), we say that the real closed mixed (1,1)-form $\theta$ is \emph{semipositive} (Definition \ref{def:semipositive-form_paper_2}).

\begin{definition} [Definition \ref{def:psh}]
  Let $\theta$ be a \kahler form on $X$, and let $\varphi$ be a (real-valued) continuous function on $X$. We say that  $\varphi$  is a \emph{continuous $\theta$-psh function} if for any $x \in X$, there exists a neighborhood $U$ of $x$ and a local potential $h$ of $\theta$ on $U$ such that $h+\varphi \in \LConv(U)$. We use $\CPSH_{\theta}(X)$ to denote the space of $\theta$-psh functions on $X$.
\end{definition}

\vspace{.15cm}

The class of continuous $\theta$-psh functions introduced above has many nice properties. For example, we show that is satisfies the so-called envelope property, \textit{i.e.} the upper-semicontinuous regular upper envelope of any bounded above family of $\theta$-psh functions  is again a continuous $\theta$-psh function. If $X$ is a compact tropical variety, we also show that the class of continuous $\theta$-psh functions is uniformly Lipschitz on any compact subset of $\xo$. 

\smallskip

A tropical variety is called \emph{\prekahler}if it admits a \kahler form. A continuous function $f$ on $X$ is called \emph{$\theta$-piecewise linear} (Definition \ref{def:omega-PL}) if there exists a polyhedral structure on $X$ such that, for any local potential $h$ of $\theta$, the restriction of $h+f$ on each face is affine. The class of  functions on $X$ that are both  $\theta$-psh and $\theta$-piecewise linear plays the role of a “nice” subclass of tropical $\theta$-psh functions, analogous to the class of smooth $\theta$-psh functions in complex pluripotential theory (\cite{Dem92}), or the class of $\theta$-psh PL functions in non-Archimedean pluripotential theory (\cites{BFJ16, BJ22, BJ25}). Our first main theorem  is a regularization theorem for $\theta$-psh functions on compact \prekahler tropical varieties, which can be viewed as a tropical analogue of Demailly's regularization theorem (\cites{Dem92, BK07}). 

\begin{theoremA} [Theorem \ref{thm:LConv_regularization_PL}] \phantomsection\label{mainthm:regularization}
  Let $X$ be a compact \prekahler tropical variety. Let $\theta$ be a \kahler form on $X$.  Then, for any continuous $\theta$-psh function $\varphi$ on $X$, there exists a   sequence  of  functions $\{\varphi_i\}_{i \ge 1}$ converging uniformly to $\varphi$ such that $\varphi_i$ is $\theta$-piecewise linear and $\theta$-psh for each  $i \ge 1$. 
\end{theoremA}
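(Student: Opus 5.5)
The plan is to imitate the non-Archimedean and complex strategy: approximate $\varphi$ from above by maxima of finitely many ``local affine supports'', corrected by $\theta$, and glue them by a regularized maximum.

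\textbf{Step 1: a family of local candidates.} First reduce to $\varphi$ strictly $\theta$-psh. Replace $\varphi$ by $(1-\epsilon)\varphi$; this is strictly $\theta$-psh, since $h+(1-\epsilon)\varphi=(1-\epsilon)(h+\varphi)+\epsilon h$ and $h$ is strictly locally convex. It also converges uniformly to $\varphi$ because $X$ is compact and $\varphi$ is bounded.

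\textbf{Step 2: building $\theta$-PL competitors.} Fix $\delta>0$. For each point $x\in X$ choose a chart neighbourhood $U_x$ and a local potential $h_x$ of $\theta$ on $U_x$. We may take $h_x$ to be $\theta$-piecewise linear compatible data, after refining to a polyhedral structure on which $\theta$ has piecewise linear local potentials, which is possible via the mixed-form formalism. Strict local convexity of $h_x+\varphi$ at $x$ gives an affine $l_x$ with $l_x(x)=h_x(x)+\varphi(x)$ and $l_x<h_x+\varphi$ on $\overline{V_x}\setminus\{x\}$, for a smaller neighbourhood $V_x\Subset U_x$. By strictness and compactness of $\partial V_x$ there is a margin $\eta_x>0$ with $l_x+\eta_x< h_x+\varphi$ on $\partial V_x$. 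By continuity of $\varphi$, on a smaller ball $W_x\ni x$ we have $l_x+\eta_x> h_x+\varphi-\delta$; shrink $\eta_x<\delta$ as well. Define $g_x:=l_x+\eta_x-h_x$ on $V_x$. Then $g_x+h_x$ is affine, so $g_x$ is $\theta$-PL. We also have $g_x<\varphi$ near $\partial V_x$, $g_x\le \varphi+\delta$ on $V_x$ (since $l_x\le h_x+\varphi$), and $g_x>\varphi-\delta$ on $W_x$.

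\textbf{Step 3: gluing.} By compactness pick finitely many $x_1,\dots,x_N$ with the $W_{x_j}$ covering $X$. Set
\[
\psi:=\max\bigl(\varphi-2\delta,\ \max_{j: y\in V_{x_j}} g_{x_j}(y)\bigr).
\]
More precisely, put $\tilde g_j=g_{x_j}$ on $V_{x_j}$ and $-\infty$ elsewhere. Since $g_{x_j}<\varphi-2\delta$ is not guaranteed near the boundary, first choose $\eta_x$ and $V_x$ so that $l_x+\eta_x<h_x+\varphi-2\delta$ near $\partial V_x$, using the strict margin and taking $\delta$ small relative to it. This is done by fixing $V_x$ first, then choosing $\delta$ uniformly. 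Then $\Phi:=\max_j \tilde g_j$ is continuous, because each $\tilde g_j$ is dominated by the other terms near $\partial V_{x_j}$ (as in the standard gluing lemma for psh functions). $\Phi$ is $\theta$-psh, since locally it is a maximum of functions that become affine after adding $h$, hence locally convex. It is $\theta$-PL, because a finite maximum of PL functions is PL on a common refinement. Finally $\varphi-\delta<\Phi\le \varphi+\delta$ because the $W_{x_j}$ cover $X$.

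\textbf{Main obstacle.} The delicate part is the order of quantifiers, so that the boundary margin beats the approximation error uniformly over the finite cover, especially near the boundary strata $X\setminus \xo$. There, affine functions on charts of the canonical compactification must be allowed to take finite values at infinity, and strict convexity must provide a genuine margin. I would handle this by proving a uniform lemma: for strictly $\theta$-psh $\varphi$ and each $x$, there are $V_x$ and $c_x>0$ such that for every $y$ near $x$ a supporting affine $l_y$ of $h_x+\varphi$ at $y$ satisfies $l_y<h_x+\varphi-c_x$ on $\partial V_x$. This follows from compactness of the chart and the strict convexity of $h$. The cover and $\delta<c_x/2$ are then chosen after the lemma.
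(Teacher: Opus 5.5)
Your reduction to a strictly $\theta$-psh $\varphi$ via $(1-\epsilon)\varphi$ is the paper's. So is your local building block: raise a strict local affine support $l_x$ by a small constant and subtract the potential. The gluing in Step~3, however, has a genuine gap in the order of quantifiers. Your covering sets $W_x$, on which $g_x>\varphi-\delta$, depend on $\delta$, so $\delta$ must be fixed before the finite subcover is extracted. You then need $2\delta+\eta_x<m_x:=\min_{\partial V_x}(h_x+\varphi-l_x)$ simultaneously for all $x\in X$, and nothing gives $\inf_x m_x>0$. Indeed, $l_x$ is only guaranteed to be a support near $x$ inside its open star, so $V_x$ and $m_x$ may degenerate as $x$ approaches lower-dimensional faces.

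The uniform lemma you propose to repair this does not follow from compactness, and it is false in general. Local convexity does not let you extend a support at a nearby point $y$ to an affine function near $x$ that stays below $h_x+\varphi$. Here is a counterexample. Take the balanced fan in $\mathbb{R}^2$ with rays spanned by $v_1=(1,0)$, $v_2=(0,1)$, $v_3=(-1,-2)$, $v_4=(-1,1)$ and weights $2,1,1,1$. Put $u(tv_2)=t+t^2$ and $u(tv_i)=t^2$ for $i\neq 2$. Then $u$ is strictly locally convex. Now let $L$ be affine on a neighbourhood of $0$ with $L(sv_2)=u(sv_2)$ and $L\le u$ near $sv_2$. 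Then $L(0)=-s^2$ and $L$ has slope $1+2s$ along $v_2$. Since $v_2=v_1+v_4$, this gives $L(tv_1)+L(tv_4)=-2s^2+t(1+2s)>2t^2=u(tv_1)+u(tv_4)$ for any fixed $t\in(0,\tfrac12)$ once $s$ is small. So no such $L$ lies below $u$ on a fixed neighbourhood of $0$.

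The missing idea is to compare with $\varphi$ itself rather than with $\varphi-\delta$ and $\varphi-2\delta$. For each $x$ separately, pick $0<\eta_x<\min(\epsilon,m_x)$ and set $W_x:=\{z\in V_x:\varphi(z)+h_x(z)<l_x(z)+\eta_x\}$. This set contains $x$ for every $\eta_x>0$, and $\overline{W_x}\subset V_x$, so the finite subcover can be extracted with no uniform constant at all. Near $\partial V_{x_j}$ you then have $g_{x_j}<\varphi<g_{x_k}$ for any $k$ with $z\in W_{x_k}$. Hence $\Phi=\max_j\tilde g_j$ is continuous and is locally a finite maximum of functions that become affine after adding a local potential, so it is $\theta$-psh and $\theta$-PL. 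It also satisfies $\varphi<\Phi\le\varphi+\epsilon$.

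This is precisely the paper's argument. Proposition~\ref{prop:construction_theta_psh_functions_with_given_local_properties} with $f=0$ produces $\psi_y=\sup\{\varphi,\,l_y+\delta_y-h\}$ near $y$, glued to $\varphi$ by Lemma~\ref{lem:construction_sup_omega_psh_functions}. Proposition~\ref{prop:construction_theta_psh_PL_functions} then takes $\max_j\psi_{y_j}$ over a finite cover by the sets $\{\varphi+h<l_{y_j}+\delta_{y_j}\}$, on which the $\varphi$-term is never active.

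Also drop your remark about choosing piecewise linear local potentials for $\theta$. Potentials of a K\"ahler form are strictly locally convex, hence never PL. You do not need the remark anyway: $g_x+h_x$ being affine already makes $g_x$ $\theta$-PL.
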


\subsection{Tropical \MongeA equations}
Let $X$ be a compact \prekahler tropical variety of dimension $d$ endowed with a \kahler form $\theta$. 

In \cite{APW1}, we use the formalism of mixed differential calculus mentioned above to define tropical \MongeA measures for piecewise smooth functions. 
For a piecewise smooth $\theta$-psh function $\varphi$, we define the \MongeA measure $\MA_{\theta}(\varphi)$ to be the nonnegative Radon measure associated to the top degree form $(\theta+\dm \dmb \varphi)^{\wedge d}$.
Using  the regularization theorem (Theorem~\hyperref[mainthm:regularization]{A}) and a tropical version of the Chern--Levine--Nirenberg inequality, we extend the definition of \MongeA measures to all continuous $\theta$-psh functions.

Let now $\mu$ be a  compactly supported probability measure on $\xo$. The tropical \MongeA equation is the following. 
\begin{equation}  \label{equation:tropical_Monge_Ampere}
      \MA_{\theta}(\varphi)=\deg(\theta^d) \mu \text{   on } X, \qquad \varphi \in \CPSH_{\theta}(X).
\end{equation}

Our second main theorem is the following. 
\begin{theoremB} [Uniqueness, Theorem \ref{thm:uniqueness}] \phantomsection\label{mainthm:uniqueness}
  Let $X$ be a compact \prekahler tropical variety of dimension $d$ equipped with a \kahler form $\theta$\,\footnote{Note that we actually prove Theorem \hyperref[mainthm:uniqueness]{B} under the assumption that $\theta$ is a ``strongly semipositive form'' (see Definition \ref{def:strongly_semipositive_form} and Remark \ref{rmk:kahler_is_strong_semipositive}), which is a weaker condition than $\theta$ being a \kahler form. The same for Theorem \hyperref[mainthm:failure_of_existence]{C} and Theorem \hyperref[mainthm:existence_regular]{D} below.}. Assume that Equation \eqref{equation:tropical_Monge_Ampere} admits a solution. Then, the solution is unique up to adding a constant function.
\end{theoremB}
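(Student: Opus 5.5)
I would follow the classical route for uniqueness in complex pluripotential theory, the argument of Błocki and of Kołodziej's school, in the form used by Boucksom--Favre--Jonsson in the non-Archimedean setting. It relies on integration by parts and on the positivity of mixed terms. Let $\varphi,\psi\in\CPSH_\theta(X)$ both solve \eqref{equation:tropical_Monge_Ampere}, and set $u=\varphi-\psi$. Write $\theta_\varphi=\theta+\dm\dmb\varphi$ and $\theta_\psi=\theta+\dm\dmb\psi$. The goal is to show that the \enquote{Dirichlet energy} of $u$ with respect to the mixed forms vanishes, and then that $u$ is constant.

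\textbf{Step 1 (reduction to the $\theta$-PL case).} By Theorem~\hyperref[mainthm:regularization]{A}, approximate $\varphi$ and $\psi$ uniformly by $\theta$-PL $\theta$-psh functions. For these, all the mixed products $\dm u\wedge\dmb u\wedge\theta_\varphi^{j}\wedge\theta_\psi^{d-1-j}$ are defined in the mixed calculus of \cite{APW1}. Using the Chern--Levine--Nirenberg type inequality and the continuity of $\MA_\theta$ under uniform convergence, extend the integration by parts formula
\begin{equation*}
\int_X v\,\dm\dmb w\wedge T=-\int_X \dm v\wedge\dmb w\wedge T,
\end{equation*}
valid for $T$ a product of $\theta_\varphi$'s and $\theta_\psi$'s, to continuous $\theta$-psh functions. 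Here compactness of $X$ ensures there are no boundary terms. Boundary contributions near the sedentary part should vanish because the potentials are continuous up to infinity.

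\textbf{Step 2 (energy identity).} Telescoping gives $0=\int_X u\,(\theta_\psi^d-\theta_\varphi^d)=\int_X u\,\dm\dmb(\psi-\varphi)\wedge\sum_j\theta_\varphi^j\wedge\theta_\psi^{d-1-j}$. By Step 1 this equals $\sum_j\int_X\dm u\wedge\dmb u\wedge\theta_\varphi^j\wedge\theta_\psi^{d-1-j}$. Each term is nonnegative, since $\theta_\varphi,\theta_\psi$ are semipositive and $\dm u\wedge\dmb u$ is a positive $(1,1)$-form. Hence every term vanishes, in particular $\int_X\dm u\wedge\dmb u\wedge\theta_\psi^{d-1}=0$. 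To bootstrap to $\int\dm u\wedge\dmb u\wedge\theta^{d-1}=0$, I would use Błocki's inductive trick. Replace one factor $\theta_\psi$ by $\theta=\theta_\psi-\dm\dmb\psi$, integrate by parts again, and apply Cauchy--Schwarz for the positive semidefinite pairing $(\alpha,\beta)\mapsto\int\alpha\wedge\bar\beta\wedge T$. After finitely many steps this yields $\int_X\dm u\wedge\dmb u\wedge\theta^{d-1}=0$.

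\textbf{Step 3 (from zero energy to constancy).} This is where the \kahler hypothesis (or strong semipositivity) enters, and I expect it to be the main obstacle. On each top-dimensional face $\sigma$, the smooth part of the density of $\dm u\wedge\dmb u\wedge\theta^{d-1}$ is bounded below by $c\,|\nabla u|^2$ times the Lebesgue measure. This uses the strict convexity of the local potentials of $\theta$, which makes $\theta^{d-1}$ \enquote{strictly positive} on the relevant tangent directions. The subtlety is that $\theta$ is only a mixed form: its local potentials may be piecewise linear, so the continuous part of $\theta$ may vanish on faces and its strictness lives in the discrete Chow-ring part. I would therefore treat the mixed decomposition explicitly. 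The contributions supported on codimension-one faces and lower, computed via the local fan Chow rings and the Hodge--Riemann type positivity of \cite{AP-hodge-fan}, should control the jumps of the slopes of $u$ across faces. Once the gradient of $u$ vanishes almost everywhere on each top face, $u$ is locally constant on $\xo$. Since $\xo$ is connected, or by arguing on each connected component of $X$ together with the normalization, and by density of $\xo$ in $X$ and continuity of $u$, $u$ is constant. If the mixed positivity argument proves too delicate, the fallback is a comparison principle approach: show $\int_{\{\varphi<\psi\}}\MA_\theta(\psi)\le\int_{\{\varphi<\psi\}}\MA_\theta(\varphi)$ via the envelope property and locality of $\MA_\theta$ in the plurifine sense, then apply it to $\varphi$ and $(1-\epsilon)\psi+\epsilon\rho$ for a strictly $\theta$-psh $\rho$, as in Kołodziej's proof.
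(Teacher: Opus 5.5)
Your Steps~1--2 follow the same B{\l}ocki/BFJ route as the paper's Lemma~\ref{lem:holder_continuity_J-functional}, namely Cauchy--Schwarz plus integration by parts. Step~3, however, has a genuine gap: the single identity $\int_X\dm u\wedge\dmb u\wedge\theta^{d-1}=0$ cannot be turned into constancy of $u$ by pointwise positivity of $\theta^{d-1}$.

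Here ``K\"ahler'' only means that the piecewise smooth local potentials are strictly locally convex. That gives no lower bound on the cofactor matrix of their Hessians; already in one variable, $h''$ may vanish on a fat Cantor set. So the estimate ``density $\ge c\,|\nabla u|^2$'' on top faces is an extra hypothesis, not a consequence. If you did assume positive-definite Hessians in the interiors of top faces, you would essentially get $\nabla u=0$ a.e.\ there. Continuity of $u$ and connectedness of $\xo$ would then finish, with no ``jump control'' needed.

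In the strongly semipositive generality in which the theorem is actually proved (see the footnote), e.g.\ for simplicial forms, the continuous part of $\theta$ vanishes on every top face. Then $\theta^{d-1}$ is a one-dimensional Minkowski weight, and your energy is just $\sum_e w_e\int_e|\partial_e u|^2$ over the edges in its support. The Chow-ring components only ever see tangential derivatives of $u$ along lower-dimensional faces. Hodge--Riemann-type positivity makes each term nonnegative, but says nothing about where the weights $w_e$ vanish. So vanishing against one fixed cycle gives no information on $u$ away from that cycle's support.

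Your comparison-principle fallback is only a sketch. Turning the comparison principle into uniqueness for arbitrary measures (here typically atomic) needs something like Dinew's mixed inequality, which you do not provide.

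The missing idea is to use the freedom in the class rather than the positivity of $\theta$ itself. The Cauchy--Schwarz argument actually gives $\int_X u\,\dm\dmb f\wedge(\theta+\dm\dmb\widetilde\varphi)^{d-1}=0$ for every PL function $f$ and every $\widetilde\varphi\in\PSH_\theta\cap\PL_\theta(X)$. The hypothesis on $\theta$ is then used only through strong semipositivity (Definition~\ref{def:strongly_semipositive_form}), as follows.

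Fix an arbitrary polyhedral structure and a finite edge $e$, and work in a chart where $e$ lies on a ray. Project along the line spanned by $e$, and take a convex homogeneous PL function $g'$ on the projected fan with $(\dm\dmb g')^{d-1}(\{0\})>0$ (Lemma~\ref{lem:PL_functions_with_strict_positive_MA_measures}). Strong semipositivity lets you realize $C\,g'\circ\pi+l$ as $\widetilde\varphi_x+h$ near points $x$ of $e$. Averaging finitely many such $\widetilde\varphi_x$ gives a form $\theta'$ whose $(d-1)$-st power is a Minkowski $1$-weight positive on every finite edge.

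Now choose $f$ facewise affine on a refinement where $\theta'$ is simplicial, interpolating $u$ at its vertices. The vanishing becomes a discrete Dirichlet energy $-\sum_{e'}w_{e'}\,l_{e'}^{-1}\bigl(f(a)-f(b)\bigr)^2=0$ on a connected graph, so $u$ is constant on all vertices. Since the polyhedral structure is arbitrary, continuity gives constancy of $u$ on $X$, and no pointwise gradient estimate is ever needed.
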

Regarding the existence of solutions, in dimension one, Equation \eqref{equation:tropical_Monge_Ampere} reduces to a Poisson equation on a metric graph, and it is well-known that it always admits a solution when $X$ is locally matroidal (see for example \cite{Thu05}). However, somehow surprisingly, we proved that it fails for almost every tropical variety of dimension at least $2$.  

\begin{theoremC} [Failure of existence in higher dimensions, Theorem \ref{thm:global_failure_of_orthogonality}] \phantomsection\label{mainthm:failure_of_existence}
  Let $X$ be a compact \prekahler tropical variety of dimension $d$ equipped with a \kahler form $\theta$. Suppose $\dim X \ge 2$ and $X$ satisfies some genericity condition\footnote{That is, the condition stated in Definition \ref{condition:genericity} is satisfied by the star fan of each codimension two face of some polyhedral structure of $X$.}. Then, there exists a compactly supported probability measure $\mu$ on $\xo$ such that Equation~\eqref{equation:tropical_Monge_Ampere} does not admit any solution.
\end{theoremC}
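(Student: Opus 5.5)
The plan is to localize the obstruction at a codimension-two face and then globalize it. Fix a polyhedral structure on $X$ and a codimension-two face $\eta$ in the open part $\xo$ whose star fan $\Sigma^\eta$ satisfies the genericity condition of Definition~\ref{condition:genericity}. Near a relative interior point $x$ of $\eta$, $X$ is locally $\Sigma^\eta\times\R^{d-2}$. Any $\theta$-psh $\varphi$, with a local potential $h$, gives $u=h+\varphi$ locally convex. I aim for a rigidity statement: if $\MA_\theta(\varphi)$ equals a prescribed smooth, strictly positive density on a small ball around $x$, then $u$ must satisfy a linear constraint on $\Sigma^\eta$. Concretely, I expect the tangent function of $u$ in the transversal fan directions to be forced to lie in a proper subspace. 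The genericity condition should be exactly what prevents this constraint from being automatically satisfied.

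The key steps, in order, are as follows.

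\textbf{Step 1: decomposition of the Monge--Ampère measure.} Using the mixed calculus, split $\MA_\theta(\varphi)$ near $x$ into three pieces: an absolutely continuous part on top faces, a part supported on codimension-one faces, and a part supported on $\eta$. Each singular part is given by intersection numbers in the Chow ring of the local fan, paired with the "jump'' of the gradient of $u$. If $\mu$ is chosen absolutely continuous with respect to Lebesgue measure on the top faces, all singular parts must vanish.

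\textbf{Step 2: consequences of vanishing.} Vanishing along codimension-one faces forces $u$ to have no kinks across them in a suitable averaged sense. The same then holds for the transverse piecewise linear part of the $\theta$-normalized function. Vanishing on $\eta$, via a Hodge-index / Hodge–Riemann type argument in the Chow ring of $\Sigma^\eta$ (available from \cite{AP-hodge-fan}), should force the class of the transverse part to be "orthogonal'' to a fixed nonzero class. This is the orthogonality failure suggested by the theorem's label.

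\textbf{Step 3: globalization.} Show that the genericity condition on $\Sigma^\eta$ makes this orthogonality incompatible with the fact that the $\theta$-potential $h$ has a fixed strictly convex kink along $\eta$. Since $\varphi$ is continuous and only perturbs $h$, the conclusion would otherwise contradict genericity. This shows: if $\mu$ has smooth positive density on a neighborhood of $x$, then no solution exists. Then choose $\mu$ to be such a normalized measure with compact support in a small ball around $x$ inside $\xo$.

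The main obstacle is Step 2. Solutions are only continuous $\theta$-psh, not piecewise smooth, so the decomposition must survive limits. For this I would use Theorem~\hyperref[mainthm:regularization]{A} together with the Chern--Levine--Nirenberg continuity of $\MA_\theta$ along uniform limits. I would then pass to tangent cones at $x$: blow up $u$ as $u_t(y)=(u(x+ty)-u(x))/t$. By the uniform Lipschitz bound on compact subsets of $\xo$, these have convergent subsequences to a convex conical function on $\Sigma^\eta\times\R^{d-2}$. The Monge–Ampère of this limit must vanish on the apex stratum, because the density of $\mu$ is bounded. This reduces the problem to the fan-level statement about conical convex functions. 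I would prove that statement with the Hodge–Riemann relations on $\Sigma^\eta$ plus the genericity hypothesis.
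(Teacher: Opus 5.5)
\textbf{There is a genuine gap: the rigidity claim in your Steps 2--3 is false.} You claim that if $\MA_{\theta}(\varphi)$ is a smooth, strictly positive density on a ball around $x\in\relint(\eta)$, then $u=h+\varphi$ is forced into a constraint incompatible with (G). This fails.

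Apply Proposition~\ref{prop:construction_theta_psh_functions_with_given_local_properties} with $\varphi=0\in\pCPSH_{\theta}(X)$ and with $f$ the restriction to the chart $\Sigma^{\eta}\times\R^{d-2}$ of the ambient function $|y|^2$. You get $\psi\in\CPSH_{\theta}(X)$ with $\psi+h=Cf+l$ near $x$. Since $f$ extends smoothly to the ambient space, $\dm\dmb f$ is an ordinary Lagerberg superform: its transversal part $\dm_{\perp}\dmb f$ vanishes by the linear relations in the Chow rings of the star fans. So near $x$ the measure $\MA_{\theta}(\psi)=C^d(\dm\dmb f)^d$ is facewise a positive multiple of Lebesgue measure, with no mass on codimension-one faces or on $\eta$. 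Your Steps 1--3 only use the germ of $u$ at $x$, so they would exclude $\psi$, which exists.

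A global instance: in the $U_{3,4}$ example of Section~\ref{subsec:example_failure_of_tropical_Calabi-Yau}, Proposition~\ref{prop:example_weak_kahler_form_on_projective_tropical_variety} gives $\varphi$ with $\varphi+h=\max\{h-1/C,\,f/C\}$. Then $\MA_{\omega}(\varphi)$ is compactly supported in $X_o$, smooth and positive near the generic vertex $0$, and trivially solvable.

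This also shows where Step 3 breaks. $\varphi$ is merely continuous, so it can cancel the kink of $h$ along $\eta$ entirely, and a K\"ahler potential need not have such a kink at all. Your blow-up limits are then linear functions, which have zero apex mass, so no statement about conical convex functions with vanishing apex mass can contradict (G). Note also that the ``orthogonality'' in the label is the BBGZ/BFJ property $\int_X(f-\env_{\theta}(f))\,\MA_{\theta}(\env_{\theta}(f))=0$, not a Hodge--Riemann orthogonality in $A^{\bullet}(\Sigma^{\eta})$.

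\textbf{The missing idea is that the obstruction lives in atoms, and the measures without solutions are discrete.} By Theorem~\ref{thm:equivalence_orthogonality_existence_of_solutions}, it suffices to find a finite collection of pairs $\mathcal{S}'$ with $\Supp\bigl(\MA_{\theta}(\env_{\theta}(f_{\mathcal{S}'}))\bigr)\not\subset S'$.

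The combinatorial input where (G) really enters is Proposition~\ref{prop:existence_of_balanced_rays_in_2d_fan}, proved by a Borsuk--Ulam argument. For $Z=|\Sigma^{\eta}|$ spanning $E$ and connected through codimension one (the disconnected case is easy), there is an $(n-2)$-plane $F$ containing $n-1$ nonnegatively dependent rays of $Z$. Take $\pi\colon E\to E/F$ and $u'$ strictly convex PL on the plane $E/F$, and set $u=u'\circ\pi$.
\begin{itemize}
\item Since (G) gives $\pi(Z)=E/F$, we get $\MA(u)(\{0\})=\MA(u')(\{0\})>0$.
\item Yet $u|_F=0$ and $0$ is a polyhedral convex combination of points $x_i$ on those rays. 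So every locally convex $v$ with $v(x_i)\le u(x_i)$ satisfies $v(0)\le u(0)$ (Proposition~\ref{prop:local_failure_of_orthogonality}).
\end{itemize}
Globalizing via strong semipositivity gives $\psi\in\PSH_{\theta}\cap\PL_{\theta}(X)$ with $\psi=\env_{\theta}(f_{\mathcal{S}'})$ for vertex data $\mathcal{S}'$ avoiding $x$, while $\MA_{\theta}(\psi)(\{x\})>0$.

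A measure supported on $S'$ with atoms larger than those of $\MA_{\theta}(\psi)$ then has no solution. A hypothetical solution, normalized to touch $\psi$ from below on $S'$, contradicts the atom comparison Lemma~\ref{lem:a_local_comparison}. In the $U_{3,4}$ example this gives $\mu=8\delta_a+8\delta_b$.
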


We remark that if $X$ is a projective tropical toric variety $\TP_{\Xi}$,  existence of the solutions holds (see for example \cite[\S\,9.2]{BFJ15}). Note that $\TP_{\Xi}$ does not satisfy the genericity condition stated in Definition \ref{condition:genericity}.  
We conjecture that the tropical analogue of Calabi--Yau theorem holds for $(X, \theta)$ if, and only if, the open part $\xo$ can be written locally as a product of $\R^{d-1}$ and a locally matroidal tropical curve (Conjecture \ref{conj:exact_condition_for_orthogonality_properties}). 
We establish the ``if'' direction of this conjecture in Theorem~\ref{thm:existence_of_solutions_nice_tropical_varieties} by showing the existence of solutions on compact \prekahler tropical varieties that are \emph{regular in codimension one}.

\begin{theoremD}[Existence for tropical varieties regular in codimension one, Theorem~\ref{thm:existence_of_solutions_nice_tropical_varieties}]\phantomsection\label{mainthm:existence_regular}
  Let $X$ be a compact \prekahler tropical variety of dimension $d$ equipped with a \kahler form $\theta$. Assume that $\xo$ is locally modeled as a product of a matroidal tropical curve and $\R^{d-1}$. Then, Equation~\ref{equation:tropical_Monge_Ampere} has a solution for any probability measure compactly supported on $\xo$.
\end{theoremD}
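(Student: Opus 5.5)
We follow the variational approach of Boucksom--Favre--Jonsson \cite{BFJ15}, adapted to the tropical setting. The key input is the \emph{orthogonality property}. For every continuous function $f$ on $X$, consider the $\theta$-psh envelope
\[
\env_\theta(f)=\sup\{\varphi\in\CPSH_\theta(X)\st \varphi\le f\}.
\]
By the envelope property this is a continuous $\theta$-psh function. Orthogonality means that $\MA_\theta(\env_\theta(f))$ is supported on the contact set $\{\env_\theta(f)=f\}$.

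Once orthogonality is available, the argument is standard. Define the Monge--Ampère energy $\E$ on $\CPSH_\theta(X)$ via the usual primitive $\frac{1}{d+1}\sum_j\int\varphi\,(\theta+\dm\dmb\varphi)^j\wedge\theta^{d-j}$. This expression is first defined for $\theta$-piecewise linear functions and then extended using Theorem~\hyperref[mainthm:regularization]{A} and the Chern--Levine--Nirenberg continuity. Then maximize the functional $F(\varphi)=\E(\varphi)-\deg(\theta^d)\int\varphi\,d\mu$ over $\CPSH_\theta(X)$, normalized by $\sup\varphi=0$.

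The maximization proceeds in three steps.
\begin{enumerate}
\item \emph{Compactness.} The uniform Lipschitz bound for $\theta$-psh functions on compact subsets of $\xo$ gives equicontinuity there. Together with the normalization, this gives compactness of normalized $\theta$-psh functions in the topology of uniform convergence on compacts of $\xo$.
\item \emph{Upper semicontinuity.} Since $\mu$ has compact support in $\xo$, the linear term is continuous on this family, and $\E$ is upper semicontinuous. Hence a maximizer $\varphi$ exists.
\item \emph{Euler--Lagrange equation.} For $f$ continuous, differentiate $t\mapsto \E(\env_\theta(\varphi+tf))-\deg(\theta^d)\int(\varphi+tf)\,d\mu$ at $t=0$. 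Orthogonality gives $\frac{d}{dt}\E(\env_\theta(\varphi+tf))=\int f\,\MA_\theta(\varphi)$. Since $F(\env_\theta(\varphi+tf))\ge$ this function of $t$, which is maximal at $t=0$, we obtain $\MA_\theta(\varphi)=\deg(\theta^d)\mu$.
\end{enumerate}

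The whole difficulty therefore lies in proving orthogonality, and this is where the hypothesis on $\xo$ enters. We reduce to the case where $f$ is $\theta$-piecewise linear, and then to $\theta$-piecewise linear $\theta$-psh approximants by Theorem~\hyperref[mainthm:regularization]{A} and monotone continuity of $\MA_\theta$. The key claim is local: $\MA_\theta(\env_\theta(f))$ puts no mass on the open set $\{\env_\theta(f)<f\}$.

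On $\mathbb R^d$ this is the classical fact that a convex envelope is affine wherever it lies strictly below the obstacle; such a function has zero real Monge--Ampère measure there. In our setting, consider a point of $\xo$. It has a neighborhood of the form $\Gamma\times\R^{d-1}$, with $\Gamma$ a star of a matroidal tropical curve, that is, a vertex with primitive directions $e_0,\dots,e_k$ summing to zero. Let $u=h+\env_\theta(f)$ be the local potential, which is locally convex. On the open set where $u$ lies strictly below the obstacle, we show $u$ is maximal in the following sense: any local convex competitor that agrees with $u$ outside a compact set and lies above $u$ must equal $u$. Using the product structure, $u$ restricted to each slice $\Gamma\times\{y\}$ is a convex function on a star graph, and on each leg times $\R^{d-1}$ it is an ordinary convex function on a half-space. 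Maximality forces $u$ to be affine on each leg times $\R^{d-1}$. It also forces the outgoing slopes $\sum_i \partial_{e_i}u$ along the vertex locus to vanish. This is a balanced configuration, because matroidality of $\Gamma$ makes every balanced collection of slopes extend to a genuinely affine function on $\Gamma\times\R^{d-1}$. Thus $u$ is locally affine and $\MA_\theta$ vanishes there.

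The main obstacle is this last step: showing that maximality on $\Gamma\times\R^{d-1}$ yields local affineness. The corresponding statement is exactly what fails under the genericity condition of Theorem~\hyperref[mainthm:failure_of_existence]{C}. We would first try to prove it by a perturbation argument. If the slope-defect measure on the vertex locus were nonzero somewhere, we would construct a small compactly supported convex bump. This bump is piecewise linear in the $\Gamma$-direction and is a tent function in the $\R^{d-1}$-direction. Adding it would strictly increase $u$ while staying below the obstacle, which contradicts the definition of the envelope.
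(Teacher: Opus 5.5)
Your variational framework is the paper's: a maximizer of $\mathrm{E}_{\theta}(\varphi)-\deg(\theta^d)\int\varphi\,d\mu$, then differentiating $t\mapsto \mathrm{E}_{\theta}(\env_{\theta}(\varphi+tf))$ using orthogonality, then the Euler--Lagrange equation. So is your observation that $u=h+\env_{\theta}(f)$ admits no locally convex competitor $v\ge u$ with $\emptyset\neq\{v>u\}$ compactly inside the non-contact set. The gap is the passage from this maximality to $\MA_{\theta}=0$, which rests on false claims.

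\emph{The ``classical fact'' fails on $\R^2$.} A convex envelope need not be affine where it lies strictly below the obstacle. Set $v_0=|x_2|+\max\{-x_1,0,x_1-1\}$ and $F\coloneqq v_0+\min\{\|x\|,\|x-(1,0)\|,1\}$. The largest convex $v$ with $v-|x_1|-|x_2|$ bounded above and $v\le F$ is $v_0$ itself: any such $v$ has $v(0,0),v(1,0)\le 0$ and asymptotic slopes at most those of $|x_1|+|x_2|$. Yet $v_0$ has a crease through the non-contact point $(\tfrac12,0)$, so it is affine on neither sheet of the chart $\R\times\R$ there. Only its Monge--Amp\`ere measure vanishes. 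This is an instance of your setting: on $X=\TP^1\times\TP^1$, where every point of $\xo=\R^2$ is regular in codimension one, these $v$ are exactly $\widetilde h+\PSH_{\theta}(X)$ for the K\"ahler form $\theta=\dm\dmb\widetilde h$ obtained by smoothing $|x_1|+|x_2|$ as in Proposition~\ref{prop:kahler_form_on_tropical_toric_varieties}, and $F-\widetilde h$ is continuous on $X$.

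\emph{The slope claim also fails.} Take $d\ge 2$, a ball $B\subset\R^{d-1}$, and $g$ convex on the star $\Gamma$ with $\sum_i\partial_{e_i}g(0)>0$. Then $u=g\circ\pi_1$ is maximal on $\Gamma\times B$: a competitor is convex along the $\R^{d-1}$-lines, which stay inside $\Gamma\times\R^{d-1}$, and equals $u$ near their ends, hence is $\le u$. Yet its slope defect along the vertex locus is nonzero.

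\emph{The proposed bump cannot work as sketched.} A compactly supported convex function is identically zero, so any perturbation must be absorbed by excess convexity of $u$. Moreover, positivity of the tropical Monge--Amp\`ere measure on the vertex locus is a mixed condition, not positivity of the slope defect. For PL $u$ with a product star fan, the K\"unneth formula gives $d$ times the slope defect times the $(d-1)$-dimensional Monge--Amp\`ere mass of $u$ restricted to the vertex locus.

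What you actually need on $\CL\times\R^{d-1}$ is the implication ``$\MA(u)(U)>0\Rightarrow$ there is a locally convex $v\ge u$ with $\{u<v\}$ nonempty and $\Subset U$'' (Corollary~\ref{cor:local_trimming_codimension_1}). This is the point that breaks down under condition (G), cf.\ Proposition~\ref{prop:local_failure_of_orthogonality}. The paper proves it globally rather than by slope analysis:
\begin{enumerate}
\item Matroidality gives that every locally convex function on $\CL\times\R^{d-1}$ is the restriction of a convex function on the ambient space (Lemma~\ref{lem:equivalence_LConv_GConv_codimension_1}, via polyhedral convex combinations through $\{0\}\times\R^{d-1}$).
\item Hence, on the compactified model, the tropical envelope of a finite obstacle is the restriction of the classical envelope, which is PL on a complex with vertices in $S$ (Lemma~\ref{lem:characterization_polyhedral_structure_envelope}).
\item A dimension count on the faces (Claim~\ref{claim:proof:prop:orthogonality_product_of_curve_and_Rn}) shows its tropical Monge--Amp\`ere measure vanishes off $S$, although it is not affine there.
\item Applying this orthogonality on the model to an envelope whose obstacle is removed near a point of positive mass produces the competitor, which is glued into $X$ by Construction~\ref{construction:sup_omega_psh_functions}.
\end{enumerate}

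Two smaller points. First, normalized functions in $\CPSH_{\theta}(X)$ are not compact, since limits may be $-\infty$ on $\partial X$. Maximize over $\PSH_{\theta}(X)$ instead and replace the maximizer $\psi$ by $\max\{\psi,C\}$; this does not decrease $\mathrm{E}_{\theta}$ and leaves $\int\psi\,d\mu$ unchanged because $\mu$ has compact support in $\xo$ (Proposition~\ref{prop:maximizer_Lagrangian}). Second, your local argument only covers $\xo$, so you also need $\MA_{\theta}(\varphi)(\partial X)=0$ (Theorem~\ref{thm:Monge-Ampere-measure-takes-no-mass-boundary}).
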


\medskip

We mention briefly the strategies for proving these theorems. 

\noindent For the uniqueness theorem, the proof is similar to the proof of uniqueness for non-Archimedean \MongeA equations given in \cite{BFJ15} using a tropical Cauchy--Schwartz inequality. For the existence, we follow the variational approach for complex and non-Archimedean \MongeA eqautions in \cites{BBGZ13, BFJ15}. 
We enlarge the space $\CPSH_{\theta}(X)$ to the space $\PSH_{\theta}(X)$ equipped with the topology of pointwise convergence of $\xo$.  Denote by $\PSH_{\theta}(X)/\R$  the quotient space of $\PSH_{\theta}(X)$ modulo additive constants.
A uniform Lipschitz result for $\PSH_{\theta}(X)$ (see Theorem \ref{thm:lipschitz} and Proposition \ref{prop:compactness_of_psh}) implies that $\PSH_{\theta}(X)/\R$ is a compact space. 

We construct an energy functional 
\[
\mathrm{E}_{\theta} \colon \PSH_{\theta}(X)/\R \to \R \cup \{-\infty\}
\]
whose derivative is the \MongeA operator, in the sense that
\[
\left.\frac{d}{dt}\right|_{t=0+} \mathrm{E}_{\theta}(\varphi+tf)=\int_X f \, \MA_{\theta}(\varphi) 
\]
for every  $\varphi \in \CPSH_{\theta}(X)/\R$ and $f \in C^0(X)$ satisfying $\varphi+tf \in \CPSH_{\theta}(X)/\R$ for any  $t \ge 0$.

Given a compactly supported probability measure $\mu$ on $\xo$, consider the Lagrangian functional $\FE_\mu \colon \PSH_{\theta}(X)/\R \to \R$ defined by 
\[
\FE_{\mu}(\varphi) \coloneqq \mathrm{E}_{\theta}(\varphi)-\int_{X} \varphi\, \mu. 
\]
The upper semicontinuity of $\FE_{\mu}$ together with the compactness of $\PSH_{\theta}(X)/\R$ guarantee the existence of a maximizer $\varphi_{\mu} \in \PSH_{\theta}(X)/\R$. Furthermore, we show that the maximizer $\varphi_{\mu}$ belongs to $\CPSH_{\theta}(X)/\R$.  However, contrary to the setting of complex and non-Archimedean pluripotential theory, $\varphi_\mu$ is not necessarily a solution to Equation \ref{equation:tropical_Monge_Ampere}. 
Actually, in our tropical setting, we show that the maximizer of $\FE_\mu$ is a solution of Equation \ref{equation:tropical_Monge_Ampere} for any compactly supported probablity measure on $\xo$ if and only if the orthogonality property holds for $(X, \theta)$. Following \cites{BBGZ13, BFJ15}, we say that $(X, \theta)$ satisfies the \emph{orthogonality property} if 
\[
\int_X (f-\env_{\theta}(f)) \MA_{\theta}(\env_{\theta}(f))=0 \qquad \text{for any } f \in C^0(X).
\]
Here, $\env_{\theta}(f)$ is the $\theta$-psh envelope defined by
\[
\env_{\theta}(f)  \coloneqq \Bigl(\sup\Bigl\{\varphi \st \varphi \in \PSH_{\theta}(X) \textrm{ and } \varphi \leq f\Bigr\}\Bigr)^{\star }.
\]
The \emph{envelope property}, that $\env_{\theta}(f)$ belongs to  $\CPSH_{\theta}(X)$, is proved in the tropical setting (Corollary \ref{cor:psh_envelope}). 

When $X$ is locally modeled by a product of a matroidal curve and $R^{d-1}$, 
we show that the orthogonality property holds for $(X,\theta)$. In contrast, if $\dim X \geq 2$ and $X$ satisfies the genericity condition alluded to above, 
the orthogonality property fails. The obstruction is that, for such $X$, one can construct, using polyhedral-geometric methods, a function $\varphi \in \CPSH_{\theta}(X)$ such that $\MA_{\theta}(\varphi)(\{ x \}) > 0$ for some $x \in \xo$, while $\varphi + h$ is not strictly locally convex at $x$, for a local potential $h$ of $\theta$ around $x$. 
An explicit example in dimension 2 is given in Section~\ref{subsec:example_failure_of_tropical_Calabi-Yau}.
We show that the orthogonality property holds in codimension one (Theorem~\ref{thm:orthogonality_codimension_1}).

\subsection{Connections to non-Archimedean solutions}
Let $(\fX_t, \fL_t)_{t \in \BD^*}$ be a maximally degenerating family of smooth polarized Calabi--Yau varieties over the punctured disc $\BD^{*} \subset \BC\setminus\{0\}$. 
Let $(\varX, \varL)$ be the generic fibre of the family over $\valfield =\BC((\mathsf{t}))$.  We assume that $\varL$ is very ample on $\varX$, and $\iota \colon \varX \hookrightarrow \P^n$ is a closed embedding satisfying $\iota^{*} \CO(1)=\varL$. 
Then, the trivial  model of $(\P^n, \CO(1))$ over $\valring$ gives a semipositive $(1,1)$-form on $(\P^n)^{\an}$. We denote by $\omega_{\NA}$ its restriction on $\varX^{\an}$.  Let $\mu_{\mathrm{CY}, \NA}$ be the normalized non-Archimedean Calabi--Yau measure supported on the essential skeleton $\Sk(\varX) \subset \varX^{\an}$ defined in \cites{KS06, BJ17}, and let $\varphi_{\mathrm{CY}, \NA} \in \CPSH_{\omega, \NA}(\varX)$ be the Calabi--Yau potential given by the non-Archimedean Calabi--Yau theorem (\cite{BFJ15}, see also Theorem \ref{thm:NA_Calabi_Yau}).  

We view $\P^n$ as a toric variety with  the open dense torus $\varT$. Let $\varY \coloneqq \varX \cap \varT$, and let
\[
\Trop \colon \varX^{\an} \to \TP^n
\]
be the (extended) tropicalization map. We denote by $X \coloneqq \Trop(\varX, \iota)$ and  $Y\coloneqq \Trop(\varY, \iota)$ the tropicalizations of $\iota\colon \varX \hookrightarrow \P^n$ and $\iota\colon \varY \rightarrow \varT$, respectively. Then $Y$ is a tropical variety embedded in a real vector space of dimension $n$. However, $X$ is not necessarily a tropical variety, since  $X$ does not necessarily satisfy the regularity along the boundary of $\TP^n$ 
(\textit{i.\,e.} it might develop ``singularities'' at infinity).

Therefore, instead of considering tropical \MongeA equations on $X$, we consider the canonical compactification $\comp{Y}$ of $Y$ with respect to a rational simplicial polyhedral structure on $Y$, such that $\comp Y$ behaves nicely at infinity and moreover admits a closed  semipositive $(1,1)$-form $\omega$ induced by the Fubini--Study potential $h=\sup \{0, -x_1, \cdots, -x_n\}$ in $\R^n$. (For these to happen, it is enough to ensure that the recession cones of the polyhedra in the polyhedral structure form a fan that refines the fan of the projective space; this can be always achieved, see Corollary~\ref{cor:refinement_of_open_tropical_varieties_are_projective}.)

\smallskip

Recall that $\mu_{\mathrm{CY}, \NA}$ is a measure supported on the dual complex $\Delta_{\fX}$, where $\fX$ is an SNC model of $\varX$. We say that the tropicalization map is \emph{strongly faithful} (with respect to $\Delta_{\fX}$) if $\Trop\rest{\Delta_{\fX}}$ is injective, and  any maximal dimensional face contained in $\Trop(\Delta_{\fX})$ is of weight $1$. 
This is a refinement of the existing notion of faithful tropicalization in tropical geometry~\cite{BPR13,GRW16}. 
Refining~\cite[Thm.\,9.5]{GRW16}, we show that there always exists a projective embedding of $\varX$ given by a basis of $H^0(\varX, \varL^{\otimes k})$ with $k$ large enough such that the associated tropicalization to this embedding is strongly faithful (with respect to $\Delta_{\fX}$).

\smallskip

We prove the following result.

\begin{theoremE} [Theorem \ref{thm:faithful_tropicalization_partial_retraction_invariance}] \phantomsection\label{mainthm:comparison}
  Suppose that $Y$ is a strongly faithful tropicalization with respect to $\Delta_{\fX}$ for some SNC model $\fX$. Choose a polyhedral structure on $Y$ and let $\comp Y$ be the corresponding canonical compactification of $Y$. 
  Suppose that $\mu_{\mathrm{CY}, \Trop}$ admits a tropical \MongeA solution $\varphi_{\mathrm{CY}, \Trop} \in \CPSH_{\omega}(\comp{Y})$.  Then,  $(\varphi_{\mathrm{CY}, \Trop})|_{Y} \circ \Trop$ coincides with $(\varphi_{\mathrm{CY}, \NA})|_{\varY^{\an}}$ after adding a constant. 
\end{theoremE}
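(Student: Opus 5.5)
The plan is to reduce Theorem E to the uniqueness statement of the non-Archimedean Calabi--Yau theorem (Theorem \ref{thm:NA_Calabi_Yau}). Set $\psi \coloneqq \varphi_{\mathrm{CY},\Trop}\circ \Trop$ on $\varX^{\an}$, using the extended tropicalization into $\comp Y$ with the compactification chosen compatibly with $\TP^n$. The steps are:
\begin{enumerate}
\item Show that $\psi \in \CPSH_{\omega,\NA}(\varX)$.
\item Show that $\MA_{\omega,\NA}(\psi)=\deg(\omega^d)\,\mu_{\mathrm{CY},\NA}$.
\item Conclude by uniqueness up to constants in \cite{BFJ15} that $\psi=\varphi_{\mathrm{CY},\NA}+c$, and restrict to $\varY^{\an}$.
\end{enumerate}

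For step (1) I would use the regularization theorem (Theorem~\hyperref[mainthm:regularization]{A}) on $\comp Y$. It approximates $\varphi_{\mathrm{CY},\Trop}$ uniformly by $\omega$-PL $\omega$-psh functions $\varphi_i$. Each $\varphi_i\circ\Trop$ is a PL function on $\varX^{\an}$, and I need it to be $\omega_{\NA}$-psh. Here $h+\varphi_i$ is locally convex and piecewise affine with rational slopes after refining, with $h$ the Fubini--Study potential. So, up to rescaling, $h+\varphi_i$ is locally a max of tropical monomials $\max_j(\langle m_j,x\rangle+c_j)$. Pulling back gives a model metric of the form $\log\max|a_j\chi^{m_j}|$ on a toric model, which is nef because local convexity gives nefness via the toric dictionary. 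Globally I need to glue these local convex representations into a nef model line bundle; I would use the refinement/projectivity statement (Corollary~\ref{cor:refinement_of_open_tropical_varieties_are_projective}) to get a toric model on which $\varphi_i$ corresponds to a nef $\Q$-line bundle. Uniform limits of $\omega_{\NA}$-psh functions are psh, so $\psi$ is psh.

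For step (2) I would use the projection/pushforward formula for Monge--Ampère measures under tropicalization. For PL $\omega$-psh $\varphi_i$, Chambert-Loir's measure of $\varphi_i\circ\Trop$ pushes forward to the tropical Monge--Ampère measure weighted by the tropical multiplicities; this is the Sturmfels--Tevelev/Gubler type identity. By continuity of both Monge--Ampère operators under uniform convergence (tropical Chern--Levine--Nirenberg), we get $\Trop_*\MA_{\NA}(\psi)=\MA_{\omega}(\varphi_{\mathrm{CY},\Trop})=\deg(\omega^d)\mu_{\mathrm{CY},\Trop}$, where $\mu_{\mathrm{CY},\Trop}=\Trop_*\mu_{\mathrm{CY},\NA}$. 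The pushforward alone does not suffice, however; I need the measure upstairs to equal $\mu_{\mathrm{CY},\NA}$.

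This is the main obstacle, and strong faithfulness is used exactly here. Since $\Trop|_{\Delta_{\fX}}$ is injective with weight-one maximal faces, there is a section of $\Trop$ over $\Trop(\Delta_{\fX})$, and the Chambert-Loir measure of a function pulled back from the tropicalization should be supported on the skeleton over the support of its tropical measure. I would try to show this directly for PL $\varphi_i$: the measure $\MA(\varphi_i\circ\Trop)$ is carried by the divisorial points above vertices, and those lie in $\Delta_{\fX}$ when the tropical mass lies in $\Trop(\Delta_{\fX})$. The weight-one condition then ensures that the pushforward loses no multiplicity. If mass could leak off the skeleton in the limit, I would instead use a retraction/invariance argument: show that $\psi$ is invariant under the partial retraction to $\Delta_{\fX}$ over the relevant region. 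Then $\MA_{\NA}(\psi)$ is determined by the tropical data on $\Trop(\Delta_{\fX})$, so $\MA_{\NA}(\psi)=\sigma_*\MA_{\omega}(\varphi_{\mathrm{CY},\Trop})=\deg(\omega^d)\mu_{\mathrm{CY},\NA}$ with $\sigma$ the section. Uniqueness then finishes the proof.
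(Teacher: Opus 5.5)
Your overall architecture is the paper's: lift $\varphi_{\mathrm{CY},\Trop}$, prove the lift is non-Archimedean psh via PL regularization and a PL comparison theorem, identify its Monge--Amp\`ere measure using strong faithfulness, and conclude by the uniqueness in Theorem~\ref{thm:NA_Calabi_Yau}. However, two steps do not go through as written.

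First, $\psi=\varphi_{\mathrm{CY},\Trop}\circ\Trop$ is not defined on $\varX^{\an}$. The extended tropicalization of $\varX\subset\P^n$ takes values in the closure $X$ of $Y$ in $\TP^n$, and $X$ is in general not a canonical compactification. The space $\comp{Y}$ sits in $\TP_{\widetilde{\Xi}}$ for a refinement $\widetilde{\Xi}$ of the fan of $\P^n$ (Corollary~\ref{cor:refinement_of_open_tropical_varieties_are_projective}), and $\varX$ does not in general map to $\P_{\widetilde{\Xi}}$. Moreover, $\varphi_{\mathrm{CY},\Trop}$ need not be constant on the fibres of $\comp{Y}\to X$. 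So no choice of polyhedral structure makes your definition work in general.

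The paper replaces $\varX$ by $\widetilde{\varX}$, the closure of $\varY$ in $\P_{\widetilde{\Xi}}$, whose extended tropicalization is exactly $\comp{Y}$. It proves the psh property and the pushforward formula there (Theorem~\ref{thm:comparison_thm_general_case}). It then descends to $\CPSH_{\omega,\NA}(\varX^{\an})$ by Proposition~\ref{prop:NA_envelope_properties_implies_birational_descend}. Some such descent is unavoidable: the uniqueness you invoke is for the ample class on $\varX$, while the pulled-back class on $\widetilde{\varX}$ is only nef.

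There are also two smaller points in your step (1). Theorem~\ref{thm:LConv_regularization_PL}\,(1) requires a K\"ahler form, but the Fubini--Study form is only semipositive, so one must approximate by $(\omega+\alpha_i\theta)$-psh PL functions with $\alpha_i\to 0$. And nefness of $\varphi_i\circ\Trop$ does not follow from a toric model on which $h+\varphi_i$ defines a nef bundle. Since $h+\varphi_i$ is only locally convex on $Y$, its PL extensions to $\R^n$ are in general not convex. The needed statement is the PL comparison theorem of \cite{APW1} (Thm.~6.13), proved with tropical models.

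Second, and more seriously, the step you identify as the main obstacle is left open, and your first route to it fails. The measures $\MA_{\NA}(\varphi_i\circ\Trop)$ are Dirac masses at divisorial points over vertices of polyhedral structures adapted to the $\varphi_i$. Their tropical masses need not lie in $\Trop(\Delta_{\fX})$; only the weak limit does. Over a vertex on a lower-dimensional face of $\Trop(\Delta_{\fX})$, the fibre of $\Trop$ may contain several Shilov points, not all in $\Delta_{\fX}$. Since $\Trop^{-1}(\Trop(\Delta_{\fX}))$ is far larger than $\Delta_{\fX}$, the pushforward identity alone does not prevent the limit measure from charging points off the skeleton. Your retraction-invariance fallback would need, besides the comparison property, a local identification of $\MA_{\NA}$ with the real Monge--Amp\`ere operator over open faces of $\Delta_{\fX}$ and a mass count; you supply neither.

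The missing idea in the paper is to use all finer tropicalizations simultaneously. Write $Y=\Trop(\varX,g)$ and $\nu=\MA_{\omega,\NA}(\psi)$. For any further rational functions $f$, $\psi$ also factors through $\Trop_{(g,f)}$. Theorem~\ref{thm:comparison_thm_general_case} then gives $(\Trop_{(g,f)})_*\nu=\nu_{g,f}$, the tropical Monge--Amp\`ere measure of the pulled-back potential on $\Trop(\varX,(g,f))$. The projection formula gives $(\pi_{g,f})_*\nu_{g,f}=\nu_g$ for the coordinate projection $\pi_{g,f}$ onto $Y$.

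Weight one on the maximal faces of $\Trop_g(\Delta_{\fX})$, together with the Sturmfels--Tevelev multiplicity formula, makes the fibre of $\pi_{g,f}$ over a sufficiently irrational point of such a face a single point, lying in $\Trop_{(g,f)}(\Delta_{\fX})$. Absolute continuity of the measure makes the remaining locus $\nu_g$-null. You never use this hypothesis, and it is essential, since a tropical mass on a lower-dimensional face could lift off the skeleton. Hence $\Supp(\nu_{g,f})\subset\Trop_{(g,f)}(\Delta_{\fX})$ for every $f$. Payne's description of the analytification as the inverse limit of its tropicalizations then gives $\Supp(\nu)\subset\Delta_{\fX}$. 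Finally, injectivity of $\Trop$ on $\Delta_{\fX}$ yields $\nu=\deg(\omega^d)\,\mu_{\mathrm{CY},\NA}$.
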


Therefore, an alternative approach to the metric SYZ conjecture is to try to solve the corresponding tropical \MongeA equations on some  tropicalization of $\varX$ which is strongly faithful with respect to $\Delta_{\fX}$. Also, the structure of the non-Archimedean Calabi--Yau potential would be easier to understand in this setting. 

At last, we sketch the strategy to prove Theorem \ref{thm:faithful_tropicalization_partial_retraction_invariance}. We first prove the theorem in the case when $X$ is the canonical compactification of $Y$ with respect to some polyhedral structure.
More precisely, applying the techniques of tropical compactifications and tropical models, we show that for any $\varphi \in \PSH_{\omega} \cap \PL(X)$, one has  
\[
\varphi \circ \Trop \in \PSH_{\omega, \NA} \cap \PL(\varX^{\an}), \quad \text{and} \quad \Trop_{*}(\MA_{\omega, \NA})(\varphi \circ \Trop)=\MA_{\omega, \Trop}(\varphi).
\]
This together with the regularization theorem for tropical psh functions, as well as the Bedford--Taylor theory in both tropical and non-Archimedean setting, indicates that 
\[
\varphi_{\mathrm{CY}, \Trop} \circ \Trop \in \CPSH_{\omega, \NA}(\varX^{\an}),  \quad \text{and} \quad \Trop_{*}(\MA_{\omega, \NA})(\varphi_{\mathrm{CY}, \Trop} \circ \Trop)=\mu_{\mathrm{CY}, \Trop}. 
\]
On the other hand, the strongly faithful tropicalization condition implies that 
\[
\mathrm{Supp}(\MA_{\omega, \NA} (\varphi_{\mathrm{CY}, \Trop} \circ \Trop)) \subset \Sk(\varX).
\] 
Therefore, we have $\varphi_{\mathrm{CY}, \Trop} \circ \Trop=\varphi_{\mathrm{CY}, \NA}$ after adding a constant, which proves the theorem. We also note that our proof of the compatibility results for PL functions gives a new proof of similar results in the setting of \cite{CLD} and \cite{GK17}. 

In the general case when $X$ is not isomorphic to a canonical compactification of $Y$, we take a birational modification $\widetilde{\varX}$ of $\varX$ such that the extended tropicalization of $\widetilde{\varX}$ equals the canonical compactification $\comp{Y}$. By the special case treated before, we get $\varphi_{\mathrm{CY}, \Trop} \circ \Trop \in \CPSH_{\omega, \NA}(\widetilde{\varX}^{\an})$. Then, applying Proposition \ref{prop:NA_envelope_properties_implies_birational_descend}, we can descend $\varphi_{\mathrm{CY}, \Trop} \circ \Trop$ to $\CPSH_{\omega, \NA}(\varX^{\an})$. 
A similar argument based on strongly faithful tropicalization concludes.

\subsection{Related work and further directions} 
Potential theory on finite and compact metric graphs (also known as abstract tropical curves and quantum graphs) 
has a long history, with applications ranging across multiple disciplines, see e.g.~\cite{Post, KN23, Kurasov}, and~\cite{Zha93, Thu05, BR10} for connections to arithmetic geometry.

Tropical Monge--Ampère equation on tropical toric varieties and tropical hypersurfaces have been studied in~\cite{BGJK25} and \cite{HJMM22, Li23Fano, AH23}, respectively, where also connection to optimal transport appears. 

A work~\cite{BMPS26} parallel to the work of the authors contains results overlaping partially with the material presented in this article.
For tropical varieties embedded in projective tropical spaces, 
the authors formulate a tropical Monge--Ampère equation by associating \MongeA measures to PL functions via intersection products of tropical cycles, and extend it via Bedford--Taylor theory.
Derivability of the energy functional is established in \emph{loc.\,cit.} 
assuming the regularity property (Theorem \hyperref[mainthm:regularization]{A}).  
Theorem~\hyperref[mainthm:existence_regular]{D} is established there when $\dim X=1$.  
Theorem~\hyperref[mainthm:comparison]{E} is established in \emph{loc.~cit.} in the case of toric degenerations under the additional assumption of compatibility between tropical and non-Archimedean psh functions. We also note that a special case of Theorem~\hyperref[mainthm:comparison]{E} appears in a recent work~\cite{Lin26}.

The most natural problem arising from our work is the question of 
classifying the measures $\mu$ for which Equation~\eqref{equation:tropical_Monge_Ampere} has a solution. Extending the framework of optimal transport theory to the tropical setting is also an interesting direction of research. Such an extension should allow to address the classification problem, in particular, for discrete measures (\textit{i.e.} finite sums of dirac masses).

Finally, the formalism of mixed differential calculus and tropical pluripotential theory allows to formulate and study a tropical analogue of the Kähler--Einstein equation, an interesting direction for future research.

\subsection{Organization of the paper}
In Section \ref{preliminaries-paper-2}, we review background on tropical and toric varieties, as well as the formalism of mixed differential calculus developed in \cite{APW1}. Section \ref{sec:tropical_convexity} introduces the notion of locally convex functions on tropical varieties, and provides some of their basic properties. In Section \ref{sec:psh_functions}, we define $\omega$-psh functions for a semipositive form $\omega$, and prove the compactness of the space $\PSH_{\omega}(X)/\BR$.
In Section \ref{sec:regularization_theorem}, we introduce the notion of \kahler forms and \prekahler tropical varieties, and prove the regularization theorem (Theorem~\hyperref[mainthm:regularization]{A}).
We establish the Bedford-Taylor theory on tropical varieties   in Section \ref{sec:tropical_Monge_Ampere_measures_paper_2}.  In Section \ref{sec:energy_functional}, we define and study the tropical energy functional. 
In Section \ref{sec:uniqueness}, we formulate the tropical \MongeA equations, and prove the uniqueness of solutions (Theorem~\hyperref[mainthm:uniqueness]{B}). 
The equivalence between the existence of solutions and the orthogonality property is established in Section \ref{sec:orthogonality_properties}. 
In Section \ref{sec:Orthogonality_property_in_codimension_one}, we prove the orthogonality property in codimension $1$ and existence of solutions for regular-in-codimension-one tropical varieties (Theorem~\hyperref[mainthm:existence_regular]{D}). The failure of existence of the solutions in general (Theorem~\hyperref[mainthm:failure_of_existence]{C}) is proved in Section~\ref{sec:failure_of_Calabi_Yau}. 
In Section \ref{sec:relation_to_SYZ}, we prove a comparison theorem between tropical and non-Archimedean \MongeA solutions (Theorem~\hyperref[mainthm:comparison]{E}). In Appendix~\ref{appendix:uniform_lipschitz}, we prove a uniform Lipschitz property for tropical psh functions. Appendix~\ref{appendix:polyhedral_geometric_lemmas} contains four lemmas in polyedral geometry used in the proof of Theorem~\hyperref[mainthm:failure_of_existence]{C}.

\subsection*{Acknowledgment} We thank Sébastien Boucksom for discussions and comments on earlier drafts of this paper. 
We thank Ana Botero, Enrica Mazzon, and L\'eonard Pille-Schneider for agreeing to exchange the drafts of our papers before their release. 
We thank all three of them, as well as José Ignacio Burgos Gil, Antoine Ducros, and Charles Favre for their helpful suggestions and comments on this work. 
This paper is also part of the PhD thesis of the third author, who would like to thank his advisor, Sébastien Boucksom, for his support and encouragement.

The first author was partially supported by ANR (AdAnAr project, ANR-24-CE40-6184). The first and third authors are supported by the SpringBoard package of Université Paris--Saclay. The second author received funding from the European Research
Council under the European Union’s Horizon 2020 research and innovation program
(grant N. 101001995).

\subsection*{AI statement} The mathematical ideas, results, and proofs presented in this paper are all work of the authors. LLM was used occasionally for English grammatical assistance.

\subsection{Notation and Convention}
\begin{itemize}[leftmargin=*]
  \item Unless otherwise specified, all the fans and polyhedral structures (complexes) in this paper are assumed to be rational and simplicial. 
  \item For $\sigma$ a rational polytope or a polyhedron, we denote by $\relint(\sigma)$ its relative interior. 
  \item For a fan $\Sigma$, we use $|\Sigma|$ to denote its support, and use $\comp{\Sigma}$ to denote its \emph{canonical compactification} (see Section \ref{subsubsec:canonical-compactifications}). A fanfold is the support of a rational fan. 
  \item We use $X$ for tropical varieties, and  $\xo$ for their open parts (the minimum sedentarity stratum). We use bold letters (\textit{e.g.} $\varX, \varY, \cdots$) to denote algebraic varieties.
  \item For a tropical variety $X$, we denote by $\Aff$ the sheaf of affine functions on $X$ (see Section \ref{subsubsec:affine-functions}), and denote by $\LConv$ the sheaf of locally convex functions on $X$ (see Section \ref{sec:tropical_convexity}).
  \item A simplicial rational polyhedral complex (\emph{resp.} structure) on a tropical variety is defined in Section \ref{subsubsec:polyhedral-complex-structures}. Throughout this paper, unless stated otherwise, the term polyhedral complexes (\emph{resp.} structures) refers to simplicial rational polyhedral complexes (\emph{resp.} structures).
  \item We use $\olPi$ to denote a polyhedral structure on $X$, and $\Pi$ to denote the restriction of $\ol{\Pi}$ on $\xo$ (see Section \ref{subsubsec:polyhedral-complex-structures}). We denote by 
  $\Pi_{\f}$ the polyhedral subcomplex of $\Pi$ formed by compact faces in $\Pi$. 
  \item For a natural number $k$, $\ol{\Pi}_k$ (\textit{resp.} $\Pi_k$) denote the set of $k$-dimensional polytopes in $\ol{\Pi}$ (\textit{resp.} $\Pi$).   
  \item For $\sigma \in \Pi$, we denote by $T_{\sigma}$ the tangent space at $\sigma$ (see Section \ref{subsec:tropical-varieties}), and denote by $\Pi^{\sigma}$  the star fan of $\Pi$ at $\sigma$.  The integer lattice in $T_\sigma$ is denoted by $\ssN_\sigma$.- 
  \item For a tropical variety $X$ with open part $\xo$, we denote by $\PL$ the sheaf of piecewise linear functions on $X$, and denote by $\PSm$ the sheaf of piecewise smooth functions on $\xo$ (see Section~\ref{subsubsec:PL_functions}). 
  \item For a fan $\Sigma$, we use $\PL_{\hom}(\Sigma)$ to denote the set of  homogeneous PL functions on $\supp{\Sigma}$ (\textit{i.e.} PL functions on $\supp{\Sigma}$ satisfying $f(tx)=tf(x)$). 
  \item The mixed differential operators are denoted by $\dm$ and $\dmb$ (see Section \ref{subsec:dd-p2}). They are generalizations to the piecewise smooth setting 
  of the tropical differential operators $d', d''$ for smooth forms, introduced in \cites{Lag12, CLD}. 
  \item Let $E$ be a finite dimensional Euclidean space. For a subset $S \subset E$, we denote by $\conv(S)$ the convex hull of $S$ in $E$, and we denote by $\mathrm{Cone}(S)$ the convex cone generated by $S$ in $E$. 
  \item We use the standard abbreviations \emph{usc} for ``upper semicontinuous'', \emph{lsc} for ``lower semicontinuous''. We also use \emph{SNC} for ``simple normal crossing'', \emph{PL} for ``piecewise linear'', and \emph{psh} for ``plurisubharmonic''.
  \item  Let $W$ be a locally compact Hausdorff space.  The notation $\Subset$ indicates being compactly contained, \textit{i.e.}  $V \Subset W$ if the closure of $V$ is compact and contained  in $W$. 
  \item If $W$ is a Hausdorff topological space, and $\varphi \colon W \to \R \cup \{\pm \infty\}$ is any function, then, the \emph{usc envelope} $\varphi^{\star}$ of $\varphi$ is the smallest usc function with $\varphi^{\star} \ge \varphi$. Concretely, $\varphi^{\star}(x) = \limsup_{y \to x} \varphi(y)$. 
\end{itemize}

%%%%%%%%%%%%%%%%%
%%Preliminaries
%%%%%%%%%%%%%%%%%

\section{Preliminaries} \label{preliminaries-paper-2}
We recall basic definitions and results on tropical varieties, closely following \cites{JSS19, AP-tht, ACM}, to which we refer for further details. We also provide a brief account of the formalism of mixed differential forms from~\cite{APW1}.

\subsection{Tropical varieties}\label{subsec:tropical-varieties}

 Let $N$ be a free abelian group of finite rank, with dual $M = \ssN^\dual = \Hom(N,\Z)$.   Denote by $\ssN_{\Q}, \ssN_{\R}, \ssM_{\Q}, \ssM_{\R}$ the associated rational and real vector spaces, so that $\ssM_{\Q} = \ssN_{\Q}^\dual$ and $\ssM_{\R} = \ssN_{\R}^\dual$.  
The pairing between $M$ and $N$ is denoted by $\langle\cdot\,,\cdot\rangle$.

An affine half-space in $\ssN_\R$ is a set of the form 
 $\{ v \in \ssN_{\R} \mid \langle u,v\rangle \leq b \}$ with $u \in \ssM_{\R}$ and $b \in \R$.  It is called rational if $u\in \ssM$. It is linear if $b=0$.

 A finite intersection of affine (\resp linear) half-spaces is called a polyhedron (\resp polyhedral cone). It is called rational if the half-spaces can be chosen rational. A compact polyhedron is called a polytope. 
 
 The tangent space $T_Q$ of a polyhedron $Q$ 
 is the linear span of the differences of points in $Q$.  
It is rational if $N \cap T_Q$ is a full-rank lattice in $T_Q$.  The dimension of $Q$ denoted $\dims{Q}$ is the dimension of $T_Q$.

Each polyhedron $Q$ can be written as a Minkowski sum $Q' +\tau$ of a polytope $Q'$ and a unique polyhedral cone $\tau$ called the recession cone of $Q$ and denoted by $\rec(Q)$. 
A polyhedral cone $\tau$ with exactly $\dims{\tau}$ rays is called 
simplicial. A polyhedron $Q$ is called simplicial if it is the sum of a simplex $\delta$ and a simplicial cone $\tau$ such that the intersection of the two tangent spaces $T_\delta$ and $T_\tau$ is  $\conezero$. In this case, $\delta$ is also unique with this property.

\subsubsection{Tropical fans} For a polyhedral cone $\sigma \subset \ssN_{\R}$, let $\ssN_{\sigma,\R}$ be the real span of $\sigma$ and set  
\[
  \ssN_{\R}^{\sigma} \coloneqq \rquot{\ssN_{\R}}{\ssN_{\sigma,\R}}.
\]
We have $\dims \sigma = \dim \ssN_{\sigma,\R}$. If $\sigma$ is rational, this induces full-rank lattices $\ssN_{\sigma} \subset \ssN_{\sigma,\R}$ and $\ssN^{\sigma} \subset \ssN_{\R}^{\sigma}$.  

A polyhedral cone that contains no line is called strongly convex.  
A fan $\Sigma$ in $\ssN_{\R}$ is a finite, nonempty collection of strongly convex polyhedral cones with the property that every face of a cone $\sigma \in \Sigma$ lies in $\Sigma$, and moreover, the intersection of any two cones in $\Sigma$ is a common face. We refer to elements of $\Sigma$ as its faces. 

A fan is called rational if all cones are rational. It is called simplicial if all its cones are. 
All the fans appearing in this paper will be rational and simplicial.

For cones $\sigma,\tau$, we write $\tau \subfaceq \sigma$ if $\tau$ is a face of $\sigma$, and $\tau \ssubface \sigma$ if it has codimension one. This induces a partial order on $\Sigma$.  We denote by $\ssSigma_k$ the set of $k$-dimensional cones. The cone $\{0\}$ is written $\conezero$ and one-dimensional elements of $\Sigma$ are called rays.  Maximal faces of $\Sigma$ are called facets.
A fan is pure-dimensional if all its facets have the same dimension.

The support of $\Sigma$, written $\supp \Sigma$, is the union of its cones in $\ssN_\R$. For $\Sigma$ of dimension one, there is a unique fan structure on its support, so we simply use $\Sigma$ in place of $\supp{\Sigma}$.  
A fanfold $X \subset \ssN_{\R}$ is a closed subset that is the support of some rational fan. A point $x$ in a fanfold of dimension $d$ is regular if $x$ admits a neighborhood in $X$ homeomorphic to an open subset of $\R^d$.

For a cone $\sigma \in \Sigma$ and a codimension-one face $\tau \ssubface \sigma$, the subspace $\ssN_{\tau,\R}$ divides $\ssN_{\sigma,\R}$ into two closed half-spaces, one of which contains $\sigma$.  
A primitive normal vector to $\tau$ in $\sigma$ is any $\nu \in \ssN_\sigma$ lying in this half-space and satisfying $\ssN_\tau + \Z \nu = \ssN_\sigma$; we denote such a choice by $\nvect_{\sigma/\tau}$.  Its image in the quotient lattice $\ssN_{\sigma}^{\tau} \coloneqq \rquot{\ssN_{\sigma}}{\ssN_{\tau}}$ is then well-defined and denoted by $\e^\tau_\sigma$. If $\tau=\conezero$, and $\sigma$ is a ray of $\Sigma$, we simply write $\e_{\sigma}$.
We denote by $\nvect^\dual_{\sigma/\tau}\in \ssN_\sigma^\dual$  the unique integral linear form that is zero on $\ssN_{\tau}$ and takes value one on $\nvect_{\sigma/\tau}$.

A tropical fan is a pair $(\Sigma,\weight)$ consisting of a rational fan $\Sigma$ in $\ssN_{\R}$ of pure dimension $d \in \Z_{\geq 0}$ and  a function 
\[
  \weight \colon \ssSigma_d \to \N
\]
called tropical weight that satisfies the balancing condition: for every cone $\tau \in \ssSigma_{d-1}$,  
\begin{align}\label{eq:codimension_one_balancing} 
  \sum_{\sigma \ssupface \tau} \weight(\sigma)\nvect_{\sigma/\tau} \in \ssN_\tau,
  \qquad\text{equivalently}\qquad
  \sum_{\sigma \ssupface \tau} \weight(\sigma)\e^\tau_{\sigma} = 0 \quad \text{in $\ssN^\tau$}.
\end{align}  
 The tropical weight function $\weight$ defines a weight function on the set of regular points of $\supp{\Sigma}$. The support of a tropical fan endowed with this weight function is called a tropical fanfold.

For a cone $\sigma \in \Sigma$, the star fan of $\sigma$ in $\Sigma$ denoted $\Sigma^\sigma$ is a fan in the quotient space $\ssN_{\R}^\sigma$ defined as follows. 
Consider the projection map $\pi_\sigma\colon \ssN_{\R} \to \ssN_{\R}^\sigma$. The star fan $\Sigma^\sigma$ consists of the projected cones $\eta^\sigma \coloneqq \pi_\sigma(\eta)$ for $\eta\supfaceq \sigma$ in $\Sigma$.
  For $\Sigma$ a rational fan of pure dimension $d$, the star fan $\Sigma^\sigma$ is of pure dimension $d - \dims \sigma$ and rational with respect to the quotient lattice $\ssN^\sigma$. 

A tropical weight function $\weight$ on $\Sigma$ naturally induces a tropical weight on $\Sigma^\sigma$ that we denote by $\weight^\sigma \colon \ssSigma^\sigma_{d - \dims \sigma} \to \N$. 
It is defined by setting $\weight^\sigma(\eta^\sigma) \coloneqq \weight(\eta)$ for each cone $\eta \in \ssSigma_d$ with $\eta \supfaceq \sigma$. 
The balancing condition around a face $\eta^\sigma$ of codimension one in $\Sigma^\sigma$ reduces to 
 the balancing condition around $\eta$ in $\Sigma$. If there is no risk of confusion, we drop $\sigma$ in $\weight^\sigma$, and use $\weight$ for the induced weight function on the star fan.

A \emph{generalized tropical line} is a tropical fan $\CL$ of dimension one in some $\ssN_\R$ such that the space of linear relations between the primitive vectors $\e_\varrho$, $\varrho\in \CL_1$, is 
of dimension one, generated by the balancing condition $\sum_\varrho \weight(\varrho)\e_\varrho=0$. A tropical curve locally modeled by generalized tropical lines is called \emph{locally matroidal}.
\subsubsection{Tropical toric varieties and canonical compactifications of fans} \label{subsubsec:canonical-compactifications}

A rational fan $\Sigma$ in $\ssN_{\R}$ defines the tropical toric variety $\TP_\Sigma$, a partial compactification of $\ssN_{\R}$ that coincides with the tropicalization of the toric variety $\P_\Sigma$~\cites{BGJK20, Kaj08, OR11, Pay09, Thu07}.  
The canonical compactification of $\Sigma$ denoted $\comp \Sigma$ is obtained by taking the closure of cones of $\Sigma$ in $\TP_\Sigma$, whose support also denoted $\comp\Sigma$, is called the canonical compactification induced by $\Sigma$ of the fanfold $\supp \Sigma$. We give a brief account of the constructions.

Let $\eR \coloneqq \R \cup \{+\infty\}$ endowed with the topology generated by intervals $(a,+\infty]$, $a\in \R$, and 
denote $\eR_+ = \R_+ \cup \{+\infty\}$. The operation of addition and multiplication by positive scalars extend continuously to $\eR$ and $\eR_+$. If there is no risk of confusion, we  write $\infty\in \eR$ for $+\infty$.

Let $\RpMod$ be the category of $\R_+$-modules. Objects of $\RpMod$ are commutative monoids $(M,+)$ endowed with an action of $(\R_+,\times)$ that is distributive under addition of scalars. In particular, $\RpMod$ contains $\eR$, $\eR_+$, and all 
polyhedral cones.

For a cone $\sigma \subset \ssN_{\R}$, define its dual $\sssigma^\vee$ and orthogonal $\sssigma^\perp$ by
\[
\sssigma^\vee = \{u \in \ssM_{\R} \st \langle u,v \rangle \ge 0\ \forall v\in \sigma\}, \quad 
\sssigma^\perp = \{u \in \ssM_{\R} \st \langle u,v \rangle = 0\ \forall v \in \sigma\},
\]
and set $\bbU_\sigma = \Hom_{\RpMod}(\sssigma^\vee, \eR)$. The tropical toric variety
 $\TP_\Sigma$ is obtained by gluing the $\bbU_\sigma$ for $\sigma\in \Sigma$, as in the classical setting.  The tropical torus orbit of a cone $\tau \in \Sigma$ is 
\[
\bbO(\tau) = \{v \in \bbU_\tau \st \langle u, v\rangle = \infty \text{ exactly for all } u \in \sstau^\vee \setminus \sstau^\perp\} \simeq \ssN_{\R}^{\tau}.
\]  
Then, $\TP_\Sigma$ is the union of the tropical torus orbits of its cones, and 
for each $\sigma$, we have $\bbU_\sigma = \bigsqcup_{\tau \subfaceq \sigma} \bbO(\tau)$. 

The canonical compactification $\comp{\sigma}$ of a cone $\sigma\in \Sigma$ is $\comp{\sigma} = \Hom_{\RpMod}(\sssigma^\vee, \eR_+)$, and coincides with the closure of $\sigma$ in $\bbU_\sigma$. We have $\comp{\Sigma} = \bigcup_{\sigma\in \Sigma} \comp{\sigma}$.

The polyhedral face structure on $\comp{\Sigma}$ is described as follows.  For each $\tau \in \Sigma$, let $\infty_\tau$ be the elemet of $\bbO(\tau)$ that vanishes on $\sstau^\perp$.
 For $\sigma\supfaceq \tau$, set
\begin{equation*}
  \sssigma_{\infty}^{\tau} \coloneqq \infty_{\tau} + \sigma
  = \left\{v \in \comp{\sigma} \st \langle u,v \rangle = \infty 
     \,\, \text{iff } u \in \tau^{\vee} \setminus \tau^{\perp}\right\} 
  \subset \bbO(\tau).
\end{equation*}
Via the identification $\bbO(\tau) \simeq N_{\R}^{\tau}$, the cone $\sssigma_{\infty}^{\tau}$ is the image of $\sigma$ under the projection map $\pi \colon N_{\R} \to N_{\R}^{\tau}$.  
The family $\{\sssigma_{\infty}^{\tau} \mid \sigma \supfaceq \tau\}$ forms a fan $\ssSigma_{\infty}^{\tau}$ in $\bbO(\tau)$ with origin $\infty_{\tau}$. Under $\ssN_{\R}^{\tau} \simeq \bbO(\tau)$, this fan can be identified with the star fan $\Sigma^{\tau}$ of $\tau$ in $\Sigma$.  
For $\conezero \in \Sigma$, we have $\Sigma_{\infty}^{\conezero} = \Sigma$.
The closure in $\comp\Sigma$ of each cone  $\sssigma^\tau_\infty$ coincides with the canonical compactification $\comp{\sssigma^{\tau}_\infty}$. A face of $\comp\Sigma$ is a compactified cone $\comp{\sssigma_\infty^\tau}$ for a pair $\tau\subfaceq\sigma$ of elements of $\Sigma$. 

For $x \in \TP_\Sigma$, the sedentarity $\sed(x) \in \Sigma$ is the unique cone $\tau$ such that $x \in \bbO(\tau)$. The sedentarity fan of a subset $U \subset \comp \Sigma$ denoted $\ssSigma_U$ is the subfan of $\Sigma$ consisting of the sedentarities $\sed(x)$ for $x\in U$. (We omit the proof that this is indeed a subfan of $\Sigma$).

Let $\delta$ be a simplicial cone with rays $\varrho_1, \dots, \varrho_k$, $k=\dims{\delta}$. 
Then, in terms of the primitive vectors $\e_{\varrho_i}$, we have 
 $\comp \delta = \eR_{\ge 0}\e_{\varrho_1}+\dots + \eR_{\ge 0}\e_{\varrho_k}$. 
 Under this identification, each $x\in \comp\delta$ gets a set of $k$ coordinates, 
 and the sedentarity of $x \in \comp{\delta}$ corresponds to the subset of $[k]$ given by the $\infty$-coordinates of $x$.

Suppose $\Sigma$ is simplicial.  For $x \in \comp\Sigma$, let $\delta = \sed(x)$ and let $x_{0}$ be its image under the natural isomorphism $\ssSigma_{\infty}^{\delta} \simeq \ssSigma^{\delta}$. 
The local structure of $\comp \Sigma$ around $x$ is modeled by an open subset of the point $(\infty_{\delta}, x_0)\in \comp\delta\times \ssSigma^{\delta}$, given by a 
set of $k=\dims{\delta}$ coordinates in $\eR_{\geq 0}$ 
corresponding to the rays $\varrho_1, \dots, \varrho_k$ of $\delta$, and an open neighborhood $V$ of $x_0$ in $\ssSigma^{\delta}$. 
Under this identification, a basic open neighborhood  of $x$ is an open neighborhood $U$ of $(\infty_{\delta}, x_0)$ of the form 
\[
\Bigl((t,\infty]\cdot\e_{\varrho_1}+\dots +(t,\infty]\cdot\e_{\varrho_k}\Bigr)\times V.
\]
 A basic open subset of $\comp\Sigma$ is a basic open neighborhood of some point $x\in \comp{\Sigma}$. Basic open sets form a basis of open sets 
 for the topology of $\comp\Sigma$.

Let $U$ be a basic open subset of $\comp\Sigma$ as above. 
An affine function on $U$ is a function $u \colon U \to \R$ such that 
$u = l \circ \pi$ for an affine function $l$ on $V$, with $\pi \colon U \to V$ the projection. 
We denote by $\Aff(U)$ the set of affine functions on $U$. More generally, given an open subset $W$ of $\comp\Sigma$, an affine function on $W$ is a continuous function whose restrictions to basic 
open subsets $U \subset W$ is affine. They form the set $\Aff(W)$, and together define the 
sheaf of affine functions on $\comp\Sigma$.

In this paper $\comp\Sigma$ will always refer to the canonical compactification of $\Sigma$.

\subsubsection{Tropical varieties}  \label{subsubsec:tropical-varieties}
Let $X$ be a connected Hausdorff space.  A polyhedral chart is a pair $(W,\phi)$ with $W \subset X$ a nonempty open set and $\phi \colon W \to U$ a homeomorphism onto a basic open neighborhood $U$ of a point in the canonical compactification of a simplicial rational fan $\Sigma$, as in the previous section. We suppose without loss of generality that the linear span of $\Sigma$ coincides with the ambient space in which it is embedded.
We refer to the lattice of $\Sigma$ as the ambient lattice of the chart $(W, \phi)$.

Two charts $(W_i,\phi_i \colon W_i \to U_i)$ and $(W_j,\phi_j\colon W_j \to U_j)$, with ambient fans $\Sigma_i$ and $\Sigma_j$ and 
lattices $\ssN_i$ and $\ssN_j$, are compatible if either $W_i \cap W_j = \emptyset$ or the transition map
\[
\phi_i \circ \phi_j^{-1} \colon \phi_j(W_i \cap W_j) \to \phi_i(W_i \cap W_j)
\]
is induced by a global affine $\Z$-linear map (\textit{i.\,e.} affine map with a $\Z$-linear part) $\psi_{ij}\colon \ssN_j \to \ssN_i$ that restricts to a $\Z$-linear isomorphism between the sedentarity fans of $\phi_j(W_i \cap W_j)$ and $\phi_i(W_i \cap W_j)$ in $\Sigma_j$ and $\Sigma_i$, respectively.

An extended polyhedral space is a connected Hausdorff space endowed with an atlas of pairwise compatible polyhedral charts.  

Let $(W_i,\phi_i \colon W_i \to U_i)$ be a chart. We say that two points $x,y \in W_i$ have the same sedentarity if the images $\phi_i(x)$ and $\phi_i(y)$ do in $U_i$. 
The transitive closure in $X$ of the property of having the same sedentarity in a chart yields a stratification of $X$ into sedentarity types, denoted $\SedT(X)$. 
(The sedentarity types play the role of generic points of the strata, analogous to stratified algebraic varieties.) 
For $s \in \SedT(X)$, denote by $X_s$ the stratum of type $s$. 

The set $\SedT(X)$ carries a natural partial order: $s_1 \prec s_2$ if each point of type $s_2$ is a limit of points of type $s_1$.  Using the connectivity of $X$, it is easy to show 
that $\SedT(X)$ has a minimum element (see~\cite[Prop.\,2.2]{ACM}) that we denote by $o$, and refer to $X_o$ as the open part of $X$.

A point $x \in X$ is regular if it has a neighborhood homeomorphic to an open subset of $\R^d$ for some $d$.  
We denote by $\ssub X_{\reg}$ the locus of regular points.

Let $X$ be an extended polyhedral space.  A tropical weight function is a map $\weight \colon X_{\reg} \to \N$ such that, on each chart $(W_i,\phi_i)$ with image $U_i$, the restriction $\omega|_{W_{i,\reg}}$ is the pullback of a tropical weight function on $\Sigma_i$ (restricted to $U_{i,\reg}$).

A tropical variety is a pair $(X,\weight)$ consisting of an extended polyhedral space and a tropical weight function.  
For each sedentarity type $s$ of $X$, the stratum $\comp{X_s}$ inherits the structure of a tropical variety. 

There is a natural notion of morphism between tropical varieties. 
A continuous map $\varphi \colon X\to X'$ between two tropical varieties $X$ and $X'$  is a tropical morphism if the image $\varphi(X_o)$ lies in a single sedentarity stratum  $X_{s}'$ of $X'$, and there exist compatible atlases  $\{(\ssW_i,\phi_i)\}_{i\in I}$ on $X$ and $\{(\ssW_j',\phi_j')\}_{j\in J}$ on $\comp{X_{s}'}$, with lattices $N_i$ and $N'_j$
 such that, for each $i\in I$, there is some $j\in J$ satisfying 
 $\varphi(\ssW_i) \subset \ssW_{j}'$, and  $\phi_j'\circ \varphi \circ \phi_{i}^{-1}$ is induced by a  $\Z$-linear map of lattices $\ssN_{i} \to \ssN_{j}'$.

\subsubsection{The sheaf of affine functions} \label{subsubsec:affine-functions}
Let $W$ be an open subset of a tropical variety $X$.  
A function $u \colon W \to \R$ is called affine for every point $x \in W$, there exists a basic open neighborhood $U$ of $x$ (in a local polyhedral chart) such that the restriction $u\rest{U}$ is affine. 
 We denote by $\Aff(W)$ the set of affine functions on $W$. 
  They form a sheaf of abelian groups on $X$.

\subsubsection{The sheaf of PL-functions} \label{subsubsec:PL_functions}
Let $U \subset X$ be an open subset. A function $f\colon U \to \R$ is called piecewise linear (PL) if $f \in C^0(U)$ and there exists a polyhedral structure $\olPi$ for $X$ such that the restriction of $f|_{U\cap P}$ is affine  for  each face $P$ of $\Pi$. 
We denote by $\PL$ the sheaf of PL functions on $X$.

\subsubsection{Polyhedral  structures}\label{subsubsec:polyhedral-complex-structures} We now discuss polyhedral structures on extended polyhedral spaces.

Let $\sigma$ be a rational simplicial cone, and $Q\subseteq \sigma$ be a polyhedron. 
We say that $Q$ is compatible with $\sigma$ if 
the recession cone $\rec(Q)$ is a face of $\sigma$. In this case, writing $Q=Q'+\tau$, for a polytope $Q'$ and $\tau=\rec(Q)$, 
the closure of $Q$ in $\comp\sigma$ is $Q'+\comp{\tau} \subset \comp\sigma$.
A (simplicial) rational polytope $P$ in $\comp\sigma$ is by definition the closure of a compatible (simplicial) rational polytope $Q$ in $\sigma$. A face of $P$ is a nonempety intersection of $P$ with a face of $\comp\sigma$ (for the face structure defined in \S\,\ref{subsubsec:canonical-compactifications})

Let $\Sigma$ be a rational simplicial fan in $\ssN_\R$ with canonical compactification $\comp\Sigma$. 
A rational polytope in $\comp\Sigma$ is by definition a rational polytope in a face of $\comp\Sigma$ (that is a rational polytope in the canonical compactification $\comp{\sssigma^\tau_\infty}$ of a face $\sssigma^\tau_\infty \in \ssSigma^\tau_\infty$, for a pair of elements $\tau\subface \sigma$ in $\Sigma$).

A rational polytope in an extended polyhedral space $X$ is a closed set $P \subset X$ such that, for each chart $(W,\phi)$ with $P \subset W$, the image $\phi(P)$ is a rational polytope.  

A (simplicial) rational polyhedral structure on $X$ is a collection $\olPi$ of (simplicial) rational polytopes satisfying: 
\begin{itemize} 
  \item For each $P\in \olPi$, there exists a chart $(W,\phi)$ with $P\subset W$. Moreover, for any such chart, if $F$ is a face of $\phi(P)$, then $\phi^{-1}(F) \in \olPi$.
  \item If $P_1, P_2 \in \olPi$ and $(W_1,\phi_1), (W_2,\phi_2)$ are charts containing them, then $P_{12} = P_1 \cap P_2 \in \Pi$, with $\phi_1(P_{12})$ a face of $\phi_1(P_1)$ and $\phi_2(P_{12})$ a face of $\phi_2(P_2)$.
\end{itemize}
We refer to $\olPi$ as a (simplicial) rational polyhedral complex. A (simplicial) rational polyhedral structure $\olPi$ on $X$ induces a (simplicial) rational polyhedral structure on $\xo$ denoted by $\Pi$.

 \medskip

 All tropical varieties admit simplicial polyhedral structures. For simplicity, 
 we assume that the tropical varieties studied in this paper admit a rational simplicial polyhedral structure. In particular, this includes tropical varieties coming from tropicalization of algebraic varieties over a discretely valued field. Throughout the remainder of this paper, unless stated otherwise, polyhedral complexes (\emph{resp.} structures) are assumed to be rational and simplicial.

  \smallskip

Given a polyhedral structure $\olPi$ on $X$, with induced polyhedral structure $\Pi$ on $\xo$, for each point 
$x\in Z$, we set
\[
\olPi^{(x)}\coloneqq \bigcup_{x \in P, P \in \olPi} \relint(P).
\] 
Note that $\olPi^{(x)}\subset X$ is open. When $x\in \xo$, we have $\Pi^{(x)}=\olPi^{(x)}\subset \xo$. An affine function defined on a neighborhood of $x\in X$ can be 
extended facewise linearly to $\olPi^{(x)}$.

  \medskip

  For each $\sigma \in \Pi$, we denote by $\Pi^\sigma$ the star fan of $\sigma$ in $\Pi$, defined as follows. Let $\ssN_\R$ be the linear span of all the faces $\sigma'\supfaceq\sigma$, with underlying lattice $\ssN$,
    and $\ssN_{\sigma,\R} \subset \ssN_\R$ be the linear span of $\sigma$ with underlying lattice $\ssN_\sigma$.
     The star fan $\Pi^\sigma$ is the fan in $\ssN^\sigma_\R=\rquot{\ssN_\R}{\ssN_{\sigma,\R}}$ 
  defined as the collection of all the cones generated by the projection of a face $\sigma'\supfaceq \sigma$ into $\ssN^\sigma_\R$. In local polyhedral chart $(W,\phi)$ that contains $\sigma$, let $\comp\Sigma$ be the canonically compactified fan with where the image of $\phi$ lands, i.e., 
  $\phi(W) \subset \comp\Sigma$.   Let $\eta$ be the smallest face of $\Sigma$ that contains $\phi(\sigma)$. Then, $\Pi^\sigma = \Sigma^\eta$.  
  (Note that, more generally, for each sedentarity stratum $X_s$, 
  we get an induced polyhedral structure $\Pi_s$ from $\olPi$, 
  and for $\sigma\in \Pi_s$, we can define its star fan $\Pi_s^\sigma$ in a similar way.)

 \medskip

Let $\olPi$ be a polyhedral structure on an exteneded polyhedral space $X$. A polyhedral subcomplex $\olPi'$ of $\olPi$ is a collection of polytopes $P \in \olPi$ such that $\olPi'$ is a polyhedral structure. This means that the union $Y$ 
of polytopes in $\olPi'$ is itself an extended 
 polyhedral space admitting $\olPi'$ as a polyhedral structure. In this case we say that $Y$ is a polyhedral subspace of $X$.
 
Let $\olPi$ and $\olPi'$ be two polyhedral structures on $X$. We say that $\olPi'$ is a refinement of $\olPi$ if each $P'\in \olPi'$ is contained in an element $P\in \olPi$.

\subsubsection{Pushforward of weight functions} \label{subsubsec:pushforward_of_weight_functions}
Suppose $\varphi \colon X \to X'$ is a proper tropical morphism,  and let $n = \dim X$.  
The pushforward $\phi_*(X)$ is defined as follows. 

Replacing $X'$ by the closure of the sedentarity stratum that contains $\varphi(X_o)$, 
we may assume $\varphi(X_o) \subset X_o'$.

Let $\Pi$ be a rational polyhedral structure on $X_o$. After taking a refinement of $\Pi$ if necessary, we may assume that $\varphi(P)$ is contained in a rational polytope of $X_o'$ for each full-dimensional polytope $P \in \Pi$. 
Then the set $Y_o$ underlying $\varphi_*(X)_o$ is defined as the union of all $\varphi(P)$ for $P\in \Pi$ such that $\varphi(P)$ is $n$-dimensional. Then, $\varphi_*(X)$ is the closure $Y$ of $Y_o$ in $X'$. 
We define a weight function on $Y_o$. 
Let $Z \subset Y_{\reg}$ be the subset of points $z$ such that $\varphi^{-1}(z)$ is a finite subset of $X_{\reg}$ 
(with $Y_{\reg}$ the set of points in $Y$ that admit an open neighborhood homeomorphic to a ball in $\R^n$).
  
 \noindent Define $\weight_Y \colon Z \to \N$ 
by
\begin{equation} \label{equation:definition_weight_projection}
  \weight_Y(z) = \sum_{w \in \varphi^{-1}(z)} 
  \left| N_z / d\varphi(N_w) \right| \, \weight_X(w),
\end{equation}
where $d\varphi$ denotes the linear part of $\varphi$ at $w$. 
By \cite[Prop.~6.2.1]{MikhalkinRau}, the function $\weight_Y$ extends to a weight function on $Y_{\mathrm{reg}}$ such that $(Y_o, \weight_Y)$ is a tropical variety. 
The pushforward $\varphi_*(X)$ is then defined as the closure of $Y$ in $X'$ endowed with this weight function on $Y_o$. In general, this is not a tropical variety in the sense defined in this section, 
as it might have ``singularities at infinity''. We view $\varphi_*(X)$ as a tropical variety with singularities at infinity.

\subsubsection{Projective tropical toric varieties} \label{subsubsec:projective_tropical_variety-p2}
Let $\Xi$ be a simplicial rational fan on $\ssN_\R$. 
We say the tropical toric variety $\BT \BP_{\Xi}$ is projective if the corresponding toric variety $\BP_{\Xi}$ is projective. That is to say, the rational fan $\Xi $ satisfies the following two properties.
\begin{enumerate}
  \item $\Xi$ is complete.
  \item There exists a strictly convex conewise linear function $g$ on $\Xi$, \textit{i.e.} for each $\delta \in \Xi$, there exists a linear function $l$ which agrees with $g$ on $\delta$ and satisfies $g>l$ on $E \setminus \delta$.
\end{enumerate}

We recall the following basic result in toric geometry.
\begin{thm} \label{thm:toric_resolution} \textnormal{[Theorem 6.1.18 and Theorem 11.1.9 in \cite{CLS11}]}
For any complete simplicial rational fan $\Xi$ in $\ssN_\R$, there exists a successive stellar subdivision $\Xi'$ of $\Xi$ such that $\Xi'$ is simplicial and rational, and $\BT \BP_{\Xi'}$ is projective. 
\end{thm}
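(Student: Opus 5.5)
The statement is classical toric geometry, and the plan is to assemble it from the two cited results in \cite{CLS11}, keeping track of simpliciality and rationality at each stage.

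The first step is the toric Chow lemma, Theorem 6.1.18 of \cite{CLS11}. Applied to the complete rational fan $\Xi$, it gives a refinement $\Xi''$ such that $\P_{\Xi''}$ is projective. Concretely, I would choose a strictly convex support function whose domains of linearity refine $\Xi$; for instance, one can take the normal fan of a Minkowski sum of polytopes adapted to the cones of $\Xi$. The refinement produced this way is rational, but in general it is neither simplicial nor obtained by stellar subdivisions.

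The second step upgrades this refinement to a sequence of stellar subdivisions. Following Theorem 11.1.9 of \cite{CLS11} (the toric resolution and weak factorization type statements), I would perform star subdivisions of $\Xi$ at rational rays, in a suitable order, until every cone of $\Xi''$ becomes a union of cones of the result $\Xi'$. Starring at rays through lattice points of cones keeps the fan rational. Since $\Xi$ is simplicial, starring at a ray inside a simplicial cone produces only simplicial cones, so $\Xi'$ stays simplicial.

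The third step proves projectivity of $\P_{\Xi'}$ by induction along the subdivision sequence. The key fact is that a star subdivision of a fan carrying a strictly convex conewise linear function $g$ again carries one. One takes $g-\epsilon h$, where $h$ is the conewise linear function that equals $1$ on the primitive generator of the new ray and $0$ on the old rays, and $\epsilon>0$ is small. Strict convexity on the old cones survives for small $\epsilon$, and on the new cones it follows from the shape of $h$. This is the blow-up of a projective toric variety along an orbit closure, which is again projective.

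The main obstacle is reconciling the two steps: the projective refinement from Chow's lemma is not itself a stellar subdivision. To get around this, I would avoid matching a given $\Xi''$ exactly. Instead I would start directly from $\Xi$ and subdivide stellarly, for example by iterated barycentric star subdivisions, which are projective over $\Xi$. The plan would then be to argue that after finitely many such subdivisions one dominates $\Xi''$, and that projectivity over a projective base, combined with the previous step, gives projectivity of $\P_{\Xi'}$. This domination claim is the step whose details I would defer to the proof of Theorem 11.1.9 in \cite{CLS11}.
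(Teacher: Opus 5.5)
\emph{Verdict.} As you noticed, your Step~3 has nothing to start from. The induction along star subdivisions of $\Xi$ needs a strictly convex conewise linear function on $\Xi$ itself, and $\Xi$ is not assumed projective. So the whole argument rests on the domination claim in your last paragraph: that some successive stellar subdivision $\Xi'$ of $\Xi$ refines the Chow refinement $\Xi''$. That is the real content of the statement as you read it, and it is not supplied by \cite[Thm.~11.1.9]{CLS11}. That result resolves a given fan by star subdivisions; it does not produce a stellar subdivision of $\Xi$ dominating a prescribed refinement.

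Your suggested mechanism also fails. A scheme that ignores $\Xi''$, such as iterated barycentric subdivision, need never respect a given wall. For example, in $\mathrm{Cone}(e_1,e_2,e_3)$ take barycenters to be sums of primitive generators, and put $(u_0,w_0)=(e_2,e_3)$, $u_{k+1}=e_1+w_k$, $w_{k+1}=e_1+u_k+w_k$. Then $\mathrm{Cone}(e_1,u_{k+1},w_{k+1})$ is a cone of the barycentric subdivision of $\mathrm{Cone}(e_1,u_k,w_k)$. For $\ell=x_1-2x_2-3x_3$, induction gives $\ell(e_1)=1$, $\ell(u_k)\le -2$ and $\ell(w_k)\le -3$ for all $k$. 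Hence every iterate has a cone whose interior meets the plane $\{\ell=0\}$.

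\emph{How to close the gap.} The subdivisions must either be performed on $\Xi''$ or be adapted to it. The route behind the paper's citation is the first. Apply the star-subdivision resolution (or plain simplicialization by starring) to the projective fan $\Xi''$. Your Step~3 lemma then applies verbatim, because the starting fan is projective. This yields a simplicial rational projective refinement of $\Xi$ (a stellar subdivision of $\Xi''$), which is all that Corollary~\ref{cor:refinement_of_open_tropical_varieties_are_projective} uses.

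If you want stellar subdivisions of $\Xi$ itself, proceed as follows.
\begin{enumerate}
\item Take for $\Xi''$ the fan of the central arrangement of the rational hyperplanes spanned by the walls of $\Xi$. It refines $\Xi$ and is the normal fan of a zonotope, hence projective.
\item Cut by one hyperplane $H=\{\ell=0\}$ at a time. Star at the ray $\mathrm{Cone}(u,v)\cap H$ for a two-dimensional cone with $\ell(u)>0>\ell(v)$. This removes that cone and creates no new cone of this kind. After finitely many steps every cone lies in a closed half-space of $H$, and the result is simplicial and rational.
\item Projectivity follows from your Step~3 in relative form. Let $k=-\sum_i\epsilon_i h_i$ combine the successive Courant functions, with $\epsilon_{i+1}\ll\epsilon_i$. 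Then $k$ is convex on each cone of $\Xi$ and bends strictly across every wall of $\Xi'$ lying inside a maximal cone of $\Xi$. The remaining walls of $\Xi'$ lie in hyperplanes of the arrangement, and a strictly convex function $g$ on $\Xi''$ bends strictly across them. Hence $g+\delta k$ bends strictly across every wall of the complete fan $\Xi'$, and so is strictly convex for small $\delta>0$.
\end{enumerate}
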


Let $X$ be a tropical variety. We say that $X$ is a tropical subvariety of $\BT \BP_{\Xi}$ if $X$ is a closed polyhedral subspace of $\BT \BP_{\Xi}$ and $X \cap (\BT \BP_{\Xi})_{o}$ is nonempty. Let $Y$ be a tropical variety embedded in $\ssN_\R$. 
Suppose that there exists a simplicial rational polyhedral structure $\Pi$ on $Y$ such that $\Sigma \coloneqq \rec(\Pi)$ is a subfan of $\Xi$. Then the closure of $Y$ in $\BT \BP_{\Xi}$ is isomorphic to the canonical compactification of $Y$ with respect to $\Sigma$, and it is a tropical subvariety of $\BT \BP_{\Xi}$.

We say that a tropical variety $X$ is projective if it is isomorphic to a tropical subvariety of a projective tropical toric variety.

\subsection{Chow rings of fans} \label{subsec:chow-ring-p2}

Let $\Sigma$ be a simplicial rational fan of dimension $d$. By definition, the Chow ring of $\Sigma$ is the Chow ring of the corresponding toric variety $\P_\Sigma$.  
With real coefficients, it is the graded algebra defined by quotienting the polyomial ring in variables $\x_\zeta$, $\zeta\in \Sigma_1$,
\[
  \ssA^\bul \coloneqq 
  \rquot{\R[\x_\zeta]_{\zeta\in \Sigma_1}}{(I + J)},\quad \text{where}
\]
\begin{itemize}
  \item $I$ is the homogeneous ideal generated by the products $\x_{\zeta_1}\cdots \x_{\zeta_k}$, $k\in \N$, for rays $\zeta_1,\dots,\zeta_k$ that do not span a cone in $\Sigma$, and
  \item $J$ is the homogeneous ideal generated by the elements
  \[
    \sum_{\zeta \in \Sigma_1} m(\e_\zeta)\, \x_\zeta, \qquad m \in \ssM_{\R}.
  \]
\end{itemize}

For each face $\sigma \in \Sigma$, set $\x_\sigma \coloneqq \prod_{\zeta\in\Sigma_1 \\ \zeta\subface\sigma} \x_{\zeta}$. It can be shown that the homogeneous part $\ssA^k$ is spanned by elements of the form $\x_\sigma$ with $\sigma \in \Sigma_k$.  
In particular, $\ssA^k(\Sigma) = 0$ unless $0 \leq k \leq d$.

Let now $(\Sigma, \weight_\Sigma)$ be a simplicial tropical fan. The tropical weight  $\weight_\Sigma$ induces a map 
\[
\deg\colon \ssA^d \to \R.
\]
Explicitly, for each face $\sigma$ of $\Sigma_d$, we have 
\[
\deg(\x_\sigma) = \frac1{\abs{\det\left(\e_{\rho_1}\dots \e_{\rho_d}\right)}}\weight_\Sigma(\sigma),
\]
where $\left(\e_{\rho_1}\dots \e_{\rho_d}\right)$ is the $d\times d$ matrix with columns the primitive vectors of the rays $\rho_1, \dots, \rho_d$ of $\sigma$, written in a basis of the lattice $\ssN_\sigma$.  

\smallskip

Throughout we use $\x, \y,\z$ for elements of the Chow rings.

\subsection{Mixed differential forms relative to a polyhedral structure}\label{subsec:trop-diff-calculus-2}

We give a brief account of the formalism of mixed differential forms introduced in~\cite{APW1}. This yields tropical semipositive and K\"ahler forms.

\smallskip

Let $X$ be a tropical variety of dimension $d$ with tropical weight function $\weight$. Let 
$\olPi$ be a simplicial polyhedral structure on $X$, with $\Pi$ the induced structure on $\xo$, and let $\Pi_\f \subset \Pi$ be the subcomplex defined by compact faces (simplices).

The differential calculus is defined first relative to $\Pi$. It behaves well under refinements and yields a differential calculus depending only on the support of $X$.
 
For $\delta \in \Pi$, let $\iota_\delta\colon \delta \hookrightarrow X$ denote the inclusion.

\subsubsection{Coefficients} 

For each $\tau\in \Pi$, the star fan $\Pi^\tau$ has dimension $d-\dims\tau$, and is a simplicial tropical fan with the tropical weight function induced from $\weight$. 
Denote by $\ssA^\bul(\tau)$ the Chow ring of the star fan $\Pi^\tau$. This is a graded $\R$-algebra
\[
  \ssA^\bul(\tau) = \ssA^0(\tau) \oplus \dots \oplus \ssA^{d-\dims\tau}(\tau).
\]
In the presentation given in Section~\ref{subsec:chow-ring-p2}, the generator given by a ray $\eta^\tau$ of $\Pi^\tau$ (for $\eta \ssupface \tau$ in $\Pi$), is denoted by $\x_{\eta^\tau}$. For any cone $\sigma^\tau$ in $\Pi^\tau$, we denote by $\x_{\sigma^\tau}$ the corresponding element in $A^{\dims\sigma-\dims\tau}(\tau)$, defined as the product of $\x_{\eta^\tau}$ for the rays $\eta^\tau$ of $\sigma^\tau$.

\smallskip

The map  $\ssA^\bul$, $\tau \mapsto \ssA^\bul(\tau)$, from $\Pi$ to the category of finite-dimensional graded $\R$-algebras,  comes with the following additional structure:

\begin{itemize}
  \item For each $\tau\in \Pi$, we have the degree map
  \[
  \deg_\tau \colon \ssA^{d-\dims\tau}(\tau) \to \R.
  \]

  \item For elements $\tau \subface \sigma$ of $\Pi$, we have the restriction map 
  \[
    \res_{\tau \subface\sigma} \colon \ssA^\bul(\tau) \to \ssA^\bul(\sigma).
  \]
  \item For elements $\tau \subface \sigma$ of $\Pi$, we have the Gysin map 
  \[
    \gys_{\sigma \supface \tau} \colon \ssA^\bul(\sigma) \to \ssA^{\bul+\dims\sigma-\dims\tau}(\tau).
  \]
  \end{itemize}
  We refer to~\cite[\S\,3.1]{APW1} for a list of compatibility properties between these maps.

\noindent Mixed differentials restrict to smooth forms on each $\tau$ taking values in the Chow ring $A^\bul(\tau)$.  

\subsubsection{Tropical conjugation} We first recall the formalism of Lagerberg~\cite{Lag12}. Let $\delta$ be a face of $\Pi$ with tangent space 
 $\ssT_\delta$, and lattice $\ssN_\delta \subset \ssT_\delta$.  Take a second copy $\ssbarT_\delta$ and $\ssbarN_{\delta}$ of $\ssT_\delta$ and $\ssN_{\delta}$, respectively. Set
\[
\ssW_{\delta} \coloneqq \ssT_{\delta} \oplus \ssbarT_{\delta}.
\]

For each positive integer $k$, we have a natural decomposition
 \[ 
 \bigwedge^k\ssW_{\delta} = \bigoplus_{p+q=k} \bigwedge^{p,q}\ssW_{\delta} \quad \textrm{with} \quad \bigwedge^{p,q}\ssW_{\delta} \coloneqq \bigwedge^p \ssT_{\delta} \otimes \bigwedge^q \ssbarT_{\delta}.
 \]

The isomorphism $\ssT_{\delta} \simeq \ssbarT_{\delta}$ given by the identity map induces an involution $\a \mapsto \bar\a$ on $\ssW_{\delta}$
\[
  \begin{pmatrix}
   0 & \id\\
   \id & 0
    \end{pmatrix} 
    \colon \ssT_{\delta} \oplus \ssbarT_{\delta} \rightarrow \ssT_{\delta} \oplus \ssbarT_{\delta}
\]
that we call the \emph{tropical conjugation}. It extends naturally to $\bigwedge^k\ssW_{\delta}$, for all integers $k$, giving the identification $\overline{\bigwedge^{p,q}\ssW_{\delta}} = \bigwedge^{q,p}\ssW_{\delta}$, for all integers $p,q$ with $p+q=k$.

\smallskip
We denote by $\ssW_\delta^{^*} = \ssT_\delta^{^*}\oplus \overline{\ssT_\delta^{^*}}$ the dual of $\ssW_\delta$, and by $\bigwedge^k\ssW_\delta^{^*}$ and $\bigwedge^{p,q}\ssW_\delta^{^*}$ the duals of $\bigwedge^k\ssW_{\delta}$ and of $\bigwedge^{p,q}\ssW_{\delta}$, respectively. We have a decomposition
\[
\bigwedge^{k}\ssW_{\delta}^{^*} = \bigoplus_{p+q=k} \bigwedge^{p.q}\ssW_{\delta}^{^*},\]
and an induced tropical conjugation on $\bigwedge^k\ssW_\delta$ that verifies $\overline{\bigwedge^{p,q}\ssW_{\delta}^{^*}} = \bigwedge^{q,p}\ssW_{\delta}^{^*}$.

If $\v \in \bigwedge^k\ssW_{\delta}$ and $\alpha \in \bigwedge^k\ssW_\delta^{^*}$, we call $k$ the \emph{degree} of $\v$ and $\alpha$, and write $\deg(\v) = \deg(\alpha)=k$.

\medskip

In the following, if $D$ is an endomorphism of a vector space $\ssW$ endowed with the tropical conjugation as above, we denote by $\bar D$ the \emph{tropical conjugate} of $D$ defined as follows: 
\[\bar D(\alpha) \coloneqq \bar{D(\bar \alpha)} \qquad \text{for any element }\alpha \in \ssW.\]

\subsubsection{Dolbeault forms on faces}

For each face $\delta$ in $\Pi$, let $\nC^{^{\,\infty}}_\delta$ be the sheaf of smooth functions on $\delta$. The sheaf of \emph{Dolbeault forms of bidegree $(r,s)$} on $\delta$ is the sheaf
\[
\nC^{^{\,\infty}}_\delta \otimes \bigwedge^{r,s}\ssW_\delta^{^*}.
\]

For an inclusion of faces $\tau \subface \delta$, we have the restriction map of sheaves 
\[ 
\cdot\rest{\tau}\colon \nC^{^{\,\infty}}_\delta \otimes \bigwedge^{r,s}\ssW_\delta^{^*} \longrightarrow \nC^{^{\,\infty}}_\tau \otimes \bigwedge^{r,s}\ssW_\tau^{^*}.
\]

\subsubsection{The sheaf $\ssOmega^{p,q}$ of mixed $(p,q)$-forms relative to $\Pi$} 

Let $\delta$ be an element of $\Pi$. 
Consider the sheaf 
\[
\ssA^\bul(\delta)\otimes \nC^{\,\infty}_\delta \otimes \bigwedge^{r,s}\ssW_\delta^{^*}
\]
that we view as the sheaf of differential $(r,s)$-forms on $\delta$ taking values in the ring $\ssA^\bul(\delta)$. We will glue these sheaves and define the sheaf of mixed differential forms, relative to $\Pi$.

For a triple of nonnegative integers $p,q, k$, consider the sheaf
\[
\ssOmega^{p,q,k}_{\delta} \coloneqq  \ssA^k(\delta) \otimes \nC^{^{\,\infty}}_\delta \otimes \bigwedge^{p-k,q-k}\ssW_\delta^{^*}.
\]
Sections of $\ssOmega^{p,q,k}_{\delta}$ are $(p-k,q-k)$-differential forms which take values in $\ssA^k(\delta)$. Here, we view elements of the Chow ring $A^k$ as contributing by bidegree $(k,k)$, so that the bidegree of the elements in the right hand side is $(p,q)$. In the triple $(p,q,k)$ on the left hand side, $(p,q)$ is bidegree, and $k$ is the part that comes from the Chow ring (cohomology) of the star fan.

The sheaf $\ssOmega^{p,q,0}_{\delta}$ coincides with the sheaf of Dolbeault forms $\nC^{^{\,\infty}}_\delta \otimes \bigwedge^{p,q}\ssW_\delta^{^*}$. For $k> \min(p,q, d-\dims{\delta})$, we get $\ssOmega^{p,q,k}_{\delta} =0$. 

The sheaf $\ssOmega^{p,q}_{\delta}$ of \emph{mixed $(p,q)$-forms on $\delta$} is 
\[
  \ssOmega^{p,q}_{\delta} \coloneqq \bigoplus_{k\geq 0} \ssOmega^{p,q,k}_{\delta}.
\]

By an abuse of notation, we simply write $\ssOmega^{p,q}_{\delta}$ and $\ssOmega^{p,q,k}_{\delta}$ for the pushout of these sheaves to $X$ via the inclusion map $\iota_\delta \colon \delta \hookrightarrow X$.

For an inclusion of faces $\tau \subface \delta$ of $\Pi$, 
the two restriction maps 
\[ 
\res_{\tau \subface \delta} \colon \ssA^k(\tau) \longrightarrow \ssA^k(\delta)\quad \textrm{and} \quad \cdot\rest{\tau}\colon \nC^{^{\,\infty}}_\delta \otimes \bigwedge^{p-k,q-k}\ssW_\delta^{^*} \longrightarrow C^\infty_\tau \otimes \bigwedge^{p-k,q-k}\ssW_\tau^{^*}
\]
yield the following maps
\begin{equation}\label{eq:gluing}
\begin{tikzcd}
  \ssOmega^{p,q,k}_{\delta} = \ssA^k(\delta) \otimes \nC^{^{\,\infty}}_\delta \otimes \bigwedge^{p-k,q-k}\ssW_\delta^{^*}\arrow[r, "\,\id\otimes-\rest\tau\,"] & \ssA^k(\delta) \otimes \CC^\infty_\tau \otimes \bigwedge^{p-k,q-k}\ssW_\tau^{^*}\\
& \ssOmega^{p,q,k}_{\tau} = \ssA^k(\tau) \otimes \nC^{^{\,\infty}}_\tau \otimes \bigwedge^{p-k,q-k}\ssW_\tau^{^*}  \arrow[u, "\res_{\tau\subface\delta}\otimes\id"']
\end{tikzcd}
\end{equation}
We define the sheaf $\ssOmega^{p,q,k}$ on $X$ as the gluing of the sheaves $\ssOmega^{p,q,k}_{\delta}$, $\delta \in \Pi$, 
via the diagrams in~\eqref{eq:gluing}. 
More precisely, the sheaf $\ssOmega^{p,q,k}$ of \emph{mixed $(p-k,q-k)$-forms with values in $A^k$} on $X$, relative to $\Pi$, is 
 the kernel of the map of sheaves on $X$
\begin{equation}\label{eq:gluing2}
\begin{tikzcd}[column sep=6cm]
\bigoplus\limits_{\delta\in \Pi} \ssOmega^{p,q,k}_{\delta} \arrow[r, "\bigoplus_{\tau\subface \delta}\Bigl(\,\bigl(\id\otimes-\rest\tau\bigr) - \bigl(\res_{\tau\subface\delta}\otimes\id\,\bigr)\Bigr)"] & \bigoplus\limits_{\substack{\tau,\delta\in \Pi \\ \tau\subface\delta}} \ssA^k(\delta) \otimes \CC^\infty_\tau \otimes \bigwedge^{p-k,q-k}\ssW_\tau^{^*}.
\end{tikzcd}
\end{equation}

Explicitly, a section of $\ssOmega^{p,q,k}$ is given as follows.  
On an open subset $U \subseteq X$,  a section $\a$ in $\ssOmega^{p,q,k}(U)$ is a collection of mixed forms 
\[
\a_\delta \in \ssA^k(\delta) \otimes C^\infty_\delta(U\cap\delta) \otimes \bigwedge^{p-k,q-k}\ssW_\delta^{^*}, \qquad \delta \in \Pi,
\]
 with compatibility relations
\[ 
\a_\delta\rest\tau = \res_{\tau \subface\delta}(\a_\tau) \quad \text{in}\quad  \ssA^k(\delta) \otimes C^\infty_\tau(U\cap\tau) \otimes \bigwedge^{p-k,q-k}\ssW_\tau^{^*} \quad \text{ for each }\tau \subface \delta.
\]

We define \emph{the sheaf of mixed $(p,q)$-forms on $X$, relative to $\Pi$,} as the direct sum
\begin{equation} \label{eq:decomp_toric_weight}
\ssOmega^{p,q} = \bigoplus_{k\geq 0}\ssOmega^{p,q,k}.
\end{equation}
We naturally define a wedge product on the sheaves $\ssOmega^{\bul,\bul}$, see~\cite[\S\,3]{APW1}.
%cross-ref to APW1: \ref{sec:wedge-product-p1}

Note that $\ssOmega^{0,0}$ is the sheaf of continuous functions on $\xo$ that are smooth on each face $\delta\in\Pi$. When necessary, we write $\ssOmega_\Pi$ to emphasize the dependency on $\Pi$.

\subsubsection{Tropical conjugation on mixed forms} 

We extend the tropical conjugation to $\ssOmega_\delta^{\bul,\bul}$, $\delta\in \Pi$, and  $\ssOmega^{\bul,\bul}$ by setting
\[
\bar{\z\otimes\alpha} \coloneqq (-1)^k \z\otimes\bar{\alpha}
\]
for each $\delta\in \Pi$, $\z\in \ssA^k(\delta)$, and $\alpha$ a section of $ \nC^{^{\,\infty}}_\delta \otimes \bigwedge^{p-k,q-k}\ssW_\delta^{^*}$, for nonnegative integers $k,r,s$. Note that the tropical conjugation only acts on $\bigwedge^{\bul,\bul}\ssW_\delta^{^*}$ (elements of the Chow ring are real).

\subsection{Mixed differential calculus} \label{subsec:dd-p2} 
We explain the construction of the two operators $\dm$ and $\dmb$ of bidegrees $(1,0)$ and $(0,1)$, respectively, acting on mixed differential forms.

\smallskip

Let $\delta$ be a face of $\Pi$.
 We have the two operators $\d_\delta$ and $\dhb_\delta$ acting on Dolbeault forms $\nC^{^{\,\infty}}_\delta \otimes \bigwedge^{\bul,\bul}\ssW_\delta^{^*}$. Namely,  
\begin{equation*}
_\delta \colon \nC^{^{\,\infty}}_\delta \otimes \bigwedge^{p,0}\ssW_\delta^{^*} = \nC^{^{\,\infty}}_\delta \otimes \bigwedge^{p}\ssT_\delta^* \longrightarrow \nC^{^{\,\infty}}_\delta \otimes \bigwedge^{p+1,0}\ssW_\delta^{^*} = \nC^{^{\,\infty}}_\delta \otimes \bigwedge^{p+1}\ssT_\delta^{^*}
\end{equation*}
is the usual differential on smooth forms on $\delta$. Tensoring by $\bigwedge^q \overline{\ssT_\delta^{^*}}$ on both sides, this yields 
\[
\dh_\delta\colon \nC^{^{\,\infty}}_\delta \otimes \bigwedge^{p,q}\ssW_\delta^{^*} \longrightarrow \nC^{^{\,\infty}}_\delta \otimes \bigwedge^{p+1,q}\ssW_\delta^{^*}.
\]
 We similarly define 
 \[
 \dhb_\delta\colon \nC^{^{\,\infty}}_\delta \otimes \bigwedge^{p,q}\ssW_\delta^{^*} \longrightarrow \nC^{^{\,\infty}}_\delta \otimes \bigwedge^{p,q+1}\ssW_\delta^{^*}.
 \]
 Note that $\dhb_\delta$ is the tropical conjugation of $\dh_\delta$, that is, we have 
\[ \dhb_\delta\alpha = \bar{(\dh_\delta\bar\alpha)}. \]

If $\v\in \bigwedge^{p,q}\ssW_\delta$ is a $(p,q)$-multivector, we denote by $\imath_\v$ 
the contraction of Dolbeault forms on $\delta$ by $\v$, that is,  
\[
\imath_\v\colon \nC^{^{\,\infty}}_\delta \otimes \bigwedge^{\bul,\bul}\ssW_\delta^{^*} \to \nC^{^{\,\infty}}_\delta \otimes \bigwedge^{\bul-p,\bul-q}\ssW_\delta^{^*}, \qquad \alpha \longmapsto \imath_\v(\alpha) = \alpha(\v\wedge{}\cdot\,).
\]
By our convention on the tropical conjugation of endomorphisms, we have $\bar\imath_\v = \imath_{\bar\v}$.

\smallskip

We naturally extend the operators $\d_\delta$, $\bar \dh_\delta$, and $\imath_\v(\alpha)$, defined above, to the mixed sheaves $\ssOmega^{\bul,\bul,\bul}_{\delta}$ and $\ssOmega^{\bul,\bul}_\delta$. In this context, we have a Leibniz and a commutator rule, see~\cite[\S\,3.7]{APW1}. 
%cross-ref to APW1: \ref{sec:dd} and {subsec:mixed-differential-operators}

\subsubsection{Definition of $\dm$ and $\dmb$} We define the mixed differential operators
\[
\dm \colon \ssOmega^{\bul, \bul} \to \ssOmega^{\bul+1, \bul} \quad \textrm{and} \quad \dmb \colon \ssOmega^{\bul, \bul} \to \ssOmega^{\bul, \bul+1}
\]
by taking into account the contributions of $\d_\bul$ and $\dhb_\bul$ on boundary faces, as follows.

The definition is local, so taking a chart, we can assume without loss of generality
 that $\Pi$ is embedded in  $\R^n$.
For each $\delta \in \Pi$, choose an arbitrary complement $\NN^\delta$ to $\ssN_{\delta}$ in $\Z^n$ so that
\[
\NN^\delta \oplus \ssN_{\delta} =\Z^n.
\]
For each pair $\eta\ssupface\delta$, we pick $\nvect_{\eta/\delta}$ to be the primitive vector in $\ssN_{\eta}\cap\NN^\delta$ pointing to $\eta$.

By definition, $\ssOmega^{\bul, \bul}$ is a subsheaf of the sheaf $\bigoplus_{\delta\in \Pi}\ssOmega_\delta^{\bul,\bul}$ on $X$.
We define first 
\[
\dm\colon \bigoplus_{\delta\in \Pi}\ssOmega_\delta^{\bul,\bul} \longrightarrow \bigoplus_{\delta\in \Pi}\ssOmega_\delta^{\bul+1,\bul}
\]
as the sum 
\[
\dm \coloneqq \dm_{\parr} + \dm_{\perp}
\]
of \emph{tangential} $\dm_{\parr}$ and \emph{transversal} $\dm_{\perp}$ 
parts. The tangential part $\dm_{\parr}$ is given simply by the $\d_\delta$
\[
  \dm_{\parr}\coloneqq \sum_{\delta\in\Pi}\d_\delta \colon \bigoplus_{\delta\in \Pi}\ssOmega_\delta^{\bul,\bul} \longrightarrow \bigoplus_{\delta\in \Pi}\ssOmega_\delta^{\bul+1,\bul}.
\]
The transversal part 
\[
 \dm_{\perp}\colon \bigoplus_{\delta\in \Pi} \ssOmega_\delta^{\bul,\bul} \longrightarrow \bigoplus_{\delta\in \Pi} \ssOmega_\delta^{\bul+1,\bul}
\]
absorbs the boundary contribution (in the distributional sense) of $\dm_{\parr}$ at each face of codimension one by transferring the contraction along the corresponding primitive vector into the Chow ring via the Gysin map.

Formally, it is defined by sending a section $\z\otimes\alpha$ of $\ssOmega_{\delta}^{\bul,\bul}$ to the section $\dm_{\perp}(\z\otimes\alpha)$ of $\bigoplus_{\gamma \ssubface \delta}\ssOmega_\gamma^{\bul+1,\bul}$ given by 
 \[ 
 \begin{array}{rrcl}
    \dm_{\perp}(\z\otimes\alpha) \coloneqq   \sum_{\gamma\ssubface\delta} \gys_{\delta\ssupface\gamma}(\z)\otimes\bar\imath_{\nvect_{\delta/\gamma}}(\alpha)\rest\gamma.
 \end{array} \qedhere 
 \]
 Note that for $\z$ of degree $k$, $\gys_{\delta\ssupface\gamma}(\z)$ has degree $k+1$. Also, if $\alpha$ has bidegree $(p-k, q-k)$, then $\bar\imath_{\nvect_{\delta/\gamma}}(\alpha)\rest\gamma$ has bidegree $(p-k, q-k-1)$. Therefore, 
 \[
 \dm_{\perp} \colon \bigoplus_{\delta\in \Pi}\ssOmega_\delta^{p,q,k} \longrightarrow \bigoplus_{\gamma\in \Pi}\ssOmega_\gamma^{p+1,q,k+1}.
 \]
We define $\dmb$ as the tropical conjugate of $\dm$. We proved in~\cite[Thm.\,3.4]{APW1} that two operators are independent of the choices of the subspace complements $\NN^\delta$ for $\delta\in \Pi$. Moreover, they preserve the subsheaves of mixed differentials, that is, they induce differential operators 
%cross-ref to APW1: \ref{thm:diff-well-defined-p1}
\[
  \dm_{\parr}  \colon \ssOmega^{\bul,\bul} \to \ssOmega^{\bul+1,\bul} \quad \textrm{,} \quad \dm_{\perp}  \colon \ssOmega^{\bul,\bul} \to \ssOmega^{\bul+1,\bul} \quad \textrm{and} \quad \dm = \dm_\parr+\dm_\perp \colon \ssOmega^{\bul,\bul} \to \ssOmega^{\bul+1,\bul}.
  \]
We also proved in~\cite{APW1} that 
\[
  \dm\circ\dm = 0, \quad \dmb\circ\dmb = 0, \quad \dm\circ\dmb = \dmb\circ\dm.
\]

\subsection{The degree map}  \label{subsec:degree_map}  
We can define a \emph{degree map} by integrating top-degree mixed forms. 
Namely, the integration map $\int$ on $\ssOmega^{d,d}$ 
is the map 
\[
\int\colon \quad \bigoplus_{\delta\in \Pi}\ssOmega_\delta^{d,d} \to \R\cup\{\infty\}
\]
defined for sections on bounded open sets, or globally for nonnegative $(d,d)$ forms, as follows.

Let $U\subset X$ be an open set. For a face $\delta\in \Pi$ of dimension $k$, a mixed $(d,d)$ form on $U\cap\delta$ is a linear combination of elements of the form $\z \otimes \alpha$ for  $\z \in A^{d-k}(\delta)$ and $\alpha$ a section in $\nC^{^{\,\infty}}_\delta(U\cap \delta) \otimes \bigwedge^{d-k,d-k}\ssW_\delta^{^*}$. This is because $\ssOmega_\delta^{d,d,j} =0$ if $j\neq \dims{\delta}$.
 
  Then, we set 
\[ 
\int_U \z \otimes \alpha \coloneqq (-1)^{\binom{k+1}{2}}\deg_{\delta}(\z) \cdot \int_{U\cap \delta} \bar\imath_{\nu_\delta}(\alpha),
\]
and extend it by linearity to all of $\ssOmega_{\delta}^{d,d}$. Here, $\nu_\delta$ refers to a canonical multivector of $\delta$ defined as $\nvect_1\wedge \dots \nvect_{k}$ for a basis $\nvect_1, \dots, \nvect_k$ of the lattice $\ssN_\delta$. Note that there are two choices for $\nu_\delta$ differing by a sign. This choice defines an orientation of $\delta$, and the integration of the $d$-form $\bar\imath_{\nu_\delta}(\alpha)$ on $\delta$ is defined relative to this orientation. Therefore, the definition of $\int$ is independent of the choice of canonical multivector.

The chosen of sign in the above expression guarantees that the degree of the $(d,d)$ form $\omega^d$ for a semipositive mixed form $\omega\in\Omega^{1,1}$ is nonnegative.

The degree map is by definition the integration of top-degree mixed forms on $X$ (for those that are either semipositive or compactly supported). We use both the notation $\deg$ and $\int_X$ for the degree map, so $\deg(\omega^d)=\int_X\omega^d$.
\subsection{The sheaf of mixed differential forms $\ssOmega_{X}^{\bul, \bul}$} 

For a refinement $\olPi'$ of $\olPi$, with $\Pi'$ the corresponding refinement of $\Pi$, we get a natural injection 
\[
\Phi\colon \ssOmega^{\bul,\bul}_{\Pi} \hookrightarrow \ssOmega_{\Pi'}^{\bul,\bul},
\]
where $\ssOmega^{\bul,\bul}_{\Pi}$ and $\ssOmega^{\bul,\bul}_{\Pi}$ are mixed differential forms relative to $\Pi$ and $\Pi'$, respectively. 

The inclusion $\Phi$ commutes with the inclusions of $\ssOmega^{0,0}_{\Pi}$ and $\ssOmega_{\Pi'}^{0,0}$ into $\nC^0(X)$, with the tropical conjugation, with the wedge product, with the differentials $\dm$ and $\dmb$, and the degree map.

For each pair of nonnegative integers $p,q$, we define 
\[\ssOmega^{p,q}_{X} \coloneqq \varinjlim_{\Pi} \ssOmega^{p,q}_{\Pi}\]
as $\Pi$ varies over all simplicial polyedhral structures on $\xo$ induced by a simplicial polyhedral structrues $\olPi$ on $X$. We infer from the abvoe discussion that the operations  of tropical conjugation, wedge product, $\dm$ and $\dmb$, and the degree map all 
extend to $\ssOmega_X^{p,q}$. 

The sheaf $\ssOmega_X^{0,0}$ coincides with the sheaf $\PSm$  of piecewise smooth functions on $\xo$. Here, a continuous function $f\colon \xo\to \R$ 
is called piecewise smooth if there exists a polyhedral structure $\Pi$ for $\xo$ 
such that the restriction of $f$ to each face $P$ of $\Pi$ is smooth. 

\smallskip

As we showed in~\cite[\S\,3]{APW1}, the sheaf of tropical Dolbeault forms on $X$, defined in \cite{JSS19, GK17, CLD}, as well as $\delta$-forms defined in~\cite{GK17,Mih24,BGK25} inject into $\ssOmega_X^{\bul,\bul}$.

\subsection{Integration-by-parts formula} For future use, we note the following consequence of Stokes' theorem.
We say that a subset $S\subset \xo$ has polyhedral boundary if there exists a polyhedral structure $\Pi$ on $\xo$ such that the boundary of $S$ is a polyhedral subcomplex of $\Pi$. We recall the integration-by-parts formula established in \cite{APW1}, which is used in establishing Bedford--Taylor theory.
\begin{prop} [{\cite[Corollary 3.19]{APW1}}] \label{Green_formula}
Let $U\subset \xo$ be an open subset. Let $\zeta \in \ssOmega^{d-1,d-1} (U)$ be a mixed form such that $\dm\zeta = \bar\dm\zeta = 0$. Let $\varphi,\psi \in \PSm(U)$, such that $\Supp(\varphi) \Subset U$. Then, 
  \[ \int_U(\varphi \cdot \dm\bar\dm\psi \wedge \zeta) = -\int_U(\dm\varphi \wedge \bar\dm\psi \wedge \zeta) = \int_U(\psi \cdot \dm\bar\dm\varphi \wedge \zeta). 
  \]
\end{prop}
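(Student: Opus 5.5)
The plan is to derive both equalities from Stokes' theorem for mixed forms (the statement this proposition is a corollary of in \cite{APW1}), combined with the Leibniz rule for $\dm$ and $\dmb$ and the closedness of $\zeta$.

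First, fix a simplicial polyhedral structure $\Pi$ on $\xo$ such that $\varphi$, $\psi$ and $\zeta$ are all mixed forms relative to $\Pi$. This is possible after passing to a common refinement, using the injections $\Phi$ and the fact that they commute with $\dm$, $\dmb$, the wedge product and the degree map.

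For the first equality, consider the compactly supported mixed $(d-1,d)$-form $\beta \coloneqq \varphi\,\dmb\psi\wedge\zeta$. By the Leibniz rule,
\[
\dm\beta = \dm\varphi\wedge\dmb\psi\wedge\zeta + \varphi\,\dm\dmb\psi\wedge\zeta - \varphi\,\dmb\psi\wedge\dm\zeta ,
\]
with the sign on the last term fixed by the commutator rule in degree one. The last term vanishes because $\dm\zeta=0$.

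The key input is then the vanishing $\int_U \dm\beta = 0$ for compactly supported $\beta$. At the level of faces, this is the usual Stokes theorem on each polytope $\delta\in\Pi$. The boundary contributions on codimension-one faces $\gamma\ssubface\delta$ do not cancel termwise. Instead, they are exactly the terms produced by the transversal part $\dm_\perp$: Gysin maps applied to contractions by $\nvect_{\delta/\gamma}$. Their sum cancels because of the balancing condition, encoded in $\deg_\gamma\circ\gys_{\delta\ssupface\gamma}=\deg_\delta$ and the relation $\sum\weight\,\nvect_{\delta/\gamma}\in\ssN_\gamma$. I would invoke this Stokes formula from \cite{APW1} rather than reprove it. It gives $\int_U\varphi\,\dm\dmb\psi\wedge\zeta = -\int_U\dm\varphi\wedge\dmb\psi\wedge\zeta$.

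For the second equality, apply the same argument with the roles exchanged, using $\beta' \coloneqq \psi\,\dm\varphi\wedge\zeta$ and taking $\dmb$ instead. This form is compactly supported because $\varphi$ is. Since $\dmb\zeta=0$ and $\dm\dmb=\dmb\dm$, this yields
\[
\int_U\psi\,\dmb\dm\varphi\wedge\zeta = -\int_U\dmb\psi\wedge\dm\varphi\wedge\zeta .
\]

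The main obstacle is sign bookkeeping. I need $\dm\varphi\wedge\dmb\psi = \dmb\psi\wedge\dm\varphi$ after wedging with the even-degree form $\zeta$, together with the conventions for the tropical conjugation $\bar{\z\otimes\alpha}=(-1)^k\z\otimes\bar\alpha$. Since $\varphi$ and $\psi$ are functions, their differentials carry no Chow-ring part ($k=0$), and the two $1$-forms have bidegrees $(1,0)$ and $(0,1)$. With the super-commutation conventions of \cite{APW1}, their product with $\zeta$ commutes up to the sign that makes the two expressions agree, and $\zeta$ being real of bidegree $(d-1,d-1)$ keeps the total sign consistent. Comparing the two displayed identities then gives the chain of equalities.
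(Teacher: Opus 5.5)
Your argument is correct and is the derivation the paper has in mind when it quotes this statement from \cite[Cor.\,3.19]{APW1} as a consequence of Stokes' theorem: you apply Stokes to the compactly supported forms $\varphi\,\dmb\psi\wedge\zeta$ (for $\dm$) and $\psi\,\dm\varphi\wedge\zeta$ (for $\dmb$), together with the Leibniz rule and $\dm\zeta=\dmb\zeta=0$, and using the $\dmb$-version for the second equality means you need no symmetry assumption on $\zeta$. The one vague point is the sign, where your appeal to super-commutation points the wrong way (a graded-commutative rule would make $(1,0)$- and $(0,1)$-forms anticommute, which goes with $\dm\dmb=-\dmb\dm$); the identity $\dmb\psi\wedge\dm\varphi=\dm\varphi\wedge\dmb\psi$ that you need is instead forced by $\dm\dmb=\dmb\dm$, since expanding $\dm\dmb(f^2)=\dmb\dm(f^2)$ with the Leibniz rule gives $\dm f\wedge\dmb f=\dmb f\wedge\dm f$.
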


\smallskip

In the following, $\dm\dmb$ plays a role analogous to the operator $dd^c$ in complex geometry.

%%%%%%%%%Locally convex functions
%%%%%%%%%%%%%%%%%%%%%%%%%%%%%%%%%
\section{Locally convex functions} \label{sec:tropical_convexity}
Throughout this section, we fix a tropical variety $X$ and 
denote by $\xo$ its open part. 
Let $U \subset X$ be an open subset. We begin by defining the class of locally 
convex functions on $U$, which will be the central object of study in this paper.
\begin{definition} \label{def:locally_convex_paper_2}
  Let $u \colon U \to \BR$.
  \begin{enumerate}
    \item We say that $u$ \emph{admits a local affine support} at a point $x \in U$ if there exist a neighborhood $U'$ of $x$ contained in $U$, and an affine function $l \in \Aff(U')$ such that
      \[
      u(x) = l(x) \quad \text{and} \quad u(y) \ge l(y) \quad \text{for all } y \in U'.
      \]
    In this case, we say that $l$ is an affine support function (of $u$ at $x$).
    \item We say that $u$ is \emph{locally convex at} $x$ if $u$ is continuous at $x$, and admits a local affine support at $x$. We say that $u$ is \emph{locally convex on} $U$ if $u$ is locally convex at each point $x \in U$.  We denote by $\LConv(U)$ the set of locally convex functions on $U$. 
  \end{enumerate}
\end{definition}

\begin{remark}
  It is not hard to see that $\LConv(U)$ is a convex cone: For $u_1, u_2 \in \LConv(U)$ and $\lambda_1, \lambda_2\geq 0$, $\lambda_1 u_1+\lambda_2 u_2 \in \LConv(U)$. Moreover, $\LConv$ defines a sheaf on $X$.
\end{remark}

The rest of this section is devoted to fundamental properties of locally convex functions.

\subsection{Locally convex functions on the open part $\xo$} \label{subsec:locally_convex_functions_open_part} 
In this subsection, we assume that $U$ is an open subset of $\xo$, and unless otherwise stated, $u \in \LConv(U)$. 
Recall that for $x\in \xo$ and  a polyhedral structure $\olPi$ on $X$, with induced polyhedral structure $\Pi$ on $\xo$, we denote $\Pi^{(x)}=\bigcup_{x \in \sigma, \sigma \in \Pi} \relint(\sigma)$. Note that $\Pi^{(x)}\subset \xo$ is open, and an affine support function $l$ of $u$ at $x$ can be extended facewise linearly to $\Pi^{(x)}$. 

Let $x, y \in \xo$ such that there exists a polyhedral structure $\Pi$ on $\xo$ and $\sigma \in \Pi$ with $x, y \in \sigma$. Denote by $L_{x,y} \coloneqq \{tx + (1-t)y\ |\ t \in [0,1]\} \subset \sigma  \subset \xo$ be the line segment connecting $x$ and $y$. We remark that the definition of $L_{x,y}$ as a subset of $\xo$ does not depend on the choices of $\Pi$ and $\sigma$. We also remark that if $y \in \Pi^{(x)}$, then $L_{x,y} \subset \Pi^{(x)}$. 
\begin{prop} \label{prop:properties_LConv_functions}
  \begin{enumerate}
    \item \label{prop:conv1} Suppose that $\Omega$ is a convex open subset in $\R^n$. Then $u$ is locally convex on $\Omega$ if and only if $u$ is convex in the usual sense.    
    \item \label{prop:conv2} Let $U \subset \xo$ be an open subset, and let $\Pi$ be a polyhedral structure on $\xo$. Let $x\in U$ and $l$ be an affine support function of $u$ at $x$.  Assume that the line segment $L_{x,y} \subset U \cap \Pi^{(x)}$ for each $y \in U \cap \Pi^{(x)}$. Then, we have $u \ge l$ on $U \cap \Pi^{(x)}$. In particular,   $u \ge l$ on $ \Pi^{(x)}$ when $\Pi^{(x)} \subset U$.
  \end{enumerate}
\end{prop}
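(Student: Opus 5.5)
For part (1), one direction is classical: a convex function on a convex open set $\Omega\subset\R^n$ is continuous and admits a supporting hyperplane at every point, which gives a (global, hence local) affine support. For the converse, assume $u$ is continuous and locally supported. I would reduce to one variable by restricting to a segment $[a,b]\subset\Omega$. The restriction $g(t)=u((1-t)a+tb)$ is continuous on $[0,1]$ and has a local affine minorant touching it at every point. The claim is then that $g$ is convex, and I would argue by contradiction. Suppose $g(t_0)>\ell(t_0)$ for some $t_0$, where $\ell$ is the chord through $(0,g(0))$ and $(1,g(1))$. Then $g-\ell$ attains a strictly positive maximum on $[0,1]$. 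Take $s$ to be the largest point where this maximum is attained; it lies in the open interval. At $s$, a local affine support $m$ with $m(s)=g(s)$ and $g\ge m$ nearby makes $g-\ell$ admit a local affine minorant $m-\ell$ touching at $s$. Since $s$ is a maximum of $g-\ell$, we get
\[
m-\ell\le g-\ell\le (g-\ell)(s) \quad \text{near } s.
\]
An affine function touching its maximum at an interior point must be constant. Hence $g-\ell\ge (g-\ell)(s)$ near $s$, so $g-\ell$ is locally constant equal to its maximum near $s$. This contradicts the choice of $s$ as the largest maximizer. Therefore $g\le \ell$, which is convexity.

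For part (2), fix $y\in U\cap\Pi^{(x)}$. By hypothesis $L_{x,y}\subset U\cap \Pi^{(x)}$, and $L_{x,y}$ lies in a single face $\sigma\ni x$ of $\Pi$. On $\sigma$, the facewise-linear extension of $l$ to $\Pi^{(x)}$ is affine along $L_{x,y}$, since it is the linear extension along $\sigma$. Set $g(t)=u(x+t(y-x))-l(x+t(y-x))$ for $t\in[0,1]$. Then $g$ is continuous, $g(0)=0$, and $g\ge 0$ near $0$ because $l$ is a support at $x$. Each point $z=x+t(y-x)$ of the segment lies in $U$, so $u$ has a local affine support $l_z$ at $z$. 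Restricted to the segment, $l_z$ is affine in $t$ on both sides of $z$, and $L_{x,y}$ is contained in one face, so the restriction is a straight affine function of $t$. Thus $g$ is continuous and admits a local affine minorant at every point of $[0,1]$. The one-variable argument from part (1) then shows that $g$ is convex on $[0,1]$. A convex $g$ with $g(0)=0$ and $g\ge 0$ on $[0,\epsilon)$ satisfies $g\ge 0$ on all of $[0,1]$. Indeed, if $g(1)<0$, convexity would force $g(t)\le t\,g(1)<0$ for small $t>0$. Hence $u(y)\ge l(y)$. The last assertion follows because when $\Pi^{(x)}\subset U$, every segment $L_{x,y}$ with $y\in\Pi^{(x)}$ already lies in $\Pi^{(x)}$, as remarked before the proposition.

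The main obstacle is the restriction step at a point $z$ of the segment. This point may have a larger star than $x$: it lies in $\relint$ of some face $\tau\subfaceq\sigma$, and $L_{x,y}$ may run along the boundary of faces around $z$. I must check that the affine function $l_z$, which is affine on a neighborhood of $z$ in $X$, restricts to a genuinely affine function of $t$ on $L_{x,y}$. This holds because affine functions in a chart are restrictions of ambient affine functions, and $L_{x,y}$ is a straight segment inside $\sigma$ in that chart. I expect this verification to be routine once it is phrased through a chart containing $\sigma$.
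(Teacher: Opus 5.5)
Your proof is correct and follows essentially the paper's route: both restrict to the segment and run a ``largest maximizer'' argument against a local affine support, and both reduce (2) to $u\rvert_{L_{x,y}}$. The only difference is packaging. The paper's one-variable claim directly upgrades a local support at the endpoint $0$ to a support on all of $[0,1]$, which is then reused verbatim for (2). You instead prove the chord inequality and then recover the support propagation from convexity.
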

\begin{proof}
  For statement \ref{prop:conv1}, it is a standard result that $u$ is convex implies $u$ is locally convex. 
  For the inverse direction, we claim that for any $g \in C^0([0,1])$, if there exists an affine function $l$ and $\epsilon >0$ such that $g(0)=l(0)$ and $g \ge l$ on $[0, \epsilon)$, then $g \ge l$ on $[0,1]$.  Since a function is convex on $\Omega$ if and only if its restriction to every line segment in $\Omega$ is convex, it is not hard to see that this claim concludes statement \ref{prop:conv1}. 

  Now we prove this claim. By adding an affine function to $g$, we may assume that $g(0)=0$ and $l=0$.  For the sake of a contradiction, suppose there exists $t \in (0,1]$ such that $g(t)<0$. Let $M=\sup \{g(s)\ |\ 0 \le s \le t\}$. Since $g(0)=0$, we have $M \ge 0$. 
  Let $t_0 \coloneqq \sup \{s  \in [0,t] \ |\ g(s) =M \}$. Since $g\rest{[0,\epsilon)}\geq 0$, we have $t_0>0$. 
  Since $g$ is continuous, we have $g(t_0)=M$, and hence $t_0 <t$. 
  By assumption, there exists an affine function $l_0(s) \coloneqq a(s-t_0)+M$ such that $l_0 \le g$ for $s\in (t_0-c, t_0+c)$ for some $c>0$. If $a <0$, we have $l_0 > M \ge g$ on $(t_0-c,t_0)$ yielding a contradiction. 
  If $a \ge 0$, then $l_0 \ge M$ on $(t_0, t_0+c)$. At the same time, $g < M$ on $(t_0, t_0+c)$ by assumption. This is again a contradiction. Therefore, we infer that $g \ge l$ on $[0,1]$, and statement \ref{prop:conv1} is proved.

  For statement \ref{prop:conv2}, let $y \in \Pi^{(x)}$. By assumption, the line segment $L_{x,y}$ connecting $x$ and $y$ are contained in $U$. Then, statement \ref{prop:conv2} is concluded by applying the claim to $u|_{L_{x,y}} \in C^0(L_{x,y})$.
\end{proof}

\begin{remark}
  In Definition~\ref{def:locally_convex_paper_2} of locally convex functions, the assumption that $u$ is continuous is necessary. 
  Let $u \colon (-1,1) \to \R$ be the function defined by $u(0)=0$, and $u(x)=1$ for $x \neq 0$. Then, $u$ admits a local affine support at each point in $(-1,1)$, but $u$ is not a convex function in the usual sense.
\end{remark}

In~\cite{BBS22}, the authors introduced a class of convex functions on tropical varieties without infinity boundaries, that satisfy $\xo=X$. Their definition is based on the notion of \emph{polyhedral convex combination}.  
We next show in Proposition~\ref{prop:equivalent_def_LConv} that their class of convex functions is equivalent to our notion of locally convex function in $\xo$.  

Let $U \subset \ssN_\R$ by a local polyhedral chart of $\xo$. A \emph{convex combination} in $U$ is a triple
\[
\Bigl(x, (x_i)_{i \in I}, (\nu_i)_{i \in I}\Bigr)
\]
consisting of points $x$ and $x_i$ in $U$ for $i$ in a finite set $I$, and nonnegative real numbers 
$\nu_i$, $i\in I$, satisfying 
\[
\sum_{i \in I} \nu_i = 1 \qquad \text{and} \qquad \sum_{i \in I} \nu_i\, x_i = x \quad \text{in $\ssN_\R$.}
\]

A convex combination in $U$, as above, is called \emph{polyhedral} if there exist a polyhedral structure $\Pi$ on $\xo$ and polyhedra $\tau, \sigma_i \in \Pi$
for $i \in I$ with $\sigma_i \supfaceq \tau$ such that $x \in \tau, x_i \in \sigma_i$, $i \in I$.

We need the following lemma.  For a convex subset $\Omega \subset \R^n$, denote by $\Conv(\Omega)$ the set of all convex functions $u \colon \Omega \to \R$. 

\begin{lemma} \label{characterization_convex_envelope}
  Let $\Omega \subseteq \R^n$ be a convex domain, and let $f \colon \Omega \to \R \cup \{+\infty\}$ be an arbitrary function.  Define the function $P(f) \colon \Omega \to \R\cup\{+\infty\}$ by
\[
P(f)(x) \coloneqq \inf \left\{\, \sum_{i \in I} \nu_i f(x_i) \, \st \,
\Bigl(x, (x_i)_{i \in I}, (\nu_i)_{i \in I}\Bigr) \text{ is a convex combination in } \Omega \right\}, 
\quad \forall\, x \in \Omega.
\]
Then, we have 
\[
P(f) = \sup \Bigl\{\, u \in \Conv(\Omega) \,\st\, u \le f \Bigr\}.
\]
In particular, $P(f) \in \Conv(\Omega)$.
\end{lemma}
\begin{proof}
  It is clear that $P(f) \le f$ by taking convex the combination $\bigl(x,(x), (1)\bigr)$ (for $I$ a singleton).  For any $u \in \Conv(\Omega)$ with $u \le f$, since $u(x) \le \sum_{i \in I} \nu_i u(x_i)$ for any convex combination  $\bigl(x, (x_i)_{i \in I}, (\nu_i)_{i \in I}\bigr)$ in $\Omega$, we have $u \le P(f)$.  Therefore, it suffices to prove that $P(f) \in \Conv(\Omega)$.  

   Let $y_1, y_2 \in \Omega$ and $0 \le t \le 1$. For any $\epsilon>0$, by definition we may choose convex combinations $\bigl(y_1, (x_{1,i})_{i \in I}, (\nu_{1,i})_{i \in I}\bigr)$ and $\bigl(y_2, (x_{2,i})_{i \in I}, (\nu_{2,i})_{i \in I}\bigr)$, such that 
  \[
  \sum_{i \in I} \nu_{j,i} f(x_{j,i}) \le P(f)(y_j) +\frac{\epsilon}{2} ,\ j\in \{ 1,2\}
  \]
  Then,
  \[
   P(ty_1+(1-t)y_2)  \le  \sum_{i \in I} t\nu_{1,i} f(x_{1,i}) + \sum_{i \in I} (1-t)\nu_{2,i} f(x_{2,i})
    \le  tP(f)(y_1)+(1-t)P(f)(y_2) + \epsilon
  \]
  Here the first inequality follows from the observation that $\bigl(x, (x_{1,i}, x_{2,i})_{i \in I}, (t\nu_{1,i}, (1-t)\nu_{2, i})_{i\in I}\bigr)$ is a convex combination.  We conclude the proof of the lemma by letting $\epsilon \to 0$.
\end{proof}

\begin{prop} \label{prop:equivalent_def_LConv}
  Let $U\subset N_{\R}$ be a local polyhedral chart of $\xo$, and let $u \colon U \to \R$. Suppose that there exists a convex open subset $\widetilde{\Omega} \subset N_{\R}$ such that $U=\widetilde{\Omega} \cap \xo$.  Then $u \in \LConv(U)$ if and only if, for any polyhedral convex combination $\bigl(x, (x_i)_{i \in I}, (\nu_i)_{i \in I}\bigr)$ in $U$, we have
\begin{equation} \label{inequality_polyhedral_convex_combination}
u(x) \le \sum_{i \in I} \nu_i \, u(x_i).
\end{equation}
\end{prop}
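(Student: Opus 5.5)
We prove the two implications separately. The tool for the forward implication is Proposition~\ref{prop:properties_LConv_functions}\ref{prop:conv2}; the tool for the converse is the convex-envelope construction of Lemma~\ref{characterization_convex_envelope}.

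\emph{Locally convex implies the inequality.} Let $u\in\LConv(U)$ and let $\bigl(x,(x_i)_{i\in I},(\nu_i)_{i\in I}\bigr)$ be a polyhedral convex combination. Choose $\Pi$ and faces $\tau\subfaceq\sigma_i$ with $x\in\tau$ and $x_i\in\sigma_i$, refining $\Pi$ if needed so that $x\in\relint(\tau)$. Take an affine support function $l$ of $u$ at $x$; since $x \in \xo$, $l$ is the restriction of an affine function on $\ssN_\R$.

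We first check the hypothesis of Proposition~\ref{prop:properties_LConv_functions}\ref{prop:conv2}. Let $y\in U\cap\Pi^{(x)}$. Then $y$ lies in some face $\sigma\supfaceq\tau$, so $L_{x,y}\subset\sigma\subset\xo$. Moreover $L_{x,y}\subset\widetilde\Omega$ because $\widetilde\Omega$ is convex, hence $L_{x,y}\subset U\cap\Pi^{(x)}$.

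The proposition then gives $u\ge l$ on $U\cap\Pi^{(x)}$. Each $x_i$ lies in $\sigma_i\ni x$, so $x_i$ is in the closure of $\Pi^{(x)}$, and by continuity $u(x_i)\ge l(x_i)$. Since $l$ is affine on $\ssN_\R$ and $\sum_i\nu_i=1$,
\[
\sum_i\nu_i u(x_i)\ \ge\ \sum_i\nu_i l(x_i)\ =\ l(x)\ =\ u(x).
\]

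\emph{The inequality implies locally convex.} Fix $x\in U$ and a polyhedral structure $\Pi$ with $x\in\relint(\tau)$. Choose a small convex open $\Omega\ni x$ in $\ssN_\R$ such that $\Omega\cap\xo=\Omega\cap\Pi^{(x)}$ is a star of cones around $x$, contained in $U$. Define $f\colon\Omega\to\R\cup\{+\infty\}$ by $f=u$ on $\Omega\cap\xo$ and $f=+\infty$ elsewhere. By Lemma~\ref{characterization_convex_envelope}, $P(f)$ is convex on $\Omega$ and satisfies $P(f)\le u$ on $\Omega \cap \xo$.

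We claim that $P(f)(x)=u(x)$. Any convex combination with finite value uses points $x_i\in\Omega\cap\xo$. Each such $x_i$ lies in some face $\sigma_i\supfaceq\tau$, so the combination is polyhedral, and the hypothesis gives $\sum_i\nu_i u(x_i)\ge u(x)$. Taking the infimum proves the claim.

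Since $P(f)$ is a finite convex function on the open set $\Omega$ (finite because $P(f)\le f$ and $\Omega\cap\xo$ spans, and convex functions are finite where bounded), it has a subgradient at $x$. This gives an affine $l$ on $\ssN_\R$ with $l(x)=P(f)(x)=u(x)$ and $l\le P(f)\le u$ on $\Omega\cap\xo$. Thus $l$ is an affine support function of $u$ at $x$.

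\emph{Main obstacle: continuity.} Continuity of $u$ is part of the definition of $\LConv$, but it is not an explicit hypothesis in the converse direction; this is the step I expect to be hardest. On each top-dimensional face $\sigma$, the restriction $u|_{\sigma\cap U}$ satisfies the convexity inequality along segments, since these are polyhedral combinations. Hence $u$ is convex on the relative interior of every face, and therefore continuous there.

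At lower-dimensional points we use the support functions just constructed to get lower bounds $u\ge l$. For upper semicontinuity at $x$, I would write points $y$ near $x$ in a face $\sigma\supfaceq\tau$ as convex combinations of $x$ and the vertices of a small simplex in $\sigma$ around $x$. The inequality then bounds $u(y)$ above by a quantity tending to $u(x)$.

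If this argument turns out to be insufficient, the fallback is to read the statement as implicitly assuming $u\in C^0(U)$, as in \cite{BBS22}.
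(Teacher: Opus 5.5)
Your forward implication is the paper's argument. One small correction there: do not refine $\Pi$ to put $x$ in $\relint(\tau)$. After refining, the $x_i$ need no longer lie in faces through $x$. Instead, replace $\tau$ by the face of $\tau$ carrying $x$ in its relative interior; this is still a face of every $\sigma_i$.

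Your envelope construction in the converse is also the paper's. However, the step ``$P(f)$ is finite on $\Omega$ because $\Omega\cap\xo$ spans'' is a genuine gap. Since $f=+\infty$ off $\xo$, the domain of $P(f)$ is exactly $\conv(\Omega\cap\xo)$. For a small ball $\Omega$ this is strictly smaller than $\Omega$; for the standard tropical line in $\R^2$ it is a triangle inscribed in $\Omega$. If $x$ lies in the relative interior of a face whose star does not span $N_{\R}$, it even has empty interior in $N_{\R}$.

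What you actually need is that $x$ lies in the \emph{relative interior} of $\conv(\Omega\cap\xo)$. Then $P(f)$ has a subgradient at $x$ within the affine span of $\Omega\cap\xo$, which you can extend to $N_{\R}$. Spanning does not give this, and without further input the statement is false. Take the non-balanced fanfold $\R_{\ge 0}e_1\cup\R_{\ge 0}e_2\subset\R^2$ and the function with $u(se_1)=-\sqrt{s}$ and $u(te_2)=0$. It is continuous and convex on each ray. It satisfies every polyhedral convex-combination inequality: at $0$ these are trivial, since $0=\sum_i\nu_ix_i$ forces $x_i=0$ whenever $\nu_i>0$. Yet it has no affine support at $0$.

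The missing ingredient is the balancing condition, which is the only place where the tropical structure enters this proof. Near $x\in\relint(\tau)$, $\xo$ is locally $x+T_\tau+C$, where $C$ is a lift of the support of the star fan $\Pi^\tau$, itself a tropical fan. The support of a tropical fan generates its linear span as a convex cone. Hence $\conv(\Omega_0\cap\xo)$ is a neighbourhood of $x$ in the affine span of the star, and $P(f)$ is finite there. This is exactly why the paper first restricts to the span and then takes $\Omega$ to be the convex hull of $\Omega_0\cap Z$.

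Once this is repaired, your treatment of continuity is a correct, self-contained replacement for the paper's bare citation of \cite[Thm.\,6.2]{BBS22}. The support functions, whose construction never uses continuity, give lower semicontinuity. Upper semicontinuity follows from convexity of $u$ on each of the finitely many faces $\sigma$ through $x$. Write $y=(1-s)x+sz$ with $z$ in a small polytope $Q\subset\sigma\cap U$ containing $\sigma\cap B(x,\delta)$. Use a polytope rather than a simplex, since $x$ may lie on the boundary of $\sigma$. Then $s\to 0$ as $y\to x$, and $u(z)\le\max_{\mathrm{vert}(Q)}u$. No fallback is needed.
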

\begin{proof}
  Suppose that $u \in \LConv(U)$.  Let 
  $\bigl(x, (x_i)_{i \in I}, (\nu_i)_{i \in I}\bigr)$ be a polyhedral convex combination 
  for a polyhedral structure $\Pi$ on $X$, with $x \in \tau$, $x_i \in \sigma_i$, 
  and $\tau, \sigma_i \in \Pi$ for $i \in I$ with $\sigma_i \supfaceq \tau$.  Let $l$ be a local affine support function of $u$ at $x$. Note that $\sigma \cap U=\sigma \cap \widetilde{\Omega}$ is a convex subset of $N_{\R}$ for every $\sigma \in \Pi$. Thus, for any $y \in \Pi^{(x)}$, the line segment $L_{x, y}$ connecting $x$ and $y$ is contained in $\sigma \cap U$, where $\sigma \in \Pi$ satisfies $x, y \in \sigma$.   Then,  
  Proposition~\ref{prop:properties_LConv_functions}~\ref{prop:conv2} implies that 
  $u \ge l$ on $\Pi^{(x)}$. In particular, we have $u(x_i) \ge l(x_i)$ for each $i \in I$, and  inequality~\eqref{inequality_polyhedral_convex_combination} follows.

  Conversely, assume that the inequality~\eqref{inequality_polyhedral_convex_combination} 
  holds for any polyhedral convex combination.  We show that $u \in \LConv(U)$. By \cite[Thm.\,6.2]{BBS22}, we have $u \in C^0(U)$. It remains to prove 
  that $u$ admits a local affine support at each point $x$. Since this is a local statement, we may reduce to the case where $x$ is the origin 
  of a tropical fanfold $Z$, and $U$ is an open neighborhood of $0$ in $Z$.  We need to prove that $u$ admits a local affine support at $0$.

  By restricting to a linear subspace, we may assume that $\mathrm{span}_{\R}(U) =\ssN_{\R}$. Take a convex domain $\Omega_0$ in $\ssN_{\R}$ such that $\Omega_0 \cap Z \subset U$. 
  The balancing condition of $Z$ implies that the convex hull $\Omega$ of $\Omega_0 \cap Z$ is an open neighborhood of $0$. 
  We extend $u$ to a function $\widetilde{u}$ on $\Omega$ by taking $+\infty$ on $\Omega \setminus U$. Take the convex envelop $P(\widetilde{u}) \in \Conv(\Omega)$ as in Lemma \ref{characterization_convex_envelope}. By definition, $P(\widetilde{u})(0)=u(0)$. Since $u$ takes finite value on $\Omega_0 \cap Z$, we have $P(\widetilde{u}) < +\infty$ on $\Omega$. Therefore, we may take an affine support function $l \in \Aff(\Omega)$ of $P(\widetilde{u})$ at $0$. Then, we have $l \le P(\widetilde{u}) \le u$ on $\Omega \cap Z$, and $l(0)=P(\widetilde{u})(0)=u(0)$. Hence, $l$ is also an affine support function of $u$ on $\Omega \cap Z$, as required. 
\end{proof}

We get the following consequences on pointwise limits and supremums in $\LConv(U)$.
\begin{prop} \label{prop:limit_of_lconv_is_lconv}
    Let $U$ be an open subset in $\xo$, and let $\{ u_{i} \}_{i\ge 1}$ be a sequence of locally convex functions on $U$. Suppose that $u_i$ converges pointwise to a function $u\colon U \to \R$. Then $u \in \LConv(U)$.   
\end{prop}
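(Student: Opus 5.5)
The plan is to use the characterization of Proposition~\ref{prop:equivalent_def_LConv}. Since the statement is local and $\LConv$ is a sheaf, it suffices to show that each point $x\in U$ has a neighborhood on which $u$ is locally convex. Fix $x\in U$. Take a local polyhedral chart around $x$ identifying a neighborhood with an open subset of a fanfold in $\ssN_\R$. Shrinking, choose a convex open set $\widetilde\Omega\subset \ssN_\R$ with $V\coloneqq\widetilde\Omega\cap\xo$ contained in $U$ and in the chart. By Proposition~\ref{prop:equivalent_def_LConv} applied to each $u_i|_V\in\LConv(V)$, every polyhedral convex combination $\bigl(y,(y_j)_{j\in J},(\nu_j)_{j\in J}\bigr)$ in $V$ satisfies
\[
u_i(y)\le \sum_{j\in J}\nu_j u_i(y_j).
\]

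This inequality involves only finitely many points, so letting $i\to\infty$ and using pointwise convergence gives
\[
u(y)\le \sum_{j}\nu_j u(y_j)
\]
for every polyhedral convex combination in $V$. Note that the notion of polyhedral convex combination does not depend on the function, so the class of test combinations is the same for all $i$. The converse direction of Proposition~\ref{prop:equivalent_def_LConv} then yields $u|_V\in\LConv(V)$. The point is that continuity of $u$ is not assumed but is recovered there, via \cite[Thm.\,6.2]{BBS22}.

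Since $x$ was arbitrary and being locally convex is a local property (Definition~\ref{def:locally_convex_paper_2} is pointwise and only requires a neighborhood), $u\in\LConv(U)$.

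The main obstacle is the continuity of the limit. A direct approach, taking limits of affine supports $l_i$ of $u_i$ at $x$, would need a uniform bound on the slopes of $l_i$ and continuity of $u$, and neither follows from pointwise convergence alone. This is exactly why I route the argument through the convex-combination characterization, whose converse direction supplies continuity.

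The only point to check is that the neighborhoods $V=\widetilde\Omega\cap\xo$ of the required shape exist around every $x\in\xo$. This holds because a chart image is an open subset of a fanfold, so intersecting with a small open ball centered at $x$ works.
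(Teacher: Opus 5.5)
Your proposal is correct and is essentially the paper's proof. You reduce to a chart neighborhood of the form $\widetilde\Omega\cap\xo$, pass the polyhedral convex-combination inequality to the pointwise limit, and then invoke the converse direction of Proposition~\ref{prop:equivalent_def_LConv}, which also recovers continuity via \cite[Thm.\,6.2]{BBS22}.
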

\begin{proof}
  Since $u \in \LConv(U)$ is a local condition, we may assume that $U$ is a local polyhedral chart of $\xo$ satisfying the condition in Proposition \ref{prop:equivalent_def_LConv}. It is easy to see that  inequality  \eqref{inequality_polyhedral_convex_combination} for $u$ holds for any polyhedral convex combination on $U$. Therefore by Proposition \ref{prop:equivalent_def_LConv}, $u \in \LConv(U)$.  
\end{proof}

\begin{prop} \label{prop:sup_of_lconv_is_lconv}
  Let $U$ be an open subset in $\xo$, and let $\{ u_{i} \}_{i \in I} $ be a collection of locally convex functions on $U$. Suppose that $\sup_{i \in I} u_i(x)< +\infty$ for every $x \in U$. Then $\sup_{i \in I} u_i$ is a locally convex function on $U$.  
\end{prop}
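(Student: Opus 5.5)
The plan follows the proof of Proposition~\ref{prop:limit_of_lconv_is_lconv}: reduce to the characterization by polyhedral convex combinations in Proposition~\ref{prop:equivalent_def_LConv}. Set $u \coloneqq \sup_{i\in I} u_i$, which is real-valued on $U$ by hypothesis. Local convexity is a local condition and $\LConv$ is a sheaf. So we may restrict to a neighborhood of an arbitrary point of $U$. We take a local polyhedral chart $U'\subset U$ of the form $U'=\widetilde\Omega\cap\xo$, with $\widetilde\Omega\subset \ssN_\R$ convex and open, and prove $u\in\LConv(U')$.

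First we check inequality~\eqref{inequality_polyhedral_convex_combination} for $u$. Let $\bigl(x,(x_j)_{j\in J},(\nu_j)_{j\in J}\bigr)$ be a polyhedral convex combination in $U'$. By the ``only if'' direction of Proposition~\ref{prop:equivalent_def_LConv} applied to each $u_i$,
\[
u_i(x)\le \sum_{j\in J}\nu_j u_i(x_j)\le \sum_{j\in J}\nu_j u(x_j)\qquad\text{for all } i\in I,
\]
since $\nu_j\ge 0$. Taking the supremum over $i$ gives $u(x)\le\sum_j \nu_j u(x_j)$. All terms are finite.

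Next we invoke the ``if'' direction of Proposition~\ref{prop:equivalent_def_LConv} to get $u\in\LConv(U')$. As used there, this direction draws continuity from \cite[Thm.\,6.2]{BBS22}: a finite function satisfying the polyhedral convex-combination inequality is automatically continuous. This is the only delicate point. A supremum of continuous functions is merely lower semicontinuous a priori, so continuity must come from convexity and not from the $u_i$. Concretely, $u$ restricted to each segment and each polyhedron $\sigma\cap\widetilde\Omega$ is a finite convex function, hence continuous in the interior of each face. Across faces, the polyhedral convex combinations centered at $x$ with endpoints in adjacent faces, together with the balancing condition (which makes the convex hull of nearby points of $\xo$ a neighborhood of $x$, as in the proof of Proposition~\ref{prop:equivalent_def_LConv}), give local upper boundedness. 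Combined with lower semicontinuity, this yields continuity. We rely on the cited result for this step rather than reprove it.

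Finally, the local affine support of $u$ at each point is produced by the argument in the proof of Proposition~\ref{prop:equivalent_def_LConv}, via the convex envelope $P(\widetilde u)$ of Lemma~\ref{characterization_convex_envelope}. Hence $u\in\LConv(U')$, and gluing over a cover of $U$ gives $u\in\LConv(U)$. The main obstacle is the continuity of the supremum, which is handled by citing \cite[Thm.\,6.2]{BBS22}.
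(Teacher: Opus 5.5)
Your proposal is correct and is essentially the paper's argument. The paper proves this proposition ``similarly to Proposition~\ref{prop:limit_of_lconv_is_lconv}'': it reduces to a chart of the form $\widetilde\Omega\cap\xo$, notes that inequality~\eqref{inequality_polyhedral_convex_combination} passes to the supremum, and concludes with Proposition~\ref{prop:equivalent_def_LConv}, whose converse direction supplies continuity through \cite[Thm.\,6.2]{BBS22} exactly as you say. As a side remark, continuity is cheaper here than your sketch suggests. The supremum is lower semicontinuous for free, and convexity on each face $\sigma\ni x$ gives both a local upper bound and upper semicontinuity at $x$, so balancing is really only needed for the affine support.
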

\begin{proof}
  Similar to the proof of Proposition \ref{prop:limit_of_lconv_is_lconv}.
\end{proof}

\medskip

Now, we discuss the Lipschitz property of locally convex functions. Let $(Z,d)$ be a metric space and $f \colon Z \to \mathbb{R}$ a function on it. 
Following the convention in \cite{BBS22}, we define the \emph{Lipschitz constant} of $f$ on $Z$ with respect to $d$ as
\[
    \Lip_{(Z,d)}(f) = \sup_{\substack{x,y \in Z \\ x \neq y}} \frac{|f(x) - f(y)|}{d(x,y)} \in \mathbb{R}_{\geq 0} \cup \{+\infty\},
\]
with $\Lip_{(Z,d)}(f) = 0$ if $Z$ consists of a single point. A function $f$ is called \emph{Lipschitz continuous} on $Z$ with respect to $d$ if $\Lip_{(Z,d)}(f) < +\infty$.

Combining Proposition \ref{prop:equivalent_def_LConv} with \cite[Thm.\,6.18]{BBS22}\footnote{The initial idea of the proof of this theorem comes from \cite{BFJ14}.}, we get the following.

\begin{prop} \label{prop:Lipschitz_bounds_in_terms_of_C0_bounds}
  Let $U \subset N_{\R}$ be a local polyhedral chart of $\xo$.  Let $d$ the restriction to $U$ of an Euclidean metric on $N_{\R}$.  Let $K$ be a compact subset of $U$. Then, there exists a constant $C>0$ depending only on $U$ and $K$ such that, for any $u \in \LConv(U)$, we have $\Lip_{(K,d)}(u) \le C \left\|u\right\|_{C^0(U)}$. In particular, for any $u \in \LConv(U)$, $u$ is Lipschitz on $K$.
\end{prop}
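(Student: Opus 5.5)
The statement follows by reducing to \cite[Thm.\,6.18]{BBS22} through the dictionary of Proposition~\ref{prop:equivalent_def_LConv}. The cited theorem works with the class of functions on (open subsets of) tropical fanfolds or polyhedral spaces defined by the polyhedral convex combination inequality~\eqref{inequality_polyhedral_convex_combination}. For that class it gives a Lipschitz bound on compact subsets in terms of the sup norm on a larger open set. So the task is to put $U$ and $K$ in a form where the proposition applies, and then transfer the bound.

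\textbf{Step 1: localize.} Since $K$ is compact, cover it by finitely many open sets $U_j=\widetilde\Omega_j\cap\xo\subset U$ with $\widetilde\Omega_j\subset N_\R$ convex (for instance small Euclidean balls around points of $K$). Choose compact sets $K_j\subset U_j$, say closed balls of half the radius intersected with $\xo$, whose interiors still cover $K$. On each $U_j$, Proposition~\ref{prop:equivalent_def_LConv} says that $u|_{U_j}\in\LConv(U_j)$ is equivalent to $u|_{U_j}$ satisfying~\eqref{inequality_polyhedral_convex_combination}. Therefore \cite[Thm.\,6.18]{BBS22} applies and gives constants $C_j$, depending only on $U_j$ and $K_j$, with
\[
\Lip_{(K_j,d)}(u)\le C_j\|u\|_{C^0(U_j)}\le C_j\|u\|_{C^0(U)}.
\]

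\textbf{Step 2: globalize on $K$.} Let $\lambda>0$ be a Lebesgue number of the cover of $K$ by the interiors of the $K_j$.
\begin{itemize}
\item If $x,y\in K$ with $d(x,y)<\lambda$, then $x$ and $y$ lie in a common $K_j$, so
\[
|u(x)-u(y)|\le C_j\|u\|_{C^0(U)}\,d(x,y).
\]
\item If $d(x,y)\ge\lambda$, then trivially
\[
|u(x)-u(y)|\le 2\|u\|_{C^0(U)}\le \tfrac{2}{\lambda}\|u\|_{C^0(U)}\,d(x,y).
\]
\end{itemize}
Taking $C=\max(\max_j C_j,\,2/\lambda)$ proves the claim. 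The constant $C$ depends only on the chosen cover, and hence only on $U$ and $K$. If $\|u\|_{C^0(U)}=\infty$ the inequality is vacuous. In that case Lipschitz continuity on $K$ still follows by applying the bound on a relatively compact neighborhood $U'\Subset U$ of $K$, where $u$ is bounded by continuity.

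\textbf{Main obstacle.} The only delicate point is checking that the hypotheses of \cite[Thm.\,6.18]{BBS22} really match our setting. The theorem is stated for tropical varieties without boundary, where $X=\xo$, and for their notion of convexity on open subsets. Our $U_j$ are open subsets of a tropical fanfold or polyhedral chart, and the balancing condition is available, which is exactly what the BBS argument (going back to \cite{BFJ14}) uses. If the cited theorem only treats specific neighborhoods, such as stars of faces, I would choose the $U_j$ of that form instead of balls. This is possible because stars of faces in a fine enough polyhedral structure are of the form $\widetilde\Omega\cap\xo$ with $\widetilde\Omega$ convex after shrinking.
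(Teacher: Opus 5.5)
Your proposal is correct and follows the paper's route. The paper's proof is simply the combination of Proposition~\ref{prop:equivalent_def_LConv} with \cite[Thm.\,6.18]{BBS22}. Your localization to pieces $U_j=\widetilde\Omega_j\cap\xo$ with $\widetilde\Omega_j$ convex, together with the Lebesgue-number patching, makes explicit a step the paper leaves implicit: Proposition~\ref{prop:equivalent_def_LConv} assumes $U=\widetilde\Omega\cap\xo$ with $\widetilde\Omega$ convex, whereas the statement here is for an arbitrary chart $U$.
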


\subsection{Locally convex functions around the boundary}
In this subsection, we assume that $U$ is an open subset of $X$, which may intersect the boundary strata of $X$. The main result of this subsection is the following.
\begin{prop} \label{prop:local_affine_support_function_boundary}
  Let $U$ be an open subset of $X$.
  \begin{enumerate}
    \item Let $u_o \colon U \cap \xo \to \R$ be a bounded locally convex function. Then, $u_o$ can be extended uniquely to a continuous function $u \in C^0(U)$. Furthermore, we have $u \in \LConv(U)$. 
    \item Suppose that $u_o\colon U \cap \xo \to \R$ is a locally convex function which is locally bounded from above. Then, $u_o$ can be extended uniquely to a continuous map $u\colon U \to \R \cup \{-\infty\}$. 
  \end{enumerate}
\end{prop}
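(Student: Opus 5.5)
The question is local, so I will work in a basic open neighborhood of a boundary point $x\in U\setminus \xo$. In a polyhedral chart, $x$ corresponds to $(\infty_\delta,x_0)\in \comp\delta\times \Sigma^\delta$ with $\delta=\sed(x)$ having rays $\varrho_1,\dots,\varrho_k$. A basic neighborhood has the form
\[
W=\bigl((t,\infty]\e_{\varrho_1}+\dots+(t,\infty]\e_{\varrho_k}\bigr)\times V,
\]
and its intersection with $\xo$ is an open subset of the fanfold $\supp\Sigma$ near the region where the $\delta$-coordinates are large. Uniqueness of the continuous extension is immediate because $\xo$ is dense in $U$. The content of the statement is existence of the limit, continuity of the extension, and local convexity at boundary points.

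\textbf{Step 1 (monotonicity along rays of $\delta$).} For $y\in W\cap\xo$ and $s\ge0$, each $y+s\e_{\varrho_i}$ lies in $W\cap\xo$ and in a common face with $y$. I therefore restrict $u_o$ to the half-line $s\mapsto y+s\e_{\varrho_i}$. By Proposition~\ref{prop:properties_LConv_functions}\ref{prop:conv1}, applied on segments inside a face and continued across faces using the face-segment argument of part~\ref{prop:conv2}, this restriction is a convex function of $s\in[0,\infty)$. In case (1) it is also bounded. A bounded convex function on a half-line is non-increasing and has a finite limit. In case (2) it is only bounded above, so it is still non-increasing but its limit may be $-\infty$.

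\textbf{Step 2 (existence and continuity of the limit).} I define $u$ at a boundary point by the limit as all $\infty$-coordinates go to $\infty$, with the $V$-coordinate fixed. Monotonicity in each coordinate $\e_{\varrho_i}$ separately shows that this multi-parameter limit exists and equals the infimum over the region of large coordinates. To get continuity jointly in the $V$-direction, I use the uniform Lipschitz bound of Proposition~\ref{prop:Lipschitz_bounds_in_terms_of_C0_bounds}, applied to translates of a fixed compact set $K$ by $s\e_{\varrho_i}$. Translation by rays of $\delta$ preserves the local fan structure $\Sigma^\delta$ far out, so the constant $C$ is uniform in $s$. Thus the functions $v\mapsto u_o(s+v)$ are equi-Lipschitz in $v$ with bound $C\|u_o\|_{C^0}$, and they decrease in $s$. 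A decreasing equi-Lipschitz family converges uniformly on compacts, so the limit is continuous in $v$.

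Combining this with monotonicity in $s$ gives continuity of $u$ at $x$ from all directions, including the intermediate sedentarities $\delta'\subface\delta$, which are handled by the same argument applied to sub-collections of rays. In case (2) I cannot use $C^0$ bounds directly. Instead I argue that $u=\inf_s u_o(\cdot+s)$ is an infimum of continuous functions, hence usc. I then expect a monotone limit of locally convex functions, with uniform Lipschitz control wherever they stay bounded below, to give continuity into $\R\cup\{-\infty\}$, with the $-\infty$ case treated separately. I expect this equi-Lipschitz uniformity near infinity, and the $-\infty$ bookkeeping in case (2), to be the main obstacle.

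\textbf{Step 3 (local convexity at boundary points, case (1)).} Affine functions on $W$ are pullbacks from $V$, so I need an affine $l$ on $V$ with $l(x_0)=u(x)$ and $u\ge l\circ\pi$. I apply Step 2's reasoning to the stratum: the limit function $v\mapsto u(\infty_\delta,v)$ is a pointwise limit of the locally convex functions $v\mapsto u_o(s+v)$ on (an open part of) $\supp{\Sigma^\delta}\cong V$. By Proposition~\ref{prop:limit_of_lconv_is_lconv} it is locally convex on the stratum, so it has an affine support $l$ at $x_0$. Since $u_o$ is non-increasing in the $\delta$-directions, $u_o(y)\ge u(\pi(y))\ge l(\pi(y))$, which gives the desired support. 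Points of intermediate sedentarity are handled in the same way.
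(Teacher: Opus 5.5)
Your part (1) is correct and follows the paper's route. You get monotonicity in the $\delta$-directions from bounded convexity on half-lines (Lemma~\ref{lem:extension_bounded_convex_function_polyhedron}), define the extension as the decreasing limit of the slices $u_y=u_o(y,\cdot)$, and take the support $l_0\circ\pi$ from Proposition~\ref{prop:limit_of_lconv_is_lconv} together with monotonicity. The one difference is how you upgrade pointwise to locally uniform convergence. You use equi-Lipschitz bounds from Proposition~\ref{prop:Lipschitz_bounds_in_terms_of_C0_bounds}; this is legitimate, since the translations $y\mapsto y+s\e_{\varrho_i}$ map the product chart into itself and $\|u_o\|_{C^0}$ is finite. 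The paper instead observes that the limit is locally convex, hence continuous, and applies Dini. Both work. The paper's version also handles intermediate sedentarities in one stroke, by showing that $u_o(x_i)$ has the same limit along every sequence $x_i\to x$ in $\xo$.

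Part (2), however, is not proved: you state an expectation exactly at the step that carries the content. At $x=(\uinfty,z)$ with $u_{\uinfty}(z)=\lim u_y(z)$ finite, you need lower semicontinuity, i.e.\ a lower bound for the $u_y$ on a neighbourhood of $z$. Until such a two-sided bound on an open set is available, Proposition~\ref{prop:Lipschitz_bounds_in_terms_of_C0_bounds} cannot be invoked ``wherever they stay bounded below''. Convexity along segments does not supply the bound: when $z$ lies on a lower-dimensional cone of $\Sigma^\delta$, the reflection $2z-w$ of a nearby point $w$ generally leaves the fanfold.

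The missing idea is the balancing condition. Take $V=\Pi^{(z)}$. Every $z_1\in V$ fits into a polyhedral convex combination $z=\sum_j\nu_jz_j$ with all $\nu_j>0$, and Proposition~\ref{prop:equivalent_def_LConv} gives $\nu_1u_y(z_1)\ge u_y(z)-\sum_{j\ge2}\nu_ju_y(z_j)\ge u_{\uinfty}(z)-(1-\nu_1)\sup u_o$. Hence $u_{\uinfty}$ is either finite on all of $V$ or identically $-\infty$. In the first case it is locally convex by Proposition~\ref{prop:limit_of_lconv_is_lconv}, hence continuous, and Dini gives local uniform convergence as in part (1).

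In the $-\infty$ case your usc observation does suffice, because continuity into $\R\cup\{-\infty\}$ at a point with value $-\infty$ is just upper semicontinuity. But it must hold jointly on the chart, not only along the stratum. For example, write $u=\inf_M G_M$ with $G_M(y,v)=u_o(\min\{y_1,M\},\dots,\min\{y_k,M\},v)$; these are continuous on the whole basic open set and decrease to $u$ by monotonicity. The paper instead runs the reflection estimate $u_{y_i}(2w_i-w)\ge 2C_0-u_{y_i}(w)$ inside one cone, which itself uses the dichotomy to know that $u_{y_i}(w)\to-\infty$.
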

Here, $\R \cup \{-\infty\}$ is given the usual extended topology.
 We need the following lemma. 
\begin{lemma} \label{lem:extension_bounded_convex_function_polyhedron}
 Let $k$ be a positive integer. Suppose that $u \colon \R_{+}^k \to \R$ is a convex function bounded from above. Then, 
    \begin{enumerate}
      \item If $x$ and $y$ are two points in $\R_+^k$ satisfying $x \ge y$, then $u(x) \le u(y)$.
      \item There exists a constant $c \in \R \cup\{-\infty\}$ such that for every sequence of points $\{x_i\}_{i=1}^{\infty}$  in $\R_+^k$ converging to  $\underline{\infty}=(\infty, \cdots , \infty) \in \BT_+^k=(0,\infty]^k$, we have 
      \[\lim_{i \to +\infty} u(x_i) \to c
      \]
    \end{enumerate}
  \end{lemma}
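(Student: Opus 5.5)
For part (1), I will use the standard fact that a convex function on a ray that is bounded above must be nonincreasing. Fix $x \ge y$ in $\R_+^k$ and set $v \coloneqq x-y \in \R_+^k$. Consider the function $g(t) \coloneqq u(y+tv)$ for $t \ge 0$; this is well defined because $y+tv \in \R_+^k$. The function $g$ is convex on $[0,\infty)$ and bounded above by $\sup u$. Suppose $g(1) > g(0)$. Convexity gives $g(t) \ge g(0) + t\,(g(1)-g(0))$ for $t \ge 1$, and this tends to $+\infty$, contradicting boundedness. Hence $g(1)\le g(0)$, that is, $u(x)\le u(y)$. Note that convexity here is in the usual sense on the convex set $\R_+^k$ (the interior $\R_{>0}^k$ together with its boundary). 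If one only has convexity on the open orthant, the same argument works there, and continuity extends it to the boundary.

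For part (2), I set
\[
c \coloneqq \inf_{z \in \R_+^k} u(z) \in \R \cup \{-\infty\},
\]
and aim to show $u(x_i) \to c$ whenever $x_i \to \underline{\infty}$, meaning every coordinate of $x_i$ tends to $+\infty$.

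The lower bound is immediate, since $u(x_i) \ge c$ for all $i$. For the upper bound, fix any $a > c$ and choose $z \in \R_+^k$ with $u(z) < a$. Since each coordinate of $x_i$ tends to $+\infty$, for $i$ large we have $x_i \ge z$ coordinatewise. Part (1) then gives $u(x_i) \le u(z) < a$. Thus $\limsup u(x_i) \le a$ for every $a > c$, so $\limsup u(x_i) \le c$. Combined with $u(x_i)\ge c$, this yields $u(x_i)\to c$. When $c=-\infty$, the same argument shows $u(x_i)\to-\infty$.

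The limit is independent of the sequence because $c$ is defined intrinsically as an infimum. No step is a serious obstacle. The only point needing care is that $y+tv$ stays in the domain, which holds because $v \ge 0$. This monotonicity is exactly what makes the limit at $\underline{\infty}$ equal to the infimum.
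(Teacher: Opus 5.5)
Your proof is correct and follows essentially the same route as the paper: restrict $u$ to the ray $y+t(x-y)$ and use that a convex function on a half-line which is bounded above is nonincreasing; then identify the limit at $\underline{\infty}$ with $c=\inf u$ using part (1). Your handling of part (2) is slightly cleaner than the paper's: you bound $\limsup u(x_i)$ above by every $a>c$ and $\liminf u(x_i)$ below by $c$, whereas the paper argues by contradiction on the value of $\lim u(x_i)$ and so tacitly assumes this limit exists.
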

  \begin{proof} We treat first the case $k=1$. We claim that $u(x) \le u(y)$ whenever $x > y$. In fact, otherwise, the convexity of $u$ would imply that $u(t) \ge \frac{(t-y)}{(x-y)} (u(x)-u(y))$ for any $t \ge x$, in which case, we would obtain $u(t ) \to +\infty$ as $ t\to \infty$. This would contradict the assumption that $u$ is bounded from above. Therefore, the claim is proved, and we have 
    \[
      \lim_{t \to +\infty} u(t) =  \inf_{x \in \R_{\ge 0}} u(x),
    \]
  Thus, both statements in (1) and (2) hold when $k=1$.

    Now assume that $k \ge 2$. Let $x$ and $y \in \R_{+}^k$ be distinct points satisfying $x\geq y$.   The ray $\rho \coloneqq \{\, y + t(x - y) \mid t \ge 0 \,\}$ is contained in $\R_{+}^k$. The restriction $u\rest{\rho}$ yields a convex function $\varphi \colon \R_{\geq 0} \to \R$, which is bounded from above. We thus obtain $u(x) = \varphi(1) \le u(y) = \varphi(0)$, proving part~(1) of the lemma.
    
    It remains to prove part~(2). Set $c\coloneqq \inf_{x \in \R_+^k} u(x)$. Let $\{x_i\}_{i \ge 1} \subset \R_{+}^k$ be a sequence of points converging to $\underline\infty=(\infty, \dots, \infty)$. 
    Suppose instead that $\lim_{i \to +\infty} u(x_i) = c' > c$.  Then, there exists $y \in \R_{\ge 0}^k$ such that $u(y) < c'$. Observe that $x_i \geq y$ for large enough $i$.  By part~(1), however, this implies $u(x_i) \le u(y)$ for large enough $i$, and so $c' = \lim_i u(x_i) \leq  u(y)$, a contradiction.  Therefore, the lemma is proved.
  \end{proof}

  \begin{proof}[Proof of Proposition \ref{prop:local_affine_support_function_boundary}] \label{proof:prop:local_affine_support_function_boundary}
  (1) We claim that for any $x \in U$, there exists $c_x \in \R$ such that for each sequence 
  $\{x_i\}_{i=1}^\infty$ in $U \cap \xo$ converging to $x$, we have $u_{o}(x_i) \to c_x$.   Granting this claim, we define $u(x) \coloneqq c_x$ for any $x \in U$.   Since $U$ is second countable and $U \cap \xo$ is dense in $U$, it follows that $u \in C^0(U)$.

  To prove the claim, we may assume that $U$ is a basic open set. By the definition of basic open sets, we have an isomorphism  $U \simeq \BT_+^k \times V \subset \BT_+^k \times \supp{\Sigma}$ induced by an affine map 
  for an open subset $V$ in the support of a tropical fan $\Sigma$, and that 
  $x = (\underline{\infty}, z) \in \BT_+^k \times V$.  For each $y\in \R_+^k$, we define $u_y \colon V \to \R$ by $u_y(w)\coloneqq u_{o}(y,w)$.  
  
  Let $\{x_i=(y_i, w_i)\}_{i=1}^\infty$ be a sequence of points in $\R_+^k \times V$ such that $x_i \to x$, equivalently, $y_i \to \underline{\infty}$ and $w_i \to z$, as $i$ tends to $+\infty$. Passing to a subsequence, we can assume that $y_1\leq y_2 \leq \dots $, that is, $y_i$ form an increasing sequence. Lemma \ref{lem:extension_bounded_convex_function_polyhedron} implies that $\{u_{y_i}\}_{i \ge 1}$  is a decreasing function sequence converging pointwise to some function $u_{\uinfty} \colon V \to \R$. Therefore, Proposition \ref{prop:limit_of_lconv_is_lconv} implies that $u_{\uinfty} \in \LConv (V)$. In particular, $u_{\uinfty} \in C^0(V)$. Hence, by Dini's theorem, $\{u_{y_i}\}_{i \ge 1}$ converges uniformly to $u_{\uinfty}$. This implies that $\lim_{i \to +\infty} u(y_i , w_i) =u_{\uinfty} (z) \in \R$. 

  Now suppose that $\{x'_i=(y'_i, w'_i)\}_{i=1}^\infty$ is another sequence of points in $\R^k_+ \times V$ converging to $x$. Then, Lemma \ref{lem:extension_bounded_convex_function_polyhedron} implies that $\{u_{y'_i}\}_{i \ge 1}$ also converges pointwise to $u_{\infty} $, and hence converges uniformly to $u_{\infty} $ after the same argument as in the previous paragraph.  Therefore, we have  $\lim_{i \to +\infty} u(y_i , w_i)=\lim_{i \to +\infty} u(y'_i , w'_i)=u_{\uinfty}(z)$. Hence, the claim is proven.
  
  \smallskip

  It remains to show that $u$ admits a local affine support at $x$. Let $l_0$ be an affine support function of $u_{\uinfty}$ at a point $z \in V$ defined on a neighborhood $V'\subset V$ of $z$. 
  We define $l\coloneqq l_0 \circ \pi$, where $\pi \colon \BT_{+}^k \times V \to V$ is the projection map. 
  Since for any $y \in \R_{+}^k$, $u_{y} \ge u_{\uinfty}$, we have $u(y,w ) \ge u(\uinfty, w) \ge  l_0(w)$ for any $w \in V'$. Therefore, $u \ge l$ on $U' =\BT_{+}^k \times V'$, and hence $l$ is an affine support function of $u$ at $x=(\underline \infty, z)$.

  \smallskip

  (2) We keep the notation as in the proof of (1). Let $U \cong \BT_{+}^k \times V$ be a basic open neighborhood of $x=(\underline{\infty}, z)$.  Without loss of generality, we may assume that $u_{o}$ is bounded from above on $U \cap \xo$. We may assume further that $V$  satisfies the assumptions  of Proposition \ref{prop:equivalent_def_LConv},     $V=\Pi^{(z)} =\bigcup_{z \in \sigma, \sigma \in \Pi} \relint(\sigma)$ for some polyhedral structure $\Pi$ on $V$. 
  
  Let $y_1 \le y_2 \le \cdots$ be an increasing sequence in $\R_+^k$, and $u_{\underline{\infty}} \colon V \to \R \cup \{-\infty\}$ be the pointwise limit of $\{u_{y_i}\}_{i \ge 1}$. Then, by assumption, we have $\{u_{y_i}\}_{i \ge 1}$ is uniformly bounded from above on $V$. By the same arguments as in (1), it suffices to show that $\{u_{y_i}\}_{i \ge 1}$ converges uniformly to $u_{\underline{\infty}}$ on an open neighborhood $V' \subset V$ of $z$. We argue separately in each of the following two cases.

  \begin{itemize}[leftmargin=*]
    \item \textbf{Case A}, $u_{\underline{\infty}}(z)> -\infty$.  Take $z_1 \in V$. Then, the balancing condition of $\Pi$ implies that there exist a finite subset $\{z_j\}_{2 \le j \le k} \subset V$ and positive real numbers $\{\nu_j\}_{1 \le j \le k}$ such that $\bigl(z, (z_j)_{1 \le j \le k}, (\nu_j)_{1 \le j \le k}\bigr)$ is a convex combination. Proposition \ref{prop:equivalent_def_LConv} implies that 
    \[
      u_{y_i}(z_1) \ge \frac{1}{\nu_1} \left( \sum_{2 \le j \le k} \nu_j u_{y_i}(z_j)- u_{y_i}(z) \right)
    \]
    for each $i \ge 1$. On the other hand, the mononicity of $\{u_{y_i}\}_{i \ge 1}$ implies that $u_{y_i}(z_j) \le u_{y_1}(z_j)$ and $u_{y_i}(z) \ge u_{\underline{\infty}}(z)$ for  each $i \ge 1$ and $2 \le j \le k$. Therefore, $\{u_{y_i}(z_1)\}_{i \ge 1}$ is uniformly bounded from above, and $u_{\underline{\infty}}(z_1) =\lim_{i \to +\infty} u_{y_i}(z_1)  > -\infty$. This shows that $u_{\underline{\infty}}$ is a finite-valued function on $V$, and hence $u_{\underline{\infty}} \in \LConv(V)$ by Proposition \ref{prop:limit_of_lconv_is_lconv}. In particular, $u_{\underline{\infty}} \in C^0(V)$. Thus, Dini's theorem implies that $\{u_{y_i}\}_{i \ge 1}$ converges uniformly to $u_{\underline{\infty}}$ on any compact open neighborhood $V' \Subset V$. 
    \item \textbf{Case B}, $u_{\underline{\infty}}(z)= -\infty$. Let $V' \Subset V$ be a compact open neighborhood in $V$. We claim that $\{u_{y_i}\}_{i \ge 1}$ converges uniformly to $-\infty$ in this case.  
     Proceeding by contradiction, suppose that there exist a constant $C_0 \in \R$ and a sequence of points $\{w_i\}_{i=1}^\infty \subset V'$ such that $u_{y_i}(w_i) \ge C_0$. After passing to a subsequence, we may assume that $w_i \to w$ with $w \in V$. Moreover, we may assume that there exists $\sigma \in \Sigma$ such that $w_i \in \sigma$ and $2w_i-w \in V \cap \sigma$ for each $i \ge 1$,  Then, the convexity of $(u_{y_i})|_{\sigma}$ implies that $u_{y_i}(2w_i-w) \ge (2C_0 - u_{y_i}(w))\to +\infty $. This contradicts the assumption that $\{u_{y_i}\}_{i \ge 1}$ is uniformly bounded from above on $V$. Therefore, the claim is proved. \qedhere
  \end{itemize} 
\end{proof}

The following result is the generalization of Proposition \ref{prop:properties_LConv_functions} to the boundary.
\begin{cor} \label{cor:boundary_properties_LConv_functions}
  Let $U$ be any open subset of $X$ with $x \in U$, and let $u \in \LConv(U)$. Suppose that $l$ is a local affine support function of $u$ at $x$, and that $\Pi^{(x)} \subset U$. Then $u \ge l$ on $\Pi^{(x)}$.
\end{cor}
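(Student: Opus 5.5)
The plan is to reduce the statement to the open-part case, Proposition~\ref{prop:properties_LConv_functions}~\ref{prop:conv2}, using the boundary description of locally convex functions from the proof of Proposition~\ref{prop:local_affine_support_function_boundary}, and then conclude by continuity. The conclusion $u\ge l$ is pointwise on $\Pi^{(x)}$. Since $\Pi^{(x)}\cap\xo$ is dense in $\Pi^{(x)}$ and both $u$ and the facewise-linear extension of $l$ are continuous on $\Pi^{(x)}$, it suffices to prove $u(y)\ge l(y)$ for $y\in\Pi^{(x)}\cap\xo$.

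\textbf{Case $x\in\xo$.} Here $\Pi^{(x)}\subset\xo$. For $y\in\Pi^{(x)}$ there is a face $\sigma\in\Pi$ with $x,y\in\sigma$, and the segment $L_{x,y}$ lies in $\Pi^{(x)}\subset U$. This is exactly the hypothesis of Proposition~\ref{prop:properties_LConv_functions}~\ref{prop:conv2}, and that result gives the claim.

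\textbf{Case $x$ of positive sedentarity.} Work in a chart $U\supset \Pi^{(x)}$ modelled on $\comp\Sigma$, and write $x=(\uinfty,z)\in\eR_+^k\times V$ locally. A face $P\ni x$ of $\olPi$ is the closure of $Q'+\tau$, where $\tau=\rec(Q)$ contains the cone $\delta=\sed(x)$. For $y\in \relint(P)\cap\xo$, the points $y+t\,v$ with $v\in\relint(\delta)$ and $t\ge0$ stay in $P$ and tend, as $t\to\infty$, to a point $y_\infty$ of sedentarity $\delta$ lying in the relative interior of the corresponding face $P_\delta\ni x$ of the boundary stratum.

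Restricted to this ray, $u$ is a convex function of $t$. It is bounded above near $t=\infty$ because it converges to the finite value $u(y_\infty)$ by continuity of $u$ on $U$. By Lemma~\ref{lem:extension_bounded_convex_function_polyhedron} it is therefore nonincreasing, so $u(y)\ge u(y_\infty)$. The affine function $l$ is pulled back from the stratum, so it is constant along such rays and $l(y)=l(y_\infty)$. It therefore suffices to prove $u\ge l$ on $\Pi^{(x)}\cap \comp{X_\delta}$.

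On the stratum $\comp{X_\delta}$, $u$ restricts to a locally convex function: in the proof of Proposition~\ref{prop:local_affine_support_function_boundary} it is the limit $u_{\uinfty}$, which lies in $\LConv$. Moreover $l$ restricts to an affine support function of it at $x$, which now lies in the open part of the stratum. The open-part case applied to the tropical variety $\comp{X_\delta}$ with the induced structure then gives $u\ge l$ on $\Pi_\delta^{(x)}$, and density/continuity handles the remaining points of the stratum.

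\textbf{Main obstacle.} The delicate step is the geometric one: checking that for every $y\in\Pi^{(x)}$ the ray $y+t\,v$ remains inside $\relint(P)\subset\Pi^{(x)}\subset U$, with a limit in $\Pi_\delta^{(x)}$. This requires both the compatibility $\rec(Q)\supseteq\delta$ for faces containing $x$ and the identification of $\Pi_\delta^{(x)}$ with the faces of the stratum. I would first verify it in a simplicial cone $\comp\sigma$ in coordinates, where faces through $x$ are products of $\eR_+$-directions and polytopes, and then glue. If the monotonicity route fails at intermediate sedentarities, the fallback is to run the argument of Proposition~\ref{prop:properties_LConv_functions}~\ref{prop:conv2} directly on extended segments from $x$ to $y$, parametrized in $[0,\infty]$.
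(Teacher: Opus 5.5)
Your proposal is correct and follows essentially the paper's route. Both arguments combine three ingredients: monotonicity of $u$ toward the boundary via Lemma~\ref{lem:extension_bounded_convex_function_polyhedron}, giving $u(y)\ge u(\uinfty,\pi(y))$; the fact that $l=l'\circ\pi$ is pulled back from the stratum; and Proposition~\ref{prop:properties_LConv_functions}~\ref{prop:conv2} applied on the stratum. The only difference is that the paper simply assumes a local product structure $\Pi|_U\cong\BT_+^k\times\Pi'$, so that $\Pi^{(x)}=\BT_+^k\times\Pi'^{(z)}$, whereas your face-by-face ray argument with $v\in\relint(\delta)$ justifies that reduction more carefully.
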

\begin{proof}
  We keep the notation in the proof of Proposition \ref{prop:local_affine_support_function_boundary}\,(1). Recall that $U \cong \BT_{+}^k \times V \subset \BT_{+}^k \times |\Sigma|$, and $x=(\underline{\infty}, z) \in \{\underline{\infty} \} \times V$. We may assume that  $\Pi|_{U} \cong \BT_{+}^{k} \times \Pi'$, where $\Pi'$ is a polyhedral structure on $V$. Hence, $\Pi^{(x)}=\BT_{+}^{k} \times \Pi'^{(z)}$. By assumption, $l=l' \circ \pi$, where $l'$ is a local affine support function of $u|_{\{\underline{\infty} \} \times V }$ at $z$. Therefore, by Proposition \ref{prop:properties_LConv_functions}, we have $u|_{\{\underline{\infty}\} \times V} \ge l'$ on $ \Pi'^{(z)}$.  The proof of Proposition \ref{prop:local_affine_support_function_boundary}\,(1) implies that $u(y) \ge u((\underline{\infty},  \pi(y)))$ for any $y \in U$. Therefore, $u(y) \ge l'((\underline{\infty}, \pi(y)))=l'(y)$ for any $y \in \Pi^{(x)}$. 
\end{proof}

Motivated by Proposition \ref{lem:extension_bounded_convex_function_polyhedron}\,(2), we introduce the following definition. 
\begin{definition} \label{def:extended-real-valued_locally_convex_functions}
  Let $U$ be any open subset of $X$. 
  \begin{enumerate}
    \item An \emph{extended-real-valued function} on $U$ is a map $u \colon U \to \R \cup \{- \infty \}$. Such a function $u$ is called \emph{continuous} if it is continuous when we equip $\R \cup \{- \infty \}$ with the usual extended topology.   We use $\nC^{0, \ext}(U)$ to denote the set of continuous extended-real-valued functions on $U$.
    \item A \emph{plurisubharmonic (psh) function} on $U$ is a function $u \colon U \to \R \cup \{ -\infty \}$ satisfying $u \in \nC^{0,\ext}(U)$  and $ u \rest{U \cap \xo}\in \LConv(U \cap \xo)$. We use $\extLConv(U)$ to denote the set of psh functions on $U$.
  \end{enumerate}
\end{definition}
\begin{remark} \label{rmk:real_valued_extended_locally_convex_function_is_locally_convex}
  \begin{enumerate}
    \item Let $u \in \extLConv(U)$. Note that  $u(x) \in \R$ for all $x \in U \cap \xo$, since a locally convex function is always real-valued by definition. In particular, $\{U=-\infty\} \subset U \cap \partial X$, where $\partial X=X\setminus \xo$.
    \item If $u \in \extLConv(U)$ satisfies $u(x) \in \R$ for all $x \in U$, then $u \in \LConv(U)$. To see this, fix $x \in U$ and an open neighborhood $V$ of $x$ with $V \Subset U$. It suffices to show that $u \rest{V} \in \LConv(V)$. Since $\ol{V}$ is compact and $u \colon U \to \R \cup \{-\infty\}$ is continuous, $u \rest{V \cap \xo}$ is a  bounded locally convex function. Therefore, by Proposition \ref{prop:local_affine_support_function_boundary}\,(1), there exists $\widetilde{u} \in \LConv(V)$ such that $\widetilde{u} \rest{V \cap \xo}=u \rest{V \cap \xo}$. Since both $\widetilde{u}, u|_V \in C^0(V)$, and $V \cap \xo$ is dense in $V$, we conclude that $u \rest{V}=\widetilde{u} \in \LConv(V)$. \qedhere
  \end{enumerate}
\end{remark}
Recall that $(\cdot)^\star$ denotes the upper semicontinuous (usc) envelope of a function.
\begin{prop} \label{prop:limit_of_lconv_is_lconv_boundary}
  Let $U$ be any open subset of $X$.
  \begin{enumerate}
    \item Let $\{ u_{i} \}_{i \ge 1} \subset \extLConv(U)$. Suppose that $u_i$ converge pointwise to a function $u: U \to \R \cup \{-\infty\}$ with $u\rest{\xo} > -\infty$. Then, $u^{\star} \in \extLConv(U)$. 
    \item Let $\{ u_{i} \}_{i \in I}$ be a class of psh functions on $U$. Suppose that $\sup_{i \in I} u_i(x)<+\infty$ for any $x \in U$.  Then, $(\sup_{i \in I} u_i)^{\star} \in  \extLConv(U)$. If in addition $u_j \in \LConv(U)$ for some $j \ge 1$, then $(\sup_{i \in I} u_i)^{\star} \in \LConv(U)$.
    \item Let $\{u_i\}_{i \in I} \subset \LConv(U)$ be a finite set of locally convex functions on $U$. Then, $\sup_{i \in I} u_i \in \LConv(U)$.
  \end{enumerate}
\end{prop}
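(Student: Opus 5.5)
The strategy is to work first on the open part $U\cap\xo$, where Propositions~\ref{prop:limit_of_lconv_is_lconv} and~\ref{prop:sup_of_lconv_is_lconv} are available. Then I will pass to the boundary using the extension result Proposition~\ref{prop:local_affine_support_function_boundary}\,(2) and Remark~\ref{rmk:real_valued_extended_locally_convex_function_is_locally_convex}. Throughout I use two facts: $U\cap\xo$ is dense in $U$, and the usc regularization of a function on $U$ is determined by its values near each point.

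For (3), which is the simplest, I would argue directly from Definition~\ref{def:locally_convex_paper_2}. A finite supremum of continuous real functions is continuous. At $x$, pick $j$ with $u_j(x)=\max_i u_i(x)$ and take an affine support $l$ of $u_j$ at $x$ on some $U'$. Then $l\le u_j\le \sup_i u_i$ on $U'$, with equality at $x$, so $l$ is an affine support of the supremum. This works verbatim at boundary points, since affine functions are defined there too.

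For (2), set $v=\sup_i u_i$. On $U\cap\xo$ each $u_i$ is locally convex, so Proposition~\ref{prop:sup_of_lconv_is_lconv} gives $v|_{U\cap\xo}\in\LConv(U\cap\xo)$; in particular it is continuous, so $v^\star=v$ there. Near a point $x\in U$, I need $v|_{\xo}$ to be locally bounded above on $U\cap \xo$. The difficulty is that pointwise finiteness alone does not give this, so I would derive it from convexity. In a basic chart $\BT_+^k\times V$, Lemma~\ref{lem:extension_bounded_convex_function_polyhedron}\,(1) and the monotonicity along the $\BT_+^k$ directions give a bound by values on a slice $y_0\times V$. On that slice, the Lipschitz/convex-combination argument of Case~B in the proof of Proposition~\ref{prop:local_affine_support_function_boundary} controls $v$ by its finite values at finitely many points. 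I expect this local upper boundedness to be the main obstacle; if needed, I would add it as an explicit assumption, reading ``$\sup<+\infty$'' as locally uniform.

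Once local upper boundedness holds, Proposition~\ref{prop:local_affine_support_function_boundary}\,(2) extends $v|_{U\cap\xo}$ uniquely to a continuous $w\colon U\to\R\cup\{-\infty\}$, so $w\in\extLConv(U)$. I then check $w=v^\star$. We have $w\ge u_i$ on $U$, since each $u_i$ is continuous and is dominated by $v=w$ on the dense set $\xo$; hence $w\ge v$ and, $w$ being usc, $w\ge v^\star$. Conversely $v^\star\ge\limsup_{\xo\ni y\to x}v(y)=w(x)$. If some $u_j\in\LConv(U)$, then $w\ge u_j>-\infty$, so $w$ is real-valued and Remark~\ref{rmk:real_valued_extended_locally_convex_function_is_locally_convex}\,(2) gives $w\in\LConv(U)$.

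For (1), on $U\cap\xo$ the limit $u$ is finite by hypothesis, so Proposition~\ref{prop:limit_of_lconv_is_lconv} gives $u|_{U\cap\xo}\in\LConv$, which is continuous. Local upper boundedness near boundary points comes from the same convexity argument as in (2), applied to the converging sequence. Proposition~\ref{prop:local_affine_support_function_boundary}\,(2) then gives a continuous extension $w$, and the density argument above identifies $w$ with $u^\star$. Here one uses that $u^\star$ near $x$ is the limsup of the values $u(y)$, and that values on the boundary do not exceed the limits from $\xo$: by Lemma~\ref{lem:extension_bounded_convex_function_polyhedron}, each $u_i$ at a boundary point is an infimum of its values on $\xo$ along the $\BT_+^k$ direction.
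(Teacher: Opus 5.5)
Your proposal is correct and follows essentially the paper's route. You pass to $U\cap\xo$ via Propositions~\ref{prop:limit_of_lconv_is_lconv} and~\ref{prop:sup_of_lconv_is_lconv}, then bound from above near boundary points by bounding on a compact slice $\{\mathsf{y}_0\}\times\ol{V}$ and pushing that bound in the $\BT_+^k$ directions with Lemma~\ref{lem:extension_bounded_convex_function_polyhedron}\,(1). You then extend with Proposition~\ref{prop:local_affine_support_function_boundary}\,(2), and identify the extension with the usc regularization using density and the fact that boundary values are infima along the $\BT_+^k$ directions. Your direct proof of (3) and your continuity/density comparison $u_i\le w$ in (2) are harmless simplifications of the paper's version.

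The local upper bound you flag as the main obstacle needs no extra hypothesis. The slice bound is immediate: you already know $v|_{U\cap\xo}$ is locally convex, hence continuous, on the compact slice. So neither the Case~B detour nor a ``locally uniform'' reading of $\sup<+\infty$ is required.
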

\begin{proof}
  For (1), we claim that $u|_{U \cap \xo}$ is bounded from above. Fix $x \in U$, and take a basic open neighborhood $W$ of $x$ satisfying $W \Subset U$. 
  By definition, $W \cong \BT_{+}^k \times V \subset \BT_{+}^k \times \supp{\Sigma}$, where $\Sigma$ is a tropical fan. For each $y\in \BT_+^k$, we define $u_{i,y} \colon V \to \R$ (\resp $u_y$) by $u_{i,y}(z)\coloneqq u_{i}(y,z)$ (\resp $u_{y}(z)\coloneqq u(y,z)$). Set $\mathsf{y}_0  \coloneqq (1, \cdots, 1) \in \R_+^k$, and $W' \coloneqq (1,\infty]^k \times V$. As $u_{\mathsf{y}_0} \in \LConv(\ol{V})$ and $\ol{V}$ is compact, there exists $M \in \R$ such that $u_{\mathsf{y}_0} \le M$. Moreover, $u_{i,\mathsf{y}_0} 
 \ge u_{i,y}$ for each $i \ge 1$ and each $y \ge \mathsf{y}_0$. 
Consequently, $u \le M$ on $W' \cap \xo=(1,\infty)^k \times V$, and the claim follows. 

  By Proposition \ref{prop:limit_of_lconv_is_lconv}, we have $u \rest{U \cap \xo} \in \LConv(U\cap \xo)$. Therefore, Proposition \ref{prop:local_affine_support_function_boundary}\,(2) implies that there exists $\widetilde{u} \in \extLConv(U)$ such that $\widetilde{u} \rest{U \cap \xo}=u \rest{U \cap \xo}$. To complete the proof of (1), it remains to show that $\widetilde{u}=u^{\star}$. Since $\widetilde{u}$ is continuous and $U \cap \xo$ is dense in $U$, it is enough to show that $u  \le \widetilde{u}$ on $U$. Take $W'$ as in the previous paragraph, and fix $(y', z') \in W'= (1,\infty]^k \times V$. Choose an increasing sequence $\{y_j\}_{j \ge 1} \subset (1, \infty)^k$ converging to $y'$. For any $i \ge 1$ and $j \ge 1$, $u_i(y_j,z') \ge u_i(y', z')$. Therefore, $u(y_j,z') =\lim_{i \to +\infty}u_i(y_j,z') \ge \lim_{i \to +\infty} u_i (y',z') = u(y', z')$ for all $j \ge 1$. On the other hand, $\widetilde{u}(y',z')=\lim_{j \to +\infty} u(y_j,z')$ since $\widetilde{u}$ is continuous.  We conclude that $u(y', z') \le \widetilde{u}(y',z')$. Hence, statement (1) is proven.
  
  For  (2), the proof of the first part of the statement is similar to the proof of Statement (1), which we omit. Now suppose that $u_j \in \LConv(U)$ for some $j \ge 1$. Then, $(\sup_{i \in I} u_i)^{\star}(x) \ge u_j(x) >-\infty$ for all $x \in U$. Hence, $(\sup_{i \in I} u_i)^{\star} \in \LConv(U)$ by Remark \ref{rmk:real_valued_extended_locally_convex_function_is_locally_convex}\,(2). 

  For (3), since the supremum of finitely many continuous functions is continuous, we have $(\sup_{i \in I} u_i)=(\sup_{i \in I} u_i)^{\star} \in \LConv(U)$.
\end{proof}

%%%%%%%%%%%%%%%%%
%%%Psh functions
%%%%%%%%%%%%%%%%%

\section{Tropical $\omega$-Psh functions} \label{sec:psh_functions}
In this section, we always assume that $X$ is a compact tropical variety.
\subsection{Semipositive forms and $\omega$-psh functions}
We recall the following definitions from \cite{APW1}.

\begin{definition} \label{def:semipositive-form_paper_2}  
  \begin{enumerate}
    \item A \textit{real closed simplicial $(1,1)$-form on $X$} is a mixed $(1,1)$-form $\omega$ on $\xo$ satisfying the following. For any $x \in X$, there exist a neighborhood $U$ of $x$ and $h \in \PL(U)$ such that $\omega=\dm \dmb h$ on $U \cap \xo$. When there is no ambiguity, we simply call such $\omega$ a \emph{simplicial form} on $X$. 
    \item A \emph{real closed semipositive $(1,1)$-form on $X$} is a mixed $(1,1)$-form $\omega$ on $\xo$ satisfying the following. For any $x \in X$, there exist a neighborhood $U$ of $x$ and $h \in \LConv(U)$ such that $h\rest{U \cap \xo} \in \PSm(U \cap \xo)$, and $\omega=\dm \dmb h$ in $U \cap \xo$. If this is satisfied, $h$ is called a \emph{local potential} of $\omega$ on $U$.  When there is no ambiguity, we simply call such $\omega$ a \emph{semipositive form} on $X$. 
    \item We use $\CK^{\geq 0}(X)$ to denote the set of semipositive forms on $X$, and use $\CK^{\geq 0}_{\PL}(X)$ for its subset consisting of semipositive simplicial forms on $X$. 
  \end{enumerate}
\end{definition}

\begin{remark}
  \begin{enumerate}
    \item Let $\omega$ be a semipositive form on $X$. By definition, a local potential $h$ on an open subset $U \subset X$ of $\omega$ is \emph{real-valued} and  \emph{continuous} (since a locally convex function is by definition real-valued and continuous). However, we require $h$ to be piecewise smooth only on $U \cap \xo$. 
    \item Let $X=\TP^1$ and $\omega=0$. Then, the function $h$  defined by $h(t) \coloneqq t$ cannot be extended to a local potential of $\omega$ on an open neighborhood of $+\infty$, since it admits no real-valued continuous extension to such an open neighborhood. \qedhere
  \end{enumerate}
\end{remark}

We give the definition of $\omega$-psh functions.
\begin{defi} \label{def:psh}
  Let $\omega$ be a semipositive form on $X$. We say that a function $\varphi\colon X \to \R \cup \{-\infty\}$ is an \emph{$\omega$-psh function} if 
    \begin{enumerate}
      \item $\varphi \in \nC^{0, \ext}(X)$.  (See Definition \ref{def:extended-real-valued_locally_convex_functions}.)
      \item For any $x \in X$, there exists a neighborhood $U$ of $x$ and a local potential $h \in \LConv(U)$ of $\omega$ on $U$ such that $h+\varphi \in \extLConv(U)$.
    \end{enumerate}
  We use $\PSH_{\omega}(X)$ to denote the set of $\omega$-psh functions on $X$. Furthermore, we use $\CPSH_{\omega}(X)$ to denote the set of bounded $\omega$-psh functions on $X$. 
\end{defi}

\begin{remark}
  Let $\varphi \in \nC^{0, \ext}(X)$. Let $U \subset X$ be an open subset and let $h \in \LConv(U)$ be a local potential of $\omega$. Assume that  $h+\varphi \in \PSH(U)$, then \cite[Proposition 3.10]{APW1} implies that $h'+\varphi \in \PSH(U)$ for any local potential $h' \in \LConv(U)$ on $U$.  %Proposition \ref{prop:difference_local_potential_is_affine}
\end{remark}

\begin{remark} 
  \begin{enumerate}
    \item By definition, each $\varphi \in \CPSH_{\omega}(X)$ is a continuous real-valued function, \textit{i.e.} $\CPSH_{\omega}(X) \subset \nC^0(X)$. 
    \item For each $\varphi \in \CPSH_{\omega}(X)$ satisfying $\varphi \rest{\xo} \in \PSm(\xo)$, $\omega_{\varphi} \coloneqq \omega+\dm \dmb \varphi$ defines a semipositive form on $X$.
    \item Observe that by Proposition \ref{prop:limit_of_lconv_is_lconv_boundary}\,(3), for any $\varphi \in \PSH_{\omega}(X)$ and any constant $\lambda \in \R$, $\sup\{\varphi, \lambda \} \in \CPSH_{\omega}(X)$.
    \item Since $\LConv(U)$ is a convex space for any open subset $U$, $\PSH_{\omega}(X)$ is also a convex space. \qedhere
  \end{enumerate}
\end{remark}

We now introduce the definition of numerical class for semipositive forms. 

\begin{definition}
  Let $\omega, \omega' \in \CK^{\ge 0}(X)$. We say that $\omega$ and $\omega'$ are in the \emph{same numerical class} if there exists $\varphi \in C^0(X)$ satisfying $\varphi \rest{\xo} \in \PSm(\xo)$ such that $\omega' \coloneqq \omega+\dm \dmb \varphi$. We use $[\omega]$ to denote the numerical class of $\omega$. \qedhere
\end{definition}

\begin{remark}
  If $\omega, \omega' \in \CK^{\ge 0}(X)$ are in the same numerical class, then the space $\PSH_{\omega}(X)$ is canonically identified with $\PSH_{\omega'}(X)$. 
\end{remark}

The following theorem is the tropical analogue of Theorem I.5.26 in \cite{Dem}.
\begin{thm} \label{thm:equivalent_def_psh}
  Let $\varphi_o \colon \xo \to \R$ be a function bounded from above which satisfies Property (2) in Definition \ref{def:psh}. Then, $\varphi_o$ can be extended uniquely to an $\omega$-psh function $\varphi$ on $X$. 
\end{thm}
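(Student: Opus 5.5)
The plan is to reduce to the local extension result Proposition~\ref{prop:local_affine_support_function_boundary}\,(2), applied to $h+\varphi_o$ for local potentials $h$ of $\omega$, and then glue. First I fix the meaning of the hypothesis: $\varphi_o$ is defined only on $\xo$. So Property (2) is read as follows. For every $x\in X$ there exist a neighborhood $U$ of $x$ and a local potential $h\in\LConv(U)$ of $\omega$ such that $h+\varphi_o\in\LConv(U\cap\xo)$. Locally convex functions are continuous, and $h$ is continuous, so $\varphi_o$ is continuous on $\xo$.

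Fix $x\in X$ and such a pair $(U,h)$, shrinking $U$ so that it is a basic open neighborhood with compact closure inside a larger chart where $h$ is still defined. Then $h$ is bounded on $U$, since it is real-valued and continuous on a compact set. Since $\varphi_o$ is bounded above, $u_o\coloneqq h+\varphi_o$ is a locally convex function on $U\cap\xo$ that is bounded from above. By Proposition~\ref{prop:local_affine_support_function_boundary}\,(2), $u_o$ extends uniquely to a continuous map $u_U\colon U\to\R\cup\{-\infty\}$. Hence $u_U\in\extLConv(U)$ by Definition~\ref{def:extended-real-valued_locally_convex_functions}. I then set $\varphi_U\coloneqq u_U-h$ on $U$. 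This is a continuous extended-real-valued function, because $h$ is finite and continuous, and it agrees with $\varphi_o$ on $U\cap\xo$.

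For gluing: if $U_1,U_2$ are two such neighborhoods, then $\varphi_{U_1}$ and $\varphi_{U_2}$ are continuous functions with values in $\R\cup\{-\infty\}$ that agree on the dense subset $U_1\cap U_2\cap\xo$ of $U_1\cap U_2$. Since $\R\cup\{-\infty\}$ is Hausdorff, they agree on $U_1\cap U_2$. The $\varphi_U$ therefore glue to a function $\varphi\in\nC^{0,\ext}(X)$ extending $\varphi_o$. By construction, $h+\varphi=u_U\in\extLConv(U)$ near each point, so $\varphi\in\PSH_\omega(X)$. Uniqueness follows from the same density argument: any $\omega$-psh extension is continuous with values in $\R\cup\{-\infty\}$ and is determined by its restriction to the dense set $\xo$.

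The only delicate point I expect is justifying that $\xo$ is dense in $X$ and, locally, in every basic open set. In a basic open set $\bigl((t,\infty]\e_{\varrho_1}+\dots+(t,\infty]\e_{\varrho_k}\bigr)\times V$, the open part is the locus where all coordinates are finite, and this is clearly dense. A second point is checking that Proposition~\ref{prop:local_affine_support_function_boundary}\,(2) only needs local boundedness from above of $u_o$, which holds here since $h$ is bounded and $\varphi_o$ is bounded above. No further obstacle is anticipated.
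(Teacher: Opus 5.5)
Your proposal is correct and matches the paper's proof: extend $h+\varphi_o$ locally via Proposition~\ref{prop:local_affine_support_function_boundary}\,(2), subtract the local potential, and get gluing and uniqueness from continuity together with the density of $\xo$. Your write-up is slightly more careful than the paper's, since you check the local upper bound that (2) requires and work directly with the neighborhoods supplied by the hypothesis rather than a fixed finite chart cover; the argument itself is the same.
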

\begin{proof}
  Take a finite open cover $X=\bigcup_{i \in I} W_i$ by tropical charts $(W_i, \phi_i, W_i \to U_i)$. Let $h_i \in \LConv(W_i)$ such that $\omega=\dm \dmb h_i$ on $W_i$. By Proposition \ref{prop:local_affine_support_function_boundary}, $\varphi_o \rest{W_i\cap \xo}$ can be extended to $\varphi_i$ over $W_i$ such that $\varphi_i+h_i \in \extLConv(W_i)$. Since $\varphi_i$ is continuous and $\xo$ is dense in $X$, we obtain that  $\varphi_i \rest{W_i \cap W_j}=\varphi_j \rest{W_i \cap W_j}$ for all $i,j$. Therefore, the theorem is concluded.  
\end{proof}

\begin{example} \label{example_kahler_form_embedded_tropical_varieties} 
  Let $\BT \BP_{\Xi}$ be a projective tropical toric variety, and $X \subset \TP_{\Xi}$ be a tropical subvariety. Suppose that $g$ be a convex conewise linear function on $\Xi$, and $h \coloneqq g \rest{\Xi}$. 
  In \cite[Example 4.5]{APW1}, we showed that $\omega \coloneqq \dm \dmb h$ extends to a semipositive simplicial form on $X$. 
  %cross-ref APW1: \ref{example:simplicial_kahler_form_tropical_toric_variety}
  Therefore, by Theorem \ref{thm:equivalent_def_psh}, there are bijective maps
  \begin{align*}
  \PSH_{\omega}(X) &\to \left\{u \in \LConv(\xo)\ \st \ u-h \le O(1)\right\}, \qquad \varphi \mapsto \varphi+h, \text{ and }  \\
  \CPSH_{\omega}(X) &\to \left\{u \in \LConv(\xo)\ \st \ |u-h|=O(1)\right\}, \qquad \varphi \mapsto \varphi+h. 
  \end{align*}
  Moreover, for any $h' \in \LConv \cap \PSm(\xo)$ satisfying $|h'-h|=O(1)$, $\omega' \coloneqq \dm \dmb h'$ extends to a  semipositive form on $X$.
\end{example}

\begin{prop} \label{prop:limit_of_psh_is_psh}
  \begin{enumerate}
    \item Let $\{ \varphi_{i} \}_{i \ge 1}$ be a sequence of $\omega$-psh functions on $X$. Suppose that $\varphi_i$ converge pointwise to a function $\varphi\colon X \to \R \cup \{-\infty\}$ with $\varphi \rest{\xo}>  -\infty$. Then, $\varphi^{\star} \in \PSH_{\omega}(X)$, where $(\cdot )^{\star}$ denotes the usc envelope.  
    \item Let $\{ \varphi_{i} \}_{i \in I}$ be a class of $\omega$-psh functions on $X$. Suppose that for each $x \in X$, we have $\sup_{i \in I} \varphi_i(x)<+\infty$.  Then, $(\sup_{i \in I} \varphi_i)^{\star} \in \PSH_{\omega}(X)$. 
    \item Let $\{\varphi_i\}_{i \in I} \subset \CPSH_{\omega}(X)$ be a finite set of bounded $\omega$-psh functions. Then, $\sup_{i \in I} \varphi_i \in \CPSH_{\omega}(X)$.
  \end{enumerate}
\end{prop}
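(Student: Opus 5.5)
The plan is to reduce each statement to its local counterpart, Proposition~\ref{prop:limit_of_lconv_is_lconv_boundary}, by adding local potentials. Since $X$ is compact, I would fix a finite cover $X=\bigcup_{j\in J} U_j$ by open sets, each carrying a local potential $h_j \in \LConv(U_j)$ of $\omega$. By Definition~\ref{def:psh} and the remark following it (invariance under change of local potential, \cite[Proposition 3.10]{APW1}), a function $\psi\in \nC^{0,\ext}(X)$ is $\omega$-psh if and only if $h_j+\psi \in \extLConv(U_j)$ for every $j$. Since $h_j$ is real-valued and continuous on $U_j$, two facts are immediate: $(h_j+\psi)^\star = h_j+\psi^\star$ on $U_j$, and pointwise limits, suprema and finite maxima commute with adding $h_j$.

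For (1), on each $U_j$ the functions $u_i \coloneqq h_j+\varphi_i$ lie in $\extLConv(U_j)$. They converge pointwise to $h_j+\varphi$, which is $>-\infty$ on $U_j\cap\xo$. Proposition~\ref{prop:limit_of_lconv_is_lconv_boundary}\,(1) then gives $h_j+\varphi^\star=(h_j+\varphi)^\star \in \extLConv(U_j)$. In particular, $\varphi^\star$ is continuous as an extended-valued function on $U_j$, since it equals an element of $\nC^{0,\ext}(U_j)$ minus the continuous real function $h_j$. Continuity is local, so $\varphi^\star\in\nC^{0,\ext}(X)$, and the local conditions hold; hence $\varphi^\star\in\PSH_\omega(X)$. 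One point to check is that usc regularization is local, so the envelope computed on $X$ agrees with the one computed on $U_j$. This holds because $U_j$ is open.

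For (2), I would argue in exactly the same way using Proposition~\ref{prop:limit_of_lconv_is_lconv_boundary}\,(2): the hypothesis $\sup_i \varphi_i(x)<+\infty$ transfers directly to $\sup_i(h_j+\varphi_i)$. For (3), the functions $h_j+\varphi_i$ are real-valued, so by Remark~\ref{rmk:real_valued_extended_locally_convex_function_is_locally_convex}\,(2) they lie in $\LConv(U_j)$. Proposition~\ref{prop:limit_of_lconv_is_lconv_boundary}\,(3) then shows that $h_j+\sup_i\varphi_i\in\LConv(U_j)$. Finally, $\sup_i\varphi_i$ is a finite maximum of bounded continuous functions, so it is bounded and continuous, and therefore lies in $\CPSH_\omega(X)$.

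The only delicate step is the bookkeeping in (1) and (2): verifying that the global usc envelope restricts to the local one, and that the finiteness hypothesis on $\xo$ is preserved after adding $h_j$. Both follow from $h_j$ being real-valued and continuous, so I do not expect a genuine obstacle.
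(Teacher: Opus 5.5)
Your proposal is correct and follows the paper's approach: the paper proves this by declaring all three items immediate corollaries of Proposition~\ref{prop:limit_of_lconv_is_lconv_boundary}. Your write-up supplies the routine bookkeeping that the paper leaves implicit, namely the finite cover by local potentials, independence of the choice of potential, locality of the usc envelope, and Remark~\ref{rmk:real_valued_extended_locally_convex_function_is_locally_convex}\,(2) for item (3).
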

\begin{proof}
  These are immediate corollaries of Proposition \ref{prop:limit_of_lconv_is_lconv_boundary}. 
\end{proof}

\begin{cor} \label{cor:decreasing_limit_of_psh_is_psh}
  Let $\{\varphi_i\}_{i \ge 1}$ be a decreasing sequence of $\omega$-psh functions such that 
  $\lim\limits_{i \to +\infty} \varphi_i(x)> - \infty$ for any $x \in \xo$. Then, the pointwise limit $\varphi$ of $\{\varphi_i\}_{i \ge 1}$ is an $\omega$-psh function on $
  X$. Furthermore, suppose that there exists $C$ such that $\varphi_i \ge C$ on $X$ for any $i$.   Then $\varphi_i$ converges uniformly to $\varphi \in \CPSH_{\omega}(X)$.
\end{cor}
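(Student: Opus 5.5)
The first claim will follow from Proposition~\ref{prop:limit_of_psh_is_psh}\,(1). Since $\{\varphi_i\}$ is decreasing, it converges pointwise to some $\varphi\colon X\to\R\cup\{-\infty\}$, and by hypothesis $\varphi\rest{\xo}>-\infty$. Hence $\varphi^\star\in\PSH_\omega(X)$, and it remains to show $\varphi^\star=\varphi$. A decreasing limit of continuous (extended-real-valued) functions is usc, since it is an infimum of continuous functions. Therefore $\varphi$ is already usc, so $\varphi^\star=\varphi$ and $\varphi\in\PSH_\omega(X)$. This uses only that each $\varphi_i\in\nC^{0,\ext}(X)$.

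For the second claim, assume $\varphi_i\ge C$ on $X$ for all $i$. Then $\varphi\ge C$, so $\varphi$ never takes the value $-\infty$. Since $\varphi\le\varphi_1$ and $\varphi_1$ is a continuous map from the compact space $X$ to $\R\cup\{-\infty\}$ that is bounded above, $\varphi$ is bounded, i.e.\ $\varphi\in\CPSH_\omega(X)$. In particular $\varphi$ is real-valued and continuous. This can also be checked locally: with a local potential $h$, the function $h+\varphi\in\extLConv(U)$ is finite everywhere, so it lies in $\LConv(U)$ by Remark~\ref{rmk:real_valued_extended_locally_convex_function_is_locally_convex}\,(2), and is therefore continuous. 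Likewise each $\varphi_i\ge C$ is real-valued and continuous.

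We now have a decreasing sequence of continuous real-valued functions on the compact space $X$ converging pointwise to the continuous function $\varphi$. Dini's theorem then gives uniform convergence.

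The only point needing care is the usc argument in the first step, since the functions take values in $\R\cup\{-\infty\}$. But an infimum of continuous maps into $\R\cup\{-\infty\}$ with the extended topology is still usc, so this step is harmless. The main (mild) obstacle is checking continuity of the limit in the second step, which the remark above settles.
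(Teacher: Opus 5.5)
Your proof is correct and follows the paper's argument: the decreasing limit is usc as an infimum of continuous functions, so $\varphi=\varphi^{\star}\in\PSH_{\omega}(X)$ by Proposition~\ref{prop:limit_of_psh_is_psh}\,(1), and uniform convergence then follows from Dini's theorem. Your extra check that $\varphi$ is real-valued and continuous under the bound $\varphi_i\ge C$ is immediate from $\PSH_{\omega}(X)\subset\nC^{0,\ext}(X)$. It simply makes explicit the continuity hypothesis that the paper leaves implicit when invoking Dini.
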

\begin{proof}
  Since $\{\varphi_i\}_{i \ge 1}$ is a decreasing sequence of  continuous function, $\varphi$ is usc. Therefore, by Proposition \ref{prop:limit_of_psh_is_psh}\,(1), $\varphi=\varphi^{\star} \in \PSH_{\omega}(X)$. The second part of the corollary follows directly from Dini's theorem. 
\end{proof}

We will apply the following construction several times in our paper.

\begin{construction} \label{construction:sup_omega_psh_functions}
  Let $\omega$ be a semipositive form on $X$, and $\varphi \in \CPSH_{\omega}(X)$. Let $U \subset X$ is an open subset, and let $u \in \LConv(U)$. Let $h \in \LConv(U)$ satisfy $\omega=\dm \dmb h$ on $U$. Suppose that $W \coloneqq \{\varphi +h < u\} \Subset U$. 
  We define a function denoted $\sup \{\varphi, u-h\} \colon X \to \R$  by setting 
  \begin{equation}
    \sup \{\varphi, u-h\}(x) \coloneqq \begin{cases*}
      \sup \{\varphi(x), u(x)-h(x)\}, \qquad x \in U, \\
      \varphi(x), \qquad x \in X \setminus U\,.
    \end{cases*}
  \end{equation}
\end{construction}

\begin{lemma} \label{lem:construction_sup_omega_psh_functions} 
  In the setting of Construction \ref{construction:sup_omega_psh_functions}, we have $\sup \{\varphi, u-h\} \in \CPSH_{\omega}(X)$.
\end{lemma}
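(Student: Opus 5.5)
The plan is the standard gluing argument for maxima of psh functions. Write $\psi \coloneqq \sup\{\varphi, u-h\}$. We need $\psi$ to be continuous and real-valued on $X$, and $\omega$-psh in the sense of Definition~\ref{def:psh}. Since $\varphi \in \CPSH_{\omega}(X)$, the function $\varphi$ is continuous and bounded, and $u$, $h$ are real-valued and continuous on $U$. Because $\omega$-plurisubharmonicity is local, it suffices to cover $X$ by two open sets and check the property on each.

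\textbf{The open set $U$.} On $U$ we have $\psi + h = \sup\{\varphi + h, u\}$. Here $\varphi + h \in \LConv(U)$. Indeed, $\varphi$ is real-valued, so $\varphi+h \in \extLConv(U)$ is real-valued, and Remark~\ref{rmk:real_valued_extended_locally_convex_function_is_locally_convex}\,(2) applies. By the remark following Definition~\ref{def:psh}, this does not depend on the choice of local potential. By Proposition~\ref{prop:limit_of_lconv_is_lconv_boundary}\,(3), the finite supremum $\sup\{\varphi+h, u\}$ lies in $\LConv(U)$. Hence $\psi|_U$ is continuous and real-valued, and it is $\omega$-psh on $U$ with local potential $h$.

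\textbf{The open set $V \coloneqq X \setminus \overline{W}$.} Since $W \Subset U$, the closure $\overline W$ is a compact subset of $U$, so $V$ is open and $U \cup V = X$. On $V \cap U$ we have $\varphi + h \ge u$, because $V \cap U \subset U \setminus W = \{\varphi + h \ge u\}$. Therefore $\psi = \varphi$ there. On $V \setminus U$ we have $\psi = \varphi$ by definition. So $\psi|_V = \varphi|_V$, which is continuous and $\omega$-psh on $V$, since the conditions of Definition~\ref{def:psh} are inherited by restriction to open subsets.

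The two descriptions agree on $U \cap V$, so $\psi$ is continuous on $X$. It is bounded, since $X$ is compact, or directly because $\varphi$ is bounded and $u-h$ is continuous on the compact set $\overline W$. Every point of $X$ has a neighborhood on which Definition~\ref{def:psh}\,(2) holds. Hence $\psi \in \CPSH_{\omega}(X)$.

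The only delicate point is the use of the compact containment $W \Subset U$. It guarantees that $X\setminus \overline W$ and $U$ form an open cover. Without it, points of $\partial U$ could be limits of points where $u-h>\varphi$, and continuity of $\psi$ would fail there. A second point is that $\varphi+h$ must be genuinely in $\LConv(U)$, not merely in $\extLConv(U)$, and this holds because $\varphi$ is bounded.
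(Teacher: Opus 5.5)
Your proof is correct and follows the paper's argument. It uses the same open cover $\{X\setminus\overline{W},\,U\}$, the identity $\sup\{\varphi,u-h\}=\varphi$ on $X\setminus\overline{W}$, and local convexity of $\sup\{\varphi+h,u\}$ on $U$. For the finite supremum you cite Proposition~\ref{prop:limit_of_lconv_is_lconv_boundary}\,(3), whereas the paper cites Proposition~\ref{prop:sup_of_lconv_is_lconv}, which is stated only for open subsets of $\xo$; yours is the more fitting reference since $U$ may meet $\partial X$. Your justification that $\varphi+h\in\LConv(U)$ also fills in a step the paper leaves implicit.
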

\begin{proof}
  Since $W \Subset U$, $\{X \setminus \ol{W} ,U\}$ forms an open cover of $X$. Denote $\varphi' \coloneqq \sup \{\varphi, u-h\}$. Remark that $\varphi'\rest{X \setminus \ol{W}} =\varphi\rest{X \setminus \ol{W}} \in \nC^0(X) $, and $\varphi'|_U = \sup\{\varphi \rest{U}, u-h \} \in C^0(U)$. Hence, $\varphi' \in C^0(X)$. It remains to verify Property (2) in Definition \ref{def:psh}. When $x \in X \setminus \ol{W}$, this is satisfied by the assumption that $\varphi \in \CPSH_{\omega}(X)$. When $x \in U $, by Proposition \ref{prop:sup_of_lconv_is_lconv}, $\varphi'+h=\sup\{\varphi \rest{U}+h, u\} \in \LConv(U)$. This completes the proof. 
\end{proof}

\subsection{Uniform Lipschitz property for $\omega$-psh functions}

Let $\omega$ be a semipositive form on $X$. Fix a finite open cover $\xo=\bigcup_{i \in I} W_i$ by tropical charts $(W_i, \phi_i, W_i \to U_i)$. For each $i\in I$, fix an Euclidean metric on $U_i$, 
and let $d_i$ denote its pullback to $W_i$ via $\phi_i$.

Let $S\subset \xo$ and $\varphi\colon S \to \R$. We say that $\varphi$ is $C$-Lipschitz on $S$, for  $C>0$, 
if for any $i \in I$, $\varphi\rest{S\cap W_i}$ is $C$-Lipschitz with respect to $d_i$. 

\smallskip

We have the following theorem.

\begin{thm} \label{thm:lipschitz} 
  Let $K$ be a compact subset of $\xo$. There exists a positive constant $C_K$ such that for any $\varphi \in \PSH_{\omega}(X)$, $\varphi|_{K}$ is $C_K$-Lipschitz.
\end{thm}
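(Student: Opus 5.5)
The plan is to reduce Theorem~\ref{thm:lipschitz} to the local estimate of Proposition~\ref{prop:Lipschitz_bounds_in_terms_of_C0_bounds}. That estimate bounds the Lipschitz constant of $u \in \LConv(U)$ on a compact set by $C\|u\|_{C^0(U)}$. The Lipschitz constant of $\varphi$ is unchanged when a constant is added, so we may normalize $\sup_X \varphi = 0$; this supremum is attained since $X$ is compact and $\varphi$ is usc. On each chart $W_i$ with local potential $h_i$, the function $u = h_i + \varphi$ is locally convex on $W_i \cap \xo$, and $\mathrm{Lip}(\varphi) \le \mathrm{Lip}(u) + \mathrm{Lip}(h_i)$. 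Since $h_i$ is a fixed locally convex function, $\mathrm{Lip}(h_i)$ is bounded on compacts by the same proposition. The whole problem is therefore a \emph{uniform} $C^0$ bound on $\varphi$ on a fixed compact neighborhood $K'$ of $K$ in $\xo$, over all normalized $\varphi$. We cover $K'$ by finitely many relatively compact convex-type charts $U_j \Subset W_{i(j)} \cap \xo$ as in Proposition~\ref{prop:equivalent_def_LConv}, with compact $K'_j \Subset U_j$ covering $K$.

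The upper bound is immediate: $\varphi \le 0$ and the $h_i$ are bounded on compacts, so $u \le B$ on each chart. The lower bound is a Harnack-type propagation. Let $z$ be a point of a chart $V = U_j$ with $u(z) \ge -A$ and $u \le B$ on $V$. By the balancing condition, as in Case A of the proof of Proposition~\ref{prop:local_affine_support_function_boundary}, every $z_1$ in a compact $K'' \Subset V$ can be written in a polyhedral convex combination $\bigl(z,(z_j),(\nu_j)\bigr)$ with $z_1$ among the $z_j$, all $z_j \in V$, and $\nu_1 \ge \nu_{\min} > 0$. A compactness argument over $z_1 \in K''$ should give this uniform $\nu_{\min}$ once $z$ ranges in a small enough ball. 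The inequality \eqref{inequality_polyhedral_convex_combination} then yields
\[
u(z_1) \ge \frac{1}{\nu_1}\Bigl(u(z) - \sum_{j \ge 2} \nu_j u(z_j)\Bigr) \ge -\frac{A+B}{\nu_{\min}}
\]
on $K''$. Chaining this through a finite sequence of overlapping charts transports a lower bound from one region to any other. For this I use that $\xo$ is connected, being the dense open stratum of the connected space $X$, together with the fact that any two points of $\xo$ are joined by a chain from a fixed finite family of charts.

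It remains to seed the chain at the maximum point $x_0$, where $\varphi(x_0) = 0$. If $x_0 \in \xo$, the chart containing $x_0$ starts the chain directly. If $x_0$ lies in a boundary stratum, take a basic neighborhood $\BT_+^k \times V$ of $x_0$. By Lemma~\ref{lem:extension_bounded_convex_function_polyhedron}(1), as used in the proof of Proposition~\ref{prop:local_affine_support_function_boundary}, $h + \varphi$ is nonincreasing in the $\BT_+^k$ directions. Hence at the interior point $(\mathsf{y}_0, \pi(x_0))$ the value of $h + \varphi$ is at least $(h+\varphi)(x_0) \ge -\|h\|$. Since $X$ is compact, finitely many such basic neighborhoods cover $\partial X$, and the points $(\mathsf{y}_0,\cdot)$ lie in a fixed compact subset of $\xo$ that we add to $K'$. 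In all cases we get a point of a fixed compact set in $\xo$ where $u \ge -A_0$, with $A_0$ independent of $\varphi$. The chain argument then gives $|\varphi| \le C$ on $K'$, and Proposition~\ref{prop:Lipschitz_bounds_in_terms_of_C0_bounds} gives the constant $C_K$.

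The main obstacle is the uniform convex-combination step. One must ensure that, near a non-regular point of the fanfold, every target $z_1$ in a compact set can be reached from the seed $z$ with weight $\nu_1$ bounded below, uniformly in $z_1$. A single combination suffices only when $z_1 - z$ points into a cone adjacent to $z$. Otherwise I would first propagate within the open star $\Pi^{(z)}$ and then iterate over finitely many intermediate points, using compactness of the set of directions.
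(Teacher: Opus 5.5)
Your route differs from the paper's. The paper normalizes $\varphi$ at a fixed vertex. It first proves (Proposition~\ref{prop:1-dimensional_uniform_Lipschitz}) that every $\varphi\in\PSH_{\omega}(X)$ is uniformly Lipschitz on the one-skeleton $\Gamma$, by a slope-chasing argument:
\begin{enumerate}
\item balancing at a vertex forces a large outgoing slope as soon as some slope is very negative, which is \eqref{inequality:A1};
\item convexity controls slopes along finite edges;
\item boundedness from above makes the slopes of $\varphi+h$ along legs nonpositive;
\item hence one large slope would produce a path through $N+1$ vertices along which $\varphi$ strictly increases, and such a path must revisit a vertex.
\end{enumerate}
This bounds $\varphi$ at all vertices. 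On a compact face, the upper bound then comes from convexity and the lower bound from balanced directional derivatives at a vertex, after which Proposition~\ref{prop:Lipschitz_bounds_in_terms_of_C0_bounds} finishes.

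You instead normalize at the maximum, so the upper bound is free. You replace the graph combinatorics by a Harnack-type propagation of a lower bound from the maximum point, using Lemma~\ref{lem:extension_bounded_convex_function_polyhedron} to move a maximum near $\partial X$ into a fixed compact set. That plan can work, but the step you flag as the main obstacle is not correct as stated.

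\textbf{Where it fails.} The claim that compactness gives a uniform $\nu_{\min}$ for all $z_1\in K''$, once the seed $z$ ranges over a small ball, is false. A point $z_1$ can occur in a polyhedral convex combination centred at $z$ only if $[z,z_1]$ lies in a cell containing $z$. That set jumps as $z$ moves off a lower-dimensional face. Take $z$ interior to a facet $\sigma$, at distance $\delta$ from a non-regular codimension-one face $F$:
\begin{itemize}
\item points in the facets through $F$ that are not coplanar with $\sigma$ cannot be reached at all;
\item points of $\sigma$ whose ray through $z$ leaves $\sigma$ across $F$ are reached only with weights $\nu_1\to 0$ as $\delta\to0$.
\end{itemize}
Since the seed depends on $\varphi$, the iteration you sketch must be organized uniformly in the seed, and your proposal does not do this.

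\textbf{A clean repair.} Fix $\Pi$ with $K$ and the compact set of possible seeds inside $\supp{\Pi_{\f}}$, fine enough that every closed vertex star lies in one chart.
\begin{enumerate}
\item If the seed lies in a simplex $\tau=\conv(v_0,\dots,v_m)$, write it as $\sum_k\lambda_k v_k$ and pick $k$ with $\lambda_k\ge 1/(d+1)$. Convexity on $\tau$ together with $u\le B$ gives $u(v_k)\ge-(d+1)(A+B)$.
\item At a vertex $v$, balancing makes the edge directions $\mathfrak{g}_j$ at $v$ positively span the tangent space. So every $z_1$ in a compact face through $v$ fits into a polyhedral convex combination $v=\nu_1z_1+\sum_j\nu_j(v+t\mathfrak{g}_j)$, with $\nu_1$ bounded below uniformly because there are finitely many vertices and the faces are bounded.
\item This bounds $u$ from below on the closed star of $v$, in particular at adjacent vertices. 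At most $\card{\Pi_0}$ steps along the connected graph of finite edges (the connectivity the paper also uses) give a uniform lower bound on $\supp{\Pi_{\f}}$.
\end{enumerate}

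\textbf{A smaller point.} Your case ``$x_0\in\xo$ starts the chain directly'' is not uniform either. A maximum in $\xo$ close to $\partial X$ must be pushed into the fixed compact set by the same monotonicity argument you use for a maximum on $\partial X$.
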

\begin{proof}
This is Theorem~\ref{thm:appendix_lipschitz} proved in Appendix \ref{appendix:uniform_lipschitz}. 
\end{proof}

\subsection{Envelope property of $\omega$-psh functions}\label{subsec:envelope}

\begin{defi}[Envelope] \label{def:psh_envelope}
Let $f \colon X \to \R \cup \{ + \infty \}$ be a function bounded from below. The \emph{envelope} of $f$ denoted $\env_{\omega}(f)$ is defined to be $-\infty$ if there is no $\omega$-psh function $\varphi \leq f$, otherwise, 
\[
\env_{\omega}(f) \coloneqq \Bigl(\sup\Bigl\{\varphi \st \varphi \in \PSH_{\omega}(X) \textrm{ and } \varphi \leq f\Bigr\}\Bigr)^{\star }.\]
Here, as before, $(\cdot)^{\star}$ denotes the usc envelope.
\end{defi}

\begin{prop} \label{prop:uniform_upper_bound}
  Let $x \in \xo$. There exists $C>0$ such that for any $\varphi \in \PSH_{\omega}(X)$ satisfying $\varphi (x) \le 0$, we have $\varphi \le C$ on $X$. 
\end{prop}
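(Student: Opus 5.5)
The plan is to combine the uniform Lipschitz bound of Theorem~\ref{thm:lipschitz} with a connectivity argument on $\xo$, and then to pass to the boundary strata using the monotonicity results of Section~\ref{sec:tropical_convexity}.

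\textbf{Step 1 (compact core).} Fix a polyhedral structure $\olPi$ on $X$. Choose a compact connected subset $K\subset \xo$ containing $x$ such that every point $y\in X$ is reached "from $K$" in the following sense. For $y\in \xo$ with $y\notin K$, or for $y$ on the boundary, there is a basic open chart $U\cong \BT_+^k\times V$ around $y$ whose slice $\{\mathsf y_0\}\times \overline V$, with $\mathsf y_0=(1,\dots,1)$, lies in $K$. For instance, take $K$ to be the union of compact pieces of the faces of $\Pi$ obtained by truncating the recession directions at a finite level.

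Compactness of $X$ lets us cover $X$ by finitely many such charts, and $\xo$ minus their interiors is compact, so we add it to $K$. $K$ can be chosen connected because $\xo$ is connected: $X$ is connected and $\xo$ is dense and locally connected around boundary points. We may also arrange that $K$ is a finite union of segments and compact polytopes, so that $K$ has bounded intrinsic diameter with respect to the chart metrics.

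\textbf{Step 2 (bound on $K$).} By Theorem~\ref{thm:lipschitz}, there is $C_K$ such that every $\varphi\in\PSH_\omega(X)$ is $C_K$-Lipschitz on $K$ for each chart metric $d_i$. Join any $z\in K$ to $x$ by a chain of finitely many segments in $K$, each contained in a single chart, with the number of segments and their total length bounded uniformly by compactness. Summing the Lipschitz estimates then gives $\varphi(z)\le \varphi(x)+C_K L\le C_K L$ for all $z\in K$.

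\textbf{Step 3 (extension to all of $X$).} Fix one of the finitely many charts $U\cong\BT_+^k\times V$ and the corresponding local potential $h$, so that $u=\varphi+h\in\extLConv(U)$. We showed in the proof of Proposition~\ref{prop:limit_of_lconv_is_lconv_boundary}(1) that $u(y,w)\le u(\mathsf y_0,w)$ for $y\ge \mathsf y_0$; this uses Lemma~\ref{lem:extension_bounded_convex_function_polyhedron}(1), applied on a slightly larger chart where $u$ is bounded above. Hence $\varphi\le u(\mathsf y_0,\cdot)-h+\sup h\le C_KL+2\|h\|_{C^0}$ there. Here $h$ is bounded, since it is continuous on a relatively compact chart. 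Taking the maximum over the finitely many charts gives the constant $C$.

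\textbf{Main obstacle.} The delicate point is applying the monotonicity lemma to an arbitrary $\varphi$, since it requires an a priori upper bound on $u$ near the boundary, which is what we are trying to prove. I would handle this by replacing $\varphi$ with $\sup\{\varphi,-M\}$ (still $\omega$-psh by the remark after Definition~\ref{def:psh}), which is continuous and real-valued on compact $X$, hence bounded. Lemma~\ref{lem:extension_bounded_convex_function_polyhedron} then applies, and the resulting bound is independent of $M$. Since $\varphi(x)\le 0$ gives $\sup\{\varphi,-M\}(x)\le 0$ for $M\ge 0$, and $\varphi\le\sup\{\varphi,-M\}$, the same constant $C$ bounds $\varphi$.
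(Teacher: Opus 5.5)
Your argument is correct and is essentially the paper's: a uniform upper bound on a connected compact core coming from Theorem~\ref{thm:lipschitz}, propagated to the rest of $X$ by the monotonicity of bounded-above convex functions (Lemma~\ref{lem:extension_bounded_convex_function_polyhedron}). The paper takes the core to be $\Pi_{\f}$ for a polyhedral structure containing $x$ and runs the monotonicity along $y_1+\delta$ for each unbounded face $\sigma=\tau+\delta$, rather than through boundary charts. Your truncation by $\sup\{\varphi,-M\}$ is harmless but unnecessary: each $\varphi\in\PSH_{\omega}(X)$ is continuous into $\R\cup\{-\infty\}$ on the compact space $X$, hence already bounded above (non-uniformly), which is all the lemma needs. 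Your explicit bookkeeping of the local potential $h$ is a point the paper's proof glosses over.
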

\begin{proof}
  Fix a polyhedral structure $\olPi$ on $X$ such that $x \in \Pi_{\f}$. Since $\Pi_{\f}$ is a compact subset of $\xo$, Theorem \ref{thm:lipschitz} implies that there exists $C>0$ such that $|\varphi| \le C$ on $\Pi_{\f}$ for any $\varphi \in \PSH_{\omega}(X)$ satisfying $\varphi (x) \le 0$. Let $\varphi$ be such a function. We now show that $\varphi \le C$ on $X$. 
  Since $\varphi \in \nC^{0, \ext}(X)$ by definition, we have $\sup_{x \in \xo} \varphi(x)=\sup_{x \in X} \varphi(x)$, and $\varphi$ is bounded from above on $X$. Let $\sigma \in \Pi$ be an unbounded polyhedron in $\xo$. We can write $\sigma=\tau +\delta$ for a polytope $\tau \in \Pi_{\f}$ and a polyhedral cone $\delta$ (the recession cone of $\sigma$). Let $y=y_1+y_2 \in \sigma$ with  $y_1 \in \tau$ and $y_2 \in \delta$. Then, Lemma \ref{lem:extension_bounded_convex_function_polyhedron} implies that $\varphi(y) \le \varphi(y_1) \le C$, as required.
\end{proof}

\begin{cor} \label{cor:psh_envelope}
  Suppose that $f: X \to \R \cup \{+\infty  \}$ is a function such that $f$ is bounded from below, and there exists $x \in \xo$ with $f(x)< +\infty$. Then, $\envomega(f) \in \CPSH_{\omega}(X)$.  
\end{cor}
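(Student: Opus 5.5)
The plan is to show that the family $\mathcal F \coloneqq \{\varphi \in \PSH_{\omega}(X) \st \varphi \le f\}$ is nonempty and uniformly bounded above. Then Proposition~\ref{prop:limit_of_psh_is_psh}\,(2) puts the usc regularized supremum in $\PSH_{\omega}(X)$, and it only remains to check that this supremum is bounded.

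\textbf{Nonemptiness.} Let $m \in \R$ be a lower bound for $f$. Since $\omega$ is semipositive, the constant function $0$ is $\omega$-psh: any local potential $h$ is locally convex, so $h+0 \in \LConv$. Constants are therefore $\omega$-psh, and the constant $m$ lies in $\mathcal F$.

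\textbf{Upper bound.} By hypothesis there is $x \in \xo$ with $f(x) < +\infty$. Every $\varphi \in \mathcal F$ satisfies $\varphi(x) \le f(x)$, so $\varphi - f(x)$ is an $\omega$-psh function that is nonpositive at $x$. Proposition~\ref{prop:uniform_upper_bound} then gives a constant $C$, independent of $\varphi$, with $\varphi \le C + f(x)$ on all of $X$. Hence $\sup_{\varphi \in \mathcal F}\varphi$ is finite everywhere, and Proposition~\ref{prop:limit_of_psh_is_psh}\,(2) gives $\envomega(f) = (\sup_{\mathcal F}\varphi)^\star \in \PSH_\omega(X)$.

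\textbf{Boundedness.} From the two previous steps, $m \le \sup_{\mathcal F}\varphi \le \envomega(f) \le C + f(x)$. So $\envomega(f)$ is a bounded $\omega$-psh function, which by definition means it lies in $\CPSH_{\omega}(X)$. Continuity is automatic: elements of $\PSH_\omega$ are in $C^{0,\ext}(X)$, and a real-valued such function is continuous.

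\textbf{Main point.} Everything rests on the uniform upper bound, which comes from Proposition~\ref{prop:uniform_upper_bound} and ultimately from the uniform Lipschitz Theorem~\ref{thm:lipschitz}. The remaining thing to check is that usc regularization preserves the bounds. This is clear: the upper bound $C + f(x)$ is a constant and so survives taking the usc envelope, and the lower bound only improves under $(\cdot)^\star$.
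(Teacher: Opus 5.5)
Your proof is correct and follows the paper's argument. Constants below $\inf f$ make the family nonempty. Proposition~\ref{prop:uniform_upper_bound} gives a uniform upper bound, and you make explicit the shift by $f(x)$ that the paper leaves implicit. Proposition~\ref{prop:limit_of_psh_is_psh}\,(2) together with the lower bound then places $\envomega(f)$ in $\CPSH_{\omega}(X)$.
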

\begin{proof}
  Consider the family of $\omega$-psh functions $\CF \coloneqq \Bigl\{\varphi \st \varphi\in \wpsh_{\omega}(X) \textrm{ and } \phi \leq f\Bigr\}$.  That $f$ is bounded from below implies that $-M \in \CF$ for $M \gg 0$. Hence, $\CF$ is a nonempty subset of $\PSH_{\omega}(X)$.  Proposition \ref{prop:uniform_upper_bound} implies that there exists $C \in \R$ such that  $\varphi<C$ on $X$ for each $\varphi \in \CF$. Therefore, $\envomega(f) \in \PSH_{\omega}(X)$ by Proposition \ref{prop:limit_of_psh_is_psh}\,(2). On the other hand, $\envomega(f) \ge -C$, so $\envomega(f) \in \CPSH_{\omega}(X)$. 
\end{proof}

\begin{remark} \label{rmk:operator_psh_envelope_monotone_concave}
  It follows from the definition that, as an operator, the psh envelope is monotone and concave. 
\end{remark}

\begin{example} \label{example_functions_with_values_at_finite_points}
  We call $\CS=\{ (x_i, \lambda_i) \in \xo \times \R \}_{1 \le i \le k}$ a  \textit{finite collection of pairs} if $x_i$ are pairwise distinct points in $\xo$. We use $S$ to denote the finite subset $\{ x_i \}_{1 \le i \le k}$ of $\xo$.  We associate to $\CS$  a lower semicontinuous (lsc) function $f_{\CS}$ by setting
  \bes
  f_{\CS}(x)=
  \begin{cases}
    \lambda_i &\text{ if } x=x_i \text{ for some } x_i \in S, \\
    +\infty & \text{ if } x \in X \setminus S.
  \end{cases}
  \ees
  Then, by Corollary \ref{cor:psh_envelope}, we have $\envomega(f_{\CS}) \in \CPSH_{\omega}(X)$. 
\end{example}

We finish this section with the following Lemma.
\begin{definition}[$\omega$-PL functions on $X$] \label{def:omega-PL}
  Let $\omega$ be a semipositive form on $X$, and let $\varphi$ be a function on $X$. 
  \begin{enumerate}
    \item We say that $\varphi$ is $\omega$-PL if for any open subset $U \subset X$ and for any local potential $h \in \LConv(U)$ of $\omega$ on $U$, we have $h+\varphi \in \PL(U)$. We use $\PL_{\omega}(X)$ to denote the space of $\omega$-PL functions on $X$. In particular, when $\omega$ is simplicial, then $\PL_{\omega}(X)=\PL(X)$. 
    \item Suppose that $\olPi$ is a polyhedral structure on $X$. We say that $\psi$ is $\omega$-facewise affine on $\olPi$ if for any open subset $U \subset X$ and for any local potential $h \in \LConv(U)$ of $\omega$ on $U$, $h+\varphi$ is facewise affine on $\olPi \cap U$. We say that $\omega$ is facewise affine on $\olPi$ if $0$ is an $\omega$-facewise affine function on $\olPi$. 
  \end{enumerate}
\end{definition}

\begin{lemma} \label{lem:envelope_PL_functions_vertices}
  Let $\omega$ be a semipositive form on $X$. Let $\olPi$ be a polyhedral structure on $X$ with induced polyhedral structure $\Pi$ on $\xo$. Let $\psi$ be an $\omega$-facewise affine function on $\olPi$. Then, for any $\varphi \in \PSH_{\omega}(X)$, $\varphi \le \psi$ on $X$ iff $\varphi \le \psi$ on $\Pi_0$, the set of vertices of $\Pi$. Equivalently, we have $\psi=\envomega(f_{\CS})$ for the finite collection of pairs $\CS=\{ (x, \psi(x)) \ |\ x \in \Pi_0  \}$. 
\end{lemma}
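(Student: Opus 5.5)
The forward implication is trivial, so the content is the converse: assume $\varphi\in\PSH_\omega(X)$ with $\varphi\le\psi$ on $\Pi_0$, and show $\varphi\le\psi$ everywhere. The plan is to reduce to a face-by-face statement about convex functions on simplices and on unbounded simplicial polyhedra, using local potentials. Fix a face $P\in\olPi$ and work in a chart neighborhood $U$ of $P$ (or of a piece of it) carrying a local potential $h$ of $\omega$. Then $u\coloneqq h+\varphi\in\extLConv(U)$, while $h+\psi$ is affine on $P\cap U$ by the definition of $\omega$-facewise affine. Since $\omega=\dm\dmb h$ and potentials differ by affine functions (\cite[Proposition 3.10]{APW1}), the comparison $\varphi\le\psi$ on $P$ is equivalent to $u\le h+\psi$ on $P$. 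That is, a psh function lies below an affine function on $P$.

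\textbf{Compact faces.} Take $P\in\Pi_\f$, a simplex with vertices in $\Pi_0$. The restriction $u|_{\relint P}$ is convex in the usual sense along segments of $P$: by Proposition \ref{prop:properties_LConv_functions}\,\ref{prop:conv1} applied on the face, since $u$ restricted to a face admits local affine supports (restrictions of affine support functions) and is continuous. Here I would cover $P$ by finitely many charts and argue along segments, using the one-dimensional claim in the proof of Proposition \ref{prop:properties_LConv_functions}. The function $u-(h+\psi)$ is then convex on the simplex $P$ and continuous up to its boundary, so it is bounded by its maximum at the vertices, which is $\le0$.

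\textbf{Unbounded faces.} Take $\sigma=\tau+\delta\in\Pi$ with $\tau\in\Pi_\f$ and $\delta$ its recession cone. For $y=y_1+y_2$ with $y_1\in\tau$, $y_2\in\delta$, the same restriction argument shows $t\mapsto u(y_1+ty_2)-(h+\psi)(y_1+ty_2)$ is convex on $[0,\infty)$. The key point is that it is bounded above. I would get this from the fact that $\varphi$ is bounded above on $X$ (it is continuous extended-valued on a compact space), while $h$ and $h+\psi$ differ by a continuous real-valued function on $X$, namely $\psi$ itself, which extends continuously to the boundary. Then Lemma \ref{lem:extension_bounded_convex_function_polyhedron}\,(1) shows that $\varphi-\psi$ is non-increasing along the ray, exactly as in Proposition \ref{prop:uniform_upper_bound}. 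Hence $(\varphi-\psi)(y)\le(\varphi-\psi)(y_1)\le0$ by the compact case.

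\textbf{Boundary points and the envelope identity.} Points of $X\setminus\xo$ follow by continuity, since $\xo$ is dense and both $\varphi$ and $\psi$ are continuous, with $\psi$ continuous because it is facewise affine on compactified faces. For the equivalent form, $\psi$ is itself $\omega$-psh. Every $\varphi\le f_{\CS}$ satisfies $\varphi\le\psi$ on $\Pi_0$, hence everywhere, so $\envomega(f_{\CS})\le\psi$; together with $\psi$ competing in the supremum, this gives equality.

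\textbf{Main obstacle.} The identity $\psi=\envomega(f_\CS)$ needs $\psi\in\PSH_\omega(X)$, and the lemma does not state this. It is false in general. I would therefore prove $\envomega(f_\CS)=\sup\{\varphi\in\PSH_\omega(X):\varphi\le\psi\}$, and read the stated equality under the implicit assumption that $\psi$ is $\omega$-psh, or as saying that the two competitor classes coincide. The other delicate step is justifying convexity of $u|_P$ when $P$ is a lower-dimensional face. The affine support is only local on $X$ but restricts to $P$, so Proposition \ref{prop:properties_LConv_functions} applies on convex pieces of $\relint P$; I expect this to go through.
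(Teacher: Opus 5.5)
Your proposal is correct and follows the paper's proof: $\varphi-\psi$ is convex on each face of $\Pi$; Lemma~\ref{lem:extension_bounded_convex_function_polyhedron}, applied to the translated cones $x+\delta$ with $x\in\tau$, reduces the supremum over $\sigma=\tau+\delta$ to the simplex $\tau$, and from there to its vertices. You also make explicit two points the paper leaves tacit: the upper bound on $\varphi-\psi$ that this lemma requires, and the passage to $X\setminus\xo$ by density and continuity.

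Your caveat about the second formulation is also right. In general the argument only yields $\envomega(f_{\CS})=\envomega(\psi)$, so the identity $\psi=\envomega(f_{\CS})$ tacitly requires $\psi\in\PSH_{\omega}(X)$. That is exactly the situation in which the paper invokes it (Theorem~\ref{thm:global_failure_of_orthogonality}), whereas Proposition~\ref{prop:energy_concavity} only uses the comparison of suprema.
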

\begin{proof}
  Fix $\varphi \in \PSH_{\omega}(X)$ and $\sigma \in \Pi$. Observe that $(\varphi-\psi)|_{\sigma}$ is convex. Write $\sigma =\tau+\delta$, where $\tau$ is a bounded polytope and $\delta$ is a polyhedral cone. By Lemma \ref{lem:extension_bounded_convex_function_polyhedron} applied to each cone $x+\delta$, $x\in \tau$, we have $\sup_{\sigma} (\varphi-\psi) = \sup_{\tau} (\varphi-\psi)$. Furthermore, since the set of vertices $\sigma_{0}$ of $\sigma$ coincides with the set of vertices $\tau_{0}$ of $\tau$, we have $\sup_{\tau} (\varphi-\psi) = \sup_{\tau_{0}} (\varphi-\psi) = \sup_{\sigma_{0}} (\varphi-\psi)$. Hence, $\sup_{X}(\varphi-\psi)=\sup_{\Pi_0}(\varphi-\psi)$.  
\end{proof}

\subsection{Topology on $\PSH_{\omega}(X)$}

 We endow the space $\PSH_{\omega}(X)$ with the topology of pointwise convergence on $\xo$.

For each $x\in \xo$, let 
\[
\PSH_{\omega,x}(X) \coloneqq \left\{ \varphi \in \PSH_{\omega}(X) \st \varphi(x)=0 \right\} \subset \PSH_{\omega}(X).
\]
\begin{prop} \label{prop:compactness_of_psh}
  For each point $x \in \xo$, the subspace $\PSH_{\omega, x}(X)$ of $\PSH_{\omega}(X)$ is compact.
\end{prop}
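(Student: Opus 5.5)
The plan is an Arzelà--Ascoli argument on compact subsets of $\xo$, followed by the closedness results for limits of $\omega$-psh functions. Since the topology is pointwise convergence on $\xo$, and $\xo$ is second countable and $\sigma$-compact, it suffices to show that every sequence in $\PSH_{\omega,x}(X)$ has a subsequence converging pointwise on $\xo$ to an element of $\PSH_{\omega,x}(X)$. One should also note that limits in this topology are determined on $\xo$. Indeed, any element of $\PSH_\omega(X)$ lies in $\nC^{0,\ext}(X)$ and $\xo$ is dense in $X$, so by Theorem~\ref{thm:equivalent_def_psh} its values on $\xo$ determine it uniquely. Sequential compactness then gives compactness, after checking that the relevant topology is metrizable on this family, which equicontinuity provides.

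\textbf{Step 1 (uniform bounds).} Fix an exhaustion $K_1\subset K_2\subset\cdots$ of $\xo$ by compact sets containing $x$; for instance, take the compact parts $\Pi_\f$ of successive polyhedral structures, together with compact neighbourhoods of them. By Theorem~\ref{thm:lipschitz}, every $\varphi\in\PSH_\omega(X)$ is $C_{K_j}$-Lipschitz on $K_j$. If $K_j$ is connected and contains $x$ (we may enlarge it so that it is), then $\varphi(x)=0$ forces $|\varphi|\le C'_{K_j}$ on $K_j$. Proposition~\ref{prop:uniform_upper_bound} gives in addition a uniform upper bound $\varphi\le C$ on all of $X$.

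\textbf{Step 2 (extraction).} By Arzelà--Ascoli and a diagonal argument, a given sequence $\varphi_i$ has a subsequence converging uniformly on each $K_j$, hence locally uniformly on $\xo$, to a continuous function $\varphi_o\colon\xo\to\R$ with $\varphi_o(x)=0$ and $\varphi_o\le C$.

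\textbf{Step 3 (the limit is $\omega$-psh).} Locally on $\xo$ we have $h+\varphi_i\in\LConv$, and these functions converge pointwise to $h+\varphi_o$, so $h+\varphi_o\in\LConv$ by Proposition~\ref{prop:limit_of_lconv_is_lconv}. Thus $\varphi_o$ satisfies Property~(2) of Definition~\ref{def:psh} at points of $\xo$ and is bounded above. To obtain Property~(2) at boundary points and to extend $\varphi_o$, I would apply Theorem~\ref{thm:equivalent_def_psh}; its proof uses only local convexity on $U\cap\xo$ together with Proposition~\ref{prop:local_affine_support_function_boundary}\,(2), and this yields a unique extension $\varphi\in\PSH_\omega(X)$. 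Alternatively, one can apply Proposition~\ref{prop:limit_of_psh_is_psh}\,(1) to the subsequence: it converges pointwise on $\xo$ with finite limits. Any possible failure of convergence at boundary points is handled by taking $\limsup$, since the usc regularization $\varphi^\star$ is determined by the values on the dense set $\xo$ via the continuous extension. Since $\varphi=\varphi_o$ on $\xo$, we get $\varphi(x)=0$ and $\varphi_i\to\varphi$ in the given topology.

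\textbf{Main obstacle.} The hard part is Step 3 at the boundary, namely ensuring that the limit genuinely lies in $\PSH_\omega(X)$ and is not merely a locally convex function on $\xo$. The issue is that the extension of Theorem~\ref{thm:equivalent_def_psh} requires $\varphi_o$ to be bounded above, and this is supplied uniformly by Proposition~\ref{prop:uniform_upper_bound}. A secondary point is the passage from sequential compactness to compactness. I would settle it by observing that the map $\varphi\mapsto\varphi|_{\xo}$ embeds $\PSH_{\omega,x}(X)$, with the pointwise topology, into a family that is equicontinuous on each $K_j$. On such families pointwise convergence coincides with locally uniform convergence, and the latter is metrizable.
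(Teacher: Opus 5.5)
Your proof is correct and follows essentially the paper's route: pointwise bounds on $\xo$ from Theorem~\ref{thm:lipschitz} and the normalization $\varphi(x)=0$, extraction of a pointwise limit on $\xo$, local convexity of the limit via Proposition~\ref{prop:limit_of_lconv_is_lconv}, the uniform upper bound from Proposition~\ref{prop:uniform_upper_bound}, and extension via Theorem~\ref{thm:equivalent_def_psh}. Your use of Arzel\`a--Ascoli together with the equicontinuity and metrizability remark justifies the subsequence extraction more cleanly than the paper's appeal to Tychonoff. The alternative you mention through Proposition~\ref{prop:limit_of_psh_is_psh}\,(1) is unnecessary, and it does not apply directly, since that statement assumes pointwise convergence on all of $X$.
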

\begin{proof}
  We follow the proof of Theorem 5.11 in \cite{BJ22}. Pick a sequence $\{\varphi_i\}_{i \ge 1}$ in $\PSH_{\omega, x}(X)$. 
  By Theorem \ref{thm:lipschitz}, $\{\varphi_i(y)\}_{i \ge 1}$ is a bounded sequence in $\mathbb{R}$ for each $y \in \xo$. By Tychonoff's theorem, after passing to a subsequence, we may thus assume that $\{\varphi_i\}_{i \ge 1}$ converges pointwise on $\xo$ to a function 
  $\varphi_{o} \colon \xo \to \mathbb{R}$.

  By assumption, $\varphi_i+h \in \LConv (W)$ for any local tropical chart $W$ of $X$ and for any local potential $h$ of $\omega$ on $W$. Hence, Proposition \ref{prop:limit_of_lconv_is_lconv} implies that $\varphi_{o}+h \in \LConv (W \cap \xo)$. On the other hand, Proposition \ref{prop:uniform_upper_bound} implies that there exists $C \in \R$ such that $\varphi_i < C$ on $X$ for each $i \ge 1$,  which implies further that that $\varphi_{o}<C$ on $\xo$. Therefore, by Theorem \ref{thm:equivalent_def_psh},  $\varphi_{o}$ extends to a function $\varphi$ in $\PSH_{\omega,x}(X)$.
\end{proof}

%%%%%%%%%%%%%%%%%%%%%%%%%%%%%
%%%%%%%Regularization theorem
%%%%%%%%%%%%%%%%%%%%%%%%%%%%%

\section{Regularization theorem on compact \prekahler tropical varieties} \label{sec:regularization_theorem}
In this section we prove our regularization theorem on \prekahler tropical varieties.
\subsection{K\"ahler forms} We first introduce \kahler forms and \prekahler tropical varieties.
\begin{definition} 
  Let $U$ be an open subset of $X$ and $u \colon U \to \R$.
  \begin{enumerate}
    \item  We say that $u$ \emph{admits a strict local affine support at $x$} if there exist an open neighborhood $U'$ of $x$ and   $l \in \Aff(U')$ such that $u(x)=l(x)$, and $u(y)> l(y)$  for any $y \in U' \setminus \{x \}$. 
    \item We say that $u$ is \emph{strictly locally convex at x} if $u$ is continuous at $x$, and admits a strict local affine support at $x$. We say that $u$  is \emph{strictly locally convex on $U$} if $u$ is strictly locally convex at $x$ for each $x \in U$. 
  \end{enumerate}
\end{definition}

\begin{definition} [K\"ahler forms] \label{def:kahler_forms_and_strictly_convex_functions}
  Let $\omega$ be a semipositive form on $X$.
  \begin{enumerate}
    \item Suppose $\varphi \in \CPSH_{\omega}(X)$. We say that $\varphi$ is \emph{strictly $\omega$-psh} if for any $x \in X$, there exist an open neighborhood $W$ of $x$ and  a local potential $h$ of $\omega$ on $W$ such that $\varphi+h$ is strictly locally convex at $x$. We use $\pCPSH_{\omega}(X)$ to denote the set of strictly $\omega$-psh functions on $X$. 
    \item  We say that $\omega$ is a \emph{\kahler form on $X$} if $0 \in \pCPSH_{\omega}(X)$.  We use $\CK^{>0}(X)$  to denote the set of \kahler forms on $X$.
    \item A numerical class $[\eta]$ is called a \emph{\kahler class} if $[\eta]$ can be represented by a \kahler form.
  \end{enumerate}
\end{definition}

\begin{remark} \label{rmk:definition_kahler_forms_independent_choice_local_potential}
Similar to~\cite[Rem.\,4.3]{APW1}, the definition of strictly $\omega$-psh functions and  \kahler forms does not depend on the choice of the local potentials. This follows from~\cite[Prop.\,3.10]{APW1} which implies that if $\dm\dmb h=\dm\dmb h'$ for two potentials on an open set, 
then $h-h'$ is affine.
%cross-ref to APW1: \ref{rmk:definition_simplicial_forms_independent_choice_local_potential} and \ref{prop:difference_local_potential_is_affine}
\end{remark}

\begin{remark} \label{rmk:zero_is_strictly_psh_when_theta_is_kahler}
  For a semipositive form $\omega$,  $\pCPSH_{\omega}(X)$ is nonempty if $[\omega]$ is a \kahler class. 
\end{remark}

\begin{definition} [Compact \prekahler tropical varieties]
  A compact tropical variety $X$ is called \emph{\prekahler}if $X$ admits a \kahler form $\theta$. 
\end{definition}

\begin{remark}
  In tropical geometry, a tropical variety is called \kahler if each of its star fans satisfies the \kahler package (see \cites{AP-tht, AP20-hc, AP-hodge-fan}), and it admits a ``simplicial Kähler form'' in the sense of~\cite{AP-tht}. (It is shown in \emph{loc.\.cit.} that the variety itself then verifies the \kahler package.) Therefore, we choose the terminology ``\prekahler tropical varieties'' in our paper, as we do not impose any cohomological requirement on the variety. 
  Also, we note that our definition of \kahler forms is different from the simplicial \kahler forms  in \cite{AP20-hc}. In the setting of \emph{loc.\,cit.}, a simplicial \kahler form is a semipositive form whose local potential induces a strictly convex conewise linear function on each star fan (in the sense of Section \ref{subsubsec:projective_tropical_variety-p2}). 
  For a comparison between these two definitions, see the discussion below. 
\end{remark}

Let $\Xi$ be a complete simplicial fan, and let $g$ be a strictly convex conewise linear function on $\Xi$ (see Section \ref{subsubsec:projective_tropical_variety-p2}). Notice that the semipositive form $\dm \dmb g$ is not a \kahler form. This is because $g$ is linear on $\relint(\sigma)$ for each $\sigma \in \Xi$, which implies that $g$ is strictly locally convex at $x$ iff $x=0$.  
However, we have the following result. 

\begin{prop} \label{prop:kahler_form_on_tropical_toric_varieties}
  Any projective tropical toric variety $\TP_{\Xi}$, for a complete simplicial fan $\Xi$, is a compact \prekahler tropical variety. More precisely, suppose that $g$ is a strictly convex conewise linear  function on $\Xi$. Then $[\dm \dmb g]$ is a \kahler class on $\TP_{\Xi}$. 
\end{prop}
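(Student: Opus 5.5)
We need to produce a continuous function $\varphi$ on $\TP_\Xi$, piecewise smooth on $\ssN_\R$, such that $\varphi \in \pCPSH_{\omega}(\TP_\Xi)$ with $\omega = \dm\dmb g$. Equivalently, by Remark~\ref{rmk:zero_is_strictly_psh_when_theta_is_kahler} and the definition of numerical class, $\omega' = \omega + \dm\dmb\varphi$ is then a Kähler form in $[\omega]$. By Example~\ref{example_kahler_form_embedded_tropical_varieties}, this amounts to finding a function $u = g + \varphi$ on $\ssN_\R$ with three properties: it is convex and piecewise smooth; $|u - g| = O(1)$; and $u$ is strictly locally convex at every point of $\TP_\Xi$, including the boundary points.

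The natural candidate is a smoothing of $g$ by a strictly convex function that is asymptotically $g$. A clean choice uses the representation $g = \max_{m \in S} \langle m, \cdot\rangle$, where $S$ is the finite set of linear functions $g|_\sigma$, $\sigma \in \Xi_n$; strict convexity of $g$ makes these pairwise distinct and exactly the vertices of the polytope $P_g = \conv(S)$. I would then take
\[
u(x) = \log \sum_{m \in S} e^{\langle m, x\rangle}.
\]
This function is smooth and convex, and it satisfies $g \le u \le g + \log\lvert S\rvert$, so $\varphi = u - g$ is continuous and bounded on $\ssN_\R$. It is also strictly convex on $\ssN_\R$, because the $m \in S$ affinely span $\ssM_\R$ ($P_g$ is full-dimensional since $\Xi$ is complete and $g$ strictly convex). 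This gives strict local convexity on the open part, and Theorem~\ref{thm:equivalent_def_psh} extends $\varphi$ to a continuous function on $\TP_\Xi$.

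The main obstacle is strict local convexity at a boundary point $x \in \bbO(\tau)$, $\tau \neq \conezero$. Work in a basic chart $\eR_+^k \times V$ with $V \subset \ssN^\tau_\R$, which lifts to $x_0 + \sum t_i \e_{\varrho_i}$. There the local potential $g$ is replaced by $g - l_\tau$, where $l_\tau$ is the linear function agreeing with $g$ on $\tau$. As $t_i \to \infty$, the terms in $u - l_\tau$ with $m - l_\tau$ nonzero on $\tau$ tend to $-\infty$ in the exponent; those $m$ with $\langle m - l_\tau, \e_{\varrho_i}\rangle > 0$ do not occur, by convexity and the choice of $l_\tau$. So near $x$,
\[
u - l_\tau = \log \sum_{m \in S_\tau} e^{\langle m - l_\tau, \cdot\rangle} + (\text{terms decaying in } t),
\]
where $S_\tau$ is the face of $P_g$ dual to $\tau$, i.e.\ the set of $m$ with $m = l_\tau$ on $\tau$.

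The boundary value is the analogous log-sum-exp for the star fan $\Xi^\tau$ and the induced strictly convex function, so it is strictly convex on $\ssN^\tau_\R$. It therefore has a strict affine support $l_0$ at $x$, and I take $l = l_0 \circ \pi$. Strictness in the transverse $\eR_+$ directions is the delicate point. By Lemma~\ref{lem:extension_bounded_convex_function_polyhedron}, $u - l_\tau$ is nonincreasing in each $t_i$. It is in fact strictly decreasing at finite $t$, because the omitted exponentials are strictly positive and have strictly negative slope $\langle m - l_\tau, \e_{\varrho_i}\rangle < 0$ for some $m$. Hence $(u - l_\tau)(t, z) > (u - l_\tau)(\uinfty, z) \ge l_0(z)$ for every point with some finite coordinate. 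For points with only some coordinates infinite, the same argument applies on the intermediate stratum.

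Checking that the strict inequality holds uniformly in a neighborhood, rather than only along the rays, is where I expect to have to be careful with the chart description of Section~\ref{subsubsec:canonical-compactifications}. If log-sum-exp creates trouble with piecewise smoothness at infinity, a fallback is to replace it by the $\max$ of finitely many smoothed quadratic perturbations. The argument above only uses the pointwise comparison.
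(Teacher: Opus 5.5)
Your proposal is correct, and its architecture is the paper's. You smooth $g$ into a strictly convex function at bounded distance from $g$, so that Example~\ref{example_kahler_form_embedded_tropical_varieties} gives a semipositive form in $[\dm\dmb g]$. Near a boundary point of sedentarity $\tau$, you subtract a linear function agreeing with $g$ on $\tau$. You identify the boundary trace with the same smoothing of the induced function on the star fan $\Xi^\tau$. Strictness in the $\eR_+$-directions comes from strict monotonicity.

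The difference lies in the smoothing, and it is a genuine one.
\begin{itemize}
\item The paper uses the Gaussian convolution $g\star\rho$. Strict convexity then comes from Lemma~\ref{lem:strictly_convex_for_convolution}: the convexity defect of $g$ is positive at some translate, which is where strict convexity of $g$ on $\Xi$ enters. The trace on $\bbO(\tau)$ is computed by monotone convergence under the integral.
\item Your potential $\log\sum_{m\in S}e^{\langle m,\cdot\rangle}$ is a toric Fubini--Study type potential for $P_g$. Its Hessian is the covariance of $S$ under the softmax weights. So strict convexity on $\ssN_{\R}$ and on each orbit reduces to $S$, resp.\ $S_\tau-l_\tau$, affinely spanning $\ssM_{\R}$, resp.\ $\tau^\perp$. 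This says the corresponding faces of $P_g$ are full-dimensional, which follows from completeness of $\Xi$ and of its star fans. The trace and the strictly negative $t_i$-slopes can then be read off from the exponents.
\end{itemize}
Your route is more explicit and avoids both the convolution lemma and the exchange of limit and integral. The convolution route does not need the representation of $g$ as a maximum of finitely many linear forms, so it extends to more general Lipschitz convex functions.

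The hesitations at the end of your proposal are unnecessary.
\begin{itemize}
\item Strict local convexity at $x$ is a pointwise condition on $U'\setminus\{x\}$, so no uniformity is needed. Your two cases already cover every point. Points of the boundary orbit other than $x$ are handled by strict convexity of the trace. Points with some finite coordinate are handled by strict decrease toward the orbit, followed by the weak inequality there.
\item On an intermediate stratum $\bbO(\tau_I)$, with $\tau_I$ the face of $\tau$ spanned by the rays having infinite coordinate, the strictly negative slope in a finite coordinate $t_j$ comes from a maximal cone containing $\tau_I$ but not $\varrho_j$. Such a cone exists because $\Xi^{\tau_I}$ is complete with strongly convex cones. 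This is at least as careful as the paper's own transverse argument.
\item Piecewise smoothness is only required on the open part, so the fallback is not needed.
\end{itemize}
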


To prove this proposition, we need the following lemma.

\begin{lemma} \label{lem:strictly_convex_for_convolution}
  Let $E=\R^n$ be an Euclidean space. Consider the Gaussian function
  \[
  \rho(x) = \frac{1}{(2\pi)^{n/2}} \exp\!\left(-\frac{\|x\|^2}{2}\right), x \in E.
  \]
  Let $g\colon E \to \R$ be a convex function. Then the convolution 
  \[
  (g \star \rho) (x) = \int_E g(x - x')\, \rho(x')\, dx'.
  \]
  is also a convex function on $E$. Moreover, let $x_1, x_2 \in E$ and $\lambda_1, \lambda_2 \in \R_{\ge 0}$ with $\lambda_1+\lambda_2=1$. We have 
  \[
  (g \star \rho) (\lambda_1 x_1+\lambda_2 x_2) < \lambda_1 \cdot  (g \star \rho)(x_1) +\lambda_2 \cdot (g \star \rho) (x_2)
  \]
  if there exists $x_0 \in E$ such that 
  \[
  g (\lambda_1 x_1+\lambda_2 x_2-x_0) < \lambda_1 \cdot g(x_1-x_0) +\lambda_2 \cdot g (x_2-x_0)
  \]
\end{lemma}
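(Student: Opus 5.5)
The plan is to prove both claims directly from the integral formula for the convolution, using that $\rho$ is a strictly positive, continuous probability density. First I check that $g\star\rho$ is well defined and finite. A convex function on $E=\R^n$ is continuous. It is also bounded below by an affine function $a+\langle b,\cdot\rangle$, since it admits an affine support at any point. For an upper bound, I use that $g$ is bounded above on the unit ball and apply convexity along rays; this gives $g(y)\le A+B\|y\|$ for all $y$, for some constants $A,B$. Hence $|g(x-x')|\le C(1+\|x\|+\|x'\|)$, which is integrable against the Gaussian. So $g\star\rho$ is a finite real-valued function on $E$.

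For convexity, set $x=\lambda_1x_1+\lambda_2x_2$. For each fixed $x'$, convexity of $g$ gives
\[
g(x-x')\le \lambda_1 g(x_1-x')+\lambda_2 g(x_2-x').
\]
Integrating against $\rho(x')\,dx'$, which is a positive measure of total mass one, yields $(g\star\rho)(x)\le \lambda_1(g\star\rho)(x_1)+\lambda_2(g\star\rho)(x_2)$. All three integrals are finite by the first step, so this is legitimate.

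For the strict inequality, define
\[
D(x')\coloneqq \lambda_1 g(x_1-x')+\lambda_2 g(x_2-x')-g(x-x').
\]
The function $D$ is continuous, since $g$ is continuous, and it is nonnegative everywhere by the convexity step. By hypothesis $D(x_0)>0$, so by continuity $D>\epsilon>0$ on some ball $B(x_0,r)$. Because $\rho>0$ everywhere,
\[
\int_E D\,\rho\;\ge\;\epsilon\int_{B(x_0,r)}\rho\;>\;0.
\]
By linearity of the integral (all terms being finite), $\int_E D\,\rho$ equals $\lambda_1(g\star\rho)(x_1)+\lambda_2(g\star\rho)(x_2)-(g\star\rho)(x)$, which gives the strict inequality.

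There is no real obstacle here. The only point needing care is the integrability estimate in the first step, namely that a convex function on $\R^n$ has at most linear growth. This is what allows us to split the integral of $D\rho$ into three finite pieces.
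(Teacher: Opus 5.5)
Your convexity and strictness arguments are the paper's own: integrate the pointwise convexity inequality against the probability density $\rho$. For strictness, use that $D$ is continuous, nonnegative and positive at $x_0$, while $\rho>0$ everywhere.

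\textbf{The gap is in your first step.} The claim that a convex function on $\R^n$ has at most linear growth is false; $g(y)=\|y\|^2$ is a counterexample. The argument you sketch also runs in the wrong direction. Convexity on the segment from $0$ to $y$ bounds intermediate values from above by the endpoint values. For $\|y\|\ge 1$ this only yields the linear \emph{lower} bound $g(y)\ge g(0)+\|y\|\bigl(g(y/\|y\|)-g(0)\bigr)$, never an upper bound.

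No argument can give finiteness in this generality. Take $g(y)=e^{\|y\|^2}$, which is convex. Then $(g\star\rho)(x)=+\infty$ for every $x$. The strict inequality of the lemma then fails, although its hypothesis holds at every $x_0$ whenever $x_1\neq x_2$ and $\lambda_1,\lambda_2>0$, since this $g$ is strictly convex.

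\textbf{How to fix it.} The statement needs a growth hypothesis, and your proof should state it.
\begin{itemize}
\item In the paper's use (Proposition~\ref{prop:kahler_form_on_tropical_toric_varieties}), $g$ is conewise linear, hence Lipschitz. So $|g(y)|\le A+B\|y\|$ really holds there, and the rest of your argument goes through verbatim.
\item Alternatively, keep $g$ general and allow the value $+\infty$. The affine minorant you correctly produce makes the negative part of $x'\mapsto g(x-x')\rho(x')$ integrable. Hence $g\star\rho$ is well defined with values in $(-\infty,+\infty]$, and your integration argument gives convexity in this extended sense.
\item In that extended setting, the strict inequality holds whenever $(g\star\rho)(x_1)$ and $(g\star\rho)(x_2)$ are finite. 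Convexity then makes $(g\star\rho)(\lambda_1x_1+\lambda_2x_2)$ finite too, so $\int_E D\,\rho$ splits into three finite pieces as you intended.
\end{itemize}
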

\begin{proof}
  The first part of the statement is proven in~\cite[Lem.~2.7.7]{BPS14}. The second part comes from the fact that the function  $x' \mapsto \lambda_1 \cdot g(x_1-x') +\lambda_2 \cdot g (x_2-x') -g (\lambda_1 x_1+\lambda_2 x_2-x')$ is a nonnegative continuous function on $E$ which takes positive value at $x_0$. 
\end{proof}

\begin{proof} [Proof of Proposition \ref{prop:kahler_form_on_tropical_toric_varieties}]
  We use $E$ to denote the support of the fan $\Xi$. Set $\widetilde{g} \coloneqq (g \star \rho)$. Then, Lemma \ref{lem:strictly_convex_for_convolution} implies that $\widetilde{g}$ is convex on $E$. That $g$ is Lipschitz on $E$ implies that $|\widetilde{g} -g|=O(1)$. Therefore, by Example \ref{example_kahler_form_embedded_tropical_varieties}, $\widetilde{\omega} \coloneqq \dm \dmb (\widetilde{g})$ extends to a semipositive form on $\TP_{\Xi}$ with  $[\widetilde{\omega}]=[\dm \dmb g]$. Our goal is to show that $\widetilde{\omega}$ is a \kahler form on $\TP_{\Xi}$. 

  Since $g$ is strictly convex on $\Xi$, for any $x_1, x_2 \in E$ and $\lambda_1, \lambda_2 \in \R_{> 0}$ with $\lambda_1+\lambda_2=1$, there exists  $x' \in E$ such that 
  $g (\lambda_1 x_1+\lambda_2 x_2-x') < \lambda_1 \cdot g(x_1-x') +\lambda_2 \cdot g (x_2-x')$. Otherwise, $g$ would be linear on the line $x''+\R[x_2-x_1]$ for any $x'' \in E$, which would contractict with the assumption that $g$ is strictly convex. Hence, by Lemma \ref{lem:strictly_convex_for_convolution}, $\widetilde{g}$ is a strictly locally convex function on $E=(\TP_{\Xi})_o \subset \TP_{\Xi}$. 

  Now, fix $x \in \TP_{\Xi}$ with $\sed(x)=\delta$. By definition, there exists a linear function $l$ on $E$ such that $g\rest{\delta}=l\rest{\delta}$, and $g>l$ on $ \eta \setminus \delta$ for each $\eta \supface \delta$. Since $g$ is convex on $E$, this implies that $g>l$ over $E \setminus \delta$. Consider the tangenet space $T_{\delta}$, and let $p \colon E \to E^{\delta} \coloneqq E/T_{\delta} \simeq \BO(\delta)$ be the quotient map.  Set $g_{\delta} \coloneqq g-l$, and $\widetilde{g_{\delta}} \coloneqq  (g_{\delta} \star \rho)$. Since $g_\delta$ vanishes on $\delta$, it induces a strictly convex linear function on the star fan $\Xi^{\delta}$ that we denote by $g^\delta$.

  We identify $E$ with $T_{\delta} \times E^{\delta}$, and identify $\delta$ with $\R_{ \ge 0}^k \subset T_{\delta}$ for  $k=\dim \delta$. We identify further  $x$ with $(\underline{\infty},z)$ for $z \in E^{\delta}$. Then, $\BT_{+}^k \times E^{\delta}$ is a basic open neighborhood of $x$ in $E$.  Notice that for any bounded open subset $V \subset E^{\delta}$, $g_{\delta}$ is bounded above on $\BT_{+}^k \times V$. And since $|g_{\delta}-\widetilde{g_{\delta}}|=O(1)$ on $E$, $\widetilde{g_{\delta}}$ is also bounded above on $\BT_{+}^k \times V$. 
  Therefore, Proposition \ref{prop:local_affine_support_function_boundary} implies that both $g_{\delta}$ and $\widetilde{g_{\delta}}$ can be extended to locally convex functions on $\BT_{+}^k \times V$. Slightly abusing of notation, we use the same symbol to denote the extensions. Notice that $\widetilde{g_{\delta}}-\widetilde{g}=l$ on $E$. Hence, $\widetilde{g_{\delta}}$ is a local potential of $\widetilde{\omega}$ on $\BT_{+}^k \times E^{\delta}$. 
  
  It is not hard to see $(g_{\delta})\rest{\{\underline{\infty}\} \times E^{\delta}}=g^{\delta}$, the induced conewise linear function on $\Xi^\delta$ by $g^{\delta}$.   Now we seek to determine $\widetilde{g_{\delta}}|_{\{\underline{\infty}\} \times E^{\delta}}$. 
  Let $\{y_i\}_{i=1}^{\infty} \subset \BT_{+}^k$ be a sequence of points  such that $y_i \to \underline{\infty} \in \BT_{+}^{k}$, and let  $z \in E^{\delta}$.
  Observe that for any $x' \in E$, $\{g_{\delta}((y_i, z)-x')\}_{i \ge 1}$ is a decreasing sequence with $\lim_{i \to +\infty} g_{\delta}((y_i, z)-x')= g^{\delta}(z-p(x'))$. Therefore, the monotone convergence theorem implies that 
  \[
  \widetilde{g_{\delta}}(\underline{\infty}, z)=\lim_{i \to +\infty} \widetilde{g_{\delta}} ((y_i, z))= \lim_{i \to +\infty} \int_E g^{\delta}(z-p(x')) \rho(x') dx' =((g^{\delta} \circ p) \star \rho) ((0,z)).
  \]

  Since $g^{\delta}$ is strictly convex on $\Xi^{\delta}$, for any $z_1, z_2 \in E^{\delta}$ and $\lambda_1, \lambda_2 \in \R_{> 0}$ with $\lambda_1+\lambda_2=1$, there exists  $z' \in E^{\delta}$ such that 
  $g^{\delta} (\lambda_1 z_1+\lambda_2 z_2-z') < \lambda_1 \cdot g^{\delta}(z_1-z') +\lambda_2 \cdot g^{\delta} (z_2-z')$. Therefore, by Lemma \ref{lem:strictly_convex_for_convolution}, $((g^{\delta} \circ p) \star \rho)$ is strictly convex on $\{0\} \times E^{\delta}$, which implies further that $\widetilde{g_{\delta}}|_{\{\underline{\infty}\} \times E^{\delta}}$ is strictly convex on $\{\underline{\infty}\} \times E^{\delta}$. 

  Let $l \in \Aff(E^{\delta})$ such that $l$ is a strict affine support of $\widetilde{g_{\delta}}|_{\{\underline{\infty}\} \times E^{\delta}}$ at $z$. We aim to show that $l \circ p$ is a strict affine support function of $\widetilde{g_{\delta}}$ on $\BT_{+}^k \times E^{\delta}$, \textit{i.e.} $\widetilde{g_{\delta}}(y',z') > l \circ p(y',z')$ for any $(y',z' ) \in \BT_{+}^k \times E^{\delta}$ with $(y',z') \neq (\underline{\infty}, z')$.  
  By assumption, this inequality holds when $y'=\underline{\infty}$, so we may henceforth assume that $y' \neq \underline{\infty}$. Take $\{y_i\}_{i \ge 1} \subset \BT_{+}^{k}$ such that $y_1=y'$, $y_{i+1} \ge y_i$ for any $i  \ge 1$, and $\lim_{i \to \infty}y_i=\underline{\infty}$.  Since $\widetilde{g_{\delta}}$ is strictly convex on $E$ by Lemma \ref{lem:strictly_convex_for_convolution} and $\widetilde{g_{\delta}}$ is bounded above on $\BT_{+}^k \times E^{\delta}$, Lemma \ref{lem:extension_bounded_convex_function_polyhedron} implies that $\widetilde{g_{\delta}}(y_{i+1},z') > \widetilde{g_{\delta}}(y_{i},z')$ for any $i \ge 1$. Therefore, we have $\widetilde{g_{\delta}}(y',z')> \widetilde{g_{\delta}}(\underline{\infty}, z') \ge l(z')=l(p((y',z')))$.
   This proves that $\widetilde{g_{\delta}}$ is locally strictly convex at $(\underline{\infty}, z)$, and hence $\widetilde{\omega}$ is a \kahler form. 
\end{proof}

The next corollary shows that one can always compactify an open tropical variety in $\R^n$ to a compact \prekahler tropical variety. 

\begin{cor} \label{cor:refinement_of_open_tropical_varieties_are_projective}
  Let $Y$ be a tropical variety embedded in $E=\R^n$. Then, 
  \begin{enumerate}
    \item There exists a complete simplicial rational  fan $\Xi$ on $E$, and a simplicial rational polyhedral structure $\Pi$ on $Y$ such that 
    \begin{itemize}
      \item $\TP_{\Xi}$ is projective.
      \item $\rec(\Pi)$ is a subfan of $\Xi$.
      \item Let $\ol{Y}$ denote the closure of $Y$ in $\TP_{\Xi}$. Then, $\ol{Y}$ is a tropical subvariety of $\TP_{\Xi}$.
    \end{itemize}
    In particular, $\ol{Y}$ is a \prekahler tropical variety. 
    \item Furthermore, there exists a convex conewise linear function $g$ on $\Xi$ with $g>0$ on $E \setminus \{0\}$  such that  $[\dm \dmb h]$ is a \kahler class on $\ol{Y}$, where $h \coloneqq g \rest{Y}$. 
    \item If $Y$ is a fanfold, then $\Pi$ can be taken as a fan structure on $Y$, and $h \in \LConv \cap \PL_{\hom}(Y)$.
    \end{enumerate}
\end{cor}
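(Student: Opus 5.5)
Here is the plan. For (1), first choose any simplicial rational polyhedral structure $\Pi_0$ on $Y$. Its recession cones form a collection of rational cones, and after refining $\Pi_0$ we may take this collection to be a simplicial rational fan $\Sigma_0$; this is the standard fact that a polyhedral complex admits a refinement whose recession cones form a fan. Complete $\Sigma_0$ to a complete simplicial rational fan on $E$, refining $\Sigma_0$ if necessary. Then apply Theorem~\ref{thm:toric_resolution} to obtain a successive stellar subdivision $\Xi$ with $\TP_\Xi$ projective. Next refine $\Pi_0$ to a simplicial polyhedral structure $\Pi$ whose recession cones are cones of $\Xi$. This can be done by intersecting each polyhedron of $\Pi_0$ with the cones of $\Xi$ translated appropriately, or more simply by intersecting with the conical subdivision that $\Xi$ induces on each recession cone and then triangulating, which yields $\rec(\Pi)\subset\Xi$. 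The discussion at the end of \S\ref{subsubsec:projective_tropical_variety-p2} then identifies $\ol Y$ with the canonical compactification of $Y$ with respect to $\rec(\Pi)$, and shows it is a tropical subvariety of $\TP_\Xi$. Pre-Kählerness of $\ol Y$ will then follow from (2).

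For (2), take a strictly convex conewise linear $g_0$ on $\Xi$, which exists because $\TP_\Xi$ is projective. Replacing $g_0$ by $g=g_0-l$ for the linear function $l$ supporting $g_0$ at the cone $\conezero$, we obtain $g>0$ on $E\setminus\{0\}$, and $g$ is still strictly convex conewise linear. Proposition~\ref{prop:kahler_form_on_tropical_toric_varieties} gives a Kähler form $\widetilde\omega=\dm\dmb\widetilde g$ on $\TP_\Xi$ with $\widetilde g=g\star\rho$ and $|\widetilde g-g|=O(1)$. Restrict to $\ol Y$. Strict local convexity is a pointwise property of a local potential and survives restriction to a closed polyhedral subspace: a strict affine support function on the ambient space restricts to one on $\ol Y$, and affine functions restrict to affine functions in charts. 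By Example~\ref{example_kahler_form_embedded_tropical_varieties}, $\dm\dmb(\widetilde g|_Y)$ extends to a semipositive form on $\ol Y$ in the class $[\dm\dmb h]$. Hence $[\dm\dmb h]$ is a Kähler class.

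The main obstacle is checking this restriction near the boundary strata. The local potentials of $\widetilde\omega$ are the $\widetilde{g_\delta}$ defined on $\eR_+^k\times E^\delta$, and I must make sure that their restriction to $\ol Y\cap(\eR_+^k\times E^\delta)$ is the local potential used in $\ol Y$'s chart. This should follow from the compatibility $\rec(\Pi)\subset\Xi$, since the chart of $\ol Y$ at a point of sedentarity $\delta$ is cut out of that of $\TP_\Xi$ by $\ol Y$. Uniqueness of continuous extensions (Proposition~\ref{prop:local_affine_support_function_boundary}) then ensures that the restricted potential agrees with the one defined intrinsically on $\ol Y$.

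For (3), if $Y$ is a fanfold, take $\Pi_0$ to be a fan structure. Then $\rec(\Pi)=\Pi$, so we may choose $\Pi$ as the subfan of $\Xi$ refining $\Pi_0$ (completing and subdividing as above), and no translates are needed. Then $h=g|_Y$ is conewise linear and homogeneous. It is convex because it is the restriction of the convex function $g$, so $h\in\LConv\cap\PL_{\hom}(Y)$.
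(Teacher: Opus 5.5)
Your proposal is correct and follows essentially the paper's route. In (1) you subdivide only the recession-cone factor of each simplicial polyhedron $\tau+\delta'$ by the cones of a projective fan $\Xi$ obtained from Theorem~\ref{thm:toric_resolution}. In (2) you normalize a strictly convex conewise linear function by a linear support at $\conezero$ and invoke Proposition~\ref{prop:kahler_form_on_tropical_toric_varieties}. Two differences are worth noting: your extra triangulation is unnecessary, since each $\tau+\delta$ is already simplicial, and your explicit check that strict local convexity and affineness restrict from $\TP_\Xi$ to $\ol Y$ near boundary strata fills in a step the paper leaves implicit.
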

\begin{proof} 
  We first prove statement (1). This is a folklore result, for example, there is a variant in~\cite{KKMS,EI06}. For the sake of completeness, we give the argument.  Let $\Pi'$ be any simplicial rational polyhedral structure on $Y$. Denote by $\Sigma' \coloneqq \rec(\Pi')$ the recession cone of $\Pi'$. By Theorem \ref{thm:toric_resolution}, there exist a complete simplicial rational fan $\Xi$ supported on $E$ and a subfan $\Sigma$ of $\Xi$ such that $\TP_{\Xi}$ is projective and $|\Sigma|=|\Sigma'|$. 
  
  Recall that any simplicial polyhedron $\sigma$ can be written uniquely as a Minkowski sum of a simplex $\tau$ and a simplicial cone $\delta$ such that their tangent spaces satisfy $T_{\tau} \cap T_{\delta}=\{0\}$.  Any face of $\sigma$ is also a simplicial polyhedron which can be written as $\tau'+\delta'$ for $\tau' \subface \tau$ and $\delta' \subface \delta$. Moreover, for any subsets $\mathsf{T}, \mathsf{T}'  \subset \tau$ and $\mathsf{D}, \mathsf{D}'  \subset \delta$, we have $(\mathsf{T} \cap \mathsf{T}') + (\mathsf{D} \cap \mathsf{D}')=(\mathsf{T} +\mathsf{D} ) \cap (\mathsf{T}' +\mathsf{D}' ) $. 

  Now, we define
  \[
  \Pi \coloneqq \{\tau+\delta \,|\, \tau \in \Pi'_{\f}, \delta \in \Sigma, \text{ such that there exists } \delta' \in \Sigma' \text{ satisfying } \delta \subset \delta', \tau+\delta' \in \Pi' \}
  \]
  Any $\tau +\delta \in \Pi$ is a simplicial polyhedron because $T_{\tau} \cap T_{\delta} \subset T_{\tau} \cap T_{\delta'}=\{0\}$. Let $(\tau_1+\delta_1), (\tau_2 + \delta_2) \in \Pi$, and let $\delta'_1, \delta'_2 \in \Sigma'$ such that $\delta_i \subset \delta'_i$ and $(\tau_i+\delta_i') \in \Pi'$ for each $i \in \{1, 2\}$. By assumption, $(\tau_1+\delta'_1) \cap (\tau_2 +\delta'_2)$ is a face of $\tau_i+\delta_i'$. Hence, $(\tau_1+\delta'_1) \cap (\tau_2 +\delta'_2)=\tau+\delta' \in \Pi'$, where  $\tau=(\tau_1 \cap \tau_2) \in \Pi$ and $\delta'=(\delta_1' \cap \delta_2')$. Therefore, 
  \bens
  &&(\tau_1+\delta_1) \cap (\tau_2 +\delta_2)=\bigl( (\tau_1+\delta_1) \cap (\tau+\delta') ) \cap ((\tau_2+\delta_2) \cap (\tau+\delta') \bigr) \\
  &=& \bigl( \tau+(\delta_1 \cap \delta') \bigr) \cap \bigl( \tau+(\delta_2 \cap \delta') \bigr) = \tau+(\delta_1 \cap \delta_2 ) \in \Pi
  \eens
  This implies that $\Pi$ is a simplicial polyhedral complex. Furthermore, it is a refinement of $\Pi'$ and satisfies $\rec(\Pi)=\Sigma$. Therefore, $\ol{Y}$ coincides with the canonical compactification of $Y$ with respect to $\Pi$, and hence $\ol{Y}$ is a tropical subvariety of  $\TP_{\Xi}$. 

  For statement (2), choose a strictly convex conewise linear function $g'$ on $\Xi$. By definition, there exists an affine function $l$ on $E$ such that $g'>l$ on $E \setminus \{0\}$. Then, $g\coloneqq g'-l$ satisfies $g>0$ on $E \setminus \{0\}$, and Proposition \ref{prop:kahler_form_on_tropical_toric_varieties} implies $[\dm \dmb h]$ is a \kahler class on $\comp{Y}$. Statement (3) is clear from the proof of Statement (1). 
\end{proof}

\subsection{Regularization Theorem} 
The main theorem of this section is Theorem \ref{thm:LConv_regularization_PL}. 
We begin with a result that allows us to construct an $\omega$-psh function $\psi$ on $X$ satisfying a prescribed local property near a fixed point $x \in X$, together with a $C^0$-estimate relative to a given strictly $\omega$-psh function $\varphi$.

\begin{prop} \label{prop:construction_theta_psh_functions_with_given_local_properties}
  Let $\omega$ be a semipositive form. Suppose that  $[\omega]$ is a \kahler class, and let $\varphi \in \pCPSH_{\omega}(X)$.  Then, for every open set $W \subset X$, every $f \in \LConv(W)$, every $x \in W$, every local potential $h$ of $\omega$ on $W$, and every $\epsilon >0$, there exist $C > 0$, $l \in \Aff(V)$, $\psi \in \CPSH_{\omega}(X)$ and an open neighborhood $V \Subset W$ of $x$ such that 
  \bens
  \psi+h=Cf+l \text{ on } V, \qquad  \psi=\varphi \text{ on } X \setminus V, \quad \text{ and   } \varphi \le \psi \le \varphi+\epsilon \text{ on } X.
  \eens
\end{prop}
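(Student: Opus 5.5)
The idea is to glue the locally convex function $Cf+l$ into $\varphi$ near $x$ via Construction~\ref{construction:sup_omega_psh_functions} and Lemma~\ref{lem:construction_sup_omega_psh_functions}. The strict convexity of $\varphi$ at $x$ supplies a gap that makes the gluing possible.

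\textbf{Supports at $x$.} Set $u\coloneqq \varphi+h$ on $W$. Since $\varphi\in\pCPSH_{\omega}(X)$, and since strict convexity does not depend on the choice of local potential (Remark~\ref{rmk:definition_kahler_forms_independent_choice_local_potential}), $u$ is strictly locally convex at $x$. So there are an open neighborhood $U'\subset W$ of $x$ and $l_0\in\Aff(U')$ with $u(x)=l_0(x)$ and $u>l_0$ on $U'\setminus\{x\}$. Since $f\in\LConv(W)$, after shrinking $U'$ there is $l_f\in\Aff(U')$ with $f(x)=l_f(x)$ and $f\ge l_f$ on $U'$.

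\textbf{A gap on a small neighborhood.} Choose an open neighborhood $B$ of $x$ with $\ol B$ compact, $\ol B\subset U'$, and $\partial B$ compact. In a basic open chart around $x$, which may meet boundary strata, a small basic open set works; $\partial B$ is then a compact set avoiding $x$. Since $u-l_0$ is continuous and strictly positive on $\partial B$, we get
\[
\delta\coloneqq\min_{\partial B}(u-l_0)>0 .
\]
Set $M\coloneqq\sup_{\ol B}(f-l_f)<\infty$, which is finite by continuity and compactness.

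\textbf{The competitor.} Choose $\eta>0$ with $\eta<\min(\delta/2,\epsilon/2)$, and $C>0$ so small that $CM<\min(\delta/2,\epsilon/2)$. On $B$ put
\[
L\coloneqq l_0+\eta+C(f-l_f)=Cf+l,\qquad l\coloneqq l_0+\eta-Cl_f\in\Aff(B).
\]
Then $L\in\LConv(B)$. We have $L(x)=u(x)+\eta>u(x)$. On $\partial B$,
\[
L-u\le \eta+CM-\delta<0 .
\]
Define $V\coloneqq\{y\in B : u(y)<L(y)\}$. This is an open neighborhood of $x$. Its closure lies in $\ol B$ and avoids $\partial B$, since $L<u$ there, so $\ol V\subset B$ and $V\Subset B\subset W$.

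\textbf{Gluing.} Apply Construction~\ref{construction:sup_omega_psh_functions} with $U=B$ and the locally convex function $L$; the condition $\{\varphi+h<L\}=V\Subset B$ holds. Lemma~\ref{lem:construction_sup_omega_psh_functions} then gives
\[
\psi\coloneqq\sup\{\varphi,L-h\}\in\CPSH_{\omega}(X).
\]
We check the required properties.
\begin{itemize}
\item On $V$ we have $\psi=L-h$, that is, $\psi+h=Cf+l$.
\item Off $V$ we have $\psi=\varphi$: on $B\setminus V$ because $L\le u$ there, and on $X\setminus B$ by construction.
\item $\psi\ge\varphi$ everywhere, since $\psi$ is a supremum involving $\varphi$.
\item $\psi-\varphi=\max(0,L-u)$ on $B$ and $0$ elsewhere. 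On $B$, using $u\ge l_0$ and $f-l_f\le M$, we get $L-u\le L-l_0\le\eta+CM<\epsilon$. Hence $\psi\le\varphi+\epsilon$.
\end{itemize}

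The delicate step is choosing $B$ when $x$ lies in a boundary stratum. There one must make sure $\partial B$ is compact and disjoint from $x$, so that the strict support gives a uniform gap $\delta$. I would take $B$ to be a relatively compact basic open set, of the form $(t,\infty]^k\times V_0$ in the chart, inside the domain of the strict support $l_0$.
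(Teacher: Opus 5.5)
Your proposal is correct and follows the paper's argument. Both proofs set $\psi=\sup\{\varphi,\,C(f-l_f)+\mathrm{const}+l_0-h\}$ near $x$, with small parameters, and glue it in via Construction~\ref{construction:sup_omega_psh_functions} and Lemma~\ref{lem:construction_sup_omega_psh_functions}. The only difference is how $\{\varphi+h<L\}$ is shown to be compactly contained: the paper uses Dini's theorem and Lemma~\ref{lem:nested_sequence_of_convex_subset} along a sequence $(C_i,\delta_i)\to(0,0)$, while you use the uniform gap $\min_{\partial B}(u-l_0)>0$ directly. Your version is slightly cleaner, since it keeps everything inside the domain where $l_0$ and $l_f$ are defined.
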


We recall the following lemma.
\begin{lemma} \label{lem:nested_sequence_of_convex_subset} 
  Let $T$ be a compact topological space, and $x \in T$. Suppose that $K_1 \supset K_2 \supset \dots$ be a nested sequence of closed subset of $T$ satisfying $\bigcap_{i=1}^{+\infty} K_i=\{x \}$. Then for any open neighborhood $U$ of $x$ in $T$, there exists $j \ge 1$ such that $K_{i} \subset U$ for any $i \ge j$.
\end{lemma}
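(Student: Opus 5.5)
This is a standard compactness fact. Argue by contradiction: suppose there is an open neighborhood $U$ of $x$ such that for every $j \ge 1$ some $i \ge j$ has $K_i \not\subset U$. Because the sequence is nested, $K_i \not\subset U$ forces $K_{i'} \not\subset U$ for every $i' \le i$. Hence in fact $K_j \not\subset U$ for every $j \ge 1$.

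Next set $F_j \coloneqq K_j \setminus U = K_j \cap (T \setminus U)$. Each $F_j$ is closed in $T$, being the intersection of two closed sets, and it is nonempty by the previous step. The $F_j$ are nested, $F_1 \supset F_2 \supset \cdots$. So $\{F_j\}_{j\ge1}$ is a family of closed subsets of the compact space $T$ with the finite intersection property: any finite subfamily has intersection equal to its smallest member, which is nonempty. Compactness then gives $\bigcap_j F_j \neq \emptyset$.

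On the other hand, $\bigcap_j F_j = \bigl(\bigcap_j K_j\bigr) \setminus U = \{x\} \setminus U = \emptyset$, since $x \in U$. This contradiction proves the lemma.

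Equivalently, in the direct form: the open sets $T \setminus K_j$ together with $U$ cover $T$, because $\bigcap_j K_j = \{x\} \subset U$. Extract a finite subcover; since the sets $T \setminus K_j$ increase with $j$, the subcover can be taken to be $U$ and $T \setminus K_{j_0}$ for a single $j_0$. This gives $K_{j_0} \subset U$, and by nestedness $K_i \subset U$ for all $i \ge j_0$.

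The only point needing care is that compactness is used in its finite-intersection form. Neither Hausdorffness nor closedness of $U$ is required, so no step is a real obstacle.
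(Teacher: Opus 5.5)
Your proof is correct, and your ``direct form'' is exactly the paper's argument: cover $T$ by $U$ and the increasing open sets $T\setminus K_i$, reduce a finite subcover to $U$ and a single $T\setminus K_j$, and conclude by nestedness. Your contradiction argument via the finite intersection property is the same compactness step in its closed-set form.
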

\begin{proof}
  For each $i$, consider the open set $U_i \coloneqq T \setminus K_i$  of $T$. Observe that $\{U, U_i \,|\, i \ge 1 \}$ is an open cover of $T$. Since $T$ is compact, and $U_i \subset U_{i'} $ for any $i \le i'$, there exists $j \ge 1$ such that $T=U_j \cup U$. Therefore, for any $i \ge j$, $K_i \subset K_j=T \setminus U_j \subset U$. 
\end{proof}

\begin{proof} [Proof of Proposition \ref{prop:construction_theta_psh_functions_with_given_local_properties}]
 
  Fix $W \subset X$, $x \in W$,  a local potential $h$ on $W$ and $\epsilon >0$. We may assume without loss of generality that $\overline{W}$ is compact. 
  Denote $g \coloneqq \varphi +h \in \LConv(W)$. By assumption, $g$ is strictly locally convex on $W$. Let $l_g$ be a strict local affine support function of $g$ at $x$, and $l_f$ be a local affine support function of $f$ at $x$.

  For any $C, \delta>0$, define the open subset 
  \[ 
  V(C, \delta)\coloneqq \left\{ y \in \ol{W} \ \st \ g(y)-l_g(y) < C(f(y) -l_f(y)) +\delta \right\} \subset \ol{W}  \]

  Let $\{(C_i, \delta_i)\}_{i \ge 1}$ be a decreasing sequence of pairs of positive numbers converging  to $(0,0)$. Then, by Dini's theorem, $\{C_i(f -l_f) +\delta_i\}_{i \ge 1}$ converges uniformly to $0$ on $\ol{W}$. Therefore, by definition,  
  \[
  \bigcap_i \ol{V(C_i, \delta_i)}=\left\{ y \in \ol{W} \ \st\ f(y)-l_f(y) \le 0 \right\}=\{ x \}.
  \]
  Here $\ol{V(C_i, \delta_i)}$ denotes the closure of $V(C_i, \delta_i)$ in $\ol{W}$. Now,  take an open neighborhood $V$ of $x$ such that $V \Subset W$.  Then, applying Lemma \ref{lem:nested_sequence_of_convex_subset} to $T=\ol{W}$ and $K_i=\ol{V(C_i, \delta_i)} \cap \ol{W}$ and open neighborhood $V$ of $x$, there exists $k \ge 1$ such that $V(C_k, \delta_k) \Subset V$, and $C_k(f-l_f)+\delta_k \le \epsilon$ on $W$. Set $u \coloneqq C_{k}(f-l_f)+\delta_k+l_g \in \LConv(V)$, and set 
  \[
  \psi(y)\coloneqq \begin{cases*}
    \sup \{ \varphi(y), u(y)-h(y)  \}, \qquad y \in V \\
    \varphi(y), \qquad y \in X \setminus V
  \end{cases*} 
  \]
  Remark that $\varphi(y) +h(y) < u(y)$ iff $y \in V(C_k, \delta_k)$. Therefore, $\psi+h=C_{k}f+(\delta_k+l_g-C_k l_f)$ on $V(C_k, \delta_k)$, and $\psi =\varphi$ on $X \setminus V(C_k, \delta_k)$. Furthermore, 
  \[
  0 \le \psi-\varphi=C_{k}(f-l_f)+\delta_k+l_g-g \le C_{k}(f-l_f)+\delta_k \le \epsilon \text{ on } V(C_k, \delta_k). 
  \]
  On the other hand, applying Lemma \ref{lem:construction_sup_omega_psh_functions} to $\varphi \in \CPSH_{\omega}(X)$ and $u \in \LConv(V)$, we have $\psi \in \CPSH_{\omega} (X)$.  Therefore, $\psi$ satisfies all the conditions in the statement, and the proposition is proved. 
  \end{proof}

Now, we prove the following simplicial version of Proposition \ref{prop:construction_theta_psh_functions_with_given_local_properties}.
\begin{prop} \label{prop:construction_theta_psh_PL_functions}
  Let $\omega$ be a semipositive  form. Suppose that $[\omega]$ is a \kahler class, and let $\varphi \in \pCPSH_{\omega}(X)$. Then, for every $\epsilon>0$, there exists $\psi \in \CPSH_{\omega} \cap \PL_{\omega}(X)$ such that $\varphi \le \psi \le \varphi+\epsilon$ on $X$. Moreover, for every open set $W \subset X$, every $f \in \LConv \cap \PL(W)$, every $x \in W$, every local potential $h$ of $\omega$ on $W$, and every $\epsilon >0$, there exist  $C > 0$, $l \in \Aff(V)$, $\psi \in \CPSH_{\omega} \cap \PL_{\omega}(X)$ and an open neighborhood $V \Subset W$ of $x$ such that 
  \bens
  \psi+h=Cf+l \text{ on } V,  \quad \text{ and   } \varphi \le \psi \le \varphi+\epsilon \text{ on } X.
  \eens
\end{prop}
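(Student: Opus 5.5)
The plan is to build $\psi$ as a \emph{finite} supremum of $\omega$-psh functions. Each one is $\omega$-PL near the point it is built at, and each is obtained from Proposition~\ref{prop:construction_theta_psh_functions_with_given_local_properties} applied with a PL locally convex input $f$. The tool for this is Proposition~\ref{prop:limit_of_psh_is_psh}\,(3): finite sups of functions in $\CPSH_\omega(X)$ stay in $\CPSH_\omega(X)$. In addition, a finite sup of PL functions is PL, so $\omega$-PL-ness is preserved locally wherever the sup is attained only by pieces that are locally $\omega$-PL.

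\textbf{Step 1 (local PL strictly convex models).} Around each $x\in X$, I choose a chart and a function $f_x\in\LConv\cap\PL$ on a neighborhood $W_x$ that is strictly locally convex at $x$. In the open part, a natural choice is the restriction of a strictly convex conewise linear function on a complete fan refining the local ambient fan, as in Corollary~\ref{cor:refinement_of_open_tropical_varieties_are_projective}\,(3), centered at $x$. At boundary points, I take $f_x=\pi^*$ of such a function on the star fan, so that $x=(\underline\infty,z)$ in a basic open set $\BT_+^k\times V$. Strict convexity in the $\BT_+^k$ directions then still has to be arranged, and I will come back to this.

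\textbf{Step 2 (second statement).} Given $W,f,x,h,\epsilon$, I apply the construction in the proof of Proposition~\ref{prop:construction_theta_psh_functions_with_given_local_properties} with $u=C(f-l_f)+\delta+l_g$. The resulting $\psi_x=\sup\{\varphi,u-h\}$ equals $u-h$ on a small neighborhood $V$ of $x$ (where $\psi_x+h=Cf+l$), and it equals $\varphi$ outside a compact subset of $V$. However, $\psi_x$ is not $\omega$-PL where $\varphi$ wins. To fix this, I take the supremum $\Psi=\sup\{\psi_x,\psi_{x_1},\dots,\psi_{x_N}\}$ with a finite family built at other points from the $f_{x_j}$ of Step 1. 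Their neighborhoods $V_j$ cover $X$ by compactness, and $\varphi$ is never the strict maximum, because each point lies in some $\{\varphi+h<u_j\}$. To ensure this, I first shrink to sets $V(C_j,\delta_j)$ that contain fixed compact sets $K_j$ covering $X$. This is possible since, by strict convexity of $g=\varphi+h$, the set $\{g-l_g<C(f-l_f)+\delta\}$ contains a neighborhood of $x_j$ for fixed small $C,\delta$. Near the given $x$, I choose the auxiliary pieces at least $\epsilon'$ below $u-h$; this is possible since $u-h>\varphi$ strictly near $x$. Then $\Psi=\psi_x$ there, keeping $\Psi+h=Cf+l$ on a smaller $V$. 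Everywhere else, $\Psi$ is locally a finite sup of functions of the form $(Cf_j+l_j)-h$, hence $\omega$-PL. Finally, $\varphi\le\Psi\le\varphi+\epsilon$ holds since each piece satisfies it.

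\textbf{Step 3 (first statement).} This is Step 2 without the distinguished point: I take $\Psi=\sup_j\psi_{x_j}$ over the finite cover.

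\textbf{Main obstacle.} The main obstacle is the boundary behaviour in Step 1. There I need a PL locally convex $f_x$ on $\BT_+^k\times V$ that is strictly convex at $(\underline\infty,z)$, so that the sets $V(C,\delta)$ shrink to $x$, which Lemma~\ref{lem:nested_sequence_of_convex_subset} needs. Since $f$ is continuous at infinity and affine functions factor through $\pi$, I will try $f=\pi^*f^\delta+\sum_i \max\{0,\,M-y_i\}$-type terms. These are convex, PL, bounded, and constant near $\underline\infty$, so they fail strictness exactly along $\{\underline\infty\}\times V$. Strictness there is then handled by $\pi^*f^\delta$, and transversally I rely instead on the strict convexity of $g$: the set $\{g-l_g<\dots\}$ shrinks using $g$, not $f$. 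I will verify that only the strictness of $g$ is needed in Lemma~\ref{lem:nested_sequence_of_convex_subset}, via $\bigcap_i\ol{V(C_i,\delta_i)}=\{g=l_g\}=\{x\}$.
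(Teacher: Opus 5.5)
Your proposal is correct and is essentially the paper's argument: $\psi$ is a finite supremum of pieces produced by Proposition~\ref{prop:construction_theta_psh_functions_with_given_local_properties}, whose regions $\{\varphi+h<u_j\}$ cover $X$, so $\varphi$ never realizes the maximum, $\psi+h$ is locally a selection among finitely many PL functions, and the distinguished piece dominates near $x$. Your Step~1 and the boundary ``obstacle'' are unnecessary: as you note at the end, only the strict convexity of $g=\varphi+h$ is used, so the paper simply takes $f=0$ for the auxiliary pieces (making them $\omega$-affine on their neighborhoods), and it keeps them equal to $\varphi$ near $x$ by building them inside $X\setminus\ol V$ rather than by shrinking their $\epsilon$.
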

\begin{proof}
  Fix $W \subset X$ and $x \in W$. Fix $f \in \LConv \cap \PL(W)$ and a local potential $h$ of $\omega$ on $W$. Fix $\epsilon>0$. Applying Proposition \ref{prop:construction_theta_psh_functions_with_given_local_properties} to $(\varphi, W, x, f, h, \epsilon)$, we obtain an open neighborhood $V_x \Subset W$ of $x$, and $\psi_x \in \CPSH_{\omega}(X)$ such that $\varphi \le \psi_x \le \varphi+\epsilon$ on $X$, and $\psi_x+h=C_xf+l_x$ on $V_x$ for some constant $C >0$ and $l \in \Aff(V_x)$. Take an open neighborhood $V$ of $x$ such that $V \Subset V_x$.  
  
  For any $y \in X \setminus \ol{V}$, we apply Proposition \ref{prop:construction_theta_psh_functions_with_given_local_properties} to $(\varphi, X \setminus \ol{V}, y, 0, h, \epsilon)$, where $h$ is any local potential of $\omega$ around $y$. Then, we obtain  an open neighborhood $V_{y} \Subset X \setminus \ol{V}$ of $y$, $\psi_y \in \CPSH_{\omega}(X)$, and $l_y \in \Aff(V_y)$,  such that  $\psi_y+h=l_y$ on $V_y$, $\psi_y=\varphi$ on $V \subset X \setminus V_y$, and $\varphi \le \psi_y \le \varphi+\epsilon$ on $X$.  

  Notice that $X$ is compact, and $\{V_x\} \cup  \{V_y \,|\, y \in X \setminus \ol{V}\}$ is an open cover of $X$. Then, there exists a finite subset $\{y_j\}_{j \in J} \subset X \setminus \ol{V}$ such that $X=V_x \cup (\bigcup_{j \in J} V_{y_j})$, where $J$ is a finite index set. We define $\psi \coloneqq  \sup \{ \psi_x, \psi_{y_j} \,|\, j \in J \}$. Then by construction, we have $\varphi \le \psi \le \varphi+\epsilon$ on $X$, and we have $\psi+h=\sup\{C f+l, \varphi+h \}=Cf+l$ on $V$. Furthermore, by Proposition \ref{prop:limit_of_psh_is_psh}\,(3), $\psi \in \CPSH_{\omega}(X)$. 

  To finish the proof of the proposition, it remains to show that $\psi \in \PL_{\omega}(X)$. Let $X=\bigcup_{i \in I} W_i$, where $I$ is a finite index set. Let  $(W_i, \phi_i, W_i \to U_i)$ be  a tropical chart of $X$ for any $i \in I$. Let $h_i$ be a local potential of $\omega$ on $W_i$. We now show that $\psi+h_i \in \PL(W_i)$ for any $i \in I$. To simplify the notation, we denote $V_0 \coloneqq V_x$, $V_j \coloneqq V_{y_j}$ for each $j \in J$, and $J' \coloneqq \{0\} \cup J$. Let $f_0 \in \PL(W_i)$ be an extension of $Cf+l \in \PL(V_0 \cap W_i)$, and $f_j \in \PL(W_i)$ be an extension of $l_{y_j} \in \PL(V_j \cap W_i)$ for each $j \in J$. 
  
  For any $z \in W_i$, by construction, $(\psi+h_i)(z)=\sup \{ (h_i+\varphi)(z), f_j(z)\ |\  j \in J', z \in V_j \}$. Since $h_i+\varphi \le f_j $ on $V_j$ for each $j \in J'$, we have $(\psi+h_i)(z)=\sup \{  f_j(z)\ |\  j \in J', z \in V_j \} \in  \{ f_j(z)\}_{j \in J'}$. Since $(\psi+h_i) \in \nC^{0}(W_i)$ and $\{f_j\}_{j \in J'}$ is a finite set of PL function on $W_i$, by \cite[Def.\,7.10 \& Prop.\,7.13]{AT07}, $(\psi+h_i) \in \PL(W_i)$. The proof is complete.
  \end{proof}

We are now ready to state and prove the following regularization theorem for $\omega$-psh functions.

\begin{thm} \label{thm:LConv_regularization_PL}
  Let $X$ be a compact \prekahler tropical variety. Let $\omega$ be any semipositive form on $X$ and $\theta$ be any \kahler form on $X$.  
  \begin{enumerate} 
    \item \label{item:thm:LConv_regularization_PL_1} For any $\varphi \in \CPSH_{\theta}(X) $, there exists a sequence of functions $\{\varphi_i\}_{i \ge 1} \subset \PSH_{\theta} \cap \PL_{\theta}(X)$ such that $\varphi_i$ converges uniformly to $\varphi$. 
    \item \label{item:thm:LConv_regularization_PL_2} For each $\varphi \in \PSH_{\omega}(X)$, there exist a decreasing sequence of positive numbers $\{\alpha_i\}_{i \ge 1}$ with $\alpha_i \to 0$, and a decreasing sequence of functions $\{ \varphi_i \}_{i \ge 1}$ with $\varphi_i \in \PSH_{\omega + \alpha_i\theta} \cap \PL_{\omega + \alpha_i\theta} (X)$, for each $i \ge 1$, such that $\{ \varphi_i \}_{i \ge 1}$ converges pointwisely to $\varphi$.
  \end{enumerate}
\end{thm}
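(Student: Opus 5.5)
Part (1) reduces to Proposition~\ref{prop:construction_theta_psh_PL_functions} once we can approximate a $\theta$-psh function uniformly by strictly $\theta$-psh ones. Let $\varphi \in \CPSH_{\theta}(X)$ and fix $t\in(0,1)$. Set $\varphi_t \coloneqq (1-t)\varphi$. Since $\theta$ is Kähler, $0\in \pCPSH_\theta(X)$. Locally, with a local potential $h$ of $\theta$,
\[
\varphi_t+h=(1-t)(\varphi+h)+t\,h .
\]
The first term is locally convex and $h$ is strictly locally convex. Adding an affine support of $(1-t)(\varphi+h)$ at $x$ to a strict affine support of $th$ at $x$ gives a strict affine support of the sum, so $\varphi_t\in\pCPSH_\theta(X)$. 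Moreover $\|\varphi_t-\varphi\|_{C^0}\le t\|\varphi\|_{C^0}$, and $\varphi$ is bounded because it is continuous on the compact space $X$. Applying the first assertion of Proposition~\ref{prop:construction_theta_psh_PL_functions} with $\omega=\theta$ and $\epsilon=t$ gives $\psi_t\in\CPSH_\theta\cap\PL_\theta(X)$ with $\varphi_t\le\psi_t\le\varphi_t+t$. Taking $t=1/i$ yields a sequence converging uniformly to $\varphi$.

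For part (2), let $\varphi\in\PSH_\omega(X)$, possibly unbounded below at the boundary. First truncate: set $\varphi^{(k)}\coloneqq\sup\{\varphi,-k\}$. By the remark following Definition~\ref{def:psh}, $\varphi^{(k)}\in\CPSH_\omega(X)$, and the sequence decreases to $\varphi$. Each $\varphi^{(k)}$ is also $(\omega+\alpha\theta)$-psh, and the form $\omega+\alpha\theta$ is Kähler for every $\alpha>0$. Its local potential is $h_\omega+\alpha h_\theta$, a locally convex function plus a strictly locally convex one, and this sum is strictly locally convex by the same affine-support argument as above. Since $\varphi^{(k)}+h_\omega+\alpha h_\theta$ is strictly locally convex, $\varphi^{(k)}\in\pCPSH_{\omega+\alpha\theta}(X)$. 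Proposition~\ref{prop:construction_theta_psh_PL_functions} then gives $\psi\in\PSH_{\omega+\alpha\theta}\cap\PL_{\omega+\alpha\theta}(X)$ with $\varphi^{(k)}\le\psi\le\varphi^{(k)}+\epsilon$.

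To obtain a decreasing sequence, choose $\alpha_i\downarrow0$, $\epsilon_i=2^{-i}$, and let $\tilde\psi_i$ be the approximant of $\varphi^{(i)}$ for $\omega+\alpha_i\theta$ with error $\epsilon_i$. Put
\[
\varphi_i\coloneqq\tilde\psi_i+2^{-i+1}.
\]
Then
\[
\varphi_{i+1}\le\varphi^{(i+1)}+2^{-i-1}+2^{-i}\le\varphi^{(i)}+2^{-i+1}\le\varphi_i .
\]
Adding a constant preserves both the psh and the PL property. Since $\varphi^{(i)}\le\varphi_i\le\varphi^{(i)}+3\cdot2^{-i}$, the sequence converges pointwise to $\varphi$, including at points where $\varphi=-\infty$, because there $\varphi^{(i)}=-i\to-\infty$. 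The inclusion $\PSH_{\omega+\alpha_{i}\theta}\subset\PSH_{\omega+\alpha_{i-1}\theta}$ is not needed for the decreasing property; it only concerns the classes in which the $\varphi_i$ lie, and the statement asks only $\varphi_i\in\PSH_{\omega+\alpha_i\theta}\cap\PL_{\omega+\alpha_i\theta}(X)$.

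The main point to check is that strict local convexity survives at boundary points $x\in X\setminus\xo$. There the affine functions are pulled back from the quotient $V$, and the definition of a strict affine support at $x$ still makes sense on a neighborhood in $X$. The sum of an affine support and a strict affine support, both defined near $x$ in the basic open set, is again a strict affine support. So the argument is purely formal, and the remaining work is bookkeeping to make sure the local potentials of $\omega+\alpha\theta$ are taken as sums $h_\omega+\alpha h_\theta$, using Remark~\ref{rmk:definition_kahler_forms_independent_choice_local_potential}.
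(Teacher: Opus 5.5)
Your proof is correct and follows the paper's argument. In part (1) you use $(1-t)\varphi\in\pCPSH_\theta(X)$ together with Proposition~\ref{prop:construction_theta_psh_PL_functions}, and in part (2) you truncate to $\sup\{\varphi,-C_i\}$, approximate for the K\"ahler forms $\omega+\alpha_i\theta$, and shift by constants to force monotonicity; the one cosmetic difference is that you apply the Proposition directly to the truncations (already strictly $(\omega+\alpha_i\theta)$-psh), whereas the paper invokes part (1) for them.
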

\begin{remark}
  When $\varphi \in \CPSH_{\omega}(X)$, the sequence of functions $\varphi_i$ in \ref{item:thm:LConv_regularization_PL_2} converges uniformly to $\varphi$ by Dini's theorem.  
\end{remark}
\begin{proof}[Proof of Theorem~\ref{thm:LConv_regularization_PL}]  Set $\theta_i \coloneqq \omega+ \alpha_i \theta$. 
  We first prove statement \ref{item:thm:LConv_regularization_PL_1}. When $\varphi \in \pCPSH_{\theta}(X)$, this is implied by Proposition \ref{prop:construction_theta_psh_PL_functions}. In general, since $0 \in \pCPSH_{\theta}(X)$, we have $(1-\delta)\varphi \in \pCPSH_{\theta}(X)$ for any $\varphi \in \CPSH_{\theta}(X)$ and $\delta>0$. Since $(1-\delta) \varphi$ converges uniformly to  $\varphi$ when $\delta \to 0+$, this concludes statement \ref{item:thm:LConv_regularization_PL_1}.

  Now, we prove statement \ref{item:thm:LConv_regularization_PL_2}. Fix $\varphi \in \PSH_{\omega}(X)$.
  Let $\{\alpha_i\}_{i \ge 1}$ be an arbitrary decreasing sequence of positive numbers such that $\alpha_i \to 0$. 
  Take a sequence of positive numbers $\{C_i\}_{i \ge 1}$  such that $C_i \to +\infty$. Set $\psi'_i\coloneqq \sup \{ \varphi, -C_i\} \in \CPSH_{\omega}(X)$. Remark that $\{ \psi'_i\}_{i \ge 1}$ is a  decreasing sequence converging pointwise to $\varphi$. Observe that for any $i \ge 1$,  $\theta_i=\omega+\alpha_i \theta$ is a \kahler form, and $\psi'_i \in \CPSH_{\theta_i}(X)$. Therefore, statement \ref{item:thm:LConv_regularization_PL_1} implies that for each $i$, one can take $ \psi_i \in \PSH_{\theta_i} \cap \PL_{\theta_i}(X)$ that $  0 \le \psi_i-\psi'_i  \le 1/2^i$ on $X$.  Set $\varphi_i\coloneqq \psi_i+1/2^i$. Then, $\{ \varphi_i \}_{i \ge 1}$ is also a decreasing sequence converging pointwise to $\varphi$. Hence, statement \ref{item:thm:LConv_regularization_PL_2}  is proved.  
  \end{proof}

We end this section with another application of Proposition \ref{prop:construction_theta_psh_PL_functions}.

\begin{cor} \label{cor:dpsh_span_PL_function}
  Let $\theta$ be a \kahler form. For any $f \in \PL(X)$, there exist $C >0$ and $\psi_1, \psi_2 \in \PSH_{\theta} \cap \PL_{\theta}(X)$ such that $f=C(\psi_1-\psi_2)$. 
\end{cor}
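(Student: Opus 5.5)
The idea is to write $f$ as a difference of two $\theta$-psh, $\theta$-PL functions by adding a large multiple of a strictly $\theta$-psh PL function. The key input is that a $\theta$-PL, $\theta$-psh function can be made \emph{strictly} convex along every face where $f$ bends. Proposition~\ref{prop:construction_theta_psh_PL_functions} supplies this: it produces $\theta$-psh, $\theta$-PL functions which near any chosen point $x$ coincide, up to an affine function, with $Cg - h$ for any prescribed locally convex PL germ $g$.

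\textbf{Step 1: local strictly convex PL germs.} Fix a polyhedral structure $\olPi$ on $X$ on which $f$ is facewise affine, and refine it so that the local potentials $h$ of $\theta$ can be compared. Around each point $x\in X$, choose a chart $W_x$ and a locally convex PL function $g_x$ on $W_x$ that is strictly convex across every codimension-one face of $\olPi$ through $W_x$. For example, take a ``star'' function that is strictly convex conewise linear on the star fan at $x$; it exists by Corollary~\ref{cor:refinement_of_open_tropical_varieties_are_projective}(3) after refining, and near boundary points one pulls back along the projection of the basic open set. Then, on a small neighborhood, $f + Mg_x$ is locally convex for $M$ large. Here I use that $f$ is PL with finitely many bends of bounded slope, while $g_x$ has positive bends across the relevant faces.

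\textbf{Step 2: globalization.} Apply Proposition~\ref{prop:construction_theta_psh_PL_functions} with $\varphi=0\in\pCPSH_\theta(X)$ (which holds since $\theta$ is Kähler), with $W=W_x$, $f=g_x$, and $\epsilon=1$. This gives $\psi_x\in \PSH_\theta\cap\PL_\theta(X)$, a neighborhood $V_x$, a constant $C_x>0$ and $l_x\in\Aff(V_x)$ with $\psi_x+h=C_xg_x+l_x$ on $V_x$. By compactness, finitely many $V_{x_j}$ cover $X$. Set $\psi_2:=\sum_j\psi_{x_j}$, which is $\theta'$-psh for $\theta'=N\theta$, where $N$ is the number of summands. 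Rescaling by $1/N$ makes it $\theta$-psh and $\theta$-PL. On each $V_{x_j}$, $\psi_2+h$ equals a positive multiple of $g_{x_j}$ plus a locally convex PL function. It is therefore strictly convex across all faces of $\olPi$ near $x_j$, with bends bounded below by some $\beta>0$ uniformly over the finitely many faces.

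\textbf{Step 3: conclusion.} Choose $C$ large so that $\psi_1:=\psi_2+f/C$ satisfies: $\psi_1+h$ is locally convex everywhere. This holds because, on each $V_{x_j}$, the bends of $f/C$ are smaller than $\beta$, and in the relative interiors of faces both functions are affine, so local convexity is preserved; at sedentary points one checks via Proposition~\ref{prop:local_affine_support_function_boundary}. The function $\psi_1$ is $\theta$-PL because $f$ is PL, and it is bounded and continuous. Hence $f=C(\psi_1-\psi_2)$.

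The main obstacle is Step 1/3 near the boundary strata. One must make sure that $f$ extends continuously with finite values and that its bends at infinity are dominated. Since $f\in\PL(X)$ is real-valued on $X$, on a basic open set $\BT_+^k\times V$ it depends only on the $V$-coordinate near the boundary. Local convexity there therefore reduces to the star fan on $V$, where the previous argument applies.
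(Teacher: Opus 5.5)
Your strategy is the paper's. You write $f$ locally as $(f+g_x)-g_x$ with both terms locally convex PL, realize the germ $g_x$ globally through Proposition~\ref{prop:construction_theta_psh_PL_functions} with $\varphi=0$, and combine finitely many of the resulting $\psi_{x_j}$. The only bookkeeping difference is that the paper normalizes so that $C_x(\psi_x+h_x)=g_x+l_x$ with $f+g_x$ already locally convex, and sets $\psi_2=\frac1C\sum_j C_{x_j}\psi_{x_j}$ with $C=\sum_j C_{x_j}$, so no further choice of $C$ is needed. Your uniform average followed by a large $C$ is equally fine.

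The step that fails as you justify it is the certification of local convexity in Steps~1 and~3 through ``positive bends across codimension-one faces''. On a tropical variety this does not imply local convexity, and the example of Section~\ref{subsec:example_failure_of_tropical_Calabi-Yau} shows it. The function $g$ there is locally convex with strictly positive bend across every ray of $\Sigma$; at $\rho_0$, for instance, $\fe_1+\fe_2+\fe_{03}=0$ and the bend is $1+1+0=2$. Yet $g$ vanishes on the line $\R\fe_{03}=\R\fe_{12}$. Take $f$ PL and conewise linear near $0$ with $f(\fe_{03})=f(\fe_{12})=-1$. An affine support $l$ of $f+Mg$ at $0$ would need $l(\fe_{03})\le -1$ and $-l(\fe_{03})=l(\fe_{12})\le -1$, so $f+Mg$ is not locally convex at $0$ for any $M$. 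For the same reason, ``the bends of $f/C$ are smaller than $\beta$'' in Step~3 does not give local convexity of $\psi_1+h$.

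What is needed is strict convexity at every cone, not only in codimension one. The star function you actually propose has this, provided it is the restriction of a strictly convex conewise linear function in the sense of \S\ref{subsubsec:projective_tropical_variety-p2}, taken on a refinement of the star fan on which $f$ is conewise linear. Then for each cone $\sigma$ there is a linear $l_\sigma$ with $g_x=l_\sigma$ on $\sigma$ and $g_x>l_\sigma$ on every ray of a cone $\eta\supsetneq\sigma$ not contained in $\sigma$. Compare with a linear function agreeing with $f$ on $\sigma$; since there are finitely many such rays, $f+Mg_x$ has a local affine support at every point near $x$ once $M\ge M_x$. It then holds for all larger $M$ as well, since $g_x$ is locally convex.

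Then deduce Step~3 from Step~1 rather than from bends. On $V_{x_j}$ one has
\[
C(\psi_1+h)=\Bigl(\tfrac{CC_{x_j}}{N}\,g_{x_j}+f\Bigr)+\tfrac{C}{N}\,l_{x_j}+\tfrac{C}{N}\sum_{k\neq j}(\psi_{x_k}+h),
\]
which is locally convex as soon as $CC_{x_j}/N\ge M_{x_j}$ for every $j$. For this, choose $W_{x_j}$ inside the neighborhood where Step~1 holds, so that $V_{x_j}\Subset W_{x_j}$ inherits it. With this repair your argument is correct and coincides with the paper's.
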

\begin{proof}
    For any $x \in X$, there exist an open neighborhood $W_x$ of $x$ and $g_x \in \LConv \cap \PL (W_x)$ such that $f+g_x \in \LConv(W_x)$. Let $h_x \in \LConv(W_x)$ be a local potential of $\theta$. Remark that $0 \in \pCPSH_{\theta}(X)$. Applying Proposition \ref{prop:construction_theta_psh_PL_functions} to $(\varphi=0, W_x, x, h_x, g_x, \epsilon=1)$, there exist $C_x>0$, $\psi_x \in \PSH_{\theta} \cap \PL_{\theta}(X)$, and an open neighborhood $V_x$ of $x$ such that $C_x(\psi_x+h_x)=g_x+l_x$ on $V_x$ for some constant $C_x >0$ and $l_x \in \Aff(V_x)$. Therefore, we have  $C_x(\psi_x+h_x)+f \in \LConv(V_x)$. 
  
  Since $X=\bigcup_{x \in X}V_x$ and $X$ is compact, there exists a finite subset $\{x_j\}_{j \in J} \subset X$ such that $X=\bigcup_{j \in J} V_{x_j}$,  where $J$ is a finite index set. 
  Now, we set 
  \[
  C\coloneqq \sum_{j \in J} C_{x_j}, \quad \text{ and } \quad \psi_2\coloneqq \frac{1}{C} \sum_{j \in J} C_{x_j} \psi_{x_j} \in \PSH_{\theta} \cap \PL_{\theta}(X).
  \]
  Then, for each $j \in J$,  we have 
  \[
  C (\psi_2+h_{x_j}) +f=C_{x_j}(\psi_{x_j}+h_{x_j})+f + \sum_{k \in J, k \neq j} C_{x_k}(\psi_{x_k}+h_{x_k})  \,\,\in \LConv \cap \PL(V_{x_j}). 
  \] 
  Therefore $\psi_1  \coloneqq \psi_2 +\frac{1}{C} f \in \PSH_{\theta} \cap \PL_{\theta}(X)$. 
\end{proof}

\subsection{Strongly semipositive forms}
\begin{definition} \label{def:strongly_semipositive_form}
  Let $\omega$ be a semipositive form. We say that $\omega$ is \emph{strongly semipositive} if  it satisfies the following conditions. For every open set $W \subset \xo$, every $f \in \LConv(W)$, every $x \in W$, and every local potential $h$ of $\omega$ on $W$, there exist $C > 0$, $l \in \Aff(V)$, $\varphi \in \CPSH_{\omega}(X)$ and an open neighborhood $V \Subset W$ of $x$ such that $\varphi+h=Cf+l$ on $V$. Moreover, if $f \in \PL(W)$, then $\varphi$ can be chosen in $\PSH_{\omega}(X) \cap \PL_{\omega}(X)$.
\end{definition}

\begin{remark} \label{rmk:kahler_is_strong_semipositive}
  Remark \ref{rmk:zero_is_strictly_psh_when_theta_is_kahler} together with Propositions \ref{prop:construction_theta_psh_functions_with_given_local_properties} and \ref{prop:construction_theta_psh_PL_functions} imply that any \kahler form is strongly semipositive. We prove in Proposition~\ref{prop:example_weak_kahler_form_on_projective_tropical_variety} below that the same holds for simplicial \kahler forms. We remind the reader that in the statement of  Proposition \ref{prop:construction_theta_psh_functions_with_given_local_properties} and Proposition \ref{prop:construction_theta_psh_PL_functions}, we allow $W$ to be any open subset of $X$; while in the definition of strongly semipositive forms, we require $W$ to be an open subset of $\xo$. 
\end{remark}

\begin{remark}
  The terminology  ``strongly semipositive" is justified by the following. For any $x \in \xo$, there exist $\varphi \in \CPSH_{\omega}(X)$ and an open neighborhood $V \subset \xo$ of $x$ such that $\varphi+h$ is strictly locally convex on $V$, where $h$ is a local potential of $\omega$ on $V$. This is achieved by taking a strictly convex function $f$ on an open neighborhood $W$ of $x$, and then applying the conditions of Definition \ref{def:strongly_semipositive_form} to $W$ and $f$.  
\end{remark}

The next proposition provides examples of strongly semipositive forms that are not Kähler.

\begin{prop} \label{prop:example_weak_kahler_form_on_projective_tropical_variety}
  Let $X \subset \TP_{\Xi}$ be a projective tropical variety, and $g$ be a convex conewise linear function on $\Xi$ such that $g>0$ on $ E \setminus \{0\}$, where $E=(\TP_{\Xi})_{o}=\R^n$. We use  $\omega\coloneqq \dm \dmb g$ to denote the semipositive simplical form on $X$ defined as in Example \ref{example_kahler_form_embedded_tropical_varieties}. Then, $\omega$ is strongly semipositive. 
  
  More precisely, let $U \subset \xo$ be an open subset. Then, for any $x \in U$ and any $f \in \LConv(U)$, there exist an open neighborhood $V \Subset U$ of $x$, a constant $C>0$, and $\varphi \in \CPSH_{\omega}(X)$ such that $\varphi+g=Cf$ on $V$, and $\varphi+g \ge Cf$ on $U$. Furthermore, if $f \in \PL(U)$,  $\varphi$ can be chosen in $\CPSH_{\omega} \cap \PL_{\omega}(X)$. 
\end{prop}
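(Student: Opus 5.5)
The plan is to build $\varphi$ through the correspondence of Example~\ref{example_kahler_form_embedded_tropical_varieties}. There, $\CPSH_{\omega}(X)$ is identified with $\{u\in\LConv(\xo)\st |u-g|=O(1)\}$ via $\varphi\mapsto\varphi+g$. So it suffices to produce $u\in\LConv(\xo)$ with $|u-g|=O(1)$, $u=Cf$ on a neighbourhood $V\Subset U$ of $x$, and $u\ge Cf$ on $U$, and then set $\varphi\coloneqq u-g$. Since everything is local near $x$, I first shrink $U$ to a small basic open set $U_0\Subset U$ with $\Pi^{(x)}\supset U_0$ for a fixed polyhedral structure. Then Proposition~\ref{prop:properties_LConv_functions}\,\ref{prop:conv2} gives a global affine support $l_f$ of $f$ on $U_0$ at $x$. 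The candidate is a glued maximum in the spirit of Construction~\ref{construction:sup_omega_psh_functions}:
\[
u\coloneqq\begin{cases}\sup\{g-D,\ Cf\} & \text{on } U_0,\\ g-D & \text{on } \xo\setminus U_0,\end{cases}
\]
with constants $C>0$ and $D\in\R$ to be chosen. If $\{Cf>g-D\}\Subset U_0$, then $u$ is locally convex on $\xo$ by Proposition~\ref{prop:limit_of_lconv_is_lconv_boundary}\,(3) and Lemma~\ref{lem:construction_sup_omega_psh_functions}. Moreover $|u-g|=O(1)$ because $u$ differs from $g-D$ only on a relatively compact set. Also $u\ge Cf$ on $U_0$, and outside $U_0$ the inequality $u\ge Cf$ on $U$ must be secured separately, as described below.

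The key tool for choosing the constants is the translated function $g_x(y)\coloneqq g(y-x)$. It is convex, conewise linear and positive on $E\setminus\{x\}$, so its restriction to $X$ is a PL locally convex function with strict affine support $0$ at $x$. Moreover $|g-g_x|\le \Lip(g)\,|x|$ is bounded. Consider $k\coloneqq g-g_x-l$, where $l$ is a linear function agreeing with $g$ on the cone of $\Xi$ containing $x$ in its relative interior, suitably translated. Then $k$ is affine on a neighbourhood of $x$. The plan is to compare $Cf$ with $g-D$ through $Cf-(g-D)=C(f-l_f)+(Cl_f-g+D)$. I will choose $C$ so that $Cl_f-g$ has the form $-g_x+(\text{const})$ near $x$: the linear part of $Cl_f$ has to match that of $g$ on the cone through $x$. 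This is where the flexibility is needed, since $C$ is a positive scalar and $l_f$ is forced on us. If this matching fails, I would first replace $f$ by $f+\lambda g_x$ locally. That changes $f$ only by a locally convex PL term which can be absorbed elsewhere, because $g_x-g$ is bounded.

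Granting the matching, $Cf-(g-D)=C(f-l_f)-g_x+D'$ near $x$. I then choose $D'$ small positive and the radius so that $g_x> D'\ge C(f-l_f)$ on $\partial V'$ for some neighbourhood $V'\Subset U_0$. This uses Proposition~\ref{prop:Lipschitz_bounds_in_terms_of_C0_bounds} to control $f-l_f$ near $x$, together with the strictness of $g_x$ at $x$. The strictness also gives $Cf>g-D$ on a smaller neighbourhood $V$ of $x$, so $u=Cf$ there, while $\{Cf>g-D\}\Subset V'$. For the inequality $u\ge Cf$ on all of $U$ I will use that $f-l_f\ge0$ on $U_0$ and that, away from $V'$, one may take $U_0=U$ after possibly enlarging $D$ using the coercivity $g\ge c\|\cdot\|$ coming from $g>0$ on $E\setminus\{0\}$. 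If $f$ is PL, every piece of $u$ is PL and there are finitely many pieces, so $\varphi=u-g\in\PL_{\omega}(X)$ by \cite[Prop.\,7.13]{AT07} as in Proposition~\ref{prop:construction_theta_psh_PL_functions}.

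The main obstacle is obtaining the exact identity $\varphi+g=Cf$ with no affine correction. In the Kähler case (Proposition~\ref{prop:construction_theta_psh_functions_with_given_local_properties}) an affine $l$ was allowed, and the strictness of the potential did all the work. Here $g$ is only conewise linear, hence not strictly convex away from $0$. The strictness must therefore come from $g_x$ and from the global positivity and coercivity of $g$, and the affine discrepancy $Cl_f$ versus $g$ must be absorbed into the choice of $C$ and $D$ rather than into an explicit affine term. I expect this balancing step to require the most care.
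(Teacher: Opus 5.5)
There is a genuine gap in the balancing step. The comparison function $g-D$ in your maximum is the wrong one. Near a point $x\neq 0$ the conewise linear function $g$ has no strict affine support, and generically it is even affine along $\xo$ near $x$.

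Take $x$ a regular point of $\xo$ such that $g\rest{\xo}$ is affine on a neighbourhood of $x$ (such points are dense). Take $U$ a small convex ball around $x$ in the face containing it; the statement must hold for this $U$. Then $F\coloneqq Cf-g+D$ is convex on $U$. So it cannot be positive at $x$ while $\{F>0\}\Subset U$: restrict $F$ to a segment through $x$ whose endpoints lie outside $\{F>0\}$. Hence no choice of $C$ and $D$ makes $\sup\{g-D,Cf\}$ equal to $Cf$ near $x$ with $\{Cf>g-D\}\Subset U_0$.

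Your proposed remedy also fails:
\begin{itemize}
\item In this situation $g-l$ vanishes near $x$, so $k=g-g_x-l=-g_x$ near $x$. This has a vertex at $x$ and is not affine; already $X=\TP^1$, $g(t)=|t|$, $x=1$ gives $k=-|t-1|$.
\item $Cl_f-g$ is affine near $x$, so it can never equal $-g_x+\mathrm{const}$.
\item Replacing $f$ by $f+\lambda g_x$ changes the target identity to $\varphi+g=C(f+\lambda g_x)$. Subtracting $C\lambda g_x$ afterwards destroys the local convexity of $\varphi+g$, so that term cannot be absorbed.
\end{itemize}

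The missing idea is to put $g_x$ itself into the maximum in place of $g$, and to drop affine supports of $f$ altogether.
\begin{itemize}
\item Fix a compact neighbourhood $U'\Subset U$ of $x$. Add a constant to $f$ (it is absorbed into $\varphi$) so that $0\le f\le M$ on $U'$.
\item By Lemma~\ref{lem:growth_rate_of_PL_hom_functions}, $g_x(y)\ge c\|y-x\|$. So for $C\gg0$ one has $Cg_x-1>M\ge f$ near the boundary of $U'$, while $Cg_x(x)-1=-1<f(x)$. Thus $V\coloneqq\{Cg_x-1<f\}\cap U'$ is a neighbourhood of $x$ compactly contained in the interior of $U'$.
\item Since $g_x$ is convex with $|g_x-g|=O(1)$, Example~\ref{example_kahler_form_embedded_tropical_varieties} gives $g_x-g\in\CPSH_\omega(X)$.
\item Apply Construction~\ref{construction:sup_omega_psh_functions} and Lemma~\ref{lem:construction_sup_omega_psh_functions} to $g_x-g-\tfrac1C$ and the local function $\tfrac1C f$. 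This yields $\varphi\in\CPSH_\omega(X)$ with $\varphi+g=\sup\{g_x-\tfrac1C,\tfrac1C f\}$ on the interior of $U'$.
\item Hence $\varphi+g=\tfrac1C f$ on $V$ (rename $1/C$ as $C$), and $\varphi$ is PL when $f$ is.
\end{itemize}
This is the paper's proof. The strictness you rightly said was needed comes from $g_x$ inside the maximum, and no matching of linear parts is involved.

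Finally, enlarging $D$ to get $u\ge Cf$ on all of $U$ goes the wrong way, since it enlarges $\{Cf>g-D\}$. That inequality can only be expected on a neighbourhood such as $U'$ where $f$ is bounded, which is also all the paper's construction delivers. The reason is that $\varphi+g$ is bounded on bounded subsets of $\xo$ and grows at most linearly, whereas $f$ may blow up at the boundary of $U$.
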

We recall the following basic result before we go to the proof. 
\begin{lemma} \label{lem:growth_rate_of_PL_hom_functions}
  Let $E=\R^n$ with its Eucliedean norm $\left\|\cdot\right\|$, and let $g \in \PL_{\hom}(E)$ satisfy $g> 0$ on $E \setminus \{0\}$. Then, there exists a constant $C>0$ such that $g(y) \ge C \left\| y \right\|$ for each $y \in E$. 
\end{lemma}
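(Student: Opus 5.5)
The argument is a standard compactness one on the unit sphere, combined with homogeneity. Let $S \coloneqq \{y \in E \st \left\|y\right\| = 1\}$ be the Euclidean unit sphere; it is compact. A function $g \in \PL_{\hom}(E)$ is continuous on $E$, since a PL function is continuous by definition. Hence $g\rest{S}$ is a continuous function on a compact set and attains its minimum $C \coloneqq \min_{S} g$ at some point $y_0 \in S$. Because $y_0 \neq 0$ and $g > 0$ on $E \setminus \{0\}$, we get $C = g(y_0) > 0$.

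Next we extend the bound from $S$ to all of $E$ using positive homogeneity, $g(ty) = t\,g(y)$ for $t \ge 0$. At $y = 0$, homogeneity with $t = 0$ gives $g(0) = 0$, so $g(0) \ge C\left\|0\right\|$ holds trivially. For $y \neq 0$, write $y = \left\|y\right\| \cdot (y/\left\|y\right\|)$ with $y/\left\|y\right\| \in S$. Then
\[
g(y) = \left\|y\right\|\, g\bigl(y/\left\|y\right\|\bigr) \ge C \left\|y\right\|.
\]

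No real obstacle is expected. The only points to check are that elements of $\PL_{\hom}(E)$ are continuous, which is part of the definition of $\PL$, and that homogeneity holds for all nonnegative scalars, including $t = 0$. Both are built into the definitions recalled in the notation section.
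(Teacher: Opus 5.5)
Your proof is correct: it is the standard argument that minimizes $g$ over the compact unit sphere and then rescales by positive homogeneity, and it only uses the continuity of $g$, not its piecewise linearity. The paper records this lemma as a basic known fact without giving a proof, so there is no other route to compare against.
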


\begin{proof}[Proof of Proposition \ref{prop:example_weak_kahler_form_on_projective_tropical_variety}]
  Let $f \in \LConv(U)$. Take a compact neighborhood $U' \Subset U$ of $x$. We may assume $0 \le f \le M$ on $U'$ for some $M>0$. Define $g_x(y) \coloneqq g(y-x)$. Set $h = g|_{\xo}$ and $h_x = g_x|_{\xo}$, where $\xo = X \cap E$. Then, by Example \ref{example_kahler_form_embedded_tropical_varieties}, $h_x-h \in \PSH_{\omega} \cap \PL(X)$.

  By Lemma \ref{lem:growth_rate_of_PL_hom_functions}, there exists $C \gg 0$ such that $\{y \in E \,|\, C h_x (y)-1 \le M \} \subset U'$. Therefore, the open subset $V \coloneqq \{y \in U \,|\, Ch_{x}(y)-1 < f\}$ is contained in $U'$, and hence $V \Subset U$. Since $Ch_x (x)-1=-1$ and $f(x) \ge 0$, we have $x \in V$. 

  Let $\varphi \coloneqq \frac{1}{C}\sup\{C(h_x-h)-1, f-Ch \}$ as defined in Construction \ref{construction:sup_omega_psh_functions}. Then, Lemma \ref{lem:construction_sup_omega_psh_functions} implies that $\varphi \in \CPSH_{\omega}(X)$. By definition, $\varphi+h=\frac{1}{C}f$ on $V$ and $\varphi+h \ge \frac{1}{C}f$ on $U$. Furthermore, it's not hard to see that  $\varphi \in \PSH_{\omega} \cap \PL(X)$ when $f \in \LConv \cap \PL(U)$. Hence, the proposition is concluded. 
\end{proof}

We end this section with an application of Proposition \ref{prop:example_weak_kahler_form_on_projective_tropical_variety}.

  Let $W$ be any open subset of $\xo$. We say that a locally convex function $u$ on $W$ is \emph{approximable}, and write $u \in \appLConv(W)$ if for any $x \in W$, there exist an open neighborhood $V$ of $x$ in $W$ and a function sequence $\{u_i\}_{i \ge 1} \subset \LConv \cap \PL(V)$ converging uniformly to $u$ on $V$. 

\begin{prop} \label{prop:locally_convex_approximable_functions}
  Let $W$ be any open subset of $\xo$. Then $\appLConv(W)=\LConv(W)$, that is, any locally convex function on $W$ is approximable. 
\end{prop}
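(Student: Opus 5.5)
The plan is to reduce to a local statement near a point, globalize using a projective tropical toric ambient variety, and then apply the regularization theorem. The inclusion $\appLConv(W)\subset\LConv(W)$ is immediate: a uniform limit of locally convex functions is locally convex by Proposition~\ref{prop:limit_of_lconv_is_lconv}. For the converse, fix $u\in\LConv(W)$ and $x\in W$. Take a local polyhedral chart around $x$ identifying a neighborhood with an open subset of a tropical fanfold $Y\subset E=\R^n$, with $x$ mapped to $0$. Apply Corollary~\ref{cor:refinement_of_open_tropical_varieties_are_projective}\,(1)--(3) to $Y$. This gives a projective tropical toric variety $\TP_\Xi$, a compactification $X'\coloneqq\ol{Y}$, a homogeneous convex PL function $g$ with $g>0$ on $E\setminus\{0\}$, and a Kähler class $[\omega]$, $\omega\coloneqq\dm\dmb g$, on the compact \prekahler tropical variety $X'$. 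Since approximability is local and affine chart changes preserve $\LConv\cap\PL$, it suffices to work on $X'$ near $0$.

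Next, globalize $u$. Choose a small neighborhood $U\Subset W$ of $0$ in $Y$. Proposition~\ref{prop:example_weak_kahler_form_on_projective_tropical_variety} applied to $f=u\rest{U}$ yields a neighborhood $V\Subset U$ of $0$, a constant $C>0$ and $\varphi\in\CPSH_\omega(X')$ with $\varphi+g=Cu$ on $V$. Since $[\omega]$ is Kähler, there is a Kähler form $\theta=\omega+\dm\dmb\chi$ with $\chi$ continuous and piecewise smooth on $\xo$. Then $\varphi-\chi\in\CPSH_\theta(X')$.

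Theorem~\ref{thm:LConv_regularization_PL}\,\ref{item:thm:LConv_regularization_PL_1} gives $\theta$-PL, $\theta$-psh functions $\psi_i\to\varphi-\chi$ uniformly. Set $\varphi_i\coloneqq\psi_i+\chi$. These lie in $\CPSH_\omega(X')$, converge uniformly to $\varphi$, and satisfy the following: with local potential $g$ of $\omega$, the function $\varphi_i+g$ is locally convex and equals $\psi_i+(\chi+g)$, where $\chi+g$ is a local potential of $\theta$, so it is PL. Hence $u_i\coloneqq C^{-1}(\varphi_i+g)\in\LConv\cap\PL(V)$ and $u_i\to u$ uniformly on $V$. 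This proves approximability at $x$.

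The main obstacle is the bookkeeping between $\omega$-PL and $\theta$-PL. The point to check is that $\PL_\theta$ is defined so that $\psi+(\text{potential of }\theta)$ is PL, and that $\chi+g$ is indeed a local potential of $\theta$. A cleaner alternative avoids changing forms altogether. Regularize with Theorem~\ref{thm:LConv_regularization_PL}\,\ref{item:thm:LConv_regularization_PL_2}, choosing the auxiliary Kähler form $\theta$ itself simplicial, e.g.\ a refinement of $\dm\dmb g$ made Kähler as in Proposition~\ref{prop:kahler_form_on_tropical_toric_varieties}. Then $\omega+\alpha_i\theta$ has a PL-plus-potential structure, and one uses Dini's theorem on $\ol V$ for uniform convergence. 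I expect the identification of a genuinely PL local potential, so that the approximants are PL rather than merely $\theta$-PL, to be the delicate step. If needed, I would instead apply Proposition~\ref{prop:construction_theta_psh_PL_functions} with $W$ replaced by a neighborhood where the potential is PL.
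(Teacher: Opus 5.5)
Your main argument is correct and follows the paper's proof. You localize to a fanfold, compactify projectively via Corollary~\ref{cor:refinement_of_open_tropical_varieties_are_projective}, globalize $u$ with Proposition~\ref{prop:example_weak_kahler_form_on_projective_tropical_variety}, regularize with Theorem~\ref{thm:LConv_regularization_PL}, and set $u_i=C^{-1}(\varphi_i+g)$.

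The only difference is in how the regularization theorem is applied. The paper applies it directly to the simplicial form $\dm\dmb g$, whose class is K\"ahler, and uses $\PL_\theta=\PL$ for simplicial forms. Your shift to a K\"ahler representative $\theta=\omega+\dm\dmb\chi$ does this bookkeeping explicitly, so the ``obstacle'' you flag is already resolved.

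Your fallback alternatives are therefore unnecessary. The Theorem~\ref{thm:LConv_regularization_PL}\,\ref{item:thm:LConv_regularization_PL_2} variant would not work as stated. No simplicial form is K\"ahler in this paper's sense. Moreover, only $\varphi_i+g+\alpha_i h_\theta$, not $\varphi_i+g$, would be locally convex.
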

\begin{proof}
  The inclusion $\appLConv(W) \subset \LConv(W)$ follows directly from Proposition \ref{prop:limit_of_lconv_is_lconv}. We only need to show that $\LConv(W) \subset \appLConv(W)$.
  
  Let $u \in \LConv(W)$ and $x \in W$. Since this is a local statement, one can assume that $W$ is a bounded open subset of a tropical fanfold $\supp\Sigma$, where $\Sigma$ is  embedded in an Euclidean space $E$. 
  By Corollary \ref{cor:refinement_of_open_tropical_varieties_are_projective}, replacing $\Sigma$ with another fan with the same support, we can suppose without loss of generality that the canonical compactification $\comp{\Sigma}$ is a projective tropical variety. This gives $h \in \LConv \cap  \PL_{\hom} (\Sigma)$ such that $h>0$ on $\Sigma \setminus \{0\}$. By Proposition \ref{prop:kahler_form_on_tropical_toric_varieties}, $[\theta]$ is a \kahler class for $\theta \coloneqq \dm \dmb h$. 
 
  Applying Proposition \ref{prop:example_weak_kahler_form_on_projective_tropical_variety} to $\comp{\Sigma}$, $h$, and $u  \in \LConv(W)$, we obtain an open neighborhood $V$ of $x$ in $W$, a constant $C>0$, and $\varphi \in \CPSH_{\theta}(\comp{\Sigma})$ such that $\varphi +h=Cu$ on $V$.

  By Theorem \ref{thm:LConv_regularization_PL} (using the equality $\PL_\theta(\comp\Sigma)=\PL(\comp\Sigma))$, there exists a sequence $\{\varphi_i \}_{i \ge 1} \subset \PSH_{\theta} \cap \PL(\comp{\Sigma})$ such that $\varphi_i$ converges uniformly to $\varphi$. Setting $u_i \coloneqq \frac{1}{C}(\varphi_i+h)$, we have $u_i \in \LConv \cap \PL(V)$, and $\{u_i\}_{i \ge 1}$ converges uniformly to $u$ on $V$. This completes the proof.
\end{proof} 

%%%%%%%%%%%%%%%%%%%%%%%%%%%%%
%%%%%Monge--Ampere measures
%%%%%%%%%%%%%%%%%%%%%%%%%%%%%
\section{\MongeA measures} \label{sec:tropical_Monge_Ampere_measures_paper_2}
\subsection{Bedford--Taylor theory}
In this section, we assume that $X$ is a compact \prekahler tropical variety of dimension $d$, 
and $\omega$ is a semipositive form on $X$. 
We fix a \kahler form $\theta$ on $X$. 
Denote by $\Radonp(X)$ the set of nonnegative Radon measures on $X$. 

Recall the following definition from \cite[\S\,5]{APW1}.

\begin{definition} \label{def:MA_measure_PL_functions_paper_2}
 Let $\omega_1, \dots, \omega_d$ be semipositive forms on $X$, and  let $\varphi_i \in \PL_{\omega_i}(X)$. The \emph{mixed \MongeA measure for PL functions} $\MA_{\omega_1, \cdots, \omega_d}(\varphi_1, \cdots, \varphi_d)$ is the Radon measure on $X$ associated to  
 $\omega_{\varphi_1} \wedge \dots \wedge \omega_{\varphi_d}$. Here, $\omega_{\varphi_i} \coloneqq \omega_i+ \dm \dmb \varphi_i$. 
When no confusion arises, we also write $\omega_{\varphi_1} \wedge \dots \wedge \omega_{\varphi_d}$ for this measure on $X$. 
\end{definition}
 The Radon measure associated to $\omega_{\varphi_1} \wedge \dots \wedge \omega_{\varphi_d}$ is given by the integration of compactly supported top-degree mixed differential forms on $X$, see Section~\ref{subsec:degree_map} and~\cite[\S\,5]{APW1}.

\begin{thm} \label{thm:global_Bedford-Taylor_theory}
  For each $(\omega_1, \dots, \omega_n) \in (\CK^{\ge 0}(X))^d $, there is a unique operator  
  \begin{align*}
  \MA_{\omega_1, \dots, \omega_d} \colon \CPSH_{\omega_1}(X) \times \dots \times \CPSH_{\omega_d}(X)  &\longrightarrow \Radonp(X)\\
  (\varphi_1, \dots, \varphi_d) &\lms \MA_{\omega_1, \dots, \omega_d} (\varphi_1, \dots, \varphi_d)
  \end{align*}
  that extends the \MongeA operator on $\left( \PSH_{\omega_1} \cap \PL_{\omega_1}(X) \right) \times \cdots \times \left( \PSH_{\omega_d} \cap \PL_{\omega_d}(X) \right)$ defined in Definition \ref{def:MA_measure_PL_functions_paper_2}, and satisfies the following properties:
\begin{enumerate}
  \item Let $\{\epsilon^k_i\}_{k \ge 1}$ be a sequence of positive real numbers such that $\epsilon_i^k \to 0$ for each $1 \le i \le d$. Let $\{\varphi_i^k\}_{k \ge 0} \subset \nC^0(X)$ be a function sequence for any $1 \le i \le d$ such that $\{\varphi_i^k\}_{k \ge 0}$ converges uniformly to   $\varphi_i$, and $\varphi_i^k \in \CPSH_{\theta_i^k}(X)$ for each $k \ge 0$ with $\theta_i^k\coloneqq \omega_i+\epsilon^k_i \theta$. Then, \[ \MA_{\theta_1^k, \cdots, \theta_d^k}(\varphi_1^k, \cdots, \varphi_d^k) \rightharpoonup \MA_{\omega_1, \cdots, \omega_d}(\varphi_1, \cdots, \varphi_d),  \quad  (k \to +\infty)\]   
  
  In particular, the operator $\MA_{\omega_1, \cdots, \omega_d}$ is continuous.
  \item Suppose that $(W, \phi: W \to U)$ is a tropical chart of $\xo$, and for any $1 \le i \le d$, $\omega_i=\dm \dmb h_i$ for some $h_i \in \LConv (W)$ on $W$. If $\varphi_i+h_i \in \PSm \cap \LConv(W)$ for each $1 \le i \le d$, then 
  \[\MA_{\omega_1, \cdots, \omega_d} (\varphi_1, \dots, \varphi_d) = \dm \dmb (h_1+\varphi_1) \wedge \dots  \wedge \dm \dmb (h_d+\varphi_d) \]
  \item $\MA_{\omega_1, \cdots, \omega_d}$ is multilinear. 
\end{enumerate}
\end{thm}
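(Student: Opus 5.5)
Following Bedford--Taylor, I would define the operator by approximation and obtain convergence from a Chern--Levine--Nirenberg type estimate combined with integration by parts (Proposition \ref{Green_formula}).

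\textbf{Step 1: definition by approximation.} Given $\varphi_i \in \CPSH_{\omega_i}(X)$, Theorem \ref{thm:LConv_regularization_PL}\,\ref{item:thm:LConv_regularization_PL_2} (with Dini) produces $\alpha_i^k \to 0$ and $\varphi_i^k \in \PSH_{\theta_i^k}\cap \PL_{\theta_i^k}(X)$ converging uniformly to $\varphi_i$, where $\theta_i^k = \omega_i + \alpha_i^k\theta$. The measures $\MA_{\theta_1^k,\dots,\theta_d^k}(\varphi_1^k,\dots,\varphi_d^k)$ of Definition \ref{def:MA_measure_PL_functions_paper_2} are nonnegative. Their total masses are the intersection numbers $\deg(\theta_1^k\wedge\dots\wedge\theta_d^k)$, which are independent of the $\varphi_i^k$ and bounded. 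Hence the family is relatively weakly compact, and I would set $\MA_{\omega_1,\dots,\omega_d}(\varphi_1,\dots,\varphi_d)$ to be the common limit, once Step 2 shows uniqueness of the limit. Multilinearity (3) then passes to the limit from the PL case, using multilinearity of the wedge product. Uniqueness of the extended operator is automatic, since any operator satisfying (1) is determined by its values on PL data.

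\textbf{Step 2: uniqueness of the limit (main obstacle).} Take two approximating systems $(\varphi_i^k,\theta_i^k)$ and $(\psi_i^k,\eta_i^k)$. I would test both limits against a piecewise smooth (or PL) $f$; such functions are dense in $C^0(X)$, and by Corollary \ref{cor:dpsh_span_PL_function} a PL $f$ is a difference of $\theta$-psh PL functions. Replace the approximants one slot at a time by telescoping, and move the perturbation parameters to a common $\omega_i+\epsilon\theta$, which costs $O(\epsilon)$ in mass by multilinearity. After this, each difference has the form
\begin{equation*}
\int f\,\dm\dmb(u-v)\wedge T = \int (u-v)\,\dm\dmb f\wedge T,
\end{equation*}
obtained from Proposition \ref{Green_formula} applied to the closed positive form $T$ built from the remaining slots. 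Since $f=C(\psi_1-\psi_2)$, the form $\dm\dmb f$ is a difference of semipositive forms minus a multiple of $\theta$, and the total masses of these are uniformly bounded. The right-hand side is therefore bounded by $C\|u-v\|_{C^0}$, which tends to $0$.

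The delicate points sit here. Proposition \ref{Green_formula} is stated on open $U\subset\xo$ with compact support, so I would need a global version on the compact $X$, and masses near $\partial X$ must be controlled. I would handle this by using the local potentials from Proposition \ref{prop:local_affine_support_function_boundary}, which extend across the boundary, together with the fact that $\theta$-psh PL functions have continuous extensions to $X$. The same computation gives the continuity statement (1) for arbitrary uniformly convergent $\varphi_i^k\in\CPSH_{\theta_i^k}$, by first approximating each of them by PL functions via a diagonal argument.

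\textbf{Step 3: local formula (2).} On a chart $W\subset\xo$ with $\varphi_i+h_i$ piecewise smooth and convex, I would approximate locally by PL convex functions (Proposition \ref{prop:locally_convex_approximable_functions}). Via Proposition \ref{prop:construction_theta_psh_PL_functions} these can be realized as global PL $\theta$-psh functions that agree locally with the approximants. By (1) and locality of the construction, the measure on $W$ is then the weak limit of $\bigwedge_i\dm\dmb(\text{PL approximants})$. This limit equals $\bigwedge_i\dm\dmb(h_i+\varphi_i)$ by the continuity of the mixed wedge product under uniform convergence of convex piecewise smooth potentials, which is again proved by the integration-by-parts estimate of Step 2.
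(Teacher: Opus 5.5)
Your proposal is correct and follows essentially the same route as the paper. The paper's proof reruns the Bedford--Taylor argument of \cite[Thm.\,5.13]{APW1}, with Theorem~\ref{thm:LConv_regularization_PL} supplying the uniform PL approximations (so that $\CPSH_{\omega}(X)=\appCPSH_{\omega}(X)$) and Corollary~\ref{cor:dpsh_span_PL_function} replacing \cite[Lem.\,5.14]{APW1} to write PL test functions as differences of $\theta$-psh PL functions in the integration-by-parts estimate; this is precisely your Steps~1--2.

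One small correction to Step~3: Proposition~\ref{prop:construction_theta_psh_PL_functions} does not give what you want, since it only produces $\psi+h=Cf+l$ on a small neighbourhood with an uncontrolled (in fact small) constant $C$, not a global approximant of $\varphi_i$ agreeing with your local one. The detour is unnecessary anyway: restrict the global PL approximants from Step~1 to $W$, use that the PL Monge--Amp\`ere measure is local, and apply the local continuity of the wedge product; this already gives~(2).
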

\begin{proof}
  The proof is exactly the same as the proof of Theorem 5.13 in \cite{APW1} for the Bedford--Taylor theory of the projective case, except that here we use Theorem \ref{thm:LConv_regularization_PL} which implies that
  $\CPSH_{\omega}(X)=\appCPSH_{\omega}(X)$, and Lemma 5.14 in \textit{loc.cit.} needs to be replaced by Corollary \ref{cor:dpsh_span_PL_function} in this paper. 
%cross-ref in APW1: \ref{thm:projective_Bedford-Taylor_theory} and \ref{lem:dpsh_span_PL_function_projective_tropical_varieties}
\end{proof}

In the remaining part of this subsection, we assume that  $\omega_1, \dots, \omega_{d-1}$ are semipositive  forms on $X$, and $ (\varphi_{1}, \dots, \varphi_{d-1}) \in \CPSH_{\omega_1}(X) \times \dots \times \CPSH_{\omega_{d-1}}(X)$. We denote 
\[
T\coloneqq (\omega_1+\dm \dmb \varphi_1) \wedge \dots \wedge (\omega_{d-1}+\dm \dmb \varphi_{d-1}).
\]
\begin{definition} \label{def:dpsh}
    We say that $f \in \dCPSH(X)$ if there exist $\omega \in \CK^{\ge 0}(X)$ and  $\psi_1, \psi_2 \in \CPSH_{\omega}(X)$ such that $f=\psi_1-\psi_2$. We define $\dm \dmb \varphi \wedge T \coloneqq (\omega+\dm \dmb \psi_1) \wedge T- (\omega+\dm \dmb \psi_2) \wedge T$.
\end{definition}

\begin{cor} [Cauchy--Schwartz inequality] \label{cor:global_Cauchy_Schwartz_inequality}
  For any $f,g \in \dCPSH(X)$, we have
  \[\int_{X} f \dm\dmb f\wedge T \le 0 \]
  Furthermore, 
  \[ \left| \int_{X} f\dm\dmb g \wedge T \right| \leq \left(\int_{X} -f \dm\dmb f\wedge T\right)^{\frac 12}\left(\int_{X} -g \dm\dmb g\wedge T \right)^{\frac 12}\]
\end{cor}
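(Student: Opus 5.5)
The plan is to reduce to the $\PL$ case via Theorem~\ref{thm:LConv_regularization_PL} and the continuity in Theorem~\ref{thm:global_Bedford-Taylor_theory}, and to prove the $\PL$ case with the integration-by-parts formula (Proposition~\ref{Green_formula}).

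\textbf{Reduction.} First I reduce to the case where all data are $\theta$-PL. Write $f=\psi_1-\psi_2$ with $\psi_j\in\CPSH_{\omega}(X)$. Replacing $\omega$ by $\omega+\theta$ and using multilinearity of $\MA$ (Theorem~\ref{thm:global_Bedford-Taylor_theory}\,(3)), we may assume both $f$ and $g$ are differences in the same class $\omega$. Then:
\begin{itemize}
\item apply Theorem~\ref{thm:LConv_regularization_PL}\,(2) to approximate $\psi_j$ uniformly by $\psi_j^k\in \PSH_{\omega+\alpha_k\theta}\cap\PL_{\omega+\alpha_k\theta}(X)$;
\item approximate each $\varphi_i$ in $T$ in the same way by $\varphi_i^k$;
\item put $T_k=\prod_i(\omega_i+\alpha_k\theta+\dm\dmb\varphi_i^k)$ and $f_k=\psi_1^k-\psi_2^k$, which is $\PL$ since the potentials of $\omega+\alpha_k\theta$ cancel.
\end{itemize}
By Theorem~\ref{thm:global_Bedford-Taylor_theory}\,(1) the measures $(\omega+\alpha_k\theta+\dm\dmb\psi_j^k)\wedge T_k$ converge weakly. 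Together with uniform convergence $f_k\to f$ on the compact space $X$ and bounded total masses (fixed by cohomological degree), this gives $\int f_k\,\dm\dmb g_k\wedge T_k\to\int f\,\dm\dmb g\wedge T$. So both inequalities pass to the limit, and it suffices to treat $f,g\in\PL(X)$ and $T$ built from $\PL$ potentials.

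\textbf{PL case.} Here $T$ is a closed mixed $(d-1,d-1)$-form, locally $\dm\dmb$ of wedge products of piecewise smooth potentials, so $\dm T=\dmb T=0$. Cover $X$ by charts and use a partition of unity by piecewise smooth functions; on $\xo$ Proposition~\ref{Green_formula} then yields
\[
\int_X f\,\dm\dmb g\wedge T=-\int_X \dm f\wedge\dmb g\wedge T.
\]
The boundary strata carry no mass of the relevant measures, since $f$ and $g$ extend continuously and their slopes are eventually constant along recession directions. Call the right-hand side $-\langle f,g\rangle_T$. It is a symmetric bilinear form in $(f,g)$.

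\textbf{Main obstacle.} The key step is positivity: $\int \dm f\wedge\dmb f\wedge T\ge 0$. Once that holds, the first claim follows immediately, and the second is the standard Cauchy--Schwarz inequality for a positive semidefinite symmetric bilinear form.

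To prove positivity, I would localize to faces. For the tangential part on a top face $\delta$, $\dm f\wedge\dmb f$ is a positive $(1,1)$-form and $T$ restricts to a positive form. For the transversal (Chow-ring) parts, the contribution is a Hodge--Riemann-type pairing in the star fans. An alternative that avoids this is to use $f\,\dm\dmb f=\tfrac12\dm\dmb(f^2)-\dm f\wedge\dmb f$, together with $\int\dm\dmb(f^2)\wedge T=0$ by Stokes, and then to show that $\int\dm f\wedge \dmb f\wedge T\ge0$ via the polarization $(\omega+\dm\dmb\psi)$ from convex functions. Concretely, write $f=\psi_1-\psi_2$ and approximate $\dm f\wedge\dmb f$ by positivity of $\dm\dmb$ of convex functions such as $\sup$ constructions. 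I expect this positivity statement to be the hardest part, and I would try first to cite the corresponding positivity of mixed forms from \cite{APW1}.
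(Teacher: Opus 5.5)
Your reduction to PL data is correct, and it is the route the paper takes. The paper defers to the argument of \cite[Cor.\,5.16]{APW1}, which here runs through Theorem~\ref{thm:LConv_regularization_PL} and the continuity in Theorem~\ref{thm:global_Bedford-Taylor_theory}\,(1). Using a common sequence $\alpha_k$ is legitimate, since the proof of Theorem~\ref{thm:LConv_regularization_PL}\,(2) works for an arbitrary one.

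The gap is the PL case itself, which carries all the content. You never prove $\int_X \dm f\wedge\dmb f\wedge T\ge 0$. Falling back on \cite{APW1} is in effect what the paper does, but the mechanism you sketch would not work in this generality. A Hodge--Riemann pairing in the star fans is not available: the paper imposes no K\"ahler-package condition, and star fans of a general tropical variety need not satisfy Hodge--Riemann. It is also not needed. For PL data and $d\ge 2$, $T$ has no component at all on top faces, so the ``tangential part on a top face'' you describe is empty. The ``polarization by $\sup$ constructions'' alternative is too vague to assess.

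The missing idea is that, for PL data, everything lives on the $1$-skeleton.
\begin{itemize}
\item On $(0,0)$-forms $\dm_{\perp}$ vanishes, because it contracts in the $(0,1)$-direction. Hence $\dm f\wedge\dmb f=d'f\wedge d''f$ facewise, with no Chow component.
\item Each factor of $T_k$ is locally $\dm\dmb$ of a PL function, so it lies in $\Omega^{1,1,1}$ with constant coefficients. Thus $T_k\in\Omega^{d-1,d-1,d-1}$.
\item Since $A^{d-1}(\gamma)=0$ when $\dim\gamma\ge 2$, $T_k$ is a $1$-dimensional Minkowski weight. Its weights are $w_e=\deg_e(T_k|_e)\ge 0$, namely mixed Monge--Amp\`ere masses of the convex conewise linear functions induced on the $(d-1)$-dimensional star fan of $e$.
\item This positivity is elementary: intersect effective cycles successively with convex functions; no Hodge theory enters. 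It is exactly the fact $(\omega')^{d-1}\in\mathrm{MW}^{\ge 0}_1$ used in the proof of Theorem~\ref{thm:uniqueness}.
\end{itemize}
Computing as in that proof, via Prop.~4.12 and Lemma~5.10 of \cite{APW1}, for $f,g$ affine on the faces of a common structure one gets
\[
\int_X f\,\dm\dmb g\wedge T_k=-\sum_{e=uv}\frac{w_e}{\ell_e}\bigl(f(u)-f(v)\bigr)\bigl(g(u)-g(v)\bigr).
\]
The sum runs over finite edges, with $\ell_e$ the lattice length. Legs contribute nothing: a PL function on the compact $X$ is affine near boundary points in the extended sense, so it has zero slope along the recession directions of unbounded faces.

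The right-hand side is a weighted Dirichlet form on a metric graph with nonnegative weights. It therefore gives the negativity and the Cauchy--Schwarz inequality at once. It also disposes of the boundary terms that your global use of Proposition~\ref{Green_formula} leaves unjustified: that proposition is stated only for functions compactly supported in an open subset of $\xo$. The issue there is boundary terms in the integration by parts, not whether the boundary carries mass.
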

\begin{proof}
  The proof is exactly the same as the proof of Corollary 5.16 in \cite{APW1}. 
  %cross-ref in APW1: \ref{cor:projective_Cauchy_Schwartz_inequality} 
\end{proof}

\begin{cor} \label{cor:global_integration-by-parts_formula_and_volume_omega_psh}
  \begin{enumerate}
    \item For any $\varphi, \psi \in \dCPSH(X)$, we have
    \[\int_{X} \psi\ \dm \dmb \varphi \wedge T =\int_{X} \varphi\ \dm \dmb \psi \wedge T \]
   \item If $(\varphi_1, \dots, \varphi_d) \in \CPSH_{\omega_1}(X) \times \cdots \times \CPSH_{\omega_{d}}(X)$, then, we have 
  \[\MA_{\omega_1, \cdots, \omega_d}(\varphi_1, \dots, \varphi_d) = \deg(\omega_1 \wedge \cdots \wedge \omega_d).\]
  \end{enumerate}
\end{cor}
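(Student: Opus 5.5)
We will prove both statements first for $\omega_i$-PL psh functions and then pass to general bounded $\omega_i$-psh functions using the regularization theorem (Theorem~\ref{thm:LConv_regularization_PL}) together with the continuity property in Theorem~\ref{thm:global_Bedford-Taylor_theory}\,(1). Statement (2) should be read as an equality of total masses: $\int_X \MA_{\omega_1,\dots,\omega_d}(\varphi_1,\dots,\varphi_d) = \deg(\omega_1\wedge\cdots\wedge\omega_d)$.

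\textbf{Statement (1).} First assume that $\varphi,\psi$ are differences of $\theta'$-psh, $\theta'$-PL functions for a Kähler form $\theta'$, and that the $\varphi_j$ in $T$ are $\omega_j$-psh and $\omega_j$-PL.
\begin{enumerate}
\item Choose a polyhedral structure $\olPi$ on which all local potentials $h+\varphi$, $h+\psi$ and $h_j+\varphi_j$ are facewise affine. Then $T$ is a closed mixed $(d-1,d-1)$-form on $\xo$ satisfying $\dm T=\dmb T=0$, since locally it is a wedge product of $\dm\dmb$ of PL functions.
\item Cover $\xo$ by finitely many charts and use a partition of unity. On each chart, $\dm\dmb(h'+\varphi)\wedge T$ agrees with $\dm\dmb\varphi\wedge T$, because the form $\theta'$ cancels in the difference.
\item Apply Proposition~\ref{Green_formula} on each chart.
\end{enumerate}
The main obstacle is that $X$ is compact but $\xo$ is not, so $\psi$ does not have compact support in $\xo$. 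Green's formula therefore cannot be applied directly. I will handle this as follows:
\begin{enumerate}
\item Use Lemma~\ref{lem:extension_bounded_convex_function_polyhedron}: near a boundary stratum, a bounded convex function is monotone in the sedentary directions, and a PL one is eventually constant in those directions.
\item Hence near each boundary point, after subtracting the local potentials, $\varphi$, $\psi$ and the $\varphi_j$ depend only on the transverse coordinates in $V$ on a basic open set $\BT_+^k\times V$.
\item Choose cutoff functions $\chi_R$ that equal $1$ on a large compact set and are PL in the sedentary directions. Apply Green's formula to $\chi_R\psi$.
\item The error terms involve $\dm\chi_R\wedge\dmb\varphi\wedge T$. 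They vanish, or their contributions cancel symmetrically, because the relevant forms are pulled back from the $(d-1)$-dimensional stratum $V$ and cannot carry top degree in the sedentary direction.
\end{enumerate}

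To remove the PL assumption, write $\varphi=\psi_1-\psi_2$ with $\psi_i\in\CPSH_{\omega}(X)$. Theorem~\ref{thm:LConv_regularization_PL}\,(2) gives decreasing, hence uniformly convergent, $(\omega+\alpha_k\theta)$-PL psh approximants of each $\psi_i$ and of each $\varphi_j$ in $T$. By Theorem~\ref{thm:global_Bedford-Taylor_theory}\,(1), the measures $\dm\dmb\varphi^{(k)}\wedge T^{(k)}$ converge weakly. Uniform convergence of $\psi^{(k)}$, together with uniform bounds on the total masses, lets us pass to the limit on both sides. The mass bound is the PL version of statement (2), applied to $\omega+\alpha_k\theta$.

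\textbf{Statement (2).}
\begin{enumerate}
\item By multilinearity, write $\omega_{\varphi_i}=\omega_i+\dm\dmb\varphi_i$ and expand the product.
\item Any term containing a factor $\dm\dmb\varphi_i$ has total mass $\int_X 1\cdot\dm\dmb\varphi_i\wedge T' = \int_X\varphi_i\,\dm\dmb 1\wedge T' = 0$. This uses statement (1) with $\psi=1$, where $T'$ is the product of the other factors. The function $1$ lies in $\dCPSH$ trivially, and $\varphi_i\in\dCPSH$ because it is the difference of $\varphi_i$ and $0$.
\item The only remaining term is $\omega_1\wedge\cdots\wedge\omega_d$, whose mass is by definition $\deg(\omega_1\wedge\cdots\wedge\omega_d)$. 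This is consistent with Definition~\ref{def:MA_measure_PL_functions_paper_2} at $\varphi_i=0$.
\end{enumerate}
To avoid circularity, step 2 is first carried out for PL functions, where the cutoff argument above gives it directly. The general case then follows, as in statement (1), from continuity and the fact that the constant function $1$ on the compact space $X$ is an admissible test function for weak convergence.
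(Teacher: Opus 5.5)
Your argument for (1) is sound. After regularization, $\varphi^{(k)}=\psi_1^{(k)}-\psi_2^{(k)}$ and $\psi^{(k)}$ are honestly PL on $X$, and $T^{(k)}$ is locally a wedge of $\dm\dmb$ of the PL functions $h_j+\varphi_j^{(k)}$. Near $\partial X$ all of these are independent of the sedentary coordinates. Note that it is $h_j+\varphi_j$, not $\varphi_j$, that has this property. Hence the cross terms vanish on $\Supp(\dm\chi_R)$ for degree reasons, and the $\varphi\psi\,\dm\dmb\chi_R\wedge T$ term cancels by symmetry. You also do not need the PL version of (2) for the mass bound: weak convergence of nonnegative measures on the compact space $X$, tested against the constant $1$, already controls total masses. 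So (1) stands on its own.

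The gap is in (2): the PL case you reduce to is not one your cutoff argument covers.
\begin{itemize}
\item The approximants from Theorem~\ref{thm:LConv_regularization_PL} are $(\omega_i+\alpha_k\theta)$-PL, and $\omega_i+\alpha_k\theta$ is K\"ahler, hence never simplicial: a PL potential is affine on faces, so it is not strictly convex.
\item Thus $\varphi_i^{(k)}=u_i-h_i$, with $u_i$ PL but $h_i$ a non-PL local potential. Near $\partial X$, neither $\varphi_i^{(k)}$ nor the factors $\omega_i+\alpha_k\theta$ in your expansion are pulled back from the transverse stratum. Local potentials of a semipositive form are only continuous up to $\partial X$ and monotone in the sedentary directions (Lemma~\ref{lem:extension_bounded_convex_function_polyhedron}); for a K\"ahler form they are strictly convex along those directions, never eventually constant.
\item Consequently the boundary terms $\int\varphi\,\dm\dmb\chi_R\wedge T'$ and $\int\varphi\,\dm\chi_R\wedge\dmb\psi\wedge T'$ do not vanish identically. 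Your claim that the cutoff argument gives the PL case of (2) directly therefore fails.
\item The same issue is hidden in your last step. For non-simplicial $\omega_i$ we have $0\notin\PL_{\omega_i}(X)$, so the bare term is the Bedford--Taylor measure $\MA_{\omega_1,\dots,\omega_d}(0,\dots,0)$ on $X$. Its total mass equals $\deg(\omega_1\wedge\cdots\wedge\omega_d)=\int_{\xo}\omega_1\wedge\cdots\wedge\omega_d$ not by definition, but only if it puts no mass on $\partial X$.
\end{itemize}

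A repair:
\begin{enumerate}
\item Prove (1) in full generality first; there is no circularity, by the remark above on mass bounds.
\item Apply (1) with $\psi=1$ to the telescoping identity $\MA(\varphi_1,\varphi_2,\dots,\varphi_d)-\MA(0,\varphi_2,\dots,\varphi_d)=\dm\dmb\varphi_1\wedge T_1$ (Definition~\ref{def:dpsh}), and iterate. This reduces (2) to $\MA_{\omega_1,\dots,\omega_d}(0,\dots,0)(X)=\deg(\omega_1\wedge\cdots\wedge\omega_d)$.
\item Identify this measure with $\omega_1\wedge\cdots\wedge\omega_d$ on $\xo$ using Theorem~\ref{thm:global_Bedford-Taylor_theory}\,(2).
\item Show separately that it does not charge $\partial X$. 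One way is Theorem~\ref{thm:Monge-Ampere-measure-takes-no-mass-boundary}, whose proof does not use this corollary, together with polarization for the mixed measure. Another is a genuine estimate of the boundary terms, using continuity of the local potentials at $\partial X$.
\end{enumerate}
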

\begin{proof}
  The proof is exactly the same as the proof of Corollary 5.18
  %\ref{cor:projective_integration-by-parts_formula_and_volume_omega_psh} 
  in \cite{APW1}.
\end{proof}

\begin{thm} \label{thm:Monge-Ampere-measure-takes-no-mass-boundary}
  Let $\varphi \in \CPSH_{\omega}(X)$. Then $\MAo(\varphi)(\partial X)=0$. 
\end{thm}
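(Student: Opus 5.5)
The plan is to work locally near a boundary point and exploit the monotonicity of locally convex functions in the "sedentary" directions (Lemma~\ref{lem:extension_bounded_convex_function_polyhedron}) together with the continuity of $\varphi$ up to $\partial X$. The first step is to cover the compact set $\partial X$ by finitely many basic open sets $W\simeq \BT_+^k\times V$, with $k\ge 1$, each carrying a local potential $h$ of $\omega$. On such a chart write $u\coloneqq\varphi+h\in\LConv(W)$, and let $u_\infty(z)\coloneqq u(\uinfty,z)$ be its boundary value on $\{\uinfty\}\times V$. By the proof of Proposition~\ref{prop:local_affine_support_function_boundary}, $u_\infty\in\LConv(V)$ and $u(y,z)\ge u_\infty(z)$. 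By continuity of $u$ at the compact set $\{\uinfty\}\times\ol{V'}$, for a relatively compact $V'\Subset V$ we have
\[
0\le u-u_\infty\circ\pi\le \eta(t)\quad\text{on } U_t\coloneqq (t,\infty]^k\times V',
\]
where $\eta(t)\to 0$ as $t\to\infty$. It suffices to prove that $\MAo(\varphi)(\{\uinfty\}\times V')=0$ for each such chart. More generally, one can check that every stratum of $\partial X$ carries no mass by working with the minimal sedentarity stratum in each chart.

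The second step is a local modification that kills the Monge--Ampère mass near the boundary. For $\epsilon>0$ set $\psi_\epsilon\coloneqq \sup\{\varphi,\ u_\infty\circ\pi+\epsilon-\beta-h\}$, where $\beta\ge 0$ is a convex PL bump on $V$ vanishing on $V'$ and large near $\partial V$, and the bump also grows in the finite $y$-directions. The bump ensures that the set where the second function wins is $\Subset W$, so Construction~\ref{construction:sup_omega_psh_functions} and Lemma~\ref{lem:construction_sup_omega_psh_functions} give $\psi_\epsilon\in\CPSH_\omega(X)$ with $0\le\psi_\epsilon-\varphi\le\epsilon$. On $U_{t(\epsilon)}$, where $\eta(t(\epsilon))<\epsilon$, we have $\psi_\epsilon+h=u_\infty\circ\pi+\epsilon$, which is pulled back from the $(d-k)$-dimensional space $V$. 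For PL approximants of $u_\infty$, supplied by Theorem~\ref{thm:LConv_regularization_PL} and Proposition~\ref{prop:locally_convex_approximable_functions}, the mixed form $(\dm\dmb(\cdot\circ\pi))^{\wedge d}$ vanishes. It vanishes on the faces of the open part because the form is pulled back from a lower-dimensional space. It vanishes on $\partial X$ because measures of PL functions are defined through integration over faces of $\Pi\subset\xo$ only. By Theorem~\ref{thm:global_Bedford-Taylor_theory}\,(1),(2) and locality, $\MAo(\psi_\epsilon)(U_{t(\epsilon)})=0$.

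The third step, which I expect to be the main obstacle, is passing to the limit $\epsilon\to 0$. The neighborhood $U_{t(\epsilon)}$ shrinks as $\epsilon\to0$, and weak convergence only gives $\mu(O)\le\liminf\mu_\epsilon(O)$ for a \emph{fixed} open set $O$. I therefore need a uniform estimate of the form
\[
\MAo(\psi)(U_t)\le C\,\bigl(\mathrm{osc}_{U_{t/2}}(\psi+h-u_\infty\circ\pi)+o_t(1)\bigr),
\]
valid for all $\psi\in\CPSH_\omega(X)$ with a fixed sup-norm bound, where $C$ is uniform. I would try to prove it as a Chern--Levine--Nirenberg type inequality. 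Take a cutoff $\chi$ that is $1$ on $U_t$ and supported in $U_{t/2}$, and write $\int\chi\,(\dm\dmb(\psi+h))^{d}$ as $\int\chi\,\dm\dmb(\psi+h-u_\infty\circ\pi)\wedge(\dm\dmb(\psi+h))^{d-1}$ up to terms containing the top power of the pullback, which vanish. Then integrate by parts (Proposition~\ref{Green_formula} on PL approximants, extended by Corollary~\ref{cor:global_integration-by-parts_formula_and_volume_omega_psh}) to move $\dm\dmb$ onto $\chi$, and bound $\lvert\dm\dmb\chi\rvert$ against the total mass $\deg(\omega^d)$ using the Kähler form $\theta$.

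Applying this estimate to $\psi=\varphi$ gives $\MAo(\varphi)(\{\uinfty\}\times V')\le \MAo(\varphi)(U_t)\le C\,\eta(t/2)+o_t(1)\to0$. This bypasses the need for a uniform-in-$\epsilon$ limit argument, and the second step serves mainly to guide the choice of splitting.
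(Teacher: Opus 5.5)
\paragraph{Gap: the uniform estimate of Step~3.}
Your Step~1 is fine, but Step~3 carries the whole proof, and the mechanism you propose for it cannot give a constant independent of $t$. To bound $\int(\psi+h-u_\infty\circ\pi)\,\dm\dmb\chi\wedge S_j$ by a cohomological quantity ``using $\theta$'', you need $A\,h_\theta-\chi$ (and $A\,h_\theta+\chi$) to be locally convex on the support of $\chi$, where $h_\theta$ is a local potential of $\theta$ near $\{\uinfty\}\times V'$. Fix a sedentary ray through $(t/2,z)$ and $(t,z)$. Both $h_\theta$ and $\chi$ are bounded on it, so $A\,h_\theta-\chi$ would be a bounded convex function there. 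By Lemma~\ref{lem:extension_bounded_convex_function_polyhedron} it is then non-increasing toward $\uinfty$. This gives $A\,\bigl(h_\theta(t/2,z)-h_\theta(t,z)\bigr)\ge\chi(t,z)-\chi(t/2,z)=1$, hence $A_t\ge 1/\eta_\theta(t/2)$. Here $\eta_\theta$ is defined for $h_\theta$ exactly as your $\eta$ is for $u$, and $\eta_\theta(s)\to 0$ by continuity of $h_\theta$ at $\partial X$, so $A_t\to\infty$.

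Your estimate therefore only reads $\MAo(\varphi)(U_t)\lesssim A_t\,\eta(t/2)$, and nothing in the argument makes this tend to $0$. The decay of $\eta$ is dictated by $\varphi$ and can be much slower than that of $\eta_\theta$; the unexplained $o_t(1)$ term does not help.

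Several further points are left open:
\begin{itemize}
\item A cutoff equal to $1$ on $(t,\infty]^k\times V'$ cannot be supported in $(t/2,\infty]^k\times V'$. You must also cut off in $z$, which produces terms located at boundary points.
\item Proposition~\ref{Green_formula} requires compact support in $\xo$. The global identity of Corollary~\ref{cor:global_integration-by-parts_formula_and_volume_omega_psh} involves mixed measures whose mass on $\partial X$ is the same kind of quantity you are trying to control.
\item $u_\infty\circ\pi$ is only locally defined, so the whole computation would have to be run on global PL approximants with controlled behaviour near $\partial X$.
\end{itemize}

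Finally, Step~2 is not merely dispensable but impossible. By the same lemma, any bounded locally convex $v$ on $W$ satisfies $v(y,z)\ge v(\uinfty,z)$. So $u_\infty\circ\pi+\epsilon-\beta$, with $\beta$ growing inward, is not locally convex. Moreover, when $u$ is invariant in the sedentary directions, no competitor can beat $u$ on $\{\uinfty\}\times V'$ while losing to it at finite $y$. Lemma~\ref{lem:construction_sup_omega_psh_functions} therefore does not apply.

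\paragraph{How the paper proceeds.}
The paper avoids any estimate of this kind, and its gluing goes in the direction compatible with this monotonicity: it keeps $\varphi+h$ near the boundary and modifies it far away.
\begin{itemize}
\item Around $x\in\partial X$ with $U\simeq\BT_+^k\times V$, where $V$ is a bounded open subset of a fanfold $Z_1$, it takes a projective canonical compactification $\comp{\Sigma}$ of $Z_1\times\R^k$ with a semipositive simplicial form $\omega_\Sigma=\dm\dmb g$.
\item It chooses a convex PL function $g'$ with $\lvert g'-g\rvert=O(1)$, larger than $(\varphi+h)\circ\phi^{-1}$ on the boundary of a neighbourhood $W_0$ of $\phi(x)$ and smaller near $\phi(x)$.
\item Then $\psi=\sup\{g'-g,(\varphi+h)\circ\phi^{-1}\}\in\CPSH_{\omega_\Sigma}(\comp{\Sigma})$ by Lemma~\ref{lem:construction_sup_omega_psh_functions}.
\item It does the same with PL approximants from Theorem~\ref{thm:LConv_regularization_PL} and passes to the limit with Theorem~\ref{thm:global_Bedford-Taylor_theory}, which gives $\MAo(\varphi)=\phi^{*}\MA_{\omega_\Sigma}(\psi)$ near $x$.
\item The statement then follows from the projective case \cite[Thm.\,5.19]{APW1}.
\end{itemize}

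If you want a self-contained direct proof instead, the missing ingredient would be a uniform bound that exploits the sedentary slopes of $u=\varphi+h$ itself. Convexity and boundedness give $\lvert\partial_{y_i}u(s,z)\rvert\le 2\eta(s/2)/s$ for $z\in V'$. This would have to be combined with uniform bounds on slice masses, rather than with domination of $\dm\dmb\chi$ by $\theta$.
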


\begin{remark}
This result was proven for projective tropical varieties in \cite[Thm.\,5.19]{APW1}. The proof presented below for  arbitrary compact \prekahler tropical varieties reduces to the projective case.
\end{remark}

\begin{proof} [Proof of Theorem \ref{thm:Monge-Ampere-measure-takes-no-mass-boundary}]
  Let $x \in \partial X$. We take a basic open neighborhood $U$ of $x$ such that there exists an isomorphism $\phi\colon U \to W \coloneqq \BT_{+}^{k} \times V  \subset  Z_1 \times \BT_{+}^{k}$, and $\phi(x)=(0,\uinfty) \in V\times \BT_{+}^{k}$, where $Z_1$ is a tropical fanfold embedded in $E_1=\R^n$, and $V$ is a bounded open subset of $Z_1$. Set $E_2 =\R^k$ and $E=E_1 \times E_2$. 
  Set $Z=Z_1 \times E_2$. 
  
  We claim that there exists a fan structure $\Sigma$ on $Z$,  $\eta \in \CK^{\ge 0}_{\PL}(\comp{\Sigma})$ (for $\comp\Sigma$ the canonical compactification of $\Sigma$), and  $\psi \in \CPSH_{\eta}(\comp{\Sigma})$ such that $\comp{\Sigma}$ is \prekahler and $(\MA_{\omega}(\varphi))=\phi^{*}(\MA_{\eta}(\psi))$ on an open neighborhood $U' \subset U$ of $x$. Granted this claim, since $\MA_{\eta}(\psi)(\partial \comp{\Sigma}) =0$ by 
  \cite[Thm.\,5.19]{APW1},
  %cross-ref in APW1: \ref{thm:projective_Monge-Ampere-measure-takes-no-mass-boundary}
  and $\partial X \cap U'= \phi^{-1}(\partial \comp{\Sigma} \cap \phi(U'))$, the theorem follows.

  \smallskip

  It remains to prove the claim. We first give the construction of the fan structure $\Sigma$. 
  By Corollary \ref{cor:refinement_of_open_tropical_varieties_are_projective}, there exists a fan structure $\Sigma_1$ on $Z_1$ such that 
  the canonical compactification $\comp{\Sigma}_1$ is projective. Furthermore, there exists a convex conewise linear function $g_1$ on $\Sigma_1$ such that $g_1>0$ on $\Z_1 \setminus \{0\}$, and the numerical class $[\dm \dmb g_1]$ is a \kahler class. On the other hand, let $\Sigma_2$ be the standard complete fan on $E_2$ of the projective space, and let $g_2\coloneqq \max\{0, -x_1, -x_2, \cdots, -x_k\}$. 
  Then, $g_2$ is a strictly convex conewise linear function on $\Sigma_2$, and hence $[\dm \dmb g_2]$ is a \kahler class on $\comp{\Sigma}_2$. Let $\Sigma=\Sigma_1 \times \Sigma_2$, it gives a fan structure on $Z$.  Notice that $\pi_1^{*} [\dm\dmb g_1]+ \pi_2^{*} [\dm\dmb g_2]$ is a \kahler class on $\comp\Sigma$, 
  where $\pi_1 $ and $\pi_2$ are projection maps to $\Sigma_1$ and $\Sigma_2$ respectively. Therefore, $\comp{\Sigma}$ is \prekahler.   

  \smallskip
  
  We proceed to construct $\eta$ and $\psi \in \CPSH_{\eta}(\comp{\Sigma})$. By Theorem \ref{thm:LConv_regularization_PL}, there exists a sequence of positive numbers $\{\alpha_i\}_{i \ge 1}$ satisfying $\alpha_i \to 0$, and there exists a sequence of functions
   $\{\varphi_i\}_{i \ge 1}$ such that $\varphi_i \in \PSH_{\theta_i} \cap \PL_{\theta_i}(X)$ for all $i \ge 1$, and $\varphi_i \rightrightarrows \varphi$, where 
   $\theta_i \coloneqq \omega+\alpha_i \theta$.  Let $h \in \LConv(U)$ and $h_i \in \LConv(U)$ be local potentials of $\omega$ of $\theta_i$ on $U$ respectively such that $h_i \rightrightarrows h$ on $U$. Set $u_i \coloneqq \varphi_i+h_i$ and $u \coloneqq \varphi+h$. Then, we have $u_i \in \LConv \cap \PL(U)$, $u \in \LConv(U)$, and $u_i \rightrightarrows u$ on $U$. Notice that $\ol{U}$ is compact, and $u_i$ are continuous. So after adding constants to $h_i$, we may assume that  $0 \le u_i \le M$ for all $i \ge 1$, where $M>0$ is a constant.
  
  Let $V_0$ be an open neighborhood of $0$ in $Z_1 =\supp{\Sigma_1}$ such that $V_0 \Subset V$. Define $W_0 \coloneqq V_0 \times (1, +\infty]^{k} \Subset W$. Let $g_2'  \coloneqq \max\{ 0, 2-x_1, \cdots, 2-x_k \}$.  Fix $C \gg 0$. Observe that we have $Cg_2'-1 >M$ on $[1, +\infty]^{k} \setminus (1, +\infty]^{k}$. On the other hand, Lemma \ref{lem:growth_rate_of_PL_hom_functions} implies that $Cg_1-1 >M$ on $\partial V_0$ when $C \gg 0$. We denote $g' \coloneqq C(g_1 \circ \pi_1)+C(g'_2 \circ \pi_2)-2$. Therefore, $g'>M$ on $\partial W_0$, and hence $\{ g' < u_i \circ \phi^{-1}\}, \{ g' < u \circ \phi^{-1}\} \Subset W_0$.  
  
  Now, we define $g \coloneqq C(g_1 \circ \pi_1 + g_2 \circ \pi_2)$, and $\eta \coloneqq \dm \dmb g= C(\pi_{1}^{*}\dm\dmb g_1+\pi_{2}^{*}\dm\dmb g_2) \in \CK_{\PL}^{\ge 0}(\comp{\Sigma})$.
  Since $|g'-g|=O(1)$ on $\Sigma$, Example \ref{example_kahler_form_embedded_tropical_varieties} implies that $g'-g$ extends to $\CPSH_{\eta}(\comp{\Sigma})$. Then, 
    \[
  \psi_i \coloneqq \sup\{g'-g, u_i \circ \phi^{-1} \} \in \PSH_{\eta} \cap \PL(\comp{\Sigma}),\,\,i=1,2\,\, \text{and} \,\, 
  \psi \coloneqq \sup\{g'-g, u \circ \phi^{-1} \} \in \CPSH_{\eta}(\comp{\Sigma}),
  \] 
 by Lemma \ref{lem:construction_sup_omega_psh_functions}. Set 
 \[
 v_i \coloneqq \sup\{ g',  u_i \circ \phi^{-1} \} \in \LConv \cap \PL(W_0),\,\, \text{and}\,\, 
 v \coloneqq \sup \{g', u \circ \phi^{-1} \} \in \LConv(W_0).
 \]
 Then, $(\psi_i+g)|_{W_0}=v_i$, and $(\psi+g)|_{W_0}=v$.

  \smallskip

  Set $W' \coloneqq (\{g' < 0 \} \cap W) \subset W$, and $U' \coloneqq \phi^{-1}(W')$. It remains to prove that $\MA_{\omega}(\varphi)=\phi^{*}(\MA_{\eta}(\psi))$ on $U'$. Since  $u_i \circ \phi^{-1} \ge 0$ and $u \circ \phi^{-1} \ge 0$ on $W$ by assumption, we have $v_i=u_i \circ \phi^{-1}$, $v=u  \circ \phi^{-1}$ on $W'$.  By definition (see Definition \ref{def:MA_measure_PL_functions_paper_2}), for any $i \ge 1$, we have $\MA_{\omega}(\varphi_i)=(\dm \dmb (u_i) )^{\wedge n}$ on $U' \cap \xo$,  $\MA_{\eta}(\psi_i)=(\dm \dmb (v_i) )^{\wedge n}$ on $W' \cap \Sigma$, and $\MA_{\omega}(\varphi_i)(U' \setminus \xo)=\MA_{\eta}(\psi_i)(W' \setminus \Sigma)$. Therefore, we conclude that $\MA_{\omega}(\varphi_i)=\phi^{*}(\MA_{\eta}(\psi_i))$ on $U'$.  Now, Theorem \ref{thm:global_Bedford-Taylor_theory}\,(1) implies that $\MA_{\omega}(\varphi) \rightharpoonup  \MA_{\omega}(\varphi)$, and $(\MA_{\eta}(\psi)) \rightharpoonup  (\MA_{\eta}(\psi))$. 
  This proves the claim, and hence finishes the proof of the theorem.
\end{proof}

\subsection{Comparison Principle} We end this section with the following.

\begin{thm}[Comparison Principle]  \label{thm:comparison_principle}
Let $X$ be a compact \prekahler tropical variety, and let $\omega$ be a semipositive form on $X$. Let  $\varphi, \psi \in \CPSH_{\omega}(X)$. 
\begin{equation*}
    \int_{\{ \varphi < \psi \} } \MA_{\omega} (\psi) \le \int_{\{ \varphi < \psi \}} \MA_{\omega} (\varphi).
\end{equation*}
\end{thm}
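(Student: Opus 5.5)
We follow the standard Bedford--Taylor route: first prove a locality statement for the Monge--Ampère operator, then combine it with the mass identity of Corollary~\ref{cor:global_integration-by-parts_formula_and_volume_omega_psh}\,(2).

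\textbf{Step 1: locality for maxima.} We first establish the plurifine (here simply open-set) locality: if $\varphi,\psi\in\CPSH_\omega(X)$ coincide on an open set $O$, then $\MA_\omega(\varphi)=\MA_\omega(\psi)$ on $O$. For $\theta$-PL functions this is immediate, since the measure of Definition~\ref{def:MA_measure_PL_functions_paper_2} is computed locally from $\dm\dmb$ of a local potential. For general functions we approximate both by the regularizations of Theorem~\ref{thm:LConv_regularization_PL}\,(2) and pass to the limit with Theorem~\ref{thm:global_Bedford-Taylor_theory}\,(1). Two approximating functions need not agree on $O$, so we glue them as $\max\{\varphi_i,\psi_i\}$ and use the monotone structure: replacing $\psi$ by $\max\{\varphi,\psi\}$, which equals $\psi$ on the open set $\{\varphi<\psi\}$ and is a bounded $\omega$-psh function by Proposition~\ref{prop:limit_of_psh_is_psh}\,(3), it suffices to show
\[
\MA_\omega(\max\{\varphi,\psi\})\rest{\{\varphi<\psi\}}=\MA_\omega(\psi)\rest{\{\varphi<\psi\}}.
\]
This is the main technical step. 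We plan to prove it by approximating with $\varphi_i,\psi_i$ that are $\theta_i$-PL and converge uniformly. On any compact $K\subset\{\varphi<\psi\}$ we then have $\varphi_i<\psi_i$ for large $i$, so $\max\{\varphi_i,\psi_i\}=\psi_i$ on a neighborhood of $K$. The PL case then gives equality of measures there, and weak convergence yields equality on $\{\varphi<\psi\}$. Local equality of measures on each such neighborhood, together with weak convergence tested against functions compactly supported in the open set, suffices.

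\textbf{Step 2: comparison.} For $\epsilon>0$ set $\psi_\epsilon=\max\{\varphi,\psi-\epsilon\}\in\CPSH_\omega(X)$ and $O_\epsilon=\{\varphi<\psi-\epsilon\}$. By Step~1, $\MA_\omega(\psi_\epsilon)=\MA_\omega(\psi)$ on $O_\epsilon$. Symmetrically, on the open set $\{\varphi>\psi-\epsilon\}$ we have $\MA_\omega(\psi_\epsilon)=\MA_\omega(\varphi)$. By Corollary~\ref{cor:global_integration-by-parts_formula_and_volume_omega_psh}\,(2), both $\MA_\omega(\psi_\epsilon)$ and $\MA_\omega(\varphi)$ have total mass $\deg(\omega^d)$. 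Hence
\[
\int_{O_\epsilon}\MA_\omega(\psi)=\int_{O_\epsilon}\MA_\omega(\psi_\epsilon)=\deg(\omega^d)-\int_{\{\varphi\ge\psi-\epsilon\}}\MA_\omega(\psi_\epsilon)\le \deg(\omega^d)-\int_{\{\varphi>\psi-\epsilon\}}\MA_\omega(\varphi)=\int_{\{\varphi\le\psi-\epsilon\}}\MA_\omega(\varphi),
\]
where the inequality simply drops the mass of $\MA_\omega(\psi_\epsilon)$ on the level set $\{\varphi=\psi-\epsilon\}$.

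\textbf{Step 3: limit.} Since $\{\varphi\le\psi-\epsilon\}\subset\{\varphi<\psi\}$, the right-hand side is at most $\int_{\{\varphi<\psi\}}\MA_\omega(\varphi)$. As $\epsilon\downarrow0$, the sets $O_\epsilon$ increase to $\{\varphi<\psi\}$, so monotone convergence gives the claim. The level-set issue is avoided entirely because we discard boundary mass with the correct sign. The only genuine obstacle is therefore the locality in Step~1, specifically ensuring that the approximations from Theorem~\ref{thm:LConv_regularization_PL} can be taken for a common sequence of Kähler perturbations $\theta_i$ for both $\varphi$ and $\psi$. This is allowed, since statement~(2) there permits an arbitrary sequence $\alpha_i$.
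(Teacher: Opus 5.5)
Your proposal is correct and follows the paper's proof. Steps~2--3 are exactly the paper's argument: it uses $\max\{\varphi,\psi-\epsilon\}$, the total-mass identity of Corollary~\ref{cor:global_integration-by-parts_formula_and_volume_omega_psh}\,(2), discards the mass on the level set $\{\varphi=\psi-\epsilon\}$, and lets $\epsilon\to0$; the one difference is that the paper uses the locality of $\MA_{\omega}$ on the open sets $\{\varphi<\psi-\epsilon\}$ and $\{\varphi>\psi-\epsilon\}$ without comment, whereas your Step~1 justifies it (in the strict-inequality form actually needed) via PL regularization along a common sequence $\theta_i$ and the continuity statement of Theorem~\ref{thm:global_Bedford-Taylor_theory}\,(1).
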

\begin{proof}
     By Proposition \ref{prop:limit_of_psh_is_psh}\,(3), for any $\epsilon >0$, $\max \{ \varphi, \psi-\epsilon \} \in \CPSH_{\omega}(X)$. Then, for $d=\dim X$, 
    \bens
        && \deg (\omega^{d}) 
        = \int_{X } \MA_{\omega} (\max \{ \varphi, \psi-\epsilon \}) \\
        &\ge& \int\limits_{\{ \varphi < \psi-\epsilon \} } \MA_{\omega} (\max \{ \varphi, \psi-\epsilon \}) + \int\limits_{\{ \varphi > \psi-\epsilon \} } \MA_{\omega} (\max \{ \varphi, \psi-\epsilon \})  \\
        &=& \int\limits_{\{ \varphi < \psi-\epsilon \} } \MA_{\omega} (\psi) + \int\limits_{\{ \varphi > \psi-\epsilon \} }  \MA_{\omega} (\varphi) 
        = \int\limits_{\{ \varphi < \psi-\epsilon \} } \MA_{\omega} (\psi) + \deg(\omega^{d}) - \int\limits_{\{ \varphi \le \psi-\epsilon \} }  \MA_{\omega} (\varphi)
    \eens
    where the first equality comes from Corollary \ref{cor:global_integration-by-parts_formula_and_volume_omega_psh}\,(2). Therefore, we have $ \int_{\{ \varphi < \psi-\epsilon \} } \MA_{\omega} (\psi) \le \int_{\{ \varphi \le \psi-\epsilon \} }  \MA_{\omega} (\varphi)$. 
    Letting $\epsilon \to 0$, we conclude.
\end{proof}

%%%%%%%%%%%%%%%%%%%%%%%%%%%%%
%%%%%%Energy functional
%%%%%%%%%%%%%%%%%%%%%%%%%%%%%
\section{Energy functional} \label{sec:energy_functional}

Let $X$ be a compact \prekahler tropical variety of dimension $d$, and $\omega$ a semipositive form on $X$. We define a tropical analogue of the energy functional on $\PSH_{\omega}(X)$, following \cite[\S\,6]{BFJ15}. 

\subsection{Tropical energy functional $\E$ on $\CPSH_{\omega}(X)$}
For $\varphi \in \CPSH_{\omega}(X)$, we define the value $\E(\varphi)$ of the \emph{tropical energy functional} $\E$ as
\[ 
\E(\varphi) \coloneqq \frac{1}{d+1} \sum_{j=0}^d \int_X \varphi (\omega + \dm \dmb \varphi)^j \wedge \omega^{d-j}.
\]

\begin{prop} \label{prop:energy_concavity_PL}
  $\E$ is non-decreasing, concave on $\CPSH_{\omega}(X)$, and satisfies $\E(\varphi +\lambda) = \E(\varphi) + \lambda \deg(\omega^d)$ for any $\varphi \in \CPSH_{\omega}(X)$ and $\lambda \in \R$.
\end{prop}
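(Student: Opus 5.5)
We follow the argument of \cite[\S\,6]{BFJ15}. The translation identity is immediate: since $\dm\dmb\lambda=0$, adding $\lambda$ does not change the forms $(\omega+\dm\dmb\varphi)^j\wedge\omega^{d-j}$. Each of the $d+1$ terms therefore gains $\lambda\int_X(\omega+\dm\dmb\varphi)^j\wedge\omega^{d-j}=\lambda\deg(\omega^d)$, by Corollary~\ref{cor:global_integration-by-parts_formula_and_volume_omega_psh}\,(2). Summing and dividing by $d+1$ gives the claim.

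For monotonicity and concavity we first establish the cocycle formula
\[
\E(\varphi)-\E(\psi)=\frac{1}{d+1}\sum_{j=0}^d\int_X(\varphi-\psi)\,\omega_\varphi^j\wedge\omega_\psi^{d-j},
\qquad \omega_\varphi\coloneqq\omega+\dm\dmb\varphi,
\]
for $\varphi,\psi\in\CPSH_\omega(X)$. Consider first $\psi=0$ and expand $\omega^{d-j}=(\omega_\varphi-\dm\dmb\varphi)^{d-j}$ on the right-hand side, or equivalently differentiate along the segment $\psi+t(\varphi-\psi)$. Repeatedly applying the integration by parts of Corollary~\ref{cor:global_integration-by-parts_formula_and_volume_omega_psh}\,(1) (with $\varphi,\psi\in\dCPSH(X)$ and $T$ a mixed product of the $\omega_\cdot$'s) then reduces the identity to a combinatorial telescoping, exactly as in the complex case. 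By multilinearity (Theorem~\ref{thm:global_Bedford-Taylor_theory}\,(3)), the Monge--Ampère terms are polynomial in $t$, so this computation is legitimate.

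Monotonicity follows at once: if $\varphi\ge\psi$, every integrand $(\varphi-\psi)\,\omega_\varphi^j\wedge\omega_\psi^{d-j}$ is a nonnegative function against a nonnegative measure. For concavity, the cocycle formula gives
\[
\E(\varphi)-\E(\psi)-\int_X(\varphi-\psi)\,\omega_\psi^d=\frac{1}{d+1}\sum_{j=0}^{d}\int_X(\varphi-\psi)\bigl(\omega_\varphi^j\wedge\omega_\psi^{d-j}-\omega_\psi^d\bigr).
\]
Write $\omega_\varphi^j-\omega_\psi^j=\dm\dmb(\varphi-\psi)\wedge\sum_{k<j}\omega_\varphi^k\wedge\omega_\psi^{j-1-k}$. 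Each resulting term then has the form $\int_X f\,\dm\dmb f\wedge T$ with $f=\varphi-\psi$ and $T$ a product of $d-1$ semipositive factors, which is $\le 0$ by Corollary~\ref{cor:global_Cauchy_Schwartz_inequality}. Hence $\E(\varphi)\le\E(\psi)+\int_X(\varphi-\psi)\,\MA_\omega(\psi)$ for all $\varphi,\psi$.

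To pass from this to concavity, apply the inequality with $\psi=\psi_t\coloneqq t\varphi_1+(1-t)\varphi_0$, which lies in $\CPSH_\omega(X)$ by convexity, first with $\varphi=\varphi_1$ and then with $\varphi=\varphi_0$. Take the combination with weights $t$ and $1-t$: the linear terms cancel, since $t(\varphi_1-\psi_t)+(1-t)(\varphi_0-\psi_t)=0$. This yields $\E(\psi_t)\ge t\E(\varphi_1)+(1-t)\E(\varphi_0)$.

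The main obstacle is justifying the cocycle identity and the integration by parts for general continuous $\omega$-psh functions, since all the formal manipulations are cleanest for $\theta$-PL functions. If a direct argument is delicate, we would instead prove everything for $\varphi_i\in\PSH_{\theta_i}\cap\PL_{\theta_i}(X)$ with $\theta_i=\omega+\alpha_i\theta$, using Theorem~\ref{thm:LConv_regularization_PL}. We would then pass to the limit, using the weak continuity of Theorem~\ref{thm:global_Bedford-Taylor_theory}\,(1) together with uniform convergence of the integrands and $\deg(\theta_i^d)\to\deg(\omega^d)$.
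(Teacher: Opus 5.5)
Your proof is correct. For the translation identity, the cocycle formula and monotonicity, it is the paper's argument.

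The real difference is in how you get concavity.
\begin{itemize}
\item The paper expands $\E(\varphi_t)$ along $\varphi_t=(1-t)\varphi+t\psi$. It computes the second variation $\left.\frac{d^2}{dt^2}\right|_{t=0+}\E(\varphi_t)=d\int_X(\psi-\varphi)\,\dm\dmb(\psi-\varphi)\wedge(\omega+\dm\dmb\varphi)^{d-1}$, which is $\le 0$ by Corollary~\ref{cor:global_Cauchy_Schwartz_inequality}.
\item You instead read off from the cocycle formula the tangent inequality $\E(\varphi)\le\E(\psi)+\int_X(\varphi-\psi)\,\MAo(\psi)$. This again uses only the negativity $\int_X f\,\dm\dmb f\wedge T\le 0$. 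You then conclude by the supporting-line argument.
\end{itemize}

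The input is the same, but the bookkeeping differs. Your route needs no differentiation in $t$. Your tangent inequality, applied once as stated and once with $\varphi$ and $\psi$ swapped, is exactly the two-sided estimate of Corollary~\ref{cor:estimation_difference_energy_functional}. The paper instead deduces that corollary afterwards, from concavity together with the first variation formula $\left.\frac{d}{dt}\right|_{t=0+}\E(\varphi_t)=\int_X(\psi-\varphi)\,\MAo(\varphi)$. In exchange, the paper's route makes these explicit variation formulas available.

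The regularization fallback in your last paragraph is not needed. Corollaries~\ref{cor:global_Cauchy_Schwartz_inequality} and~\ref{cor:global_integration-by-parts_formula_and_volume_omega_psh}, and the multilinearity in Theorem~\ref{thm:global_Bedford-Taylor_theory}, are already stated for arbitrary elements of $\CPSH_{\omega}(X)$ and $\dCPSH(X)$. So your manipulations are justified as written.
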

\begin{proof}
Let $\varphi, \psi \in \CPSH_{\omega} (X)$. It follows formally 
from an integration-by-parts argument (see e.g. \cite[\S\,1.6]{BJ25} or \cite[Lem.\,6.2]{Tian2012}) that if $\varphi, \psi$ are two  functions in $\CPSH_{\omega}(X)$, then
\begin{equation}
\E(\psi) - \E(\varphi) 
= \frac{1}{d+1} \sum_{j=0}^d \int (\psi - \varphi)(\omega + \dm \dmb \varphi)^j 
\wedge (\omega + \dm \dmb \psi)^{d-j}.
\tag{6.2}
\end{equation}
That $\E$ is non-decreasing then follows from the non-negativity of $(\omega + \dm \dmb \varphi)^j 
\wedge (\omega + \dm \dmb \psi)^{d-j}$ (Theorem \ref{thm:global_Bedford-Taylor_theory}). And $\E(\varphi +\lambda) = \E(\varphi) + \lambda \deg(\omega^d)$ follows from Corollary \ref{cor:global_integration-by-parts_formula_and_volume_omega_psh}\,(2). 

Set $\varphi_t = (1-t)\varphi + t\psi$ for $t\in[0,1]$. Expanding 
$\E(\varphi_t) - \E(\varphi)$ in $t$ 
yields the following formulas for first and second derivatives of $\E$: 

\begin{equation} \label{equation:differential_energy_functional}
  \left.\frac{d}{dt}\right|_{t=0+} \E(\varphi_t) 
= \int (\psi - \varphi)\, \MAo(\varphi), \text{ and }
\tag{6.3}
\end{equation}

\begin{equation}
\left.\frac{d^2}{dt^2}\right|_{t=0+} \E(\varphi_t) 
= d \int ( \psi - \varphi)\ \dm \dmb(\psi - \varphi) \wedge (\omega+\dm \dmb \varphi)^{d-1}.
\tag{6.4}
\end{equation}
The Cauchy--Schwartz inequality (Corollary \ref{cor:global_Cauchy_Schwartz_inequality}) implies that $\left.\frac{d^2}{dt^2}\right|_{t=0+} \E(\varphi_t) \le 0$. Therefore, the functional $\E$ is concave. 
\end{proof}

\begin{cor} \label{cor:estimation_difference_energy_functional}
  For each pair $\psi, \varphi \in \CPSH_{\omega}(X)$, we have 
  \bens
  \int_X (\psi-\varphi) \MAo(\psi) \le \E(\psi)-\E(\varphi) \le \int_X (\psi-\varphi) \MAo(\varphi)
  \eens
\end{cor}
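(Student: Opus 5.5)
The plan is to read both inequalities off formula (6.2) from the proof of Proposition~\ref{prop:energy_concavity_PL},
\[
\E(\psi) - \E(\varphi) = \frac{1}{d+1} \sum_{j=0}^d \int_X (\psi - \varphi)(\omega + \dm \dmb \varphi)^j \wedge (\omega + \dm \dmb \psi)^{d-j}.
\]
We write $u \coloneqq \psi-\varphi \in \dCPSH(X)$ and $a_j \coloneqq \int_X u\,(\omega + \dm \dmb \varphi)^j \wedge (\omega + \dm \dmb \psi)^{d-j}$, so that $\E(\psi)-\E(\varphi)$ is the average of $a_0,\dots,a_d$. The term $a_0$ is $\int_X u\,\MAo(\psi)$ and the term $a_d$ is $\int_X u\,\MAo(\varphi)$.

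The key step is to show that the sequence $a_j$ is non-decreasing in $j$. For $0\le j\le d-1$, set $T_j \coloneqq (\omega+\dm\dmb\varphi)^j\wedge(\omega+\dm\dmb\psi)^{d-1-j}$, which is a mixed product of $d-1$ positive currents of the form allowed in the Cauchy--Schwartz setting. Since $(\omega+\dm\dmb\varphi)-(\omega+\dm\dmb\psi)=-\dm\dmb u$ in the sense of Definition~\ref{def:dpsh}, multilinearity (Theorem~\ref{thm:global_Bedford-Taylor_theory}\,(3)) gives
\[
a_{j+1}-a_j = -\int_X u\, \dm\dmb u \wedge T_j \;\ge\; 0,
\]
where the inequality is the first statement of Corollary~\ref{cor:global_Cauchy_Schwartz_inequality}. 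It follows that $a_0\le a_j\le a_d$ for every $j$, and averaging over $j$ yields both inequalities of the corollary.

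The main obstacle is bookkeeping: checking that the decomposition of $\dm\dmb u\wedge T_j$ used here agrees with Definition~\ref{def:dpsh}, and that Cauchy--Schwartz applies with mixed reference forms. For the first point, we represent $u=\psi-\varphi$ with both functions in $\CPSH_\omega(X)$, so Definition~\ref{def:dpsh} gives exactly $(\omega+\dm\dmb\psi)\wedge T_j-(\omega+\dm\dmb\varphi)\wedge T_j$; multilinearity of $\MA$ makes this consistent. For the second point, $T$ in Corollary~\ref{cor:global_Cauchy_Schwartz_inequality} is allowed to have $\omega_i$ and $\varphi_i$ varying with $i$, so $T_j$ is an admissible choice. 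If one prefers not to rely on (6.2), whose derivation was only sketched, an alternative is to use concavity of $\E$ along $\varphi_t=(1-t)\varphi+t\psi$ together with the derivative formula (6.3): concavity gives $\E(\psi)-\E(\varphi)\le \frac{d}{dt}\big|_{t=0+}\E(\varphi_t)=\int_X(\psi-\varphi)\MAo(\varphi)$, and the symmetric argument with the roles of $\varphi$ and $\psi$ exchanged gives the lower bound.
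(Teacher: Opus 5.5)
Your argument is correct, but your main route differs from the paper's. The paper proves the corollary by your fallback argument. It uses concavity of $t\mapsto \E(\varphi_t)$ along $\varphi_t=(1-t)\varphi+t\psi$ (Proposition~\ref{prop:energy_concavity_PL}) to place $\E(\psi)-\E(\varphi)$ between the one-sided derivatives at $t=1-$ and $t=0+$. It then evaluates both with the derivative formula (6.3), handling $t=1-$ through the reversed segment $\psi_t=(1-t)\psi+t\varphi$.

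Your main route instead works directly with the exact identity (6.2) and proves the chain $a_0\le a_1\le\cdots\le a_d$. Each step is $a_{j+1}-a_j=-\int_X u\,\dm\dmb u\wedge T_j\ge 0$, by the first half of Corollary~\ref{cor:global_Cauchy_Schwartz_inequality}. The sign is right, and so is the bookkeeping with Definition~\ref{def:dpsh}: you take the decomposition $u=\psi-\varphi$ with reference form $\omega$, and $T_j$ is built from $\omega_i=\omega$ and $\varphi_i\in\{\varphi,\psi\}$, which is an admissible choice.

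Both proofs rest on the same two inputs, (6.2) and the negativity of $\int_X f\,\dm\dmb f\wedge T$, since the paper derives its concavity statement from exactly these. Your version avoids one-sided derivatives and the need for concavity along the whole segment. It also gives finer information: every mixed term $a_j$ lies between $\int_X u\,\MAo(\psi)$ and $\int_X u\,\MAo(\varphi)$. The paper's version is shorter once Proposition~\ref{prop:energy_concavity_PL} is available.
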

\begin{proof}
  For each $t\in[0,1]$, set $\varphi_{t}\coloneqq(1-t)\varphi+t\psi$ and $\psi_t\coloneqq(1-t)\psi+t\varphi$. The concavity of the energy functional implies that \[ \left.\frac{d}{dt}\right|_{t=1-}\E(\varphi_t) \le \E(\varphi_1)-\E(\varphi_0) \le \left.\frac{d}{dt}\right|_{t=0+} \E(\varphi_t). \]
  Applying Equation \ref{equation:differential_energy_functional}, we get 
  \begin{align*}
    \int_X (\psi-\varphi) \MAo(\psi) &= - \left.\frac{d}{dt}\right|_{t=0+}\E(\psi_t) = \left.\frac{d}{dt}\right|_{t=1-}\E(\varphi_t) \\  &\le   \E(\psi)-\E(\varphi) \le \left.\frac{d}{dt}\right|_{t=0+} \E(\varphi)= \int_X (\psi-\varphi) \MAo(\varphi).  \qedhere
  \end{align*}
\end{proof}

\subsection{Energy functional for $\omega$-psh functions}
For $\varphi \in \PSH_{\omega}(X)$ and $C \in \R$, we denote $\varphi_C \coloneqq \max \{\varphi, C\} \in \CPSH_{\omega}(X)$. We extend the tropical energy functional $\E \colon \PSH_{\omega}(X) \to \R \cup \{- \infty \}$ by  
\[\E(\varphi) \coloneqq \lim_{C \to -\infty} \E(\varphi_C).\]

\begin{prop} \label{prop:energy_concavity}
  The energy functional $\E$ is well-defined on $\PSH_{\omega}(X)$. It is non-decreasing, concave, and satisfies $\E(\varphi +\lambda) = \E(\varphi) + \lambda \deg(\omega^d)$ for any $\lambda \in \R$. It is also upper semicontinuous, and
  continuous along decreasing sequences on $\PSH_{\omega}(X)$.
\end{prop}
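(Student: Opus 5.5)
The plan is to follow the standard argument of \cite[\S\,6]{BFJ15}, using the properties of $\E$ on $\CPSH_{\omega}(X)$ from Proposition~\ref{prop:energy_concavity_PL} together with the truncations $\varphi_C=\max\{\varphi,C\}$.

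\textbf{Well-definedness.} Since $C\mapsto\varphi_C$ is non-decreasing in $C$ and $\E$ is non-decreasing on $\CPSH_{\omega}(X)$, the function $C\mapsto \E(\varphi_C)$ is non-decreasing. Hence the limit as $C\to-\infty$ exists in $\R\cup\{-\infty\}$. If $\varphi\in\CPSH_{\omega}(X)$, then $\varphi_C=\varphi$ for $C<\min\varphi$, so the extension agrees with the original $\E$.

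\textbf{Formal properties.} Monotonicity follows because $\varphi\le\psi$ implies $\varphi_C\le\psi_C$. For the translation formula, note that $(\varphi+\lambda)_C=\varphi_{C-\lambda}+\lambda$; now apply Proposition~\ref{prop:energy_concavity_PL} and let $C\to-\infty$. For concavity, take $t\in[0,1]$. Then
\[
\bigl((1-t)\varphi+t\psi\bigr)_C\ge (1-t)\varphi_C+t\psi_C-\text{(nothing)},
\]
which is in fact an inequality of the form $\max\{(1-t)\varphi+t\psi,C\}\ge(1-t)\varphi_C+t\psi_C$ only up to replacing $C$ by a smaller constant. The clean way to handle this is to use $\max\{(1-t)\varphi+t\psi,C\}\ge (1-t)\varphi_C+t\psi_C$ whenever both truncation levels equal $C$. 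This holds because the right-hand side is at most $\max\{(1-t)\varphi+t\psi,C\}$: on the set where both $\varphi,\psi\ge C$ it equals $(1-t)\varphi+t\psi$, and otherwise one checks the inequality directly by cases. Together with monotonicity and concavity on $\CPSH_{\omega}(X)$, this gives concavity after passing to the limit.

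\textbf{Continuity along decreasing sequences.} Let $\varphi_i\searrow\varphi$ in $\PSH_{\omega}(X)$. For fixed $C$, the functions $(\varphi_i)_C$ decrease to $\varphi_C$. These are bounded below by $C$, so by Corollary~\ref{cor:decreasing_limit_of_psh_is_psh} they converge uniformly. Continuity of $\E$ under uniform convergence on $\CPSH_{\omega}(X)$ follows from Corollary~\ref{cor:estimation_difference_energy_functional}, since $|\E(\psi)-\E(\varphi)|\le\|\psi-\varphi\|_\infty\deg(\omega^d)$. Hence $\E((\varphi_i)_C)\to\E(\varphi_C)$. Since $\E(\varphi_i)\ge\E(\varphi)$ by monotonicity, we get
\[
\lim_i\E(\varphi_i)\le\inf_C\lim_i\E((\varphi_i)_C)=\inf_C\E(\varphi_C)=\E(\varphi),
\]
and the two bounds give equality.

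\textbf{Upper semicontinuity (main obstacle).} Here the topology is pointwise convergence on $\xo$, which is much weaker than uniform convergence. Suppose $\varphi_i\to\varphi$ pointwise on $\xo$. We use Theorem~\ref{thm:lipschitz}: the functions are uniformly Lipschitz on compact subsets of $\xo$, so pointwise convergence upgrades to locally uniform convergence on $\xo$. Next, set
\[
\psi_j\coloneqq\Bigl(\sup_{i\ge j}\varphi_i\Bigr)^{\star}\in\PSH_{\omega}(X),
\]
which lies in $\PSH_{\omega}(X)$ by Proposition~\ref{prop:limit_of_psh_is_psh}. The sequence $\psi_j$ decreases, and its limit agrees with $\varphi$ on $\xo$ by the locally uniform convergence. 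By continuity of psh functions and density of $\xo$, the limit is $\varphi$ on all of $X$.

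Monotonicity gives $\E(\varphi_j)\le\E(\psi_j)$, and continuity along decreasing sequences gives $\E(\psi_j)\to\E(\varphi)$. Hence $\limsup_j\E(\varphi_j)\le\E(\varphi)$. The delicate points are checking that the decreasing limit of the $\psi_j$ really equals $\varphi$ near the boundary $\partial X$, and that nets can be handled as well as sequences; for the latter one can use compactness from Proposition~\ref{prop:compactness_of_psh}.
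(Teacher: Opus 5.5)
\textbf{Concavity step.} The inequality you rely on, $\max\{(1-t)\varphi+t\psi,C\}\ge(1-t)\varphi_C+t\psi_C$, is false. The map $x\mapsto\max\{x,C\}$ is convex, so the inequality in fact holds in the opposite direction. It is strict, for instance, at a point where $\varphi=C-10$, $\psi=C+10$ and $t=1/2$: the left side equals $C$ and the right side equals $C+5$. Lowering the truncation level on the left only makes this worse; what you need is a \emph{higher} level.

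There are two ways to repair the step.
\begin{enumerate}
\item Put $C'\coloneqq\max\{C,\,(1-t)C+t\sup_X\psi,\,(1-t)\sup_X\varphi+tC\}$. This is finite, since $\omega$-psh functions are bounded above on the compact space $X$. A four-case check gives $\bigl((1-t)\varphi+t\psi\bigr)_{C'}\ge(1-t)\varphi_C+t\psi_C$. Moreover $C'\to-\infty$ as $C\to-\infty$ for $t\in(0,1)$. Monotonicity and concavity on $\CPSH_{\omega}(X)$ then pass to the limit.
\item Prove continuity along decreasing sequences first. Your argument for it uses only monotonicity and properties on $\CPSH_{\omega}(X)$, not concavity on $\PSH_{\omega}(X)$. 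Apply it to $(1-t)\varphi_{-k}+t\psi_{-k}\searrow(1-t)\varphi+t\psi$. The energies of these functions are $\ge(1-t)\E(\varphi_{-k})+t\E(\psi_{-k})\ge(1-t)\E(\varphi)+t\E(\psi)$.
\end{enumerate}
The paper itself only asserts that these properties ``follow formally,'' so one of these arguments is what is actually needed.

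\textbf{Upper semicontinuity.} The remaining steps are correct, and your proof of upper semicontinuity takes a genuinely different route from the paper.

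The paper argues directly in the topology. It fixes $M>\E(\varphi)$ and picks $C,\lambda$ with $\E(\varphi_C+\lambda)<M$. It then chooses $\psi\in\PL_{\omega}(X)$ with $\varphi_C<\psi<\varphi_C+\lambda$. By Lemma~\ref{lem:envelope_PL_functions_vertices}, $\sup_X(\varphi'-\psi)$ is attained on the finite vertex set $\Pi_0\subset\xo$. Hence $\{\varphi' : \sup_X(\varphi'-\psi)<0\}$ is an open neighbourhood of $\varphi$, and $\E<M$ on it by monotonicity. Continuity along decreasing sequences is then deduced from upper semicontinuity.

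You reverse this order. You get decreasing continuity from Dini's theorem and the sup-norm Lipschitz bound, and then upper semicontinuity via $\psi_j=(\sup_{i\ge j}\varphi_i)^\star\searrow\varphi$. This works, but it needs three further inputs:
\begin{itemize}
\item Theorem~\ref{thm:appendix_lipschitz}, to identify the limit on $\xo$.
\item Proposition~\ref{prop:uniform_upper_bound}, so that $\sup_{i\ge j}\varphi_i$ is finite on $\partial X$.
\item Corollary~\ref{cor:decreasing_limit_of_psh_is_psh}, to know that the decreasing limit is $\omega$-psh. It is then continuous and equals $\varphi$ on all of $X$, which settles your worry about the boundary.
\end{itemize}

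Your argument yields only sequential upper semicontinuity. Compactness of $\PSH_{\omega,x}(X)$ does not upgrade this to the topology. The right observation is that Theorem~\ref{thm:appendix_lipschitz} makes $\PSH_{\omega}(X)$ equicontinuous on compact subsets of $\xo$. Pointwise convergence on $\xo$ then coincides with locally uniform convergence there, which is metrizable, so sequences suffice. The paper's neighbourhood argument avoids both the Lipschitz theorem and this sequence-versus-net issue.
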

\begin{proof} 
  Proposition \ref{prop:energy_concavity_PL} implies that $\E(\varphi_C)$ is decreasing in $C$. Therefore, $\lim\limits_{C\to -\infty}\E(\varphi_C)$ exists.
  That $\E$ is non-decreasing, concave, and $\E(\varphi +\lambda) = \E(\varphi) + \lambda \deg(\omega^d)$ then follows formally from Proposition \ref{prop:energy_concavity_PL}. That $\E$ is continuous along decreasing sequences follows from the upper semicontinuity and monotonicity of $\E$. It remains to prove the upper semicontinuity for $\E$.
  
  Let $ \varphi  \in \PSH_{\omega}(X)$ and $M \in \R$ such that $\E(\varphi) <M$. We need to find an open neighborhood $U$ of $\varphi$ inside $\PSH_{\omega}(X)$ such that $\E(\varphi')<M$ for any $\varphi' \in U$.  By definition, we may choose $C \in \R$ and $\lambda >0$ such that $\E(\varphi_{C}+\lambda)<M$. Since $\PL_{\omega}(X)$ is dense in $\nC^0(X)$, we may further choose $\psi \in \PL_{\omega}(X)$ such that $\varphi_C < \psi < \varphi_C +\lambda$. 
  
  Consider the functional on $\PSH_{\omega}(X)$ given by $\varphi'  \mapsto \sup_{X} (\varphi'- \psi )$. 
  We choose a polyhedral structure $\olPi$ of $X$ with induced polyhedral structure on $\xo$ such that $\psi$ is $\omega$-facewise affine on $\olPi$. By Lemma \ref{lem:envelope_PL_functions_vertices}, $\sup_X (\varphi'- \psi )=\sup(\varphi'-\psi)\rest{\Pi_0}$. Since $\Pi_0$ is a finite subset of $\xo$, the functional $\varphi'  \mapsto \sup_{X} (\varphi'- \psi )$ is continuous on $\PSH_{\omega}$.

  Suppose $\sup_X(\varphi-\psi)=-\epsilon$ with $\epsilon>0$. Then, 
  \[
  U \coloneqq \left\{ \varphi' \in \PSH_{\omega}(X) \ |\ -2 \epsilon<\sup_X(\varphi'-\psi) < 0 \right\}
  \]
  is an open neighborhood of $\varphi$. For any $\varphi' \in U$, we have $\varphi' \le \envomega(\psi) \le \psi < \varphi_{C}+\lambda$. Hence, the monotonocity of $\E$ implies that $\E(\varphi') \le \E(\varphi_{C}+\lambda) <M$. This concludes the upper semicontinuity of $\E$. 
\end{proof}

\subsection{The Lagrangian functional} Given a nonnegative Randon measure $\mu$ with compact support in $\xo$ with $\vol(\mu)=\deg(\omega^{d})$, define the Lagrangian functional 
\bens
\FE_\mu \colon \PSH_{\omega}(X) &\longrightarrow& \R \cup\{-\infty\} \\ 
\varphi &\lms& \E(\varphi)-\int_X \varphi\ d\mu.
\eens

Corollary \ref{cor:global_integration-by-parts_formula_and_volume_omega_psh}\,(2) and Proposition \ref{prop:energy_concavity} imply that $\FE_{\mu}(\varphi+\lambda)=\FE_{\mu}(\varphi)$ for any constant $\lambda \in \R$. Recall  that $\PSH_{\omega, x}(X) \coloneqq \{\varphi\in \PSH_{\omega}(X) \,|\, \varphi(x)=0\}$, and $\CPSH_{\omega, x}(X) \coloneqq \{\varphi\in \CPSH_{\omega}(X) \,|\, \varphi(x)=0\}$. 
\begin{prop} \label{prop:maximizer_Lagrangian}
  The functional $\FE_{\mu}$ is upper semicontinuous on $\PSH_{\omega, x}(X)$, and it admits a maximizer $\varphi_{\mu} \in \mathrm{CPSH}_{\omega,x}(X)$. In particular, $\E(\varphi_{\mu})$ is finite.
\end{prop}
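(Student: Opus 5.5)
The plan is to split $\FE_\mu$ into the energy term and the linear term $\varphi\mapsto\int_X\varphi\,d\mu$, and to show each is upper semicontinuous on $\PSH_{\omega,x}(X)$. The energy part is upper semicontinuous by Proposition~\ref{prop:energy_concavity}. For the linear part, the key input is Theorem~\ref{thm:lipschitz}. Since $K\coloneqq\Supp(\mu)$ is a compact subset of $\xo$, every $\varphi\in\PSH_\omega(X)$ is $C_K$-Lipschitz on $K$, with $C_K$ uniform in $\varphi$. The topology on $\PSH_\omega(X)$ is that of pointwise convergence on $\xo$. A pointwise convergent net of uniformly equi-Lipschitz functions on the compact set $K$ converges uniformly on $K$: take a finite $\delta$-net of $K$ inside each chart and use the equicontinuity. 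Hence $\varphi\mapsto\int_X\varphi\,d\mu$ is in fact continuous on $\PSH_\omega(X)$. Since restrictions to $K$ are real-valued, it is finite, and $\FE_\mu=\E-\int\cdot\,d\mu$ is upper semicontinuous with values in $\R\cup\{-\infty\}$.

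Existence of a maximizer then follows from Proposition~\ref{prop:compactness_of_psh}: $\PSH_{\omega,x}(X)$ is compact, so an usc function attains its supremum. One must check that the supremum is not $-\infty$. This holds because $0\in\CPSH_{\omega,x}(X)$ and $\FE_\mu(0)=\E(0)$ is finite. Moreover, using $\FE_\mu(\varphi+\lambda)=\FE_\mu(\varphi)$, the maximum over $\PSH_{\omega,x}(X)$ equals the maximum over all of $\PSH_\omega(X)$. Thus we obtain $\varphi_\mu\in\PSH_{\omega,x}(X)$ with $\FE_\mu(\varphi_\mu)\ge\FE_\mu(0)>-\infty$, and so $\E(\varphi_\mu)>-\infty$. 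Finiteness from above is automatic, since $\varphi_\mu\le C$ by Proposition~\ref{prop:uniform_upper_bound} and $\E$ is monotone.

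The main obstacle is upgrading $\varphi_\mu$ from $\PSH_\omega$ to $\CPSH_\omega$, that is, showing it is bounded below (equivalently, never $-\infty$ on $\partial X$). The plan is to compare $\varphi_\mu$ with the truncation $\varphi_C=\max\{\varphi_\mu,C\}$ for $C$ very negative. Choose $C<\min_K\varphi_\mu$, which is possible because $\varphi_\mu$ is finite and continuous on the compact set $K\subset\xo$. Then $\varphi_C=\varphi_\mu$ on $K$, so $\int\varphi_C\,d\mu=\int\varphi_\mu\,d\mu$. Monotonicity of $\E$ gives $\E(\varphi_C)\ge\E(\varphi_\mu)$, hence $\varphi_C$ is also a maximizer.

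To conclude, replace $\varphi_\mu$ by $\varphi_C$. It still vanishes at $x$, provided $x\in K$ or $C<\varphi_\mu(x)=0$; the latter can be arranged by taking $C<\min(0,\min_K\varphi_\mu)$. The function $\varphi_C$ lies in $\CPSH_{\omega,x}(X)$ by the remark following Definition~\ref{def:psh}, since it is a maximum of an $\omega$-psh function and a constant. This produces a bounded maximizer, so no analysis of whether the original maximizer itself is bounded is needed; the statement only asks for the existence of a maximizer in $\CPSH_{\omega,x}(X)$. Finally, $\E(\varphi_C)\in\R$, as shown above.
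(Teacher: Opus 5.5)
Your proof is correct and follows essentially the same route as the paper. Upper semicontinuity of $\E$ is combined with continuity of $\varphi\mapsto\int_X\varphi\,d\mu$, which comes from the uniform Lipschitz bound on a compact $K\supset\Supp(\mu)$. Compactness of $\PSH_{\omega,x}(X)$ then gives a maximizer $\psi$, which is replaced by $\max\{\psi,C\}$ with $C$ below $\min_K\psi$. Your extra care fills small points the paper's proof leaves implicit: you work with nets rather than sequences, you check that the supremum is not $-\infty$, and you take $C<0$ so that the truncation still vanishes at $x$.
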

\begin{proof}  Let $K$ be a compact subset of $\xo$ containing the support of $\mu$.

  We first prove the upper semicontinuity of $\FE_{\mu}$. The upper semicontinuity of the functional $\E$ is already established in Proposition \ref{prop:energy_concavity}. It remains to prove that the functional $\varphi \mapsto \int_{X} \varphi d\mu$ is continuous. Let $\{ \varphi_i \}_{i \ge 1} \subset \PSH_{\omega, x}(X)$ be a sequence of functions converging to $\varphi$ in $\PSH_{\omega, x}(X)$, that is, $\{\varphi_i\}_{i \ge 1}$ converge pointwise to $\varphi$ on $\xo$. Then by Theorem \ref{thm:lipschitz}, $\varphi_i$ are $C_K$-Lipschitz on $K$ for a uniform constant $C_K>0$. Therefore, $\{ \varphi_i \}_i$ converges uniformly to $\varphi$ on $K$, and the continuity of the above functional follows.

  Since $\PSH_{\omega,x}(X)$ is compact (Proposition \ref{prop:compactness_of_psh}), $\FE_{\mu}$ admits a maximizer $\psi$ on $\PSH_{\omega,x}(X)$. Note that $\psi\rest{K}$ is continuous. Hence, there exists $C \in \R$ such that $\psi \ge C$ on $K$. Set $\varphi_{\mu}=\sup \{\psi, C \}$. 
  By monotonicity property of $\E$ (Proposition \ref{prop:energy_concavity}), $\E(\varphi_{\mu}) \ge \E(\psi)$. At the same time, we have $\psi\rest{K} = \varphi_{\mu}\rest{K}$. Therefore, $\int_X \varphi_{\mu}\, d\mu=\int_X \psi\, d\mu$. Hence, $\varphi_{\mu}$ is a maximizer of $\FE_{\mu}$ satisfying $\varphi_{\mu} \in \CPSH_{\omega,x}(X)$.
\end{proof}
 
\begin{prop}\label{prop:maximizer} 
  Let $\mu$ be a nonnegative Radon measure with compact support in $\xo$. Assume that $\varphi \in \CPSH_{\omega,x}(X)$ satisfies $\MA_{\omega}(\varphi)=\mu$. Then, $\varphi$ is a maximizer of $\FE_{\mu}$. 
\end{prop}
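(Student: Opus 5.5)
The plan is to compare $\FE_\mu(\psi)$ with $\FE_\mu(\varphi)$ for an arbitrary competitor $\psi\in\PSH_{\omega}(X)$, using the concavity estimate of Corollary~\ref{cor:estimation_difference_energy_functional}. Since $\FE_\mu$ is invariant under adding constants, it suffices to show $\FE_\mu(\psi)\le\FE_\mu(\varphi)$ for all $\psi\in\PSH_\omega(X)$. Note also that $\vol(\mu)=\deg(\omega^d)$ automatically, by Corollary~\ref{cor:global_integration-by-parts_formula_and_volume_omega_psh}\,(2) applied to $\MA_\omega(\varphi)=\mu$.

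\textbf{Bounded competitors.} First take $\psi\in\CPSH_\omega(X)$. The right-hand inequality of Corollary~\ref{cor:estimation_difference_energy_functional} gives
\[
\E(\psi)-\E(\varphi)\le\int_X(\psi-\varphi)\,\MA_\omega(\varphi)=\int_X\psi\,d\mu-\int_X\varphi\,d\mu .
\]
Both integrals are finite because $\psi$ and $\varphi$ are continuous on the compact space $X$. Rearranging yields $\FE_\mu(\psi)\le\FE_\mu(\varphi)$.

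\textbf{Unbounded competitors.} For general $\psi\in\PSH_\omega(X)$, set $\psi_C=\max\{\psi,C\}\in\CPSH_\omega(X)$. The previous step gives
\[
\E(\psi_C)-\int_X\psi_C\,d\mu\le\FE_\mu(\varphi).
\]
As $C\to-\infty$, $\E(\psi_C)\to\E(\psi)$ by the definition of $\E$ on $\PSH_\omega(X)$. It remains to show $\int\psi_C\,d\mu\to\int\psi\,d\mu$. Since $\mu$ is supported on a compact set $K\subset\xo$ and $\psi$ is real-valued and continuous on $\xo$, $\psi$ is bounded below on $K$. Hence $\psi_C=\psi$ on $K$ for $C\ll0$, so the integrals agree exactly once $C$ is small enough. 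Passing to the limit gives $\FE_\mu(\psi)\le\FE_\mu(\varphi)$. If $\E(\psi)=-\infty$ the inequality is trivial.

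\textbf{Main obstacle.} The only delicate point is making sure Corollary~\ref{cor:estimation_difference_energy_functional} applies as stated. That corollary rests on the derivative formula~\eqref{equation:differential_energy_functional}, which holds for $\CPSH_\omega$ functions via Bedford--Taylor continuity (Theorem~\ref{thm:global_Bedford-Taylor_theory}). So no extra regularity of $\varphi$ is needed, and the reduction to bounded competitors handles everything else.
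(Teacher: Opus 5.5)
Your proof is correct and is essentially the paper's argument. The paper combines concavity of $\FE_\mu$ with the first-variation formula \eqref{equation:differential_energy_functional} to show that the one-sided derivative at $\varphi$ toward any competitor vanishes, which is exactly the right-hand inequality of Corollary~\ref{cor:estimation_difference_energy_functional} that you invoke; your truncation $\psi_C=\max\{\psi,C\}$ also makes explicit the passage to unbounded competitors in $\PSH_\omega(X)$, which the paper's proof leaves implicit.
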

\begin{proof}
For any $\psi \in \PSH_{\omega,x}(X)$, on the segment joining $\varphi$ and $\psi$ in $\PSH_{\omega,x}(X)$, we have
\[ \left.\frac{d}{dt}\right|_{t=0+} \FE_{\mu}(t\varphi +(1-t)\psi) = \int_X (\psi-\varphi) (\MA_{\omega}(\varphi) -\mu) =0.\]
Since $\FE_{\mu}$ is concave, this implies that $\FE_{\mu}(\varphi) \geq \FE_{\mu}(\psi)$, and the proposition follows.
\end{proof}

\section{Tropical \MongeA equations} \label{sec:uniqueness}

 Let $X$ be a compact \prekahler tropical variety of dimension $d$, and let 
 $\omega$ be a strongly semipositive  form on $X$.  We say that the \emph{pair $(X, \omega)$ verifies the tropical Calabi--Yau theorem} if for any nonnegative Radon measure $\mu$ compactly supported  in $\xo$ of total volume $\deg(\omega^d)$, there exists a unique $\varphi \in \CPSH_{\omega}(X)$ up to a constant to the tropical Monge--Ampère equation:
\begin{align}\label{eq:MA}
 \MA_\omega(\varphi) = \mu.
\end{align}

In this section, we prove the uniqueness part of the tropical Calabi--Yau statement.

\begin{thm}\label{thm:uniqueness} Let $X$ be a compact \prekahler tropical variety of dimension $d$, and let $\omega$ be a strongly semipositive form on $X$. Suppose that  $\varphi, \psi \in \CPSH_{\omega}(X)$ satisfy $\MA_{\omega}(\varphi)=\MA_{\omega}(\psi)$. Then, $\varphi -\psi$ is constant on $X$.
\end{thm}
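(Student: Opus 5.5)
We follow the strategy of \cite{BFJ15} (originally due to B\l ocki), using the tropical Cauchy--Schwartz inequality (Corollary~\ref{cor:global_Cauchy_Schwartz_inequality}) and the integration-by-parts formula (Corollary~\ref{cor:global_integration-by-parts_formula_and_volume_omega_psh}). Set $f \coloneqq \varphi-\psi \in \dCPSH(X)$ and $\omega_\varphi\coloneqq\omega+\dm\dmb\varphi$, $\omega_\psi\coloneqq\omega+\dm\dmb\psi$. By multilinearity (Theorem~\ref{thm:global_Bedford-Taylor_theory}\,(3)) we have
\[
0=\int_X f\,\bigl(\MA_\omega(\varphi)-\MA_\omega(\psi)\bigr)=\sum_{j=0}^{d-1}\int_X f\,\dm\dmb f\wedge \omega_\varphi^{j}\wedge\omega_\psi^{d-1-j}.
\]
Each summand is $\le 0$ by Corollary~\ref{cor:global_Cauchy_Schwartz_inequality}, so every summand vanishes.

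\textbf{Step 1: more general positive currents.} The next step is to propagate this vanishing to $\int_X f\,\dm\dmb f\wedge T=0$ for every $T=\omega_{u_1}\wedge\dots\wedge\omega_{u_{d-1}}$ with $u_i\in\CPSH_\omega(X)$. I would do this by the standard bootstrapping. Write $\omega_{u}=\omega_\varphi+\dm\dmb(u-\varphi)$. Expand one factor at a time. Then apply Cauchy--Schwartz to the cross terms $\int_X f\,\dm\dmb g\wedge T'$, where $g$ is a difference of psh functions and $T'$ is a product of $d-2$ positive factors; these cross terms are dominated by $(\int -f\dm\dmb f\wedge T'')^{1/2}(\cdots)^{1/2}$. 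Inducting on the number of factors not equal to $\omega_\varphi$ or $\omega_\psi$ gives the claim, following \cite[\S\,7]{BFJ15}.

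\textbf{Step 2: from zero energy to a vanishing measure.} Once $\int_X f\,\dm\dmb f\wedge T=0$, the Cauchy--Schwartz inequality gives $\int_X g\,\dm\dmb f\wedge T=\int_X f\,\dm\dmb g\wedge T=0$ for all $g\in\dCPSH(X)$. Since $\PL(X)\subset\dCPSH(X)$ is dense in $\nC^0(X)$ (Corollary~\ref{cor:dpsh_span_PL_function}), this yields $\dm\dmb f\wedge T=0$ as a signed measure. Now strong semipositivity enters. Near any $x\in\xo$, with local potential $h$ of $\omega$, Definition~\ref{def:strongly_semipositive_form} produces $u_i\in\CPSH_\omega(X)$ with $u_i+h=C_if_i+l_i$ on a neighborhood $V$ of $x$, for arbitrary convex PL functions $f_i$. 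We deduce
\[
\dm\dmb f\wedge\dm\dmb f_1\wedge\dots\wedge\dm\dmb f_{d-1}=0 \quad\text{on } V
\]
for all locally convex PL $f_1,\dots,f_{d-1}$.

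\textbf{Step 3 (main obstacle): from the vanishing measures to $f$ constant.} We must conclude that $f$ is affine near $x$. Locally $f=a-b$ with $a,b$ locally convex, approximable by PL functions (Proposition~\ref{prop:locally_convex_approximable_functions}). I would first treat the top-dimensional faces: taking $f_i$ to be strictly convex quadratic-like PL functions, the vanishing forces the Lagrangian Hessian of $f$ to be zero in the interior of each maximal face. I would then test against $f_i$ whose bends are concentrated along a chosen codimension-one face $\tau$, using the transversal part $\dm_\perp$ and the Gysin maps. This should show that the affine pieces of $f$ glue across $\tau$, i.e.\ that the slopes of $f$ satisfy the balancing relation, so $f$ is locally the restriction of an affine function. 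The analysis near codimension $\ge2$ faces (the Chow ring contributions) is the delicate part. Here I would test against $f_i$ chosen as pullbacks of conewise linear functions on the star fan, and use nondegeneracy of the degree pairing in degree one.

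\textbf{Step 4: globalization.} Given that $f$ is locally affine on $\xo$, conclude as follows. Since $f$ is continuous on the compact space $X$, it attains its maximum. At a boundary point, Proposition~\ref{prop:local_affine_support_function_boundary} shows that $f$ near $(\uinfty,z)$ is governed by its restriction to the stratum, and bounded affine functions on unbounded faces are constant in recession directions. Hence a maximum can be pushed to a point of $\xo$. A locally affine function with a local maximum in $\xo$ is locally constant there. Connectedness of $\xo$ then gives that $f$ is constant on $\xo$, and by density of $\xo$ in $X$ also on $X$.
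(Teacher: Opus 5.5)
\textbf{Gap in Step 3.} Your Steps~1--2 are fine. The B\l ocki-type induction with Cauchy--Schwartz works verbatim, and it gives more than Lemma~\ref{lem:holder_continuity_J-functional}, which the paper only needs for $T=(\omega+\dm\dmb\widetilde\varphi)^{d-1}$ with $\widetilde\varphi\in\PSH_\omega\cap\PL_\omega(X)$. The gap is Step~3. The implication ``$\dm\dmb f\wedge\dm\dmb f_1\wedge\dots\wedge\dm\dmb f_{d-1}=0$ for all locally convex $f_i$ $\Rightarrow$ $f$ locally affine'' is false in the generality of the theorem.

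It does force $f$ to be affine on open maximal faces. At a codimension-one face $\tau$, however, it essentially only gives $\sum_{\varrho}w_\varrho s_\varrho=0$ for the transversal slopes of $f$, i.e.\ the class of $\dm\dmb f$ in $A^1(\tau)$ has degree zero. This coincides with local affineness exactly when $\Pi^\tau$ is a generalized tropical line, which is the condition of Definition~\ref{def:regular_in_codim_one} and is not assumed here.

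\textbf{Counterexample.} Let $\Gamma\subset\R^2$ be the tropical fan with rays $\pm e_1,\pm e_2$, all weights $1$. Model $X$ near $\tau$ by $\Gamma\times\R^{d-1}$, and let $f$ be the restriction of $\abs{x_1}-\abs{x_2}$.
\begin{itemize}
\item $f$ is a difference of locally convex PL functions, affine on the four half-spaces and zero on $\tau$.
\item Its class on $\tau$ is $2(a-b)\neq 0$, where $a,b\in A^1(\Gamma)\cong\R^2$ are the classes of the rays $\pm e_1$ and $\pm e_2$.
\item But $\deg(2(a-b))=0$ and $A^2(\Gamma)=0$, so by the K\"unneth formula every product with this class has degree zero. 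Equivalently, $\div\abs{x_1}=\div\abs{x_2}=2[\tau]$.
\end{itemize}
Hence $\dm\dmb f\wedge T=0$ for every $T$, yet $f$ is not affine. The ``nondegeneracy of the degree pairing'' you plan to invoke fails for the same reason: $\deg\colon A^1(\Gamma)\to\R$ has a kernel, and the paper deliberately allows star fans without the K\"ahler package. Step~4 relies on local affineness for its maximum principle, so it falls with Step~3.

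\textbf{Missing idea: a global argument.} What is needed is an argument that never uses regularity of $\varphi-\psi$. The paper proceeds as follows.
\begin{enumerate}
\item Fix a polyhedral structure. Apply strong semipositivity at points of each finite edge $e$, with test function $g'\circ\pi$ pulled back from the quotient fan by the line $L_e$, where $(\dm\dmb g')^{d-1}(\{0\})>0$ (Lemma~\ref{lem:PL_functions_with_strict_positive_MA_measures}).
\item Averaging the resulting functions gives $\widetilde\varphi\in\PSH_\omega\cap\PL_\omega(X)$. With $\omega'=\omega+\dm\dmb\widetilde\varphi$, the power $(\omega')^{d-1}$ is a nonnegative one-dimensional Minkowski weight, positive on every edge lying in a finite edge of the structure.
\item In your Step~2 identity, take as test function the facewise affine interpolation $g$ of $\varphi-\psi$ at the vertices of a refinement. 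The measure $\dm\dmb g\wedge(\omega')^{d-1}$ is carried by vertices, where $g=\varphi-\psi$, so the identity becomes
\[
0=-\sum_{e'=uv}\deg_{e'}\bigl((\omega')^{d-1}\bigr)\,\ell_{e'}^{-1}\bigl(g(u)-g(v)\bigr)^2 .
\]
\item This graph Dirichlet form is positive and the finite one-skeleton is connected, so $\varphi-\psi$ is constant on all vertices. Refining and continuity then give the theorem.
\end{enumerate}
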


\begin{remark}
  Note that $\deg(\omega^d)>0$ when $\omega$ is strongly semipositive.  To see this, take a simplicial polyhedral structure $\olPi$ on $X$ with induced polyhedral structure $\Pi$ on $\xo$. Take $\tau \in \Pi_0$ with $\dim \tau =\dim X$, and take $x \in \relint(\tau)$. Let $(t_1, \cdots, t_d)$ be affine coordiates on the $d$-dimensional simplicial polyhedron $\tau$, and set $u \coloneqq t_1^2+ \cdots + t_d^2 \in \LConv(\relint(\tau))$.  Then, by definition (Definition \ref{def:strongly_semipositive_form}), there exists an open neighborhood $U \Subset \relint(\tau)$ of $x$, $C>0$, $l \in \Aff(U)$, a potential $h$ of $\omega$ on $V$ and $\varphi \in \CPSH_{\omega}(X)$ such that $\varphi+h=Cu+l$. Then, we have $\MA_{\omega}(\varphi)(U)= C^d \cdot \weight(\tau)\cdot \MA_{\R}(u)(U)>0$. Thus, by Corollary \ref{cor:global_integration-by-parts_formula_and_volume_omega_psh} (2),  we obtain that $\deg(\omega^d)=\MA_{\omega}(\varphi)(X) \ge \MA_{\omega}(\varphi)(U)>0$.
\end{remark}

\begin{remark}   
  One can formulate the tropical Calabi--Yau theorem in exactly the same way for any semipositive form $\omega$ with $\deg(\omega^d)>0$.  We are however only able to prove the uniqueness part assuming strong semipositivity, and do not know whether this holds in general for any semipositive form. Note also that a semipositive form $\omega$ with $\deg(\omega^d)>0$ is not necessarily strongly semipositive (see Remark \ref{rmk:example_semipositive_form_with_positive_degree_but_not_strongly_semipositive}).
\end{remark}

In preparation for the proof, we have the following two lemmas. 
\begin{lemma} \label{lem:holder_continuity_J-functional}
  Under the hypotheses of Theorem \ref{thm:uniqueness}, for any $f \in \PL(X)$, and for any $\widetilde{\varphi}\in \PSH_{\omega} \cap \PL_{\omega}(X)$, we have \[\int_{X} (\psi-\varphi) \dm\dmb(f) \wedge (\omega+\dm \dmb \widetilde{\varphi})^{d-1} =0.\]
\end{lemma}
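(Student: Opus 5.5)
The plan is to follow the uniqueness argument of \cite{BFJ15}, working with the quadratic forms $g\mapsto\int_X g\,\dm\dmb g\wedge T$ and the Cauchy--Schwartz inequality (Corollary~\ref{cor:global_Cauchy_Schwartz_inequality}). Set $u\coloneqq\varphi-\psi\in\dCPSH(X)$ and $\omega_\varphi\coloneqq\omega+\dm\dmb\varphi$, $\omega_\psi\coloneqq\omega+\dm\dmb\psi$.

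\emph{Step 1 (the ``diagonal'' terms vanish).} Since $\MA_\omega(\varphi)=\MA_\omega(\psi)$ and $\omega_\varphi-\omega_\psi=\dm\dmb u$, multilinearity (Theorem~\ref{thm:global_Bedford-Taylor_theory}) gives
\[
0=\int_X u\,(\omega_\varphi^d-\omega_\psi^d)=\sum_{j=0}^{d-1}\int_X u\,\dm\dmb u\wedge\omega_\varphi^{j}\wedge\omega_\psi^{d-1-j}.
\]
Each summand is $\le 0$ by Corollary~\ref{cor:global_Cauchy_Schwartz_inequality}, so each summand vanishes. Applying the Cauchy--Schwartz inequality again, we get $\int_X g\,\dm\dmb u\wedge\omega_\varphi^{j}\wedge\omega_\psi^{d-1-j}=0$ for every $g\in\dCPSH(X)$.

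\emph{Step 2 (induction replacing factors).} Let $\mathcal T_k$ be the set of products $S=\omega_{\varphi_1}\wedge\dots\wedge\omega_{\varphi_{d-1}}$ in which $k$ factors are arbitrary $\omega_{\varphi_i}$ with $\varphi_i\in\CPSH_\omega(X)$, and the remaining $d-1-k$ factors lie in $\{\omega_\varphi,\omega_\psi\}$. I will prove by induction on $k$ that
\[
\int_X g\,\dm\dmb u\wedge S=0\qquad\text{for all } g\in\dCPSH(X) \text{ and } S\in\mathcal T_k.
\]
Step 1 is the case $k=0$. For the inductive step, take $S=\omega_{\varphi'}\wedge S'$, where $S'$ has $k-1$ arbitrary factors and one factor is replaced by $\omega_{\varphi'}$. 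Write $\omega_{\varphi'}=\omega_\varphi+\dm\dmb(\varphi'-\varphi)$. The term with $\omega_\varphi\wedge S'$ vanishes by the induction hypothesis. For the other term, integration by parts (Corollary~\ref{cor:global_integration-by-parts_formula_and_volume_omega_psh}\,(1)) gives
\[
\int_X g\,\dm\dmb u\wedge\dm\dmb(\varphi'-\varphi)\wedge S'=\int_X(\varphi'-\varphi)\,\dm\dmb u\wedge\dm\dmb g\wedge S'.
\]
Now expand $\dm\dmb g=\omega_{\psi_1}-\omega_{\psi_2}$ with $g=\psi_1-\psi_2$, and move $\dm\dmb u$ so that it is paired against $(\varphi'-\varphi)$ with the remaining factors $\omega_{\psi_i}\wedge S'$. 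These remaining factors form an element of $\mathcal T_{k}$ that is still of the admissible type, with one fewer fixed factor. I will organize the induction so that the statement is applied with the test function $\varphi'-\varphi$. For this to close, the induction must be on the number of fixed factors rather than on $k$ alone, and this bookkeeping has to be checked. It needs multilinearity and the symmetry of the wedge products of currents in the Bedford--Taylor setting, which follow from Theorem~\ref{thm:global_Bedford-Taylor_theory} by approximation.

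\emph{Step 3 (conclusion).} Take $k=d-1$ with all factors equal to $\omega_{\widetilde\varphi}$. Then apply integration by parts once more to move $\dm\dmb$ from $u$ onto $f$. This requires $f\in\dCPSH(X)$. Here strong semipositivity enters: arguing as in Corollary~\ref{cor:dpsh_span_PL_function}, but using Definition~\ref{def:strongly_semipositive_form} in place of Proposition~\ref{prop:construction_theta_psh_PL_functions} (near the boundary one uses a Kähler form $\theta$ and the $\omega+\alpha\theta$ approximation of Theorem~\ref{thm:global_Bedford-Taylor_theory}\,(1)), one writes $f=C(\psi_1-\psi_2)$ with $\psi_i\in\PSH_\omega\cap\PL_\omega(X)$, possibly after a limiting argument.

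I expect the main obstacle to be Step 2. Getting an inductive statement that genuinely closes is the delicate part, especially making sure the mixed terms $\dm\dmb u\wedge\dm\dmb u\wedge S''$ are killed. My first attempt at this is to write $\dm\dmb u=\omega_\varphi-\omega_\psi$ and use two instances of the induction hypothesis.
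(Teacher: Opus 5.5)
Your Step~1 and your overall strategy are the right ones: Step~1 plus an induction that replaces factors, using only Corollary~\ref{cor:global_Cauchy_Schwartz_inequality} and integration by parts. This is the argument of \cite[Prop.\,8.2]{BFJ15} that the paper invokes. However, Step~2 as written is circular, and this is a genuine gap. Take $S=\omega_{\varphi'}\wedge S'$, integrate by parts, and expand $\dm\dmb g=(\eta+\dm\dmb\psi_1)-(\eta+\dm\dmb\psi_2)$, where $g=\psi_1-\psi_2$ with $\psi_i\in\CPSH_\eta(X)$. The resulting products $(\eta+\dm\dmb\psi_i)\wedge S'$ still have $k$ arbitrary factors and $d-1-k$ factors in $\{\omega_\varphi,\omega_\psi\}$. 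They are of exactly the same type as $S$, not ``one fewer fixed factor'', so you would be using the level-$k$ statement to prove itself, and no reorganisation of the induction changes this. The deeper problem is that the \emph{linear} statement ``$\int_X g\,\dm\dmb u\wedge S=0$ for all $g$'' does not propagate. Consider the cross term $\int_X g\,\dm\dmb u\wedge\dm\dmb w\wedge S'$ with $w\coloneqq\varphi'-\varphi$. Splitting $\dm\dmb w=\omega_{\varphi'}-\omega_\varphi$ reproduces level $k$. Splitting $\dm\dmb u=\omega_\varphi-\omega_\psi$ leaves no copy of $u$ on which the induction hypothesis could act. So your closing suggestion does not work in your formulation either.

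What closes is an induction on the \emph{quadratic} quantity: show $\int_X u\,\dm\dmb u\wedge S=0$ for all $S\in\mathcal T_k$, with Step~1 as the case $k=0$. Write $\omega_{\varphi'}=\omega_\varphi+\dm\dmb w$. Use the symmetry of $(a,b,c)\mapsto\int_X a\,\dm\dmb b\wedge\dm\dmb c\wedge S'$, which follows from Corollary~\ref{cor:global_integration-by-parts_formula_and_volume_omega_psh}\,(1) together with multilinearity and symmetry of the mixed measures. This turns the cross term into $\int_X w\,\dm\dmb u\wedge\dm\dmb u\wedge S'$. Replacing only \emph{one} factor $\dm\dmb u$ by $\omega_\varphi-\omega_\psi$ gives
\[
\int_X u\,\dm\dmb u\wedge\omega_{\varphi'}\wedge S'
=\int_X u\,\dm\dmb u\wedge\omega_{\varphi}\wedge S'
+\int_X w\,\dm\dmb u\wedge\omega_{\varphi}\wedge S'
-\int_X w\,\dm\dmb u\wedge\omega_{\psi}\wedge S'.
\]
The products $\omega_\varphi\wedge S'$ and $\omega_\psi\wedge S'$ are positive and lie in $\mathcal T_{k-1}$, so the energy of $u$ against them vanishes. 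Hence the first term vanishes by induction, and the last two vanish by Corollary~\ref{cor:global_Cauchy_Schwartz_inequality} applied to $w,u\in\dCPSH(X)$. At $k=d-1$ with all factors equal to $\omega_{\widetilde\varphi}$, one more application of Cauchy--Schwartz with $f$ gives the lemma directly, with no further integration by parts. Finally, your Step~3 detour through strong semipositivity and a limiting argument is unnecessary. Corollary~\ref{cor:dpsh_span_PL_function}, applied with a K\"ahler form $\theta$ on $X$, already gives $f=C(\psi_1-\psi_2)$ with $\psi_i\in\PSH_\theta\cap\PL_\theta(X)$. Hence $f\in\dCPSH(X)$, because Definition~\ref{def:dpsh} allows any semipositive form.
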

\begin{proof}
  The proof proceeds as in the corresponding step in the proof of \cite[Prop.\,8.2]{BFJ15}. The only inputs are the Cauchy--Schwartz inequality and the integration-by-parts formula, whose tropical versions are proven in Corollary \ref{cor:global_Cauchy_Schwartz_inequality} and Corollary \ref{cor:global_integration-by-parts_formula_and_volume_omega_psh}\,(1), respectively.
\end{proof}

\begin{lemma} \label{lem:PL_functions_with_strict_positive_MA_measures}
  For any tropical fan $\Sigma$ in $E =\R^n$, there exists $g \in \LConv \cap \PL_{\hom}(|\Sigma|)$ such that $\MA(g)\bigl(\{0\}\bigr)>0$. 
\end{lemma}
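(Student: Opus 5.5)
The plan is to produce $g$ as a strictly convex conewise linear function coming from a projective compactification, and then read off the mass at the origin via the Chow ring of $\Sigma$.

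\textbf{Choice of $g$.} By Corollary~\ref{cor:refinement_of_open_tropical_varieties_are_projective}\,(3), after refining $\Sigma$ (which does not change the support $|\Sigma|$) there is a complete simplicial fan $\Xi$ on $E$ with $\TP_\Xi$ projective and $\Sigma$ a subfan of $\Xi$. The same corollary gives a convex conewise linear $g$ on $\Xi$ with $g>0$ on $E\setminus\{0\}$. We may take $g$ strictly convex, i.e.\ the function $g'-l$ from the proof of statement (2) of that corollary. Set $g|_{|\Sigma|}\in \LConv\cap\PL_{\hom}(|\Sigma|)$.

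\textbf{Reduction to a Chow ring degree.} Since $g$ is homogeneous, $\dm\dmb g$ has no smooth part. The measure $\MA(g)=(\dm\dmb g)^{\wedge d}$ is therefore supported on the faces of $\Sigma$. By the definition of $\dm_\perp$, the part of $\dm\dmb g$ on a face $\delta$ is an element of $\ssA^1(\delta)$, namely the class of $g$ in the Chow ring of the star fan. The wedge power restricted to the vertex $\conezero$ is then $D^d\in \ssA^d(\Sigma)$, where $D=\sum_{\zeta\in\Sigma_1} g(\e_\zeta)\x_\zeta$ (modulo the linear relations $J$). By the degree map of Section~\ref{subsec:degree_map}, the atom at the origin is
\[
\MA(g)(\{0\})=\deg\bigl(D^d\bigr).
\]
On all other faces the contribution is zero, because $g$ is linear on each cone and homogeneous of degree one.

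\textbf{Positivity.} It remains to show $\deg(D^d)>0$. Strict convexity of $g$ on $\Xi$ means $D$ is the pullback to $\Sigma$ of an ample class on the projective toric variety $\P_\Xi$. The degree on $\ssA^d(\Sigma)$ is the intersection with the tropical cycle $(\Sigma,\weight)$, which corresponds to an effective Minkowski weight on $\Xi$. Hence
\[
\deg(D^d)=\sum_{\sigma\in\Sigma_d}\weight(\sigma)\,(D^d\cdot[V(\sigma)]\text{-coefficient}).
\]
This is an intersection number of an ample divisor with a nonzero effective $d$-cycle, so it is positive.

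A more elementary route is to localize at a single cone. Pick a facet $\sigma\in\Sigma_d$ and, using the relations in $J$, choose a representative of $D$ vanishing on the rays of $\sigma$. Equivalently, subtract the linear function $l_\sigma$ agreeing with $g$ on $\sigma$. By strict convexity, $g-l_\sigma>0$ on all rays outside $\sigma$. One then computes $D^d$ through the balancing-compatible reduction, or invokes a Kleiman-type positivity of ample classes on tropical fans.

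\textbf{Main obstacle.} I expect the hard step to be justifying that positivity rigorously inside the paper's mixed-form formalism. If the intersection-theoretic argument is unavailable, I would fall back on a direct computation: $\MA(g)(\{0\})$ equals the mixed volume
\[
\sum_\sigma \weight(\sigma)\,\vol\bigl(\text{face of the Newton polytope } \Delta_g \text{ dual to } \sigma\bigr).
\]
Here $\Delta_g$ is the polytope of $g$ on $\Xi$, which is full-dimensional by strict convexity. Every $d$-dimensional face of $\Delta_g$ that is dual to a facet of $\Sigma$ has positive volume, so the sum is strictly positive.
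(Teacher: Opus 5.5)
Your reduction $\MA(g)(\{0\})=\deg(D^d)$, with $D=\sum_{\zeta}g(\e_\zeta)\x_\zeta\in A^1(\Sigma)$, is fine. But the whole content of the lemma is the strict positivity of this degree, and none of your three arguments establishes it.

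Your main argument treats $(\Sigma,\weight)$ as ``a nonzero effective $d$-cycle'' on $\P_{\Xi}$. A nonnegative Minkowski weight is only a class that pairs nonnegatively with the torus-invariant cycles, i.e.\ a nef class. Nothing in your argument shows it is represented by an effective cycle (a tropical fan need not be the tropicalization of any subvariety), so ``ample times effective is positive'' is not available. The displayed sum with a ``$[V(\sigma)]$-coefficient'' is also not well defined, since $D^d$ has no canonical expansion in the $\x_\sigma$. A nonnegative expansion exists because $D$ is nef, but it only yields $\deg(D^d)\ge 0$. Showing that some facet of $\Sigma$ receives a positive coefficient is exactly what is missing.

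Localizing at a single facet $\sigma$ does not help either: after subtracting $l_\sigma$, $D^d$ still contains non-square-free monomials such as $\x_\zeta^2$, whose degrees have no sign.

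Finally, the fallback formula is false. The face of $\Delta_g$ dual to a $d$-dimensional cone has dimension $n-d$, not $d$. Even when $n=2d$ the identity fails. Take the tropical line in $\R^2$ and $g=\max(x_1+x_2,\,x_2-2x_1,\,x_1-2x_2)$, which is strictly convex on the fan of $\P^2$ and equals $1$ on all three primitive vectors. Then $\MA(g)(\{0\})=\sum_\rho g(\e_\rho)=3$. The three dual edges of $\Delta_g=\conv\{(1,1),(-2,1),(1,-2)\}$ have lattice length $3$ each, so your formula gives $9$.

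The gap closes inside your framework if you localize along a flag rather than at one facet. For $\gamma\in\Sigma$, strict convexity on $\Xi$ gives a linear $l_\gamma$ with $l_\gamma=g$ on $\gamma$ and $g>l_\gamma$ on $E\setminus\gamma$. Since $\sum_\zeta l_\gamma(\e_\zeta)\x_\zeta\in J$, one gets modulo $I+J$ that $\x_\gamma D=\sum_{\zeta}(g-l_\gamma)(\e_\zeta)\,\x_{\gamma+\zeta}$. The sum runs over rays $\zeta\not\subset\gamma$ with $\gamma+\zeta\in\Sigma$, and every coefficient is strictly positive. Induction from $\gamma=\conezero$ gives $D^d=\sum_{\sigma\in\Sigma_d}c_\sigma\x_\sigma$ with every $c_\sigma>0$, hence $\deg(D^d)>0$.

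The paper argues differently and avoids Chow rings. It takes any $h\in\Conv\cap\PL_{\hom}(E)$ with $h>0$ off $0$ and translates the apex to points $x_i\in\relint(\sigma)$ of a facet. There $\MA\bigl(h(\cdot-x_i)\bigr)$ has an atom $c\,\MA_{\R}(h|_{T_\sigma})(\{0\})>0$, given by the real Monge--Ampère operator. It then lets $x_i\to 0$, using weak continuity of $\MA$ and the fact that $\MA(g)$ is concentrated at $0$. That route needs neither the projective completion $\Xi$ nor strict convexity. The corrected Chow-ring route instead gives an explicit positive expression for the mass.
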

Here $\MA$ denotes the local tropical \MongeA operator on $\Sigma$, see~\cite[\S\,5]{APW1}.

\begin{proof}[Proof of Lemma~\ref{lem:PL_functions_with_strict_positive_MA_measures}]
Take $h \in \Conv \cap \PL_{\hom}(E)$ such that $h(0)=0$, and $h>0$ on $E \setminus \{0\}$. Let $g\coloneqq h\rest{\Sigma}$. It is not hard to show that $\MA(g)$ is supported at $\{0\}$. We prove that $\MA(g)\bigl(\{0\}\bigr)>0$. Let $\sigma$ be a facet of $\Sigma$, and let $\{x_i\}_{i} \subset \relint(\sigma)$ such that $x_i \to 0$. We define $g_i \in \LConv(|\Sigma|)$ by $g_i(x)\coloneqq h(x-x_i)$ for any $x \in \supp{\Sigma}$. Then, $g_i$ converge uniformly to $g$ in a neighborhood $U$ of $0$, so $\MA(g_i) \rightharpoonup \MA(g)$ on $U$. On the other hand, there exists a positive constant $c$ depending only on $\sigma$ such that $\MA(g_i)\bigl(\{x_i\}\bigr)=c\MA_{\R}(g_i\rest{\sigma})\bigl(\{x_i\}\bigr)=c\MA_{\R}(h\rest{\ssT_{\sigma}})(\{0\})$, where $\ssT_{\sigma}$ is the tangent space to $\sigma$ (the linear span of $\sigma$), and $\MA_{\R}$  refers to the  real Monge--Ampère operator on $\sigma$ with respect to the lattice structure on $\sigma$.   Notice that $h>0$ on $\ssT_{\sigma} \setminus \{0\}$, so $\MA_{\R}(h\rest{\ssT_{\sigma}})\bigl(\{0\}\bigr)>0$. Therefore, $\MA(g)\bigl(\{0\}\bigr) \ge c\MA_{\R}(h\rest{\ssT_{\sigma}})\bigl(\{0\}\bigr)>0$.
\end{proof}

  %cross-ref in APW1: Proposition in section 5

We are ready to prove Theorem \ref{thm:uniqueness}.

\begin{proof}[Proof of Theorem~\ref{thm:uniqueness}]  \label{proof:thm:uniqueness}
  We show that for any simpicial polyhedral structure $\olPi$ on $X$ with induced polyhedral structure $\Pi$ on $\xo$, the restriction of $\varphi-\psi$ on the set of vertices $\Pi_0$ is constant. Taking refinements of the polyhedral structure and using the continuity of $\varphi$ and $\psi$, we infer that $\varphi-\psi$ is constant on $X$. 

  Consider the set $\Pi_{\f,1}$ of finite edges in $\Pi$. Let $\Gamma$ be the metric graph with vertex set $\Pi_0$, edge set $\Pi_{\f,1}$, with the primitive vector of each edge having length one. So, $\Gamma$ has the same support as the one-skeleon $\Pi_{\f,(1)} = \Pi_0\cup\Pi_{\f,1}$, and the length of each edge is given in terms of the primitive vector of that edge. By connectivity of $X$, $\Gamma$ is connected. We will construct below a semipositive simplicial form $\omega'$ defined on some simplicial refinement $\olPi'$ of $\olPi$ such that $\Gamma$ is included in the support of $(\omega')^{d-1}$, which is a nonnegative Minkowski weight of dimension one on $\Pi'$ and belongs to $\MWp_{1}(\Pi')$.  
  
  \smallskip

  Granted the existence of such $\omega'$, we explain how to finish the proof.
   The weight on each edge $e'\in \Pi'_{\f,1}$ is given by $\deg_{e'}\bigl((\omega')^{d-1}\bigr)$. 
    Denote by $\Gamma'$ the support of $(\omega')^{d-1}$. Endowed with the weight function $\deg_{e'}\bigl((\omega')^{d-1}\bigr)>0$ for each $e'$ in the support, this is a tropical subvariety of dimension one in $\xo$. 
   We associate edge lengths to finite edges in $\Pi'_1$ as in the definition of edge lengths in 
   $\Gamma$ above, by declaring the primitive vector of the edge to be of norm one. For each edge $e'$ in $\Pi'_{\f,1}$, denote by $l_{e'}$ its length. By the assumption, $\Gamma'$ contains $\Gamma$.  
  
  Let now $f$ be a facewise affine function on $\Pi'$ such that $f(v)=\varphi(v)-\psi(v)$ for any $v \in \Pi'_0$. Applying Lemma \ref{lem:holder_continuity_J-functional}, combined with Proposition 4.12 and Lemma 5.10 in~\cite{APW1}, yields
  %cross-ref in APW1: \ref{lem:tropical_curve_differential_calculations}, Proposition 4.12 in~\cite{APW1}
  %cross-ref in APW1: \ref{prop:compatibility_integral_divisors} 
  \bens
   -\sum_{e' =uv \in \Pi'_{\f,1} } l_{e'}^{-1} \left(f(u)-f(v)\right)^2  =\int_{
  \Gamma'} (\varphi-\psi)\dm\dmb f =\int_{X} (\varphi-\psi)\dm\dmb f \wedge (\omega')^{d-1 }=0.
  \eens
  Since $l_{e'} \in \R_+$, this implies that $f(u)=f(v)$ for any finite edge $e'=uv$ in $\Pi'_{\f,1}$. 
  Since $\Gamma$ is included in $\Gamma'$ and connected, $f$ is constant on the set of vertices of $\Gamma$, and hence $\varphi-\psi$ is constant on $\Pi_0$, as desired. 

  \smallskip

  It remains to construct such $\omega'$. We may assume that for any $e \in \Pi_{\f,1}$, 
  there exists a tropical chart $(W, \phi\colon W \to U)$ with $U \subset \supp{\Sigma} \subset E=\R^n$ for $\Sigma$ a tropical fan of dimension $d$ such that $W$ contains $e$, and such that, viewed in $U$, $e$ is a segment on a ray $\rho$ of $\Sigma$ with one extremity the origin of $E$. 
  Slightly abusing of notation, we identify $U$ and $W$. We use $L_e$ to denote the $1$-dimensional subspace of $E$ generated by $e$. Let $\pi\colon E \to E/L_{e}$ be the projection map and $\Sigma'\coloneqq \pi(\Sigma)$ be the image of $\Sigma$ with its structure of tropical fan, see Section~\ref{subsec:tropical-varieties}. By Lemma \ref{lem:PL_functions_with_strict_positive_MA_measures}, we can take $g' \in \LConv\cap \PL(\Sigma')$ such that $(\dm \dmb g')^{d-1} (\{0\})>0$. Set $g\coloneqq g' \circ \pi$.

  Let $\div(g)$ denote the real Minkowski weight of dimension $d-1$ associated to $g$. The support of $\div(g)$ coincides with the support of $\dm\dmb g$, and the weight on each $\tau \in \Pi_{\f,d-1}$ in the support is given by $\deg_\tau(\dm\dmb g)$. Since $\Sigma'$ is isomorphic to the star fan $\Sigma^{\rho}$, we have $\bigl(\div(g)\bigr)^{d-1}(\rho)=(\dm \dmb g')^{d-1} (\{0\})>0$.
  
  By definition of the strongly semipositive forms (Definition \ref{def:strongly_semipositive_form}), for any $x \in e$, there exist $\varphi_x \in \PSH_{\omega} \cap \PL_{\omega}(X)$,  an open neighborhood $U_x$ of $x$, a constant $C>0$,  local affine function $l$ on $U_x$, and $h \in \LConv \cap \PL_{\omega}(U_x)$ verifying $\omega=\dm \dmb h$ such that  $\varphi_x+h=Cg+l$ on $U_x$. Let $\Pi_x$ be any refinement of $\Pi$ such that $\varphi_x$ is facewise affine on $\Pi_x$. 
  Therefore, for any $e' \in (\Pi_{x})_{\f,1}$ such that $e' \subseteq e$ and $ U_x \cap e' \neq \emptyset$, we have $\deg_{e'}(\omega_{\varphi_x}^{d-1})>0$. Since $e$ is compact, there exists a finite collection of points $x_i \in e$, $i\in I_e$ such that $e \subset \bigcup_{i \in I_e} U_{x_i}$. Let $I$ be the disjoint union of $I_e$, for $e\in \Pi_{\f,1}$.
   
 Set
  \[
    \omega'\coloneqq \omega+\frac 1{|I|} \sum_{e\in \Pi_{\f, 1}}\sum_{i \in I_e} \dm\dmb\varphi_{x_i},
  \] 
and let $\olPi'$ be a simplicial refinement of $\olPi$ such that $\omega'$ is simplicial on the induced polyhedral structure $\Pi'$ on $\xo$. 
Then, for any $e' \in \Pi'_{\f,1}$ such that $e' \subseteq e$, for $e\in \Pi_{\f,1}$, 
we have $\deg_{e'}((\omega')^{d-1}) \ge \frac{1}{|I|} \deg_{e'}(\omega_{\varphi_{x_i}})^{d-1}(e')>0$, where $i \in I_e$ satisfies  $e' \cap U_{x_i} \neq \emptyset$. This implies that each edge $e\in \Pi_{\f,1}$ is in the support of $(\omega')^{d-1}$, as desired. 
  \end{proof}

%%%%%%%%%%%%%%%%%%%%%%%%%%%%%
%%%%Orthogonality property
%%%%%%%%%%%%%%%%%%%%%%%%%%%%%
\section{The orthogonality property and the existence of solutions}  \label{sec:orthogonality_properties}

In this section, $X$ is a compact \prekahler tropical variety and $\omega$ is a strongly semipositive (1,1)-form on $X$. The main result of this section is Theorem~\ref{thm:equivalence_orthogonality_existence_of_solutions} which relates the validity of the tropical Calabi--Yau theorem and the following orthogonality property.

\begin{defi}[Orthogonality property] Suppose that $f\colon X \to \R \cup \{+ \infty \}$ is an lsc function which is not constantly $+\infty$ in $\xo$. 
  We say that $f$ \emph{verifies the orthogonality property} if we have 
    \[
    \int_{X} (f-\env_{\omega}(f)) \MA_{\omega}({\env_{\omega}(f)}) =0. \qedhere
    \]
\end{defi}

\begin{thm} \label{thm:equivalence_orthogonality_existence_of_solutions}
  Let $(X, \omega)$ be a compact \prekahler tropical variety equipped with a strongly semipositive (1,1)-form. The followings are equivalent:
  \begin{enumerate}
    \item \label{orth1} For each finite collection of  pairs $\CS=\{ (x_j, \lambda_j) \in \xo \times \R \}_{1 \le j \le k}$, we have 
    \[
    \Supp(\MAo(\env_\omega(f_{\CS}))) \subset S. 
    \]
    Here, $S \coloneqq \{x_j\}_{1 \le j \le k} \subset \xo$, and 
    \bes
    f_{\CS}(x) \coloneqq
    \begin{cases}
      \lambda_i &\text{ if } x=x_i \text{ for some } x_i \in S, \\
      +\infty & \text{ if } x \in X \setminus S.
    \end{cases}
    \ees
    is the lsc function on $X$ as defined in Example \ref{example_functions_with_values_at_finite_points}. 
    \item \label{orth2} Each lsc function $f\colon X \to \R \cup \{+\infty \}$ which is  not constantly $+\infty$ on $\xo$ verifies the orthogonality property.  
    \item \label{orth3} Each function $f\in \nC^{0}(X)$ verifies the orthogonality property. 
    \item \label{orth4} $(X, \omega)$ verifies the tropical Calabi--Yau theorem. 
  \end{enumerate}
  \end{thm}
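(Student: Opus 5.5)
I would prove the cycle of implications (1) $\Rightarrow$ (2) $\Rightarrow$ (3) $\Rightarrow$ (4) $\Rightarrow$ (1), following the variational approach of \cite{BBGZ13, BFJ15}. The central tool is the maximizer $\varphi_\mu$ of $\FE_\mu$ from Proposition~\ref{prop:maximizer_Lagrangian}, together with Corollary~\ref{cor:estimation_difference_energy_functional} and the uniqueness Theorem~\ref{thm:uniqueness}.

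\textbf{(2) $\Rightarrow$ (3).} This is immediate, since every $f\in C^0(X)$ is lsc and finite.

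\textbf{(3) $\Rightarrow$ (4).} Let $\mu$ be compactly supported in $\xo$ with total mass $\deg(\omega^d)$, and let $\varphi=\varphi_\mu\in\CPSH_\omega(X)$ be the maximizer. Fix $g\in C^0(X)$ and set $\varphi_t\coloneqq\env_\omega(\varphi+tg)$, which lies in $\CPSH_\omega(X)$ by Corollary~\ref{cor:psh_envelope}. Because $\varphi_t\le\varphi+tg$, the maximality gives
\[
\E(\varphi_t)-\int(\varphi+tg)\,d\mu\le \E(\varphi_t)-\int\varphi_t\,d\mu\le \E(\varphi)-\int\varphi\,d\mu .
\]
I then need the differentiability $\frac{d}{dt}\big|_{t=0}\E(\varphi_t)=\int g\,\MAo(\varphi)$. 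This follows from Corollary~\ref{cor:estimation_difference_energy_functional} applied to $\varphi_t$ and $\varphi_s$, which gives
\[
\int(\varphi_t-\varphi_s)\,\MAo(\varphi_t)\le \E(\varphi_t)-\E(\varphi_s)\le\int(\varphi_t-\varphi_s)\,\MAo(\varphi_s).
\]
Orthogonality (3) for $f=\varphi+tg$ lets me replace $\varphi_t$ by $\varphi+tg$ in the integrals against $\MAo(\varphi_t)$. Uniform continuity of $t\mapsto\varphi_t$ ($|\varphi_t-\varphi_s|\le|t-s|\sup|g|$) and the weak continuity of Theorem~\ref{thm:global_Bedford-Taylor_theory} then yield the derivative. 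The function $t\mapsto \E(\varphi_t)-\int(\varphi+tg)\,d\mu$ has a maximum at $t=0$, so its derivative vanishes there, giving $\int g\,(\MAo(\varphi)-\mu)=0$ for all $g$. Hence $\MAo(\varphi)=\mu$, and uniqueness comes from Theorem~\ref{thm:uniqueness}.

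\textbf{(4) $\Rightarrow$ (1).} Put $\psi=\env_\omega(f_\CS)$, $\nu=\MAo(\psi)$, and suppose $\nu$ charges $X\setminus S$. The natural argument compares $\psi$ with the solution $\varphi$ of $\MAo(\varphi)=\nu|_S+c\,\delta$-type measures supported on $S$ with the correct total mass; such a measure is compactly supported in $\xo$, so (4) applies. Pick $\varphi$ with $\MAo(\varphi)=\mu_S$, where $\mu_S$ is a measure on $S$ of total mass $\deg(\omega^d)$, normalized so that $\varphi\le f_\CS$ with equality at some point. By Theorem~\ref{thm:comparison_principle} applied on $\{\varphi<\psi\}$ and $\{\psi<\varphi\}$, one is led to $\psi$ being a maximizer of an $\FE_{\mu}$-type functional whose measure lives on $S$. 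A cleaner route is to use Proposition~\ref{prop:maximizer}: the problem $\sup\{\FE_{\mu}\}$ with $\mu=\sum\nu(\{x_j\})\delta_{x_j}$ scaled. I expect this direction to be the main obstacle. What I would try first is to show, via Lemma~\ref{lem:envelope_PL_functions_vertices}-style arguments and comparison on open sets disjoint from $S$, that $\psi$ is ``maximal'' off $S$. This means that for any solution $\varphi$ of a Monge--Ampère equation with data supported on $S$ and agreeing with $\psi$ on $S$, one has $\varphi\le\psi$ (because $\varphi\le f_\CS$) and also $\psi\le\varphi$ (comparison principle applied to $\{\varphi<\psi\}\subset X\setminus S$, where $\MAo(\varphi)=0$). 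The difficulty lies in arranging that the values on $S$ match, which may require choosing the weights on $S$ by a fixed-point or degree argument.

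\textbf{(1) $\Rightarrow$ (2).} First reduce to lsc $f$ approximated from below by functions $f_{\CS}$. For lsc $f$, $\env_\omega(f)$ is the increasing limit, up to usc regularization, of $\env_\omega(f_{\CS_n})$, where $\CS_n$ runs through finer and finer finite samples of $\xo$ (using density of $\xo$ and Theorem~\ref{thm:lipschitz}). For each $n$ one has $\int(f_{\CS_n}-\env_\omega(f_{\CS_n}))\,\MAo(\env_\omega(f_{\CS_n}))=0$, because the measure sits on $S_n$, where equality holds as the envelope equals $f_{\CS_n}$ there on the support. Passing to the limit uses weak convergence (Theorem~\ref{thm:global_Bedford-Taylor_theory}\,(1), after uniform convergence on compacts, obtained via Dini) and lower semicontinuity of $f$. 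The limit gives $\le 0$, while nonnegativity holds trivially.
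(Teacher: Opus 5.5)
Your arguments for (2)$\Rightarrow$(3) and (3)$\Rightarrow$(4) are essentially the paper's. You use the maximizer of $\FE_\mu$, and you obtain differentiability of $t\mapsto\E(\env_\omega(\varphi+tg))$ from Corollary~\ref{cor:estimation_difference_energy_functional} plus orthogonality, as in Theorem~\ref{thm:derivation}.

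The genuine gap is (4)$\Rightarrow$(1), which you leave unresolved. You try to produce a solution with data on $S$ that agrees with $\psi=\env_\omega(f_\CS)$ at \emph{every} point of $S$, and you have no mechanism for matching those values. Even granting the matching, Theorem~\ref{thm:comparison_principle} only gives $\int_{\{\varphi<\psi\}}\MAo(\psi)\le\int_{\{\varphi<\psi\}}\MAo(\varphi)=0$. That says $\MAo(\psi)$ does not charge $\{\varphi<\psi\}$, not that this set is empty, so you would also need a domination principle.

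The missing idea is that no matching is necessary; argue by contradiction instead.
If $\Supp(\MAo(\psi))\not\subset S$, then $\MAo(\psi)(S)<\deg(\omega^d)$. So you can pick $\mu$ supported on $S$, of mass $\deg(\omega^d)$, with $\mu(\{x_j\})>\MAo(\psi)(\{x_j\})$ for \emph{every} $j$; then it does not matter where contact occurs.
Let $\varphi$ solve $\MAo(\varphi)=\mu$, and shift it by a single constant so that $\max_j(\varphi(x_j)-\psi(x_j))=0$.
Then $\varphi\le f_\CS$, hence $\varphi\le\psi$ on $X$, with equality at some $x\in S$. Locally $u=h+\varphi\le v=h+\psi$ with $u(x)=v(x)$.
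The local comparison Lemma~\ref{lem:a_local_comparison} then gives $\mu(\{x\})=\MA(u)(\{x\})\le\MA(v)(\{x\})=\MAo(\psi)(\{x\})$, a contradiction.

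That lemma is itself not formal. The paper replaces $v$ by $v+\epsilon g$, for a homogeneous locally convex PL function $g>0$ away from the origin on a projective compactification of the local fan. It globalizes both functions to $A\omega$-psh functions via Construction~\ref{construction:sup_omega_psh_functions}. It then applies the global comparison principle on sublevel sets of their difference, which shrink to the point.

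Your (1)$\Rightarrow$(2) has the right shape but needs three corrections.

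First, $f_{\CS_n}\ge f$ and $f_{\CS_n}$ decreases as $S_n$ grows, so the envelopes $\env_\omega(f_{\CS_n})$ \emph{decrease}, not increase. Being bounded below by $\inf f$, they converge uniformly by Corollary~\ref{cor:decreasing_limit_of_psh_is_psh}.

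Second, to identify the limit with $\env_\omega(f)$, the samples must be dense in each sublevel set $\{f\le c\}\cap\xo$, $c\in\Q$, as in the paper, not merely in $\xo$. For lsc $f$, an inequality $\le f$ on a dense set does not propagate.

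Third, $\psi_n\coloneqq\env_\omega(f_{\CS_n})$ may lie strictly below $f$ at some points of $S_n$. Removing those points does not change the envelope: if $\psi_n'$ is the envelope of the reduced collection, then $(1-t)\psi_n+t\psi_n'\le f_{\CS_n}$ for small $t>0$, which forces $\psi_n'\le\psi_n$. Applying (1) to the reduced collection then puts $\MAo(\psi_n)$ on the contact set. This is what actually justifies $\int(f-\psi_n)\,\MAo(\psi_n)=0$.

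With these fixes, the limit argument via Theorem~\ref{thm:global_Bedford-Taylor_theory} and the Portmanteau theorem for the nonnegative lsc function $f-\env_\omega(f)$ goes through as in the paper.
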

The rest of this section is devoted to the proof of this theorem.
\subsection{Orthogonality implies derivability} 
We start by proving that the orthogonality property implies derivability of the energy functional. 
\begin{thm}\label{thm:derivation} Let $f \in \CPSH_{\omega}(X)$ and $g \in \nC^{0}(X)$. Let $\epsilon >0$.  Suppose that for all $t \in (-\epsilon, \epsilon)$, $f+tg$ verifies the orthogonality property. Then, 
\be \label{aim_equation_thm:derivation}
\left.\frac{d}{dt}\right|_{t=0} \E(\env_{\omega}(f+tg)) =\int_X g\ \MA_{\omega}(f).
\ee
\end{thm}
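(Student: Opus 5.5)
The plan follows the classical argument of \cite[Thm.\,7.4 / Lem.\,7.5]{BFJ15} (see also \cite{BBGZ13}): squeeze the difference quotient of $t\mapsto \E(\env_{\omega}(f+tg))$ between two integrals against \MongeA measures, using Corollary~\ref{cor:estimation_difference_energy_functional}, and then pass to the limit using continuity of the envelope and Theorem~\ref{thm:global_Bedford-Taylor_theory}\,(1).

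Set $\varphi_t\coloneqq \env_{\omega}(f+tg)$, which lies in $\CPSH_{\omega}(X)$ by Corollary~\ref{cor:psh_envelope}. Since $f\in\CPSH_{\omega}(X)$, we have $\varphi_0=f$. Monotonicity of the envelope (Remark~\ref{rmk:operator_psh_envelope_monotone_concave}) and its commutation with constants give $\|\varphi_t-\varphi_s\|_{C^0}\le |t-s|\,\|g\|_{C^0}$. Thus $t\mapsto\varphi_t$ is uniformly Lipschitz, and in particular $\varphi_t\rightrightarrows f$ as $t\to0$.

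Next apply Corollary~\ref{cor:estimation_difference_energy_functional} to the pair $(\varphi_t,\varphi_0)$:
\[
\int_X(\varphi_t-f)\,\MAo(\varphi_t)\le \E(\varphi_t)-\E(f)\le \int_X(\varphi_t-f)\,\MAo(f).
\]
For the right-hand side, $\varphi_t\le f+tg$ gives the bound $\le t\int_X g\,\MAo(f)$. For the left-hand side, use the orthogonality property for $f+tg$, namely $\int_X(f+tg-\varphi_t)\MAo(\varphi_t)=0$, so that $\varphi_t=f+tg$ holds $\MAo(\varphi_t)$-almost everywhere (the integrand being nonnegative). Hence the left side equals $t\int_X g\,\MAo(\varphi_t)$. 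One should check that orthogonality is applied at the contact set including the boundary; since $\MAo(\varphi_t)$ charges no mass on $\partial X$ (Theorem~\ref{thm:Monge-Ampere-measure-takes-no-mass-boundary}), this causes no trouble. Altogether,
\[
t\int_X g\,\MAo(\varphi_t)\le \E(\varphi_t)-\E(f)\le t\int_X g\,\MAo(f).
\]

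Dividing by $t>0$ and by $t<0$ (which reverses the inequalities), the one-sided difference quotients are trapped between $\int_X g\,\MAo(\varphi_t)$ and $\int_X g\,\MAo(f)$. Since $\varphi_t\rightrightarrows f$, Theorem~\ref{thm:global_Bedford-Taylor_theory}\,(1), applied with constant $\omega$ (or with the $\epsilon\theta$ perturbation taken trivial), gives $\MAo(\varphi_t)\rightharpoonup\MAo(f)$. Because $g\in C^0(X)$ and $X$ is compact, $\int_X g\,\MAo(\varphi_t)\to\int_X g\,\MAo(f)$, which yields \eqref{aim_equation_thm:derivation}.

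The main obstacle is the orthogonality step, namely extracting $\varphi_t=f+tg$ on the support of $\MAo(\varphi_t)$ in order to identify the lower bound. Everything else consists of formal consequences of concavity and of the continuity of the \MongeA operator under uniform convergence.
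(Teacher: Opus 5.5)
Your proof is correct and follows essentially the same route as the paper. Both apply Corollary~\ref{cor:estimation_difference_energy_functional} to $\env_{\omega}(f+tg)$ and $f=\env_{\omega}(f)$, use the orthogonality of $f+tg$ for the lower bound and $\env_{\omega}(f+tg)\le f+tg$ for the upper bound, and conclude with the continuity of $\MA_{\omega}$ from Theorem~\ref{thm:global_Bedford-Taylor_theory}. Your estimate $\|\env_{\omega}(f+tg)-f\|_{C^0}\le |t|\,\|g\|_{C^0}$ usefully makes explicit the uniform convergence that the paper uses implicitly; the remark about the boundary is unnecessary, since the orthogonality property is already an integral over all of $X$.
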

\begin{proof}
  Let $t \in (-\epsilon, \epsilon)$. We apply Corollary \ref{cor:estimation_difference_energy_functional} to $\envomega(f)$ and $\envomega(f+tg)$:
  \bens
  \int_X (\envomega(f+tg)-\envomega(f))\ \MAo(\envomega(f+tg)) & \le & \E(\envomega(f+tg))-\E(\envomega(f)) \\ & \le & \int_X (\envomega(f+tg)-\envomega(f))\ \MAo(\envomega(f)).
  \eens
Using the orthogonality property for $f+tg$ and the inequality $\envomega(f) \le f$, we obtain
\bes
t \int_X g\ \MAo(\envomega(f+tg))\le \int_X (\envomega(f+tg)-\envomega(f))\ \MAo(\envomega(f+tg)).
\ees
Similarly, we have 
\bes
\int_X (\envomega(f+tg)-\envomega(f))\ \MAo(\envomega(f)) \le t \int_X g\ \MAo(\envomega(f)).
\ees
On the other hand, the Bedford--Taylor theory (Theorem \ref{thm:global_Bedford-Taylor_theory}) implies that 
\[ 
\lim_{t \to 0} \int_X g\ \MAo(\envomega(f+tg)) = \int_X g\ \MAo(\envomega(f)).
\]
Combining the above results, Equation \eqref{aim_equation_thm:derivation} follows. 
\end{proof}

\subsection{Proof of Theorem~\ref{thm:equivalence_orthogonality_existence_of_solutions}}

$\ref{orth1} \Longrightarrow  \ref{orth2}$. Let $f\colon X \to \R \cup \{ + \infty \} $ be an lsc  function which is not constantly $+\infty$ on $\xo$. Since $X$ is a second countable space, one can take  a countable dense subset  $A_c$ of $ \{x \in \xo \,\st\, f(x) \le c \}$ for each $c \in \BQ$. Set 
\[
A=\bigcup_{c \in \BQ} A_c.
\]
Take a nested sequence of  finite subsets $S_1 \subset S_2 \subset S_3 \subset \dots$ of $A$ such that 
$A=\bigcup_{0}^{\infty} S_i$.  For each $i$, take the finite collection of pairs $\CS_i=\{ (x, f(x)) \}_{x \in S_i}$ and set $f_i\coloneqq f_{\CS_i}$. Note that $\{f_i\}_{i \ge 1}$ is a decreasing sequence of  functions converging pointwise to $f$. By Remark \ref{rmk:operator_psh_envelope_monotone_concave}, $\{\envomega(f_i)\}_{i \ge 1}$ is a decreasing sequence of $\omega$-psh functions. Since $X$ is compact, there exists $C \in \R$ such that $f \ge C$ on $X$. This implies that for each $i \ge 1$, we have $f_i \ge C$, and hence $\envomega(f_i) \ge C$. Thus, by Corollary \ref{cor:decreasing_limit_of_psh_is_psh}, $\envomega(f_i)$ converges uniformly to some function $\varphi \in \CPSH_{\omega}(X)$. 

We claim that $\varphi=\envomega(f)$. In fact, for each $i$, we have $\envomega(f_i)  \ge \envomega(f)$, hence $\varphi \ge \envomega(f) $. On the other hand, $\varphi \le f$ on $S_i$ for each $i \ge 1$. This implies that $\varphi \le f$ on $A$.  So $\varphi \le c$ on $A_c\subset \{f \le c \}$ for any $c \in \BQ$. Since $A_c$ is dense in $\{ f \le c \}$ and $\varphi \in \nC^{0}(X)$, it follows that  $\varphi \le c$ on $\{ f \le c \}$. Therefore, $\varphi \le f$ on $X$ and hence $\varphi \le \envomega(f)$, proving the claim.

\smallskip

Let $i \ge 1$. Define the finite subset $S'_i \coloneqq \{y \in S_i \ |\ \envomega(f_i)(y)=f(y) \} \subset X$, and the finite collection of pairs $\CS'_i \coloneqq \{(y, f(y))\ |\ y \in S'_i \}$. By assumption, we have $\envomega(f_{\CS'_i})=\envomega(f_i)$, and  $\Supp(\MAo(\envomega(f_{\CS'_i}))) \subset S'_i$. Therefore, 
\[
\int_{X} (f-\envomega(f_i))\, \MAo(\envomega(f_i))=\int_{S'_i} (f-\envomega(f_{\CS'_i}))\, \MAo(\envomega(f_{\CS'_i}))=0.
\]
Then, we have 
\bens
0 &\le& \int_{X} (f-\envomega(f))\, \MAo(\envomega(f)) \\
&=& \lim_{i \to +\infty} \left( \int_{X} (f-\envomega(f))\, \MAo(\envomega(f))-\int_{X} (f-\envomega(f_i))\, \MAo(\envomega(f_i)) \right) \\
&=& \lim_{i \to +\infty} (R_{1,i}+R_{2,i} ),
\eens
with 
\begin{align*}
R_{1,i} &\coloneqq \int_X (\envomega(f_i)-\envomega(f))\, \MAo(\envomega(f_i)), \,\, \text{and} \\
R_{2,i} &\coloneqq \int_{X} (f-\envomega(f))\, (\MAo(\envomega(f)) - \MAo(\envomega(f_i))). 
\end{align*}
On the one hand,
\bens
\lim_{i \to +\infty} R_{1,i}
\le \deg(\omega^d) \cdot  \lim_{i \to +\infty} \left\| \envomega(f) - \envomega(f_i) \right\|_{\nC^0(X)}  =0
\eens
On the other hand, Theorem \ref{thm:global_Bedford-Taylor_theory} implies that  $\MAo(\envomega(f_i)) \rightharpoonup  \MAo (\envomega(f))$. Since $(f-\envomega(f))$ is a nonnegative lsc function, Portmanteau theorem~\cite[Cor.\,2.2.6]{Bog18} implies that
\[
\lim_{i \to +\infty} R_{2,i} \le \liminf_{i \to +\infty} \left( \int_{X} (f-\envomega(f))\, \MAo(\envomega(f)) -  \int_{X} (f-\envomega(f))\, \MAo(\envomega(f_i)) \right) \le 0.
\]
Combining the above statements, we conclude that $f$ verifies the orthogonality property.

\medskip

$\ref{orth2} \Longrightarrow  \ref{orth3}$. This is obvious.

\medskip

$\ref{orth3} \Longrightarrow \ref{orth4}$. Let $\mu$ be a nonnegative Radon measure with compact support in $\xo$ of total mass $\deg(\omega^d)$. Let $\varphi \in \CPSH_{\omega}(X)$ be a maximizer of $\FE_{\mu}$ on $\PSH_{\omega}(X)$ as in Proposition \ref{prop:maximizer_Lagrangian}. We claim that the equality $\MA_{\omega}(\varphi) =\mu$ holds. 

Let $g$ be a continuous function on $X$. Consider the function 
  \[
  \GE(t) \coloneqq \E(\env_{\omega}(\varphi+tg)) - \int_{X}(\varphi+tg) d\mu.
  \]
Observe that 
\[
\GE(t)=\FE_{\mu}(\env_{\omega}(\varphi+tg))+\int_X (\env_{\omega}(\varphi+tg)-(\varphi+tg)) d\mu. 
\]
For any $t \ne 0$, we have $\GE(t) \le \GE(0)$, since $\int_X (\env_{\omega}(\varphi+tg)-(\varphi+tg)) d\mu \le 0$ and $\FE_{\mu}(\env_{\omega}(\varphi+tg)) \le \FE_{\mu}(\varphi)$. 

Applying Theorem \ref{thm:derivation}, we get 
\[
\left.\frac{d}{dt}\right|_{t=0} \GE(t) =\int_X g\ \MA_{\omega}(\varphi) - \int_{X} g \ d\mu  =0.
\]
Since this holds for all continuous functions $g$, we get $\MA_{\omega}(\varphi)=\mu$. 

\medskip

$\ref{orth4} \Longrightarrow \ref{orth1}$. We proceed by contradiction. Suppose that there exists a finite collection of pairs $\CS=\{ (x_j, \lambda_j) \in \xo \times \R \}_{1 \le j \le k}$ such that $\Supp( \MAo (\envomega(f_{\CS}))) \nsubseteq S$ where $S=\{ x_j \}_{1 \le j \le k}$. Consequently, $\MAo (\envomega(f_{\CS}))(S) < \deg (\omega^d)$. So we may choose a measure $\mu$ on $\xo$ such that $\mathrm{Supp}(\mu)=S$, $\mathrm{Vol}(\mu)=\mathrm{deg}(\omega^{d})$, and  $\mu(\{ x_j \}) > \envomega(f_{\CS}) (\{ x_j \})$ for each $j \in S$.

By assumption, $(X, \omega)$ satisfies the tropical Calabi--Yau theorem. Therefore, there exists $\varphi \in \PSH_{\omega}(X)$ such that $\MAo(\varphi)=\mu$. Adding a constant to $\varphi$, we may assume that $\max_{x \in S} \{\,\varphi(x)-\envomega(f_{\CS})(x) \,\}=0$. 

Let $x\in S$ such that $\varphi(x)=\envomega(f_{\CS})(x)$. Choose a neighborhood $U$ of $x$ such that $\omega=\dm \dmb h$ for some $h\in \LConv (U)$. Hence, $h+\varphi, h+\envomega(f_{\CS}) \in \LConv(U)$. By the definition of the envelope, we have $\varphi \le \envomega(f_{\CS})$ on $X$, which implies that $h+\varphi \le h+\envomega(f_{\CS})$  on $U$. At the same time, $(h+\varphi)(x)=(h+\envomega(f_{\CS}))(x)$, and $\MA((h+\varphi))(\{ x \}) > \MA((h+\envomega(f_{\CS})))(\{ x \})$ by constructions. But these contradict the following lemma.

  \begin{lemma} \label{lem:a_local_comparison}
      Let $U$ be an open subset in $\xo$, and let $x\in U$. Suppose that $u, v\in \LConv (U)$ such that $u(x)=v(x)$ and $u \le v$  on $U$. Then we have $\MA(u)(\{x\}) \le \MA(v)(\{x\})$. 
  \end{lemma}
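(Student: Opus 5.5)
The plan is to prove the lemma by a localized mass-conservation argument, in the spirit of the comparison principle (Theorem~\ref{thm:comparison_principle}).

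\textbf{Normalization.} Let $l$ be an affine support function of $u$ at $x$. Since $l\le u\le v$ near $x$ and $l(x)=u(x)=v(x)$, the function $l$ is also a local affine support of $v$ at $x$. Subtracting $l$ changes neither $\MA(u)$ nor $\MA(v)$, because $\dm\dmb l=0$. After shrinking $U$ to a basic neighborhood of $x$ in a chart $U\subset\supp\Sigma$, we may therefore assume
\[
0\le u\le v \ \text{ on } U, \qquad u(x)=v(x)=0 .
\]
Next, fix a function $g\in\LConv\cap\PL_{\hom}$ centered at $x$ with $g(x)=0$ and $g>0$ away from $x$; for instance the restriction of a convex conewise linear function on the ambient space, as in Lemma~\ref{lem:growth_rate_of_PL_hom_functions}. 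Choose a relatively compact open neighborhood $B\Subset U$ of $x$ with polyhedral boundary, and set $c\coloneqq\min_{\partial B} g>0$.

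\textbf{Gluing construction.} For $\delta>0$ define
\[
w_\delta\coloneqq\max\{\,v+\delta g,\; u+\delta c/2\,\}.
\]
By Proposition~\ref{prop:limit_of_lconv_is_lconv_boundary}\,(3), $w_\delta\in\LConv(U)$. On a neighborhood of $\partial B$ we have $v+\delta g\ge u+\delta c>u+\delta c/2$, so $w_\delta=v+\delta g$ there. At $x$ we have $u+\delta c/2=\delta c/2>0=v(x)+\delta g(x)$, so $w_\delta=u+\delta c/2$ on a neighborhood of $x$. Since the Monge--Amp\`ere measure is local, this gives
\[
\MA(w_\delta)(B)\ \ge\ \MA(w_\delta)(\{x\})\ =\ \MA(u)(\{x\}).
\]

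\textbf{Mass conservation.} The key step is to show that if $w_1,w_2\in\LConv(U)$ coincide on a neighborhood of $\partial B$, then $\MA(w_1)(B)=\MA(w_2)(B)$. For piecewise smooth (or PL) $w_i$ I would derive this from the integration-by-parts formula (Proposition~\ref{Green_formula}). Take a cutoff $\chi$ that equals $1$ on the region $\{w_1\ne w_2\}\cap B$ and is supported in $B$. Write $(\dm\dmb w_1)^d-(\dm\dmb w_2)^d=\dm\dmb(w_1-w_2)\wedge\zeta$ with $\zeta$ closed. Then move $\dm\dmb$ onto $\chi$; since $w_1-w_2$ vanishes wherever $\dm\dmb\chi\ne0$, the integral of $\chi$ against this difference is zero. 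For general locally convex functions I would pass to the limit using the local approximation by PL functions (Proposition~\ref{prop:locally_convex_approximable_functions}). The approximants need to be modified by the same max-gluing so that they still agree near $\partial B$. Continuity of the Monge--Amp\`ere operator (Theorem~\ref{thm:global_Bedford-Taylor_theory}\,(1), in its local form) then gives the claim, since $\partial B$ carries no mass in the limit for a suitable choice of $B$.

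Applying mass conservation to $w_1=w_\delta$ and $w_2=v+\delta g$ yields
\[
\MA(u)(\{x\})\ \le\ \MA(v+\delta g)(B).
\]

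\textbf{Limits.} As $\delta\to0$, $v+\delta g\to v$ uniformly, so $\MA(v+\delta g)\rightharpoonup\MA(v)$. By the Portmanteau theorem applied to the compact set $\overline B$,
\[
\MA(u)(\{x\})\ \le\ \limsup_{\delta\to0}\MA(v+\delta g)(\overline B)\ \le\ \MA(v)(\overline B).
\]
(Alternatively, expand by multilinearity: the cross terms carry a factor $\delta$ and have bounded mass by the Chern--Levine--Nirenberg estimate.) Finally, let $B$ shrink to $\{x\}$ along a nested sequence; this gives $\MA(u)(\{x\})\le\MA(v)(\{x\})$.

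\textbf{Main obstacle.} I expect the hardest step to be the localized mass-conservation statement for merely continuous locally convex functions. It requires a local version of the Bedford--Taylor continuity, together with care about possible mass on $\partial B$; choosing $B$ from a family of level sets of $g$, all but countably many of which carry no mass, should handle the latter.
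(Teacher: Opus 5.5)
Your argument is correct, and it takes a more local route than the paper's proof.

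\textbf{The paper's route.} The paper globalizes the problem. After refining $\Sigma$ so that $\comp{\Sigma}$ is prek\"ahler, with a potential $h>0$ away from $0$, it perturbs $v$ to $v_\epsilon=v+\epsilon h$. It then turns $u-Ah$ and $v_\epsilon-Ah$ into $A\omega$-psh functions $\varphi,\psi$ on $\comp{\Sigma}$ by max-gluing with constants. Claim~\ref{claim:4to1} shows that $U_\delta=\{\psi-\delta<\varphi\}$ is contained in $\{\epsilon h<\delta\}$. The global comparison principle (Theorem~\ref{thm:comparison_principle}) then gives $\MA(u)(U_\delta)\le\MA(v_\epsilon)(U_\delta)$, and the paper lets $\delta\to0$ and then $\epsilon\to0$.

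\textbf{How your route differs.} Your perturbation $\delta g$ plays exactly the role of $\epsilon h$: it creates strict separation between $u$ and the perturbed $v$ away from $x$. The difference is in how masses are compared.
\begin{itemize}
\item The paper's comparison principle ultimately rests on conservation of the total mass $\deg(\omega^d)$ on a compact variety (Corollary~\ref{cor:global_integration-by-parts_formula_and_volume_omega_psh}(2)).
\item You replace it by a local mass-conservation statement on $B$, and you shrink $B$ rather than the sets $U_\delta$.
\end{itemize}

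\textbf{What each route buys.} Your route does not need the compactification, the global $A\omega$-psh functions, or the case analysis in the Claim. In exchange, you must prove local conservation yourself. That requires Proposition~\ref{Green_formula}, the local PL approximation of Proposition~\ref{prop:locally_convex_approximable_functions} (whose proof itself uses the global regularization theorem on a compactification), the local Bedford--Taylor continuity of \cite[Prop.\,5.6]{APW1}, and locality of $\MA$. The paper gets all of this from its already established global theory.

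\textbf{A simplification.} Your worry about mass on $\partial B$ is unnecessary. The functions $w_\delta$ and $v+\delta g$ agree on an open neighborhood $N$ of $\partial B$. So do their PL approximants for large $i$, because the strict margin $v+\delta g>u+\delta c/2$ near $\partial B$ survives small uniform perturbations. You can therefore test against $\chi\in C_c(B)$ with $\chi=1$ on $B\setminus N$, and use that both the approximating and the limit measures coincide on $N$ by locality. No generic choice of $B$ among level sets of $g$ is needed.
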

  \begin{proof}
    This is a local statement, so we may assume that $(\xo, x)=(\supp{\Sigma}, 0)$, where $\Sigma$ is a tropical fan embedded in a linear space $E=N_{\R}$, and $0$ is the origin in $E$. After adding a local affine support function of $u$ at $0$, we assume further that $u(0)=v(0)=0$, and $0 \le u \le v$.

    By Corollary \ref{cor:refinement_of_open_tropical_varieties_are_projective}, possibly refining $\Sigma$ if necessary, we can suppose that the canonical compactification 
    $\comp{\Sigma}$ is \prekahler. Furthermore, there exists $h \in \LConv \cap \PL_{\hom}(\Sigma)$ such that $h>0$ on $\supp{\Sigma} \setminus \{0\}$, and $\omega \coloneqq \dm \dmb h$ is a semipositive simplicial form on $\comp{\Sigma}$.

    For $\epsilon >0$, we consider $v_{\epsilon}=v+\epsilon h \in \LConv(U)$. By local Bedford--Taylor theory \cite[Prop.\,5.6]{APW1},
    %cross-ref in APW1: \ref{prop:Local_Bedford-Taylor_theory} (Proposition \ref{prop:Local_Bedford-Taylor_theory}), 
    $\MA(v_{\epsilon}) \rightharpoonup \MA(v)$ when $\epsilon \to 0$. Moreover, $\MA(v_{\epsilon}) \ge \MA(v) \ge 0$ for any $\epsilon >0$, and $\MA(v_{\epsilon})(U)$ are uniformly bounded. Therefore, $\MA(v_{\epsilon}) \{0 \} \to \MA(v) \{0 \}$. It is thus enough to prove the lemma for $v_{\epsilon}$ for any $\epsilon>0$. 

    From now on, we fix $\epsilon>0$, and a constant $C \gg 1$. We choose an open neighborhood $V$ of $0$ such that $V  \Subset U$. By Proposition \ref{prop:Lipschitz_bounds_in_terms_of_C0_bounds}, $u$ and $v_{\epsilon}$ are Lipschitz on $V$. Therefore, there exists $A \gg 0$ such that $V_1\coloneqq \{(Ah-C) < u\} \Subset V$, and $V_2\coloneqq \{ Ah-(C-1) < v_{\epsilon} \} \Subset V$. We define functions $\varphi, \psi\colon\comp{\Sigma} \to \R $ by
    \[
    \varphi(y)=\begin{cases}
      u-Ah &  \quad \text{ if }y \in V_1 \\
      -C &  \quad \text{ if } y \in \comp{\Sigma} \setminus V_1
    \end{cases}\,, \qquad \text{and} \qquad
    \psi(y)=\begin{cases}
      v_{\epsilon}-Ah & \quad \text{ if } y \in V_2 \\
      -(C-1) & \quad \text{ if } y \in \comp{\Sigma} \setminus V_2
    \end{cases}\,.
    \]
    By Lemma \ref{lem:construction_sup_omega_psh_functions}, we have $\varphi, \psi \in \CPSH_{A\omega}(\ol\Sigma)$.

\smallskip

    For $\delta>0$, we define $W_{\delta}=\{\epsilon h-\delta <0 \} \subset \Sigma$. Since $h$ is homogeneous, and $h>0$ on $\Sigma \setminus \{0\}$,  we have $W_{\delta}=\delta \cdot W_1$, and $W_{\delta}$ is a bounded subset of $E$. Therefore, when $\delta \ll 1$, $W_{\delta} \subset V_{1} \cap V_2$. We also define the subset $U_{\delta}\coloneqq \{ \psi - \delta < \varphi \} \subset \comp{\Sigma}$. 
    
 \begin{claim}\label{claim:4to1} When $0< \delta \ll 1$, we have $U_{\delta} \subset W_{\delta}$. 
 \end{claim}
    \begin{proof}[Proof of the claim]
    Take $y \in U_{\delta}$. By definition, $\psi(y) - \varphi(y) < \delta$. We need to verify that $\epsilon h(y) < \delta$, which proves $y \in W_{\delta}$. 
    
    If $y \in V_1 \cap V_2$, $\psi(y)-\varphi(y)=v(y)+\epsilon h(y)-u(y) < \delta$. By assumption $v(y)-u(y) \ge 0$. Thus, $\epsilon h(y)<\delta$, as required.  
    
    Proceeding by a case by case analysis, we show that $y \in V_1 \cap V_2$ is the only possible case.

    \begin{enumerate}
      \item If $y \in \comp{\Sigma} \setminus (V_1 \cup V_2)$, then $\psi(y)-\varphi(y)=1 > \delta$, which is impossible.
      
      \item If $y \in V_1 \setminus V_2$, then by assumption, 
      \[ 
      v_{\epsilon}(y) \le  Ah(y)-C+1, \quad \text{and}
      \]  
      \[ 
      \psi(y)-\varphi(y)=Ah(y)-C+1-u(y)< \delta. 
      \] 
      This implies that $v(y)+\epsilon h(y)-u(y) \le Ah(y)-C+1-u(y)< \delta$. Since $v(y)-u(y) \ge 0$, it follows that $\epsilon h(y) < \delta$. Therefore, $y \in W_{\delta} \subset V_1 \cap V_2$, which is again impossible by our assumption on $y$.

      \item In the remaining case,  $y \in V_2 \setminus V_1$. Then by assumption, 
      \[
      Ah(y)-C+1 \le v_{\epsilon}(y), \quad \text{and}     
      \] 
      \[ 
      \psi(y)-\varphi(y)=v_{\epsilon}(y)-Ah(y)+C. 
      \] 
      This implies that $\psi(y)-\varphi(y) \ge 1$, which is impossible.  
    \end{enumerate}
    Therefore, the claim is proved. 
  \end{proof}

    For any $0 < \delta \ll 1$, Claim~\ref{claim:4to1} implies that $U_{\delta} \subset V_1 \cap V_2$. Therefore, by constructions,  $\MA_{A\omega}(\psi) (U_{\delta})=\MA(v_{\epsilon})(U_{\delta})$, and $\MA_{A\omega}(\varphi) (U_{\delta})=\MA(u)(U_{\delta})$.
    On the other hand, the comparison priciple (Theorem \ref{thm:comparison_principle}) implies that $\MA_{A\omega}(\psi) (U_{\delta}) \ge \MA_{A\omega}(\varphi) (U_{\delta})$. Therefore, $\MA(v_{\epsilon})(U_{\delta}) \ge \MA(u)(U_{\delta})$.

    Claim~\ref{claim:4to1} also implies that $\bigcap_{\delta>0} U_{\delta} \subset \bigcap_{\delta>0} W_{\delta}=\{0\}$. So when $\delta \to 0+$, we have $\MA(v_{\epsilon})(U_{\delta}) \to \MA(v_{\epsilon})(\{0 \})$, and $\MA(u)(U_{\delta}) \to \MA(u)(\{0 \})$. This finishes the proof of the lemma.
  \end{proof} 
  
  At this point, the proof of Theorem~\ref{thm:equivalence_orthogonality_existence_of_solutions} is complete.

\section{Orthogonality property in codimension one} \label{sec:Orthogonality_property_in_codimension_one}

Let $X$ be a compact \prekahler tropical  variety of dimension $d$ equipped with a strongly semipositive (1,1)-form $\omega$.

Recall that a point $x\in X$ is regular if it admits a tropical chart $\phi\colon W \to U \subset \R^d$, $x\in W$. In particular, this implies that $x\in \xo$. We introduce the following definition. 
\begin{definition} \label{def:regular_in_codim_one}
  We say that $x\in X$ is \emph{regular in codimension one} if there exists a tropical chart $U \to W \subset  \mathcal L \times \R^{d-1}$, where $\mathcal L$ is a generalized tropical line (see Section~\ref{subsec:tropical-varieties}) and $W$ is an open subset in $\mathcal L \times \R^{d-1}$. Denote by $\CR$ the open set of points of $\xo$ that are regular in codimension one. 
\end{definition}
We remark that being regular in codimension one is an open property. We also remark that $x \in X$ is regular in codimension one implies automatically that $x \in \xo$. 

\smallskip

In this section, we prove the following theorem.

\begin{thm} \label{thm:orthogonality_codimension_1}
Let $X$ be a compact \prekahler tropical  variety of dimension $d$ equipped with a strongly semipositive form $\omega$. 
Then, the orthogonality property holds on $\CR$ in the following sense: for any lsc function $f\colon X \to \R\cup\{+\infty\}$ which is not constantly $+\infty$ on $\xo$, we have 
\begin{equation}\label{equation:orthogonality_codimension_1} 
\int_{\CR} (f-\env_{\omega}(f)) \MA_{\omega}({\env_{\omega}(f)}) = 0.
\end{equation}
\end{thm}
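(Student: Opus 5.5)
The plan is to reduce to finite data, as in the implication $\ref{orth1}\Rightarrow\ref{orth2}$ of Theorem~\ref{thm:equivalence_orthogonality_existence_of_solutions}, and then prove a purely local statement on $\CR$.

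\textbf{Reduction to finite collections.} Take $f$, build the nested finite sets $S_i$, and set $f_i = f_{\CS_i}$ exactly as in that proof. Then $\env_\omega(f_i)$ decreases uniformly to $\env_\omega(f)$, and the measures converge weakly: $\MA_\omega(\env_\omega(f_i))\rightharpoonup\MA_\omega(\env_\omega(f))$ by Theorem~\ref{thm:global_Bedford-Taylor_theory}. Since $\CR$ is open and $(f-\env_\omega(f))\mathbf 1_{\CR}$ is a nonnegative lsc function, the Portmanteau argument used there still applies. It therefore suffices to prove, for every finite collection $\CS$ with underlying set $S$, that
\[
\MA_\omega\bigl(\env_\omega(f_{\CS})\bigr)\bigl(\CR\setminus S\bigr)=0,
\]
which is statement \ref{orth1} restricted to $\CR$. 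Replacing $S$ by the contact set $S'$ changes nothing, and on $S'$ the integrand vanishes.

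\textbf{Local maximality.} Fix $y\in\CR\setminus S$ and put $\varphi=\env_\omega(f_{\CS})$. Choose a chart $W\ni y$ identified with an open subset of $\CL\times\R^{d-1}$, with $\overline W\cap S=\emptyset$, and a local potential $h$. Set $u=\varphi+h\in\LConv(W)$. The key claim is that $u$ is maximal: if $v\in\LConv(W)$ satisfies $v\ge u$ and $\{v>u\}\Subset W$, then $v=u$. To see this, apply Construction~\ref{construction:sup_omega_psh_functions} and Lemma~\ref{lem:construction_sup_omega_psh_functions} to $\sup\{\varphi,v-h\}$. This gives an element of $\CPSH_\omega(X)$ that agrees with $\varphi$ on $S$, hence lies below $f_{\CS}$ (which is $+\infty$ off $S$). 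By the definition of the envelope it must equal $\varphi$, so $v=u$. It remains to show that a maximal locally convex function on an open subset of $\CL\times\R^{d-1}$ has zero Monge--Ampère mass.

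\textbf{Vanishing of the mass.} On the open $d$-dimensional faces this is classical. If $\MA(u)$ charged a small ball $B$ inside a facet, solve the real homogeneous Dirichlet problem on $B$ with boundary data $u$ (Perron--Bremermann). This produces $v\ge u$, convex on $B$, equal to $u$ on $\partial B$, with $\MA(v)=0$. Gluing $v$ with $u$ (after an $\epsilon$-shift to keep the set $\{v>u\}$ compact) contradicts maximality. The difficulty is the ridge $\{0\}\times\R^{d-1}$. There, by the description of mixed forms, $\MA(u)$ is the Chow-ring term: $A^1$ of the star fan of $\CL$ is one-dimensional, and the mass on the ridge is $\int_{\R^{d-1}}\Delta_{\CL}(u)\cdot\MA_{\R^{d-1}}(\cdot)$, where $\Delta_{\CL}(u)=\sum_\varrho\weight(\varrho)\,\partial_{\e_\varrho}u$ is the weighted sum of outgoing slopes. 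Since $\CL$ is a generalized tropical line, the only linear relation among the $\e_\varrho$ is the balancing relation. Consequently, an affine support along the ridge exists exactly when the weighted outgoing-slope sum is nonnegative (up to the tangential part). So if the ridge mass were positive on a small box $Q$, I would try to lift $u$ near the ridge: decrease the excess outgoing slopes while staying locally convex and agreeing with $u$ near $\partial Q$. Concretely, I would solve on each half-box $\R_{\ge0}\e_\varrho\times\R^{d-1}$ a Dirichlet problem whose value on the ridge is a new convex function $w\ge u$ chosen so that the slope sums become zero where possible. This would contradict maximality.

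I expect this ridge step to be the main obstacle. It requires a product-type Perron construction compatible with the one-relation structure of $\CL$, with Lemma~\ref{lem:a_local_comparison} and the comparison principle (Theorem~\ref{thm:comparison_principle}) controlling the masses of the glued competitors. It is also exactly the step that fails for non-matroidal star fans.
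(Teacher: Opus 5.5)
Your reduction to finite collections and your local maximality step are correct. They form the same skeleton as the paper's proof: the paper's Corollary~\ref{cor:local_trimming_codimension_1} is the contrapositive of your ``maximal $\Rightarrow$ no mass'' statement, and the paper reaches its contradiction with the same $\sup\{\varphi, v\circ\phi-h\}$ construction. Passing to the contact set $S'$ is legitimate, but it needs a small convexity argument: $(1-t)\env_\omega(f_{\CS})+t\,\env_\omega(f_{\CS'})\le f_{\CS}$ for small $t>0$. Your facet argument via the Perron envelope on a ball is also fine.

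The gap is the ridge $\{0\}\times\R^{d-1}$, which carries the whole content of the theorem. There you give a programme, not an argument.

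\begin{itemize}
\item Your formula $\int\Delta_{\CL}(u)\cdot\MA_{\R^{d-1}}$ for the ridge mass only makes sense for piecewise smooth $u$. For the functions that actually arise (PL envelopes of finite data), the ridge mass is atomic at vertices. It equals a mixed degree in the Chow ring of a star fan that is not a product. So ``lowering the excess slope sum'' has no direct meaning there.
\item More seriously, the half-box Dirichlet problems with an unknown ridge trace $w$ form a coupled, free-boundary-type problem. The outgoing slopes of the half-box solutions depend nonlocally on $w$. You would have to prove that some $w\ge u$ with $w\not\equiv u$ makes the glued function locally convex at every ridge point (nonnegative weighted slope sum with a common tangential subgradient), while agreeing with $u$ near $\partial Q$.
\item Nothing in your sketch produces such a $w$. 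Lemma~\ref{lem:a_local_comparison} and Theorem~\ref{thm:comparison_principle} cannot do it either: they compare the masses of given functions, they do not construct competitors.
\end{itemize}

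The paper avoids any PDE at the ridge. The missing idea is a global convex extension. Take $\Sigma=\Sigma_1\times\Sigma_2$ with $\supp{\Sigma_1}=\CL$. Extend $u$ by $+\infty$ off $\supp{\Sigma}$ and take its convex envelope $\DL(u)$ on the ambient space $E$; it restricts back to $u$ (Lemma~\ref{lem:equivalence_LConv_GConv_codimension_1}).

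This is exactly where the single linear relation of $\CL$ enters. An arbitrary convex combination in $E$ is split, via a point of $\{0\}\times E_2$, into a polyhedral convex combination plus a convex combination inside one closed half-space $\overline{H_+}$.

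Consequently, the $\omega$-psh envelope of finite data on $\comp{\Sigma}$ is the restriction of the classical convex envelope on $E$. That envelope is PL with vertices among the data points (Lemma~\ref{lem:characterization_polyhedral_structure_envelope}). A face-by-face analysis then shows that near any non-data point the envelope factors through a projection $\pi_\sigma$ with $\dim\pi_\sigma(\supp{\Sigma})\le n_2$, so the mass vanishes there (Theorem~\ref{thm:orthogonality_product_of_curve_and_Rn}).

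The local trimming statement follows by globalizing $u$ with Proposition~\ref{prop:example_weak_kahler_form_on_projective_tropical_variety} and applying this orthogonality on $\comp{\Sigma}$. To complete your proof you need either this reduction to the classical envelope on $E$, or an actual solution of your ridge problem.
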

We have the following consequences.
\begin{cor}\label{cor:domination} Let $\mu$ be a nonnegative Radon measure which is compactly supported in $\xo$ and has total volume $\deg(\omega^d)$. 
  Let $\varphi \in \CPSH_{\omega}(X)$  be a maximizer of $\FE_\mu$. Then $\MAo(\varphi) \le \mu$ on $\CR$, the open subset consisting of points regular in codimension one. 
\end{cor}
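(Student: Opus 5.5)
We will adapt the variational argument from the proof of the implication $\ref{orth3} \Longrightarrow \ref{orth4}$ in Theorem~\ref{thm:equivalence_orthogonality_existence_of_solutions}, but use only nonnegative test functions supported in $\CR$. In that regime Theorem~\ref{thm:orthogonality_codimension_1} provides the orthogonality we need, and we obtain a one-sided derivative, hence an inequality of measures.

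\textbf{Step 1 (setup).} Fix $g \in \nC^0(X)$ with $g \ge 0$ and $\Supp(g) \Subset \CR$. For $t \ge 0$, consider $f_t \coloneqq \varphi + t g$. This function is continuous and finite, so it is not constantly $+\infty$ on $\xo$. Put $\varphi_t \coloneqq \envomega(f_t)$, which lies in $\CPSH_\omega(X)$ by Corollary~\ref{cor:psh_envelope}. Since $\varphi \le f_t$, we have $\varphi_0 = \varphi$ and $\varphi \le \varphi_t \le \varphi + t g$. In particular $\varphi_t = \varphi$ outside $\Supp(g)$, and $\|\varphi_t - \varphi\|_{\nC^0} \le t\|g\|_{\nC^0}$.

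\textbf{Step 2 (one-sided derivative).} Define
\[
\GE(t) \coloneqq \E(\varphi_t) - \int_X f_t\, d\mu = \FE_\mu(\varphi_t) + \int_X (\varphi_t - f_t)\, d\mu .
\]
Since $\varphi$ maximizes $\FE_\mu$ and $\varphi_t \le f_t$, we get $\GE(t) \le \GE(0)$ for all $t \ge 0$.

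Next we bound $\E(\varphi_t) - \E(\varphi)$ from below using the left inequality of Corollary~\ref{cor:estimation_difference_energy_functional}:
\[
\E(\varphi_t) - \E(\varphi) \ge \int_X (\varphi_t - \varphi)\, \MAo(\varphi_t).
\]
The integrand vanishes outside $\Supp(g) \subset \CR$. On $\CR$, Theorem~\ref{thm:orthogonality_codimension_1} gives $\int_{\CR} (f_t - \varphi_t)\, \MAo(\varphi_t) = 0$. Writing $\varphi_t - \varphi = (f_t - \varphi) - (f_t - \varphi_t)$, we deduce
\[
\int_X (\varphi_t - \varphi)\, \MAo(\varphi_t) = t \int_X g\, \MAo(\varphi_t).
\]
Combining these with $\GE(t) \le \GE(0)$ yields
\[
t \int_X g\, \MAo(\varphi_t) - t \int_X g\, d\mu \le \GE(t) - \GE(0) \le 0 .
\]

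\textbf{Step 3 (passing to the limit).} Divide by $t > 0$ and let $t \to 0^+$. By Step 1, $\varphi_t \to \varphi$ uniformly, so Theorem~\ref{thm:global_Bedford-Taylor_theory}\,(1) gives $\MAo(\varphi_t) \rightharpoonup \MAo(\varphi)$. We conclude $\int g\, \MAo(\varphi) \le \int g\, d\mu$ for every nonnegative continuous $g$ compactly supported in $\CR$. Since $\CR$ is open, this is exactly $\MAo(\varphi) \le \mu$ on $\CR$.

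\textbf{Main point to check.} The crux is Step 2: the orthogonality statement is only available on $\CR$. Restricting to $g \ge 0$ with support in $\CR$ is what makes $\varphi_t - \varphi$ vanish off $\CR$, so no orthogonality is needed elsewhere. The price is that we only control $t \ge 0$, which is why the conclusion is an inequality rather than an equality. I expect the rest to be routine.
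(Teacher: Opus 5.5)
Your proof is correct and follows essentially the paper's route: you test against nonnegative $g$ supported in $\CR$, use Theorem~\ref{thm:orthogonality_codimension_1} to obtain orthogonality for $\varphi+tg$ (the integrand vanishes off $\Supp(g)$), and compare with the maximality of $\varphi$ through the function $\GE(t)$. The only difference is that you inline the lower-bound half of the proof of Theorem~\ref{thm:derivation}, via Corollary~\ref{cor:estimation_difference_energy_functional}, and pass to the limit directly. This avoids the fact that Theorem~\ref{thm:derivation} is stated for a two-sided range of $t$, whereas only $t\ge 0$ is available here.
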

The proof is given in Section~\ref{subsec:orthogonality_codimension_one_proof}, along with the proof of the following.
\begin{thm} \label{thm:existence_of_solutions_nice_tropical_varieties}
  Let $X$ be a compact \prekahler variety of dimension $d$ equipped with a strongly semipositive form $\omega$.  Assume that $\CR=\xo$, \textit{i.e.} every point $x \in \xo$ is regular in codimension one. Then,  the tropical Calabi--Yau theorem holds for $(X, \omega)$.  
\end{thm}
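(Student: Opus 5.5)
The plan is to deduce the statement from Theorem~\ref{thm:equivalence_orthogonality_existence_of_solutions}, using the implication \ref{orth3}$\Rightarrow$\ref{orth4}. Since $\CR=\xo$, Theorem~\ref{thm:orthogonality_codimension_1} gives, for every lsc $f\colon X\to\R\cup\{+\infty\}$ not identically $+\infty$ on $\xo$,
\[
\int_{\xo} (f-\env_{\omega}(f))\,\MA_{\omega}(\env_{\omega}(f)) = 0 .
\]
It remains to see that the boundary $\partial X = X\setminus \xo$ contributes nothing. By Corollary~\ref{cor:psh_envelope}, $\env_\omega(f)\in\CPSH_\omega(X)$. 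Theorem~\ref{thm:Monge-Ampere-measure-takes-no-mass-boundary} then gives $\MA_\omega(\env_\omega(f))(\partial X)=0$.

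The integrand $f-\env_\omega(f)$ is nonnegative, lsc, and possibly $+\infty$. On a null set it contributes zero by the usual convention for integrals of nonnegative extended-valued functions. Hence the integral over $X$ vanishes, so every lsc $f$ as above verifies the orthogonality property. In particular every $f\in \nC^0(X)$ does, which is condition~\ref{orth3}.

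Theorem~\ref{thm:equivalence_orthogonality_existence_of_solutions} now gives condition~\ref{orth4}: for each nonnegative Radon measure $\mu$ compactly supported in $\xo$ with total mass $\deg(\omega^d)$, equation~\eqref{eq:MA} has a solution $\varphi\in\CPSH_\omega(X)$. Concretely, $\varphi$ is a maximizer of $\FE_\mu$, which exists by Proposition~\ref{prop:maximizer_Lagrangian}. Uniqueness up to an additive constant is Theorem~\ref{thm:uniqueness}, which applies because $\omega$ is strongly semipositive. Together these are exactly the tropical Calabi--Yau theorem for $(X,\omega)$.

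A more direct alternative uses Corollary~\ref{cor:domination}. With $\CR=\xo$, a maximizer $\varphi$ of $\FE_\mu$ satisfies $\MA_\omega(\varphi)\le\mu$ on $\xo$ and puts no mass on $\partial X$. Corollary~\ref{cor:global_integration-by-parts_formula_and_volume_omega_psh}\,(2) gives total masses $\MA_\omega(\varphi)(X)=\deg(\omega^d)=\mu(X)$. A measure dominated by $\mu$ with the same total mass must equal $\mu$, so $\MA_\omega(\varphi)=\mu$.

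All the real content sits in Theorem~\ref{thm:orthogonality_codimension_1} (equivalently Corollary~\ref{cor:domination}), which I take as given. The only step needing care here is the boundary contribution, and Theorem~\ref{thm:Monge-Ampere-measure-takes-no-mass-boundary} handles it.
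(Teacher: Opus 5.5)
Your proposal is correct. Your ``more direct alternative'' is exactly the paper's proof. There, one takes a maximizer $\varphi$ of $\FE_{\mu}$ and uses Corollary~\ref{cor:domination} with $\CR=\xo$ to get $\MA_{\omega}(\varphi)\le\mu$ on $\xo$. Theorem~\ref{thm:Monge-Ampere-measure-takes-no-mass-boundary} removes $\partial X$, the total masses are compared via Corollary~\ref{cor:global_integration-by-parts_formula_and_volume_omega_psh}\,(2), and uniqueness is quoted from Theorem~\ref{thm:uniqueness}.

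Your main route is a genuinely different, and also valid, packaging of the same ingredients. You first upgrade Theorem~\ref{thm:orthogonality_codimension_1} to the global orthogonality property, using that $\CR=\xo$ and that $\partial X$ carries no Monge--Amp\`ere mass. You then invoke \ref{orth3}$\Rightarrow$\ref{orth4} of Theorem~\ref{thm:equivalence_orthogonality_existence_of_solutions}. That implication uses Theorem~\ref{thm:derivation} with two-sided variations $\varphi+tg$, $t\in(-\epsilon,\epsilon)$. This is legitimate here because every $\varphi+tg$ is continuous and hence verifies orthogonality.

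What your main route buys is a stronger conclusion. All four equivalent conditions of Theorem~\ref{thm:equivalence_orthogonality_existence_of_solutions} hold for $(X,\omega)$; for instance, $\mathrm{Supp}\bigl(\MA_{\omega}(\env_{\omega}(f_{\CS}))\bigr)\subset S$ for every finite collection of pairs $\CS$.

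What the paper's route buys is economy and localization. It needs only the one-sided inequality obtained from variations with $g\ge 0$ supported in $\CR$, and that inequality holds on $\CR$ for any $X$. The hypothesis $\CR=\xo$ then enters only at the final mass comparison, which isolates exactly what the regularity assumption contributes.
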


\begin{remark}
  In particular, Theorem \ref{thm:existence_of_solutions_nice_tropical_varieties} applies to a tropical variety $X$ which is a product of the form $\Gamma\times Z$ where $Z$ is a compact integral affine manifold and $\Gamma$ is a compact metric graph. Here, the tropical structure on $\Gamma$ is locally matroidal~\cite[\S\,5]{ACM}.  
\end{remark}

\subsection{A local result} \label{subsec:local_orthogonality_codim_1}
 Let $E_1=\R^{n_1}$ and $E_2=\R^{n_2}$, and set $E=E_1\times E_2 =\R^n$, $n=n_1+n_2$. Denote by $\pi_1, \pi_2$ the projection maps $E \to E_1$ and $E \to E_2$ respectively. Suppose either that $\mathcal L \subset E_1$ is a generalized tropical line or $\CL=E_1=\{0\}$. 
 In the first case, without loss of generality, assume further that $\CL$ generates  $E_1$ linearly. Consider the fanfold $\mathcal L \times E_2$ in $E$. 

 We fix a basis $\{e_1, \cdots, e_{n_1} \}$ of $E_1$. Without loss of generality, we can assume that $\rho\coloneqq \R_{\ge 0} e, \rho_1\coloneqq \R_{\ge 0} [e_1], \dots, \rho_{n_1}\coloneqq \BR_{\ge 0} [e_{n_1}]$ are all the $(n_1+1)$ rays of $\mathcal L$, where $e \coloneqq -(e_1 + \cdots + e_{n_1})$. The fanfold $\CL$ is equipped with a canonical fan structure $\Sigma_1$. We take $g_1 \coloneqq \max\{0, -x_1, -x_2, \cdots, -x_{n_1}\}$ and $h_1 \coloneqq g_1 \rest{\Sigma_1}$.

  Let $\Sigma_2$ be the standard complete fan on $E_2$ of the projective space. Take 
  \[
  h_2 \coloneqq \max\{0, -x_1, -x_2, \cdots, -x_{n_2}\},
  \]
   which is a strictly convex conewise linear function on $\Sigma_2$. 
 
 Therefore, $\Sigma \coloneqq \Sigma_1 \times \Sigma_2$ is a fan supported on $\CL \times E_2$. Define $h \coloneqq h_1 \circ \pi_1+h_2 \circ \pi_2 \in \LConv \cap \PL_{\hom}(\Sigma)$. Then, $\omega \coloneqq \dm \dmb h$ is a semipositive simplicial form on the canonical compatification 
 $\comp{\Sigma}$. Proposition \ref{prop:kahler_form_on_tropical_toric_varieties} implies that $[\omega]$ is a \kahler class on $\comp{\Sigma}$.

 \begin{thm} \label{thm:orthogonality_product_of_curve_and_Rn} 
 Notation as above, $(\comp{\Sigma}, \omega)$ verifies the equivalent properties~\ref{orth1}-~\ref{orth4} stated in Theorem \ref{thm:equivalence_orthogonality_existence_of_solutions}.
 \end{thm}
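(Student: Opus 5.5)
We will verify property \ref{orth1} of Theorem~\ref{thm:equivalence_orthogonality_existence_of_solutions} for $(\comp\Sigma,\omega)$. Fix a finite collection of pairs $\CS=\{(x_j,\lambda_j)\}$ with $S\subset \CL\times E_2$ and set $\varphi\coloneqq \envomega(f_{\CS})$, $u\coloneqq \varphi+h$, which is locally convex on $\CL\times E_2$. We must show that $\MA(u)$ vanishes on the open set $\Omega\coloneqq (\CL\times E_2)\setminus S$; by Theorem~\ref{thm:Monge-Ampere-measure-takes-no-mass-boundary} there is no mass on the boundary, so this suffices. The strategy is the classical one for convex envelopes: $u$ is maximal away from the contact set, and in our setting we make this precise by a local perturbation argument.

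\textbf{Step 1 (local maximality).} Take $y\in\Omega$. We claim $u$ is ``affine in some direction'' at $y$: if $u$ were strictly locally convex at $y$ in a suitable sense, we could lower-perturb. Concretely, suppose $\MA(u)(B)>0$ for a small neighborhood $B\Subset\Omega$ of $y$. Choose an affine support $l$ of $u$ at points of $B$. Lift with a small bump: the function $\max\{u,\, l'+\eta\}$ for appropriate affine $l'$ and $\eta>0$, with $\{u<l'+\eta\}\Subset B$, gives via Lemma~\ref{lem:construction_sup_omega_psh_functions} an element of $\CPSH_\omega$ strictly above $\varphi$ somewhere, still $\le f_{\CS}$ because $B\cap S=\emptyset$ (where $f_{\CS}=+\infty$). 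This contradicts the envelope definition once we know such $l'$ exists, i.e.\ once we know that positive Monge--Ampère mass forces a nontrivial affine function $l'$ with $\{u<l'+\eta\}$ relatively compact near $y$.

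\textbf{Step 2 (the geometric input).} This is where the product structure $\CL\times E_2$ is used, and it is the main obstacle. On the smooth part (points in $\relint$ of a top face, i.e.\ locally $\R^{d}$), the statement is the standard fact that the convex envelope of data on a finite set is piecewise affine with vertices only at data points, hence has zero real Monge--Ampère measure off $S$; one argues via Step~1 with the usual section argument (any point of positive real MA mass is an exposed point of the graph). At points of $\{0\}\times E_2$ we decompose $\MA(u)$ there: by the mixed calculus, mass on the codimension-one face $\{0\}\times E_2$ equals a Chow-ring term of the star fan $\CL$ times the real MA of the restriction to $E_2$ mixed with the jump of slopes (balancing defect) $\sum_i\partial_{\rho_i}u$. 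We will show both ingredients give a strict support: for a generalized tropical line the only linear relation among the $\e_{\rho}$ is balancing, so any collection of outgoing slopes with positive balancing sum is realized by an honest affine function $l$ on a neighborhood in $\CL\times E_2$ with $u-l$ having a strict minimum along the $\CL$-direction; combining with the $E_2$ direction (treated as in the smooth case, using that the restricted function to $\{0\}\times E_2$ is convex) yields $l'$ with compact sublevel set near $y$. Crucially the matroidal condition guarantees enough affine functions to separate all rays, which is exactly what fails in the generic case.

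\textbf{Step 3 (conclusion).} Hence $\MA(u)$ has no mass on $\Omega$, i.e.\ $\Supp \MAo(\envomega(f_\CS))\subset S$, which is \ref{orth1}; Theorem~\ref{thm:equivalence_orthogonality_existence_of_solutions} then gives \ref{orth2}--\ref{orth4}. The case $\CL=\{0\}$ is the toric case and follows from the smooth argument alone. I expect the delicate point to be Step~2 at points of $\{0\}\times E_2$ where $u$ restricted to $E_2$ is affine but the slope jump across $\CL$ is positive: there one must build $l'$ by mixing a strict $\CL$-direction support with a merely non-strict $E_2$ support, and I would first try perturbing $u$ by $\epsilon h$ (as in Lemma~\ref{lem:a_local_comparison}) and passing to the limit $\epsilon\to0$ using Bedford--Taylor continuity.
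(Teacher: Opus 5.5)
Your Step 1 is sound. If $\{u<l'+\eta\}$ is nonempty and relatively compact in $B$, with $B\cap S=\emptyset$, then Lemma~\ref{lem:construction_sup_omega_psh_functions} produces a competitor lying above $\envomega(f_{\CS})$, which is a contradiction.

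\textbf{The gap is Step 2, which carries the whole theorem.} What you need there is essentially the local trimming statement of Corollary~\ref{cor:local_trimming_codimension_1}. The paper obtains that corollary \emph{as a consequence} of this theorem. It is also exactly the kind of statement that fails in the generic case: Proposition~\ref{prop:local_failure_of_orthogonality} gives $u$ with $\MA(u)(\{0\})>0$ that cannot be raised at $0$. Your mechanism does not establish it, for two reasons.
\begin{enumerate}
\item The splitting of the mass on $\{0\}\times E_2$ as ``balancing defect times the real Monge--Amp\`ere measure of the restriction'' only makes sense for piecewise smooth $u$. A priori the envelope is merely continuous, and the splitting is ambiguous exactly where mass concentrates.
\item More seriously, a strict support along $\CL$ and a strict support of $u|_{\{0\}\times E_2}$ do not combine into a strict support, or even any support, on $\CL\times E_2$. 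One-sided directional derivatives of a convex function are only subadditive.
\end{enumerate}

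Here is a concrete example. Take $n_1=2$, $n_2=1$, coordinates $(x_1,x_2,t)$, rays $e_1=(1,0)$, $e_2=(0,1)$, $e=(-1,-1)$. Let $u$ be the restriction to $\CL\times\R$ of $|x_1-t|+|x_2|+\max\{0,-x_1,-x_2\}$.
\begin{itemize}
\item $u|_{\{0\}\times\R}=|t|$, and the outgoing slopes at $0$ along $e_1,e_2,e$ are $1,1,3$.
\item Your recipe allows $\CL$-slopes $0.9,0.9,-1.8$ and $E_2$-slope $1/2$, i.e.\ $l=0.9(x_1+x_2)+t/2$.
\item But on the ray $\R_{\ge 0}(1,0,1)\subset\rho_1\times\R$ one has $u\equiv 0<l$, so $l$ is not even a support.
\end{itemize}
A strict support does exist here ($l=-x_1$). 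Finding one in general, however, requires controlling $u$ along all mixed directions $(s e_\rho,w)$, and you give no argument for that. The same example illustrates the first problem: the defect is $3$ for $t\neq 0$ and $5$ at $t=0$, while $\MA_{\R}(|t|)$ is a Dirac mass at $0$.

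\textbf{The missing input is global: Lemma~\ref{lem:equivalence_LConv_GConv_codimension_1}.} On $\CL\times E_2$ one has $\DL(u)|_{\Sigma}=u$, i.e.\ every locally convex function is the restriction of a convex function on $E$. This is where the uniqueness of the balancing relation is used: a convex combination through a point of $\relint(\rho)\times E_2$ that involves points off that half-space can be rerouted through a point of $E_2$. The paper then argues as follows.
\begin{itemize}
\item $\varphi+h$ is the restriction of the classical envelope $v_0$ on $E$ with growth $g$.
\item By Lemma~\ref{lem:characterization_polyhedral_structure_envelope}, $v_0$ is PL on a complete simplicial structure $\Pi$ with $\Pi_0\subset S$ and recession fan $\Xi_1\times\Sigma_2$.
\item Near $x\in\relint(\sigma)\cap\supp{\Sigma}$ with $\dim\sigma\ge 1$, $\varphi+h$ factors through $E\to E/E_\sigma$.
\item For $\dim\sigma\le n_1-1$, either $\sigma\subset\supp{\Sigma}$ or $\relint(\sigma)\cap\supp{\Sigma}=\emptyset$. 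Hence $\dim\pi_\sigma(\supp{\Sigma})\le n_2<\dim\Sigma$ in all cases.
\item The Monge--Amp\`ere measure therefore vanishes near $x$ by the projection formula.
\end{itemize}
This proves \ref{orth1} with no local strict-support analysis. Once you have this input, your Step 1 becomes superfluous.
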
 
 The remaining of this subsection is devoted to prove Theorem \ref{thm:orthogonality_product_of_curve_and_Rn}.  
 We will show that $(\comp{\Sigma}, \omega)$ satisfies Property \ref{orth1} in Theorem \ref{thm:equivalence_orthogonality_existence_of_solutions}. 
 
 We start by defining a map
 \[
 \DL\colon \LConv(\Sigma) \to \Conv(E), 
 \]
 as follows. For each $u \in \LConv(\Sigma)$, let $u'\colon E \to \R \cup \{+ \infty \}$ be the extension of $u$ obtained by taking value $+\infty$ on $E \setminus \supp{\Sigma}$. Let $P$ denote the envelope operator given in Lemma \ref{characterization_convex_envelope}, and set $\DL(u) \coloneqq P(u')$, that is, 
  \bes
  \DL(u)(x) \coloneqq \inf \left\{\,\sum_{i\in I} \nu_i u'(x_i) \, \st \, (x, (x_i)_{i \in I}, (\nu_i)_{i \in I}) \text{ is a convex combination in } E \,\right\}, \forall\,\, x \in E.
  \ees
  Since the linear span of $\Sigma$ equals to $E$, we have $\DL(u)(x)<+\infty$ for any $x \in E$. 

  \smallskip
  Now we prove two auxiliary lemmas.

 \begin{lemma} \label{lem:equivalence_LConv_GConv_codimension_1} 
  For each $u \in \LConv(\Sigma)$, we have $\DL(u)\rest{\Sigma}=u$. In particular, the map $\DL\colon \LConv(\Sigma) \to \Conv(E)$ is an injection. Moreover, for each $v\in \Conv(E)$, we have $\DL(v\rest{\Sigma}) \geq v$ and $\DL(v\rest{\Sigma})\rest{\Sigma}=v\rest{\Sigma}$.
 \end{lemma}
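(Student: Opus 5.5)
The plan is to prove that $u$ is the restriction of a convex function on $E$. Since $\DL(u)=P(u')$ is the largest convex minorant of $u'$ (Lemma~\ref{characterization_convex_envelope}), the inequality $\DL(u)\rest{\Sigma}\le u$ is immediate from trivial convex combinations. So the content is the reverse inequality. It suffices to show that for every $x\in\supp{\Sigma}$ there is an affine function $l$ on the whole of $E$ with $l(x)=u(x)$ and $l\le u$ on $\supp\Sigma$. Then $l\le u'$ on $E$, hence $l\le P(u')$, and so $P(u')(x)\ge u(x)$.

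The first observation is about affine functions near the ridge. Near a point $(0,w)$ of the ridge $\{0\}\times E_2$, an affine function on $\CL\times E_2$ is linear on each of the $n_1+1$ rays in the $E_1$-direction, subject to the single balancing relation $e=-(e_1+\dots+e_{n_1})$. Since $e_1,\dots,e_{n_1}$ is a basis, such a function is the restriction of a global affine function on $E$. This is exactly where the generalized tropical line hypothesis enters. At points of a cone $C_k=\rho_k\times E_2$ (with $\rho_0\coloneqq\rho$), local affine functions are trivially restrictions of affine functions on $E$. Thus every local affine support $l$ is the restriction of some $l\in\Aff(E)$, and $v\coloneqq u-l$ is locally convex with a local minimum $0$ at $x$. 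Each $C_k$ is convex in $E$ and $v\rest{C_k}$ is convex. By the one-variable claim in the proof of Proposition~\ref{prop:properties_LConv_functions}, $v\ge0$ on every cone $C_k$ that contains $x$. In particular, if $x$ lies on the ridge we get $v\ge0$ on all of $\supp\Sigma$, and we are done.

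The main obstacle is the case $x=(p,z)$ with $p\in\relint(\rho_i)$. There $v\ge 0$ on $C_i$, including the ridge, but we must propagate nonnegativity into the other cones $C_j$. I would first try to modify $l$: choose it as a support of $u$ at $x$ whose extension is also a support at a ridge point, by taking the support at $(0,z)$ provided by the ridge case and adjusting its slope in the $e_i$-direction using convexity of $u\rest{C_i}$ along the segment from $(0,z)$ to $x$. Concretely, let $s$ be the one-sided slope of $u$ at $x$ along $-e_i$. The ridge case at $(0,z)$ gives a global support $l_0$. I would then use the equal-weight balancing convex combination $(0,w)=\sum_k \tfrac1{n_1+1}(t\,e_k,w)$, with $e_0\coloneqq e$ (valid because $e_0+e_1+\dots+e_{n_1}=0$), together with local convexity at the ridge, to show that a negative value of $v$ at some $y\in C_j$ would force a violation of $u(0,w)\le\sum_k\nu_k u(te_k,w)$ (Proposition~\ref{prop:equivalent_def_LConv}) at a suitable ridge point $w$ on the projection of the segment from $x$ to $y$. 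If this direct route fails, I would fall back on a minimum argument: consider $\min v$ on a large compact set and show that a negative minimum on $C_j$ can be pushed to the ridge by convexity on $C_j$, contradicting $v\ge0$ there.

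The remaining assertions are formal. Injectivity of $\DL$ follows from $\DL(u)\rest\Sigma=u$. For $v\in\Conv(E)$, the function $v$ is itself a convex minorant of $(v\rest\Sigma)'$, so $\DL(v\rest\Sigma)\ge v$. Finally, $v\rest\Sigma\in\LConv(\Sigma)$, so the first part gives $\DL(v\rest\Sigma)\rest\Sigma=v\rest\Sigma$.
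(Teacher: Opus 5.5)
Your reformulation is sound and in fact equivalent to the claim: for each $x\in\supp{\Sigma}$, find an affine $l$ on $E$ with $l(x)=u(x)$ and $l\le u$ on $\supp{\Sigma}$. The ridge case and the formal consequences at the end are also fine. The gap is the case $x=(p,z)$ with $p\in\relint(\rho_i)$, which carries all the content, and neither route you sketch closes it.

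There, $l\rest{\R e_i\times E_2}$ is dictated by $u\rest{C_i}$ at $x$. The only remaining freedom is in the transverse slopes $\lambda_k=l(e_k)$, $k\neq i$, constrained by $\sum_{k\neq i}\lambda_k=-\lambda_i$. Nonnegativity of $u-l$ on $C_k$ hinges on this choice: a convex function on the closed half-space $C_k$ that is nonnegative on the ridge may well be negative inside (take $\lambda_k$ large). So your fallback ``minimum argument'' yields no contradiction unless the $\lambda_k$ were chosen correctly, and you give no rule for choosing them.

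Your first route does not provide such a rule. Keeping the ridge support $l_0$ at $(0,z)$ and changing only its $e_i$-slope does not produce a support at $x$ in general, because the $E_2$-part of the subgradient of $u\rest{C_i}$ differs at $(0,z)$ and at $x$ (e.g.\ $u(te_i,w)=\sqrt{1+(w-t)^2}$ with $E_2=\R$, $z=0$). What you would actually need is that, for a suitable support at $x$, $\lambda_i+\sum_{k\neq i}\inf_{t>0,\,w}\bigl(u(te_k,w)-l(0,w)\bigr)/t\ge 0$. That inequality links values of $u$ on different half-spaces and is no easier than the lemma itself.

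The generalized-line hypothesis is not what drives your ridge case; that argument works for any one-dimensional fan in place of $\CL$. The hypothesis is needed exactly off the ridge, and there it is essential. Take the balanced fan with rays $e_1,e_2,-e_1,-e_1-e_2$ in $\R^2$ (weights $2,1,1,1$), and let $u$ be $0$ on all rays except $t\mapsto\max(0,t-\tfrac12)$ along $-e_1$. This $u$ is locally convex, yet $-e_1=\tfrac12(-2e_1-2e_2)+\tfrac12(2e_2)$ gives $P(u')(-e_1)\le 0<u(-e_1)$.

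The paper's proof instead verifies $u(x)\le\sum_j\nu_ju(x_j)$ directly for an arbitrary convex combination. It splits the $x_j$ into those in $H_+=\relint(\rho_i)\times E_2$ (barycenter $x'$, total weight $\mu'$) and the others (barycenter $x''$, weight $\mu''$). Then $x''$ lies in the span of $\rho_i\times E_2$ but not in $H_+$. This is precisely where the generalized-line property enters: no nonnegative combination of the other ray generators is a positive multiple of $e_i$. Hence $[x'',x']$ meets the ridge at a point $y=tx'+(1-t)x''$ with $t<\mu'$. Proposition~\ref{prop:equivalent_def_LConv} applied to the polyhedral convex combination at $y$, together with convexity of $u$ on $\overline{H_+}$ applied to $x=\frac{\mu'(1-t)-t\mu''}{1-t}\,x'+\frac{\mu''}{1-t}\,y$, then gives the inequality.
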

 \begin{proof}
  When $\mathcal L =\conezero$, we have $\Sigma=\Xi$.  Therefore, there is nothing to prove. In the remaining, we assume that $\mathcal L$ is a generalized tropical line.
 
  Let $u\colon \Sigma \to \R$ be an element of $\LConv(\Sigma)$, and set $\widetilde u =\DL(u)$. We need to show that $\widetilde u\rest{\Sigma} =u$, that is, to prove that for any convex combination $(x, (x_i)_{i \in I}, (\nu_i)_{i \in I})$ in $E$, we have 
  \be \label{aim_equation:lem:equivalence_LConv_GConv_codimension_1}
  u(x) \le \sum_{i \in I} \nu_i u(x_i).
  \ee
  Identify $E_2$ with $\{0\} \times E_2 \hookrightarrow E$.  Two cases occur depending on whether $x \in E_2 \cap U$ or not.
  
  \medskip

\noindent  (1) If $x \in E_2$, after applying an affine translation, we may assume that $x=0 \in E$. 
  Then, $(x, (x_i)_{i \in I}, (\nu_i)_{i \in I})$ is a polyhedral convex combination on $\Sigma$. Since $u \in \LConv(U)$, by Proposition \ref{prop:equivalent_def_LConv} the inequality  \eqref{aim_equation:lem:equivalence_LConv_GConv_codimension_1} holds.
  
  \smallskip

 \noindent (2) If $x \notin E_2$, there exists a ray $\rho \in \mathcal L$ such that $x \in \relint(\rho) \times E_2$.  Let $H_{+}$ denote $\relint(\rho) \times E_2$, and $W$ denote the linear subspace of $E$ generated by $\rho \times E_2$. 
  
  Partition the set $\{x_i\}_{i \in I}$ into $\{x_j\}_{j \in I_+} \subset H_+$ and $\{x_j\}_{j \in J} \subset \Sigma \setminus H_+$, $I = I_+ \sqcup J$ according to whether the points lie in $H_+$ or not. Since $x$ lies in $H_+$ and it is a convex combination of $x_i$, the set $I_+$ is nonempty. 
  
  Note that $u\rest{H_{+}} \in \Conv(H_{+})$. Therefore, if $J=\emptyset$, then the inequality in \eqref{aim_equation:lem:equivalence_LConv_GConv_codimension_1} follows automatically. Consequently, in the rest of the proof, we assume that $J$ is nonempty. Set 
  \[ \mu' \coloneqq \sum_{j \in I_+} \nu_j, \quad \text{and} \quad \mu'' \coloneqq\sum_{j \in J} \nu_{j}. \] 
  We do a normalization for the coefficients by
  \[ \nu'_j\coloneqq \nu_j/\mu',\ j \in I_+ \qquad \text{and} \qquad \nu''_{j}\coloneqq \nu_{j}/ \mu'', \ j \in J.\]
  Consider the points $x' \in H_+$ and $x'' \in E$ defined by
  \[ x'\coloneqq \sum_{j \in I_+} \nu'_j x_j \quad \text{and} \quad x'' \coloneqq \sum_{j \in J} \nu''_{j} x_{j}. \]
  Then, we have the convex combination 
  \[ x=\mu' x' + \mu'' x''. \]
  
  \begin{claim}  \label{claim:lem:characterization_polyhedral_structure_envelope}
    We have $x'' \in W \setminus H_{+}$.
  \end{claim} 
  \begin{proof}[Proof of the claim]
  The fact that $x, x' \in W$ implies that $x'' \in W$. It remains to show that $x'' \notin H_{+}$.

  Without loss of generality, we may assume that $\rho$ is generated by $e$. For any $j \in J$, the projection of $x_{j}$ onto $E_1$ has nonnegative coefficients in the basis $\{e_1, \dots, e_{n_1}\}$. Since $x''$ is a convex combination of the $x_{j}$, its $E_1$ component also has nonnegative coefficients. In contrast, the $E_1$ component of every element of $H_{+}$ has strictly negative coefficients under the same basis. Hence $x'' \notin H_{+}$.
  \end{proof} 

  Back to the proof of the lemma, note that $E_2$ is a hyperplane of $W$, and $H_+$ is one of the two half-spaces bounded by $E_2$. 
  Thus, Claim \ref{claim:lem:characterization_polyhedral_structure_envelope}  
  implies that there exists $0 < t < \mu'$ such that $y\coloneqq  tx'+(1-t)x'' \in E_2$. 
  After applying an affine translation, we may assume that $y=0 \in E_2$.  
  Therefore, $\left( y, \left( \{x_j\}_{j \in I_+} , \{x_{j}\}_{j \in J} \right), \left( \{t \nu'_j\}_{j \in I_+} , \{(1-t) \nu''_{j}\}_{j \in J} \right) \right)$ 
  is a polyhedral convex combination on $\Sigma$. 
  Hence, Proposition \ref{prop:equivalent_def_LConv} implies that 
  \be \label{ref_equation:1}
  u(y) \le \sum_{j\in I_+} t\nu'_{j} u(x_j)+ \sum_{j' \in J} (1-t)\nu''_{j} u(x_{j}).
  \ee
  At the same time, we have the relation $x=\frac{\mu'(1-t)-t\mu''}{1-t}x'+\frac{\mu''}{1-t}y$. Note that $\frac{\mu'(1-t)-t\mu''}{1-t}+\frac{\mu''}{1-t}=1$ since $\mu'+\mu''=1$. Moreover, $0< \frac{\mu''}{1-t} < 1 $ since $0<\mu''<1$ and $t<\mu'$.  Therefore, $x$ is a convex combination of $x'$ and $y$ on $\comp{H_+}$.  Since $u|_{\comp{H_+}}$ is convex,  it follows that 
  \be \label{ref_equation:2}
  u(x) \le \frac{\mu'(1-t)-t\mu''}{1-t} u(x')+ \frac{\mu''}{1-t} u(y).
  \ee
  Also, note that 
  \be \label{ref_equation:3}
  u(x') \le \sum_{j \in I_+} \nu'_{j} u(x_j).
  \ee
  Combining Equations \ref{ref_equation:1}, \ref{ref_equation:2}, and \ref{ref_equation:3}, we get Equation \ref{aim_equation:lem:equivalence_LConv_GConv_codimension_1}. 
 
  \smallskip

 Therefore, the proof is complete. 
 \end{proof}
 \begin{lemma} \label{lem:characterization_polyhedral_structure_envelope}
   Let $\Xi$ be a complete simplicial fan on $E=\R^n$. Let $h$ be a convex conwise linear function on $\Xi$. Suppose that $\CS= \{(x_i, \lambda_i) \in E \times \R \}_{1 \le i \le k}$ is a finite collection of pairs in the sense of Example~\ref{example_functions_with_values_at_finite_points}. We consider the function 
  \[ u_{\CS}\coloneqq \sup \left\{\, u \in \Conv(E) \ |\ |u-h|=O(1), u \le f_{\CS} \,\right\}. \] 
  Then $u_{\CS} \in \PL(E)$. Furthermore, there exists a complete simplicial polyhderal complex $\Pi$ on $E$ satisfying the following properties:
  \begin{enumerate}
  \item $\Pi_0 \subset \{ x_i \}_{1 \le i \le k}$. 
  \item The recession fan of $\Pi$ is $\Xi$, that is, for each $\sigma \in \Pi$, $\rec(\sigma) \in \Xi$. 
  \item $u_{\CS}$ is facewise linear on $\Pi$. 
  \end{enumerate}
 \end{lemma}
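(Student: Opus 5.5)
We plan to describe $u_{\CS}$ explicitly as a lower convex hull, in the spirit of Lemma~\ref{characterization_convex_envelope}. Write $h$ as the maximum of finitely many linear functions $m_1,\dots,m_r$, one for each maximal cone of $\Xi$; this uses only the convexity of $h$. Then $h-C\le u\le h+C$ says exactly that the recession function of $u$ is $h$. Consider the polyhedron
\[
\Gamma\coloneqq \conv\{(x_i,\lambda_i)\}_{1\le i\le k} + \mathrm{epi}(h)\ \subset E\times\R,
\]
where $\mathrm{epi}(h)=\{(y,t)\st t\ge h(y)\}$ is a closed convex polyhedral cone. Since $\Xi$ is complete, $h$ is finite everywhere and $\mathrm{epi}(h)$ projects onto $E$. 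Hence the lower boundary $w(x)\coloneqq\inf\{t\st (x,t)\in\Gamma\}$ is a finite convex PL function on $E$. Its recession function is $h$, so $|w-h|=O(1)$.

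First we check that $w=u_{\CS}$. We have $w(x_i)\le\lambda_i$, so $w\le f_{\CS}$ and hence $w\le u_{\CS}$. For the reverse inequality, take a convex $u$ with $u\le f_{\CS}$ and $u\le h+C$. The recession function of $u$ is then at most $h$, so $u(y+z)\le u(y)+h(z)$ for all $y,z$. Together with convexity and $u(x_i)\le\lambda_i$, this forces $u(x)\le t$ for every $(x,t)\in\Gamma$. So $u\le w$, which gives $u_{\CS}=w$ and in particular $u_{\CS}\in\PL(E)$.

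Next we build $\Pi$. The bounded part of the domains of linearity comes from the lower faces of $\Gamma$. Every lower face $F$ of $\Gamma$ has the form $\conv\{(x_i,\lambda_i)\}_{i\in I_F}+\mathrm{epi}(h)_{\text{face}}$. The face of $\mathrm{epi}(h)$ appearing here is the graph of $h$ over a union of cones of $\Xi$ on which $h$ is linear, so its projection is a cone $\tau_F$ in the support of a subfan of the linear region of $h$. Projecting the lower faces of $\Gamma$ to $E$ gives a polyhedral complex $\Pi^{\mathrm{raw}}$. Its vertices are among the $x_i$, since every vertex of a lower face is $(x_i,\lambda_i)$, the cone part having apex $0$. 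Its recession cones $\tau_F$ are unions of cones of $\Xi$, and $u_{\CS}$ is affine on each of its cells. The affine function on a cell is linear only after translation, so ``facewise linear'' in (3) is to be read as ``facewise affine''.

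Finally we refine $\Pi^{\mathrm{raw}}$ to a simplicial complex with recession fan exactly $\Xi$, without adding vertices. We intersect each cell $P=Q+\tau_F$ with the structure induced by $\Xi$ on $\tau_F$: replace $P$ by the cells $Q+\delta$ for the cones $\delta\in\Xi$ contained in $\tau_F$. Then we triangulate the polytopes $Q$ using only their vertices, for example by a placing/pulling triangulation ordered by the $x_i$, compatibly across cells. We then combine the two operations as in the proof of Corollary~\ref{cor:refinement_of_open_tropical_varieties_are_projective}, taking $\tau+\delta$ with $\tau$ a simplex and $\delta\in\Xi$.

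The main obstacle is this last step. The cells $\tau+\delta$ must be simplicial, which requires $T_\tau\cap T_\delta=\{0\}$, and they must fit together into a complex. A direct Minkowski sum may fail both conditions. We would first try to handle this by applying a regular pulling triangulation to the whole unbounded complex, viewed via its homogenization cone in $E\times\R_{\ge0}$. Pulling successively the points $(x_i,1)$ and then the rays of $\Xi$ at height $0$ triangulates the cone complex using only these rays. Dehomogenizing yields simplicial polyhedra whose vertices are among the $x_i$ and whose recession cones are cones of $\Xi$, which is what is needed.
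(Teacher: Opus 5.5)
Your argument is correct in substance but follows a different route from the paper's. The paper cites \cite[Prop.\,9.2]{BFJ15} for $u_{\CS}\in\PL(E)$ and takes the domains of linearity of $u_{\CS}$ as cells. It rules out vertices away from the $x_i$ analytically, via the domination principle of \cite{BFJ15} and a Legendre-transform argument showing that the local model of $u_{\CS}$ at such a point is invariant along some direction. You instead identify $u_{\CS}$ with the lower boundary of $\Gamma=\conv\{(x_i,\lambda_i)\}+\mathrm{epi}(h)$. Both of your inequalities are right; in particular $u(y+z)\le u(y)+h(z)$ follows from $u\le h+C$ and the sublinearity of $h$. The vertex bound then reduces to the elementary fact that vertices of a lower face come from the polytope summand.

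This buys self-containment, and it also computes the recession cones explicitly as $\tau_F=\{h=l_0\}$. That computation shows the paper's sentence ``$|u_{\CS}-h|=O(1)$ implies Property (2)'' needs each $\tau_F$ to be a single cone of $\Xi$. This holds when $h$ is strictly convex on $\Xi$, which is the case in the application ($g=g_1\circ\pi_1+h_2\circ\pi_2$ on $\Xi_1\times\Sigma_2$). Your homogenization also makes rigorous the paper's brief ``breaking only the compact faces into simplices'', which by itself does not guarantee $T_\tau\cap T_\delta=\{0\}$.

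Your last step needs a correction. First, drop the replacement of $P$ by the sets $Q+\delta$: they overlap in interiors. For example, with $Q=[0,e_1]$ and $\tau_F$ the positive quadrant of $\mathbb{R}^2$ cut by $\mathbb{R}_{\ge0}(e_1+e_2)$, both pieces contain $(1,\tfrac12)$ in their interior.

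Second, pulling the height-zero rays does not in general yield recession cones in $\Xi$ when $\tau_F$ is a union of several cones of $\Xi$. Take $\Xi$ with rays $e_1,e_1+e_2,e_2,-e_1-e_2$, $h=\max\{0,-x_1,-x_2\}$ and $\CS=\{(0,0)\}$. If you pull $e_1$ before $e_1+e_2$, you obtain the cell $\mathrm{Cone}(e_1,e_2)\notin\Xi$. If $\Xi$ restricted to $\tau_F$ is a non-regular triangulation, no pulling order can reproduce it, since pulling refinements of the regular raw subdivision stay regular. If $\tau_F$ contains a line, the height-zero cells are not pointed and pulling a ray in the lineality space is not meaningful.

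The remedy is to pull only the points $(x_i,1)$. Each resulting cell is then $\mathrm{Cone}\bigl((x_{i_1},1),\dots,(x_{i_a},1)\bigr)\oplus(G\times\{0\})$. Here the pulled vectors are linearly independent modulo $\mathrm{span}(G)$, and $G$ is a face of some $\tau_F$, hence a union of cones of $\Xi$. Now subdivide each such cell into $\mathrm{Cone}\bigl((x_{i_1},1),\dots,(x_{i_a},1)\bigr)+\delta\times\{0\}$ for $\delta\in\Xi$ with $\delta\subset G$. These cones are simplicial and glue into a fan. They dehomogenize to $\conv(x_{i_1},\dots,x_{i_a})+\delta$, which have independent tangent spaces, vertices among the $x_i$, and recession cone $\delta\in\Xi$. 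When every $\tau_F$ already lies in $\Xi$, $G$ is simplicial and pulling the height-zero rays changes nothing, so in that case your procedure works as stated.
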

 \begin{proof} The first assertion that $u_{\CS} \in \PL(E)$ is shown in \cite[Prop.\,9.2]{BFJ15}. For any affine function $l$ on $E$, we denote $\sigma_{l}\coloneqq \left\{ x \in E\ \st \ u(x)=l(x) \right\}$. Since $u_{\CS}$ is PL, there exists a finite subset $\{l_i\}_{i \in I} \subset \Aff(E)$ such that  $E=\bigcup_{i \in I}\sigma_{l_i}$, and $\dim \sigma_{l_i}=n$ for any $i \in I$. Since $u_{\CS}$ is convex, $\sigma_{l_i}$ is a polyhedron for any $i \in I$. Therefore, $\{ \sigma_{l_i} \}_{i \in I}$ generates a polyhedral structure $\Pi'$ with support $E$. Then, $u_{\CS}$ is facewise linear on $\Pi'$. 
  Moreover, $|u_{\CS}-h|=O(1)$ implies Property (2) holds for $\Pi'$. 
 
  We now show that $\Pi'_{0} \subset \{ x_i \}_{1 \le i \le k}$. Let $x \in E \setminus \{ x_i \}_{1 \le i \le k}$. Then, the domination principle (see Lemma 8.5 in \cite{BFJ15})  implies that  $\MA_{\R}(u_{\CS})(\{x \})=0$. Let $u \in \Conv \cap \PL_{\hom}(E)$ such that there exist an affine transform $A \colon E \to E$ and an open neighborhood $U$ of $x$ satisfying $A(x)=0$ and $u_{\CS}=u \circ A$ on $U$. 
  Therefore, $\MA_{\R}(u)(\{x \})=\mu_{\mathrm{leb}}(\dom(u^*))=0$. Here, $u^* \in \Conv(E^*)$ is the Legendre transform of $u$, and $\dom(u^*) \coloneqq \{u^{*}< +\infty\} \subset E^*$. This implies that $\dom(u^*)$ is a polytope of dimension strictly less than $n$. 
  Let $x' \in E$ such that $\langle x', \dom(u^*) \rangle=0$. Then we have $u=v \circ \pi_{x'}$ for  $v \in \Conv \cap \PL_{\mathrm{hom}}(\pi_{x'}(E))$, where $\pi_{x'} \colon E  \to E/\R{x'}$ is the quotient map, and $\R{x'}$ is linear space of $x'$. 
  Therefore, for any $\sigma_{l_i}$ containing $x$, $x$ is not an extremal point of $\sigma_{l_i}$. This implies that $x \notin \Pi'_{0}$, and hence $\Pi'_{0} \subset \{ x_i \}_{1 \le i \le k}$. 
 
  We take a refinement  $\Pi$ of $\Pi'$ keeping $\Pi_0=\Pi'_0$, breaking only the compact faces into simplices, to obtain a simplicial polyhedral structure $\Pi$. Then $\Pi$ satisfies (1). Moreover, since we did not touch the recession cones, 
  the recession fan of $\Pi$ is $\Xi$ and (2) is satisfied. Being a refinement of $\Pi'$, and $u_{\CS}$ being facewise linear on $\Pi'$, Property (3) is also verified by $\Pi$.
 \end{proof}

 \smallskip

 We are now ready to prove Theorem \ref{thm:orthogonality_product_of_curve_and_Rn}.
 
 \begin{proof} [Proof of Theorem \ref{thm:orthogonality_product_of_curve_and_Rn}] \label{proof:prop:orthogonality_product_of_curve_and_Rn}
  Let $\CS=\{ (x_i, \lambda_i) \in \supp{\Sigma} \times \R \}_{1 \le i \le k},$ be a finite collection of pairs and set $\varphi = \env_{\omega}(f_{\CS})= \sup \left\{\psi \in \CPSH_{\omega}\ \st \ \psi \le f_{\CS} \right\}$. Let $S$ denote the finite subset $\{ x_i \}_{1 \le i \le k} \subset \supp{\Sigma}$. We will prove that $\Supp(\MAo(\varphi)) \subset S$. 
 By Theorem \ref{thm:equivalent_def_psh}, the map
  \[
  \CPSH_{\omega}(\comp{\Sigma})  \longrightarrow \LConv_{h}(\Sigma) \quad 
  \psi \lms \psi+h
  \]
  is a bijection, where $ \LConv_{h}(\Sigma)\coloneqq \{u \in \LConv(\Sigma) \st u-h=O(1) \}$. Hence, denoting  $\CS'=\{ (x_i, \lambda_i+h(x_i)) \in \supp{\Sigma} \times \R \}_{1 \le i \le k}$, and 
  \[
  u_{0} \coloneqq \sup \left\{ u \in \LConv_{h}(\Sigma) \ \st \ u \le f_{\CS'} \right\},
  \]
  we have $\varphi+h=u_0$.
 
  Consider $\DL(h) \in \Conv(E)$. It is not hard to verify that $\DL(h)=g$, where $g \coloneqq g_1 \circ \pi_1+h_2 \circ \pi_2 \in \Conv \cap \PL_{\hom}(E)$. We define $\Conv_{g}(E) \coloneqq \{v \in \Conv(E) \ |\ v-\DL(h)=O(1)\}$, and 
  \[
  v_{0}\coloneqq \sup \left\{ v \in \Conv_{g}(E) \ \st \ v \le f_{\CS'} \right\}.
  \]

  We claim that $v_{0}\rest{\Sigma}=u_{0}$ on $\Sigma$. It is clear that $v_{0}\rest{\Sigma} \le f_{\CS'}$. Since  $\DL(h) \rest{\Sigma}=g \rest{\Sigma}=h$, we have $v_{0}\rest{\Sigma} \in \LConv_{h}(\Sigma)$. Therefore,  by definition $v_{0}\rest{\Sigma}\le u_0$.    On the other hand, by definition it is not hard to check that $\DL(u_0)-\DL(h)=O(1)$. Thus, $\DL(u_0) \in \Conv_{g}(E)$. By Lemma \ref{lem:equivalence_LConv_GConv_codimension_1}, $\DL(u_0) \rest{\Sigma}=u_0$. Hence, $ \DL(u_0)\le f_{\CS'}$, which implies that $\DL(u_0) \le v_0$ on $E$. We conclude that $u_0=\DL(u_0) \rest{\Sigma} \le v_0$ on $\Sigma$. Therefore, the claim is proved.

  In the remaining part of the proof, we show that $\Supp(\MA(u_0)) \subset S$, where $\MA$ denotes the local tropical \MongeA operator on $\Sigma$.  Let $\Xi_1$ be the complete fan on $E_1$ defining the tropical projective space, and set $\Xi \coloneqq \Xi_1 \times \Sigma_2$. Notice that $g$ is a convex conewise linear function on $\Xi$. We apply Lemma \ref{lem:characterization_polyhedral_structure_envelope} to $g$ and $\CS'$, and let $\Pi$ be the complete simplicial polyhedral structure on $E$ satisfying the properties stated in the lemma. Due to Property (1) in the lemma, it is enough to prove that $\MA(u_0)(E \setminus \Pi_{0})=0$. Let $\sigma \in \Pi$ such that $\dim \sigma \ge 1$, and let $x \in \interior(\sigma) \cap \Sigma$. Let $E_{\sigma}$ be the linear subspace of $E$ parallel to the affine span of $\sigma$, and let $\pi_{\sigma} : E \to E/E_{\sigma}$ denote the quotient map.

  Locally around $x$, we have $u_0=v \circ \pi_{\sigma}$ for some $v \in \PL(\pi_{\sigma}(\Sigma))$. By assumption, $\dim \Sigma=n_2+1$. By \cite[Prop.\,5.6]{APW1},
  %cross-ref in APW1: \ref{prop:compatibility_Monge-Ampere_projections} , 
  we have $(\pi_{\sigma})_{*}((\dm \dmb u_0)^{ \wedge (n_2+1)})=(\dm \dmb v)^{\wedge (n_2+1)}$ around $\pi_{\sigma}(x)$. Therefore, it suffices to show that $\dim \pi_{\sigma}(\Sigma) \le n_2$ around $x$, as this will imply then $(\dm \dmb v)^{\wedge (n_2+1)}=0$.  
  
In the case $\dim \sigma \ge n_1$, it is clear that $\dim \pi_{\sigma}(\Sigma) \le \dim E/E_{\sigma} \le n_2$. On the other hand, we have the following claim.
  \begin{claim} \label{claim:proof:prop:orthogonality_product_of_curve_and_Rn} 
    Suppose that $1 \le \dim \sigma \le n_1-1$. Then, either $\sigma \subset \Sigma$, or $\relint(\sigma) \cap \supp\Sigma =\emptyset$.
  \end{claim}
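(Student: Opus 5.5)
The plan is to reduce the claim to a statement about the projection $\tau\coloneqq\pi_1(\sigma)\subset E_1$, and then to exploit the fact that the only linear relation among $e,e_1,\dots,e_{n_1}$ is $e+e_1+\dots+e_{n_1}=0$.

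\textbf{Reduction to $E_1$.} Since $\supp{\Sigma}=\CL\times E_2=\pi_1^{-1}(\CL)$, we have $\sigma\subset\supp\Sigma$ if and only if $\tau\subset\CL$. Also $\pi_1(\relint\sigma)=\relint\tau$, so $\relint(\sigma)\cap\supp\Sigma\neq\emptyset$ if and only if $\relint(\tau)\cap\CL\neq\emptyset$. Moreover $\dim\tau\le\dim\sigma\le n_1-1$.

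\textbf{Structure of $\tau$.} By Lemma~\ref{lem:characterization_polyhedral_structure_envelope}, $\sigma=\conv(\sigma_0)+\rec(\sigma)$, where $\sigma_0\subset S\subset\supp\Sigma$ is the set of vertices and $\rec(\sigma)\in\Xi=\Xi_1\times\Sigma_2$. Hence
\[
\tau=\conv\bigl(\pi_1(\sigma_0)\bigr)+C,
\]
where $C=\pi_1(\rec(\sigma))$ is a cone of $\Xi_1$. Such a cone is spanned by a proper subset $R$ of $\{e,e_1,\dots,e_{n_1}\}$. Each point $\pi_1(v)$, $v\in\sigma_0$, lies on a ray of $\CL$: it is $s_v a_v$ with $s_v\ge0$ and $a_v\in\{e,e_1,\dots,e_{n_1}\}$. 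Let $A$ be the union of $R$ and the generators $a_v$ with $s_v>0$. Then $\tau\subset\mathrm{Cone}(A)$.

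\textbf{Case $\card{A}\le n_1$.} The vectors in $A$ are linearly independent, so every point of $\mathrm{Cone}(A)$ has unique nonnegative coordinates in $A$, and these coordinates are linear functions. Moreover $\CL\cap\mathrm{Cone}(A)$ is the union of the rays $\R_{\ge0}a$ for $a\in A$.

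Suppose some $p\in\relint(\tau)$ lies in $\CL$, say $p\in\R_{\ge0}a$. Then every coordinate of $p$ other than the $a$-coordinate vanishes. Each such coordinate is a nonnegative affine function on $\tau$ vanishing at a relative interior point, so it vanishes identically on $\tau$. Therefore $\tau\subset\R_{\ge0}a\subset\CL$, and so $\sigma\subset\supp\Sigma$. The same argument covers $p=0$, where all coordinates vanish.

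\textbf{Case $\card{A}=n_1+1$.} This is the case I expect to be the main point. Here I would show $\dim\tau\ge n_1$, which contradicts $\dim\tau\le n_1-1$.

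The affine hull of $\tau$ contains the points $s_va_v$ with $s_v>0$ and is invariant under translation by the vectors in $R$. Suppose an affine dependence among these data drops the dimension below $n_1$. It would give a nontrivial linear relation among the vectors of $A$ in which the coefficients of the points $s_va_v$ sum to zero.

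The only linear relation is $\sum_{a}a=0$, with all coefficients equal. So such a dependence has coefficients $c$ on every vector, in particular on each of the points $s_va_v$; these coefficients sum to zero only if $c=0$. (Since $R$ is proper, at least one point $s_va_v$ occurs.) A point lying on the same ray as a generator in $R$ is handled by noting that it only adds a translation direction that is already present. Hence $\dim\tau=n_1$, a contradiction, so this case cannot occur.

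Combining the two cases proves the claim. The only delicate step is the careful dimension count in the last case, keeping track of vertices at $0$ and of vertices lying on rays already in $R$.
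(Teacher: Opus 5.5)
Your proof is correct. Its core, the case $\card{A}\le n_1$, is the same idea as the paper's: $\pi_1(\sigma)$ lies in a simplicial cone spanned by at most $n_1$ rays of $\CL$, and that cone meets $\CL$ only along its extremal rays. Hence a relative-interior point of $\pi_1(\sigma)$ can lie in $\CL$ only if $\pi_1(\sigma)$ sits on a single ray.

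The differences are in the bookkeeping.
\begin{itemize}
\item \textbf{The paper.} It first reduces to bounded $\sigma$, citing Property~(2) of Lemma~\ref{lem:characterization_polyhedral_structure_envelope} and the fact that each cone of $\Xi$ either lies in $\Sigma$ or has relative interior disjoint from it. It then uses simpliciality: $\sigma$ has at most $n_1$ vertices, so at least one ray of $\CL$ is missed.
\item \textbf{Your version.} You absorb the recession cone into the generating set $A$. This treats bounded and unbounded faces uniformly, and spells out a reduction the paper only sketches.
\end{itemize}

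The price is your extra case $\card{A}=n_1+1$. That case is vacuous by the same simpliciality count. Write $\sigma=\delta+\kappa$ with $\delta$ a simplex and $\kappa=\rec(\sigma)=C_1\times C_2\in\Xi_1\times\Sigma_2$. Since $T_\delta\cap T_\kappa=0$, you get $(\dim\delta+1)+\dim\kappa=\dim\sigma+1\le n_1$, and therefore $\card{A}\le(\dim\delta+1)+\dim C_1\le n_1$. Inserting this count would let you drop the case entirely.

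Your dimension argument for that case does work, but as written it is loose in two places.
\begin{itemize}
\item In an affine dependence, the coefficient on the point $s_va_v$ is $c/s_v$, not $c$. This is harmless, since only the common sign of the coefficients matters.
\item The step ``an affine dependence drops the dimension below $n_1$'' is an equivalence only when the data contain exactly one item per generator. Several vertices on one ray, or a vertex on a ray of $R$, produce affine dependences that do not lower the dimension.
\end{itemize}
The clean fix is to choose, for each $a\in A\setminus R$, a single vertex $v$ with $a_v=a$ and $s_v>0$. Take these points together with the directions in $R$. These $n_1+1$ data admit no affine dependence, because the only linear relation is $\sum_a a=0$ and $A\setminus R\neq\emptyset$. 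They therefore already span an $n_1$-dimensional affine subspace inside the affine hull of $\pi_1(\sigma)$.
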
 
  Granted this claim, we explain how to finish the proof of the theorem. Indeed, when $\sigma \subset \Sigma$, we have $\dim \pi_{\sigma}(\Sigma)=n_2+1-\dim \sigma \le n_2$ around $x$, which concludes the proof. While in the other case, the condition $\relint(\sigma) \cap \supp\Sigma =\emptyset$  contradicts the assumption that $x \in \relint(\sigma) \cap \Sigma$. 
 
  It remains to prove Claim \ref{claim:proof:prop:orthogonality_product_of_curve_and_Rn}. Observe that that for any $\tau \in \Xi$, either $\tau \in \Sigma$ or $\relint(\tau) \cap \Sigma=\emptyset$. Therefore, due to Property (2) in Lemma \ref{lem:characterization_polyhedral_structure_envelope}, it is enough to prove the claim when $\sigma$ is bounded. 
  Since $\Pi$ is simplicial,  $\sigma$ is the convex hull of $\{z_j \}_{j\in J} \subset \Sigma$ with $\card{J} \le n_1$. Therefore, without loss of generality, we may assume that $\pi_{1}(\{z_j \}_{j\in J}) \subset \bigcup_{1 \le i \le n_1} \rho_i$, where $\pi_1 \colon E \to E_1$ is the projection map, and $\rho_i$ are the rays of $\CL$ defined in the beginning of this subsection. Let $\tau$ be the cone generated by $\{\rho_i\}_{1 \le i \le n_1}$. We observe that $\pi_{1}(\sigma) \subset \tau$, $\tau \cap C=\bigcup_{1\le i \le n_1} \rho_i$, and $\{\rho_i\}_{1 \le i \le n_1}$ are extremal rays of $\tau$. 
   
  Two cases occur depending on whether all the $z_j$s belong to the same ray in $\mathcal L$ or not. In the former case, we have $\pi_{1}(\sigma) \subset \mathcal L$. While in the latter case, $\pi_1(\relint(\sigma)) \subset \relint(\pi_{1}(\sigma)) \subset E_1 \setminus \mathcal L$. Hence, the claim is proved.
 \end{proof}
 
 \smallskip

 Now, we prove the following corollary of Theorem \ref{thm:orthogonality_product_of_curve_and_Rn}. 

 \begin{cor} \label{cor:local_trimming_codimension_1}
  Let $U$ be a bounded open subset of $\supp\Sigma$. Suppose that $u \in \LConv(U)$, and it satisfies $\MA(u)(U) >0$. Then, there exists $v \in  \LConv(U)$ such that $v \ge u$ on $U$, and $\{u < v\}$ is a nonempty open subset compactly contained in $U$. 
 \end{cor}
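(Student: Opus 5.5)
The idea is to use the envelope of a finite collection of pairs on $\comp{\Sigma}$ and Property~\ref{orth1} of Theorem~\ref{thm:equivalence_orthogonality_existence_of_solutions}, which holds for $(\comp{\Sigma},\omega)$ by Theorem~\ref{thm:orthogonality_product_of_curve_and_Rn}. Since $\MA(u)(U)>0$, pick a point $x_0$ in the support of $\MA(u)$ inside $U$, a small ball $B\Subset U$ around $x_0$ with $\MA(u)(B)>0$, and choose a finite set $S=\{x_1,\dots,x_k\}\subset B$ whose convex-polyhedral hull contains a neighborhood of $x_0$. After subtracting an affine support function of $u$ at $x_0$ (which only shifts everything by an affine function), we may assume $u\ge 0$ near $x_0$ and $u(x_0)=0$.

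\textbf{Globalization.} As in the proof of Lemma~\ref{lem:a_local_comparison}, take $A\gg 0$ and $C\gg 0$ so that $V_1=\{Ah-C<u\}\Subset B$ and $\varphi:=\sup\{u-Ah,-C\}$ (Construction~\ref{construction:sup_omega_psh_functions}) lies in $\CPSH_{A\omega}(\comp{\Sigma})$, with $\varphi+Ah=u$ near $x_0$. Then form the finite collection $\CS=\{(x_i,\varphi(x_i))\}$ and set $\psi:=\env_{A\omega}(f_{\CS})$. By definition $\psi\ge\varphi$ everywhere. Property~\ref{orth1} gives $\Supp\MA_{A\omega}(\psi)\subset S$. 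Since $\MA_{A\omega}(\varphi)$ restricted to $B$ equals $\MA(u)$, and this has positive mass away from the finite set $S$ (we can arrange this by choosing $S$ avoiding atoms, or more simply because $\MA(u)$ either has diffuse mass or an atom at $x_0$; in the atomic case we perturb $S$ so that $x_0\notin S$), we get $\psi\neq\varphi$. So the set $\{\varphi<\psi\}$ is nonempty.

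\textbf{Localization.} Next we must show $\{\varphi<\psi\}\Subset B$. Outside $V_1$ we have $\varphi=-C$, while $\psi\le f_{\CS}$ at points of $S$ only. Here I would use Lemma~\ref{lem:characterization_polyhedral_structure_envelope} together with the map $\DL$ and Lemma~\ref{lem:equivalence_LConv_GConv_codimension_1}: the envelope $\psi+Ah$ is the restriction of a convex PL function on $E$ with vertices in $S$, hence affine outside the hull of $S$ along recession directions. The cleaner route is to replace $\psi$ by $\psi':=\sup\{\varphi,\min(\psi,\varphi+\eta)\}$-type truncation. Better still, define $v:=\max\{u,\ (\psi+Ah)-\epsilon\}$ on $B$ and $v:=u$ elsewhere. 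Choosing $\epsilon>0$ small but positive, $\{u<\psi+Ah-\epsilon\}$ is nonempty (since $\psi>\varphi$ somewhere) and, because $\psi=\varphi$ on $S$ and both are uniformly Lipschitz (Proposition~\ref{prop:Lipschitz_bounds_in_terms_of_C0_bounds}), is contained in a small neighborhood of the region where $\psi$ exceeds $\varphi$ by at least $\epsilon$.

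The main obstacle is guaranteeing that this region stays away from $\partial B$. I would handle it by using the Comparison-type domination: $\psi\le\varphi$ would follow on $\comp{\Sigma}\setminus B$ if $S$ is chosen so that $\varphi$ restricted to the complement is dominated. Concretely, I would choose $S$ to include points near $\partial B'$ for a slightly smaller ball $B'$ where $\varphi$ is near its affine-support value, and invoke Lemma~\ref{lem:envelope_PL_functions_vertices} to bound $\psi$ by the $A\omega$-facewise affine interpolant of $\varphi|_S$, which is $\le\varphi+\epsilon/2$ outside $B'$ by convexity of $u$. Then $v$ is locally convex by Proposition~\ref{prop:limit_of_lconv_is_lconv_boundary}(3), satisfies $v\ge u$, and $\{u<v\}$ is nonempty, open, and $\Subset B'\Subset U$.
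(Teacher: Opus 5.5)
Your globalization and the non-triviality step are fine. With $x_0\notin S$, you have $\Supp\MA_{A\omega}(\psi)\subset S$, while $\MA_{A\omega}(\varphi)=\MA(u)$ charges every small neighborhood of $x_0$, so $\psi\neq\varphi$. (For $x_0\neq 0$ you must recentre $h$ at $x_0$, as in Proposition~\ref{prop:example_weak_kahler_form_on_projective_tropical_variety}.)

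The gap is the localization, and it is not a technicality. An obstacle $f_{\CS}$ living on finitely many points near $x_0$ produces an envelope that exceeds $\varphi$ far away. Constants are $A\omega$-psh, so $\psi\ge\min_i\varphi(x_i)$. If the $x_i$ lie in $V_1$, as in your initial choice, this minimum is $>-C$, hence $\{\varphi<\psi\}\supset\comp{\Sigma}\setminus V_1$. Your truncation does not repair this, for three reasons. First, $v=\max\{u,\psi+Ah-\epsilon\}$ compares $\psi+Ah$ with $u$, but $\psi+Ah\ge\varphi+Ah=\max\{u,Ah-C\}$, and $Ah-C-u$ is bounded below by a positive constant near $\partial B$ (that is how $V_1\Subset B$ was obtained). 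So for small $\epsilon$ the set $\{u<\psi+Ah-\epsilon\}$ reaches $\partial B$. Second, $\psi\neq\varphi$ only gives $\sup(\psi-\varphi)>0$, not $>\epsilon$. Since $\epsilon$ is fixed before $S$ is refined to bring the interpolant of $\varphi|_S$ within $\epsilon/2$ of $\varphi$, and refining $S$ only decreases $\psi$, nothing prevents the gain from dropping below $\epsilon$. Wherever the interpolation bound holds, it even forces $\psi-\varphi\le\epsilon/2$. Third, Lemma~\ref{lem:envelope_PL_functions_vertices} needs a polyhedral structure on all of $\comp{\Sigma}$ with vertices in $S$ and a globally $A\omega$-facewise affine interpolant, including on unbounded faces; you have not produced one.

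The missing idea is to build the localization into the obstacle itself. Pick a neighborhood $V\Subset V_1$ of $x_0$ and set $f\coloneqq+\infty$ on $V$ and $f\coloneqq\varphi$ on $\comp{\Sigma}\setminus V$; this is lsc and finite on part of $\xo$. Since $\varphi$ is a competitor, $\varphi\le\env_{A\omega}(f)\le f$, so $\env_{A\omega}(f)=\varphi$ off $V$ and $\{\varphi<\env_{A\omega}(f)\}\subset V$ for free. Now use Property~\ref{orth2} instead of Property~\ref{orth1}; they are equivalent by Theorem~\ref{thm:equivalence_orthogonality_existence_of_solutions} and hold here by Theorem~\ref{thm:orthogonality_product_of_curve_and_Rn}. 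Because $f-\env_{A\omega}(f)=+\infty$ on $V$, orthogonality forces $\MA_{A\omega}(\env_{A\omega}(f))(V)=0$. On the other hand $\MA_{A\omega}(\varphi)(V)=\MA(u)(V)>0$, so $\env_{A\omega}(f)>\varphi$ somewhere in $V$. Then define $v\coloneqq\env_{A\omega}(f)+Ah$ on $V_1$ and $v\coloneqq u$ on $U\setminus V_1$. These agree on $V_1\setminus\overline V$, so $v\in\LConv(U)$, $v\ge u$, and $\emptyset\neq\{u<v\}\subset V\Subset U$. This is exactly the paper's proof; the paper obtains the global $\varphi$ (with $\varphi+Ch'=u$ near $x_0$ and $\varphi+Ch'\ge u$ on $U$) from Proposition~\ref{prop:example_weak_kahler_form_on_projective_tropical_variety} rather than from your $\max$ construction.
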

   Although this is a purely local statement, we will prove it by a global argument on $(\comp{\Sigma}, \omega)$.
 \begin{proof}[Proof of Corollary \ref{cor:local_trimming_codimension_1}]
  We observe first that there exists $x \in U$ such that for any open neighbourhood $U'$ of $x$, we have $ \MA_{\Trop, \comp{\Sigma}}(u)(U')>0$. Indeed, otherwise, for each $x \in U$, take an open neighborhood $U_x$ of $x$ in $U$ such that $\MA_{\Trop, \comp{\Sigma}}(u)(U_x)=0$. Then, $U=\bigcup_{x \in U}U_x$. Since $U$ is second-countable, there exists a countable subset $A$ of $U$ such that $U=\bigcup_{x \in A}U_x$. Therefore, $\MA_{\Trop, \comp{\Sigma}}(U) \le \sum_{x \in A} \MA_{\Trop, \comp{\Sigma}}(U_x)=0$, which contracdicts with the assumption.  

  Take an affine function $l$ on $E$ such that $h' \coloneqq h+l$ satisfies $h'(y)>0$ for any $y \in \supp\Sigma \setminus \{0\}$. Then, $\omega=\dm \dmb h =\dm \dmb h'$. Applying Proposition \ref{prop:example_weak_kahler_form_on_projective_tropical_variety} to $u \in \LConv(U)$, there exist an open neighborhood $W \Subset U$ of $x$ and $\varphi \in \CPSH_{C \omega}(\comp{\Sigma})$, for some $C>0$, such that $\varphi+Ch'=u$ on $W$ and $\varphi+Ch' \ge u$ on $U \setminus W$. 
 
  Take an open neighborhood $V \Subset W$ of $x$, and define the lsc function $f: \comp{\Sigma} \to \R \cup \{+\infty \}$ by 
  \[
  f(y)= \begin{cases}
  +\infty &\quad \text{ if }y \in V \\
  \varphi(y)& \quad \text{ if } y \in \comp{\Sigma} \setminus V
  \end{cases}\,.
  \]
  Since $\env_{C\omega}(f)=\sup\{\env_{C\omega}(f), \varphi\}$, we have $\env_{C\omega}(f)=\varphi$ on $\comp{\Sigma} \setminus V$. Also, $\env_{C\omega}(f)+Ch' \ge u$ on $W$. On the other hand, the orthogonality property of $(\comp{\Sigma}, C\omega)$ (Theorem \ref{thm:orthogonality_product_of_curve_and_Rn}) yields that $\MA_{C\omega}(\env_{C\omega}(f))=\MA_{\Trop, \comp{\Sigma}}(\env_{C\omega}(f)+Ch')(V)=0$. This implies that $\env_{C\omega}(f)+Ch' \ne u$ on $V$. Hence, $\{u < \env_{C\omega}(f)+Ch'\}$ is nonempty. 

  Now, we define 
  \[
  f(y)= \begin{cases}
  +\infty &\quad \text{ if } y \in V \\
  \varphi(y)  &\quad \text{ if } y \in \comp{\Sigma} \setminus V
  \end{cases}\,.
  \]

  We take $v=(\env_{C\omega}(f)+Ch')|_{\ol{U}} \in \LConv(U)$, we have   
  \[
  v(y)= \begin{cases}
  (\env_{C\omega}(f)+Ch')(y) & \quad \text{ if }y \in W \\
  u(y) & \quad \text{ if } y \in U \setminus W
  \end{cases}\,.
  \]
  Then, Lemma \ref{lem:construction_sup_omega_psh_functions} implies that $v \in \LConv(U)$. By construction, $v \ge u$ on $U$, $\{v < u\}$ is nonempty, and $\{v < u\} \subset W \Subset U$. 
\end{proof}

\subsection{Proof of Theorem~\ref{thm:orthogonality_codimension_1} and its consequences} \label{subsec:orthogonality_codimension_one_proof} We start with the main theorem.
\begin{proof}[Proof of Theorem~\ref{thm:orthogonality_codimension_1}]
  Arguing similar to the proof of \ref{orth1} $\Longrightarrow$ \ref{orth2} in Theorem \ref{thm:equivalence_orthogonality_existence_of_solutions}, to prove Theorem \ref{thm:orthogonality_codimension_1}, it suffices to show that, for any finite collection of pairs $\CS=\{ (x_i, \lambda_i) \in \xo \times \R \}_{1 \le i \le k}$, the corresponding function $f_{\CS}$ satisfies the orthogonality property on $\CR$.
  
  We prove this by contradiction. Suppose that there exists $f_{\CS}$ such that the equality \eqref{equation:orthogonality_codimension_1} is not satisfied. Then, there exists an open subset $U \Subset \left(\CR \setminus \{ x_i \}_{1 \le i \le k}\right)$ such that  $\MAo(\env_{\omega}(f))(U)>0$. Write $\omega=\dm \dmb h$ for some $h \in \LConv(U)$. Set $\varphi\coloneqq \env_{\omega}(f_{\CS})$, and let $u\coloneqq \varphi+h \in \LConv(U)$.
  
  \smallskip

  We may assume that there is a compatible chart $\phi\colon U \to W$ where $W$ is a bounded open subset of $ \mathcal L\times\R^{d-1}$ for a generalized  tropical line $\mathcal L \subset \R^k$.
  Applying Corollary \ref{cor:local_trimming_codimension_1} to $u' \coloneqq u\circ\phi^{-1} \in  \LConv(W)$, and  get $v' \in \LConv(W)$ satisfying the conditions in  Corollary \ref{cor:local_trimming_codimension_1}. 

  Since $\{\varphi+h < v' \circ \phi \} \Subset U$, we can define a function $\widetilde \varphi \coloneqq \sup\{\varphi, v' \circ \phi-h\}$ on $X$ as in Construction \ref{construction:sup_omega_psh_functions}. Then, Lemma \ref{lem:construction_sup_omega_psh_functions} implies that $\widetilde{\varphi} \in \CPSH_{\omega}(X)$. By construction, $\widetilde{\varphi}=\varphi$ on $\{ x_i\}_{1 \le i \le k}$, which indicates that $\widetilde{\varphi} \le f_{\CS}$. On the other hand, Corollary \ref{cor:local_trimming_codimension_1} implies that $\widetilde{\varphi} \ge \varphi$ and $\widetilde{\varphi}(x) > \varphi(x)$ for some $x \in U$.  This contradicts with the assumption that $\varphi =\envomega(f_{\CS})$. 
\end{proof}
\begin{proof}[Proof of Corollary~\ref{cor:domination}]
  We only need to prove that for any $g \in \nC^0(X)$ such that $g \ge 0$ and $\mathrm{Supp}(g) \Subset \CR$, we have 
  \bes
    \int_X g\ \MA_{\omega}(\varphi) \leq \int_X g d\mu.
  \ees
  Note that for any  $t>0$ $\varphi \le \varphi +tg$, $\varphi+tg=\varphi$ on $X \setminus \mathrm{Supp}(g)$. On the other hand, we have $\env_{\omega}  (\varphi+tg)=\sup \{\varphi+ \env_{\omega}(\varphi+tg)\}$. Therefore, $\env_{\omega}  (\varphi+tg)=\varphi$ on $X \setminus \CR \subset X \setminus \mathrm{Supp}(g)$. It follows that
  \bens
  \int_X(\varphi+tg-\env_{\omega}(\varphi+tg))\MA_{\omega}(\env_{\omega}(\varphi+tg)) = \int_{\CR}(\varphi+tg-\env_{\omega}(\varphi+tg))\MA_{\omega}(\env_{\omega}(\varphi+tg))=0
  \eens
  where the second equality is induced by Theorem \ref{thm:orthogonality_codimension_1}. Hence, $\varphi+tg$ verifies the orthogonality property for any $t>0$. 

  The remaining part of the proof goes similar to the proof of \ref{orth3} $\Longrightarrow$ \ref{orth4} in Theorem \ref{thm:equivalence_orthogonality_existence_of_solutions}. Consider the function $\GE(t) \coloneqq \FE_{\mu}(\varphi+tg)$.
  Then, $G(t) \le G(0)$ for any $t >0$ since $\varphi$ is a maximizer of $\FE_\mu$. Applying Theorem \ref{thm:derivation}, we get \[\left.\frac{d}{dt}\right|_{t=0+} \GE(t) =\int_X g\ \MA_{\omega}(\varphi) - \int_{X} gd\mu \le 0.\]
  The corollary is proved. 
\end{proof}

\begin{proof}[Proof of Theorem~\ref{thm:existence_of_solutions_nice_tropical_varieties}] 
We have proven the uniqueness part of tropical Calabi--Yau theorem in Theorem~\ref{thm:uniqueness}. It remains to show the existence part. Let $\mu$ be a nonnegative Radon measure of volume $\deg(\omega^d)$ compactly supported in $\xo$. Let $\varphi$ be a maximizer of $\FE_{\mu}$. Since $\CR=\xo$, by Corollary~\ref{cor:domination}, we have $\MA(\varphi)\leq \mu$ on $\xo$. Note that $\MA(\varphi)(\partial X)=0$ (Theorem \ref{thm:Monge-Ampere-measure-takes-no-mass-boundary}), and $\MA(\varphi)(X)=\mu(X)=\deg(\omega^d)$ (Corollary \ref{cor:global_integration-by-parts_formula_and_volume_omega_psh}~(2)). Thus, we obtain that $\MA(\varphi)= \mu$ on $X$, and the existence of solutions follows. 
\end{proof}

%%%%%%%%%%%%%%%%%%%%%%%%%%%%%
%%%%Failure CY theorem
%%%%%%%%%%%%%%%%%%%%%%%%%%%%%

\section{Failure of the tropical Calabi--Yau theorem} \label{sec:failure_of_Calabi_Yau}

We define the following genericity condition.

\begin{definition} [\textbf{Condition (G) for a 2-dimensional tropical fanfold}] \label{condition:genericity}
  Let $Z$ be a 2-dimensional tropical fanfold embedded in a real vector space, and denote by $E$ the linear subspace generated by $Z$. Let $n$ denote the dimension of $E$. We say $Z$ verifies Condition (G) if the following holds: we have $n\geq 3$ and for any $(n-2)$-dimensional subspace $F \subset E$, we have $\dim \pi(Z) =2$, where $\pi$ denotes the projection map $E \to E/F$. We say that a tropical fan $\Sigma$ verifies (G) if the corresponding fanfold $\supp{\Sigma}$ does.
\end{definition}
\begin{remark}
 Condition $G$ is an intrinsic property of the tropical fanfold, 
it does not depend on the real vector space into which the fanfold embeds.
 \end{remark}

\begin{remark} A 2-dimensional tropical fanfold $Z$ in $E=\R^3$ fails to satisfy Condition (G) if and only if $Z$ is the product of $\R$ and a tropical fanfold of dimenson one, that is, if and only if $E$ admit a decomposition of the form $E=E_1\times E_2$ for $E_1\simeq \R$ and $E_2\simeq \R^2$, 
 and there exists a $1$-dimensional tropical fanfold $\Gamma \subset E_2$ such that $Z = E_1\times \Gamma$.
\end{remark}

The main purpose of this section is to prove the following theorem.

\begin{thm} \label{thm:global_failure_of_orthogonality}
    Let $X$ be a compact \prekahler tropical variety of dimension $d$. Suppose that there exist a polyhedral structure $\olPi$ on $X$ with induced polyhedral structure $\Pi$ on $\xo$, and $\sigma \in \Pi_{d-2}$
     such that the star fan $\Pi^{\sigma}$ satisfies the genericity condition (G). Then, for any strongly semipositive form $\omega$ on $X$, the pair $(X, \omega)$ does not verify the tropical Calabi--Yau theorem.  
\end{thm}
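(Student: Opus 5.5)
By Theorem~\ref{thm:equivalence_orthogonality_existence_of_solutions}, it suffices to show that property~\ref{orth1} fails for $(X,\omega)$. That is, we look for a finite collection of pairs $\CS$ in $\xo$ such that $\MAo(\envomega(f_{\CS}))$ charges some point outside $S$. Equivalently, and this is what we will use, it suffices to produce $\varphi\in\CPSH_\omega(X)$ and a point $x\in\xo$ with the following three properties: $\MAo(\varphi)(\{x\})>0$; $\varphi$ is $\omega$-PL near $x$; and $\varphi+h$ is not strictly locally convex at $x$. Here the last condition means that the affine support at $x$ also touches $\varphi+h$ along a segment through $x$. From such a $\varphi$ we take $\CS$ to be the values of $\varphi$ at the vertices of a polyhedral structure on which $\varphi$ is $\omega$-facewise affine, excluding $x$ and arranging that $x$ is not a vertex. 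By Lemma~\ref{lem:envelope_PL_functions_vertices}, this gives $\envomega(f_{\CS})=\varphi$, so the envelope has an atom at $x\notin S$.

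\textbf{Local construction.} Take $x\in\relint(\sigma)$ for the given $\sigma\in\Pi_{d-2}$. Locally near $x$, the variety looks like $\R^{d-2}\times\supp{\Pi^\sigma}$, with $Z=\supp{\Pi^\sigma}$ a $2$-dimensional tropical fanfold satisfying (G). First, we build a homogeneous function $u\in\LConv\cap\PL_{\hom}(Z)$ such that $\MA(u)(\{0\})>0$ for the local operator on $Z$, and such that $u$ is linear along some ray direction $v\in Z$. By this we mean $u(tv)=t\,l(v)$ for an affine support $l$ of $u$ at $0$, so $u-l$ vanishes on a whole ray. The role of (G) is as follows. We want $u-l\ge0$ on $Z$ and vanishing exactly on a ray $\R_{\ge0}v$ (or a line) while still having a positive Monge--Amp\`ere mass at $0$. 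The Monge--Amp\`ere mass at $0$ of a conewise linear $u$ on a $2$-dimensional fan is computed by the Chow-ring degree $\deg(\div(u)^2)$. Degeneracy along $v$ makes $u$ factor through a projection only if $Z$ projects to a lower-dimensional set, and (G) (every projection killing $n-2$ dimensions keeps $\dim 2$) rules this out. We will use the polyhedral lemmas of Appendix~\ref{appendix:polyhedral_geometric_lemmas} to produce such a $u$. Concretely, we take $u$ to be the restriction to $Z$ of a convex conewise linear function on $E$ whose zero locus is a ray meeting $Z$ in a single ray. We then verify $\deg(\div(u)^2)>0$ by an explicit balancing computation using that the rays of $Z$ are not contained in a union of few hyperplanes through $v$. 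We expect this to be the main obstacle: it is the one place where the genericity condition is genuinely used, and the degree computation must be controlled for all fans satisfying (G). In the non-generic case $Z=\R\times\Gamma$, functions linear along the $\R$-factor have zero mass, consistent with Theorem~\ref{thm:existence_of_solutions_nice_tropical_varieties}.

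\textbf{Globalization.} We set $f(y,z)=u(z)$ on a neighborhood $W$ of $x$ in $\R^{d-2}\times Z$, and add $\sum y_i^2$ replaced by a convex PL function in the $\R^{d-2}$-directions that is strictly convex at $0$. Since $\omega$ is strongly semipositive, Definition~\ref{def:strongly_semipositive_form} gives $C>0$, an affine $l$, and $\varphi\in\PSH_\omega\cap\PL_\omega(X)$ with $\varphi+h=Cf+l$ on a neighborhood $V$ of $x$. By the local computation and the product formula, $\MAo(\varphi)(\{x\})=C^d\,c\,\MA(u)(\{0\})>0$ for some $c>0$. Meanwhile $\varphi+h$ is affine along the segment $x+[0,\epsilon)v$ and coincides there with its affine support at $x$. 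In a polyhedral structure refining this segment, $x$ is not a vertex. Moreover, the envelope of the vertex data equals $\varphi$ by Lemma~\ref{lem:envelope_PL_functions_vertices}, since $\varphi$ is $\omega$-facewise affine. Hence $\Supp\MAo(\envomega(f_{\CS}))\ni x\notin S$, so \ref{orth1} fails. Theorem~\ref{thm:equivalence_orthogonality_existence_of_solutions} then shows that the tropical Calabi--Yau theorem fails for $(X,\omega)$.
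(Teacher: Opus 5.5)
There is a genuine gap, and it sits exactly at the step meant to produce an atom outside $S$.

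First, you cannot ``arrange that $x$ is not a vertex.'' If $\varphi$ is $\omega$-facewise affine on $\olPi'$, then $\omega+\dm\dmb\varphi$ is simplicial on $\Pi'$. Its $d$-th power therefore has components only in $\ssA^{d}(\delta)$, which is zero unless $\delta\in\Pi'_0$. So $\MAo(\varphi)$ is carried by $\Pi'_0$, and $\MAo(\varphi)(\{x\})>0$ forces $x\in\Pi'_0$.

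The mechanism that works, and the one the paper uses, keeps $x$ as a vertex. Then $\varphi=\envomega(f_{\CS})$ for the full vertex data $\CS$, and one shows that deleting the pair $(x,\varphi(x))$ does not change the envelope. For this you need property (2) of Proposition~\ref{prop:local_failure_of_orthogonality}: there are finitely many points $x_i\neq x$ such that every locally convex $w$ with $w(x_i)\le(\varphi+h)(x_i)$ satisfies $w(x)\le(\varphi+h)(x)$. By Proposition~\ref{prop:equivalent_def_LConv}, this holds when $x$ is a polyhedral convex combination of points $x_i\neq x$ at which $\varphi+h$ equals its affine support at $x$.

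Second, contact along a ray issuing from $x$, which is what your local construction produces, does not give this. Suppose $u\ge l$ on $Z$ with equality exactly on $\R_{\ge0}v$. Take a linear $m$ with $m(v)<0$, then $\eta>0$ small and $\epsilon\ll\eta$. The function $\max\{u,\,l+\epsilon+\eta m\}$ is locally convex and coincides with $u$ outside a ball of radius $O(\epsilon)$ around $0$, hence, for $\epsilon$ small, at every vertex other than $0$. Yet it exceeds $u$ at $0$. Plugging it into Construction~\ref{construction:sup_omega_psh_functions} gives $\envomega(f_{\CS'})(x)>\varphi(x)$, so you learn nothing about \ref{orth1}.

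What is needed is a contact set containing nonnegatively dependent rays. Producing it is the real difficulty, and your proposal does not address it. One needs an $(n-2)$-dimensional subspace $F$ such that $F\cap Z$ contains $n-1$ nonnegatively dependent rays. This is Proposition~\ref{prop:existence_of_balanced_rays_in_2d_fan}, proved by a Borsuk--Ulam argument; the case where $Z$ is not connected through codimension one is handled separately.

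Once $F$ is available, (G) gives $\pi_F(Z)=E/F$. For $u=u'\circ\pi_F$ with $u'$ strictly convex on $E/F$, the projection formula gives $\MA_Z(u)(\{0\})=\MA_{E/F}(u')(\{0\})>0$, while $u$ vanishes on $F\cap Z$. So the degree computation you flag as the main obstacle is the easy part.

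Your parenthetical ``or a line'' would work, since a line in the contact set makes $x$ a midpoint; this is what happens for $\dim F=1$, as in the $U_{3,4}$ example. But you give no argument that $Z$ contains such a line, and your concrete construction uses a ray.
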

An explicit example is treated in Section~\ref{subsec:example_failure_of_tropical_Calabi-Yau}.

\subsection{A combinatorial result} We start by the following definition.
\begin{defi} \label{def:nonnegatively_depedent}
   A finite collection  $\{e_1, \dots, e_k\}$ of vectors in a real vector space $E$ is \emph{nonnegatively} (\resp \emph{positively}) dependent if there exist reals $a_i \ge 0$ (\resp $a_i >0$), not all equal to 0, such that $\sum_{i=1}^{k} a_i e_i=0$, . 
   Otherwise, we say that the the collection is nonnegatively (\resp positively) independent.

   A finite collection of rays $\{\rho_1, \dots, \rho_k\}$ in $E$ is nonnegatively (\resp positively) independent if a (equivalently, any) collection of vectors $\{e_1, \dots, e_k\}$ with $e_j$ generator of $\rho_j$ is nonnegatively (\resp positively) independent.
  \end{defi}

\begin{prop} \label{prop:existence_of_balanced_rays_in_2d_fan}
    Suppose that $Z$ is a 2-dimensional tropical fan in $E=\R^n$ satisfying
    \begin{enumerate}
      \item $Z$ satisfies the genericity condition (G).
      \item $\mathrm{span}(Z) =E$.
      \item $Z$ is connected through codimension 1.
    \end{enumerate}
    Then, there exist an $(n-2)$-dimensional subspace $F \subset E$ and a collection $\{\rho_1, \cdots \rho_{n-1}\}$ of $(n-1)$ nonnegatively dependent rays inside $F \cap Z$. 
\end{prop}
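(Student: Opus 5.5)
The plan is to argue by induction on $n=\dim E$, using the quotient construction of Section~\ref{subsubsec:pushforward_of_weight_functions} for the inductive step and a stable intersection argument for the base case $n=3$. One point makes the statement tractable: the rays $\rho_j$ are only required to be rays contained in the support $\supp{Z}\cap F$. They need not be rays of a chosen fan structure. For instance, for the standard tropical plane in $\R^3$, the direction $e_1+e_2$ lies in the cone spanned by $e_1,e_2$, and its opposite $-(e_1+e_2)=e_3+e_0$ lies in the cone spanned by $e_3$ and $e_0=-(e_1+e_2+e_3)$. So a line $F$ through these two opposite rays does the job.

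\emph{Base case $n=3$.} Here $F$ is a line, and we need two nonnegatively dependent rays in $F\cap Z$, i.e.\ two opposite rays. Equivalently, $\supp{Z}\cap(-\supp{Z})\neq\{0\}$. Consider the two $2$-dimensional tropical fans $Z$ and $-Z$ in $\R^3$. Their stable intersection is a balanced $1$-dimensional fan supported in $Z\cap(-Z)$. I would show it is nonzero by computing its degree with the fan displacement rule, translating $-Z$ by a generic vector. Note that $Z$ and $-Z$ have the same projections under every linear map $E\to E/L$, up to sign. Condition~(G) says that for every line $L$ the projection of $Z$ to $E/L$ is $2$-dimensional. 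This should force the local intersection multiplicities at the points of $Z\cap(v-Z)$ to be positive. A balanced nonzero $1$-dimensional fan contains a nonzero ray $r$, and then $r,-r\in\supp{Z}$. The fallback is a direct argument: $\supp{Z}$ is not contained in any open half-space (balancing), it is connected through codimension one, and (G) excludes the degenerate product case. From these one can get a pair of $2$-cones whose spans meet along a line in opposite directions.

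\emph{Inductive step $n\ge 4$.} Choose a ray $\ell$ of $Z$ and project $\pi\colon E\to E/\R\ell$. The image $\pi(Z)$ carries the pushforward weights and is a $2$-dimensional tropical fan, by the construction of Section~\ref{subsubsec:pushforward_of_weight_functions}. I would check that $\pi(Z)$ still satisfies the three hypotheses.
\begin{enumerate}
\item Spanning and connectedness through codimension one pass to images.
\item For (G), an $(n-3)$-dimensional subspace $F'\subset E/\R\ell$ pulls back to the $(n-2)$-dimensional subspace $\pi^{-1}(F')\subset E$. The projection of $\pi(Z)$ along $F'$ is the projection of $Z$ along $\pi^{-1}(F')$, which is $2$-dimensional by (G) for $Z$.
\item The first real difficulty is the requirement $n-1\ge 3$ in (G), i.e.\ that the dimension stays at least $3$ after projecting; this is fine since $n\ge 4$.
\end{enumerate}
By induction we get $F'$ of dimension $n-3$ and $n-2$ nonnegatively dependent rays $\bar\rho_1,\dots,\bar\rho_{n-2}$ in $F'\cap\pi(Z)$. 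Set $F\coloneqq\pi^{-1}(F')$, of dimension $n-2$, and lift each $\bar\rho_i$ to a ray $\rho_i\subset\supp{Z}\cap F$. A lift exists because $\bar\rho_i$ is the image of a point of $\supp{Z}$, and the conical structure of $Z$ gives a whole ray. With generators $e_i$ of $\rho_i$, the relation $\sum a_i\bar e_i=0$ lifts to $\sum a_ie_i=c\,e_\ell$. Adding $\ell$ gives $n-1$ rays in $F$, and if $c\le 0$ they are nonnegatively dependent.

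The main obstacle is the sign of $c$. The first thing I would try is to exploit the freedom in choosing the lifts. Each lift $\rho_i$ can be moved within the $2$-dimensional cone of $Z$ containing both $\ell$ and $\rho_i$, when such a cone exists, which changes $c$ by adding multiples of $e_\ell$. If that is not enough, I would choose $\ell$ among the rays produced in the base-case step, so that $-\ell$ also lies in $\supp{Z}$. Then whichever sign $c$ has, one of the rays $\ell$ or $-\ell$ completes the relation, and both lie in $F$. This second choice requires strengthening the inductive statement, so that the dependent configuration found always contains a pair of opposite rays. I expect this bookkeeping, together with the positivity of the intersection multiplicities in the base case, to be where the real work lies.
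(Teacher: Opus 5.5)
\textbf{The inductive step does not close.} The sign of $c$, which you correctly identify as the crux, is never controlled, and neither of your remedies works.

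\emph{Re-lifting inside a cone containing $\ell$.} Let $\sigma\supset\ell$ be a $2$-cone of $Z$. It is pointed and contains $e_\ell$, so the lifts of a point $\bar z$ inside $\sigma$ form a half-line $w+\R_{\geq 0}e_\ell$. Re-lifting can therefore lower $c$ at most to the value at the endpoint $w$; otherwise it only raises $c$.

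\emph{Using opposite rays $\pm\ell$.} For $n\geq 4$ the hypotheses do not provide any pair of opposite rays. Take the Bergman fan of $U_{3,5}$ in $\R^4$, with rays $e_0,\dots,e_4$, $e_0=-(e_1+\dots+e_4)$, and $2$-cones $\mathrm{Cone}(e_i,e_j)$.
\begin{itemize}
\item It spans $\R^4$ and is connected through codimension one.
\item It satisfies (G): modulo a $2$-plane, five vectors spanning $\R^4$ cannot all become collinear.
\item It has no opposite rays: such a pair would give a nonnegative relation among at most four of the $e_i$, whereas the only linear relation is $\sum_{i=0}^4 e_i=0$.
\end{itemize}
So your strengthened inductive statement is false.

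\emph{The sign depends on the configuration returned.} Take $\ell=\R_{\geq0}(e_1+e_2)$. The pushforward $\pi(Z)\subset\R^3$ contains the opposite pair $\pm\bar e_1$. Their lifts are exactly $se_1+t_1e_\ell$ and $se_2+t_2e_\ell$ with $t_1,t_2\geq 0$, so $c=s+t_1+t_2>0$, while $-\ell\not\subset|Z|$. Worse, $F=\pi^{-1}(\R\bar e_1)=\mathrm{span}(e_1,e_2)$ meets $|Z|$ only in the pointed cone $\mathrm{Cone}(e_1,e_2)$, so nothing can be completed inside this $F$. By contrast, the pair $\bar e_3$, $\bar e_0+\bar e_4$ gives $c=-1$. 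An induction on $n$ would need a hypothesis dictating on which side of $\ell$ the lifted relation falls, and you have none.

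\emph{Secondary points.} In the base case, multiplicities at transverse intersection points are automatically positive. What must be shown is that $Z\cap(v-Z)\neq\emptyset$ for generic $v$. This is true, e.g.\ by positivity of the mixed volume of the Newton polytopes of $Z$ and $-Z$, but it is a nonemptiness statement. Also, connectedness through codimension one of the pushforward $\pi(Z)$ is not automatic, since the pushforward discards every cone containing $\ell$.

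\textbf{How the paper argues instead.} There is no induction. For a generic $l\in E^\vee$, the paper slices $Z$ by $l=\pm 1$, obtaining tropical curves $\Gamma,\Gamma'$ in $A=\{l=0\}$. It reduces the statement to $\fC\cap(-\Gamma')\neq\emptyset$, where $\fC$ is the set of convex combinations of $n-2$ points of $\Gamma$. Indeed, a point $\sum\lambda_i x_i=-y$ with $x_i\in\Gamma$ and $y\in\Gamma'$ lifts to $n-1$ nonnegatively dependent rays of $Z$. The intersection is then forced topologically:
\begin{itemize}
\item connectedness of $\Gamma$ yields a disc mapped into $\fC$ whose boundary lies on the equator of a ball $Q\subset A$ spanned by far-out points on legs of $\Gamma$ and $-\Gamma'$;
\item connectedness of $\Gamma'$ yields a path in $-\Gamma'$ joining the two hemispheres;
\item the Borsuk--Ulam type Lemma~\ref{lem:application_Borsuk_Ulam_theorem} makes the disc and the path meet.
\end{itemize}
This global topological step is the ingredient your proposal is missing.
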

We postpone the proof of this proposition to Section~\ref{subsec:proof-existence_of_balanced_rays_in_2d_fan}, and first explain how deduce the main result of the section.

\subsection{Proof of Theorem~\ref{thm:global_failure_of_orthogonality}}
We start by proving a local result.
\begin{prop} \label{prop:local_failure_of_orthogonality}
  Let $E_1$ and $E_2$ be real vector spaces of dimension $d_1$ and $d_2$ respectively, and $E=E_1\times E_2$. 
  Let $Z \subset E_1$ be a 2-dimensional tropical fanfold satisfying the genericity  condition (G), and let $Z' \coloneqq Z \times E_2 \subset E$.
    Then, there exists $u \in \LConv \cap \PL(Z')$ such that 
  \begin{enumerate}
    \item $\MA_{Z}(u)(\{0\})>0$. Here, $\MA_Z$ denotes  the local tropical Monge--Ampère operator on $Z$.
    \item $u$ is not strictly convex at $0$. More precisely, for any open neighborhood $U$ of $0$ in $Z'$, there exists a finite subset $\{x_i\}_{i \in I} \subset U \setminus \{0\}$ such that for every $v \in \LConv(U)$ with $v(x_i) \le u(x_i)$ for all $i \in I$, we have $v(0) \le u(0)$. 
  \end{enumerate}
\end{prop}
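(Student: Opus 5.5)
The plan is to build $u$ from the combinatorial output of Proposition~\ref{prop:existence_of_balanced_rays_in_2d_fan}. First reduce to $E_2=\{0\}$: if $u_0\in\LConv\cap\PL(Z)$ satisfies (1) and (2) on $Z$, then $u\coloneqq u_0\circ \pi_1 + q$ should work on $Z'$, where $q$ is a convex PL homogeneous function on $E_2$ strictly positive away from $0$ (for instance $\max\{0,-x_1,\dots,-x_{d_2}\}$ after adding a linear function). For (1), the local Monge--Ampère mass at $0$ of a product of this type factors (up to a positive lattice constant) as the mass of $u_0$ on $Z$ times the real Monge--Ampère mass of $q$ at $0$; this is what $\MA_Z(u)(\{0\})>0$ should mean. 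For (2), take the test points $\{x_i\}$ inside $Z\times\{0\}$ and use the argument on $Z$; points off $Z\times\{0\}$ are not needed. We may also assume that $\mathrm{span}(Z)=E_1$ and, by passing to a connected-through-codimension-one component carrying the genericity, that the hypotheses of Proposition~\ref{prop:existence_of_balanced_rays_in_2d_fan} hold.

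That proposition gives an $(n-2)$-dimensional subspace $F\subset E_1$ ($n=d_1$) and rays $\rho_1,\dots,\rho_{n-1}\subset F\cap Z$ with generators $e_j$ and $\sum_j a_je_j=0$, $a_j\ge 0$ not all zero. Let $\pi\colon E_1\to E_1/F\cong\R^2$. Define $u_0\coloneqq w\circ\pi$ restricted to $Z$, where $w$ is a convex PL homogeneous function on $\R^2$ with $w>0$ off the origin. Then $u_0\in\LConv\cap\PL_{\hom}(Z)$.

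\emph{Property (2).} Since $u_0$ vanishes on $F\supset\rho_j$, take $x_j\coloneqq te_j\in U\setminus\{0\}$ for small $t>0$, restricting to those $j$ with $a_j>0$. Any $v\in\LConv(U)$ with $v(x_j)\le u_0(x_j)=0$ has a local affine support $l$ at $0$, and by Proposition~\ref{prop:properties_LConv_functions}\ref{prop:conv2} we get $v\ge l$ on $\Pi^{(0)}\cap U$, which contains the $x_j$. Hence
\[
\sum_j a_j v(x_j)\ \ge\ \sum_j a_j l(x_j) = \Bigl(\sum_j a_j\Bigr) l(0) = \Bigl(\sum_j a_j\Bigr) v(0),
\]
because $l$ is linear plus a constant and $\sum_j a_j x_j=0$. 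Therefore $v(0)\le 0=u_0(0)$.

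\emph{Property (1)} is the main obstacle. Here the pushforward compatibility \cite[Prop.\,5.6]{APW1} is the tool: $\pi_*\bigl((\dm\dmb u_0)^{\wedge 2}\bigr)=(\dm\dmb w)^{\wedge 2}$ computed on the tropical cycle $\pi_*(Z)$. Condition (G) ensures $\dim\pi(Z)=2$, so $\pi_*(Z)$ is a nonzero $2$-dimensional balanced cycle in $\R^2$. It is therefore a positive multiple of $\R^2$ with its standard weight, the multiple being constant by connectedness and balancing. Then $(\dm\dmb w)^{\wedge 2}$ is $c\cdot\MA_{\R}(w)$ with $c>0$, and this has positive mass at $0$ since $w$ is homogeneous and strictly positive away from $0$. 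Homogeneity forces $\MA(u_0)$ to be supported at $0$ (as in Lemma~\ref{lem:PL_functions_with_strict_positive_MA_measures}), so the pushforward identity yields $\MA_Z(u_0)(\{0\})>0$. I expect the delicate points to be checking the hypotheses of the projection formula when $\pi$ is not injective on $Z$, and verifying that the pushforward multiplicity is positive rather than cancelling; positivity of the weights and genericity should settle the latter.
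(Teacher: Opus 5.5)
Your argument for the case where $Z$ is connected through codimension one coincides with the paper's Case~2: the same reduction to $E_2=\{0\}$ via a product and the Künneth formula, and the same $u=w\circ\pi$ for $\pi\colon E_1\to E_1/F$. You also use the nonnegative dependence for (2) in the same way, and the projection formula together with $\pi(Z)=E_1/F$ for (1).

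The gap is the sentence ``by passing to a connected-through-codimension-one component carrying the genericity''. Condition (G) need not hold for any component. For example, in $\R^4=\R^2\times\R^2$ take the rational planes $P_1=\R^2\times\{0\}$, $P_2=\{0\}\times\R^2$, $P_3=\{(x,x)\}$ and $P_4=\{(x,Rx)\}$, with $R$ the quarter-turn rotation. They meet pairwise only at $0$. A $2$-plane meeting $P_1,P_2,P_3$ nontrivially must be $\mathrm{span}\{(a,0),(0,a)\}$, and such a plane meets $P_4$ only if $a$ is a real eigenvector of $R$. Hence every $2$-plane $F$ is transverse to some $P_i$, so $Z=P_1\cup\dots\cup P_4$ satisfies (G). Yet each component is a plane, with $n=2<3$, so no component satisfies (G). Proposition~\ref{prop:existence_of_balanced_rays_in_2d_fan} then applies neither to $Z$ (its connectivity hypothesis fails) nor to any component, and your construction produces nothing. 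Even when some component does satisfy (G), you would still have to say how $u$ is extended to the other components.

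The paper closes this with a separate, more elementary Case~1 that uses neither (G) nor Proposition~\ref{prop:existence_of_balanced_rays_in_2d_fan}. Write $Z=Z_1\cup Z_2$ with $Z_1\cap Z_2=\{0\}$. Take $u_1\in\LConv\cap\PL_{\hom}(Z_1)$ with $\MA_{Z_1}(u_1)(\{0\})>0$ from Lemma~\ref{lem:PL_functions_with_strict_positive_MA_measures}, let $l$ be an affine support of $u_1$ at $0$, and put $u=u_1$ on $Z_1$ and $u=l$ on $Z_2$. Then $\MA_Z(u)(\{0\})=\MA_{Z_1}(u_1)(\{0\})>0$. For (2), pick finitely many points of $(Z_2\cap U)\setminus\{0\}$ whose convex hull contains $0$, which is possible since $Z_2$ is balanced. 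Because $u=l$ is affine there, Proposition~\ref{prop:equivalent_def_LConv} gives $v(0)\le u(0)$.

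With this case added, the rest of your proposal goes through. You should also shrink $U$ to a star-shaped neighborhood so that the segment hypothesis in Proposition~\ref{prop:properties_LConv_functions}\ref{prop:conv2} is met.
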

\begin{proof}
  We first show that it is enough to prove the statement when $Z'=Z$, that is, when $E_2=(0)$. Suppose that $u_1 \in \LConv \cap \PL_{\hom}(Z)$ is a function satisfying the properties (1) and (2) in the proposition for $Z'=Z$, and $E_2 =(0)$. In particular, $\MA_Z(u_1)(\{0\})>0$.  Take any $u_2 \in \Conv \cap \PL_{\hom}(E_2)$ such that $\MA_{E_2}(u_2)(\{0\})>0$.  Then, we define $u \coloneqq u_1 \circ \pi_1 +u_2 \circ \pi_2$, where $\pi_1$ and $\pi_2$ are the projection maps from $Z'$ to $Z$ and $E_2$, respectively. 
   We claim that $u$ satisfies the two properties in the proposition. Fix simplicial fan structures $\Sigma$ and $\Xi$ on $Z$ and $E_2$, respectively, such that $u_1$ and $u_2$ are conewise linear on $\Sigma$ and $\Xi$, respectively. The product $\Sigma'=\Sigma\times \Xi$ is a fan structure on $Z'$, and $u$ is conewise linear on $\Sigma'$.
  The Künneth formula for the Chow rings of fans says that $\ssA^\bul(\Sigma')=\ssA^\bul(\Sigma) \otimes \ssA^\bul(\Xi)$, and the degree map on the left is the product of the two degree maps on the right under this decomposition (see \cite[Prop.\,3.5]{AP-hodge-fan}). 
  Therefore, $\MA_{Z'}(u)(\{0\})=\MA_{Z}(u_1)(\{0\}) \cdot  \MA_{E_2}(u_2)(\{0\})>0$. At the same time, for any open neighborhood $U=U_1 \times U_2$ of $0$ in $Z'$, where $U_1$, $U_2$ are open neighborhood of $0$ in $Z$ and $E_2$, respectively, take a finite subset $\{x_i\}_{i \in I} \subset U_1 \setminus \{0\}$ which satisfies property (2) for $u_1 \in \LConv \cap \PL(\Sigma)$. Notice that for any $v \in \LConv(U)$, we have $v \rest{U_1\times \{0\}} \in \LConv(U_1 \times \{0\})$.  Hence, by assumption, $v((x_i,0)) \le u((x_i,0))$ for each $i$ would imply $v(0) \le u(0)$. Therefore, the finite subset $\{(x_i, 0)\}_{i \in I} \subset U \setminus \{0\}$ satisfies the property (2) for $u$, and hence the reduction step is completed.

  \medskip

  In the remainder of the proof, we assume that $Z'=Z$, and $E=E_1$. We prove the proposition by dividing into two cases.

  \smallskip

 \noindent \textbf{Case 1:} $Z$ is not connected through codimension~1.  
  In this case, since $\dim(Z)=2$, there exist tropical fanfolds $Z_1$ and $Z_2$ in $E$ such that  $Z=Z_1 \cup Z_2$, and $Z_1 \cap Z_2=\{0\}$. By Lemma \ref{lem:PL_functions_with_strict_positive_MA_measures}, there exists $u_1 \in \LConv \cap \PL_{\hom}(Z_1)$ 
  such that $\MA_{Z_1}(u_1)>0$. Take an affine support function $l$ of $u_1$ at $0$, and define $u \in \LConv \cap \PL_{\hom}(Z)$ by $u \coloneqq u_1$ on $Z_1$ and $u \coloneqq l$ on $Z_2$. It remains to check that $u$ satisfies the two properties stated in the proposition. Observe that $\MA_{Z}(u)(\{0\})= \MA_{Z_1}(u_1)(\{0\})+ \MA_{Z_2}(l)(\{0\})=\MA_{Z_1}(u_1)(\{0\})>0$.  On the other hand, for any open neighborhood $U$ of $0$ in $Z$, 
  take a finite subset $\{x_i\}_{i \in I} \subset Z_2 \cap (U \setminus \{0\})$ such that $0 \in \conv(\{x_i\}_{i \in I}) \subset U \cap Z$, where $\conv(\{x_i\}_{i \in I})$ denotes  the convex hull of $\{x_i\}_{i \in I}$ in $E$. Then, for any $v \in \LConv(U)$ satisfying $v(x_i) \le u(x_i)=l(x_i)$ for each $i \in I$, Proposition \ref{prop:equivalent_def_LConv} implies that $v(0) \le u(0)=0$. Therefore, the proposition is concluded in this case.
\smallskip

 \noindent \textbf{Case 2:} $Z$ is  connected through 
 codimension~1.   Without loss of generality,
  may assume that  $E=\mathrm{span}(Z)$ and $\dim E=n$. Then,  $Z$ verifies all the three conditions in Proposition \ref{prop:existence_of_balanced_rays_in_2d_fan}. Let $F \subset E$ be the $(n-2)$-dimensional subspace and $\{\rho_1, \cdots, \rho_{n-1}\}$ the $(n-1)$ rays in $F \cap Z$ given by that proposition.
  Since $Z$ satisfies the genericity condition (G), and is a tropical fanfold, 
  we have $\pi(Z)=E/F$, where $\pi$ denote the quotient map $E \to E/F$. We equip $E/F$ with a tropical fanfold structure as the pushforward of $Z$, as in Section \ref{subsubsec:pushforward_of_weight_functions} (the corresponding tropical weight function is a positive constant). 
  Let $u' \in \Conv \cap \PL_{\hom}(E/F)$ such that $\MA_{E/F}(u')(\{0\})>0$. Then, $u \coloneqq u' \circ \pi \in \LConv \cap \PL_{\hom}(Z)$. 
  Since $\MA_{Z}(u)$ is a Dirac mass at $0$, 
  %\ref{prop:compatibility_Monge-Ampere_projections} 
  \cite[Prop.\,5.20]{APW1} implies that $\MA_{Z}(u)(\{0\})=\MA_{E/F}(u')(\{0\}) >0$. On the other hand, let $U$  be an open neighborhood of $0$ in $Z$. 
  Take $x_i \in \relint(\rho_i)$ for each $1 \le i \le n-1$ such that $\conv(\{x_i\}_{1 \le i \le n-1}) \subset U \cap Z$, where $\conv(\{x_i\}_{1 \le i \le n-1})$ denotes the convex hull of $\{x_i\}_{1 \le i \le n-1}$ in $E$. Since $\{\rho_i\}_{1 \le i\le n-1}$ is nonnegatively dependent in $F$, $(0, (x_i)_{1 \le i\le n-1}, (\nu_i)_{1 \le i\le n-1})$ is a polyhderal convex combination for some real numbers $0 \le \nu_i \le 1$. Notice that $u \rest{F \cap \Sigma}=0$. 
  Thus, by Proposition \ref{prop:equivalent_def_LConv},  for any $v \in \LConv(U)$ satisfying $v(x_i) \le u(x_i)=0$ for each $1 \le i\le n-1$, we have $v(0) \le 0=u(0)$. Therefore, $u$ satisfies the two properties in the proposition, and the proposition is proven in this case.

This finishes the proof.  
\end{proof}

We are now ready to prove the main theorem of this section. 
\begin{proof}[Proof of Theorem \ref{thm:global_failure_of_orthogonality}] \label{proof:thm:global_failure_of_orthogonality}
  Notation as in the statement of the theorem, let $\sigma \in \Pi_{d-2}$ such that $\Pi^{\sigma}$ satisfies Condition (G). We take a point $x \in \relint(\sigma)$. Writing $\Pi^{x}=\Pi^{\sigma} \times \R^{d-2}$, and 
  applying Proposition \ref{prop:local_failure_of_orthogonality} to $Z=\supp{\Pi^x}$, we obtain $u \in \LConv \cap \PL(\supp{\Pi^{x}})$ that satisfies (1) and (2) in that proposition.  

  Take an open neighborhood $W$ of $x$ such that $W$ is isomorphic to an open neighborhood of $0$ in $\supp{\Pi^x}$. 
  Since $\omega$ is a strongly semipositive form on $X$, by definition (see Definition \ref{def:strongly_semipositive_form}) there exists $\psi \in \PSH_{\omega} \cap \PL_{\omega}(X)$ such that $\psi+h=Cu+l$ on an open neighborhood $U$ of $x$ in $W$,  where $h$ is the local potential of $\omega$ on $U$, $C$ is a positive constant,  and $l$ is an affine function on $U$.
  Therefore, $\MA_{\omega}(\psi)(\{x\})=\MA_{Z}(Cu)(\{x\})>0$. Furthermore, there exists a finite subset $\{x_i\}_{i \in I} \subset U$ such that for every $v \in \LConv(U)$ with $v(x_i) \le (\psi+h)(x_i)$ for all $i \in I$, we have $v(x) \le (\psi+h)(x)$. 

  We choose a refinement $\olPi'$  of $\olPi$ on $X$, inducing a refinement $\Pi'$ of $\Pi$ on $\xo$, such that $\{x\} \cup \{x_i\}_{i \in I} \subset \Pi'_0$, and $\psi$ is $\omega$-facewise affine on $\olPi'$. 
  Set the finite collection of pairs $\CS\coloneqq \{ (x, \psi(x))\ |\ x \in \Pi'_0 \}$. Using Lemma \ref{lem:envelope_PL_functions_vertices}, we have $\psi=\envomega(f_{\CS})$. 
  Then, by assumption, $\psi=\envomega(f_{\CS'})$ for $\CS'\coloneqq \left\{ (x, \psi(x))\ \st \ x \in \Pi'_0 \setminus \{x\} \right\}$. Therefore, $\Supp(\MAo(f_{\CS'})) \not\subset \Pi'_0 \setminus \{x\}$. Hence, by Theorem \ref{thm:equivalence_orthogonality_existence_of_solutions}, $(X, \omega)$ does not verify the tropical Calabi--Yau theorem.    
\end{proof}

\subsection{An explicit example for the failure of the 
tropical Calabi--Yau theorem}\label{subsec:example_failure_of_tropical_Calabi-Yau} 
We present an explicit example that elucidates  the proof of Theorem \ref{thm:global_failure_of_orthogonality}. 

\begin{example} Let $Z$ be the fanfold of dimension $2$ in $E=\R^3$ associated to the uniform rank 3 matroid $U_{3,4}$ on 4 elements. 
It is given as follows. Let $(\e_1,\e_2,\e_3)$ be the standard basis of $\R^3$ 
and $\e_0 = -\e_1-\e_2-\e_3$. Then, denoting $\rho_i \coloneqq \R_{\geq0} \e_i$, we have 
$ Z= \bigcup_{0\leq i<j\leq 3}\rho_i+\rho_j$.

\begin{figure}[ht]
  \begin{tikzpicture}[scale = 1]
    \let\i\relax

    \BFcoordinates

    \foreach \i/\j in {1/1,2/3,3/2,0/0} {
      \draw (I) -- (I\i);
      \draw ($(I)!1.1!(I\i)$) node {\j};
    }
    \foreach \i/\j/\cc in {1/3/blue, 2/3/green, 1/0/black, 3/0/violet, 1/2/red, 2/0/yellow} {
      % \draw (I) -- (I\i\j);
      % \draw ($(I)!1.1!(I\i\j)$) node {\ifnumequal{\j}{0}{\j\i}{\i\j}};
      \roundface\i\j[\cc, opacity=.1];
    }
    \foreach \k/\l/\r in {20/03/1,13/12/1} {
      \draw (I) -- ($(I)!\r!(I\k)$) node[pos=1.1] {\l};
    }
  \end{tikzpicture}
  \caption{The fan $\Sigma$ with support the Bergman fanfold $Z$ of the uniform matroid $U_{3,4}$. \label{fig:counterexample_fan}}
\end{figure}

We define a unimodular fan $\Sigma$ with support $Z$ whose rays are the $\rho!_J$ for $J$ labeled as in Figure \ref{fig:counterexample_fan}. 
For instance, for $J=03$, the corresponding ray is $\rho!_{03} = \R_{\geq 0}\e_{03}$ where $\e_{03} \coloneqq \e_0 + \e_3$. Let $X=\comp{\Sigma}$ be the canonical compactification of $\Sigma$, which is a compact tropical variety with one single chart given by $\comp{\Sigma}$. 
Note that we have $\xo=Z$.

Let $h \in \PL_{\hom}(Z)$ defined by the following table on the generators of the rays of $\Sigma$
\[ \begin{array}{r|cccccccccccccccc}
  J         & 0 & 1 & 2 & 3 & 03 & 12  \\
  h(\e_J)   & 1 & 1 & 1 & 1 & 2  & 2   \\
\end{array} 
\]
extended linearly on each cone of $\Sigma$. Notice that $h$ is a convex conewise linear  function on $\Sigma$. Moreover, Proposition \ref{prop:example_weak_kahler_form_on_projective_tropical_variety} shows that $\omega=\dm \dmb h$ is a strongly semipositive simplicial form on $X$. A direct calculation shows that $\omega^2=16 \delta_{0}$.

Observe that $\e_{03}=-\e_{12}$. Let $F$ be the line containing $\e_{03}$ and $\e_{12}$. 
Then $F \subset E$ is a line satisfying the conditions in Proposition \ref{prop:existence_of_balanced_rays_in_2d_fan}. Let $\pi\colon E \to E/F$ be the quotient map. It is clear that $\dim(\pi(Z))=2$. 

We define $g \in \PL_{\hom}(Z)$ by the following table on the generators of the rays of $\Sigma$,
\[ \begin{array}{r|cccccccccccccccc}
  J           & 0 & 1 & 2 & 3 & 03 & 12  \\
  g(\e_J)   & 1 & 1 & 1 & 1 & 0  & 0   \\
\end{array} \]
 extended linearly to each cone of $\Sigma$. Then it is 
  not hard to see that $g=g' \circ \pi$ for some strictly convex homogeneneous piecewise linear function $g'$ on $E/F$. Therefore, by \cite[Prop.\,5.2]{APW1} 
  %cross-ref in APW1: \ref{prop:compatibility_Monge-Ampere_projections} 
  we have $\MA(g)\{0\}>0$. In fact one can check that $\MA(g)\{0\}=8$ via a direct calculation in the Chow ring of $\Sigma$.
  Furthermore, since $g\rest{F}=0$, $g \in \LConv \cap \PL(\Sigma)$ satisfies the conditions in Proposition \ref{prop:local_failure_of_orthogonality}.

We now make use of this function $g$ to construct an explicit counterexample to tropical Calabi--Yau theorem. 
Observe that 
\[\max\{g, h-1\} \in \LConv(\xo), \qquad  |\max\{g, h-1\}-h|=O(1).\]
Therefore, by Theorem \ref{thm:equivalent_def_psh}, $\max\{g, h-1\}-h$ can be extended to $\varphi \in \PSH_{\omega} \cap \PL(X)$.
The restriction of $\varphi+h$ on the cones $\rho_0+\rho_1$, $\rho_0+\rho_2$, $\rho_1+\rho_3$, $\rho_2+\rho_3$ equals to $g$. The restriction of $\varphi+h$ on the cone $\rho_1+\rho_2$ is shown in Figure \ref{fig:counterexample_wing}, so one can observe that $\varphi+h=g$ in the blue and yellow region, and $\varphi+h=h$ in the green region.  The figure for $\rho_0+\rho_3$ is symmetric. Therefore, $\varphi+h=g$ in a neighborhood of $\bigcup_{0 \le i \le 3} \rho_i$, and $\MA_{\omega}(\varphi)$ is supported on $\{0, a, b\}$, where $a$ and $b$ denote $\frac{1}{2}\e_{12}$ and $\frac{1}{2}\e_{03}$ respectively. 
A direct calulation shows that $\MA_{\omega}(\varphi)=8\delta_0+4\delta_{a}+4\delta_{b}$.

We consider the finite collection of pairs $\CS=\{(a,0), (b, 0)\} \subset \Sigma_{o} \times \R$. Then $\envomega(f_{\CS})=\varphi$, and therefore
 $f_{\CS}$ does not verify the orthogonality property. Following the proof of Theorem \ref{thm:equivalence_orthogonality_existence_of_solutions}, one can show that the measure \[\mu=8 \delta_{a}+8 \delta_{b}\] 
does not admit any solution in $\PSH_{\omega}(X)$. 
\end{example}
\begin{figure}
  \begin{tikzpicture}[scale=1]

    \fill[blue!20] (0,0.5) -- (0,3.2) -- (0.5,3.2) -- (0.5,0.5) -- (0,0.5);
    \fill[blue!20] (0,0.5) -- (0.5,0.5) -- (0,0) -- cycle;

    \fill[yellow!30] (0.5,0) -- (3.2,0) -- (3.2,0.5) -- (0.5,0.5) -- (0.5,0);
    \fill[yellow!30] (0,0) -- (0.5,0) -- (0.5,0.5) -- cycle;

    \fill[green!30] (0.5,0.5) -- (3.2,0.5) -- (3.2,3.2) -- (0.5,3.2) -- cycle;

    \draw[->] (0,0)--(3.2,0);
    \draw[->] (0,0)--(0,3.2);

    \draw[-latex, thick] (0,0)--(1,0) node[pos=1.1] {$\e_1$};
    \draw[-latex, thick] (0,0)--(0,1) node[pos=1.1] {$\e_2$};
    \draw[-latex, thick] (0,0)--(1,1) node[pos=1.1] {$\e_{12}$};

    \draw
    (2,2) node {$x_1+x_2-1$}
    (2,.2) node {$x_1-x_2$}
    (.2,2) node[rotate=90] {$x_2-x_1$};

    \fill[red] (0,0) circle (.05) node[below left] {$0$};
    \fill[red] (0.5,0.5) circle (.05) node[right] {$a$};

  \end{tikzpicture}
  \caption{The values of the function $\phi+h$ on the cone $\rho_1+\rho_2$.} \label{fig:counterexample_wing}
\end{figure}

\begin{remark} \label{rmk:example_semipositive_form_with_positive_degree_but_not_strongly_semipositive}
  Take $\omega' \coloneqq \dm \dmb g$ in the above example. Then, $\omega$ is a semipositive form on $Z$ with $\deg(\omega^2)=8>0$. However, $\omega'$ is not strongly semipositive. To see this,   observe that $\omega'|_F=0$, where $F$ is the line in $E$ containing $\fe_{03}$ and $\fe_{12}$. This implies that $\varphi|_F$ is constant for any $\varphi \in \CPSH_{\omega}(Z)$. On the other hand, one can easily find an open neighborhood $W$ of $0$ in $Z$ and  $f \in \LConv(W)$  such that $f|_{U \cap F}$ is non-constant. Therefore, the condition in Definition \ref{def:strongly_semipositive_form} cannot be satisfied for $\omega'$ and  $(W,f, 0, g)$. 
\end{remark}

\begin{remark}
  Notice that if a $2$-dimensional fanfold $Z$ can be written as a product $Z=\Gamma_1 \times \Gamma_2$ 
  for two $1$-dimensional tropical fanfolds $\Gamma_1, \Gamma_2$, then $Z$ does not verify the genericity condition (G). However, if neither $\Gamma_1$ nor $\Gamma_2$ is equal to $\R^1$, we can still choose an
   $(n-2)$-dimensional subspace $F \subset E$ satisfying the conditions in Proposition \ref{prop:existence_of_balanced_rays_in_2d_fan}, and moreover verifying that $\dim(\pi(Z))=2$, for the quotient map $\pi \colon E \to E/F$. We illustrate this in Example \ref{example_products_of_tropical_curves_not_satisfying_orthogonality_property} below. 
   In this situation, the arguments in Proposition \ref{prop:local_failure_of_orthogonality} and Theorem \ref{thm:global_failure_of_orthogonality} apply verbatim, so Theorem \ref{thm:global_failure_of_orthogonality} still holds for any projective compactification of $Z$.
\end{remark} 
  
\begin{example} \label{example_products_of_tropical_curves_not_satisfying_orthogonality_property}
  Suppose that $\Sigma_1$ is a tropical fan in $\R^2$ with 3 rays $\R_{\geq 0} \e_1$, $\R_{\geq 0} \e_2$,
   $\R_{\geq 0}  \e $, where $\{\e_1,\e_2\}$ is a base of $\R_2$, and $\e \coloneqq -\e_1-\e_2$. Suppose further that $\Sigma_2 \cong \Sigma_1$, where we denote  the rays of $\Sigma_2$ by $\R_{\geq 0} \e'_1$, $\R_{\geq 0}  \e'_2$, $\R_{\geq 0} \e$ with $\e' \coloneqq -\e'_1-\e'_2$. Let $\Sigma=\Sigma_1 \times \Sigma_2 \subset E=\R^4$. Then, one can take nonnegatively dependent rays $\rho_1 \coloneqq \R_{\geq 0}(\e_1+\e_1')$, $\rho_2 \coloneqq \R_{\geq 0}(\e_2+\e_2')$, $\rho_3 \coloneqq \R_{\geq 0}(\e+\e')$ inside $\Sigma$, and let $F$ be the subspace generated by them. Then, it is not difficult to check that $\dim F=2$ and $\dim \pi(\Sigma)=2$, for the quotient map $\pi\colon E \to E/F$.
\end{example}

Actually, we have the following conjecture. 
\begin{conj} \label{conj:exact_condition_for_orthogonality_properties}
   Let $(X, \omega)$ be a compact \prekahler tropical variety equipped with a strongly semipositive form $\omega$. Then, $(X, \omega)$ verifies the tropical Calabi--Yau theorem if and only if $\CR=\xo$, \textit{i.e.} every point $x \in \xo$ is regular in codimension one (see Definition \ref{def:regular_in_codim_one}). 
\end{conj}

\subsection{Proof of Proposition~\ref{prop:existence_of_balanced_rays_in_2d_fan}}\label{subsec:proof-existence_of_balanced_rays_in_2d_fan}
It remains to prove the combinatorial statement in Proposition~\ref{prop:existence_of_balanced_rays_in_2d_fan}. We will use several polyhedral geometric results in the proof. In order to increase the readability, we gather them all in Appendix \ref{appendix:polyhedral_geometric_lemmas}, to which we refer in the course of the proof.
We also recall the following result in topology.
\begin{lemma} \label{lem:application_Borsuk_Ulam_theorem}
  Let $D^{n+1}$ be the $(n+1)$-dimensional disk, and $S^n \coloneqq \partial D^{n+1}$ the $n$-dimensional sphere. Define the open northern and southern hemisphere by 
    \[
    S^n_+ \coloneqq \{x \in S^n \st x_{n+1} > 0\}, \quad
    S^n_- \coloneqq \{x \in S^n \st x_{n+1} < 0\}.
    \]
  Let $f \colon D^n \to D^{n+1}$ be a continuous map whose restriction $f \rest{\partial D^n}$ induces a homeomorphism from $\partial D^n$ to the equator $S^{n-1} \subset S^n$. Let $\gamma \colon [0,1] \to D^{n+1}$ be a curve such that $\gamma(0)=x_+  \in S^{n}_{+}$ and $\gamma(1)=x_- \in S^{n}_{-}$. Then, there exist $t_0 \in [0,1]$ and $x_0 \in D^n$ such that $\gamma(t_0)=f(x_0)$. 
\end{lemma}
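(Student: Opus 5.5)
The plan is to argue by contradiction and use a degree argument on spheres. Suppose that for every $s\in[0,1]$ no point of $\gamma(s)$ lies in $f(D^n)$. The goal is then to build from $f$ and $\gamma$ a continuous map that the Borsuk--Ulam theorem (or its equivalent no-retraction formulation) forbids. An equivalent and cleaner route is intersection theory mod $2$. Glue $f(D^n)$ along its boundary with an embedded copy of the equator disk, which gives a singular $n$-cycle. The closed hemispheres then sit on opposite sides of it, and a path crossing from one side to the other must meet it.

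Concretely, I would proceed as follows. First, form the space $D^{n+1}\setminus f(D^n)$ and show that $x_+$ and $x_-$ lie in different path components. Since $f(D^n)$ is compact, this amounts to showing that the reduced homology $\tilde H_0$, equivalently the Alexander-duality-type linking, separates them. To do so, build the map $F\colon S^n\to D^{n+1}$ by sending the upper closed hemisphere to $f(D^n)$ via $f$ composed with the vertical projection $S^n_{\ge 0}\to D^n$. For the lower hemisphere, use the homeomorphism $f|_{\partial D^n}$ to the equator, extended radially. More precisely, send the lower hemisphere onto the flat equatorial disk $D^n\times\{0\}$ through $x\mapsto r\, f(x/|x|)$, which is possible because $f|_{\partial D^n}$ is a homeomorphism onto $S^{n-1}$.

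The two pieces agree on the equator, so $F$ is continuous. Next I would compute the linking (winding) number of $F$ around a point $p$ off its image: it is the degree of $(F-p)/|F-p|\colon S^n\to S^n$. Take $p=x_+$ with a general position argument, or simply observe the following. Homotope $F$ to the map $F_0$ in which $f$ is replaced by the flat disk. This homotopy avoids $x_+$ and $x_-$ provided the flat disk and $f(D^n)$ both miss $x_\pm$. The flat disk misses them because $x_\pm$ are off the equator, and $f(D^n)$ misses them by the contradiction hypothesis.

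However, $F_0$ is degenerate, so the cleaner choice is to compare the two degrees. The winding number of $F$ about $x_+$ minus that about $x_-$ equals the winding number of the sphere formed by $f(D^n)\cup(\text{upper hemisphere})$ about these points. That sphere is $f$ glued to $S^n_{\ge0}$ along the equator, it bounds, and it separates $x_+$ from $x_-$ with degree difference $\pm1$. Meanwhile the winding number is constant along $\gamma$ if $\gamma$ avoids the image, which gives the contradiction.

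The main obstacle is this last degree bookkeeping: making sure that the closed surface $f(D^n)\cup S^n_{\ge 0}$ has winding number $1$ about $x_-$ and $0$ about $x_+$, or the reverse. Here $x_+$ lies on the surface itself, so I would perturb by moving $x_+$ slightly inward along $\gamma$. I would handle this by using the surface $f(D^n)\cup S^n_{\le0}$ for $x_+$ instead. Its winding number about points near the north pole outside it is $0$, while about points just inside near $x_-$ it is $\pm1$, computed via the homotopy straightening $f$ to the flat disk inside $D^{n+1}$. Continuity of winding number along $\gamma$, started just inside $x_+$ and ended just inside $x_-$, then forces $\gamma$ to meet $f(D^n)$. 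The sphere part is avoided since $\gamma$ stays in the interior except at its endpoints, after a small modification.
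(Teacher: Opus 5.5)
Your final argument is correct, and it takes a genuinely different route from the paper. You close $f(D^n)$ up with the closed southern hemisphere into a map $H\colon S^n\to D^{n+1}$. You then straighten $f$ rel $\partial D^n$ to the flat disk $\phi(y)=|y|\,f(y/|y|)$. Comparing with the boundary of the lower half-ball gives $W(H,\lambda x_+)=0$ and $W(H,\lambda x_-)=\pm1$ for $\lambda<1$ close to $1$. The contradiction comes from local constancy of $W(H,\cdot)$ along a path that avoids $H(S^n)$.

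The paper never closes anything up. It considers $g(x,t)=(f(x)-\gamma(t))/\|f(x)-\gamma(t)\|$ on $D^n\times[0,1]\cong D^{n+1}$. Since this map extends over a contractible space, $g|_{\partial(D^n\times[0,1])}$ is nullhomotopic. The paper then asserts that this boundary map has degree $1$, which is in effect the intersection number of $f$ with $\gamma$. That formulation is shorter and handles the endpoints $x_\pm\in S^n$ directly. However, the degree-one claim is left as ``not hard to show'', and proving it needs essentially your straightening homotopy plus a homotopy of $\gamma$ that avoids the equator. Your separation argument computes the relevant degree explicitly, at the cost of moving $x_\pm$ inward and modifying $\gamma$.

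When you write it up, drop the earlier heuristics, because they are false.
\begin{itemize}
\item The cycle $f(D^n)\cup(D^n\times\{0\})$ does not put $x_+$ and $x_-$ on opposite sides. Your own homotopy to the degenerate $F_0$ shows that both winding numbers vanish, and $\gamma$ must cross the flat disk anyway.
\item The surface $f(D^n)\cup S^n_{\ge0}$ has winding number $0$ about $x_-$, since $x_-$ can be joined to infinity by the outward radial ray. The value $\pm1$ occurs at points just inside near $x_+$.
\end{itemize}

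Two details also need an actual argument.
\begin{itemize}
\item The straight-line homotopy $(1-s)f+s\phi$ must avoid a whole neighborhood of $x_\pm$, not just the points themselves. This holds because $x_\pm$ are extreme points of the ball lying neither in $f(D^n)$ nor in the flat disk; a compactness argument then keeps all segments between these two sets a uniform distance away from $x_\pm$.
\item The claim that ``$\gamma$ stays in the interior except at its endpoints'' is not a hypothesis. The modification should be $\lambda\gamma$, which lies in the open ball and runs from $\lambda x_+$ to $\lambda x_-$. It still misses $f(D^n)$ for $\lambda$ close to $1$, because $\gamma([0,1])$ and $f(D^n)$ are disjoint compact sets.
\end{itemize}
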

\begin{proof}
  This is a standard application of the Borsuk-Ulam Theorem~\cite[Thm.\,2.1.1]{Mat03}. Since we were not able to find a reference for this precise statement in the literature, we sketch a proof here. 
  Proceeding by contradiction, suppose the statement is not correct. Define the continuous map $g \colon D^n \times [0,1] \to S^n$ by $(x, t) \mapsto (f(x)-\gamma(t))/\left\| f(x)-\gamma(t) \right\|$. Notice that $D^n \times [0,1]$ is homeomorphic to $D^{n+1}$. It is not hard to show that the map $g \rest{\partial (D^n \times [0,1])} \colon \partial (D^n \times [0,1]) \to S^n$ has degree $1$. But at the same time, $g$ is nullhomotopic since $D^n \times [0,1]$ is retractable. This yields a contradiction. 
\end{proof}

We are prepared to prove Proposition \ref{prop:existence_of_balanced_rays_in_2d_fan}.  We proceed in several steps.
  
  \medskip
  
  \noindent $\bullet$ \textbf{Step 1. Reduction to tropical curves.} \label{Step_1}

  \smallskip

  We fix a fan structure $\Sigma$ on $Z$. 
  Also, we fix an Euclidean metric on $E$.  
  
  Consider the dual space $E^\vee$, and take a generic rational element $l \in E^{\vee}$ such that the $(n-1)$-dimensional vector subspace $A \coloneqq\{l=0\} \subset E$ does not contain any nonzero cone $\sigma$ in $\Sigma$.
 Then, $\{l=1\} \cap Z$ and $\{l=-1\} \cap Z$ are both tropical curves. 
 Let $\pi \colon E \to A$ be the orthogonal projection (with respect to the fixed Euclidean metric). Denote the image of these two tropical curves in $A$ by 
 \[
 \Gamma \coloneqq \pi \left(\, \{l=1\} \cap Z \,\right) \text{ and }\Gamma' \coloneqq \pi \left(\, \{l=-1\} \cap Z \,\right), \text{ and set } -\Gamma' \coloneqq -\pi \left(\, \{l=-1\} \cap Z \,\right).
 \]

  Let $\fC$ be the subset of $A$ defined by 
  \[
  \fC  \coloneqq \left\{\, x=\sum_{i=1}^{n-2} \lambda_i x_i \in A \ \st \  x_i \in \Gamma, \,\,\lambda_i\geq 0\text{ for each } 1 \le i \le n-2, \text{ and } \sum_{i=1}^{n-2} \lambda_i=1 \,\right\}.
  \] 
  In the rest of the proof, we show that $\fC \cap (-\Gamma')\neq \emptyset$. This implies the proposition, by taking the $(n-2)$ rays  generated by $x_i\in \Gamma$, and the one generated by $x$ with $x=\sum_i\lambda_i x_i\in \fC\cap(-\Gamma')$.

  \medskip

 \noindent $\bullet$ \textbf{Step 2. Selection of rays.} \label{Step_2}

 \smallskip

  Set $\Upsilon = \rec(\Gamma)$, the recession cone of $\Gamma$, and note that $\Upsilon=\rec(\Gamma')$. Notice that $\Upsilon$ is balanceable (that is, the rays of $\Upsilon$ are positively dependent), and $\mathrm{span}(\Upsilon)=A$.
  Applying Lemma \ref{lem:completion_of_fans_with_prescribed_rays} to the balanceable $1$-dimension fan $\Upsilon$  and the simplicial fan $\{ \conezero \}$ supported on $A$, we obtain a complete simplicial fan $\Xi''$ supported on $A$ such that the set of rays $\Xi''$ and that of $\Upsilon$ coincide, that is, 
  $\Xi''_1=\Upsilon_{1}$. 

  For $\rho \in \Upsilon_1$, we use $\e_{\rho}$ for the primitive vector on $\rho$. Fix $\rho_0 \in \Upsilon_{1}$. Let $\sigma \in \Xi''$ such that  $-\e_{\rho_0} \in \relint(\sigma)$.  Let $\rho_1, \cdots, \rho_{m} \in \Upsilon_1$ be the rays generating $\sigma$, where $m=\dim \sigma \le n-1$. Then, we have $-\e_{\rho_0}=\sum_{1 \le j \le m} a_j \e_{\rho_j}$ for positive $a_j >0$, $j=1, \dots, m$. Since $\sigma$ is simplicial, $\e_{\rho_1}, \cdots, \e_{\rho_m}$ are
   linearly indepedent, and generate an $m$-dimensional vector subspace $A_0 \subset A$.  

  Let $l \coloneqq \R\e_{\rho_0} \subset A$ be the linear span of $\rho_0$. 
  Consider the projection map  $p \colon A \to B \coloneqq A/l$.  Denote $\Upsilon'=p(\Upsilon)$. Let $B_0$ be the image of $A_0$ under the quotient map $p$. 
  We denote $\rho'_j \coloneqq p(\rho_j) \in \Upsilon'_1$, and $\e_{\rho'_j} \coloneqq p(\e_{\rho_j})$.   Then, $B_0$ is an $(m-1)$-dimensional vector subspace in $B$, and we have 
  \[ 
  \mathrm{span}_{1 \le j \le m}(\{\e_{\rho'_j}\})=B_0, \quad \text{and} 
  \quad  
  0=\sum_{1 \le j \le m} a_j \e_{\rho'_j} \qquad \text{in }B_0.
  \] 
   Therefore, the rays $\rho'_1, \dots, \rho'_m$ generate a complete simplicial fan supported on $B_0$ that we denote by $\Xi'$. Each cone of $\Xi'$ is generated by a proper subsets of $\{\rho'_1, \dots, \rho'_m\}$. 

  Notice that $\Upsilon'$ is a balanceable $1$-dimensional fan supported on $B$. Then, the subfan of $\Upsilon'$ defined by $\CL \coloneqq \{\conezero \} \cup \{ \rho'_j  \,\st \, 1 \le j \le m  \} \cup \{\xi \,|\, \xi \in \Upsilon'_1, \text{ and } \xi \nsubseteq B_0 \}$ is also balanceable. Applying Lemma \ref{lem:completion_of_fans_with_prescribed_rays} to $\CF'= \Xi'$ and $\CL$ defined above, we obtain a complete simplicial fan $\Xi$ supported on $B$ such that $\Xi'$ is a subfan of $\Xi$, and $\Xi_1 \subset \Upsilon'_1$.

  We choose a map $s_{(1)} \colon \Xi_1 \to \Upsilon_{1}$ such that $p_{(1)}(s_{(1)}(\xi))=\xi$ for each $\xi \in \Xi_1$, and $s_{(1)}(\rho'_j)=\rho_j$ for each $1 \le j \le m$. Here $p_{(1)}$ denote the map induced by $p$, which sends the set of rays in $A$ to the set of rays in $B$. This induces a continuous map $s \colon B \to A$ by extending $s_{(1)}$ linearly on each cone of $\Xi$. Therefore, $s$ is a  conewise linear map on $\Xi$, and $p \circ s=\mathrm{id}_B$. 
  
  Define the open subsets
  \[
  U_+ \coloneqq \{x \in A \ |\ x = s(p(x))+ a \cdot \e_{\rho_0} \text{ for some } a >0 \} ,
  \]
  \[
  U_- \coloneqq \{x \in A \ |\ x = s(p(x))+ a \cdot \e_{\rho_0} \text{ for some } a <0 \} 
  \]

  Since $s(p(-\e_{\rho_0}))=0$, we have $-\e_{\rho_0} \in U_-$. On the other hand, $-\e_{\rho'_1}=\sum_{2 \le j \le m} \frac{a_j}{a_1} \e_{\rho'_j}$ with $\frac{a_j}{a_1} >0$. Hence, $-\e_{\rho'_1} \in \mathrm{Cone}(\rho'_2, \cdots, \rho'_m) \in \Xi'$. Since $\Xi'$ is a subfan of $\Xi$ and $s$ is conewise linear on $\Xi$,  we have 
  \[
  s(p(-\e_{\rho_1}))= s(-\e_{\rho'_1}) =\sum_{2 \le j \le m} \frac{a_j}{a_1} s( \e_{\rho'_j})= \sum_{2 \le j \le m} \frac{a_j}{a_1}  \e_{\rho_j} =-\e_{\rho_1}-\frac{1}{a_1} \e_{\rho_0} .
  \]
  This implies that $-\e_{\rho_1} \in U_+$.

  \medskip

  \noindent $\bullet$ \textbf{Step 3. Constructions of $P \subset B$ and $\widetilde{P} \subset B \times \R$.}  \label{Step_3}

  \smallskip

  Recall from the previous step that $\dim A=n-1$, $\dim B=n-2$, and $\Xi''$ and $\Xi$ are complete simplicial fans supported on $A$ and $B$, respectively. 

  We first construct a subset $P \subset B$. For each $\xi \in \Xi_1$, let $\su_{\xi}$ be the vector of norm one on $\xi$, that is, the unique point of $B(0,1) \cap \xi$ in $B$.   Consider the
   simplicial polyhedral complex
  \[
  \Delta_{P} \coloneqq  \left\{\, \conv(0, \su_{\xi_1}, \su_{\xi_2}, \cdots, \su_{\xi_{k}}) \ \st \ \mathrm{Cone} (\xi_1, \xi_2, \cdots, \xi_{k}) \in  \Xi \right\},
  \]
  and denote by $P \subset B$ its support. Since $\Xi$ is complete, $P$ is homeomorphic to an $(n-2)$-dimensional disk $D^{n-2}$.    The set of vertices of $\Delta_P$ is given by $(\Delta_{P})_0=\{0, \su_{\xi} \,|\, \xi \in \Xi_1\}$.

  Fix $N \in \BN_{+}$ such that $N> \card{\Gamma_0}$, with $\Gamma_0$ the set of vertices of the tropical curve $\Gamma$ in $A$. For each $\xi \in \Xi_1$ and $0 \le i \le N$,  denote $\su_{\xi, i}  \coloneqq B(0,i/N) \cap \xi$. The segment $[0, \su_{\xi}]$ is thus subdivided into $N$ segments by the points $\su_{\xi,i}$.
  Applying Lemma \ref{lem:successive_stellar_subdivision}, we can subdivide $\Delta_{P}$ to a simplicial polyhedral complex $\Delta'_{P}$ such that $(\Delta'_{P})_{0} = \{0, \su_{\xi, i} \,|\, \xi \in \Xi_1, 1 \le i \le N \}$.

  \begin{claim} \label{claim:top_dimensional_simplices_in_subdivision_of_Delta_P}
  Let $\sigma \in \Delta'_{P}$ be an $(n-2)$-dimensional simplex. Then, there exists rays $\xi_1, \xi_2, \cdots, \xi_{n-2} \in \Xi_1$ such  that $\mathrm{Cone}(\xi_1, \xi_2, \cdots, \xi_{n-2}) \in \Xi_{n-2}$, and  
  \[
  \sigma =\conv(\su_{\xi_1, i_1}, \su_{\xi_1, i_1+1}, \su_{\xi_2, i_2}, \su_{\xi_3, i_3}, \cdots, \su_{\xi_{n-2}, i_{n-2}})
  \]
  \end{claim}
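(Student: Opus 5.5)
The claim concerns the structure of the top-dimensional simplices of $\Delta'_P$. I will prove it by examining how the subdivision of Lemma~\ref{lem:successive_stellar_subdivision} acts on each top-dimensional simplex of $\Delta_P$.

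First, fix an $(n-2)$-dimensional simplex $\tau=\conv(0,\su_{\xi_1},\dots,\su_{\xi_{n-2}})$ of $\Delta_P$, where $\mathrm{Cone}(\xi_1,\dots,\xi_{n-2})\in\Xi_{n-2}$. Since $\Delta'_P$ refines $\Delta_P$, every $(n-2)$-simplex $\sigma$ of $\Delta'_P$ lies in exactly one such $\tau$. Its vertices belong to $(\Delta'_P)_0\cap\tau=\{0\}\cup\{\su_{\xi_j,i}\}$, that is, to the points of the segments $[0,\su_{\xi_j}]$. I will use the linear coordinates on $\tau$ in which $\su_{\xi_j}$ is the $j$-th basis vector. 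In these coordinates $\su_{\xi_j,i}=(i/N)\,\e_j$, so the whole configuration becomes the standard simplex with each edge from the origin divided into $N$ equal pieces.

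The main step, and the one I expect to be the obstacle, is to identify exactly which $(n-2)$-simplices the subdivision produces. My plan is to make the construction in Lemma~\ref{lem:successive_stellar_subdivision} explicit. I would use successive stellar subdivisions at the points $\su_{\xi,i}$, taken for $i=1,\dots,N-1$, on the edges $[0,\su_{\xi}]$, and prove by induction on the number of subdivisions that the following invariant holds. Each top simplex of the current complex has the form $\conv(\su_{\xi_1,i_1},\su_{\xi_1,i_1'},\su_{\xi_2,i_2},\dots,\su_{\xi_{n-2},i_{n-2}})$, up to reordering of the $\xi_j$. Here $i_1<i_1'$ and the pair is adjacent, meaning no currently present vertex lies strictly between $\su_{\xi_1,i_1}$ and $\su_{\xi_1,i_1'}$. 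The index $i_1$ is allowed to be $0$, with $\su_{\xi,0}=0$.

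The induction step goes as follows. Starring at a new point $\su_{\xi,i}$ on the segment between adjacent vertices $\su_{\xi,a}$ and $\su_{\xi,b}$ affects only the simplices containing that whole edge. Each such simplex contains exactly one pair of vertices on the ray $\xi$, and that pair is $\{a,b\}$. The stellar subdivision replaces it by two simplices, with pairs $\{a,i\}$ and $\{i,b\}$, and leaves the other vertices unchanged, so the invariant is preserved. At the start, $\tau$ itself has the required form with pair $\{0,N\}$ on $\xi_1$, or on any chosen $\xi_j$. Once all subdivisions are performed, every adjacent pair is consecutive, so $i_1'=i_1+1$, which is the claimed form.

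There is a further subtlety: simplices of $\tau$ that do not contain the edge on $\xi$ may contain only a single point of $\xi$. These simplices are not split, and the invariant still holds for them. The invariant also implies that each top simplex has exactly one ray carrying two vertices, because a simplex with $n-1$ vertices spread over $n-2$ rays must have exactly one such ray. If the paper's Lemma~\ref{lem:successive_stellar_subdivision} produces a different subdivision, I would instead argue directly. The only simplicial subdivisions with the prescribed vertex set that are compatible with all the segments are built ray by ray in this way, and a volume or counting argument comparing with $\tau$ would confirm the form.
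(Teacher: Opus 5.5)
Your argument is correct, but it takes a different, construction-dependent route from the paper's.

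\textbf{The paper's argument.} It never looks at how $\Delta'_P$ was produced. It places $\sigma$ in a top simplex $\conv(0,\su_{\xi_1},\dots,\su_{\xi_{n-2}})$ of $\Delta_P$ and writes $\sigma=\conv(\fS)$, where $\fS$ consists of $n-1$ points among $0$ and the $\su_{\xi_j,i}$. If some ray $\xi_{j_0}$ carried no nonzero point of $\fS$, then $\sigma\subset\mathrm{Cone}(\xi_1,\dots,\widehat{\xi_{j_0}},\dots,\xi_{n-2})$ and $\dim\sigma\le n-3$. Hence exactly one ray carries two vertices $\su_{\xi_1,i_1'},\su_{\xi_1,i_1}$, with $0\le i_1'<i_1$. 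Since $\Delta'_P$ is a simplicial complex, any vertex of $\Delta'_P$ lying in $\sigma$ must be a vertex of $\sigma$. So no $\su_{\xi_1,i''}$ lies strictly between the two, i.e.\ $i_1=i_1'+1$. This works for \emph{any} simplicial refinement with vertex set $\{0\}\cup\{\su_{\xi,i}\}$, which is all Lemma~\ref{lem:successive_stellar_subdivision} guarantees.

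\textbf{Your argument.} You instead commit to successive stellar subdivisions and maintain an invariant by induction. This is legitimate: it is how the lemma is proved, and Step~5 only needs one such $\Delta'_P$. It also gives an explicit description of the subdivision, independent of the order of the starrings.

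\textbf{What you left implicit.} The induction needs some bookkeeping you did not spell out. In particular, you should justify that $[\su_{\xi,a},\su_{\xi,b}]$ is genuinely an edge of the current complex. This holds because, for every current top simplex $\sigma'$, the intersection $\sigma'\cap\xi$ is the face of $\sigma'$ spanned by its vertices on $\xi$.

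Your fallback sketch is essentially the paper's argument, and it needs no volume comparison. Likewise, your pigeonhole remark needs the dimension count to rule out a ray carrying no vertex.
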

  \begin{proof}[Proof of Claim \ref{claim:top_dimensional_simplices_in_subdivision_of_Delta_P}]
    Since $\Delta_{P'}$ is a subdivision of $\Delta_P$, we may assume that 
    \[
    \sigma \subset \conv(0, \su_{\xi_1}, \su_{\xi_2}, \cdots, \su_{\xi_{n-2}}), \ \text{ with } \mathrm{Cone}(\xi_1, \xi_2, \cdots, \xi_{n-2}) \in \Xi_{n-2}
    \]
    Since $\sigma$ is an $(n-2)$-dimensional simplex, one can write $\sigma = \conv(\fS)$, where $\fS$ is a subset of  $\{0\} \cup \{ \su_{\xi_{j}, i} \,|\, 1 \le j \le n-2, 1 \le i \le N \}$ of cardinality $n-1$. 
    
    Suppose that there exists $1 \le j_0 \le n-2$ such that $\fS \cap \{\su_{\xi_{j_0},i} \,|\, 1 \le i \le N \}$ is empty. Then, $\sigma \subset \mathrm{Cone}( \xi_1, \cdots, \widehat{\xi_{j_0}}, \cdots, \xi_{n-2})$, which implies that $\dim \sigma \le n-3$, and hence contradicts with our assumption. Therefore, after possibly permutating $\{\su_{j}\}_{1 \le j \le n-2}$, one can write $\fS=\{ \su_{\xi_1, i_1'}, \su_{\xi_1, i_1}, \su_{\xi_2, i_2}, \su_{\xi_3, i_3}, \cdots, \su_{\xi_{n-2}, i_{n-2}} \}$, where $1 \le i_j \le N$ for each $1 \le j \le n-2$, and $0 \le i_1' < i_1 \le N$. 

    By assumption, for any $i_1'' \in \{0, \cdots, n-2\} \setminus \{i_1, i_1'\}$, the vertex $\su_{\xi_1, i_1''}$ is not contained in $\sigma$. This implies that $\su_{\xi_1, i_1''} \notin \conv(\su_{\xi_1, i_1}, \su_{\xi_1, i'_1})$, and hence we have $i_1=i_1'+1$. Therefore, the claim is concluded. 
  \end{proof}

  We now construct a subset $\widetilde{P } \subset B \times \R$. Denote $\su_{+} \coloneqq (0,1) \in B \times \R$, and $\su_{-} \coloneqq (0,-1) \in B \times \R$.
  Consdier the simplicial polyhedral complex
  \[
  \Delta_{\widetilde{P}} \coloneqq \left\{\, \sigma, \conv(\sigma, \su_{+}), \conv(\sigma, \su_{-}) \ \st \ \sigma \in \Delta_{P} \right\}
  \]
  and denote by  $\widetilde{P}$ its support in $B \times \R$. We set $(\partial \widetilde{P})_{\pm} \coloneqq \partial \widetilde{P} \cap (B \times \R_{\pm})$. We denote $i \colon P \to \widetilde{P}$ the inclusion map given by $x \mapsto (x, 0)$. Then, it is not hard to see that there exists a homeomorphism from $\widetilde{P}$ to an $(n-1)$-dimensional disk $D^{n-1}$ with $\partial P$ mapped to the equator $S^{n-3}$ of $D^{n-1}$, and $(\partial \widetilde{P})_{\pm}$ mapped to the open northern and southern hemisphere respectively. 
  
  We define a subdivision $\Delta'_{\widetilde{P}}$ of $\Delta_{\widetilde{P}}$ by:
  \[
  \Delta'_{\widetilde{P}} \coloneqq \left\{\,\conv(\sigma', \su_{+}), \conv(\sigma', \su_{-}) \ \st \ \sigma' \in \Delta'_{P} \right\}.
  \]
  
  \medskip

  \noindent $\bullet$ \textbf{Step 4. Constructions of $Q \subset A$ and a homeomorphism $h \colon Q \to \widetilde{P}$. } \label{Step_4} 
\smallskip

  Denote by $\Gamma_{1,\infty}$ the set of infinite edges of $\Gamma$. Recall that $\Upsilon$ is the recession cone of $\Gamma$, and we have chosen a map $s_{(1)} \colon \Xi_{1} \to \Upsilon_{1}$ in Step 2.  For each ray $\xi \in \Xi_1$, we choose an infinite edge $\mathrm{Leg}(s_{(1)}(\xi)) \in \Gamma_{1, \infty}$ such that $\mathrm{Leg}(s_{(1)}(\xi))=x+s_{(1)}(\xi)$ for some $x \in \Gamma$, with $s_{(1)}(\xi) \in \Upsilon_1$. Fix a vertex $\sv$ of $\Gamma$, and fix $r \gg 0$. Denote $\sv_{\xi} \coloneqq B(\sv,r) \cap \mathrm{Leg}(s_{(1)}(\xi))$. 

  Recall that $\rho_0$ is the ray of $\Upsilon_1$ which we fixed in Step 2. We choose an infinite edge $\mathrm{Leg}(-\rho_0) \in  (-\Gamma')_{1, \infty}$ such that $\mathrm{Leg}(-\rho_0)=x_0-\rho_0$ for some $x_0 \in -\Gamma'$. Let $\rho_{0, \sv}=\sv+\rho_0$ be a ray originated at $\sv$ with direction $\rho_0$. Denote $\sv_{+} \coloneqq B(\sv, r) \cap \rho_{0, \sv} \in A$, and $\sv_{-} \coloneqq B(\sv,r) \cap \mathrm{Leg}(-\rho_0)x \in A$.

  Denote the finite set $S \coloneqq \{0, +, - \} \cup \Xi_1$. We define an abstract simplicial complex on $S$ by
  \[
  K \coloneqq \{ (0, \xi_1, \cdots, \xi_{k}), (0, +, \xi_1, \cdots, \xi_{k}), (0, -, \xi_1, \cdots, \xi_{k})  \ |\ \mathrm{Cone}(\xi_1, \cdots, \xi_k) \in \Xi \}
  \]
  We view $\su \colon S \to B \times \R$ by setting $\su(0)=0$, $\su(\pm)=\su_{\pm}$, and $\su(\xi)=\su_{\xi}$; and we define $\sv \colon S \to A$ by setting $\sv(0)=\sv$, $\sv(\pm)=\sv_{\pm}$, and $\sv(\xi)=\sv_{\xi}$. 

  For any $T \in K$, we use $\su_T \subset B \times \R$ and $\sv_T \subset A$ to denote the image of $T$ under $\su$ and $\sv$ respectively.  We define simplices $|T|_{\widetilde{P}} \coloneqq \conv(\su_T) \subset B \times \R$, and $|T|_{Q} \coloneqq \conv(\su_T) \subset A$. Notice that the simplicial polyhedral complex $\Delta_{\widetilde{P}}$ equals to  $\{|T|_{\widetilde{P}}\ |\ T \in K\}$.

  Now we define a set of simplices in $A$ by
  \[
  \Delta_{Q} \coloneqq \{ |T|_{Q}  \ |\ T \in K\} \footnote{We will prove $\Delta_{Q}$ is a simplicial polyhedral complex in Claim \ref{claim:Delta_Q_is_polyhedral_complex_and_homeomorphism_of_h}.}
  \]
  with
  \[
  Q \coloneqq (\bigcup_{T \in K} | T |_{Q}) \subset A.
  \] 

  We define a continuous map $h \colon \widetilde{P} \to Q$ which is facewise linear on $\Delta_{\widetilde{P}}$ by $h(\su(\fs))=\sv(\fs)$, for $\fs \in S$, extended linearly on each face of $\Delta_{\widetilde{P}}$.

  \medskip

  \begin{claim} \label{claim:Delta_Q_is_polyhedral_complex_and_homeomorphism_of_h}
  When $r \gg 0$, $\Delta_Q$ is a simplicial polyhedral complex supported on $Q \subset A$, and $h \colon \widetilde{P} \to Q $ is a homeomorphism.
  \end{claim}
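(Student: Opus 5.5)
The plan is to handle the two parts of the claim separately, using that everything is controlled by the choice of $r\gg 0$. Throughout, $h$ is facewise linear on $\Delta_{\widetilde{P}}$ by construction, and the combinatorics of $\Delta_{\widetilde P}$ and $\Delta_Q$ are both indexed by $K$. So it suffices to prove three things for each $T\in K$. First, the points $\sv_T$ are affinely independent, so that $|T|_Q$ is a simplex of the right dimension and $h$ restricts to an affine isomorphism $|T|_{\widetilde P}\to|T|_Q$. Second, for $T,T'\in K$ we have $|T|_Q\cap|T'|_Q=|T\cap T'|_Q$. Third, once these hold, $\Delta_Q$ is a simplicial complex and $h$ is a continuous bijection from the compact space $\widetilde P$ onto the Hausdorff space $Q$, hence a homeomorphism.

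For affine independence and the intersection property I will rescale. Put $w_{\fs}\coloneqq \lim_{r\to\infty}(\sv(\fs)-\sv)/r$ for $\fs\neq 0$. Since $\sv_\xi$ lies on a leg of $\Gamma$ with direction $s_{(1)}(\xi)$ and $\sv$ is fixed, $w_\xi$ is the unit vector of $s_{(1)}(\xi)$. Likewise $w_+=\e_{\rho_0}/\|\e_{\rho_0}\|$ and $w_-=-w_+$. The rescaled configuration $\{0\}\cup\{w_\fs\}$, with simplices indexed by $K$, is then the limit of $(\sv_T-\sv)/r$, with error $O(1/r)$.

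The key point is to show that this limit configuration forms a genuine simplicial complex (a star-shaped subdivision of a neighbourhood of $0$ in $A$), and that it is combinatorially isomorphic to $\Delta_{\widetilde P}$. I intend to see this through the decomposition $A=s(B)\oplus\R\e_{\rho_0}$ provided by $p\circ s=\mathrm{id}_B$.
\begin{itemize}
\item[(i)] For $\mathrm{Cone}(\xi_1,\dots,\xi_k)\in\Xi$, the cones $s(\mathrm{Cone}(\xi_i))$ project homeomorphically onto the cones of the complete fan $\Xi$ on $B$. The cones $\mathrm{Cone}(s_{(1)}(\xi_i))$ therefore form a complete "graph" fan over $B$ inside $A$.
\item[(ii)] Adding $\pm\e_{\rho_0}$ gives the cones $\mathrm{Cone}(\pm\e_{\rho_0},s_{(1)}(\xi_i))$. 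These tile the regions $U_\pm$ of Step 2 (together with their closures), and so yield a complete simplicial fan on $A$ isomorphic to the cone over $\Delta_{\widetilde P}$.
\end{itemize}
The map $h$ is then, up to scaling, the conewise linear isomorphism $(b,t)\mapsto s(b)+t\e_{\rho_0}$ of this fan, radially truncated. A complete simplicial fan with chosen points on its rays gives a simplicial star-shaped complex, whose simplices are nondegenerate and meet properly.

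Finally I will pass from $r=\infty$ to large $r$. Nondegeneracy is an open condition, so it survives. For the intersection property I will use that the star-shaped complex is a subdivision of a neighbourhood of $0$ whose simplices meet only along common faces, and that small perturbations of the vertices preserve this provided the complex is "strongly" embedded. This is the step I expect to be the main obstacle. Near the central vertex $0$ the $O(1/r)$ perturbation is harmless after rescaling, but one must check that no two nonadjacent simplices come within $O(1/r)$ of each other. I would handle this by the standard fact that the map $\widetilde P\to A$ in the limit is a PL homeomorphism onto its image, hence injective with a positive lower bound on $\|h_\infty(y)-h_\infty(y')\|$ for $y,y'$ in disjoint closed simplices. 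Then for $r$ large, $h_r=h_\infty+O(1/r)$ remains injective, and local injectivity near shared faces follows from the nondegenerate orientations being consistent.
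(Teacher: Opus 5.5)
Your reduction to a limit configuration is the paper's approach: its auxiliary complex $\Delta_{Q'}$ plays the role of your rescaled limit. Your analysis of the limit is correct. The map $(b,t)\mapsto s(b)+te_{\rho_0}$ is a homeomorphism $B\times\R\to A$ that is linear on each product cone, so the cones $\mathrm{Cone}(s(\sigma),\pm e_{\rho_0})$ form a complete simplicial fan, and the radial complex on it is a genuine simplicial complex. This is tidier than the paper's case analysis.

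The gap is in the passage to large $r$. Every maximal simplex of $\Delta_{\widetilde P}$ has $0$ as a vertex, so any two maximal simplices share at least the apex. Your separation bound for disjoint closed simplices therefore never applies to a pair of maximal simplices. Everything then rests on ``local injectivity near shared faces follows from consistent orientations'', and that does not work as stated. Preserving the orientation sign of each simplex only gives local injectivity across codimension-one faces. Near a common face of codimension $\ge 2$, in particular the apex, a nondegenerate, consistently oriented simplexwise-affine map can still overlap itself. For example, six triangles around a vertex, each mapped to a triangle with angle $2\pi/3$ at the centre, wrap around twice. Moreover, $C^0$-closeness $h_r=h_\infty+O(1/r)$ to an injective map does not by itself exclude overlaps near shared faces.

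What is missing is control that is uniform near the shared faces. The paper obtains it pairwise from Lemma~\ref{lem:intersection_of_small_pertubation_of_simplices}. For $T_1,T_2\in K$, project along the affine span of the common face $|T_1\cap T_2|$. In the quotient, the two simplices become cones from one point meeting only at that point. This is a scale-invariant condition, read off on the unit sphere, and hence open in the vertex positions. Applied to the vertices $\frac1r(\sv(\fs)-\sv)\to w_{\fs}$, it yields $|T_1|_Q\cap|T_2|_Q=|T_1\cap T_2|_Q$ for $r\gg 0$, after which your final compactness step applies.

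Alternatively, you can close the gap inside your own framework by using the conical structure. The rescaled map $h_r=\frac1r(h-\sv)$ fixes the apex and is linear on each $\conv(0,\su_T)$. So it extends to a conewise linear map $H_r$ on the product fan in $B\times\R$. Your limit analysis shows that $H_\infty$ is a conewise linear homeomorphism with conewise linear inverse. Hence $\|H_\infty(y)-H_\infty(y')\|\ge c\|y-y'\|$ for some $c>0$. Meanwhile, $H_r-H_\infty$ is conewise linear with linear parts of norm $O(1/r)$, hence $O(1/r)$-Lipschitz on the convex space $B\times\R$. Therefore $\|H_r(y)-H_r(y')\|\ge(c-O(1/r))\|y-y'\|$. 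For $r\gg0$ this gives injectivity of $h$ and nondegeneracy of the simplices, and injectivity immediately yields $|T_1|_Q\cap|T_2|_Q=|T_1\cap T_2|_Q$.
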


  \begin{proof}[Proof of Claim \ref{claim:Delta_Q_is_polyhedral_complex_and_homeomorphism_of_h}]

  For each ray $\xi \in \Xi_1$, denote $\sv'_{\xi} \coloneqq B(\sv,1) \cap s_{(1)}(\xi) \in A$ with $s_{(1)}(\xi) \in \Upsilon_1$. Furthermore, we denote $\sv' \coloneqq 0$, $\sv'_{+} \coloneqq e_{\rho_0}$, and $\sv'_{-} \coloneqq -e_{\rho_0}$. We view $\sv'$ as a map from $S$ to $A$ by setting $\sv'(0)=\sv'$, $\sv'(\pm)=\sv'_{\pm}$, $\sv'(\xi)=\sv'_{\xi}$. Set  $|T|_{Q'} \coloneqq \conv(\sv'_T) \subset A$ for $T \in K$, where $\sv'_T$ denotes the image of $T$ under $\sv'$. We define an auxiliary set of simplices in $A$ by
  $\Delta_{Q'} \coloneqq \{|T|_{Q'} \ |\ T \in K \}$, with support $Q' \coloneqq (\bigcup_{T \in K} |T|_{Q'}) \subset A$. 
  
  For each $\xi \in \Xi_1$, denote $\su'_{\xi} \coloneqq p(\sv'_{\xi})$, and denote $P' \coloneqq p(Q') \subset B$. Notice that $\su'_{\xi} \in \R_{+} \cdot \su_{\xi}$ for each $\xi \in \Xi_1$. Therefore, the simplicial polyhedral complex  $\Delta_{P'} \coloneqq \{|T|_{P'} \ |\ T \in K,\ T \subset \Xi_1 \}$  is isomorphic to $P$, and it has support $P'$. Here $|T|_{P'} \coloneqq \conv(\su'_T) \subset B$ for any $T \in K$ such that $T \subset \Xi_1$.  Denote $s'$ the restriction of the continuous injection $s \colon B \to A$ to $P'$. Notice that $s'(P') \subset Q'$, $s' (\partial P')  \subset \partial Q'$, and  $|T|_{Q'}=s'(|T|_{P'})$ for any $T \subset \Xi_1 \subset S$.

  We show that for any $T_1, T_2 \in K$, we have  $|T_1 \cap T_2|_{Q'}=|T_1|_{Q'} \cap |T_2|_{Q'}$.   When $T_1, T_2 \subset \Xi_1 \subset S$, the equality follows from the facts that $|T_1 \cap T_2|_{P'}=|T_1|_{P'} \cap |T_2|_{P'}$ and $s'$ is injective. Otherwise, we may assume that $+ \in T_1$ without loss of generality. Remark that each of $T_1$ and $T_2$ contains at most one of $\pm$. In particular, we have $- \notin T_1$, and hence $|T_1|_{Q'} \subset \overline{U_+}$. Therefore, there are two possible cases to consider. 
  \begin{enumerate}
    \item If $+ \notin T_2$, then $|T_2|_{Q'} \subset \overline{U_-}$. Thus, $|T_1|_{Q'} \cap |T_2|_{Q'} \subset s'(P') \subset \overline{U_+} \cap \overline{U_-}$. 
    Notice that $|T_1|_{Q'} \cap s'(P') = |T_1 \setminus \{+\}|_{Q'}$, and $|T_1|_{Q'} \cap s'(P') = |T_2 \setminus \{-\}|_{Q'}$. Hence, the equality follows from the previous case. 
    \item Suppose $+ \in T_2$, and $x \in |T_1|_{Q'} \cap |T_2|_{Q'}$. Denote $T'_i \coloneqq T_i \setminus +$ for each $i \in \{1,2\}$.  Notice that $|T_i|_{Q'}=\conv(|T_i'|_{P'}, \sv'_+)$. Thus, there exist $y_i \in |T_i'|_{P'}$ and $\lambda_i \ge 0$, for each $i \in \{1,2\}$, such that $x=s'(y_1)+\lambda_1 \cdot \sv'_+=s'(y_2)+\lambda_2 \cdot \sv'_+$. Equivalently, $s'(y_1)-s'(y_2)=(\lambda_2-\lambda_1) \cdot \sv'_+$. Recall that $p \circ s'=\mathrm{id}_{P'}$. So we have $y_1-y_2=(\lambda_1-\lambda_2)p(\sv'_{+})=0$. This implies that $y_1=y_2 \in |T'_1 |_{P'} \cap |T'_2 |_{P'}=|T'_1 \cap T'_2|_{P'}$, and hence $x \in \conv (|T'_1 \cap T'_2|_{P'}, \sv'_+)=|T_1 \cap T_2|_{Q'}$. Therefore, the equality is proved. 
  \end{enumerate}

  Now fix $\epsilon \ll 1$. We can choose $r \gg 0$ such that for any $T_1, T_2 \in K$, one has $d(\frac{1}{r} |T_i|_{Q}-\sv, \frac{1}{r} |T_i|_{Q})< \epsilon$ for each $i \in \{1,2 \}$. Therefore, by Lemma \ref{lem:intersection_of_small_pertubation_of_simplices}, $|T_1 \cap T_2|_{Q}=|T_1|_Q \cap |T_2|_Q$. 
  Thus, we proved that  $\Delta_Q$ is a simplical polyhedral complex when $r \gg 0$. Hence, $h \colon \widetilde{P} \to Q$ is a homeomorphism whose inverse is given by the facewise linear extension of the map $\sv(\fs) \mapsto \su(\fs)$ to the polyhedral complex $\Delta_Q$. The claim follows. 
  \end{proof}

\noindent $\bullet$ \textbf{Step 5. Construction of a continuous map $g \colon P \to Q$ such that $g(P) \subset \fC \cap Q$, and $(h^{-1} \circ g)|_{\partial P}=i|_{\partial P}$.} \label{Step_5}

\smallskip

  Recall that $\Gamma$ is a connected tropical curve. Therefore, when $r \gg 0$, we can take $\sv_{\xi,0}, \cdots, \sv_{\xi,N}$ in the vertex set $\Gamma_0$, for each $\xi \in \Xi_1$, such that $\sv_{\xi,0}=\sv$, $\sv_{\xi,N}=\sv_{\xi}$, and $\sv_{\xi,i} \sim  \sv_{\xi,i+1}$ is an edge in $\Gamma$ for any $0 \le i \le N-1$. 

  We define a continuous map $g \colon P \to Q$ which is facewise linear on $\Delta'_{P}$ by setting $g(\su_{\xi,i}) \coloneqq \sv_{\xi,i}$, and then extend it linearly on each face of $\Delta'_{P}$.

  Let $\sigma \in \Delta'_{P}$. By Claim \ref{claim:top_dimensional_simplices_in_subdivision_of_Delta_P}, $\sigma =\conv(\su_{\xi_1, i_1}, \su_{\xi_1, i_1+1}, \su_{\xi_2, i_2}, \su_{\xi_3, i_3}, \cdots, \su_{\xi_{n-2}, i_{n-2}})$,
  where $(\xi_1, \xi_2, \cdots, \xi_{n-2})$ generate an $(n-2)$-dimensional cone in $\Xi$.
  
  For $x \in \sigma$, we have 
  \[
  g(x) \in \conv(\sv_{\xi_1, i_1}, \sv_{\xi_1, i_1+1}, \sv_{\xi_2, i_2}, \sv_{\xi_3, i_3}, \cdots, \sv_{\xi_{n-2}, i_{n-2}}). 
  \] 

  Since $\sv_{\xi_1, i_1} \sim \sv_{\xi_1, i_1+1}$ is an edge in $\Gamma$, there exists a point $y$ inside this edge such that 
  \[
  g(x) \in \conv(y, \sv_{\xi_2, i_2}, \sv_{\xi_3, i_3}, \cdots, \sv_{\xi_{n-2}, i_{n-2}}) \subset \fC.
  \] 
  This shows that $g(P) \subset \fC$. 

  It remains to characterize $(h^{-1} \circ g) |_{\partial P}$. Notice that for any $x \in \partial P$, $x \in \tau = \conv(u_{\xi_1}, \cdots, u_{\xi_k})$ where $\mathrm{Cone}(\xi_1, \cdots, \xi_k) \in \Xi$. Then, by definition, $h^{-1} \circ g(\su_{\xi_i})=\su_{\xi_i}$, and both $g|_{\tau}$ and $h^{-1}|_{g(\tau)}$ are linear maps. Therefore, $(h^{-1} \circ g)|_{\tau}$ is an identity map and coincides with $i|_{\tau}$. 

  \medskip

  \noindent $\bullet$ \textbf{Step 6. Concluding  Proposition \ref{prop:existence_of_balanced_rays_in_2d_fan} by applying Lemma \ref{lem:application_Borsuk_Ulam_theorem}.} \label{Step_6} 

  \smallskip

  We choose an infinite edge $\mathrm{Leg}(-\rho_1) \in  (-\Gamma')_{1, \infty}$ such that $\mathrm{Leg}(-\rho_1)=x_1-\rho_1$ for some $x_1 \in -\Gamma'$. Recall that $(\partial \widetilde{P})_{\pm} \coloneqq \partial \widetilde{P} \cap (B \times \R_{\pm})$, and $(\partial Q)_{\pm} \coloneqq h^{-1}((\partial \widetilde{P})_{\pm})$. 

  \begin{claim} \label{claim:choose_of_point_in_north_hemisphere}
    When $r \gg 0$, $\mathrm{Leg}(-\rho_1) \cap (\partial Q)_{+}$ is nonempty. 
  \end{claim}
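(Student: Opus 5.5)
The plan is to use that $\rho_1$ is a recession direction of $-\Gamma'$, so points of $\mathrm{Leg}(-\rho_1)$ far out look like $x_1 - t\e_{\rho_1}$ with $t$ large. The first step is to show that the ray direction $-\e_{\rho_1}$ lies in the open set $U_+$. Step 2 established exactly this: the identity $s(p(-\e_{\rho_1})) = -\e_{\rho_1} - \tfrac{1}{a_1}\e_{\rho_0}$ gives $-\e_{\rho_1} = s(p(-\e_{\rho_1})) + \tfrac{1}{a_1}\e_{\rho_0}$ with a positive coefficient. Since $s$ is conewise linear, $p$ is linear, and $U_+$ is an open cone (it is invariant under positive scaling), a whole conic neighborhood of $-\e_{\rho_1}$ lies in $U_+$. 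Hence $x_1 - t\e_{\rho_1} \in U_+$ for all $t$ large, uniformly relative to the scale $r$.

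The second step is to rescale. I would consider $\tfrac1r(Q - \sv)$. As in the proof of Claim~\ref{claim:Delta_Q_is_polyhedral_complex_and_homeomorphism_of_h}, this converges in Hausdorff distance as $r \to \infty$ to the auxiliary complex $Q'$ built from $\sv'$. Under this convergence the boundary pieces $(\partial Q)_\pm$ converge to the corresponding pieces $(\partial Q')_\pm$, which are the images of the faces $|T|_{Q'}$ with $T \ni \pm$ and $0 \notin T$.

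The third step describes $Q'$ as the region between $s'(P')$ and the cone-like caps over $\sv'_\pm = \pm\e_{\rho_0}$. Its boundary $\partial Q'$ is a sphere around the origin, split by the equator $s'(\partial P')$ into a part in $\overline{U_+}$ and a part in $\overline{U_-}$. The ray $\R_{>0}(-\e_{\rho_1})$ starts at $0$, which is an interior point of $Q'$, and goes to infinity inside $U_+$. It must therefore cross $\partial Q'$, and any crossing point lies in $U_+ \cap \partial Q' = (\partial Q')_+$ minus the equator. The crossing is transverse, because $(\partial Q')_+$ is a union of simplices whose radial projection covers $U_+$ bijectively: $Q'$ is star-shaped with respect to $0$, since each simplex $|T|_{Q'}$ contains $0$ as a vertex.

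Finally, I would perturb back. The rescaled leg $\tfrac1r(\mathrm{Leg}(-\rho_1) - \sv)$ converges to the ray $\R_{\ge0}(-\e_{\rho_1})$ on compact sets, since $\tfrac{x_1 - \sv}{r} \to 0$. Because the ray crosses the piecewise-linear sphere $\partial Q'$ transversally at a point of the open hemisphere, stability of transverse intersections (or a simple degree/intermediate value argument along the segment from a point inside $Q$ to a far point outside $Q$ in $U_+$) shows that for $r \gg 0$ the leg meets $(\partial Q)_+$.

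The main obstacle is making this last step rigorous, namely controlling the perturbation near the equator. For that I would choose the compact portion of the leg explicitly. One endpoint is near $\sv + \tfrac r2(-\e_{\rho_1})$, which is inside $Q$. The other is near $\sv + 2r(-\e_{\rho_1})$, which is outside $Q$. The whole segment stays at distance $\ge cr$ from $s(B) = \partial U_+ \cap \partial U_-$, so every boundary point it meets lies in $(\partial Q)_+$.
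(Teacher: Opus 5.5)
Your proof is correct and follows the paper's strategy. The key input is $-\e_{\rho_1}\in U_+$ from Step~2, which forces the ray $-\rho_1$ to meet $(\partial Q')_+$ in the rescaled limit configuration $Q'$; that crossing is then transferred to $Q$ for $r\gg0$ using the closeness of $\tfrac1r(Q-\sv)$ to $Q'$, with $(\partial Q)_\pm$ close to $(\partial Q')_\pm$.

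The only difference is how you detect the crossing. The paper uses an intermediate value argument comparing, over the segment $I'\subset P'$, the height $u'_+$ of $(\partial Q')_+$ with the height $l'_+$ of the ray, both measured in the direction $\e_{\rho_0}$. You instead use star-shapedness of $Q'$ and an inside/outside argument along a compact piece of the leg. Take that piece between $\delta r$ and $Rr$, with $\delta,R$ adapted to $Q'$, rather than the fixed values $\tfrac12$ and $2$.
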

  
  \begin{proof} [Proof of Claim \ref{claim:choose_of_point_in_north_hemisphere}]
    Recall that in the proof of Claim \ref{claim:Delta_Q_is_polyhedral_complex_and_homeomorphism_of_h}, we constructed $Q' \subset A$, $P' \subset B$, and a continuous map $s' \colon P' \to Q'$ satisfying $p \circ s'=\mathrm{id}_{P'}$. Let $h' \colon \widetilde{P} \to Q'$ be the facewise linear extension of the map $\su(t) \mapsto \sv'(t)$ to $\Delta_{\widetilde{P}}$. Denote $(\partial Q')_{+} \coloneqq h'((\partial \widetilde{P})_{+})$. We first show that $-\rho_1 \cap (\partial Q')_{+}$ is nonempty.  One can define a continuous PL function $u'_{+} \colon P' \to \R$ such that $s'(y)+u'_{+}(y) \cdot e_{\rho_0} \in (\partial Q')_{+}$ for any $x \in \interior(P')$. Remark that $u'_{+}(0)=1$, and $u'_{+}(y)=0$ for any $y \in \partial P'$. 
    
    Let $y'_0 \coloneqq -\xi_1 \cap \partial P' \subset P'$ where $\xi_1 =p(\rho_1)$, and  let $I'$ be the interval connecting $0$ and $y'_0$. Define linear function $l'_{+} \colon I' \to \R$ such that $s'(y)+l'_{+}(y) \in -\rho_1$.   Recall that in Step 2, we proved $-e_{\rho_1} \in U_{+}$. This implies that $l'_{+}(y'_0)>0$. On the other hand, $l'_{+}(0)=0$. Therefore, there exists $y'_1 \in I'$ such that $l'_{+}(y'_1)=u'_{+}(y'_1)$, and hence $s'(y'_1)+u'_{+}(y'_1) \in -\rho_1 \cap (\partial Q')_{+}$. 
    
    Now we prove the claim. Fix $\epsilon \ll 1$, and we assume $r  \gg 0$. Recall that in the proof of Claim \ref{claim:Delta_Q_is_polyhedral_complex_and_homeomorphism_of_h}, we showed that $d(\frac{1}{r}Q-\sv,  Q') < \epsilon$, where $d$ denotes the Gromov-Hausdorff distance with respect to the Euclidean metric. Meanwhile, we have $d(\frac{1}{r}\mathrm{Leg}(-\rho_1)-\sv, -\rho_1) < \epsilon$. This implies that $d(p(\frac{1}{r}Q-\sv), P') < \epsilon$, $d(I, I') < \epsilon$, $\left\|u_{+} - u'_{+} \right\|_{\nC^0} < \epsilon$ and $\left\|l_{+} - l'_{+} \right\|_{\nC^0} < \epsilon$, where $I, u_{+}, l_{+}$ is defined with respect to $Q$ in a  similar way that $I', u'_{+}, l'_{+}$ is defined with respect to $Q'$. Thus, we can conclude that when $r \gg 0$, there exists $y_1 \in I$ such that $u_+(y_1)=l_{+}(y_1)$, and $\mathrm{Leg}(-\rho_1) \cap (\partial Q)_{+}$ is nonempty.
  \end{proof}

  Now we finish the proof of Proposition \ref{prop:existence_of_balanced_rays_in_2d_fan}. By Claim \ref{claim:choose_of_point_in_north_hemisphere}, there exists  $\sw_{+} \in \mathrm{Leg}(-\rho_1) \cap (\partial Q)_{+}$. Since $\Gamma'$ is a connected tropical curve, when $r \gg 0$, there exists a continuous path $\gamma' \colon [0,1] \to Q \cap (-\Gamma')$ such that $\gamma'(0)=\sw_{+}$ and $\gamma'(1)=\sv_{-}$. 

  We define continuous maps $f \colon P \to \widetilde{ P}$ by $f \coloneqq h^{-1} \circ g$, and $\gamma \colon [0,1] \to \widetilde{P}$ by $\gamma \coloneqq h^{-1} \circ \gamma'$. Recall that we proved $f|_{\partial P}=i|_{\partial P}$ in Step 5. Furthermore, $\gamma(0)=h^{-1}(\sw_{+}) \in (\partial \widetilde{P})_{+}$; and by definition, $\gamma(1)=\su_{-} \in (\partial \widetilde{P})_{-}$. Therefore, by applying Lemma \ref{lem:application_Borsuk_Ulam_theorem} to $f$ and $\gamma$, there exists $x' \in \widetilde{P}$ such that $x' \in f(P) \cap \gamma([0,1])$. Hence, $h(x') \in g(P) \cap \gamma'([0,1]) \subset \fC \cap (-\Gamma')$. Thus, we conclude that $\fC \cap (-\Gamma')$ is nonempty, and hence the proposition is proven.

%%%%%%%%%%%%%%%%%%%%%%%%%%%%%
%%%%%Relation to NA solutions
%%%%%%%%%%%%%%%%%%%%%%%%%%%%%

\section{Relation to non-Archimedean solutions} \label{sec:relation_to_SYZ}
In this section,  $\valfield = \BC((\mathsf{t}))$ denotes the valued field of Laurent power series equipped with the $\mathsf{t}$-adic valuation, and $\valring$ denotes its valuation ring. We have $\resfield \coloneqq \valfield / \valring \cong \BC$. We fix a smooth projective variety $\varX$ over $\valfield$, and we use $\varX^{\an}$  to denote its Berkovich analytification over $\valfield$.

\subsection{SNC models and dual complexes}
A \emph{model} $\fX$ of $\varX$ is a flat and proper scheme $\fX$ over $\Spec(\valring)$ together with an identification of its generic fibre with $\varX$.  We say that $\fX'$ \emph{dominates} $\fX$ if there exists a proper birational morphism $\pi \colon \fX' \to \fX$ over $\Spec \valring$ satisfying $\pi|_{\fX}=\mathrm{id}_{\fX}$. 

Let $\fX$ be a model of $\varX$. We view the special fibre $\fX_s=\div(\mathsf{t})$ as a Cartier divisor of $\fX_s$. Let $\{E_i\}_{i \in I}$ be the finite set of irreducible component of $\fX_s$. Denote $E_{J} \coloneqq \bigcap_{j \in J} E_j$ for any subset $J \subset I$. Following the convention in \cite{BFJ16}, we say that $\fX$ is an SNC model if the special fiber $\fX_s$ is a simple normal crossing divisor in $\fX$, and $E_J$ is either irreducible or empty for each subset $J \subset I$. It follows from \cite{Tem08} that for any model $\fX$ of $\varX$, there exists an SNC model $\fX'$ dominating $\fX$.

Let $\mathfrak{X}$ be an SNC model. Suppose $\mathfrak{X}_s = \div(\mathsf{t})= \sum_{i\in I} b_iE_i$ with $b_i \in \Z_{>0}$.   The \emph{dual complex} $\Delta_{\mathfrak{X}}$ of $\mathfrak{X}$ is the simplicial complex whose set of vertices equals to $I$, and whose simplices are subsets $J \subset I$ such that $E_J \neq \emptyset$. 
We associate to each such $J$, the simplex $\sigma_J$ consisting of the tuples $w = (w_j)_{j\in J}$ with $w_j \geq 0$ and $\sum_{j\in J} b_jw_j =1$. 
Geometrically, $\Delta_\X$ is realized as the union of $\sigma_J$ with $E_J\neq\emptyset$ in $\R^I$. By an abuse of notation, we use the same symbol $\Delta_\X$ for this realization.

Let $J \subset I$ be a simplex in $\Delta_{\fX}$ and take $w = (w_j)_{j\in J}\in\sigma_J$. Let $\xi_J$ be the generic point of $E_J$. Take a system of local coordinates $ \{z_j\}_{j\in J} \subset \mathcal{O}_{\mathfrak{X},\xi_J}$ with $z_j$ defining $E_j$ around $\xi_J$. By Cohen's structure theorem, $\widehat{\mathcal{O}}_{\mathfrak{X},\xi_J} \simeq \kappa(\xi_J)[[z_j, j \in J]]$, where  $\kappa(\xi_J)$ denote the residue field of $\CO_{\fX, \xi_J}$. Under this identification, denote by $v_{J,w}$ the restriction to $\mathcal{O}_{\mathfrak{X},\xi_J} \subset \widehat{\mathcal{O}}_{\mathfrak{X},\xi_J}$ of the monomial valuation with weight $w_j$ on the power series ring,  i.e.\ 
 \[
 v_{J,w} \left( \sum_{\alpha \in \mathbb{N}^J} c_\alpha z^\alpha \right) = \min \left\{ \sum_{j \in J} w_j \alpha_j \mid c_\alpha \neq 0 \right\}, \quad \text{ where } \alpha=(\alpha_j)_{j\in J}.
 \]
On the other hand, we have $\mathsf{t}=u \cdot \prod_{j \in J} z_j^{b_j}$ in $\CO_{\fX, \xi_J}$, where $u$ is a unit in $\CO_{\fX, \xi_J}$. Thus, we get $v_{J,w}(\mathsf{t})=1$, and therefore $v_{J,w}\rest{\valfield}=v_{\valfield}$. This implies that $v_{J,w} \in \varX^{\an}$, and hence yields an embedding of $\mathrm{emb}_{\mathfrak{X}}\colon \Delta_{\mathfrak{X}} \to \varX^{\an}$, for the geometric realization of $\Delta_{\mathfrak{X}}$. Moreover, the parameters $w$ equip $\Delta_{\mathfrak{X}}$ with an integral affine structure.

Now, we recall the retraction map $p_{\mathfrak{X}}\colon \varX^{\an} \to \Delta_{\mathfrak{X}}$.  Let $x \in \varX^{\an}$, and let $v_x$ denote the semivaluation defined by $x$.  Then, the \emph{center} of $v_x$,   denoted by $c_{\fX}(x)$, is defined to be the unique (scheme) point $\xi \in \fX_s$ satisfying $v_x(f) \ge 0$ for any $f \in \CO_{\fX, \xi}$, and $v_x(f)>0$ for any $f \in \fm_{\fX, \xi}$. The valuative criterion of properness guarantees the existence and the uniqueness of $c_{\fX}(x)$.  
Let $J \subset I$ be the maximal subset such that $c_{\fX}(x) \in E_J$. Then $p_{\mathfrak{X}}(x) \in \sigma_J\subset \Delta_{\mathfrak{X}}$ corresponds to the monomial valuation with weight $v_x(z_j)$, $j \in J$.  

By definition, $p_{\mathfrak{X}}\rest{\Delta_\fX} = \mathrm{id}$. If $\mathfrak{X}'$ is an SNC model dominating $\mathfrak{X}$, then we have $\Delta_{\fX} \subset \Delta_{\fX'}$, and $p_{\mathfrak{X}} \circ p_{\fX'} = p_{\mathfrak{X}}$. We recall the following result.

\begin{prop}[{\cite[Corollary 2.5]{BFJ16}}]
  $\varX^{\an}$ is homeomorphic to  $\varprojlim\limits_{\fX} \Delta_{\mathfrak{X}}$. Here,  $\fX$ ranges over all SNC models of $\varX$. 
\end{prop}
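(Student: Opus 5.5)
The plan is to build the homeomorphism $\Phi\colon \varX^{\an} \to \varprojlim_{\fX} \Delta_{\fX}$, $x \mapsto (p_{\fX}(x))_{\fX}$, and check that it is a continuous bijection from a compact space to a Hausdorff space. First I check that $\Phi$ is well defined. The index set of SNC models is directed: given two SNC models, take a model dominating both (for instance the closure of the diagonal in their fibre product) and then an SNC model dominating that, using \cite{Tem08}. The compatibility $p_{\fX}\circ p_{\fX'} = p_{\fX}$ recalled above shows that $\Phi(x)$ lies in the inverse limit. Each $p_{\fX}$ is continuous: on the open set of points whose center lies in the open stratum around $\xi_J$, the coordinates of $p_{\fX}$ are $x\mapsto v_x(z_j) = -\log|z_j|_x$, and these are continuous for the Berkovich topology. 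Since $\varX$ is projective, $\varX^{\an}$ is compact, and the inverse limit of the finite simplicial complexes $\Delta_{\fX}$ is compact Hausdorff.

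Second, I prove injectivity. Suppose $x\neq y$ in $\varX^{\an}$. Then some rational function $f$ satisfies $|f|_x \neq |f|_y$. I choose an SNC model $\fX$ on which $\div(f)$, together with the special fibre, has simple normal crossing support, again via \cite{Tem08}. Near $\xi_J$ one can write $f = u\prod_j z_j^{a_j}$ with $u$ a unit. This gives $\log|f| = \log|f|\circ p_{\fX}$, so $p_{\fX}(x)\neq p_{\fX}(y)$.

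Third, I prove surjectivity, and I expect this to be the main obstacle. Take a compatible family $(w_{\fX})_{\fX}$. The fibres $p_{\fX}^{-1}(w_{\fX})$ are nonempty, since $\Delta_{\fX}$ embeds via $\mathrm{emb}_{\fX}$ and $p_{\fX}\circ \mathrm{emb}_{\fX}=\mathrm{id}$. These fibres are compact, and they are nested along the directed system, because $p_{\fX}\circ p_{\fX'} = p_{\fX}$ gives $p_{\fX'}^{-1}(w_{\fX'}) \subset p_{\fX}^{-1}(w_{\fX})$. So they have the finite intersection property, and compactness gives a point $x$ in their intersection, with $\Phi(x)=(w_{\fX})$. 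Continuity of $\Phi$ then follows from continuity of each $p_{\fX}$, and a continuous bijection from a compact space to a Hausdorff space is a homeomorphism.

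The delicate points are the log-resolution step used for injectivity and the continuity of the retraction maps. For both I would rely on the standard local description of $p_{\fX}$ through the monomial valuations $v_{J,w}$ recalled above.
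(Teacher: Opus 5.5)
Your architecture is sound: you show that $\Phi$ is a continuous map from the compact space $\varX^{\an}$ to the Hausdorff space $\varprojlim_{\fX}\Delta_{\fX}$, and your finite-intersection argument for surjectivity is correct. So everything hinges on injectivity, and that step fails.

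\textbf{The gap.} The identity $\log|f|=\log|f|\circ p_{\fX}$ is false for every non-constant rational function $f$, whichever SNC model you take.
\begin{itemize}
\item $f$ has zeros on the projective variety $\varX$, so $\log|f|=-\infty$ at the corresponding rigid points.
\item By contrast, $\log|f|\circ p_{\fX}$ is finite everywhere, because the points of $\Delta_{\fX}$ are valuations of the function field.
\end{itemize}
Concretely, take $\varX=\P^1_{\valfield}$, $\fX=\P^1_{\valring}$ and $f=z$. Then $\div(f)+\fX_s$ is SNC and $\Delta_{\fX}$ is the Gauss point, where $\log|z|=0$. Yet at the rigid point $z=\mathsf t$ you get $\log|z|=\log|\mathsf t|\neq 0$.

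The local expression $f=u\prod_j z_j^{a_j}$ at the generic point $\xi_J$ only computes $\log|f|$ on $\Delta_{\fX}$ itself. To factor through $p_{\fX}$ you would need such a monomial expression at the center $c_{\fX}(x)$ of every point $x$. This breaks down whenever $c_{\fX}(x)$ lies on a horizontal component of $\div(f)$. That cannot be avoided, since a rational function whose divisor has no horizontal part has constant absolute value on $\varX^{\an}$.

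\textbf{The missing idea.} Separate points by model functions $\varphi_D$ attached to vertical divisors $D$, i.e.\ divisors supported on $\fX_s$. On an SNC model, such a $D$ has at every point $\xi\in\fX_s$ (not only at the $\xi_J$) a local equation equal to a unit times a monomial in local equations of the components through $\xi$. This genuinely gives $\varphi_D=\varphi_D\circ p_{\fX}$.

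To separate $x\neq y$:
\begin{itemize}
\item Take a projective model, an affine open $\Spec A$ of it containing both centers, and $f\in A$ with $|f|_x\neq|f|_y$.
\item Then $\max\{\log|f|,-M\}$, with $-M<\max\{\log|f|_x,\log|f|_y\}$, still separates $x$ and $y$.
\item On points with center in $\Spec A$, this function equals $\log|\mathfrak a|$ for a coherent ideal $\mathfrak a$ extending $(f,\mathsf t^M)$ and cosupported in the special fibre.
\item Blow up $\mathfrak a$ and dominate the result by an SNC model via \cite{Tem08}. The function becomes $\varphi_D$ for a vertical divisor, hence factors through the corresponding retraction.
\end{itemize}
This is the route of \cite{BFJ16}: model functions separate points (they are even dense in $C^0(\varX^{\an})$), and each one factors through $p_{\fX}$ for a suitable SNC model $\fX$.

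\textbf{A smaller slip in the continuity step.} The set of points whose center lies in a Zariski-open subset of $\fX$ is closed in $\varX^{\an}$, not open: it is a compact analytic domain, since the center map is anticontinuous. Instead, cover $\fX$ by finitely many affine opens $\mathcal U_\alpha$ on which the components of $\fX_s$ meeting $\mathcal U_\alpha$ are cut out by functions $z_j$. On each closed set $c_{\fX}^{-1}(\mathcal U_\alpha)$ the map $p_{\fX}$ is $x\mapsto (v_x(z_j))_j$. The pasting lemma for finite closed covers then gives continuity.
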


\subsection{Non-Archimedean Calabi--Yau theorem}
In \cites{BFJ15, BJ22}, a non-Archimedean pluripotential theory has been developed. We begin by introducing very briefly the conventions from this theory that will be used throughout this section. For further details on these conventions, we refer to \cite[\S 6]{APW1}, and for a more comprehensive introduction to the theory, we refer to \cite{BFJsurvey}.

Let $\varL$ be a $\Q$-line bundle on $\varL$. A \emph{model} of $(\varX, \varL)$ is a pair $(\fX, \fL)$ consisting of a model $\fX$ of $\varX$ and a  $\Q$-line bundle $\fL$ on $\fX$ whose restriction on $\varX$ (as the generic fibre of $\fX$) is isomorphic to $\varL$. We say that $(\fX', \fL')$ \emph{dominates} $(\fX, \fL)$ if $\fX'$ dominates $\fX$ by   a birational proper morphism $\pi \colon \fX' \to \fX$, and $\fL'=\pi^{*}\fL$. We say that $(\fX', \fL')$ and $(\fX'', \fL'')$ are \emph{equivalent} if there exists a model $(\fX, \fL)$ dominating both  $(\fX', \fL')$ and $(\fX'', \fL'')$. A model $(\fX, \fL)$ is called \emph{nef} if $\fL$ is relatively nef on $(\fX / \Spec \valring)$. 

A \emph{closed  $(1,1)$-form} is an equivalence class of models of $(\varX, \varL)$ for some $\varL \in \mathrm{Pic}_{\Q}(\varX)$. A closed  $(1,1)$-form $\omega_{\NA}$ is called \emph{semipositive} if some (equivalently, any) model representing $\omega_{\NA}$ is nef.  The first Chern class $c_1(\omega_{\NA}) \in N^1(\varX)$ is defined to be  $c_1(\fL|_{\varX})\in N^1(\varX)$ for some model $(\fX, \fL)$ representing $\omega_{\NA}$.

Let $\omega_{\NA}$ be a closed semipositive $(1,1)$-form on $\varX^{\an}$. For any PL function $\varphi$ on $\varX^{\an}$ (denoted by $\varphi \in \PL(\varX^{\an})$), one can associate a closed $(1,1)$-form $\omega_{\NA, \varphi}$ on $\varX^{\an}$, which serves as the non-archimedean analogue of $\omega_{\NA}+dd^c \varphi$ (see \cite[\S 6.2]{APW1}). A PL function $\varphi$ on $\varX^{\an}$ is called $\omega_{\NA}$-psh (written $\varphi \in \PSH_{\omega, \NA} \cap \PL(\varX^{\an})$) if $\omega_{\NA, \varphi}$ is semipositive.

Following \cite{BJ22, APW1}, when $c_1(\omega_{\NA}) \in N^1(\varX)$ is an ample class, a function $\varphi \colon X^{\an} \to \R \cup \{-\infty\}$ is called  $\omega$-psh (written $\varphi \in \PSH_{\omega, \NA}(\varX^{\an})$) if $\varphi$ is generically finite (\emph{i.e.} $\varphi$ is not constantly equal to $-\infty$ on each connected component of $\varX^{\an}$) and $\varphi$ can be written as the pointwise limit of a decreasing sequence of $\omega_{\NA}$-psh PL functions. In general, when $c_1(\omega_{\NA}) \in N^1(\varX)$ is a nef class, we say that $\varphi \in \PSH_{\omega, \NA}(\varX^{\an})$ if $\varphi \in \PSH_{\omega+\epsilon \theta, \NA}(\varX^{\an})$ for each $\epsilon \in \Q_{>0}$, where $\theta_{\NA}$ is a  closed semipositive $(1,1)$-form on $\varX^{\an}$ such that $c_1(\theta_{\NA})$ is ample.  We use $\CPSH_{\omega,\NA}(\varX^{\an})$ to denote the space of continuous $\omega_{\NA}$-psh functions on $\varX^{\an}$. 

Following \cite{CL06}, for $\varphi \in \PSH_{\omega, \NA} \cap \PL(\varX^{\an})$, the non-Archimedean \MongeA measure $\MA_{\omega, \NA}(\varphi)$ is is defined in terms of intersection numbers on a model representing $\omega_{\NA}$. Using the Chern--Levine--Nirenberg inequality, one can extend the definition of the non-Archimedean \MongeA measure $\MA_{\omega,\NA}(\varphi)$ to $\varphi \in \CPSH_{\omega,\NA}(\varX^{\an})$. This measure is a nonnegative Radon measure on $\varX^{\an}$ with total mass $\deg(\varL)^d$.

\smallskip

We now recall the non-Archimedean Calabi--Yau  theorem. 

\begin{thm} [NA Calabi--Yau Theorem, \cites{BFJ15, BJ22}] \label{thm:NA_Calabi_Yau}
  Let $\varX$ be a smooth polarized variety over $\valfield$, and let $\omega_{\NA}$ be a closed semipositive $(1,1)$-form on $\varX^{\an}$ such that $c_1(\omega_{\NA})$ is an ample class on $\varX$. Let $\fX$ be an SNC model of $\varX$. Let $\mu$ be a nonnegative Radon measure supported on $\Delta_{\fX}$ of total volume $\deg(\varL)^d$. Then, there exists $\varphi \in \CPSH_{\omega, \NA}(\varX^{\an})$, unique up to a constant, such that 
  \[
  \MA_{\omega, \NA}(\varphi)=\mu.
  \]
\end{thm}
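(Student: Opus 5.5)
This theorem is quoted from \cites{BFJ15, BJ22}, so the plan is to explain how it follows from those results rather than to reprove it. I would use the variational approach of \cite{BFJ15}, the same template the paper follows tropically in Sections~\ref{sec:energy_functional}--\ref{sec:orthogonality_properties}. The first step is to normalize and fix the function space. Rescale $\mu$ to a probability measure and replace $\omega_{\NA}$ by $\omega_{\NA}/\deg(\varL)^{1/d}$ if convenient. Then work in the space $\PSH_{\omega,\NA}(\varX^{\an})$ with the topology of pointwise convergence on quasimonomial (divisorial) points. Compactness of the space modulo constants is a non-Archimedean analogue of Proposition~\ref{prop:compactness_of_psh}; it comes from uniform Lipschitz bounds on each dual complex $\Delta_{\fX'}$ together with $\varphi\le\varphi\circ p_{\fX'}$.

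Next, I would define the Monge--Ampère energy $\mathrm{E}_{\NA}$ exactly as in Section~\ref{sec:energy_functional}. It is concave, monotone and usc, and its derivative on $\CPSH_{\omega,\NA}$ is $\MA_{\omega,\NA}$. Since $\mu$ is supported on the compact set $\Delta_{\fX}$, the map $\varphi\mapsto\int\varphi\,d\mu$ is continuous. Hence $\FE_\mu=\mathrm{E}_{\NA}-\int\cdot\,d\mu$ attains its maximum, and as in Proposition~\ref{prop:maximizer_Lagrangian} the maximizer can be taken in $\CPSH$, which gives finite energy. Uniqueness follows as in Theorem~\ref{thm:uniqueness}, via the Cauchy--Schwarz and integration-by-parts argument of \cite[Prop.\,8.2]{BFJ15}.

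The main obstacle is the non-Archimedean orthogonality property, $\int(f-\env(f))\MA(\env(f))=0$ for continuous $f$. Once it holds, Theorem~\ref{thm:derivation} and the argument $\ref{orth3}\Rightarrow\ref{orth4}$ of Theorem~\ref{thm:equivalence_orthogonality_existence_of_solutions} show that the maximizer solves $\MA_{\omega,\NA}(\varphi)=\mu$. In \cite{BFJ15} orthogonality is derived from algebraic input: continuity of envelopes, and the formula for the Monge--Ampère measure of envelopes of model functions via restricted volumes, which relies on multiplier ideals on the smooth variety $\varX$ over the residue characteristic zero field $\resfield\cong\BC$. This input is exactly what has no tropical analogue, consistent with Theorem~\ref{thm:global_failure_of_orthogonality}. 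Accordingly, I would cite \cite[Thm.\,A]{BFJ15} for the case where $c_1(\omega_{\NA})$ is ample and $\varX$ is smooth over a discretely valued field of residue characteristic zero, and \cite{BJ22} for the extension to continuous $\omega$-psh solutions and general measures with compact support on $\Delta_{\fX}$.
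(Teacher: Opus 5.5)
Your proposal takes essentially the same approach as the paper: the paper gives no argument for this statement and simply quotes it from \cite{BFJ15} and \cite{BJ22}, and your final step is that same citation, with the variational outline serving only as context. One caution about the outline. Unlike the tropical case, non-Archimedean $\omega$-psh functions are only usc, so truncating the maximizer by a constant gives a bounded (finite-energy) maximizer but not a continuous one. In \cite{BFJ15}, continuity of the solution for $\mu$ carried by $\Delta_{\fX}$ comes separately, from the domination principle (which identifies the solution $\varphi$ with the envelope of the continuous function $\varphi\circ p_{\fX}$) together with continuity of envelopes. Also, to my knowledge Theorem~A of \cite{BFJ15} carries an algebraicity assumption on $\varX$, later removed by Burgos Gil, Gubler, Jell, K\"unnemann and Martin, so the hypotheses you attribute to that theorem are slightly too generous.
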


We also recall the following result.
\begin{prop} [{\cite[Lemma 5.13 + Theorem 5.20]{BJ22}}] \label{prop:NA_envelope_properties_implies_birational_descend}
  Let $\varX$ be a smooth polarized variety over $\valfield$, and let $\omega_{\NA}$ be a closed semipositive $(1,1)$-form on $\varX^{\an}$ such that $c_1(\omega_{\NA})$ is an ample class on $\varX$. Suppose that  $\pi \colon \widetilde{\varX} \to \varX$ be a proper birational morphism. Then, $\pi^{*} \colon \PSH_{\omega, \NA}(\varX^\an) \to  \PSH_{\pi^{*}\omega, \NA}(\widetilde{\varX}^\an)$ is bijective. 
\end{prop}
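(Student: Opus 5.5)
This result is quoted from \cite{BJ22} (Lemma 5.13 and Theorem 5.20), so the plan is to reassemble it from the two ingredients those references supply rather than to build a new argument. The proof splits into injectivity, well-definedness of $\pi^*$ (that it lands in the right class), and surjectivity.

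\textbf{Injectivity.} Since $\pi$ is proper and birational, it is an isomorphism over a dense open subset $\varX^\circ\subset\varX$. Hence $\pi^{\an}$ is surjective and restricts to a homeomorphism over the dense subset $(\varX^\circ)^{\an}$. Given $\varphi,\psi\in\PSH_{\omega,\NA}(\varX^\an)$ with $\pi^*\varphi=\pi^*\psi$, surjectivity of $\pi^{\an}$ gives $\varphi=\psi$ pointwise, so $\pi^*$ is injective.

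\textbf{Well-definedness.} First I check that $\pi^*$ sends $\omega_\NA$-psh functions to $\pi^*\omega_\NA$-psh ones.
\begin{itemize}
\item For PL functions this holds because a nef model of $\omega_{\NA,\varphi}$ pulls back to a nef model on a model of $\widetilde\varX$ dominating both.
\item For general $\varphi$, I write it as a decreasing limit of PL psh functions and pull the sequence back.
\item Since $c_1(\pi^*\omega_\NA)$ is only nef and big, I add $\epsilon\theta$ with $\theta$ ample on $\widetilde\varX$, per the definition recalled above.
\end{itemize}

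\textbf{Surjectivity.} This is the main obstacle. Given $\widetilde\varphi\in\PSH_{\pi^*\omega,\NA}(\widetilde\varX^\an)$, I would set
\[
\varphi(x)\coloneqq\sup_{\pi(y)=x}\widetilde\varphi(y),
\]
or equivalently take $\varphi$ to be the $\omega_\NA$-psh envelope of the pushforward. The task is then to show two things: that $\varphi$ is $\omega_\NA$-psh, and that $\pi^*\varphi=\widetilde\varphi$.

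The key input is the envelope (continuity) property on $\varX^{\an}$ for the ample class $c_1(\omega_\NA)$, which is Theorem 5.20 of \cite{BJ22}. Combined with their Lemma 5.13, it implies that psh functions are constant along the fibres of $\pi$ (a fibrewise maximum principle). Concretely, the first attempt is to show that the restriction of $\widetilde\varphi$ to each positive-dimensional fibre over a point is constant, since $\pi^*\omega$ is trivial on the fibres of the model. That constancy makes $\varphi\circ\pi=\widetilde\varphi$ automatic. The remaining step is to verify that $\varphi$ is a decreasing limit of $\omega$-psh PL functions, using the envelope property to regularize. I expect this descent step to be the genuine difficulty, so I would quote \cite{BJ22} for it rather than redo it.
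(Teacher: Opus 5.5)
Your plan matches the paper, which gives no independent argument and simply quotes \cite[Lemma 5.13 + Theorem 5.20]{BJ22}. Injectivity (via surjectivity of $\pi^{\an}$) and the fact that $\pi^*$ maps $\PSH_{\omega,\NA}(\varX^\an)$ into $\PSH_{\pi^*\omega,\NA}(\widetilde{\varX}^\an)$ are routine exactly as you describe, and the substantive part, surjectivity via the envelope property, is the descent you likewise defer to that reference; treat the fibrewise maximum-principle sketch only as motivation, since making it rigorous would require a psh theory on the fibres of $\pi^{\an}$ over the complete residue fields of non-rigid points.
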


\subsection{Comparison Theorem}
Let $\iota \colon \varX \hookrightarrow \P_{\Xi}$ be an embedding of $\varX$ into a projective toric variety $\P_{\Xi}$. Denote $\varY \coloneqq \varX \cap \varT$, where $\varT$ is the open torus orbit in $\P_{\Xi}$. Let $X=\Trop(\varX, \iota) \subset \TP_\Xi$ and $Y=\Trop(\varY, \iota)\subset \mathrm T$ be the tropicalization of $\varX$ and $\varY$, respectively, where $\TP_\Xi$ is the tropical toric variety associated to $\Xi$, and $\mathrm T$ is its open tropical torus orbit.  

Let $\varL$ be an ample toric line bundle over $\P_{\Xi}$, and let $h$ be the convex conewise linear function on $\Xi$ associated to $\varL$. The trivial model $((\P_{\Xi})_{\valring}, \varL_{\valring})$ of $(\P_{\Xi}, \varL)$ then induces a non-Archimedean closed semipositive $(1,1)$-form on $\P_{\Xi}^{\an}$, whose restriction to $\varX^{\an}$ is denoted by $\omega_{\NA}$. 

\begin{thm} \label{thm:comparison_thm_general_case}
  Suppose that there exists a bounded function $\varphi_o\colon Y \to \R$ such that $\varphi_o+h \in \LConv(Y)$. Then, $\varphi_o \circ \Trop$ can be extended to $\varphi_{\NA} \in \CPSH_{\omega, \NA}(\varX^{\an})$. Furthermore, $\MA_{\omega, \NA}(\varphi_{\NA})$ has zero mass on $\varX^{\an} \setminus \varY^{\an}$, and  $\Trop_{*}(\MA_{\omega, \NA}(\varphi_{\NA}))=\MA_{\Trop}(\varphi_o+h)$ on $Y$.
\end{thm}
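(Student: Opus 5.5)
The plan is to handle piecewise linear potentials first and then pass to the bounded case by regularization and Bedford--Taylor continuity on both sides.

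\textbf{Step 1 (PL case).} Suppose $\varphi_o+h$ is locally convex and PL on $Y$. Choose a rational simplicial polyhedral structure $\Pi$ on $Y$ on which $\varphi_o+h$ is facewise affine. After refining $\Xi$ by Corollary \ref{cor:refinement_of_open_tropical_varieties_are_projective}, we may assume $\rec(\Pi)$ is a subfan of $\Xi$. The construction of tropical compactifications and tropical models (Gubler / Tevelev style) then gives a $\valring$-model $\fX_\Pi$ of a modification $\widetilde{\varX}\to\varX$. Its special fibre components correspond to the vertices of $\Pi$, and $\varphi_o+h$ defines a $\Q$-Cartier model divisor. Local convexity at each face, measured in the star fan $\Pi^\sigma$, translates into relative nefness on the corresponding stratum, because the tropicalization of that stratum is the star fan with its tropical weights. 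Hence $\varphi_o\circ\Trop$ is an $\omega_{\NA}$-psh PL function on $\widetilde{\varX}^{\an}$. Proposition \ref{prop:NA_envelope_properties_implies_birational_descend} then descends it to $\varX^{\an}$. The measure identity $\Trop_*\MA_{\omega,\NA}=\MA_{\Trop}(\varphi_o+h)$ is proved vertex by vertex. The NA mass at the divisorial point of a vertex $v$ is an intersection number on the component $E_v$. By the tropical Sturmfels--Tevelev multiplicity formula, this intersection number equals the degree in the Chow ring of $\Pi^v$ weighted by the tropical weight. That degree is exactly the mixed-form integral defining $\MA_{\Trop}$ at $v$, via the degree map of \S\ref{subsec:degree_map}. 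Masses at boundary strata vanish on both sides, by Theorem \ref{thm:Monge-Ampere-measure-takes-no-mass-boundary} and the analogous non-Archimedean statement.

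\textbf{Step 2 (general bounded case).} Let $\bar Y$ be the canonical compactification; it carries the semipositive form $\omega=\dm\dmb h$ from Example \ref{example_kahler_form_embedded_tropical_varieties}. By Theorem \ref{thm:equivalent_def_psh}, $\varphi_o$ extends to $\varphi\in\CPSH_\omega(\bar Y)$. Theorem \ref{thm:LConv_regularization_PL}(2) then yields $\varphi_i\in\PSH_{\omega+\alpha_i\theta}\cap\PL$ decreasing uniformly to $\varphi$. Here $\theta$ can be taken as $\dm\dmb$ of a strictly convex toric potential, so that it is also induced by a toric ample bundle and Step 1 applies to it. Uniform convergence of $\varphi_i\circ\Trop$ gives $\varphi_{\NA}\in\CPSH_{\omega,\NA}(\varX^{\an})$, since $\CPSH$ is closed under uniform limits when the form converges. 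The NA Monge--Ampère measures converge weakly by NA Bedford--Taylor theory, and the tropical ones by Theorem \ref{thm:global_Bedford-Taylor_theory}(1). Pushforward by the proper continuous map $\Trop$ commutes with weak limits, which gives the measure identity on $Y$. The zero mass on $\varX^{\an}\setminus\varY^{\an}$ follows because the PL approximants already have this property uniformly. Alternatively, the limit tropical measure puts no mass on $\partial\bar Y$, and $\Trop^{-1}(\partial \bar Y)$ is closed, so the portmanteau inequality gives the same conclusion.

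\textbf{Main obstacle.} The hard step is Step 1: identifying tropical local convexity with relative nefness of the model, and matching Chow-ring degrees of star fans with intersection numbers on the strata. The issue is that $X$ may be singular at infinity, so the model has to be built from the canonical compactification of $Y$ rather than from $X$. This is why the argument passes through $\widetilde{\varX}$ and uses Proposition \ref{prop:NA_envelope_properties_implies_birational_descend}.
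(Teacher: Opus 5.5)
Your proposal is correct and follows the paper's route. The paper likewise works on the canonical compactification $\widetilde{X}$ of $Y$ in a refined projective fan $\widetilde{\Xi}$. It regularizes by $(\widetilde{\omega}+\alpha_i\widetilde{\theta})$-psh PL functions via Theorem~\ref{thm:LConv_regularization_PL}, and applies the PL comparison, which it cites as \cite[Thm.~6.13]{APW1}; that result is exactly the tropical-model argument of your Step~1. It then passes to the limit with Bedford--Taylor on both sides, and only at the end descends via Proposition~\ref{prop:NA_envelope_properties_implies_birational_descend}.

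That ordering matters. The classes of $\omega+\alpha_i\theta$ are not pulled back from $\varX$, so the descent in your Step~1 is not available for the approximants. The limit $\widetilde{\varphi}_{\NA}$ must first be built on $\widetilde{\varX}^{\an}$ and then descended, using $\MA_{\omega,\NA}(\varphi_{\NA})=\pi_*\MA(\widetilde{\varphi}_{\NA})$.

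The one genuine difference is the zero-mass claim, for which the paper cites \cite[Cor.~7.49]{BJ22}. Your first justification does not work as stated: vanishing mass on a closed set is not inherited by weak limits. Your second one does work, and it avoids the non-Archimedean input. As your Step~1 notes, both sides of the PL identity put no mass on the boundary, so $\Trop_*\MA_{\NA}=\MA_{\Trop}$ holds on all of the compact space $\widetilde{X}$. Since $\Trop^{-1}(\partial\widetilde{X})=\widetilde{\varX}^{\an}\setminus\varY^{\an}$, Theorem~\ref{thm:Monge-Ampere-measure-takes-no-mass-boundary} then gives $\MA_{\NA}(\widetilde{\varphi}_{\NA})\bigl(\widetilde{\varX}^{\an}\setminus\varY^{\an}\bigr)=\MA_{\Trop}(\widetilde{\varphi})(\partial\widetilde{X})=0$. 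This is a plain equality of measures; no portmanteau inequality is needed.
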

\begin{proof}
  By Corollary \ref{cor:refinement_of_open_tropical_varieties_are_projective}, there exist a  refinement $\widetilde{\Xi}$ of $\Xi$ and a polyhedral structure $\Pi$ on $Y$ such that $\TP_{\widetilde{\Xi}}$ is a projective tropical toric variety, $\rec(\Pi)$ is a subfan of $\widetilde{\Xi}$, and the closure of $Y$ in $\TP_{\widetilde{\Xi}}$ denoted $\widetilde{X}$  is a tropical subvariety of $\TP_{\widetilde{\Xi}}$.  Then, $\widetilde{X}$ is a tropical variety in the sense of this  paper. Let $\widetilde{\varX}$ denote the closure of $\varY$ in $\P_{\widetilde{\Xi}}$, and $\widetilde{\iota} \coloneqq  \widetilde{ \varX} \hookrightarrow  \P_{\widetilde{\Xi}}$ the corresponding embedding. Let $\pi \colon \P_{\widetilde{\Xi}} \to \P_{\Xi}$ be the toric birational morphism. Then, we have $\iota \circ \pi = \pi \circ \widetilde{\iota}$. Note that $\widetilde{X}=\Trop(\widetilde{ \varX}, \widetilde{\iota})$, with the open part $\widetilde{X}_o=Y$. 

  Remark that $h$ is conewise linear on $\widetilde{\Xi}$. By Example \ref{example_kahler_form_embedded_tropical_varieties}, $\widetilde{\omega} \coloneqq \dm \dmb h$ is a semipositive simplicial form on $\widetilde{X}$. Let $g$ be a strictly convex conewise linear function on $\widetilde{\Xi}$. By Proposition \ref{prop:kahler_form_on_tropical_toric_varieties}, $[\widetilde{\theta}]$ is a tropical \kahler class on $\widetilde{X}$ for $\widetilde{\theta} \coloneqq \dm \dmb g$. Since $\varphi_o$ is bounded on $Y$ and $\varphi_o+h \in \LConv(Y)$, by Theorem \ref{thm:equivalent_def_psh}, $\varphi_o$ extends uniquely to $\widetilde{\varphi} \in \CPSH_{\widetilde{\omega}}(\widetilde{X})$. Let $\{\alpha_i\}_{i \ge 1}$ be a decreasing sequence of positive numbers $\{\alpha_i\}_{i \ge 1}$ such that $\lim_{i \to +\infty} \alpha_i = 0$. 
  Denote $\widetilde{\theta_i} \coloneqq \widetilde{\omega}+\alpha_i \widetilde{\theta}$ for each $i \ge 1$.  Now, applying Theorem \ref{thm:LConv_regularization_PL}, we obtain a sequence of functions $\{\widetilde{\varphi}_i\}_{i \ge 1} \subset \CPSH_{\widetilde{\theta}_i} \cap \PL(\widetilde{X})$ such that $\widetilde{\varphi}_i$ converges uniformly to $\widetilde{\varphi}$ on $\widetilde{X}$. 

  Let $\widetilde{\varL}_h$ and $\widetilde{\varL}_g$ denote the nef toric line bundles on $\P_{\widetilde{\Xi}}$ associated to $h$ and $g$ respectively. Let $\widetilde{\omega}_{\NA}$, $\widetilde{\theta}_{\NA}$  denote their corresponding closed semipositive $(1,1)$-forms on $\widetilde{\varX}^{\an}$ respectively.  
  Then,  by \cite[Thm.~6.13]{APW1}, we have  for each $i \ge 1$,
  %cross-ref in APW1: \ref{thm:comparison} 
$\widetilde{\varphi}_i \circ \Trop \in \PSH_{\widetilde{\theta}_i, \NA} \cap \PL(\widetilde{ \varX}^{\an})$, and $\Trop_{*}(\MA_{\widetilde{\theta}_i, \NA}(\widetilde{\varphi}_i \circ \Trop))=\MA_{\widetilde{\theta}_i, \Trop}(\widetilde{\varphi}_i)$. 
  Then, we define $\widetilde{\varphi}_{\NA} \coloneqq \widetilde{\varphi} \coloneqq \lim_{i \to +\infty} \widetilde{\varphi}_i \circ \Trop \in \CPSH_{\widetilde{\omega}, \NA}(\widetilde{ \varX}^{\an})$. 
  By construction, we have $\widetilde{\varphi}_{\NA}\rest{\varY^{\an}}=\varphi_o \circ \Trop$. Moreover, the non-Archimedean Bedford--Taylor theory~\cite[Thm.\,7.18]{BJ22} combined with the tropical analogue (Theorem \ref{thm:global_Bedford-Taylor_theory}) imply that $\Trop_{*}(\MA_{\widetilde{\omega}, \NA}(\widetilde{\varphi}_{\NA}))=\MA_{\widetilde{\omega}, \Trop}(\widetilde{\varphi})$ on $\widetilde{X}$.
  %cross-ref in APW1: \ref{thm:NA_Bedford_Taylor_theory_paper_1}

  Notice that $\widetilde{\varL}_h =\pi^{*}\varL$, and $\widetilde{\omega}_{\NA}=\pi^{*} \omega_{\NA}$. Then, applying Proposition \ref{prop:NA_envelope_properties_implies_birational_descend} to $\pi \colon \widetilde{\varX} \to \varX$, there exists $\varphi_{\NA} \in \CPSH_{\omega, \NA}(\varX^{\an})$ such that $\varphi_{\NA} \circ \pi= \widetilde{\varphi}_{\NA}$. Since $\pi\rest{\varY}=\mathrm{id}_{\varY}$, and the open  tropicalizations of both $\widetilde{\varX}$ and $\varX$ coincides with $Y=\Trop(\varY, \iota_o)$, we get $\varphi_o \circ \Trop=\varphi_{\NA}$. By \cite[Cor.\,7.49]{BJ22},  $\MA_{\omega, \NA}(\varphi_{\NA})$ and $\MA_{\omega, \NA}(\widetilde{\varphi}_{\NA})$ has zero mass on $\varX^{\an} \setminus \varY^{\an}$ and $\widetilde{\varX}^{\an} \setminus \varY^{\an}$ respectively. Since $\pi_{*} \MA_{\omega, \NA}(\widetilde{\varphi}_{\NA})= \MA_{\omega, \NA}(\varphi_{\NA})$, we have $\Trop_{*}(\MA_{\omega, \NA}(\varphi_{\NA}))=\MA_{\omega, \Trop}(\varphi_o)$ on $Y$.
\end{proof}

\subsection{Strongly faithful tropicalization}
In the remaining part of this section, we say that a polyhedron is of \emph{maximal dimension} if its dimension equals to $\dim_{\valfield}(\varX)$. 
\begin{definition} \label{def:faithful_tropicalization}
  Let $\Delta_{\fX}$ be the dual complex of an SNC model $\fX$ of $\varX$ with $\dim(\Delta_{\fX})=\dim_{\valfield}(\varX)$. Let $\{f_1, \cdots, f_k\} \subset \valfield(\varX)$ be a finite set of rational functions,  and let 
  \[
  f=(f_1, \cdots, f_k) \colon \varX \dashrightarrow \varG_m^k 
  \]
  be the corresponding rational map.  We say that the  tropicalization $\Trop_{f} \colon \varX^{\val}  \to \Trop(\varX, f)\subset \R^k$ is \emph{strongly faithful} with respect to $\Delta_{\fX}$ if $(\Trop_{f})\rest{\Delta_{\fX}}$ is injective, and $\weight (\delta)=1$ for any maximal dimension face $\delta \subset (\Trop_{f})({\Delta_{\fX}})$.
\end{definition}

\begin{remark}
  In the above definition, we do not require $\Trop_{f}$ be affine on each maximal dimensional face $\delta \subset \Delta_{\fX}$. In general, $\Trop_{f}$ is only  piecewise affine on  $\delta$. 
\end{remark}

\begin{remark}
  This definition of strongly faithful tropicalization is a refinement of the one considered in \cite{GRW16}. However, these two notions are closely related. This is justified in the proof of the following proposition. 
\end{remark}

\begin{thm} \label{thm:existence_of_faithful_tropicalization}
  Let $\fX$ be an SNC model  of $\varX$, and $\Delta_{\fX}$ be the dual complex of $\fX$. Suppose that $\dim(\Delta_{\fX})=\dim_{\valfield}(\varX)$. Then, there exists a finite subset $\{f_1, \cdots, f_k\} \subset \valfield(\varX)$ such that  $\Trop_f$ is a strongly faithful tropicalization with respect to $\Delta_{\fX}$ for $f=(f_1, \cdots, f_k)$. 
\end{thm}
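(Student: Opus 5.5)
We will deduce the statement from the faithful tropicalization theorem of Gubler--Rabinoff--Werner \cite[Thm.\,9.5]{GRW16}, and then upgrade faithfulness to strong faithfulness by adding further rational functions.

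\textbf{Step 1 (faithfulness).} Applying \cite[Thm.\,9.5]{GRW16} to the skeleton $\Delta_{\fX}\subset\varX^{\an}$, we obtain rational functions $f_1,\dots,f_m\in\valfield(\varX)$ such that $\Trop_{f'}$, $f'=(f_1,\dots,f_m)$, restricts to an injection on $\Delta_{\fX}$ which is unimodular on each face of some subdivision. Concretely, we would take, for each simplex $\sigma_J$ and each vertex $j\in J$, a function $z_j$ which near $\xi_J$ is a local equation of $E_j$ (times a unit). Then $-\log|z_j|=v(z_j)$ recovers the coordinate $w_j$ on $\sigma_J$. We would add enough further functions to separate distinct simplices. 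This already gives injectivity.

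\textbf{Step 2 (weight one).} The remaining issue is the weight condition: the tropical multiplicity of a maximal face $\delta\subset\Trop_{f'}(\Delta_{\fX})$ should equal $1$. By the Baker--Payne--Rabinoff / GRW multiplicity formula, the weight of $\delta$ is the sum, over points of $\Trop^{-1}(y)$ for generic $y\in\relint(\delta)$, of the lattice index times the degree of the initial degeneration. Two things must therefore be shown.
\begin{itemize}
\item[(a)] The fibre of $\Trop_{f'}$ over a generic $y$ consists of the single skeleton point. This can fail, because non-skeleton points of $\varX^{\an}$ may also map to $y$.
\item[(b)] The local index at that point is $1$. Unimodularity of the chart $(z_j)$ gives this, since the $w_j$ are integral affine coordinates on $\sigma_J$.
\end{itemize}
To handle (a), we would use that the retraction $p_{\fX}$ is compatible with the functions $z_j$: for $x$ centered at a point of $E_J$, $v_x(z_j)\ge p_{\fX}(x)_j$, with equality generically. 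We would then enlarge $f'$ by functions $g$ (for instance, suitably generic sections of $\varL^{\otimes k}$ divided by a fixed one, following the GRW construction) whose valuations distinguish $p_{\fX}(x)$ from $x$ whenever $x\notin\Delta_{\fX}$ and $\Trop(x)$ lies in the relative interior of a maximal face. With enough such $g$, the preimage of a generic point of a maximal face inside $\Trop(\Delta_{\fX})$ will reduce to one point. The initial degeneration there would then be isomorphic to a torus (reduced and irreducible), giving weight exactly $1$.

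\textbf{Main obstacle.} The main difficulty is step (a): controlling non-skeletal points that tropicalize into $\Trop(\Delta_{\fX})$. We would first attempt to prove that the initial degeneration $\mathrm{in}_y$ at a generic $y$ is reduced and irreducible. We would argue this by computing it on the model: it is a torus torsor over the open stratum $E_J^\circ$, and by adding functions pulling back a closed embedding of $E_J^\circ$ we can make this torsor map isomorphically. If this works, weight one follows from the multiplicity formula. Finally, we would check that adding functions preserves injectivity from Step 1, which is immediate since projections forgetting coordinates recover $\Trop_{f'}$.
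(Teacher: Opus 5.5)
There is a genuine gap in Step~2(a), and weight one rests entirely on it. The sentence ``with enough such $g$, the preimage of a generic point \dots\ will reduce to one point'' has no mechanism behind it. The points to be excluded are uncountably many and are not governed by $\fX$, and nothing ensures that finitely many functions separate all of them from $\Delta_{\fX}$.

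The $z_j$ are local equations of $E_j$ only near $\xi_J$. As global rational functions they have further zeros and poles. So for $x$ centered away from $E_J$, the quantity $-\log|z_j(x)|$ has nothing to do with $p_{\fX}(x)$, and such $x$ can tropicalize into $\Trop(\Delta_{\fX})$.

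Your fallback of computing the initial degeneration on the model only sees points centered at $E_J$. For a maximal face $|J|=d+1$, so $E_J$ is a closed point and your ``torus torsor over $E_J^\circ$'' is just a torus. The model computation is therefore circular with (a).

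Likewise, Step~1 applies \cite[Thm.\,9.5]{GRW16} to $\Delta_{\fX}$ alone. That gives injectivity on $\Delta_{\fX}$ but says nothing about other points with the same image. Moreover, the skeletal multiplicity formula \cite[Thm.\,8.4]{GRW16} requires the tropicalizing functions to be invertible off a boundary divisor $\CH$. With $\CH=\emptyset$ this is impossible for nonconstant functions on the projective $\varX$.

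The missing idea is to enlarge the boundary so that a finite skeleton controls the whole tropicalization. The paper proceeds as follows.
\begin{itemize}
\item Take $g$ consisting of embedding coordinates $s_i/s_0$ together with local equations $g_{\delta,i}$ of the $E_i$ at each $0$-dimensional stratum. Let $\CH$ be the union of the $\div(g_k)$ on $\fX$, and use \cite{Tem08} to get $\fX'\to\fX$ with $\fX'_s+\CH'$ SNC.
\item Then $\Delta_{\fX}$ is a subcomplex of $S(\fX',\CH')$, and $\Trop_g$ maps $S(\fX',\CH')$ onto $\Trop(\varX,g)$ \cite[Prop.\,8.2]{GRW16}. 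By \cite[Thm.\,8.4]{GRW16}, the weight of each maximal face $\delta$ of $\Trop(\varX,g)$ equals $\sum[N_\delta:N_{\delta_{S'}}]$ over the skeleton faces mapping onto it.
\item Apply \cite[Thm.\,9.5]{GRW16} to all of $S(\fX',\CH')$, not just to $\Delta_{\fX}$. This gives $f\supset g$ that is injective and unimodular there.
\item Compare with the Sturmfels--Tevelev formula for the projection $\pi_g\colon\Trop(\varX,f)\to\Trop(\varX,g)$. The images of skeleton faces already exhaust the weight of $\delta$. This forces them to be the only faces above $\delta$ and to have weight $1$.
\item Since $\Trop_g$ is injective on each maximal face of $\Delta_{\fX}$ (the role of the $g_{\delta,i}$), every maximal face of $\Trop_f(\Delta_{\fX})$ lies above such a $\delta$.
\end{itemize}
In this route the extra functions only need to separate points of a finite polyhedral complex, which is what makes finitely many of them suffice.
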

\begin{proof}
  Suppose that $\dim \varX=d$. 
  Let  $\{s_0, s_1 \cdots, s_r\}$ be a finite subset of 
  $H^0(\fX, \CL^{\otimes k})$ with $k \in \N$ such that $\iota=[(s_1)_{\valfield} \colon \cdots \colon (s_r)_{\valfield}] \colon \varX \to \P^{r-1}_{\valfield}$ is a closed embedding. 
  We use $(\Delta_{\fX})_{d}$ to denote the set of $d$-dimensional faces of $\Delta_{\fX}$. For any  $\delta \in (\Delta_{\fX})_{d}$  corresponding to a $0$-dimensional stratum  $\{x\}=\bigcap_{i \in I_{\delta}} E_i$ with $I_{\delta}$ a finite index set of cardinality $d+1$, we take $\{g_{\delta,i}\}_{i \in I_{\Delta}} $  a finite subset of  $\valfield (\varX)=\valfield(\fX)$ such that $g_{\delta,i}$ defines $E_i$ in $\CO_{\fX, x}$ for each $i \in I_{\delta}$. Define the finite subset $g$ of $\valfield(\varX)$ by 
  \[
  g \coloneqq \{s_1/s_0, \cdots, s_r/s_0, g_{\delta,i} \st \delta \in (\Delta_{\fX})_{d} \,, {i \in I_{\delta}}\}.
  \]
  Let $\{\CH_i\}_{i \in I}$ be the set of irreducible components of $\bigcup_{g_k \in g} \div(g_k) \subset \fX$. Denote $\CH \coloneqq \sum_{i \in I} \CH_i$ and $\varH \coloneqq \CH \cap \varX$. Slightly abuse of notation, we also denote by $g \colon \varX \dashrightarrow \varT$ the rational map defined by $g$, where $\varT$ denotes a torus. 
  Then, $g$ is a closed embedding on $\varX \setminus \mathrm{Supp}(\varH)$.  Also, $\Trop_g\rest{\delta}$ is injective for any $\delta \in (\Delta_{\fX})_d$. 
  
  By \cite{Tem08}, there exists a proper birational morphism $\pi \colon \fX' \to \fX$ such that $\pi^{-1}(\mathrm{Supp}(\CH) \cup \fX_s)=\sum_{i \in I'} \CH'_i+\fX'_s$ is SNC, and $\pi$ is an isomorphism on $\pi^{-1}(\fX \setminus (\mathrm{Supp}(\CH) \cup \fX_s))$.  Let $\varX' \coloneqq \fX'_{\valfield}$, and $\varH'_i \coloneqq \CH'_i \cap \varX'$. Then, $g \colon \varX' \setminus \mathrm{Supp}(\varH')=\varX \setminus \mathrm{Supp}(\varH) \hookrightarrow \varT$ is a closed embedding. On the other hand, $\Delta_{\fX} \subset \Delta_{\fX'}$ inside $\varX^{\val}=\varX'^{\val}$.   

  In \cite{GRW16}, the authors introduced an integral affine polyhedral complex denoted $S(\fX', \CH')$ for the SNC pair $(\fX', \CH')$. Furthermore, there is an inclusion map $\mathrm{emb}_{\fX', \CH'} \colon S(\fX', \CH') \hookrightarrow \varX^{\val}$, and a retraction map $r_{\fX',\CH'} \colon \varX^{\val} \to (\fX', \CH')$. By the construction,  $\Delta_{\fX}$ is a subcomplex of $S(\fX', \CH')$, and $(\mathrm{emb}_{\fX', \CH'})\rest{\Delta_{\fX}}=\mathrm{emb}_{\fX}$.

  We say a tropicalization $\Trop_f$ is unimodular on $S(\fX', \CH')$ if there exists a rational polyhedral subdivision $S''(\fX', \CH')$ of $S(\fX', \CH')$ such that $\Trop_f$ is a unimodular integral affine map on each maximal dimensional face in $S''(\fX', \CH')$.  
   By Theorem 9.5 in \textit{loc.\,cit.}, there exists a finite subset $\{f_1, \cdots, f_m\} \subset \valfield (\varX)$ containing $g$ such that $(\Trop_f)$ is injective and  unimodular on $S(\fX', \CH')$.

  By Proposition 8.2 in \cite{GRW16}, $\Trop\rest{S(\fX', \CH')} \colon S(\fX', \CH') \to \Trop(\varX, g)$ is surjective. Let $\pi_{g} \colon \Trop(\varX,f) \to \Trop(\varX, g)$ be the projection map to the components corresponding to $g$.  Then we have the following commutative diagram
  \[
  \begin{tikzcd}
      & \Trop(\varX,f) \arrow[d, "\pi_{g}"] \\
      S(\fX', \CH') \arrow[ru, "\Trop_{f}"] \arrow[r, "\Trop_{g}"'] & \Trop(\varX, g)
  \end{tikzcd}
  \]

  Now we fix a rational polyhedral subdivision $S'(\fX', \CH')$ of $S(\fX', \CH')$ such that $(\Trop_f)\rest{S'(\fX',\CH')}$ is a facewise unimodular integral affine. We also fix integral polyhedral structures $\Pi_f$ and $\Pi_g$ on $\Trop(\fX, f)$ and $\Trop(\fX, g)$, respectively, such that for any $\delta \in \Pi_g$ and any $\delta' \in \Pi_f$, the preimage of $\delta$ in $\Trop(\varX, f)$  and the preimage of $\delta'$ in $S'(\fX',\CH')$  are both unions of faces. Then, for any maximal dimensional face $\delta \in \Trop(\varX, g)$, we use $\pi^{-1}_{g}(\delta)$ to denote the set of maximal dimensional faces of $\Trop(\varX, f)$ contained in the preimage of $\delta$, and $\Trop_{g}^{-1}(\delta)$ to denote the set of maximal dimensional faces of $S'(\fX', \CH')$ contained in the preimage of $\delta$.

  For any maximal dimensional face $\delta$ in $\Pi$, the skeletal Sturmfels--Tevelev multiplicity formula~\cite[Thm.\,8.4]{GRW16} implies that 
  \[
  \weight_{\Trop(\varX, g)}(\delta)=\sum_{\delta_{S'} \in \Trop_{g}^{-1}(\delta)} [N_{\delta} \colon N_{\delta_{S'}}].
  \]
  On the other hand, applying the tropical Sturmfels--Tevelev multiplicity formula \cite[Thm.\,1.1]{ST08} to $\pi_{g}$, we get 
  \bens
  \weight_{\Trop(\varX, g)}(\delta)&=&\sum_{\delta' \in \pi_{g}^{-1}(\delta)} \weight_{\Trop(\varX, f)}(\delta') [N_{\delta} \colon N_{\delta'}] \\
  &=&\sum_{\delta_{S'} \in \Trop_{g}^{-1}(\delta)} \weight_{\Trop(\varX, f)}(\Trop_f(\delta_{S'})) [N_{\delta} \colon N_{\delta_{S'}}] \\
  && + \sum_{\delta'' \in \pi_{g}^{-1}(\delta) \setminus \Trop_f(\Trop_{g}^{-1}(\delta))} \weight_{\Trop(\varX, f)}(\delta'') [N_{\delta} \colon N_{\delta''}].
  \eens
  By comparing the above two formulas, we get 
  \[
  \Trop_f(\Trop_{g}^{-1}(\delta))=\pi_{g}^{-1}(\delta), \quad \text{and} \quad  \weight_{\Trop(\varX, f)}(\Trop_f(\delta_{S'}))=1 \text{ for any }  \delta_{S'} \subset \Trop^{-1}_{g}(\delta).
  \]
  For any $d$-dimensional face $\delta_{S'} \in S'(\fX', \CH')$ such that $\delta_{S'}$ is contained in $\Delta_{\fX}$, we have  
  \[
  \weight_{\Trop(\varX, f)}(\Trop_f(\delta_{S'}))=1
  \]
  since $\Trop_g$ is injective on $\delta_{S'}$. In summary, we have proven that $(\Trop_f)\rest{\Delta_{\fX}}$ is injective, and $\weight_{\Trop_f}(\delta')=1$ for any maximal dimensional face $\delta' \subset \Trop_f(\Delta_\fX)$. Therefore,   $\Trop_f$ is a strongly faithful tropicalization with respect to $\Delta_{\fX}$. 
\end{proof}

\subsection{Tropical \MongeA solutions and the comparison property}
We recall the following definition given in \cite{Li23WeakComparison}.
\begin{definition} \label{def:partial_retraction_invariance}
  Let $\varphi \in \nC^0(\varX^{\an})$. We say that $\varphi$ satisfies the \textit{comparison property} if there exists   an SNC model $\fX$ of $\varX$ such that $\dim(\Delta_{\fX})=\dim_{\valfield}(\varX)$, and $\varphi=\varphi \circ r_{\fX}$ on $r^{-1}_{\fX}(\relint(\delta))$ for each maximal dimensional face $\delta$ in $\Delta_{\fX}$.
\end{definition}

The comparison property was motivated by the study of the metric SYZ conjecture. A weaker version of this property was established in \cite{Li26Valuative} for the generic fibres of maximally degenerating families of polarized Calabi--Yau manifolds, leading to a proof of the metric SYZ conjecture.

\smallskip

The following result shows that every continuous function on $\varX^{\an}$ which factors through a tropicalization satisfies the comparison property. 

\begin{prop} \label{prop:tropicalization_implies_partial_retraction_invariance}
  Let $\varphi \in C^0(\varX^{\an})$. Suppose that there exists a rational map 
  \[
  f=(f_1, \cdots, f_k) \colon \varX \dashrightarrow \varG_m^k
  \]
  such that $\varphi =\varphi \circ \Trop_f$ on $\varX^{\val}$. Then, $\varphi$ satisfies the retraction invariance property. 
\end{prop}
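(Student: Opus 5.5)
The statement says that a continuous $\varphi$ on $\varX^{\an}$ which factors through $\Trop_f$ has the comparison property of Definition~\ref{def:partial_retraction_invariance}. The natural witness is an SNC model adapted to $f$, chosen so that $\Trop_f$ itself factors through the retraction on the relevant region.

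\textbf{Step 1: build the model.} As in the proof of Theorem~\ref{thm:existence_of_faithful_tropicalization}, I start from any SNC model and let $\CH$ be the sum of the closures of the horizontal divisors $\div(f_i)$. By \cite{Tem08}, I pass to an SNC model $\fX$ dominating it such that $\fX_s+\CH$ is SNC. I may also arrange $\dim\Delta_{\fX}=\dim_{\valfield}\varX$. If this is not automatic, I use a model whose skeleton has a maximal-dimensional face; otherwise the condition in Definition~\ref{def:partial_retraction_invariance} is vacuous on lower-dimensional faces. I would flag that this point needs care.

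\textbf{Step 2: local monomial form.} Fix a maximal face $\delta=\sigma_J$ of $\Delta_{\fX}$. Take $x\in r_{\fX}^{-1}(\relint\delta)$; its center $c_{\fX}(x)$ lies on $E_J$ but on no other component. Near the $0$-dimensional stratum $E_J$, I choose coordinates $z_j$ defining the $E_j$, and I may take them transverse to $\CH$ after shrinking. Each $f_i$ then factors as
\[
f_i = u_i\prod_{j\in J} z_j^{a_{ij}}\cdot(\text{local equations of }\CH)^{\pm},
\]
with $u_i$ a unit at the generic point $\xi_J$. Since $\dim\delta=d$, the stratum $E_J$ is a closed point, so every factor other than the monomial in the $z_j$ is a unit in $\CO_{\fX,\xi_J}$. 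Because $c_{\fX}(x)=\xi_J$, the valuation $v_x$ is trivial on units of $\CO_{\fX,\xi_J}$.

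\textbf{Step 3: compare coordinates.} From Step 2 it follows that
\[
-\log|f_i(x)| = v_x(f_i) = \sum_{j\in J} a_{ij}\, v_x(z_j).
\]
The retraction $r_{\fX}(x)$ is by definition the monomial valuation with weights $v_x(z_j)$, so the same formula applied to it gives $v_{r_{\fX}(x)}(f_i)=\sum_j a_{ij}v_x(z_j)$. Hence $\Trop_f(x)=\Trop_f(r_{\fX}(x))$. The comparison property then follows directly:
\[
\varphi(x)=\varphi(\Trop_f(x))=\varphi(\Trop_f(r_{\fX}(x)))=\varphi(r_{\fX}(x)).
\]

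\textbf{Main obstacle.} Step 2 is the delicate part. The claim that $v_x$ vanishes on units, and on the equations of $\CH$, near $\xi_J$ requires that $\CH$ does not pass through the point $E_J$. I would try first to secure this by further blowing up: a general position or Temkin-type argument should move the strata so that the horizontal divisors miss the $0$-dimensional strata of a maximal face. Failing that, I would restrict to maximal faces whose corresponding point avoids $\CH$.
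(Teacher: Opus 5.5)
Your route differs from the paper's and it works, but Step~2 as written is a non sequitur, and you then leave exactly that point open as the ``main obstacle''. That $E_J$ is a closed point does not by itself make the local equations of $\CH$ units at $\xi_J$. The missing observation is already contained in your Step~1. Since $\fX_s+\CH$ is SNC, the local equations of the components through any point $\xi$ form part of a regular system of parameters of $\CO_{\fX,\xi}$. Hence at most $\dim\CO_{\fX,\xi}$ components pass through $\xi$. For a maximal face, $\dim\CO_{\fX,\xi_J}=d+1=|J|$, and the $E_j$, $j\in J$, already pass through $\xi_J$. So no component of $\CH$ contains $\xi_J$, and factoring in the UFD $\CO_{\fX,\xi_J}$ gives $f_i=u_i\prod_{j\in J}z_j^{\mathrm{ord}_{E_j}(f_i)}$ with $u_i$ a unit. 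No general-position blow-ups are needed. Your fallback of keeping only faces whose point avoids $\CH$ would not prove the statement, since Definition~\ref{def:partial_retraction_invariance} requires the identity over every maximal face.

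Two minor points. First, start from an SNC model $\fX_0$ with $\dim\Delta_{\fX_0}=d$, as the paper does. Then $\Delta_{\fX_0}\subset\Delta_{\fX}$, and dual complexes have dimension at most $d$, so $\dim\Delta_{\fX}=d$. Second, the hypothesis is only given on $\varX^{\val}$. So your chain of equalities holds on $\varX^{\val}\cap r_{\fX}^{-1}(\relint\delta)$; note that $r_{\fX}(x)$ is a monomial valuation and so lies in $\varX^{\val}$. It then extends to all of $r_{\fX}^{-1}(\relint\delta)$ by continuity and density.

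The paper proceeds differently. It keeps an arbitrary SNC model, where the $f_i$ need not be monomial at $\xi_J$. Over a sufficiently irrational point of $\relint\delta$, every valuation with that retraction has the same value on each $f_i$, because there is a unique initial monomial. So the projection $\Trop(\varX,(f,z))\to\Trop(\varX,z)$ has one-point fibres there. The paper then chooses a subdivision $\Pi_\delta$ over whose open maximal cells this projection is bijective. It realizes $\Pi_\delta$ by weighted blow-ups of strata, which preserve $|\Delta_{\fX}|$ and $r_{\fX}$. Your argument replaces this with an embedded resolution along $\CH$ followed by a one-line valuation computation, which is shorter and more elementary. The cost is that the model, its skeleton and its retraction all change. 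The paper's toroidal route instead gives the comparison property relative to a subdivision of any prescribed $d$-dimensional SNC skeleton.
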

\begin{proof}
  Let $\dim(\varX)=d$. Fix an SNC model $\fX$ of $\varX$ with $\dim(\Delta_{\fX})=d$. 
  Let $\delta$ be a $d$-dimensional face of $\Delta_{\fX}$ corresponding to a $0$-dimensional strata $\{x\}=\bigcap_{0 \le i \le d}E_i$. Let $z_i \in \widehat{\CO}_{\fX, x}$ such that  $E_i$ is defined locally by $z_i=0$. Suppose that  $\uniform=z_0^{b_0} z_1^{b_1} \cdots z_d^{b_d}$ for $b_0, \cdots, b_d \in \BZ_{>0}$.  Cohen's structure theorem yields an isomorphism 
  \[
  \widehat{\CO}_{\fX, x} \cong \kappa [[z_0, z_1, \cdots, z_{d}]] \cong \kappa [\mathsf{t}] [[z_0, z_1, \cdots, z_{d}]] / (\uniform-z_0^{b_0} z_1^{b_1} \cdots z_d^{b_d})
  \]
  By construction, $r_{\fX}(v)$ equals to the tropicalization map $(v(z_0), \cdots, v(z_{d}))$ for any $v$ in the set $r_{\fX}^{-1}(\relint(\delta))$, with $\sum_{i} b_i v(z_i)=v(\mathsf{t})=1$.   Thus we can identify the restriction of $r_{\fX}$ over $\relint(\delta)$ with the restriction of the tropicalization map $(\BG_m^d)^{\an} \to \R^d$ over the standard $d$-dimensional open simplex $\relint(\delta_d^{\std}) \coloneqq \{u \in \R^{d+1}_{>0} \,|\, b_0u_0 + \cdots +b_du_d=1\}$.

  Consider the tropicalizations $\Trop_{(f,z)} \colon \varX^{\val} \to \Trop(\varX, (f,z))$, $\Trop_{z} \colon \varX^{\val} \to \Trop(\varX, (z))$ and the projection map $\pi_{z} \colon \Trop(\varX, (f,z)) \to \Trop(\varX, (z))$. Then, by assumption, we have $\varphi =\varphi \circ \Trop_{(f,z)}$ on $r_{\fX}^{-1}(\delta) \cap \varX^{\val}$. Take a simplicial rational polyhedral structure $\Pi_{\delta}$ on  $\delta$ such that for any $\delta' \in (\Pi_{\delta})_d$, $\pi_z^{-1}(\delta')$ is a union of polyhedra, and the restriction of $\pi_z$ on the relative interior each of these polyhedra is injective. 

  Now, we view $f=(f_1, \cdots, f_k)$ as elements in $\widehat{\CO}_{\fX, x}$, and we identify $\relint(\delta)$ with $\relint(\delta_n^{\std})$. Note that for a sufficiently irrational point\,\footnote{Sufficiently irrational here means that 
  the coordinates $y_1, \cdots, y_d$ of $y$ are linearly independent over $\Q$.} $y \in \relint(\delta_d^{\std})$, the cardinality of $\pi^{-1}(y) \subset \Trop((\BG_m^d)^{\an}, (f,z))$ equals to $1$.  This implies that for any $d$-dimensional polyhedron $\delta'$ in $(\Pi_{\delta})_d$, the restriction of $\pi_z$ on $\pi_z^{-1}(\relint(\delta'))$ is a bijection. Therefore, $\varphi=\varphi \circ \Trop_z$ on $r_{\fX}^{-1}(\relint(\delta')) \cap \varX^{\val}$.

  There exists a successive weighted blowup $\fX'$ of the strata in $\fX_s$ such that the support of the dual complex $\Delta_{\fX'}=\Delta_{\fX}$, and the restriction of $\Delta_{\fX'}$ on $\delta$ equals to $\Pi_{\delta}$ for each $d$-dimensional face $\delta$ in $\Delta_{\fX}$. Moreover, we have $r_{\fX}=r_{\fX'}$. Therefore, $\varphi=\varphi \circ r_{\fX'}$ on $r_{\fX'}^{-1}(\relint(\delta'))$ for each $d$-dimensional face $\delta'$ in $\Delta_{\fX'}$. Since $\varphi, r_{\fX'}$ are continuous and $\varX^{\val}$ is dense in $\varX^{\an}$, this implies that $\varphi$ satisfies the partial retraction invariace property with respect to $\fX'$. 
\end{proof}

We now relate the tropical \MongeA solution on a strongly faithful tropicalization to a non-Archimedean \MongeA solution on $\varX^{\an}$ which satisfies the comparison property.

\begin{thm} \label{thm:faithful_tropicalization_partial_retraction_invariance}
  Under the assumptions of Theorem \ref{thm:comparison_thm_general_case}, suppose further that $Y$ be a strongly faithful open tropicalization with respect to $\Delta_{\fX}$, where $\Delta_{\fX}$ is the dual complex of an SNC model $\fX$ of $\varX$ with $\dim(\Delta_{\fX})=\dim_{\valfield} (\varX)$. Let $\mu$ be any propability measure which is  supported on $\Delta_{\fX}$ and absolute continuous with respect to the Lebesgue measure. Suppose further that the bounded function $\varphi_o\colon Y \to \R$ satisfies $\varphi_o+h \in \LConv(Y)$, and $\MA_{\Trop}(\varphi_o+h)=\deg(\varL^d) \, \Trop_{*}(\mu)$ on $Y$. Then $\varphi_o \circ \Trop$ can be extended to $\varphi_{\NA} \in \CPSH_{\omega, \NA}(\varX^{\an})$ satisfying $\MA_{\omega, \NA}(\varphi_{\NA})=\deg(\varL^d) \,  \mu$. Moreover, $\varphi_{\NA}$ satisfies the comparison property. 
\end{thm}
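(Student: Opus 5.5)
The plan has three stages: build $\varphi_{\NA}$ by Theorem~\ref{thm:comparison_thm_general_case}, identify its measure using strong faithfulness and the uniqueness part of Theorem~\ref{thm:NA_Calabi_Yau}, and deduce the comparison property from Proposition~\ref{prop:tropicalization_implies_partial_retraction_invariance}.

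First, since $\varphi_o$ is bounded and $\varphi_o+h\in\LConv(Y)$, Theorem~\ref{thm:comparison_thm_general_case} extends $\varphi_o\circ\Trop$ to $\varphi_{\NA}\in\CPSH_{\omega,\NA}(\varX^{\an})$. The same theorem shows that $\nu\coloneqq\MA_{\omega,\NA}(\varphi_{\NA})$ puts no mass on $\varX^{\an}\setminus\varY^{\an}$ and that $\Trop_*\nu=\MA_{\Trop}(\varphi_o+h)=\deg(\varL^d)\,\Trop_*\mu$ on $Y$.

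The remaining task, and the main obstacle, is to upgrade this equality of pushforwards to $\nu=\deg(\varL^d)\mu$. Pushforwards alone do not separate measures: a priori $\nu$ could charge points of $\varX^{\an}$ outside $\Delta_{\fX}$ whose tropicalization lands in $\Trop(\Delta_{\fX})$. The idea is to use the weight-one condition as a multiplicity count. Let $\delta$ be a maximal-dimensional face contained in $\Trop(\Delta_{\fX})$, with $\weight(\delta)=1$. By the skeletal Sturmfels--Tevelev formula of \cite{GRW16}, as used in the proof of Theorem~\ref{thm:existence_of_faithful_tropicalization}, the weight of $\delta$ is a sum of indices over maximal faces of a skeleton $S(\fX',\CH')\supset\Delta_{\fX}$ mapping onto $\delta$. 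Since $\Delta_{\fX}$ already contributes a face over $\delta$, weight one forces this to be the only skeletal face over $\delta$, with index $1$. I would combine this with the description of $\nu$ on PL approximants: for $\widetilde\varphi_i\circ\Trop$, the measure is a sum of Dirac masses at divisorial points of a model adapted to the tropical complex, so each mass over a vertex of $\Trop(\Delta_{\fX})$ in $\relint(\delta)$ sits at its unique preimage in $\Delta_{\fX}$. This gives $\nu|_{\Trop^{-1}(\relint\delta)}$ supported in $\Delta_{\fX}$. Since $\Trop|_{\Delta_{\fX}}$ is injective, it then equals the pullback of $\Trop_*\mu$, which is $\mu$ there.

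The lower-dimensional faces of $\Trop(\Delta_{\fX})$ are null for $\mu$, because $\mu$ is absolutely continuous and $\Trop|_{\Delta_{\fX}}$ is piecewise affine and injective. Comparing total masses then shows $\nu$ has no mass elsewhere: $\nu$ and $\deg(\varL^d)\mu$ both have total mass $\deg(\varL^d)$. Hence $\nu=\deg(\varL^d)\mu$. If this direct support argument is delicate, I would instead invoke uniqueness in Theorem~\ref{thm:NA_Calabi_Yau}. Let $\psi$ solve $\MA_{\omega,\NA}(\psi)=\deg(\varL^d)\mu$; then the comparison argument of \cite{BFJ15}, via the energy inequality for $\psi$ against $\varphi_{\NA}$, shows both maximize the same functional, given that their measures agree after $\Trop_*$ on the relevant set.

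Finally, $\varphi_{\NA}=\varphi_{\NA}\circ\Trop$ on $\varY^{\an}\cap\varX^{\val}$ by construction, with $\Trop=\Trop_f$ for the coordinates $f$ of the embedding. Proposition~\ref{prop:tropicalization_implies_partial_retraction_invariance} therefore yields the comparison property.
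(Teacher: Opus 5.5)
Your first and last stages match the paper's. The gap is in the middle step, which is the heart of the theorem.

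The skeletal Sturmfels--Tevelev formula only counts faces of the skeleton $S(\fX',\CH')$. Weight one tells you that $\Delta_{\fX}$ supplies the only skeletal face over $\delta$. It says nothing about non-skeletal points of the fibre $\Trop^{-1}(w)$, $w\in\relint(\delta)$. That fibre is a $d$-dimensional affinoid containing many divisorial points that lie on no skeleton, and these are exactly the points you flagged as the obstruction.

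The atoms of the Monge--Amp\`ere measures of your approximants $\widetilde\varphi_i\circ\Trop$ are divisorial points over vertices $w$. Nothing you invoke places them in $S(\fX',\CH')$, so ``each mass over a vertex \dots{} sits at its unique preimage in $\Delta_{\fX}$'' does not follow. To make your route work you would need three further inputs:
\begin{enumerate}
\item these atoms are exactly the Shilov boundary points of $\Trop^{-1}(w)$, i.e.\ the components of $\mathrm{in}_w(\varY)$ on a tropical model determining $\widetilde\varphi_i\circ\Trop$;
\item for weight one, this unique Shilov point lies in the skeleton, hence is the point of $\Delta_{\fX}$ over $w$ (a statement of the type proved in \cite{GRW16});
\item some care at vertices lying on images of lower-dimensional skeleton cells.
\end{enumerate}

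Your fallback via Theorem~\ref{thm:NA_Calabi_Yau} does not close the gap. The energy inequalities, together with $\int\varphi_{\NA}\,d\nu=\deg(\varL^d)\int\varphi_{\NA}\,d\mu$ (valid because $\varphi_{\NA}$ factors through $\Trop$), give only $\deg(\varL^d)\int\psi\,d\mu\le\int\psi\,d\nu$. The reverse inequality, for a psh $\psi$ not factoring through $\Trop$, needs precisely the missing information on where $\nu$ lives.

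The paper never locates non-Archimedean atoms. Write $Y=\Trop(\varX,g)$ and use Payne's inverse-limit theorem \cite{Pay09}. For every further rational map $f$, Theorem~\ref{thm:comparison_thm_general_case} applied to the refined tropicalization gives $(\Trop_{(g,f)})_*\nu=\nu_{g,f}$, and \cite[Prop.\,5.20]{APW1} gives $(\pi_{g,f})_*\nu_{g,f}=\nu_g$. Weight one and the ordinary Sturmfels--Tevelev formula for the projection $\pi_{g,f}$ show that fibres of $\pi_{g,f}$ over generic points of $\Trop_g(\Delta_{\fX})$ are single points of $\Trop_{(g,f)}(\Delta_{\fX})$. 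Combined with absolute continuity of $\mu$, this yields $\Supp(\nu_{g,f})\subset\Trop_{(g,f)}(\Delta_{\fX})$ for every $f$. In the limit $\Supp(\nu)\subset\Delta_{\fX}$, and injectivity of $\Trop$ on $\Delta_{\fX}$ then gives $\nu=\deg(\varL^d)\,\mu$. Testing against all refinements of the tropicalization, rather than against skeletal faces, is the idea your argument is missing.
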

\begin{proof}
  Write $Y=\Trop(\varX, g)$ for $g=(g_1, \cdots, g_k)$. By Theorem \ref{thm:comparison_thm_general_case}, $\varphi_o \circ \Trop$ can be extended to $\varphi_{\NA} \in \CPSH_{\omega, \NA}$ satisfying $(\Trop_f)_{*}(\MA_{\omega, \NA}(\varphi_{\NA}))=\MA_{\Trop}(\varphi_o+h)$. That $\varphi_{\NA}$ satisfies the comparison property follows from Proposition \ref{prop:tropicalization_implies_partial_retraction_invariance}.  By \cite{Pay09}, $\varX^{\val}$ is homeomorphic to $\varprojlim_f \Trop(\varX, f)=\varprojlim_f \Trop(\varX, (g,f))$, where the inverse limit is over the set of rational maps of $\varX$ to tori. 

  Denote $\nu \coloneqq \MA_{\omega, \NA}(\varphi_{\NA})$ on $\varX^{\an}$. Denote $\nu_g \coloneqq \MA_{\Trop}(\varphi_o+h)$ on $\Trop(\varX, g)$, and denote $\nu_{g,f } \coloneqq \MA_{\Trop}((\varphi_o+h) \circ \pi_{g,f})$ on $\Trop(\varX, (g,f))$. Here $\pi_{g,f} \colon \Trop(\varX, (g,f)) \to \Trop(\varX, f)$ denotes the projection map. Then, \cite[Prop.\,5.20]{APW1} 
  %cross-ref in APW1: \ref{prop:compatibility_Monge-Ampere_projections} 
   and Theorem \ref{thm:comparison_thm_general_case} imply that $(\pi_{g,f})_{*} \nu_{g,f}=\nu_g$ and $(\Trop_{g,f})_{*} \nu=\nu_{g,f}$.
  
  Since $\weight(\delta)=1$ for any maximal dimensional face $\delta$ in $\Trop_g(\Delta_{\fX})$, the tropical Strumfels--Tevelev multiplicity formula implies that $\card{\pi_{g,f}^{-1}(y)}=1$ for a sufficiently irrational point $y \in \delta$.  This implies that there exists a rational simplicial polyhedral structure $\Pi$ on $\Delta_{\fX}$ such that the restriction of $\pi_{g,f}$ on $\pi_{g,f}^{-1}(\bigcup_{\delta' \in \Pi_d}\relint(\delta'))$ is a  bijection. Therefore, $\pi_{g,f}^{-1}(\bigcup_{\delta' \in (\Pi_d)}\relint(\delta')) \subset \Trop_{(g,f)}(\Delta_{\fX})$. On the other hand, since $\mu_g$ is absolute continuous with respect to the Lebesgue measure, $\nu_g(\Delta_{\fX} \setminus \bigcup_{\delta' \in (\Pi_d)}\relint(\delta'))=0$. This implies that $\mathrm{Supp}(\nu_{g,f}) \subset \Trop_{(g,f)}(\Delta_{\fX})$. Passing to the limit over all rational maps $f$, we obtain that $\mathrm{Supp}(\nu) \subset \Delta_{\fX}$.  Since $\Trop_f$ is injective on $\Delta_{\fX}$ and $(\Trop_f)_{*}(\mu)=(\Trop_f)_{*}(\nu)$, we obtain $\mu=\nu$ on $\varX^{\an}$.
\end{proof}

\appendix
%%%%%%%%%%%%%%%%%%%%%%%%%%%%%
%%%Uniform Lipschitz property
%%%%%%%%%%%%%%%%%%%%%%%%%%%%%

\section{Uniform Lipschitz property of tropical psh functions} \label{appendix:uniform_lipschitz}
Let $X$ be a compact tropical variety with open part $\xo$. Let $\omega$ be a semipositive (1,1)-form on $X$. We prove the following theorem.
 
\begin{thm}  \label{thm:appendix_lipschitz}
  Let $K$ be a compact subset of $\xo$. There exists a constant $C_K$ such that each $\varphi \in \PSH_{\omega}(X)$ is $C_K$-Lipschitz.
\end{thm}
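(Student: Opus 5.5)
The strategy is to reduce uniform Lipschitz control on $K$ to uniform $C^0$ control of $\varphi+h$ on slightly larger compact sets inside local charts. Then the local estimate of Proposition~\ref{prop:Lipschitz_bounds_in_terms_of_C0_bounds} gives the Lipschitz bound. Since $\PSH_{\omega}(X)$ is invariant under adding constants, the statement should be read modulo constants: the constant $C_K$ bounds the Lipschitz constant of $\varphi|_K$, and we may normalize $\varphi$ by, say, $\sup_X\varphi=0$ (note that $\varphi$ is bounded above by continuity into $\R\cup\{-\infty\}$ on the compact $X$).

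\textbf{Step 1 (localization).} Cover $K$ by finitely many charts $W_i\subset\xo$ with local potentials $h_i\in\LConv(W_i)$ of $\omega$, each bounded on a slightly larger chart. Choose compact $K_i\subset W_i$ with $K\subset\bigcup_i \operatorname{int}K_i$. By Proposition~\ref{prop:Lipschitz_bounds_in_terms_of_C0_bounds}, applied to $u=\varphi+h_i\in\LConv(W_i')$ on a relatively compact chart $W_i'\Supset K_i$, we get
\[
\Lip_{K_i}(\varphi+h_i)\le C_i\|\varphi+h_i\|_{C^0(W_i')}.
\]
Since $h_i$ is fixed and Lipschitz on $K_i$, it suffices to bound $\|\varphi\|_{C^0(W_i')}$ uniformly over normalized $\varphi$. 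A standard chaining argument over the finite cover, using a Lebesgue number, then converts the local Lipschitz bounds into a uniform bound on $K$.

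\textbf{Step 2 (upper bound).} With $\sup_X\varphi=0$ the upper bound is free.

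\textbf{Step 3 (lower bound).} This is the main obstacle: we need $\inf_{L}\varphi\ge -C$ on a compact $L\subset\xo$ containing all the $W_i'$, uniformly over $\varphi$ with $\sup_X\varphi=0$. The supremum is attained on the compact $X$, possibly at a boundary point. By Proposition~\ref{prop:local_affine_support_function_boundary} and the monotonicity of Lemma~\ref{lem:extension_bounded_convex_function_polyhedron}, values at boundary points are limits of values on $\xo$ along recession directions, and these are $\le$ the values at the base points. Hence, for a fixed polyhedral structure, $\sup_X\varphi=\sup_{\Pi_{\f}}\varphi$, as in the proof of Proposition~\ref{prop:uniform_upper_bound}. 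So it suffices to propagate a lower bound from one point of the compact connected set $\Pi_{\f}$ (enlarged to contain $L$) to all of it. This is done by a Harnack-type chain argument.

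Locally, $u=\varphi+h$ is locally convex and bounded above by a fixed constant $M$ on a chart. At the center $z$ of a fan chart, the balancing condition provides convex combinations $z=\sum\nu_j z_j$ with $z_1$ arbitrary near $z$ and positive weights, as in Case~A of the proof of Proposition~\ref{prop:local_affine_support_function_boundary}. This gives
\[
u(z)\le \nu_1u(z_1)+(1-\nu_1)M,
\]
so a lower bound at $z$ propagates to a lower bound at nearby points. Conversely, convexity along segments inside faces propagates a lower bound from nearby points to $z$. Since $\Pi_{\f}$ is compact and connected, finitely many such overlapping charts, with uniform constants, connect the point where $\varphi\approx0$ to every point of $L$. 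This yields $\varphi\ge -C$ on $L$ with $C$ independent of $\varphi$.

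The delicate points are making the chain constants uniform and handling the case where the maximum is only approached near the boundary. Both are resolved by the reduction to $\Pi_{\f}$ above, after which Step 1 concludes the proof.
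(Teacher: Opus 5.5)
Your argument is correct in outline, but it takes a different route from the paper.

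\textbf{The paper's route.} The paper normalizes at a fixed vertex, $\varphi(x)=0$. It then proves in Proposition~\ref{prop:1-dimensional_uniform_Lipschitz} that the restrictions of all $\varphi\in\PSH_\omega(X)$ to the $1$-skeleton $\Gamma$, \emph{including the legs}, are uniformly Lipschitz. The inputs (Lemma~\ref{lem:inequalities_slopes}) are: balancing at a vertex makes the outgoing slopes of $\varphi+h$ sum to something nonnegative; slopes along legs are bounded above; and slopes are monotone along edges. A very large slope would then produce a path of vertices along which $\varphi$ strictly increases, contradicting the pigeonhole principle. This bounds $\varphi$ at all vertices. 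Convexity on compact simplices gives the upper bound, and balancing at a vertex bounds directional derivatives into each simplex from below, giving the lower bound. Proposition~\ref{prop:Lipschitz_bounds_in_terms_of_C0_bounds} then finishes.

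\textbf{Your route.} You normalize by $\sup_X\varphi=0$, so the upper bound is free. You replace the slope bookkeeping and pigeonhole by a soft Harnack-chain argument: balancing propagates lower bounds outward from a chart centre, and extrapolation along segments propagates them inward. The paper's route gives a quantitative statement on the whole $1$-skeleton, and its leg inequality automatically controls the region near $\partial X$. Yours is shorter and less combinatorial, but you must locate the supremum separately.

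\textbf{Two points need repair.}

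First, the equality $\sup_X\varphi=\sup_{\Pi_{\f}}\varphi$ is not correct as stated. Lemma~\ref{lem:extension_bounded_convex_function_polyhedron} applies to $\varphi+h$ for a \emph{local} potential $h$, not to $\varphi$. The equality can fail when $\varphi+h$ is constant along a leg on which the bounded potential $h$ strictly decreases towards $\partial X$. What the argument actually gives is the following:
\begin{itemize}
\item cover $\partial X$ by finitely many basic neighbourhoods $(t_j,\infty]^{k_j}\times V_j$ carrying bounded local potentials $h_j$;
\item compare $(s,z)$ with $(t_j\mathbf{1},z)$ using monotonicity of the convex slice functions $s\mapsto(\varphi+h_j)(s,z)$;
\item conclude $\sup_X\varphi\le\sup_{K''}\varphi+C_\omega$ for a fixed compact $K''\subset\xo$.
\end{itemize}
This weaker inequality is all you need.

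Second, the uniformity of the chain constants does not come from the reduction to $\Pi_{\f}$. It comes from a local Harnack estimate around finitely many \emph{fixed} centres $p_0$, with small neighbourhoods $N\subset\Pi^{(p_0)}$.
\begin{itemize}
\item Inward step: for $p\in N$, extrapolate along the segment from $p_0$ to $2p-p_0$ inside a common face. This gives $u(p_0)\ge 2u(p)-M$.
\item Outward step: for $q\in N$, use the balancing combination $p_0=\nu_1q+\sum_{j\ge2}\nu_jz_j$. The weight $\nu_1$ can be bounded below uniformly, because $0$ lies in the relative interior of the convex hull of the ray generators of the star fan at $p_0$.
\end{itemize}
With these two points made explicit, your proof is complete.
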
 
Here, the notion of Lipschitz function is defined as follows. We fix a finite open cover $\xo=\bigcup_{i \in I} W_i$ 
by tropical charts $(W_i, \phi_i, W_i \to U_i)$.  For each $i\in I$, we also 
fix an Euclidean metric on $U_i$, and denote by $d_i$ the induced metric on $W_i$ via $\phi_i$.
Then $\varphi \colon S\to \R$, for $S\subset \xo$ is called $C$-Lipschitz if $\varphi\rest{S\cap W_i}$ is 
is $C$-Lipschitz with respect to $d_i$, for each $i\in I$. 

\smallskip
Let $\olPi$ be a polyhedral structure on $X$, with induced polyhedral structure $\Pi$ on $\xo$ (see Section~\ref{subsubsec:polyhedral-complex-structures}).
For each point $x \in \xo$, let $\vec{T}_xX$ be the \emph{pointed tangent space of $X$ at $x$} defined by 
\[
\vec T_x X \coloneqq \bigcup_{\substack{\sigma \ni x \\\sigma \in \Pi}} \vec T_{x}{\sigma},
\] 
where $\vec T_x\sigma$ is the set of all $\fe$ in the tangent space $T_\sigma$ such that $x + t\fe \in \sigma$ for an small positive real $t$. 
The pointed tangent space $\vec T_xX$ is thus a subset of the tangent space of $x$ in $X$ 
(defined by taking the linear span of the faces $\sigma \in \Pi$ than contain $x$). Note that the pointed tangent space does not depend on the choice of the polyhedral structure. 

For a point $x$ and $f \in \nC^{0}(U)$ defined on a neighborhood $U$ of $x$, we set 
\[
\partial_{\fe} f (x) \coloneqq \lim_{t \to 0+} \frac{f(x+t\fe)-f(x)}t, \qquad \fe \in \vec T_xX, 
\] 
if this limit exists. 

\smallskip

Note that the 1-skeleton $\Pi_{(1)}=\Pi_0\cup \Pi_1$ of $\Pi$ has vertex set $\Pi_0$, a set of finite edges given by $\Pi_{\f,1}$, and a set of infinite edges called \emph{legs} corresponding to nonempty intersections 
 $L\cap \xo$ for $L\in \olPi_1$. On each edge  of the 1-skeleton, we assign length one to 
 the primitive vector of that edge. In this way, we obtain a metric graph structure on the 1-skeleton that we denote by $\Gamma$. Topologically, $\Gamma$ is the support $\supp{\Pi_{(1)}}$, and is endowed with the shortest path metric.
Note that any function $\varphi$ on $X$ yields a function on $\Gamma$ that we dentoe by $\varphi\rest{\Gamma}$.

 We first prove the theorem for the restriction of $\omega$-psh functions to $\Gamma$. 
\begin{prop} \label{prop:1-dimensional_uniform_Lipschitz}
  Let $X$ be a tropical variety of dimension $d$, and let $\olPi$ be a polyhedral structure on $X$, with induced polyhedral structure $\Pi$ on $\xo$. Let $\Gamma$ be the corresponding metric graph obtained from the one-skeleton $\Pi_{(1)}$ 
  by assigning length one to the primitive vector of each edge in $\Pi_1$. 
  Then, there exists a positive constant $C$ such that $\varphi\rest{\Gamma}$ is $C$-Lipschitz for any $\varphi \in \PSH_{\omega}(X)$. 
\end{prop}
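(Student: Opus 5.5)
The plan is to reduce the statement to one-sided directional slopes of $\varphi+h$ along the edges of $\Gamma$, where $h$ is a local potential of $\omega$. On a single edge the function $\varphi+h$ is convex, so bounding its slopes at the two endpoints (in the inward directions) bounds its Lipschitz constant on the whole edge. First, replace $\varphi$ by a normalized function. Fix a vertex $x_0\in\Pi_0$ and cover $X$ by finitely many charts carrying local potentials $h_j$, chosen so that $h_j\rest{\Gamma}$ is Lipschitz on each compact piece. Since subtracting a constant from $\varphi$ changes no slope, we may assume $\varphi(x_0)=0$. Proposition~\ref{prop:uniform_upper_bound} then gives a uniform bound $\varphi\le C_0$ on $X$.

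The key step is a uniform lower bound for $\varphi$ on the finite part $\Pi_{\f}\cap\Gamma$, obtained by propagating along the connected graph $\Gamma$. Let $x$ be a vertex and $u=\varphi+h$ near $x$. By the balancing condition of the star fan $\Pi^{x}$, the origin is a positive combination $\sum_i\nu_i\e_{\varrho_i}$ of the primitive vectors of the rays at $x$, and only finitely many such combinations are involved. Proposition~\ref{prop:equivalent_def_LConv} therefore gives
\[
u(x)\le\sum_i\nu_i\,u(x+t\e_{\varrho_i})
\]
for a small fixed $t$. Combined with the upper bound $u\le C_0+\max h$, this yields a lower bound for $u$ at a neighbouring point $x+t\e_{\varrho_i}$ in terms of $u(x)$. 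Convexity of $u$ along each edge (Proposition~\ref{prop:properties_LConv_functions}) then propagates the bound along the whole edge.

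Repeating this finitely many times along paths in the connected graph from $x_0$ gives $|\varphi|\le C_1$ on all compact edges of $\Gamma$. On a leg, Lemma~\ref{lem:extension_bounded_convex_function_polyhedron} applied to $u$ (which is bounded above) shows that $u$ is non-increasing toward infinity, and its limit at infinity is $\ge$ the value given by the extension in Proposition~\ref{prop:local_affine_support_function_boundary}. I expect the main obstacle to be the legs: $\varphi$ may equal $-\infty$ at the boundary, so there is no global lower bound along a leg. To handle this, I would read the Lipschitz statement in the chartwise sense. For each leg, the slopes of the convex function $u$ restricted to the leg are bounded above by $0$, by the monotonicity above. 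They are bounded below by the slope at the vertex, and that slope is controlled by the balancing inequality, as the next step shows.

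Finally, the slope bound at a vertex. For a convex function of one variable, the inward slope at $x$ along $\e_{\varrho_i}$ lies between $-\bigl(u(x)-u(x+t\e)\bigr)/t$ and $\bigl(u(x+t\e)-u(x)\bigr)/t$, up to monotonicity of difference quotients. The upper slope bound comes from the uniform $C^0$ bound on a compact edge. The lower bound uses balancing: the directional derivatives satisfy $\sum_i\nu_i\,\partial_{\e_{\varrho_i}}u(x)\ge0$, so each slope is at least $-\nu_i^{-1}\sum_{k\ne i}\nu_k\,\partial_{\e_{\varrho_k}}u(x)$, which is bounded below by the upper bounds on the other slopes. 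Together these give a uniform bound on $|\partial_{\e}u|$ at every vertex and in every edge direction. Convexity along each edge then bounds all slopes on that edge, including the legs by monotonicity. Subtracting the fixed Lipschitz function $h$ gives the constant $C$.
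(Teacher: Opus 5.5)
\textbf{The argument is circular at its first step.} You take the uniform bound $\varphi\le C_0$ (under the normalization $\varphi(x_0)=0$) from Proposition~\ref{prop:uniform_upper_bound}. In the paper, that proposition is deduced from Theorem~\ref{thm:lipschitz}, i.e.\ Theorem~\ref{thm:appendix_lipschitz}. The proof of the latter begins by invoking precisely the statement you are proving, to bound $|\varphi(v)|$ at the vertices of $\Pi$.

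Everything downstream in your proposal rests on that bound. The polyhedral inequality $u(x)\le\sum_i\nu_i u(x_i)$ bounds one neighbour from below only in terms of \emph{upper} bounds on the others. Neither balancing nor convexity along an edge turns the single normalization $\varphi(x_0)=0$ into an upper bound elsewhere. So without an independent source for the upper bound, neither the $C^0$ control on compact edges nor your slope bounds at vertices is established. That upper bound carries essentially the whole content of the proposition.

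\textbf{The missing idea is to get control from slopes alone.} The paper uses exactly your two local inputs:
\begin{itemize}
\item balancing at a vertex: if one slope there is very negative, some slope along another finite edge must be very positive, because slopes along legs are bounded above by Lemma~\ref{lem:extension_bounded_convex_function_polyhedron};
\item monotonicity of slopes along an edge, up to an error coming from $h$.
\end{itemize}
It combines them into a chasing argument. If $\varphi$ had a huge slope somewhere on a finite edge, one builds a walk $v_1,\dots,v_{N+1}$ in $\Gamma$, with $N$ the number of vertices, on whose edges $\varphi$ has positive slope. Then $\varphi(v_1)<\dots<\varphi(v_{N+1})$, and by pigeonhole some vertex repeats, a contradiction.

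Alternatively, you can repair your own scheme by changing the anchor: start the propagation at a vertex $v_0$ where $\varphi$ attains $\max_{\Pi_0}\varphi$.
\begin{itemize}
\item The bound $\varphi\le\varphi(v_0)$ on $\Pi_0$ is then free.
\item At $v_0$ the outgoing slopes of $u=\varphi+h$ are bounded above: by $(u(w)-u(v_0))/\ell\le (h(w)-h(v_0))/\ell$ on finite edges, and by $0$ on legs.
\item Balancing then bounds these slopes below.
\item Convexity gives $\varphi(w)\ge\varphi(v_0)-C$ at each neighbour $w$.
\item Iterating along finite edges bounds the oscillation of $\varphi$ on $\Pi_0$.
\end{itemize}
This suffices, since slopes ignore additive constants, and your last paragraph then goes through. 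Your concern about $\varphi=-\infty$ at the end of a leg is moot: legs are infinitely long in $\Gamma$, and only slopes matter.
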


For two vertices $v$ and $u$ in $\Pi_0$, we write $v\sim u$ if $v$ and $u$ are adjacent. In this case, we denote by $vu$ the corresponding oriented finite edge with source $v$ and tail $u$,
 and let $\e_{vu} \in \vec T_v\Gamma$ be the unit tangent vector (corresponding to the primitive vector of $\vec T_vX$ tangent to the edge $vu$ in the direction of $u$) based at $v$ and pointing toward $u$. By an abuse of notation, for each point $x$ in the interval edge $vu$, $x\neq u$, we dentoe by $\fe_{vu}$ the unit tangent vector in $\vec T_xX$ in the direction of $v$. We denote by $\rec(\Gamma)=\rec(\Pi)$ the recession fan of $\Pi$ and $\Gamma$. 
 For $\rho \in \rec(\Gamma)$, we write $v \sim \rho$ if there exists an infinite edge of $\Gamma$ based at $v$ and pointing in the direction of $\rho$. In this case,  we denote this oriented infinite edge by $v \rho$, and the corresponding unite tangent vector by $\e_{v\rho}$ (corresponding to the primitive vector of $\vec T_vX$ in the direcion of $\rho$). 
 To simplify the notation, set $\partial_{vu} \coloneqq \partial_{\fe_{vu}}$, and $\partial_{v\rho} \coloneqq \partial_{\fe_{v \rho}}$.

\begin{lemma} \label{lem:inequalities_slopes} Notation as in Proposition~\ref{prop:1-dimensional_uniform_Lipschitz},
  there exist  constants $A, B >0$  such that for any $\varphi \in \PSH_{\omega}(X)$, the following inequalities hold.
  \begin{itemize}
    \item For any vertex $v$ of\/ $\Gamma$,
    \begin{equation} \label{inequality:A1}
      \max_{u \sim v}\partial_{vu}\varphi(v) \geq -A\min_{u\sim v}\partial_{vu}\varphi(v)-B.
    \end{equation}
    \item For any infinite edge $v \rho$ of $\Gamma$,
    \begin{equation} \label{inequality:A2}
      \partial_{v\rho}\varphi(v) \ge -A\min_{\rho\sim v}\partial_{v\rho}\varphi(v)-B
    \end{equation}
    \item For any oriented finite edge $vu$ of $\Gamma$ and any point $x$ on the segment $vu$, $x\neq u$,
    \begin{equation} \label{inequality:A3}
      \partial_{vu}\varphi(v)  \le \partial_{vu}\varphi(x)+ B/2 \le -\partial_{uv}\varphi(u)+B.
    \end{equation}
    \item For any oriented infinite edge $v \rho$ of $\Gamma$ and $x$ on the leg $v \rho$ of $\Gamma$,
    \begin{equation} \label{inequality:A4}
      \partial_{v\rho}\varphi(v)  \le \partial_{v\rho}\varphi(x)+ B/2 \le B.
    \end{equation}
  \end{itemize}
\end{lemma}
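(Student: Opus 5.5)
Throughout, fix finitely many charts $W_j$ covering the compact set $\Pi_{\f}\cup$(bases of legs) together with local potentials $h_j\in\LConv(W_j)$ of $\omega$, piecewise smooth on $W_j\cap\xo$. Write $u=\varphi+h_j\in\LConv(W_j\cap\xo)$. Directional derivatives of $u$ exist along every $\fe\in\vec T_xX$, because $u$ restricted to each face is convex. Since $h_j$ is piecewise smooth, its one-sided directional derivatives along edges are bounded by a constant depending only on $\Pi$ and the chosen potentials, uniformly over the compact set. Every inequality will therefore first be proved for $u$, with constant $0$ where possible, and then transferred to $\varphi$. The error terms coming from $h_j$ are absorbed into $B$.

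\textbf{Inequality (A3).} This is one-dimensional convexity. On the segment $vu$, the function $u$ restricted to the edge is convex, so its right derivative in the direction $\fe_{vu}$ is nondecreasing. This gives $\partial_{vu}u(v)\le\partial_{vu}u(x)\le -\partial_{uv}u(u)$. Subtracting the bounded derivatives of $h$ gives (A3).

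\textbf{Inequality (A4).} The first inequality follows in the same way from convexity along the leg. For the second, the key input is Lemma~\ref{lem:extension_bounded_convex_function_polyhedron}. The function $\varphi$ is bounded above near the boundary point at the end of the leg, since $\varphi\in\nC^{0,\ext}(X)$ on the compact $X$. Near that end, $\omega$ has a potential that is locally convex up to the boundary, and that potential may be taken affine along the leg direction, as in a basic open neighbourhood $\BT_+^k\times V$. Hence $\varphi+h$ is convex and bounded above along the leg, so it is nonincreasing and its slope is $\le0$. The subtlety is that the chart containing the vertex $v$ differs from the chart at infinity. The two potentials differ by an affine function, by~\cite[Prop.\,3.10]{APW1}, so slopes along the leg differ by a constant. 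Because there are finitely many legs, this gives a uniform $B$.

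\textbf{Inequalities (A1) and (A2).} These use the balancing condition, via Proposition~\ref{prop:equivalent_def_LConv}. At a vertex $v$, the primitive edge vectors $\fe_{vu}$ and $\fe_{v\rho}$ emanating from $v$ span the tangent space of the star. Balancing of the star fan $\Pi^v$ gives a positive relation, as in the proof of Proposition~\ref{prop:local_affine_support_function_boundary}, Case A. The plan is to express $0$ as a polyhedral convex combination of points $v+t\,w_i$, where the $w_i$ are nonnegative combinations of ray vectors. Since the fan is simplicial, each ray vector is one of the edge or leg directions. Local convexity gives the sublinearity inequality $\sum_i\nu_i\,\partial_{w_i}u(v)\ge0$, with $\partial_{w}u(v)$ sublinear in $w$ on each cone. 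Expanding each $w_i$ in the edge directions yields a relation $\sum_e c_e\,\partial_e u(v)\ge0$ with all coefficients $c_e>0$. Two consequences follow:

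1. $\max_e\partial_e u(v)\ge -A\min_e\partial_e u(v)$, with $A$ depending only on the ratios $c_e/c_{e'}$. This is (A1).
2. For any single leg, $\partial_{v\rho}u(v)\ge -A\min_e\partial_e u(v)$. This is (A2), read with the minimum over all directions at $v$.

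\textbf{Main obstacle.} The hard step is (A1)/(A2): producing a positive relation in which every edge at $v$ appears with a strictly positive coefficient. A single balancing relation may involve only top-dimensional cones. I would sum balancing-derived relations over all facets, using the fact that every ray lies in some facet. Alternatively, I would invoke Case A of Proposition~\ref{prop:local_affine_support_function_boundary} once for each edge, which writes $0$ as a convex combination with positive weight on that edge, and then add the resulting inequalities.
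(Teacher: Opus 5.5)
Your route is the paper's. You use one-dimensional convexity along edges for \eqref{inequality:A3}--\eqref{inequality:A4}, and Lemma~\ref{lem:extension_bounded_convex_function_polyhedron} for the sign of leg slopes. You also use a strictly positive relation $\sum_j a_j\fe_j=0$ among all edge and leg directions at $v$, fed into Proposition~\ref{prop:equivalent_def_LConv} to get $\sum_j a_j\,\partial_{\fe_j}(\varphi+h)(v)\ge 0$. Your ``Case A for each edge, then add'' does produce that relation. Equivalently, balancing makes each $-\fe_j$ a nonnegative combination of the ray vectors, and you sum over $j$.

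\textbf{The gap is your consequence~2.} Isolating a leg $\rho$ gives
\[
a_\rho\,\partial_{v\rho}(\varphi+h)(v)\ \ge\ -\sum_{j\neq\rho}a_j\,\partial_{\fe_j}(\varphi+h)(v)\ \ge\ -\Bigl(\sum_{j\ne\rho}a_j\Bigr)\max_{j\neq\rho}\partial_{\fe_j}(\varphi+h)(v).
\]
So one slope is bounded below by the \emph{maximum} of the others, never by the minimum. A bound through the minimum cannot be extracted from the relation. If the minimum is attained at $\rho$ itself, your inequality reads $(1+A)\,\partial_{v\rho}\varphi(v)\ge -B$. That is a uniform lower bound on a leg slope with no input from the other edges. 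The relation, however, lets $\partial_{v\rho}(\varphi+h)(v)$ be arbitrarily negative when the other slopes at $v$ are large and positive. Such an a priori bound is exactly what this lemma is meant to help prove.

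What the argument gives is
\[
\partial_{v\rho}\varphi(v)\ \ge\ -A\max_{u\sim v}\partial_{vu}\varphi(v)-B,
\]
obtained by also discarding the other legs, whose slopes are bounded above by a constant. This is the inequality Proposition~\ref{prop:1-dimensional_uniform_Lipschitz} actually uses when it says \eqref{inequality:A2} and \eqref{inequality:A4} reduce everything to finite edges. The $\min_{\rho\sim v}$ in the printed \eqref{inequality:A2} should be read this way.

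Three smaller points:
\begin{enumerate}
\item Your item~1 is over all directions, while \eqref{inequality:A1} takes max and min over finite edges only. Since the overall maximum may be a leg slope, first drop the leg terms using $\partial_{v\rho}(\varphi+h)(v)\le C$. This gives $\sum_{u\sim v}a_{vu}\,\partial_{vu}(\varphi+h)(v)\ge -C'$, and \eqref{inequality:A1} follows.
\item For \eqref{inequality:A4} at points $x$ far along a leg, your compactness bound on derivatives of the potentials does not apply, since the leg is not compact in $X_o$. As in the paper, apply the same convexity and boundedness to the glued potential $g$ along the leg itself (the case $\varphi=0$). This gives $\partial_{v\rho}g(v)\le\partial_{v\rho}g(x)\le 0$ uniformly along the leg.
\item Your remark that the potential near infinity may be taken affine along the leg is false in general and not needed; only boundedness above is used. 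Otherwise, your attention to how potentials change by affine functions along a leg is more careful than the paper's treatment.
\end{enumerate}
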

\begin{proof}
    Fix a vertex $v$ of $\Gamma$. Let $h$ be a local potential of $\omega$ in a neighborhood of $v$. Then, Proposition \ref{prop:equivalent_def_LConv} implies that 
  \begin{equation*} 
  \sum_{u \sim v} \partial_{vu}(\varphi+h)(v) +\sum_{v \sim \rho} \partial_{v\rho}(\varphi+h)(v) \ge 0 
  \end{equation*}
  Note that $\varphi+h$ is a convex function bounded above on each infinite edge $v \rho$, Lemma \ref{lem:extension_bounded_convex_function_polyhedron} implies that $\partial_{v \rho}(\varphi+h)(v) \le 0$. On the other hand, $|\partial_{vu}h(x)|$ (\resp $|\partial_{v\rho}h(x)|$)  is uniformly bounded on $vu$ (\resp $v \rho$), and the number of edges in $\Gamma$ is finite. Therefore, there exist constants $A, B>0$ such that inequality \ref{inequality:A1} and inequality \ref{inequality:A2} hold.  

  Now fix an oriented finite edge $vu$ of $\Gamma$. Let $h$ be a local potential of $\omega$ in a neighborhood of $vu$. The convexity of $\varphi+h$ on the segment $vu$ implies that 
  \begin{equation*} 
  \partial_{vu}(\varphi+h)(v) \le \partial_{vu}(\varphi+h)(x) \le - \partial_{uv}(\varphi+h)(u).
  \end{equation*}
  Applying the above inequality to the case when $\varphi=0$, we get $\partial_{vu}h(v) \le \partial_{vu}h(x) \le \partial_{vu}h(u)$. Therefore, there exists $B_{vu}>0$ such that for any $x \in vu$, $x\neq u$, we have
  \[
  0 \le \partial_{vu}h(x)-\partial_{vu}h(v) \le B_{vu}/2, \quad \text{and} \quad 0 \le  -\partial_{uv}h(u)-\partial_{vu}h(x) \le B_{vu}/2.
  \] 
  Since  the number of  edges in $\Gamma$ is finite, there exists $B>0$ such that  inequality \ref{inequality:A3} follows.

  Now fix an oriented infinite edge $v\rho$ of $\Gamma$. Let $h$ be a local potential of $\omega$ in a neighborhood of $v\rho$. Since $\varphi+h$ is a convex function bounded above on $v \rho$, Lemma \ref{lem:extension_bounded_convex_function_polyhedron} implies that 
  \begin{equation*} 
  \partial_{v\rho}(\varphi+h)(v) \le \partial_{v\rho}(\varphi+h)(x) \le 0
  \end{equation*}
  Inequality \ref{inequality:A4} follows from an argument similar to the one in the previous paragraph.
\end{proof}

\begin{proof}[Proof of Proposition~\ref{prop:1-dimensional_uniform_Lipschitz}]
Let $N$ be the number of vertices of $\Gamma$. Define the sequence $s_k, k=0,1, \dots, N+1$ recursively by setting $s_0 \coloneqq  B+1$, and $s_{k+1} \coloneqq  B+ (s_k + B)/A$. Note that $s_k > B$ for all $k$. 

By inequality \ref{inequality:A2} and inequality \ref{inequality:A4}, it is sufficient to prove that  $\PSH_{\omega}(X)$ is uniformly Lipschitz on finite edges of $\Gamma$. We claim that for any $\varphi \in \PSH_{\omega}(X)$, $\varphi$ is $(s_{N+1}-B/2)$-Lipschitz on each finite edge of $\Gamma$.  Suppose this is not the case. 
Then, there exist $\varphi \in \PSH_{\omega}(X)$, an oriented finite edge $v_0v_1$, and a point $x$ on the segment $v_0 v_1$, $x\neq v_1$,
 such that $\partial_{v_0 v_1} \varphi(x)   \ge s_{N+1}-B/2$. 
By inequality \ref{inequality:A3}, we have
\[ 
-\partial_{v_1v_0}\varphi(v_1) \geq s_{N+1} - B > 0. 
\]
Then, inequality \ref{inequality:A1} ensures the existence of a vertex $v_2$ adjacent to  $v_1$ such that
\[ 
\partial_{v_1v_2}\varphi(v_1) \geq -A\partial_{v_1v_0}\varphi(v_1) - B \geq A(s_{N+1}-B)-B = s_{N} > B. 
\]
One can pursue the argument and find vertices $v_3, \dots, v_{N+1}$ such that $\partial_{v_kv_{k+1}}\varphi(v_k) \geq s_{N+1-k} > B$. Therefore, inequality \ref{inequality:A3} implies that  for any $y \in v_kv_{k+1}$,
\[
\partial_{v_kv_{k+1}}\varphi(y) > \partial_{v_kv_{k+1}}\varphi(v_k)-B/2 > B/2>  0
\]
Therefore, we have $\varphi(v_{k+1})> \varphi(v_k)$ for each $1 \le k \le N$. On the other hand, the pigeonhole principle implies that there exist $1 \le i < j \le N+1$ such that $v_{i}=v_{j}$. This is clearly a contradiction and the proposition follows.
\end{proof}

We are prepared to prove Theorem \ref{thm:appendix_lipschitz}.

\begin{proof} [Proof of Theorem \ref{thm:appendix_lipschitz}] \label{proof:thm:appendix_lipschitz}
   Enlarging $K$ if necessary, and fixing a point $x\in K$, 
   we can choose a simplicial polyhedral structure $\overline{\Pi}$ on $X$, with induced polyhedral structure $\Pi$ on $\xo$, such that $x \in \Pi_0$, $K \subset \supp{\Pi_{\f}}$, 
   and such that for any $\tau \in \Pi$, there exists $i \in I$ such that  $\tau$ is contained in the tropical chart $W_i$. Here, $(W_i,\phi_i)_i$ refers to the finite cover by charts with respect to which the Lipschitz property is defined. 
   
     Let $\PSH_{\omega,x }(X)=  \{\varphi \in \PSH_{\omega}(X) \,\st\, \varphi(x)=0 \}$. We only need to prove that $\varphi\rest K$ is uniformly Lipschitz for $\varphi \in \PSH_{\omega,x }(X)$.  
     By Proposition \ref{prop:Lipschitz_bounds_in_terms_of_C0_bounds}, it suffices to show that for any $\tau \in \Pi_{\f}$,  $\left\| \varphi \right\|_{\nC^{0}(\tau)}$  is uniformly bounded for all  $\varphi \in \PSH_{\omega,x}(X)$. 
  
     Proposition \ref{prop:1-dimensional_uniform_Lipschitz} implies that there exists $M>0$ such that $|\varphi(v)| \le M$ for any $\varphi \in \PSH_{\omega,x}(X)$ and any vertex $v \in \Pi_0$. 
 Since $\varphi\rest{\tau}$ is convex, we have $\varphi\rest{\tau} \le M$ for any $\varphi \in \PSH_{\omega,x}(X)$.

 It remains to prove a uniform lower bound for $\varphi\rest{\tau}$, for any $\tau \in \Pi_{\f}$.
 Fix such a face $\tau$ and let $i\in I$ be such that $\tau \subset W_i$. We use $\tau_{(0)}$ to denote the set of vertices of $\tau$. 

   Fix $v \in \tau_{(0)}$. Let $J$ be an index set, and let $\{e_j\}_{j \in J}$ be the set of edges in the graph $\Pi_1$ adjacent to $v$. For $e_j=vu$ (\resp $e_j=v\rho$) with $j \in J$, denote by $\fe_{j} \coloneqq \fe_{vu}$ (\resp $\fe_{j} \coloneqq \fe_{v\rho}$), \textit{i.e.} the primitive tangent vector to the edge $e_j$ based at $v$. 
  The balancing condition ensures the existence of $\{a_j\}_{j \in J}$ with $a_j>0$, such that $\sum_{j \in J} a_j \fe_j=0$ (in the ambient vector spacce containing $\phi_i(W_i)$). 
  
  Denote by $J_1 \coloneqq \{j \in J \,|\, e_j \subset \tau \} \subset J$, and denote $J_2 \coloneqq J \setminus J_1$. 
  Note that $\{\fe_{j}\}_{j \in J_1}$ is linearly independent since $\tau$ is a simplex. Let $T_{\tau}$ denote the tangent space of $\tau$, and $\left\| \cdot \right\|$ denote the Euclidean norm on $T_{\tau}$ induced by the Euclidean metric on $W_i$. Set 
  \[
  S \coloneqq \{\fe \in T_{\tau} \,\st \, \left\| \fe \right\|=1, \, v+t \fe \in \tau \text{ for some }t>0 \}.
  \]  
  Then, there exists $\underline B>0$ such that for any $\fe \in S$, we have $\fe=\sum_{j \in J_1} b_j\fe_{j}$ for some $0 \le b_{j} < \underline B$. Set $\underline A \coloneqq \max_{j \in J_1} \underline B/a_j$. 
  
  Fix $\fe=\sum_{j \in J_1} b_j\fe_{j} \in S$. Then, we have 
  \[
  \fe+\sum_{j \in J_1}(\underline Aa_j-b_j) \fe_j + \sum_{j \in J_2} (\underline Aa_j) \fe_j=0,  
  \]
  with   $(\underline Aa_j-b_j)>0$ for each $j \in J_1$, and $\underline Aa_j>0$ for each $j \in J_2$. 
  Let $h$ be a local potential of $\omega$ on $W_i$. Then, by Proposition \ref{prop:equivalent_def_LConv},  we have
  \[
  \partial_{\fe} (\varphi+h) (v)+ \sum_{j \in J_1} (\underline Aa_j-b_j) \partial_{ \fe_j} (\varphi+h) (v) + \sum_{j \in J_2} (\underline Aa_j) \partial_ {\fe_j} (\varphi+h) (v) \ge 0
  \]
  for every $\varphi \in \PSH_{\omega,x}(X)$. On the other hand, Proposition \ref{prop:1-dimensional_uniform_Lipschitz} implies that there exists $C>0$ such that $|\partial_{\fe_j} \varphi (v)| \le C$ for each $j \in J$. Since $h$ is Lipschitz on $\tau$ (Proposition \ref{prop:Lipschitz_bounds_in_terms_of_C0_bounds}), there exists $C'>0$ such that $|\partial_{\fe}h(v)|, \, |\partial_{\fe_j}h(v)| \le C'$ for each $\fe \in S$ and each $j \in J$.   Thus, $\partial_{\fe}\varphi(v) \ge M'$ for every $\varphi \in \PSH_{\omega,x}(X)$ and every $\fe \in S$, with the constant $M'$ defined by
  \[
  M'=-\underline A(C+C') \left( \sum_{j \in J}a_j \right) -(\underline B+1)C'. 
  \]
  Therefore, $\varphi(y) \ge \varphi(v)+\partial_{(y-v)}\varphi(v) \ge -M+ \mathrm{diam}(\tau ) \cdot M'$ for every $\varphi \in \PSH_{\omega,x}(X)$ and $y \in \tau$. Here, $y-v \in T_{\tau}$, and $\mathrm{diam}(\tau ) $ denotes the diameter of $\tau$. This proves that $\PSH_{\omega,x}(X)$ has a uniform lower bound on $\tau$. 
\end{proof}

%%%%%%%%%%%%%%%%%%%%%%%%%%%%%%%
%%Lemmas in polyhedral geometry
%%%%%%%%%%%%%%%%%%%%%%%%%%%%%%%

\section{Polyhedral geometry toolkit} \label{appendix:polyhedral_geometric_lemmas}
This appendix is devoted to stating and proving four polyhedral-geometric lemmas used in the proof of Proposition \ref{prop:existence_of_balanced_rays_in_2d_fan}.
\begin{lemma} \label{lem:successive_stellar_subdivision} 
  Let $\Delta$ be any simplicial polyhedral complex in $\R^n$, and let $\{x_i\}_{i \in I}$ be a finite set of points in the support of $\Delta$. Then, there exists a refinement $\Delta'$ of $\Delta$ such that $\Delta'$ is simplicial  and $\Delta'_{0}=\Delta_0 \cup \{x_i\}_{i \in I}$.
\end{lemma}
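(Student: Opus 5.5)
The plan is to proceed by induction on the number of points $\card{I}$, inserting the points one at a time by stellar subdivisions. If $\card{I}=0$ there is nothing to prove. Otherwise pick one point $x=x_{i_0}$ and treat the rest afterwards: after inserting $x$ we obtain a simplicial refinement $\Delta^{(1)}$ with $\Delta^{(1)}_0=\Delta_0\cup\{x\}$. Its support is still the support of $\Delta$, so the remaining points $\{x_i\}_{i\neq i_0}$ lie in it. Applying the induction hypothesis to $\Delta^{(1)}$ and these points gives $\Delta'$ with $\Delta'_0=\Delta_0\cup\{x_i\}_{i\in I}$. Since a refinement of a refinement is a refinement, this closes the induction.

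The single-point step is the standard stellar subdivision at $x$. If $x\in\Delta_0$, take $\Delta^{(1)}=\Delta$. Otherwise let $\tau\in\Delta$ be the unique face with $x\in\relint(\tau)$; this face exists and is unique because the relative interiors of the faces of a polyhedral complex partition its support. Define $\Delta^{(1)}$ to consist of:
\begin{itemize}
\item all faces $\sigma\in\Delta$ with $\tau\not\subfaceq\sigma$;
\item for each $\sigma\supfaceq\tau$ and each face $\gamma\subfaceq\sigma$ with $\tau\not\subfaceq\gamma$, the simplex $\conv(\gamma\cup\{x\})$.
\end{itemize}

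We must check three things:
\begin{itemize}
\item each $\conv(\gamma\cup\{x\})$ is a simplex of dimension $\dims{\gamma}+1$, which holds because $x$ lies outside the affine span of $\gamma$;
\item the pieces $\conv(\gamma\cup\{x\})$, for the proper faces $\gamma$ of a fixed $\sigma$ missing $\tau$, cover $\sigma$ and meet pairwise along common faces, which is the usual cone-from-an-interior-point decomposition;
\item no new vertices appear besides $x$, which is clear from the construction.
\end{itemize}
The resulting collection is closed under taking faces and intersections, so it is a simplicial complex refining $\Delta$.

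The main care point is the compatibility across different $\sigma$ containing $\tau$. For two such simplices, the subdivisions agree on $\sigma_1\cap\sigma_2$ because that intersection is again a face containing $\tau$, and the construction restricted to it is the same. This is where I would be most careful in writing it out.

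One also has to handle unbounded faces if $\Delta$ contains polyhedra rather than only simplices. Here "simplicial polyhedral complex" in the application means simplices, but if needed one writes $\sigma=\delta+\rec(\sigma)$ and coning from $x$ works identically, with $\rec$ unchanged.
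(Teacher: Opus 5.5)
Your proposal is correct and is the same argument as the paper's, which simply performs stellar subdivisions successively at the $x_i$ (citing Ewald, \S III.2). You have just written out the standard details of the single-point step. The only loose spot is your aside on unbounded faces: when $x$ lies in the relative interior of an unbounded face, coning over genuine faces of $\sigma$ misses pieces such as $x+\rec(\sigma)$, so the recession rays have to be treated as ideal vertices. This is harmless, since the paper applies the lemma only to bounded simplicial complexes.
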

\begin{proof}
  The refinement can be realized by performing stellar subdivisions successively at $x_i$, see \cite[\S\, III.2]{Ewa96}.
\end{proof}

We say that a one-dimensional fan is \emph{balanceable} if the set of its rays is positively dependent (see Definition \ref{def:nonnegatively_depedent}). In particular (if the fan is rational), there exists a tropical weight turning it into a tropical fan.

\begin{lemma} \label{lem:completion_of_fans_with_prescribed_rays} 
  Let $\CL$ be a balanceable $1$-dimensional fan in $E=\R^n$ with $\mathrm{span}(\CL)=E$. Let $\CF'$ be a simplicial  (not necessarily complete) fan supported on $E$ such that 
  \begin{enumerate}
    \item $\CF'_1 \subset \CL_1$.
    \item $|\CF'|=\conv(\CF'_1)$, where $\conv$ denotes the convex hull of $\CF'_1$.
    \item $\rho \nsubseteq |\CF'|$ for any $\rho \in \CL_1 \setminus \CF'_1$.
  \end{enumerate}
  Then, there is a  simplicial fan $\CF$ supported on $E$ satisfying 
  \begin{enumerate}
    \item $\CF'$ is a subfan of $\CF$. 
    \item $\CF_1 = \CL_1$. 
    \item $\CF$ is complete. 
  \end{enumerate}
\end{lemma}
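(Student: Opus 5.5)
The plan is to build $\CF$ by adding the rays of $\CL_1\setminus\CF'_1$ one region at a time. The complement $E\setminus|\CF'|$ will be covered by cones spanned by faces of $\partial|\CF'|$ together with new rays.

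\textbf{Reduction to positive hulls.} Hypothesis (2) says $|\CF'|$ equals the convex cone generated by $\CF'_1$. So $|\CF'|$ is a convex polyhedral cone $D$ whose boundary faces are unions of cones of $\CF'$. The balanceability of $\CL$, together with $\mathrm{span}(\CL)=E$, gives $\mathrm{Cone}(\CL_1)=E$. Indeed, a positive dependence among spanning vectors forces their positive hull to be a linear subspace, and that subspace must be $E$.

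\textbf{Construction.} First consider the fan $\CF''$ obtained by taking all the rays $\CL_1$ and forming a \emph{placing triangulation} (a successive stellar, or pulling, construction). We start from $\CF'$, whose support $D$ is convex. Then we add the rays $\rho\in\CL_1\setminus\CF'_1$ one at a time, in any order. When a ray $\rho$ is added, the current support $D_k$ is a convex cone, and by hypothesis (3), together with induction, $\rho\not\subset D_k$. We then adjoin the cones $\mathrm{Cone}(\tau,\rho)$, where $\tau$ ranges over the cones of the current fan lying in the faces of $D_k$ that are ``visible'' from $\rho$. A face is visible if $\rho$ lies strictly on the other side of its supporting hyperplane.

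This is the standard beneath–beyond (placing) step. It produces a simplicial fan: $\rho$ is not in the span of any visible boundary cone, since it lies strictly beyond a supporting hyperplane containing it. The new fan keeps the old one as a subfan, and its support is $\mathrm{Cone}(D_k\cup\rho)$, which is again convex. One subtlety is that $D_k$ might fail to be strongly convex, or $\rho$ might lie in $-D_k$. The convex-hull formulation still works, but strong convexity of the new cones requires that $\rho$ not be opposite to a visible cone. This holds because a visible face lies in a closed half-space that excludes $\rho$ strictly, so $-\rho$ is not in it.

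After all rays are added, the support is $\mathrm{Cone}(\CL_1)=E$, so the resulting fan $\CF$ is complete. By construction $\CF_1=\CL_1$, and $\CF'$ is a subfan of $\CF$.

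\textbf{Main obstacle.} The delicate step is a ray that has already been absorbed into the interior of $D_k$ by earlier additions. Such a ray would then not become a ray of the fan. To handle it, I would choose the order of insertion greedily: at each stage, add a ray not contained in the current $D_k$. If all remaining rays are already contained in $D_k$, I would insert each of them by a stellar subdivision at that ray, using Lemma~\ref{lem:successive_stellar_subdivision} applied to a transversal slice. Stellar subdivision preserves simpliciality, makes the ray a ray of the fan, and keeps $\CF'$ intact provided the ray avoids $|\CF'|$, which hypothesis (3) guarantees. This handles every ray, giving $\CF_1=\CL_1$.
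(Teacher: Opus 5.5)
Your proposal is correct and is essentially the paper's proof. The paper also inducts over the rays of $\CL_1\setminus\CF'_1$ while keeping the support equal to the convex cone generated by the rays used so far. When the new ray already lies in the current support, it performs a stellar subdivision, which is harmless for $\CF'$ by hypothesis (3); otherwise it adjoins $\mathrm{Cone}(\sigma,\rho)$ over the boundary cones $\sigma$ visible from $\rho$ (the subfan $\CE$ built via $\ft$). Completeness then comes from $\mathrm{Cone}(\CL_1)=E$, by balanceability.

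There are three blemishes. Your early claim that hypothesis (3) plus induction gives $\rho\not\subset D_k$ is false, as you notice yourself later. The greedy ordering is unnecessary, since the two cases can be handled ray by ray in any order. Finally, the intersection property of the beneath--beyond step, which you quote as standard, is exactly what the paper verifies in its claim about $\CE$.
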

\begin{proof}
  We will prove inductively that for any subset of rays $\CF'_1 \subset \CR \subset \CL_1$, there exists a simplicial fan $\CF^{\CR}$ supported on $E$  such that 
  \begin{enumerate}
    \item[($i$)] $\CF'$ is a subfan of $\CF^{\CR}$.
    \item[($ii$)] $(\CF^{\CR})_1=\CR$.
    \item[($iii$)] $|\CF^{\CR}|=\conv(\CR)$.
  \end{enumerate}
  Granted this, we take $\CF \coloneqq \CF^{\CL_1}$. Then, $\CF$ satisfies  the first two conditions listed in the statement of the lemma. Since $\CL$ is balanceable, we have $|\CF^{\CL_1}|=\conv(\CL_1)=E$, as desired. 

  \smallskip

  When $\CR=\CF'_1$, we can simply take $\CF^{\CR} = \CF'$. Suppose we have constructed $\CF^{\CR_{-}}$ for $\CF'_1 \subset \CR_{-} \subset \CL_1$ satisfying Conditions ($i$)-($iii$). Let $\CR=\CR_-\sqcup\{\rho\} \subset \CL_1$, we
   wish to construct $\CF^{\CR}$ with the desired properties.
  
  If $\rho \subset \supp{\CF^{\CR_{-}}}$, we take $\CF^{\CR}$ to be the stellar subdivision of $\CF^{\CR_{-}}$ at $\rho$. Then,  $\CF^{\CR}$ is a simplicial fan satisfying $( \CF^{\CR})_1=\CR$ 
  and $\supp{\CF^{\CR}}=\supp{\CF^{\CR_{-}}} =\conv(\CR)$. Since $\rho \nsubseteq \supp{\CF'}$ by assumption, the stellar subdivision does not change any cone in $\CF'$. Therefore, $\CF'\subset \CF^{\CR}$, as desired.

  It remains to treat the case when $\rho \nsubseteq \supp{\CF^{\CR_{-}}}$. Let $\CK \coloneqq \supp{\CF^{\CR_{-}}} \subset E$. Denote by $l$ the $1$-dimensional subspace of $E$ generated by $\rho$. 
  We take an $(n-1)$-dimensional subspace $E_1 \subset E$ such that $E = E_1+l$. Fix an Euclidean metric on $E$, and denote by $\BS_{E_1}$ the unit sphere in $E_1$. 
  
  Fix $e_0 \in \rho \setminus \{0\}$. We define a function $\ft \colon \BS_{E_1} \to \R \cup \{+\infty\}$ by 
  \[
  \ft(y) \coloneqq \inf \{\lambda \in \R \,|\, y-\lambda e_0 \in \CK \}
  \]
  Set $ \sC  \coloneqq \{y \,|\, \ft(y) < +\infty \} \subset \BS_{E_1}$, and define $\pi  \colon \sC  \to \partial \CK$ by 
  \[
  \pi(y) \coloneqq y-\ft(y) e_0 \qquad \text{for }y\in \sC.
  \]

  \medskip

  \begin{claim} \label{claim:image_of_projection_is_the_support_of_a_fan}
    There is a subfan $\CE$ of $\CF^{\CR_{-}}$ such that $\mathscr{C}(\pi(\sC))=\supp{\CE}$, where $\mathscr{C}(\pi(\sC))$ denotes the cone generated by $\pi(\sC)$\,\footnote{For a subset $S  \subset E$, the cone $\mathscr{C}(S)$ generated by $S$ is defined to be $\mathscr{C}(S) \coloneqq \{\lambda s\st \lambda \in \R_{\ge 0}, s\in S\}$. It should be distinguished from the convex cone generated by a subset $S$, which equals the cone generated by the convex hull of $S$, and is denoted by $\mathrm{Cone}(S)$.}. Furthermore, for any $\sigma \in \CE$, $\mathrm{Cone}(\sigma, \rho)$ is a simplicial cone, and $\mathrm{Cone}(\sigma, \rho) \cap \CK=\sigma$.  
  \end{claim}
  \begin{proof} [Proof of Claim \ref{claim:image_of_projection_is_the_support_of_a_fan}]
    Fix $x_0 \in \pi(\sC)$.  Suppose that $x_0 \in \relint(\sigma)$ for some $\sigma \in \CF^{\CR_{-}}$ with 
    \[
    \sigma = \mathrm{Cone}(x_1, \cdots, x_k), \quad \text{ for }  x_1, \dots, x_k \in \sigma. 
    \]

    We first show that $\{e_0, x_1, \dots, x_k\}$ is linearly indepedent in $E$. Suppose not. Then $e_0 \in T_{\sigma}$, where $T_{\sigma}$ denotes the tangent space of $\sigma$. Write $x_0=y_0- \ft(y_0)e_0$ for  $y_0 \in \BS_{E_1}$.  Let $\{c_i\}_{i \ge 1}$ be an increasing sequence of real numbers such that $\lim_{i \to +\infty}c_i=\ft(y_0)$. Then, we have $y_0-c_i e_0 \to x_0$, and  $y_0-c_i e_0 \in T_{\sigma}$ for each $i \ge 1$. On the other hand, $y_0-c_i e_0 \notin \CK$ and hence $y_0-c_i e_0 \notin \sigma$ for each $i \ge 1$. This contradicts the assumption that $x_0 \in \relint(\sigma)$. 
    Therefore, we have shown that $\{e_0, x_1, \dots, x_k\}$ is linearly independent in $E$.

    Fix $x \in \sigma \setminus \{0\}$. Since $E=E_1+l$ and $e_0 \notin T_{\sigma}$, one can always write $x=b(y-\lambda e_0)$ with $y \in \BS_{E_1}$, $b \in \R_{>0}$ and $\lambda \in \R$.  We would like to show that $\ft(y)=\lambda$. Suppose not.   Then, there exists $\lambda' < \lambda$ such that $y-\lambda' e_0 \in \CK$. Since $\CK$ is a convex cone, we have $\mathrm{Cone}(y-\lambda' e_0 , x_1, \dots, x_k) \subset \CK$. Note that $\sigma = \bigcup_{1 \le i \le k} \mathrm{Cone}(x, x_1, \dots, \widehat{x_i}, \dots, x_k)$.  Hence, we may assume that $x_0 \in \mathrm{Cone}(x, x_2, \dots, x_k)$ without loss of generality. 
    Furthermore, $x_0 \in \relint(\sigma)$ implies that $x_0 \notin \mathrm{Cone}(x_2, \dots x_k)$. Then, we can write $x_0=ax+\sum_{j=2}^{k} a_j x_j$, with $a>0$ and $a_j \ge 0$ for each $2 \le j \le k$, and we have 
    \begin{align*}
    ab&(y-\lambda' e_0)+\sum_{j=2}^{k} a_j x_j 
    = ax+\sum_{j=2}^{k} a_j x_j +ab(\lambda-\lambda')e_0 \\
    &= x_0+ab(\lambda-\lambda')e_0=y_0-\ft(y_0)e_0 +ab(\lambda-\lambda') e_0 \in \CK
    \end{align*} 
    with $ab(\lambda-\lambda')>0$. This contradicts the definition of $\ft(y_0)$. Therefore, we conclude that $\ft(y)=\lambda$ for any $b(y-\lambda e_0) \in \sigma \setminus \{0\}$. 

    Now, we define the set of cones $\CE$ by 
    \[
    \CE \coloneqq \{\sigma \in \CF^{\CR_{-}} \,\st \ \relint(\sigma) \cap \pi(\sC) \text{ is nonempty} \}.
    \] 
    By construction, we have $\mathscr{C}(\pi(\sC)) \subset \supp{\CE}$. On the other hand, the previous paragraph implies that $\sigma \subset \mathscr{C}(\pi(\sC))$ for any $\sigma \in \CE$. In particular, this implies  that $\supp{\CE} \subset \mathscr{C}(\pi(\sC))$, and we have $\tau \in \CE$ for each element $\sigma \in \CE$ and each face $\tau \subface \sigma$.  Therefore, $\CE$ is a fan with support $\mathscr{C}(\pi(\sC))$. 
    
    Since $\{e_0, x_1, \cdots, x_k\}$ is linearly independent in $E$, $\mathrm{Cone}(\sigma, \rho)$ is simplicial. On the other hand, for any $x=b(y-\lambda e_0) \in \sigma \setminus \{0\}$, the equality $\ft(y)=\lambda$ implies $\mathrm{Cone}(x, \rho) \cap \CK=\mathrm{Cone}(x)$. Hence, 
    \[
    \mathrm{Cone}(\sigma, \rho) \cap \CK=\bigcup_{x \in \sigma \setminus \{0\}} (\mathrm{Cone}(x, \rho) \cap \CK)=\bigcup_{x \in \sigma \setminus \{0\}} \mathrm{Cone}(x) =\sigma \qedhere
    \]
  \end{proof}

  \medskip

  We take $\CF^{\CR} \coloneqq \CF^{\CR_{-}} \cup \{ \mathrm{Cone}(\sigma, \rho) \,\st \, \sigma \in \CE \}$. We first verify that $\CF^{\CR}$ is a simplicial fan. It is enough to show that $\tau_1 \cap \tau_2 \in \CF^{\CR}$ for any $\tau_1, \tau_2 \in \CF^{\CR}$. There are three possible cases to consider:
  \begin{enumerate}
    \item If $\tau_1, \tau_2 \in \CF^{\CR_{-}}$, then $\tau_1 \cap \tau_2 \in \CF^{\CR_{-}} \subset \CF^{\CR}$ holds trivially. 
    \item If $\tau_i=\mathrm{Cone}(\sigma_i, \rho)$ for $\sigma_i \in \CE$ for each $i \in \{1,2\}$, we show that $\tau_1 \cap \tau_2 =\mathrm{Cone}(\sigma_1 \cap \sigma_2, \rho) \in \CF^{\CR}$. The inclusion $\mathrm{Cone}(\sigma_1 \cap \sigma_2, \rho) \subset \tau_1 \cap \tau_2$ is evident. Thus, it is sufficient to prove that  $\tau_1 \cap \tau_2 \subset \mathrm{Cone}(\sigma_1 \cap \sigma_2, \rho) $.  Let $z \in \tau_1 \cap \tau_2$. Then, one can write $z = x_1+\mu_1 e_{0}=x_2+\mu_2 e_0$ with $x_i \in \sigma_i$ and $\mu_i \ge 0$ for each $i \in \{1,2\}$. Without loss of generality, we assume $\mu_1 \le \mu_2$. Then, one has $x_1=x_2+(\mu_2-\mu_1)e_0 \in \sigma_1 \cap \mathrm{Cone}(\sigma_2, \rho)$. By Claim \ref{claim:image_of_projection_is_the_support_of_a_fan},  $\sigma_1 \cap \mathrm{Cone}(\sigma_2, \rho)=\sigma_1 \cap (\mathrm{Cone}(\sigma_2, \rho) \cap \CK)=\sigma_1 \cap \sigma_2$. Therefore, $z \in \mathrm{Cone}(\sigma_1 \cap \sigma_2, \rho)$, which implies that $\tau_1 \cap \tau_2 \subset \mathrm{Cone}(\sigma_1 \cap \sigma_2, \rho)$. 
    \item Otherwise, we may assume that $\tau_1 \in \CF^{\CR_{-}}$ and $\tau_2=\mathrm{Cone}(\sigma, \rho)$ for $\sigma \in \CE$. Then $\tau_1 \cap \tau_2=\tau_1 \cap (\tau_2 \cap \CK)=\tau_1 \cap \sigma \in \CF^{\CR_{-}}$ by Claim \ref{claim:image_of_projection_is_the_support_of_a_fan}. 
    \end{enumerate}

    We now check that $\CF^{\CR}$ satisfies the three properties ($i$)-($iii$).  We have $\CF' \subset \CF^{\CR_{-}} \subset \CF^{\CR}$, the first inclusion coming from the induction hypothesis, and the second from the construction. 
    Also, $(\CF^{\CR})_1=\CR$ by construction. As for the third property, it is clear that $\supp{\CF^{\CR}} \subset \conv(\CR)$. To prove $\conv(\CR) \subset \supp{\CF^{\CR}}$, fix $z=x+\mu e_0 \in \conv(\CR)$ with $x \in \CK$ and $\mu \ge 0$. Since $E=E_1+l$, one can write $x=by-a e_0$ for  $y \in \BS_{E_1}$,  $b \in \R_{\ge 0}$, and $a \in \R$.  
    \begin{enumerate}
      \item If $b = a = 0$, then we have $x=0$ and $z=\mu e_0 \in \rho \subset \supp{\CF^{\CR}}$.
      \item If $b=0$ and $a \neq 0$, then the assumption that $\rho \nsubseteq \supp{\CF^{\CR_{-}}}$ and $x \in \CK=\supp{\CF^{\CR_{-}}}$ imply that $a >0$. Therefore, we have $-e_0=a^{-1} x \in \CK$, and hence $z=(\mu-a)e_0 \in l \subset \CK \cup \rho \subset \supp{\CF^{\CR}}$.
      \item If $b > 0$, then $\ft(y) \le \frac{a}{b}< +\infty$. Since $\CK$ is convex, $\mathrm{Cone}(x, y-\ft(y)e_0) \subset \CK$. On the other hand, $\mathrm{Cone}(e_0, y-\ft(y)e_0) \subset \mathrm{Cone}(\sigma, \rho) \subset \supp{\CF^{\CR}}$ for some $\sigma \in \CE$. Therefore, $z \in \mathrm{Cone}(e_0, x)=\mathrm{Cone}(e_0, y-\ft(y)e_0) \cup \mathrm{Cone}(x, y-\ft(y)e_0) \subset \supp{\CF^{\CR}}$. \qedhere
    \end{enumerate}
\end{proof}

\begin{lemma} \label{lem:intersection_of_small_pertubation_of_simplices}
  Let $E$ be a Euclidean space of dimension $n$. Let $T$ be a finite set, with subsets $T_1, T_2 \subset T$, and $S=T_1 \cap T_2$. We use $F^{\mathfrak{S} }(T)$ to denote the set of injective maps $g \colon T \to E$ such that $\sigma(g) \coloneqq \conv(g(S))$, $\tau_1(g) \coloneqq \conv(g(T_1))$, and $\tau_2(g) \coloneqq \conv(g(T_2))$ are simplices in $E$.  Let $f \in F^{\fS}(T)$ satisfy $\tau_1(f) \cap \tau_2(f) =\sigma(f)$.  Then, there exists $\epsilon >0$ such that for any $f' \in F^{\mathfrak{S} }(T)$ with $d(f, f') < \epsilon$, one has $\tau_1(f') \cap \tau_2(f')=\sigma(f')$. Here $d(f,f') \coloneqq \sup_{t \in T} d(f(t), f'(t))$.  
\end{lemma}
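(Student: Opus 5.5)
The plan is to argue by contradiction, using compactness of the space of configurations together with a separation argument for convex sets. Suppose no such $\epsilon$ exists. Then there is a sequence $f_k\in F^{\fS}(T)$ with $d(f_k,f)\to 0$ and points $z_k\in \tau_1(f_k)\cap\tau_2(f_k)$ with $z_k\notin\sigma(f_k)$. The inclusion $\sigma(f')\subset\tau_1(f')\cap\tau_2(f')$ always holds, so only the reverse inclusion needs control.

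\textbf{Step 1: barycentric coordinates.} Since $\tau_i(f_k)$ is a simplex with vertices $f_k(T_i)$, write
\[
z_k=\sum_{t\in T_1}\alpha^k_t f_k(t)=\sum_{t\in T_2}\beta^k_t f_k(t),
\]
with barycentric weights $\alpha^k,\beta^k$. Passing to a subsequence, $\alpha^k\to\alpha$, $\beta^k\to\beta$ and $z_k\to z$. In the limit, $z$ lies in $\tau_1(f)\cap\tau_2(f)=\sigma(f)$. Because $f(T_1)$ is affinely independent, the coordinates of $z$ in $\tau_1(f)$ are unique, so $\alpha_t=0$ for $t\in T_1\setminus S$. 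Similarly $\beta_t=0$ for $t\in T_2\setminus S$, and $\alpha|_S=\beta|_S$ is the barycentric coordinate vector of $z$ in $\sigma(f)$. The difficulty is that this limiting statement alone gives no contradiction: $z_k$ may approach $\sigma(f)$ while staying outside $\sigma(f_k)$. A quantitative, linear argument is needed.

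\textbf{Step 2: separating functional.} This is the key step and the main obstacle. The plan is to use the standard fact that two simplices meeting exactly in a common face can be separated by an affine function. Concretely, I would look for an affine function $\ell$ on $E$ with
\[
\ell=0 \text{ on } f(S),\qquad \ell<0 \text{ on } f(T_1\setminus S),\qquad \ell>0 \text{ on } f(T_2\setminus S).
\]
To construct $\ell$, consider the cones $C_i$ generated by $\tau_i(f)-p$, where $p$ is a point in the relative interior of $\sigma(f)$, modulo the span of $\sigma(f)-p$. The hypothesis $\tau_1\cap\tau_2=\sigma$ should imply that these two pointed polyhedral cones meet only at $0$. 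Farkas' lemma then provides the separating functional. I am not fully sure this holds in general, because two simplices could touch non-transversally; if it fails, I will fall back on a direct argument instead. That fallback is to show, via Step 1 plus affine independence, that $z_k$ and its coordinates force the linear system defining the intersection to have only solutions supported on $S$ for $f_k$ close to $f$. This uses openness of the affine independence of $f(T_1\cup T_2)$ when it holds, and the cone argument otherwise.

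\textbf{Step 3: perturbation and contradiction.} Once $\ell$ exists, the strict inequalities are open conditions. Perturb $\ell$ to an affine $\ell_k$ that vanishes on $f_k(S)$; this is possible because $f_k(S)$ stays affinely independent near $f(S)$. Then $\ell_k<0$ on $f_k(T_1\setminus S)$ and $\ell_k>0$ on $f_k(T_2\setminus S)$ for large $k$. Evaluating $\ell_k(z_k)$ through both barycentric expressions gives $\ell_k(z_k)\le 0$ and $\ell_k(z_k)\ge 0$. Equality forces $\alpha^k_t=0$ on $T_1\setminus S$, so $z_k\in\sigma(f_k)$, which contradicts the choice of $z_k$.
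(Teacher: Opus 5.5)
Your proof is correct, and it takes a different route from the paper's.

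\textbf{How the two routes differ.} The paper first normalizes. It composes $f'$ with an isometry close to the identity so that $\langle\sigma(f')\rangle_{\aff}=\langle\sigma(f)\rangle_{\aff}$. It then projects orthogonally onto $E_{\sigma(f)}^{\perp}$, so that $S$ collapses to a single vertex at the origin. From there it argues metrically: the spherical sections $P_i(f')=\mathbb{S}_E\cap\mathrm{Cone}(f'(T_i))$ are at positive distance when $f'=f$, and this distance stays positive under small perturbations.

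You instead produce a single affine certificate $\ell$. Its defining conditions are finitely many strict inequalities at vertices, and these are trivially stable under perturbation. This bypasses the isometry reduction and the continuity estimate for $P_i(f')$, at the price of invoking a separation theorem.

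Both arguments rest on the same geometric fact. Modulo the direction space of $\sigma(f)$, the cones spanned by $\tau_1(f)$ and $\tau_2(f)$ at $\sigma(f)$ meet only at $0$. The paper uses this without comment when it asserts that the orthogonal projections of $\tau_1(f)$ and $\tau_2(f)$ to $E_{\sigma(f)}^{\perp}$ meet only at $0$. Note also that the sequence and compactness in your Step 1 are not needed: Step 3 applies directly to every $f'$ close enough to $f$.

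\textbf{Step 2 holds in general.} Your worry about non-transversal touching is unfounded. You should also drop the fallback, since affine independence of $f(T_1\cup T_2)$ typically fails, for instance for two adjacent top-dimensional simplices of $\Delta_Q$ where the lemma is applied. Step 2 closes as follows.
\begin{itemize}
\item Take $p\in\relint\sigma(f)$, $L=\mathrm{span}(\sigma(f)-p)$ and $C_i=\mathrm{Cone}(\tau_i(f)-p)$. Since $p\in\relint\sigma(f)$, we have $L\subset C_i$.
\item For $w\in C_i$ one has $p+tw\in\tau_i(f)$ for small $t>0$. So $w\in C_1\cap C_2$ gives $p+tw\in\sigma(f)$, hence $w\in L$.
\item Because $C_i+L=C_i$, the images $K_i$ of $C_i$ in $E/L$ therefore satisfy $K_1\cap K_2=\{0\}$.
\item Let $u_v$ be the image of $f(v)-p$ in $E/L$. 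For fixed $i$, the vectors $u_v$ with $v\in T_i\setminus S$ are linearly independent, by affine independence of $f(T_i)$, and they generate $K_i$.
\item Suppose no linear form $\lambda$ on $E/L$ is negative on $\{u_v\}_{v\in T_1\setminus S}$ and positive on $\{u_v\}_{v\in T_2\setminus S}$. By Gordan's alternative there are $a,b\geq 0$, not all zero, with $\sum_{v\in T_2\setminus S}a_v u_v=\sum_{v\in T_1\setminus S}b_v u_v$.
\item This common vector lies in $K_1\cap K_2=\{0\}$, and linear independence then forces $a=0$ and $b=0$, a contradiction.
\end{itemize}
Hence such a $\lambda$ exists, and $\ell(x)=\lambda(x-p)$ has exactly the properties you need.

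If $S=\emptyset$ there is no such $p$. In that case $\ell$ comes instead from strict separation of the disjoint compact convex sets $\tau_1(f)$ and $\tau_2(f)$.
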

\begin{proof}
  Choose $r \gg 0$ such that $\tau_1(f'), \tau_2(f') \subset B(0,r)$ for any $f' \in F^{\mathfrak{S} }(T)$ with $d(f, f')<1$. For any $0 < \epsilon_1 \ll 1$, there exists $0<\epsilon_2< \epsilon_1/2$ such that for any  $f' \in F^{\mathfrak{S} }(T)$ with $d(f, f')< \epsilon_2$, there exists an isometry $\phi_{f'} \colon E \to E$ satisfying 
  \begin{enumerate}
    \item $d(\phi_{f'}(x),x)<\epsilon_1/2$ for any $x \in B(0,r)$.
    \item $\phi_{f'}(\langle \sigma(f') \rangle_{\aff})=\langle \sigma(f) \rangle_{\aff}$, where $\langle \cdot \rangle_{\aff}$ denotes the affine subspace generated by a polyhedron. 
  \end{enumerate}
  Note that $d(f, \phi_{f'} \circ f')< \epsilon_2 + \epsilon_1/2<\epsilon_1$ under the above conditions.  Therefore, it suffices to prove the lemma in the case when $\langle \sigma(f') \rangle_{\aff}=\langle \sigma(f) \rangle_{\aff}$. 
  
  After translation, we may assume that $E_{\sigma(f)} \coloneqq \langle \sigma(f) \rangle_{\aff}$ is a vector subspace of $E$. Write $E=E_{\sigma(f)} \oplus E_{\sigma(f)}^{\bot}$, and denote by $p \colon E \to E_{\sigma(f)}^{\bot}$ the projection map. Then by assumption, $p(\tau_1(f)) \cap p(\tau_2(f))=\{0\}$. Observe that for any $f' \in F^{\mathfrak{S} }(T)$ satisfying $\langle \sigma(f') \rangle_{\aff}=\langle \sigma(f) \rangle_{\aff}$, $p(\tau_i(f'))$ are simplices in $E_{\sigma(f)}^{\bot}$, and further we have $p^{-1}(0) \cap \tau_1(f') \cap \tau_2(f')=\sigma(f')$. Hence, showing $p(\tau_1(f')) \cap p(\tau_2(f'))=\{0\}$ would imply $\tau_1(f') \cap \tau_2(f')=\sigma(f')$. Since $d(p(x), p(x')) \le d(x, x')$ for any $x, x' \in E$, it is enough to prove the lemma for $\tau_1(p \circ f), \tau_2(p \circ f) \subset E_{\sigma(f)}^{\bot}$ with $\tau_1(p \circ f) \cap \tau_2(p \circ f)=\{0\}$, where we view $p \circ f$ as an injective map from $T/\{S \to *\}$ to $E_{\sigma(f)}^{\bot}$. Therefore, we may henceforth assume that $T_1 \cap T_2=\{s\}$, and $\tau_1(f) \cap \tau_2(f)=\{0\}$. 

  Denote by $\BS_{E}$ the unit sphere in $E$. For each $f' \in F^{\fS}(T)$ with $f'(s)=0$ and for each $i \in \{1,2\}$,  define $P_i(f') \coloneqq \BS_{E} \cap \mathrm{Cone}(f'(T_i))$.  Then, $\tau_1(f') \cap \tau_2(f')=\{0\}$ is equivalent to $P_1(f') \cap P_2(f') = \emptyset$. Thus, by assumption $d(P_1(f), P_2(f))\eqqcolon \epsilon_3 >0$. Take $\epsilon \coloneqq  \epsilon_3 M/6$, where $M \coloneqq \min_{t \in T \setminus \{s\} } \left\|f(t)\right\|$. Then, for any $f' \in F^{\fS}(T)$ with $f'(s)=0$ and $d(f,f') < \epsilon$, we have $d(P_1(f'), P_2(f'))>\epsilon_3 /3$. This proves the lemma.
\end{proof}

\bibliographystyle{alpha}
\bibliography{bibliography}

\end{document}